\numberwithin{equation}{subsection}
\def\enumfix{%
\if@inlabel
 \noindent \par\nobreak\vskip-\topsep\hrule\@height\z@
\fi}
\let\olditemize\itemize
\def\itemize{\enumfix\olditemize}
\begin{document}

\title{Stratified noncommutative geometry}

\author{David Ayala, Aaron Mazel-Gee, and Nick Rozenblyum}

\date{\today}

\begin{abstract}
We introduce a theory of stratifications of noncommutative stacks (i.e.\! presentable stable $\infty$-categories), and we prove a reconstruction theorem that expresses them in terms of their strata and gluing data.
This reconstruction theorem is compatible with symmetric monoidal structures, and with more general operadic structures such as $\EE_n$-monoidal structures.
We also provide a suite of fundamental operations for constructing new stratifications from old ones: restriction, pullback, quotient, pushforward, and refinement. Moreover, we establish a dual form of reconstruction; this is closely related to Verdier duality and reflection functors, and gives a categorification of M\"obius inversion.

Our main application is to equivariant stable homotopy theory: for any compact Lie group $G$, we give a symmetric monoidal stratification of genuine $G$-spectra. In the case that $G$ is finite, this expresses genuine $G$-spectra in terms of their geometric fixedpoints (as homotopy-equivariant spectra) and gluing data therebetween (which are given by proper Tate constructions).

We also prove an adelic reconstruction theorem;
this applies not just to ordinary schemes but in the more general context of tensor-triangular geometry, where we obtain
a symmetric monoidal stratification over the Balmer spectrum.  We discuss the particular example of chromatic homotopy theory.
\end{abstract}


\maketitle

\setcounter{tocdepth}{2}
\tableofcontents
\setcounter{tocdepth}{2}

\setcounter{section}{-1}

\section{Introduction}
\label{section.intro}

\startcontents[sections]


\subsection{Overview}
\label{subsection.intro.overview}

In this work, we develop a theory of \bit{stratified noncommutative stacks}. We take the term \textit{noncommutative stack} to mean a presentable stable $\infty$-category, as explained in \Cref{rmk.history.of.terminology}.\footnote{Our results apply equally well to pretriangulated dg-categories admitting all direct sums (or more precisely, to their underlying $\kk$-linear presentable stable $\infty$-categories).} We suggestively refer to the objects of a noncommutative stack as its \textit{quasicoherent sheaves}.\footnote{In particular, an ordinary scheme or stack $X$ has an underlying noncommutative stack $\QC(X)$, its presentable stable $\infty$-category of quasicoherent sheaves.} Our novel contribution is a theory of \textit{stratifications}.\footnote{This builds on work of Glasman and others, as described in \Cref{subsection.relation.with.lit}.} In short, a stratification of a noncommutative stack $\cX$ is a filtration by noncommutative substacks $\{ \cZ_p \}_{p \in \pos}$ indexed by a poset $\pos$ that satisfies certain natural geometrically-inspired conditions; for each $p \in \pos$, the \bit{$p\th$ stratum} of the stratification is the associated-graded $\cX_p := \cZ_p / \cZ_{^< p}$.\footnote{As we explain in \S\S\ref{subsection.intro.strat.sch}-\ref{subsection.intro.stratn.nc.stacks}, a stratification of an ordinary scheme $X$ determines a stratification of $\QC(X)$ via set-theoretic support on closures of strata, whose strata are closely related to those of $X$. (On the other hand, in general not all stratifications of $\QC(X)$ arise from stratifications of $X$.)}

The primary purpose of stratifications is that they provide \bit{reconstruction theorems},
in a way that can be summarized informally as follows.\footnote{Our terminology for the two parts of \Cref{slogan.cosm} is inspired by the ``macrocosm/microcosm principle'', which asserts e.g.\! that it is precisely a monoidal structure on a category that enables one to speak of algebra objects in that category.  In the present situation, macrocosm reconstruction for the noncommutative stack $\cX$ enables microcosm reconstruction for each quasicoherent sheaf $\cF \in \cX$.  This is a familiar phenomenon from classical sheaf theory: categories of globally-defined sheaves can be reconstructed from categories of locally-defined sheaves, and so globally-defined sheaves can be reconstructed from locally-defined sheaves.}
\begin{slogan}
\label{slogan.cosm}
Let $\cX$ be a noncommutative stack equipped with a stratification over a poset $\pos$.
\begin{enumerate}
\item {\bf macrocosm:} The noncommutative stack 
$\cX$ can be reconstructed from the strata
\[
\{ \cX_p \subseteq \cX \}_{p \in \pos}
\]
along with gluing data between them.
\item {\bf microcosm:} Each quasicoherent sheaf 
$\cF \in \cX$ can be reconstructed from its geometric localizations
\[
\{ \Phi_p( \cF ) \in \cX_p \}_{p \in \pos}
\]
along with gluing data between them.
\end{enumerate}
\end{slogan}


\noindent The simplest interesting example of a stratification is when $\pos = \{ 0 < 1 \}$: in this case we recover the data of a \textit{recollement} (which we review for the reader's convenience in \Cref{subsection.intro.closed.open.decomps.and.recollements}).\footnote{The French word \textit{recollement} translates to ``regluing''.}

Our main application is a symmetric monoidal reconstruction theorem for genuine $G$-spectra, which has particularly simple strata.\footnote{At the microcosm level, this presents a genuine $G$-spectrum in terms of its geometric fixedpoints (as opposed to its presentation in terms of its categorical fixedpoints as a spectral Mackey functor \cite{GM-gen,Bar-Mack}).} The eager reader may turn directly to \Cref{subsection.examples.of.SpgG} to see specific examples of this reconstruction theorem in action:
\begin{itemize}
\item genuine $G$-spectra where $G$ is one of the cyclic groups $\Cyclic_p$, $\Cyclic_{p^2}$, and $\Cyclic_{pq}$ (for distinct primes $p$ and $q$) or the symmetric group $\Symm_3$, and
\item {proper}-genuine $\TT$-spectra, where $\TT$ denotes the circle group.
\end{itemize}
In \cite{AMR-cyclo}, we build on this last example to provide a symmetric monoidal reconstruction theorem for cyclotomic spectra.  This improves on the foundational work \cite{NS} of Nikolaus--Scholze, in that it applies to all cyclotomic spectra (instead of only eventually-connective ones) and specifies its canonical symmetric monoidal structure.  In particular, it provides a universal mapping-in property at the level of objects, which we use to obtain the cyclotomic trace map
\[
\K
\longra
\TC
\]
from algebraic K-theory to topological cyclic homology in \cite{AMR-trace}. In a different direction, in \cite{AMR-mackey} we apply our reconstruction theorem to compute the $\Cyclic_{p^n}$-equivariant cohomology of a point for any odd prime $p$.




We also set up an $\cO$-monoidal enhancement of our theory, where $\cO$ denotes any $\infty$-operad satisfying certain mild conditions (e.g.\! $\EE_n$ for $1 \leq n \leq \infty$); this accounts for the symmetric monoidality of our reconstruction theorem for genuine $G$-spectra.  In this vein, we make contact with the world of tensor-triangular geometry, by showing that under mild hypotheses a presentably symmetric monoidal stable $\infty$-category admits a canonical \textit{adelic stratification}, which is a symmetric monoidal stratification over the specialization poset of its Balmer spectrum.  The adelic stratification of $\Mod_\ZZ$ recovers the classical \textit{arithmetic fracture square}, which is the natural pullback square for any $M \in \Mod_\ZZ$ that is indicated in \Cref{figure.first.instance.of.arithmetic.fracture.square}.
\begin{figure}[h]
\begin{equation}
\label{intro.arithmetic.fracture.square}
\begin{tikzcd}[row sep=1.5cm]
M
\arrow{r}
\arrow{d}
&
\QQ \otimes_\ZZ M
\arrow{d}
\\
{\displaystyle \prod_{p \textup{ prime}} M^\wedge_p}
\arrow{r}
&
\QQ \otimes_\ZZ \left( 
{\displaystyle \prod_{p \textup{ prime}} M^\wedge_p}
\right)
\end{tikzcd}
\end{equation}
\caption{The arithmetic fracture square is a natural pullback square that reconstructs any $M \in \Mod_\ZZ$ from its rationalization, its $p$-completions, and gluing data between them.}
\label{figure.first.instance.of.arithmetic.fracture.square}
\end{figure}
More generally, for any scheme $X$ satisfying mild finiteness hypotheses, the adelic stratification of $\QC(X)$ leads to an \textit{adelic reconstruction theorem}, which bears a close relationship to existing such formalisms of Beilinson and others. Moreover, the \textit{chromatic stratification} of the $\infty$-category $\Spectra$ of spectra organizes the fundamental objects of chromatic homotopy theory and recovers integral (i.e.\! not $p$-local) and higher-dimensional variants of the chromatic fracture square, as described in \Cref{ex.chromatic.stratn.of.spectra}.\footnote{This is closely related to its adelic stratification, which is described in \Cref{ex.adelic.stratn.of.spectra}.}

In a different direction, we introduce the theory of \bit{reflection}. This affords a dual form of reconstruction; applied to $\Mod_\ZZ$, this yields the \textit{reflected arithmetic fracture square}, which is the natural pushout square for any $M \in \Mod_\ZZ$ that is indicated in \Cref{figure.reflected.arithmetic.fracture.square}.\footnote{This particular example can be seen as a consequence of Greenlees--May duality (or even of local duality for $\Spec(\ZZ)$).}
\begin{figure}[h]
\[ \begin{tikzcd}[row sep=1.5cm]
\ulhom_{\Mod_\ZZ}
\left(
\QQ,
\left(
{\displaystyle
\bigoplus_{p \textup{ prime}}
M^\tors_p
}
\right)
\right)
\arrow{r}
\arrow{d}
&
\ulhom_{\Mod_\ZZ}(\QQ,M)
\arrow{d}
\\
{\displaystyle
\bigoplus_{p \textup{ prime}}
M_p^\tors
}
\arrow{r}
&
M
\end{tikzcd} \]
\caption{The reflected arithmetic fracture square is a natural pushout square that reconstructs any $M \in \Mod_\ZZ$ from its corationalization, its $p$-torsionifications, and gluing data between them.\footnotemark}
\label{figure.reflected.arithmetic.fracture.square}
\end{figure}
\footnotetext{We write $M_p^\tors := \fib ( M \ra M \otimes_\ZZ \ZZ[p^{-1}])$ for the $p$-torsionification of $M$, in analogy with the notation $M^\wedge_p$ for its $p$-completion.}
In particular, we establish a precise relationship between the gluing data and the reflected gluing data. In the case of $\Mod_\ZZ$, this specializes to the remarkable equivalence
\[
\QQ \otimes_\ZZ
\left(
\prod_p M^\wedge_p
\right)
\simeq
\Sigma
\ulhom_{\Mod_\ZZ}
\left(
\QQ
,
\left( \bigoplus_p M^\tors_p \right)
\right)
~;
\]
taking $M = \ZZ$, this gives an equivalence $\AA_\fin \simeq \ulhom_{\Mod_\ZZ}(\QQ , \QQ/\ZZ)$, where $\AA_\fin$ denotes the ring of finite adeles. Specialized to the poset $\pos = [1]$, reflection recovers the theory of reflection functors (which explains our choice of terminology). More generally, it gives a categorification of the M\"obius inversion formula and is closely related to Verdier duality.

We give a detailed overview of our work in \Cref{subsection.intro.detailed.overview}, which begins with some recollections and motivation. Our main theorems (which are stated more precisely therein) may be summarized as follows.
\begin{itemize}

\item \Cref{intro.thm.cosms} is our \bit{reconstruction theorem} for stratified noncommutative stacks, a precise articulation of \Cref{slogan.cosm}.  In fact, it provides a universal mapping-in property -- that is, a limit-type description -- both at the macrocosm level (for noncommutative stacks) and at the microcosm level (for their quasicoherent sheaves).

\item \Cref{intro.thm.fund.opns} provides a suite of \bit{fundamental operations} for constructing new stratifications from old ones: restriction, pullback, quotient, pushforward, and refinement. 

\item \Cref{intro.thm.O.mon.reconstrn} is our \bit{$\cO$-monoidal reconstruction theorem}, an enhancement of \Cref{intro.thm.cosms}.  At the macrocosm level, this provides universal mapping-in properties for presentably $\cO$-monoidal stable $\infty$-categories as such.

\item \Cref{intro.thm.balmer} establishes the symmetric monoidal \bit{adelic stratification} of a presentably symmetric monoidal stable $\infty$-category satisfying mild finiteness hypotheses over (the specialization poset of) its Balmer spectrum.

\item \Cref{intro.thm.gen.G.spt} establishes the symmetric monoidal \bit{geometric stratification} of the presentably symmetric monoidal stable $\infty$-category $\Spectra^{\gen G}$ of genuine $G$-spectra, where $G$ is any compact Lie group.  This has the following features:
\begin{itemize}
\item
its strata are the presentably symmetric monoidal stable $\infty$-categories
\[
\Spectra^{\htpy \Weyl(H)} := \Fun(\sB \Weyl(H) , \Spectra)
\]
of homotopy $\Weyl(H)$-spectra, where $H$ is a closed subgroup of $G$ and $\Weyl(H)$ denotes its Weyl group;
\item its geometric localization functors are the geometric fixedpoints functors
\[
\Spectra^{\gen G}
\xra{\Phi^H}
\Spectra^{\htpy \Weyl(H)}
~;
\]
and
\item its gluing functors are given by a version of the Tate construction.
\end{itemize}
As explained in \Cref{rmk.SpgG.nearly.commutative}, this provides a sense in which genuine $G$-spectra are the quasicoherent sheaves on a ``nearly commutative'' stack.

\item \Cref{intro.thm.reflection} establishes the theory of \bit{reflection}, which affords a dual form of reconstruction for stratified noncommutative stacks.

\end{itemize}

\noindent In \Cref{subsection.intro.detailed.overview} we also discuss a number of additional applications of our work: constructible sheaves; categorified M\"obius inversion; naive $G$-spectra; t-structures; and additive and localizing invariants.

\begin{remark}
\label{rmk.history.of.terminology}
The philosophy of noncommutative algebraic geometry can be traced back to Gabriel's thesis \cite{Gabriel-thesis}, in which he proved that one can reconstruct a scheme from its abelian category of quasicoherent sheaves.  Following this, Manin proposed that arbitrary abelian categories might therefore be thought of as categories of quasicoherent sheaves on ``noncommutative schemes'' \cite[\S 12.6]{Manin-qgrpsandncgeom}. 
This proposal has since been developed further by many authors, notably Rosenberg \cite{Rosenberg-SpecAbCat,Rosenberg-ncscheme}, as well as Kontsevich--Rosenberg \cite{KontRos-nc} and Kontsevich--Soibelman \cite{KontSoi-ncgeom} from a more derived perspective. Our usage of the term ``noncommutative stack'' to mean a presentable stable $\infty$-category is inspired by this trajectory.
\end{remark}

\subsection{Relations with existing literature}
\label{subsection.relation.with.lit}

A number of distinct narrative threads converge in the present work, some of which we discuss here.  However, the literature is vast, and we make no attempt to be comprehensive.

\subsubsection{Recollements and semiorthogonal decompositions}

Stratifications admit a rich history: 
they generalize recollements (which are stratifications over $[1]$) and more generally semiorthogonal decompositions (which are stratifications over $[n]$).\footnote{In the present discussion we do not distinguish between the small and presentable settings.} Recollements were originally introduced by Beilinson--Bernstein--Deligne in their study of perverse sheaves \cite{BBD-perv}.  A fruitful source of semiorthogonal decompositions is exceptional collections; this technique first appeared in Beilinson's calculation of the derived category of $\PP^n$ \cite{Beil-linalg}, and was pursued more systematically by Bondal--Kapranov \cite{BondKap-reconstrn}.  Semiorthogonal decompositions continue to be a highly active area of research, especially in connection with algebraic geometry; see e.g.\! \cite{Kuz-SODinAG} for more in this direction.

\subsubsection{Adelic reconstruction}

As explained in \Cref{subsection.intro.adelic}, given a scheme $X$, our work provides a decomposition of $\QC(X)$ in adelic terms; this generalizes the arithmetic fracture square \Cref{intro.arithmetic.fracture.square}, which corresponds to the case that $X=\Spec(\ZZ)$.  This is quite similar to prior adelic reconstruction results in the literature, e.g.\! \cite{Parshin-ad,Beil-ad,Huber-ad,Groechenig-ad,HPV-gluing}.  However, there is a subtle difference, even in the case of $X = \Spec(\ZZ)$: we recover the arithmetic fracture square \Cref{intro.arithmetic.fracture.square} for all $\ZZ$-modules $M$, despite the fact that two of its terms don't commute with filtered colimits in the variable $M$. In the specific context of tensor-triangulated geometry, \cite{BalchGreen-adelic} provides a symmetric monoidal macrocosm-type reconstruction theorem.

\subsubsection{Chromatic homotopy theory}

Reconstruction has long been a guiding principle in homotopy theory, going back to Sullivan's influential lecture notes \cite{Sullivan-MIT}.  The chromatic approach to stable homotopy theory grew out of Ravenel's work \cite{Rav-loc} and the resulting nilpotence and periodicity theorems of Devinatz--Hopkins--Smith \cite{DHS-nilp,HS-nilptwo}, along with the extensive axiomatic treatment of Hovey--Palmieri--Strickland \cite{HPS} -- all pointing to the chromatic fracture squares as essential from the perspective of reconstruction.  More recently, higher-dimensional chromatic fracture cubes for $p$-local spectra -- and indeed, corresponding macrocosm reconstruction theorems -- appear e.g.\! in \cite[Examples 3.14 and 3.31]{Saul-strat} and \cite{ACB-chromfrac}.

\subsubsection{Reconstruction for genuine $G$-spectra} 

The idea that genuine $G$-spectra can be expressed in terms of their geometric fixedpoints stems from the work of Greenlees and May; see in particular \cite{Greenlees-thesis,GreenMay-Tate}.  There is also much work on similar expressions of rational $G$-spectra (which are simpler because the relevant Tate constructions vanish rationally), notably the reconstruction results of Greenlees--Shipley \cite{GreenShip}.  More recent works in this direction include \cite{MNN,Saul-strat}; see also \cite[Remark II.4.8]{NS}.

\subsubsection{Glasman's theory of stratifications}
\label{subsubsection.compare.with.saul}

Theorems \ref{intro.thm.cosms} \and \ref{intro.thm.gen.G.spt} are directly inspired by Glasman's paper \cite{Saul-strat}, as we now explain.


In \cite[Definition 3.5]{Saul-strat}, Glasman introduces a notion of a stratification of a stable $\infty$-category (not assumed to be presentable). 
His definition is phrased in terms of the strata (in the sense of \Cref{defn.Cth.stratum.and.geometric.localizn}) for all convex subsets $\sC \subseteq \pos$ of the stratifying poset.  He proves a reconstruction result for his stratifications \cite[Theorem 3.21]{Saul-strat}, and for any finite group $G$ he provides a stratification of the $\infty$-category $\Spectra^{\gen G}$ genuine $G$-spectra over the 
poset $\pos_G$ of conjugacy classes of subgroups of $G$ \cite[Proposition 3.18]{Saul-strat}.

By contrast, we work primarily in the setting of presentable stable $\infty$-categories. This enables us to give a relatively simple definition of a stratification, in terms of closed subcategories indexed by the poset $\pos$ itself (rather than by its poset of convex subsets): we recover the strata as presentable quotients. (These notions are summarized in \Cref{subsection.intro.stratn.nc.stacks}.) On the other hand, using this we also provide a theory of stratifications of stable $\infty$-categories (see \Cref{subsection.stable.stratns}).\footnote{As a matter of convenience, we restrict our attention to idempotent-complete stable $\infty$-categories.} This effectively recovers Glasman's theory of stratifications, and offers a substantial refinement of his reconstruction theorem as well (which is a version of our microcosm reconstruction).

\subsection{Outline}
\label{subsection.linear.outline}

This work is organized as follows.
\begin{itemize}

\item[\Cref{subsection.intro.detailed.overview}:] We give a detailed overview of our work, and explain a number of fundamental examples and applications.

\item[\Cref{section.strat}:] We introduce closed subcategories and stratifications.  We prove that the macrocosm reconstruction theorem (\Cref{intro.thm.cosms}\Cref{intro.main.thm.macrocosm}) follows from the metacosm reconstruction theorem (\Cref{intro.thm.cosms}\Cref{intro.main.thm.metacosm}).

\item[\Cref{section.fund.opns}:] We establish our fundamental operations on stratifications (\Cref{intro.thm.fund.opns}). We accomplish this by studying the phenomenon of \textit{alignment}.

\item[\Cref{section.O.mon.reconstrn.thm}:] We introduce $\cO$-monoidal stratifications and prove the $\cO$-monoidal reconstruction theorem (\Cref{intro.thm.O.mon.reconstrn}).  We also establish the adelic stratification (\Cref{intro.thm.balmer}), which we unpack in the setting of chromatic homotopy theory.

\item[\Cref{section.genuine}:] We review the $\infty$-category of genuine $G$-spectra and establish its geometric stratification (\Cref{intro.thm.gen.G.spt}).  We record a few facts about its gluing functors, which are essentially given by proper Tate constructions.  Using these facts, we unpack a number of examples of reconstruction for genuine $G$-spectra. We also give a formula for categorical fixedpoints in terms of the geometric stratification, and we explain how this interacts with restriction and transfer.

\item[\Cref{section.reconstrn}:] We prove the metacosm reconstruction theorem (\Cref{intro.thm.cosms}\Cref{intro.main.thm.metacosm}).

\item[\Cref{section.variations}:] We prove a number of variants of the metacosm reconstruction theorem, notably our dual form of reconstruction and the theory of reflection (\Cref{intro.thm.reflection}).

\item[\Cref{section.lax.actions.and.limits}:] We review the theory of lax modules and lax limits, and record a number of results that we need.  This material is used systematically throughout the main body of the work, but this usage is confined to proofs (rather than assertions) to the greatest extent possible.

\item[\Cref{section.inftytwocats}:] We establish the necessary background regarding $(\infty,2)$-categories, particularly the theory of lax functors and natural transformations as well as the theory of adjunctions. This material primarily supports \Cref{section.lax.actions.and.limits}.

\end{itemize}

\subsection{Notation and conventions}
\label{subsection.notation.and.conventions}

\begin{enumerate}

\item \catconventions \inftytwoconventions  

\item \functorconventions

\item \circconventions

\item \kanextnconventions

\item \spacescatsspectraconventions

\item \Prconventions \Promegaconventions


\item \fibrationconventions \dualfibnsconvention \LFibRFibconvention \projectionfromproductconvention

\item \efibconventions



\end{enumerate}

\subsection{Acknowledgments}
\label{subsection.acknowledgments}

This work builds upon a broad array of mathematics developed over the past few decades; so, we owe a substantial intellectual debt to the community at large, all of the individual contributors among which it would be impossible to name here.  Nevertheless, we would like to highlight the works of Saul Glasman \cite{Saul-strat} and of Akhil Mathew, Niko Naumann, and Justin Noel \cite{MNN}, which were particularly influential to us. We also extend our gratitude to an anonymous referee for a helpful report.

\acksupport \ Additionally, all three authors gratefully acknowledge the superb working conditions 
provided by the Mathematical Sciences Research Institute (which is supported by NSF award 1440140), where DA and AMG were in residence and NR was a visitor during the Spring 2020 semester.

\stopcontents[sections]

\section{Detailed overview and fundamental examples}
\label{subsection.intro.detailed.overview}

In this section, we give an informal overview of our work. In addition to giving somewhat more precise statements of our main theorems (which we only informally described in \Cref{subsection.intro.overview}), we place our work within a broader mathematical narrative and collect key examples and applications.

In contrast with the present section, the main body of the work (i.e.\! all the material beyond \Cref{subsection.intro.detailed.overview}) is almost entirely devoted to proofs of the main theorems.\footnote{However, our specific examples of reconstruction for genuine $G$-spectra are collected in \Cref{subsection.examples.of.SpgG}, and we defer a discussion of the chromatic and adelic stratifications of spectra to Examples \ref{ex.chromatic.stratn.of.spectra} \and \ref{ex.adelic.stratn.of.spectra}.} (So for instance, we will not revisit any discussion of sheaves.)

\begin{local}
Throughout this section, we fix a scheme $X$,\footnote{More precisely, in order to simplify our exposition, we tacitly assume that our scheme $X$ is finite-dimensional and noetherian. The utility of these assumptions is explained in Footnotes \ref{fn.why.qcqs}, \ref{fn.why.noetherian}, \and \ref{fn.why.fdim}.}
a noncommutative stack $\cX$ (i.e.\! a presentable stable $\infty$-category), and a poset $\pos$.
\end{local}

This section is organized as follows.
\begin{itemize}

\item[\Cref{subsection.intro.closed.open.decomps.and.recollements}:] We recall the notion of a recollement of $\cX$ and the fact that a closed-open decomposition of $X$ determines a recollement of $\QC(X)$.

\item[\Cref{subsection.intro.strat.sch}:] We generalize closed-open decompositions of $X$ to stratifications of $X$.

\item[\Cref{subsection.intro.stratn.nc.stacks}:] We define stratifications of $\cX$ and state our main reconstruction theorem (\Cref{intro.thm.cosms}).  We also explain how a stratification of $X$ determines a stratification of $\QC(X)$; in retrospect, \Cref{subsection.intro.closed.open.decomps.and.recollements} describes the special case of this phenomenon when $\pos = [1]$.

\item[\Cref{subsection.intro.fund.operations}:] To address certain subtleties arising in \Cref{intro.thm.cosms}, we indicate our fundamental operations on stratifications (\Cref{intro.thm.fund.opns}).

\item[\Cref{subsection.intro.O.mon.stratns}:] We describe our theory of $\cO$-monoidal stratifications and state our $\cO$-monoidal reconstruction theorem (\Cref{intro.thm.O.mon.reconstrn}).

\item[\Cref{subsection.intro.adelic}:] We begin by describing the adelic stratification of $\QC(X)$.  We unpack in detail the example of $X = \Spec(\ZZ)$, which nicely illustrates essentially all of the material surveyed up to this point, and which ultimately recovers the arithmetic fracture square \Cref{intro.arithmetic.fracture.square}.  We conclude by generalizing adelic stratifications to the setting of tensor-triangular geometry (\Cref{intro.thm.balmer}).

\item[\Cref{subsection.intro.eq.spt}:] We describe the geometric stratification of genuine $G$-spectra (\Cref{intro.thm.gen.G.spt}).

\item[\Cref{subsection.cbl}:] Given a $\pos$-stratified topological space, we obtain stratifications over $\pos^\op$ of its $\infty$-categories of sheaves, constructible sheaves, and $\pos$-constructible sheaves.

\item[\Cref{subsection.naive.G.spectra.stratn}:] As a special case of a general construction, we obtain a stratification of naive $G$-spectra, which is closely related to the geometric stratification of genuine $G$-spectra.

\item[\Cref{subsection.verdier}:] We explain the theory of reflection (\Cref{intro.thm.reflection}) and indicate a number of examples, notably its close relationship with Verdier duality.

\item[\Cref{subsection.t.structures}:] We explain how to use stratifications to build t-structures.

\item[\Cref{subsection.add.loc.invts}:] We explain the relationship between stratifications and additive and localizing invariants (such as (resp.\! connective and nonconnective) algebraic K-theory).

\end{itemize}




\subsection{Closed-open decompositions and recollements}
\label{subsection.intro.closed.open.decomps.and.recollements}

We begin by recalling the theory of recollements (in the context of presentable stable $\infty$-categories).

\begin{definition}
\label{defn.recollement.in.intro}
A \bit{recollement} of the noncommutative stack (i.e.\! presentable stable $\infty$-category) $\cX$ is a diagram
\begin{equation}
\label{recollement.in.intro}
\begin{tikzcd}[column sep=1.5cm]
\cZ
\arrow[hook, bend left=45]{r}[description]{i_L}
\arrow[leftarrow]{r}[transform canvas={yshift=0.1cm}]{\bot}[swap,transform canvas={yshift=-0.1cm}]{\bot}[description]{\yo}
\arrow[bend right=45, hook]{r}[description]{i_R}
&
\cX
\arrow[bend left=45]{r}[description]{p_L}
\arrow[hookleftarrow]{r}[transform canvas={yshift=0.1cm}]{\bot}[swap,transform canvas={yshift=-0.1cm}]{\bot}[description]{\nu}
\arrow[bend right=45]{r}[description]{p_R}
&
\cU
\end{tikzcd}
\end{equation}
of adjunctions among presentable stable $\infty$-categories such that there are equalities
\begin{equation}
\label{equalities.in.defn.of.recollement}
\im(i_L) = \ker(p_L)
~,
\qquad
\im(\nu) = \ker(\yo)
~,
\qquad
\text{and}
\qquad
\im(i_R) = \ker(p_R)
\end{equation}
among full subcategories of $\cX$.\footnote{We have chosen the notation ``$\yo$'' because this is the restricted Yoneda functor (with respect to the inclusion $i_L$), and the notation ``$\nu$'' because this is the inclusion of the full subcategory of objects whose restricted Yoneda functors are null.}
\end{definition}

Given a recollement \Cref{recollement.in.intro} of $\cX$, it is not hard to check that for each $\cF \in \cX$ we obtain a canonical pullback square
\begin{equation}
\label{microcosm.pullback.square.in.intro}
\begin{tikzcd}[row sep=1.5cm, column sep=2cm]
\cF
\arrow{r}{\eta_{p_L \adj \nu}(\cF)}
\arrow{d}[swap]{\eta_{y \adj i_R}(\cF)}
&
\nu p_L \cF
\arrow{d}{\nu p_L ( \eta_{y \adj i_R}(\cF))}
\\
i_R y \cF
\arrow{r}[swap]{\eta_{p_L \adj \nu}(i_R y \cF)}
&
\nu p_L i_R y \cF
\end{tikzcd}~.\footnote{Indeed, taking fibers of the vertical morphisms reduces us to the case where $\cF \in \ker(\yo) = \im(\nu) \subseteq \cX$, in which case the claim is immediate.}
\end{equation}
Hence, the object $\cF \in \cX$ is recorded by the lower right cospan.  However, to record the object $\cF \in \cX$ we may actually record less data than this cospan: its lower morphism is the unit of the adjunction $p_L \adj \nu$, and so is canonically determined by its source $i_R y \cF \in \cX$.  Noting further that the functors $i_R$ and $\nu$ are fully faithful, we find that the object $\cF \in \cX$ can be reconstructed from the data of the object $y \cF \in \cZ$, the object $p_L \cF \in \cU$, and the morphism
\begin{equation}
\label{gluing.morphism.in.cU.for.cF}
p_L \cF \xra{p_L(\eta_{y \adj i_R}(\cF))} p_L i_R y \cF
\end{equation}
in $\cU$.  This observation forms the basis of an equivalence
\begin{equation}
\label{macrocosm.equivalence.for.recollement.in.intro}
\cX
\xlongra{\sim}
\lim^\rlax \left( \cZ \xra{p_L i_R} \cU \right)
:=
\lim \left( \begin{tikzcd}
&
\Fun([1],\cU)
\arrow{d}{t}
\\
\cZ
\arrow{r}[swap]{p_L i_R}
&
\cU
\end{tikzcd} \right)
~,\footnote{Right-lax limits will be explained further in Remarks \ref{rmk.lax.limits.have.structure.maps} \and \ref{rmk.intro.strictification.of.rlax.lim}.}
\end{equation}
which is given by the formula
\[
\cF
\longmapsto
\left( \begin{tikzcd}
&
\Cref{gluing.morphism.in.cU.for.cF}
\arrow[maps to]{d}
\\
y\cF
\arrow[maps to]{r}
&
p_L i_R y \cF
\end{tikzcd} \right)
\]
and whose inverse reconstructs each object $\cF \in \cX$ as the pullback \Cref{microcosm.pullback.square.in.intro}.

This situation is a prototypical instance of \Cref{slogan.cosm}, as well as a special case of \Cref{intro.thm.cosms} below: the equivalence \Cref{macrocosm.equivalence.for.recollement.in.intro} is a macrocosm reconstruction of the noncommutative stack $\cX$, and the pullback square \Cref{microcosm.pullback.square.in.intro} determines a microcosm reconstruction of the quasicoherent sheaf $\cF \in \cX$.

We have the following fundamental source of recollements.

\begin{example}
\label{ex.of.closed.open.decomp.giving.recollement.in.intro}
Suppose we are given a closed-open decomposition of our scheme $X$ as in the diagram
\begin{equation}
\label{closed.open.decomp.of.scheme.in.intro}
\begin{tikzcd}[column sep=0.75cm, ampersand replacement=\&]
Z
\arrow[hook]{rr}[swap]{\sf closed}{i}
\arrow[hook]{rd}
\&[-0.25cm]
\&[-0.25cm]
X
\arrow[hookleftarrow]{rr}{j}[swap]{\sf open}
\&
\&
U
\\
\&
X^\wedge_Z
\arrow[hook]{ru}[swap, sloped]{\ihat}
\end{tikzcd}~,
\end{equation}
in which we have additionally included the formal completion $X^\wedge_Z$ of $X$ along $Z$.  Then, we have a recollement
\begin{equation}
\label{specific.recollement.for.closed.open.decomp.of.scheme}
\begin{tikzcd}[column sep=1.5cm, row sep=1.5cm, ampersand replacement=\&]
\QC_Z(X)
\arrow[hook, transform canvas={yshift=0.9ex}]{r}
\arrow[leftarrow, transform canvas={yshift=-0.9ex}]{r}[yshift=-0.2ex]{\bot}
\arrow[leftrightarrow]{d}[sloped, anchor=north]{\sim}
\&
\QC(X)
\arrow[bend left=30]{r}[description]{j^*}
\arrow[hookleftarrow]{r}[transform canvas={yshift=0.15cm}]{\bot}[swap,transform canvas={yshift=-0.15cm}]{\bot}[description]{j_*}
\arrow[bend right=30]{r}
\&
\QC(U)
\\
\QC(X^\wedge_Z)
\arrow[leftarrow, transform canvas={xshift=-0.6ex, yshift=0.7ex}]{ru}[description, sloped, pos=0.35]{\ihat^*}
\arrow[hook, transform canvas={xshift=0.6ex, yshift=-0.7ex}]{ur}[sloped, yshift=-0.2ex]{\bot}[description, sloped, pos=0.70]{\ihat_*}
\end{tikzcd}~,
\end{equation}
in which
\begin{itemize}
\item $\QC_Z(X) := \ker(j^*) \subseteq \QC(X)$ denotes the full subcategory of those quasicoherent sheaves on $X$ that are set-theoretically supported on $Z$,
\item the left vertical equivalence is that between $\ms{I}_Z$-torsion and $\ms{I}_Z$-complete quasicoherent sheaves of $\ms{O}_X$-modules,\footnote{This equivalence is recorded e.g.\! as \cite[Proposition 7.1.3]{GR-dgindsch}; see also \cite{GreenMay-dual,DwyGreen-comptors}.  (Note that it is not generally t-exact, and so is an inherently derived phenomenon.)} and
\item the triangle commutes.\footnote{\label{fn.why.qcqs}The existence of the recollement \Cref{specific.recollement.for.closed.open.decomp.of.scheme} is guaranteed by the assumption that $X$ is qcqs: namely, this guarantees that the functor $j_*$ preserves colimits.}
\end{itemize}
\end{example}

\begin{warning}
In the situation of \Cref{ex.of.closed.open.decomp.giving.recollement.in.intro}, the full subcategory $\QC_Z(X) \subseteq \QC(X)$ is generated under colimits by the image of the pushforward functor
\[ \QC(Z) \xlongra{i_*} \QC(X)~, \]
but this latter functor is \textit{not} generally fully faithful.\footnote{On the other hand, it is not hard to recover the closed subset $Z \subseteq X$ from the data of the full subcategory $\QC_Z(X) \subseteq \QC(X)$.}
\end{warning}

\subsection{Stratified schemes}
\label{subsection.intro.strat.sch}

We now generalize the notion of a closed-open decomposition of $X$.  Evidently, the closed-open decomposition \Cref{closed.open.decomp.of.scheme.in.intro} of $X$ is entirely determined by the closed subset $Z \subseteq X$.  Let us write $\Cls_X$ for the poset of closed subsets of $X$ ordered by inclusion.

\begin{definition}
\label{defn.intro.comm.stratn}
A \bit{stratification} of the scheme $X$ over the poset $\pos$ is a functor
\begin{equation}
\label{intro.fixed.stratn.of.scheme}
\begin{tikzcd}[row sep=0cm]
\pos
\arrow{r}{Z_\bullet}
&
\Cls_X
\\
\rotatebox{90}{$\in$}
&
\rotatebox{90}{$\in$}
\\
p
\arrow[maps to]{r}
&
Z_p
\end{tikzcd}
\end{equation}
satisfying the following conditions:
\begin{enumerate}

\item[] {\bf generation:} $X = \bigcup_{p \in \pos} Z_p$;

\item[] {\bf stratification:} for any $p,q \in \pos$, we have
\[ Z_p \cap Z_q = \bigcup_{r \leq p \textup{ and }r \leq q} Z_r ~. \]

\end{enumerate}
\end{definition}

\begin{example}
\label{ex.intro.closed.subscheme.defines.stratn.over.brax.one}
Suppose that $\pos = [1] = \{ 0 \ra 1 \}$.  Then, a stratification of $X$ over $\pos$ is equivalent data to that of a closed subset $Z := Z_0 \subseteq X$.
\end{example}

\begin{example}
\label{ex.stratn.of.scheme.over.totally.ordered.poset}
Generalizing \Cref{ex.intro.closed.subscheme.defines.stratn.over.brax.one}, suppose that the poset $\pos$ is in fact a totally ordered set.  Then, any functor \Cref{intro.fixed.stratn.of.scheme} satisfies the stratification condition.  If $\pos$ contains a maximal element, then the functor \Cref{intro.fixed.stratn.of.scheme} satisfies the generation condition (and hence defines a stratification of $X$ over $\pos$) if and only if the maximal element $X \in \Cls_X$ lies in its image.
\end{example}

\begin{example}
\label{ex.stratn.of.scheme.over.set}
Let $S$ be a set, and suppose that $X \ra S$ is a morphism to $S$ considered as a discrete scheme (i.e.\! an $S$-indexed coproduct of copies of $\Spec(\kk)$).  Then, taking preimages defines a stratification
\[
S
\longra
\Cls_X
\]
(where $S$ is considered as a discrete poset).
\end{example}

\begin{example}
\label{ex.intro.stratn.of.affine.plane}
Suppose that $X = \AA^2 = \Spec(\kk[x,y])$ is the affine plane.  Choose any $a,b \in \kk^\times$, and consider the three full subposets
\[
\begin{tikzcd}
&
V(x)
\\
V(y)
\arrow[hook]{r}
&
\AA^2
\arrow[hookleftarrow]{u}
\end{tikzcd}
~,
\qquad
\begin{tikzcd}
V(x,y)
\arrow[hook]{r}
&
V(x)
\\
V(y)
\arrow[hook]{r}
\arrow[hookleftarrow]{u}
&
\AA^2
\arrow[hookleftarrow]{u}
\end{tikzcd}
~,
\qquad
\text{and}
\qquad
\begin{tikzcd}
V(x,y)
\arrow[hook]{r}
&
V(x)
\\
V(y)
\arrow[hook]{r}
\arrow[hookleftarrow]{u}
&
\AA^2
\arrow[hookleftarrow]{u}
\\
V(x-a,y-b)
\arrow[hook]{ru}
\end{tikzcd}
 \]
of $\Cls_{\AA^2}$: all three contain $\AA^2$, the first contains the two coordinate axes, the second additionally contains the origin $(0,0)$, and the third additionally contains the point $(a,b)$.\footnote{Here, $V(I) \in \Cls_{\AA^2}$ denotes the vanishing locus of an ideal $I \subseteq \kk[x,y]$.}  The first satisfies the generation condition but not the stratification condition, while the latter two define stratifications of $\AA^2$.
\end{example}

\begin{definition}
For each element $p \in \pos$, the \bit{$p\th$ stratum} of the stratification \Cref{intro.fixed.stratn.of.scheme} is the locally closed subset
\[
X_p
:=
\left( Z_p \left\backslash \bigcup_{q < p} Z_q \right. \right)
\]
of $X$.
\end{definition}

Altogether, the inclusions of the strata of the stratification \Cref{intro.fixed.stratn.of.scheme} assemble into a morphism
\begin{equation}
\label{morphism.from.disjoint.union.of.strata.of.stratified.scheme}
\coprod_{p \in \pos} X_p
\longra
X
~.
\end{equation}
For the stratifications described in Examples \ref{ex.intro.closed.subscheme.defines.stratn.over.brax.one}, \ref{ex.stratn.of.scheme.over.set}, \and \ref{ex.intro.stratn.of.affine.plane}, the morphism \Cref{morphism.from.disjoint.union.of.strata.of.stratified.scheme} defines a bijection on underlying sets.
In fact, for any stratification \Cref{intro.fixed.stratn.of.scheme}, the morphism \Cref{morphism.from.disjoint.union.of.strata.of.stratified.scheme} defines an injection on underlying sets: this is a consequence of the stratification condition.  However, it does not always define a surjection: for instance, the constant functor
\[
\NN^\op
:=
\{ 1 \ra 2 \ra 3 \ra \cdots \}^\op
\xra{\const_X}
\Cls_X
\]
defines a stratification (as a special instance of \Cref{ex.stratn.of.scheme.over.totally.ordered.poset}) whose strata are all empty, so that in this case the morphism \Cref{morphism.from.disjoint.union.of.strata.of.stratified.scheme} is not surjective unless $X$ itself is empty.  In fact, it is not hard to see that this counterexample is prototypical, in the sense that the morphism \Cref{morphism.from.disjoint.union.of.strata.of.stratified.scheme} is guaranteed to be surjective precisely when the poset $\pos$ is artinian (i.e.\! every decreasing sequence eventually stabilizes).

Of course, in order to reconstruct $X$ not just as a set but as a scheme, one would need to keep track of not just the strata $\{ X_p \}_{p \in \pos}$ but also gluing data between them.  \Cref{intro.thm.cosms} below enacts this idea in the noncommutative setting.  In parallel with the commutative situation just described, such reconstruction will depend on certain finiteness properties of the poset $\pos$.

\subsection{Stratified noncommutative stacks}
\label{subsection.intro.stratn.nc.stacks}

We now introduce our theory of stratified noncommutative stacks, which is closely patterned after the theory of stratified schemes.

\begin{definition}
\label{defn.closed.nc.substack.intro}
A \bit{closed noncommutative substack} of the noncommutative stack $\cX$ is a full presentable stable subcategory $\cZ \subseteq \cX$ whose inclusion extends to a diagram
\[ \begin{tikzcd}[column sep=1.5cm]
\cZ
\arrow[hook, bend left]{r}
\arrow[dashed,leftarrow]{r}[transform canvas={yshift=0.05cm}]{\bot}[swap,transform canvas={yshift=-0.05cm}]{\bot}
\arrow[dashed,bend right, hook]{r}
&
\cX
\end{tikzcd} \]
of adjoint functors.\footnote{If the right adjoint $\cX \ra \cZ$ admits its own right adjoint, the latter will automatically be fully faithful.  (In general, if a functor $F$ has adjoints $F^L \adj F \adj F^R$, then $F^L$ is fully faithful if and only if $F^R$ is: this follows from the composite adjunction $FF^L \adj FF^R$, in which one adjoint is naturally equivalent to the identity functor if and only if the other is.)}  We write $\Cls_\cX$ for the poset of closed noncommutative substacks of $\cX$ ordered by inclusion.
\end{definition}

\noindent Of course, our terminology is motivated by the fact that a closed subset $Z \subseteq X$ determines a closed noncommutative substack $\QC_Z(X) \subseteq \QC(X)$, as indicated in \Cref{ex.of.closed.open.decomp.giving.recollement.in.intro}.  This construction defines a functor
\[
\Cls_X
\xra{\QC_{(-)}(X)}
\Cls_{\QC(X)}
~.
\]

\begin{definition}
\label{defn.intro.nc.stratn}
A \bit{stratification} of the noncommutative stack (i.e.\! presentable stable $\infty$-category) $\cX$ over the poset $\pos$ is a functor
\begin{equation}
\label{intro.fixed.stratn.of.nc.stack}
\begin{tikzcd}[row sep=0cm]
\pos
\arrow{r}{\cZ_\bullet}
&
\Cls_\cX
\\
\rotatebox{90}{$\in$}
&
\rotatebox{90}{$\in$}
\\
p
\arrow[maps to]{r}
&
\cZ_p
\end{tikzcd}
\end{equation}
satisfying the following conditions:
\begin{enumerate}

\item[] {\bf generation:} $\cX = \bigcup_{p \in \pos} \cZ_p$;

\item[] {\bf stratification:} for any $p,q \in \pos$, there exists a factorization
\[ \begin{tikzcd}
{\displaystyle \bigcup_{r \leq p \textup{ and }r \leq q} \cZ_r}
\arrow[hook]{r}
&
\cZ_p
\\
\cZ_q
\arrow[hook]{r}
\arrow[dashed]{u}
&
\cX
\arrow{u}
\end{tikzcd}
~.
\]

\end{enumerate}
Here, the union symbol $\bigcup$ denotes the colimit (i.e.\! least upper bound) in the poset $\Cls_\cX$.\footnote{In fact, colimits in $\Cls_\cX$ always exist and are quite straightforward to compute; see \Cref{obs.closed.subcats.closed.under.colimit.closure}.} In this situation, we may also say that $\cX$ is \bit{$\pos$-stratified}.
\end{definition}

\begin{remark}
Given a stratification \Cref{intro.fixed.stratn.of.nc.stack} of $\cX$, the commutative square
\[ \begin{tikzcd}
{\displaystyle \bigcup_{r \leq p \textup{ and }r \leq q} \cZ_r}
\arrow[hook]{r}
&
\cZ_p
\\
\cZ_q
\arrow[hook]{r}
\arrow[hookleftarrow]{u}
&
\cX
\arrow[hookleftarrow]{u}
\end{tikzcd} \]
of defining fully faithful inclusions is in fact a pullback.\footnote{This follows from \Cref{closed.subcats.are.mutually.aligned}; see \Cref{defn.aligned}.}  Thus, the stratification condition of \Cref{defn.intro.nc.stratn} is a close cousin of the stratification condition of \Cref{defn.intro.comm.stratn}.
\end{remark}

\begin{example}
Suppose that $\pos = \{a,b\}$ is a two-element set, considered as a discrete poset.  A stratification
\[ \{ a , b \} \xlongra{\cZ_\bullet} \Cls_\cX \]
is the data of a pair of closed noncommutative substacks $\cZ_a,\cZ_b \in \cX$ such that $\cZ_a \cup \cZ_b = \cX$ and such that the composites
\[
\cZ_a
\longhookra
\cX
\longra
\cZ_b
\qquad
\text{and}
\qquad
\cZ_b
\longhookra
\cX
\longra
\cZ_a
\]
are both zero.  It follows immediately that we have an adjoint equivalence
\[ \begin{tikzcd}[column sep=1.5cm]
\cZ_a \times \cZ_b
\arrow[hook, transform canvas={yshift=0.9ex}]{r}{i_L \oplus i_L}
\arrow[leftarrow, transform canvas={yshift=-0.9ex}]{r}[yshift=-0.0ex]{\sim}[swap]{(y,y)}
&
\cX
\end{tikzcd}~; \]
in other words, a stratification of the noncommutative stack $\cX$ over $\{ a, b \}$ is nothing other than a decomposition of $\cX$ as the product of two closed noncommutative substacks.\footnote{Conversely, any product decomposition $\cX \simeq \cZ_a \times \cZ_b$ by full stable subcategories is necessarily by closed noncommutative substacks.}  More generally, for any set $S$ considered as a discrete poset, a stratification of $\cX$ over $S$ is the data of a product decomposition $\cX \simeq \prod_{s \in S} \cZ_s$ by full stable subcategories.\footnote{This may be compared with \Cref{ex.stratn.of.scheme.over.set}; note that the functor $\QC$ takes disjoint unions to products.}
\end{example}

\begin{definition}
\label{defn.intro.strata.and.geometric.localization.adjunction}
For each element $p \in \pos$, the \bit{$p\th$ stratum} of the stratification \Cref{intro.fixed.stratn.of.nc.stack} of $\cX$ is the presentable quotient
\[
\cX_p
:=
\left( \cZ_p \left/ \bigcup_{q < p} \cZ_q \right. \right)
~,
\]
which essentially by definition participates in the recollement
\[ \begin{tikzcd}[column sep=1.5cm]
{\displaystyle\bigcup_{q < p}}
&[-1.75cm]
\cZ_q
\arrow[hook, bend left=45]{r}[description]{i_L}
\arrow[leftarrow]{r}[transform canvas={yshift=0.1cm}]{\bot}[swap,transform canvas={yshift=-0.1cm}]{\bot}[description]{\yo}
\arrow[bend right=45, hook]{r}[description]{i_R}
&
\cZ_p
\arrow[bend left=45]{r}[description]{p_L}
\arrow[hookleftarrow]{r}[transform canvas={yshift=0.1cm}]{\bot}[swap,transform canvas={yshift=-0.1cm}]{\bot}[description]{\nu}
\arrow[bend right=45]{r}[description]{p_R}
&
\cX_p
\end{tikzcd}~. \]
Hence, we obtain a composite adjunction
\[
\begin{tikzcd}[column sep=1.5cm]
\Phi_p
:
\cX
\arrow[transform canvas={yshift=0.9ex}]{r}{y}
\arrow[hookleftarrow, transform canvas={yshift=-0.9ex}]{r}[yshift=-0.2ex]{\bot}[swap]{i_R}
&
\cZ_p
\arrow[transform canvas={yshift=0.9ex}]{r}{p_L}
\arrow[hookleftarrow, transform canvas={yshift=-0.9ex}]{r}[yshift=-0.2ex]{\bot}[swap]{\nu}
&
\cX_p
:
\rho^p
\end{tikzcd}~,
\]
whose left adjoint $\Phi_p$ we refer to as the \bit{$p\th$ geometric localization functor} of the stratification \Cref{intro.fixed.stratn.of.nc.stack}.
\end{definition}

\begin{example}
\label{ex.intro.induced.stratn.of.qcoh}
A stratification \Cref{intro.fixed.stratn.of.scheme} of the scheme $X$ determines a stratification
\begin{equation}
\label{intro.induced.stratn.of.QCoh}
\begin{tikzcd}[row sep=0cm, column sep=1.5cm]
\pos
\arrow{r}{Z_\bullet}
&
\Cls_X
\arrow{r}{\QC_{(-)}(X)}
&
\Cls_{\QC(X)}
\\
\rotatebox{90}{$\in$}
&
&
\rotatebox{90}{$\in$}
\\
p
\arrow[maps to]{rr}
&
&
\QC_{Z_p}(X)
\end{tikzcd}
\end{equation}
of its underlying noncommutative stack $\QC(X)$.\footnote{\label{fn.why.noetherian}Without hypotheses on the scheme $X$, suppose that the functor $\Cls_X \xra{\QC_{(-)}(X)} \Cls_{\QC(X)}$ exists, as guaranteed e.g.\! by the assumption that $X$ is qcqs (recall \Cref{fn.why.qcqs}). Then, the composite functor \Cref{intro.induced.stratn.of.QCoh} automatically satisfies stratification condition. The assumption that $X$ is noetherian guarantees that it also satisfies the generation condition. For an example where the generation condition fails, see \Cref{rmk.counterex.to.generation.condition.for.adelic}.}  Given any element $p \in \pos$, let us choose a factorization
\[ \begin{tikzcd}
X_p
\arrow[hook]{rr}[swap]{\textup{\sf locally closed}}
\arrow[dashed, hook]{rd}[swap, sloped]{\textup{\sf closed}}
&
&
X
\\
&
U_p
\arrow[dashed, hook]{ru}[sloped, swap]{\textup{\sf open}}
\end{tikzcd}~. \]
Then, the $p\th$ stratum of the stratification \Cref{intro.induced.stratn.of.QCoh} can be identified as $\QC_{X_p}(U_p) \simeq \QC((U_p)^\wedge_{X_p})$ (recall the equivalence of \Cref{ex.of.closed.open.decomp.giving.recollement.in.intro}), and thereafter its $p\th$ geometric localization functor can be identified as the composite
\[
\Phi_p
:
\QC(X)
\xra{p_L}
\QC(U_p)
\xlongra{y}
\QC((U_p)^\wedge_{X_p})
~.\footnote{This identification follows from Lemmas \ref{closed.subcats.are.mutually.aligned} \and \ref{lemma.all.about.aligned.subcats}\Cref{part.alignment.lemma.induced.map.on.quotients}\Cref{subpart.alignment.lemma.yo.and.pL.commutativity}.}
\]
\end{example}

In parallel with \Cref{ex.intro.closed.subscheme.defines.stratn.over.brax.one}, a stratification of $\cX$ over $[1]$ is simply the data of a closed noncommutative substack $\cZ := \cZ_0 \subseteq \cX$.  This necessarily extends to a recollement \Cref{recollement.in.intro}, and indeed the strata of this stratification are simply
\[
\cX_0
:=
\cZ/ 0
\simeq
\cZ
\qquad
\text{and}
\qquad
\cX_1
:=
\cX / \cZ
\simeq
\cU
~.
\]
Moreover, as we have seen in \Cref{subsection.intro.closed.open.decomps.and.recollements}, the gluing datum necessary for reconstructing $\cX$ from these strata is the composite functor
\begin{equation}
\label{extension.data.for.recollement.as.stratn.over.brax.one}
\begin{tikzcd}[row sep=0cm, column sep=1.5cm]
\cZ
\arrow{r}{p_L i_R}
&
\cU
\\
\rotatebox{90}{$=$}
&
\rotatebox{90}{$=$}
\\
\cX_0
\arrow{r}[swap]{\Phi_1 \rho^0}
&
\cX_1
\end{tikzcd}~.\footnote{Thus, the recollement \Cref{recollement.in.intro} may be thought of as a sort of categorified extension sequence, which is classified by the data of the functor \Cref{extension.data.for.recollement.as.stratn.over.brax.one}.  This analogy will be amplified in \Cref{rmk.table.of.analogies.between.stratns.and.filtrns}.}
\end{equation}


This suggests the following general construction: given a stratification \Cref{intro.fixed.stratn.of.nc.stack} of $\cX$ over an arbitrary poset $\pos$, for each morphism $p \ra q$ in $\pos$ we have an associated \bit{gluing functor}
\[ \Gamma^p_q : \cX_p \xlonghookra{\rho^p} \cX \xra{\Phi_q} \cX_q  \]
between the corresponding strata.\footnote{When both sources and targets appear in our notation, we put the source as a superscript and the target as a subscript (so e.g.\! we have $\Ar(\cC)^{|c} \simeq \cC_{c/}$ and $\Ar(\cC)_{|c} \simeq \cC_{/c}$ for any $\infty$-category $\cC$; these conventions are motivated by the fact that $\hom_\cC(-,-)$ is contravariant in the source and covariant in the target). Moreover, we have chosen to use a subscript in the notation $\Phi_q$ and a superscript in the notation $\rho^p$ in order to maintain consistency with the notation $\Gamma^p_q$.} Given a composable sequence $p \ra q \ra r$ in $\pos$, the associated gluing functors generally do not strictly compose: rather, they fit into a lax-commutative triangle
\[ \begin{tikzcd}[row sep=1.5cm, column sep=0.75cm]
&
\cX_q
\arrow{rd}[sloped]{\Gamma^q_r}
\\
\cX_p
\arrow{ru}[sloped]{\Gamma^p_q}
\arrow{rr}[transform canvas={yshift=0.6cm}]{\rotatebox{90}{$\Rightarrow$}}[swap]{\Gamma^p_r}
&
&
\cX_r
\end{tikzcd}~, \]
whose natural transformation arises from the unit of the adjunction $\Phi_q \adj \rho^q$.\footnote{For instance, in the situation and notation of \Cref{ex.intro.induced.stratn.of.qcoh}, the lax-commutative triangle
\[ \begin{tikzcd}[ampersand replacement=\&, row sep=0.7cm, column sep=-0.5cm]
\&
\&
\QC((U_q)^\wedge_{X_q})
\arrow[hook]{rd}[sloped]{\rho^q}
\\
\&
\QC(X)
\arrow{ru}[sloped]{\Phi_q}[xshift=0.6cm, yshift=-1cm]{\Uparrow}
\&
\&
\QC(X)
\arrow{rd}[sloped]{\Phi_r}
\\
\QC((U_p)^\wedge_{X_p})
\arrow[hook]{ru}[sloped]{\rho^p}
\arrow[hook]{rr}[swap]{\rho^p}
\&
\&
\QC(X)
\arrow{rr}[swap]{\Phi_r}
\&
\&
\QC((U_r)^\wedge_{X_r})
\end{tikzcd} \]
records the difference between push-pull operations either directly from $(U_p)^\wedge_{X_p}$ to $(U_r)^\wedge_{X_r}$ or passing intermediately through $(U_q)^\wedge_{X_q}$.} An elaboration of this observation reveals that the gluing functors assemble into a \textit{left-lax} functor
\begin{equation}
\label{intro.gluing.diagram.as.llax.functor.to.PrSt}
\begin{tikzcd}[column sep=1.5cm]
\pos
\arrow{r}[description, yshift=-0.05cm]{\llax}{\GD(\cX)}
&
\PrSt
\end{tikzcd}
\end{equation}
to the $(\infty,2)$-category $\PrSt$ of presentable stable $\infty$-categories and accessible exact functors between them, which we refer to as the \bit{gluing diagram} of the stratification and denote by $\GD(\cX)$ (see \Cref{defn.gluing.diagram}).

We can now state our first main theorem, which provides sufficient conditions for the reconstruction of a stratified noncommutative stack $\cX$ from its gluing diagram $\GD(\cX)$.  As foreshadowed at the end of \Cref{subsection.intro.strat.sch}, such reconstruction may be obstructed by certain convergence issues, which we precisely codify (see \Cref{rmk.sharpness.of.reconstrn.thm}).  In order to highlight its recursive structure, we state the theorem succinctly before explaining its terms.

\begin{maintheorem}[Theorems \ref{metacosm.thm} \and \ref{macrocosm.thm}]
\label{intro.thm.cosms}
Let $\pos$ be a poset.
\begin{enumerate}
\item\label{intro.main.thm.metacosm} {\bf metacosm:} The gluing diagram functor is the left adjoint in an adjunction
\begin{equation}
\label{intro.mainthm.metacosm.adjn}
\begin{tikzcd}[column sep=1.5cm]
\Strat_\pos
\arrow[transform canvas={yshift=0.9ex}]{r}{\GD}
\arrow[hookleftarrow, transform canvas={yshift=-0.9ex}]{r}[yshift=-0.2ex]{\bot}[swap]{\limrlaxfam}
&
\LMod^{\rlax,L}_{\llax.\pos}(\PrSt)
\end{tikzcd}
~.
\end{equation}

\item\label{intro.main.thm.macrocosm} {\bf macrocosm:} For each $\pos$-stratified noncommutative stack $\cX \in \Strat_\pos$, the unit of the adjunction \Cref{intro.mainthm.metacosm.adjn} determines the left adjoint in an adjunction
\begin{equation}
\label{intro.mainthm.macrocosm.adjn}
\begin{tikzcd}[column sep=1.5cm]
\cX
\arrow[transform canvas={yshift=0.9ex}]{r}{\gd}
\arrow[leftarrow, transform canvas={yshift=-0.9ex}]{r}[yshift=-0.2ex]{\bot}[swap]{\lim_{\sd(\pos)}}
&
\Glue (\cX) := \limrlaxP(\GD(\cX))
\end{tikzcd}
~.
\end{equation}

\item\label{intro.main.thm.microcosm} {\bf microcosm:} For each quasicoherent sheaf $\cF \in \cX \in \Strat_\pos$ on a $\pos$-stratified noncommutative stack, the unit of the adjunction \Cref{intro.mainthm.macrocosm.adjn} is a morphism
\begin{equation}
\label{intro.mainthm.microcosm.morphism}
\cF
\longra
\glue ( \cF ) := \lim_{\sd(\pos)} ( \gd ( \cF ) )
\end{equation}
in $\cX$.

\item\label{intro.main.thm.nanocosm} {\bf nanocosm:} For each quasicoherent sheaf $\cE \in \cX \in \Strat_\pos$ on a $\pos$-stratified noncommutative stack, applying $\ulhom_\cX(\cE,-)$ to the morphism \Cref{intro.mainthm.microcosm.morphism} determines a morphism
\begin{equation}
\label{intro.mainthm.nanocosm.morphism}
\ulhom_\cX(\cE,\cF)
\longra
\lim_{([n] \xra{\varphi} \pos) \in \sd(\pos)}
\left( \ulhom_{\cX_{\varphi(n)}} ( \Phi_{\varphi(n)}\cE , \Gamma_\varphi \Phi_{\varphi(0)} \cF ) \right)
\end{equation}
in $\Spectra$.

\end{enumerate}
Moreover, if the poset $\pos$ is down-finite,\footnote{A poset $\pos$ is called \textit{down-finite} if for each element $p \in \pos$, its down-closure $(^\leq p) := \{ q \in \pos : q \leq p \}$ is finite.} then the metacosm adjunction \Cref{intro.mainthm.metacosm.adjn} 
-- and hence the macrocosm adjunction \Cref{intro.mainthm.macrocosm.adjn}, and hence the microcosm morphism \Cref{intro.mainthm.microcosm.morphism}, and hence the nanocosm morphism \Cref{intro.mainthm.nanocosm.morphism} --
is an equivalence.\footnote{Of course, the implied implications are irreversible: respectively, it is possible for \Cref{intro.mainthm.macrocosm.adjn}, \Cref{intro.mainthm.microcosm.morphism}, or \Cref{intro.mainthm.nanocosm.morphism} to be an equivalence even if \Cref{intro.mainthm.metacosm.adjn}, \Cref{intro.mainthm.macrocosm.adjn}, or \Cref{intro.mainthm.microcosm.morphism} is not. (On the other hand, fixing some $\cX \in \Strat_\pos$, if for every $\cF \in \cX \in \Strat_\pos$ the microcosm morphism \Cref{intro.mainthm.microcosm.morphism} is an equivalence, then the macrocosm adjunction \Cref{intro.mainthm.macrocosm.adjn} is an equivalence. Likewise, fixing some $\cF \in \cX \in \Strat_\pos$, if for every $\cE \in \cX \in \Strat_\pos$ the nanocosm morphism \Cref{intro.mainthm.nanocosm.morphism} is an equivalence, then the microcosm morphism \Cref{intro.mainthm.microcosm.morphism} is an equivalence. See also \Cref{rmk.sharpness.of.reconstrn.thm}.)}
\end{maintheorem}

\noindent The various expressions appearing in \Cref{intro.thm.cosms} have the following meaning.
\begin{enumerate}

\item[] {\bf metacosm:} We write
\begin{itemize}
\item
$\Strat_\pos$ for the $\infty$-category of $\pos$-stratified noncommutative stacks,
\item
$\LMod^{\rlax,L}_{\llax.\pos}(\PrSt)$ for a certain $\infty$-category whose objects are left-lax left $\pos$-modules in $\PrSt$ (i.e.\! left-lax functors from $\pos$ to $\PrSt$),
\item
$\GD$ for the (\textit{macrocosm}) \textit{gluing diagram} functor (taking a $\pos$-stratified noncommutative stack to its gluing diagram), and
\item
$\limrlaxfam$ for a certain ``parametrized right-lax limit'' functor.
\end{itemize}
We say that $\cX \in \Strat_\pos$ is \textit{convergent} if it lies in the image of the right adjoint of the metacosm adjunction \Cref{intro.mainthm.metacosm.adjn}, or equivalently if its macrocosm adjunction \Cref{intro.mainthm.macrocosm.adjn} is an equivalence.

\item[] {\bf macrocosm:} We refer to $\Glue (\cX) := \limrlaxP ( \GD ( \cX ))$ as the \textit{reglued noncommutative stack} of $\cX \in \Strat_\pos$; this is the underlying object of the $\pos$-stratified noncommutative stack $\limrlaxfam(\GD(\cX)) \in \Strat_\pos$.  It may be identified as a full subcategory
\begin{equation}
\label{intro.inclusion.of.Glue.X.as.full.subcat.of.Fun.sd.P.X}
\Glue (\cX) \subseteq \Fun ( \sd(\pos) , \cX )
\end{equation}
of the $\infty$-category of functors to $\cX$ from the subdivision of $\pos$,\footnote{The \textit{subdivision} of the poset $\pos$ is the full subcategory $\sd(\pos) \subseteq \bDelta_{/\pos}$ (which is in fact a poset) on the conservative (or equivalently injective) functors $[n] \ra \pos$.} through which the notation $\lim_{\sd(\pos)}$ acquires meaning.  We write
$\gd$ for the (\textit{microcosm}) \textit{gluing diagram} functor (taking a quasicoherent sheaf to its gluing diagram).  For each $p \in \pos$, the $p\th$ geometric localization functor appears as the factored composite
\[ \begin{tikzcd}
\cX
\arrow{r}{\gd}
\arrow[dashed, bend right=10]{rrd}[sloped, swap]{\Phi_p}
&
\Glue (\cX)
\arrow{r}{\ev_p}
\arrow[dashed]{rd}
&
\cX
\\
&
&
\cX_p
\arrow[hook]{u}[swap]{\rho^p}
\end{tikzcd}
~.\footnote{More generally, for any $([n] \xra{\varphi} \pos) \in \sd(\pos)$, the composite $\cX \xra{\gd} \Glue(\cX) \xra{\ev_\varphi} \cX$ is the composite $\rho^{\varphi(n)} \Phi_{\varphi(n)} \cdots \rho^{\varphi(0)} \Phi_{\varphi(0)}$.}
\]
In particular, the gluing diagram $\gd(\cF) \in \Glue (\cX)$ indeed consists of the geometric localizations $\{ \Phi_p (\cF) \in \cX_p \}_{p \in \pos}$ along with gluing data between them.

\item[] {\bf microcosm:} We refer to $\glue (\cF) := \lim_{\sd(\pos)}(\gd(\cF))$ as the \textit{reglued quasicoherent sheaf} of $\cF \in \cX$.  We say that $\cF \in \cX$ is \textit{convergent} if its microcosm morphism \Cref{intro.mainthm.microcosm.morphism} is an equivalence.

\item[] {\bf nanocosm:} We write $\ulhom$ to denoted enriched hom, here meaning the hom-spectra of a stable $\infty$-category. Moreover, for any $([n] \xra{\varphi} \pos) \in \sd(\pos)$ we write
\[
\Gamma_\varphi
:=
\Gamma^{\varphi(n-1)}_{\varphi(n)} \cdots \Gamma^{\varphi(0)}_{\varphi(1)}
\]
for brevity.\footnote{The nanocosm morphism \Cref{intro.mainthm.nanocosm.morphism} is described in detail in \Cref{rmk.nanocosm}.}

\end{enumerate}
When the nanocosm morphism \Cref{intro.mainthm.nanocosm.morphism} is an equivalence, it may be viewed as affording a description of the ``(generalized) elements'' of a sheaf (i.e.\! morphisms into it) entirely in terms of compatible local elements (i.e.\! morphisms in the strata of the stratification).\footnote{For instance, via \Cref{intro.thm.gen.G.spt} this provides an explicit formula for the categorical fixedpoints of a genuine $G$-spectrum (where $G$ is a finite group) as a finite limit of spectra that are defined in terms of its geometric fixedpoints (see \Cref{subsection.categorical.fixedpoints}).}




\begin{remark}
\label{rmk.lax.limits.have.structure.maps}
Fix an $\infty$-category $\cB \in \Cat$.  Given a functor
\[
\cB
\xlongra{F}
\Cat
\]
to the $(\infty,2)$-category $\Cat$ of $\infty$-categories, an object of its \textit{limit} may be thought of informally as a system of the following data:
\begin{itemize}
\item for each object $b \in \cB$, an object $e_b \in F(b)$;
\item for each morphism $b_0 \xra{\gamma} b_1$ in $\cB$, an equivalence
\begin{equation}
\label{equivce.in.defn.of.limit.of.cats}
\begin{tikzcd}
F(\gamma)(e_{b_0})
\arrow[leftrightarrow]{r}{\sigma_\gamma}[swap]{\sim}
&
e_{b_1}
\end{tikzcd}
\end{equation} in $F(b_1)$;
\item for each composable pair of morphisms $b_0 \xra{\gamma} b_1 \xra{\delta} b_2$ in $\cB$, a commutative square
\begin{equation}
\label{comm.square.of.equivces.in.defn.of.limit.of.diagram.in.Cat}
\begin{tikzcd}[row sep=1.5cm]
e_{b_2}
\arrow[leftrightarrow]{r}{\sigma_\delta}[swap]{\sim}
\arrow[leftrightarrow]{d}[swap]{\sigma_{\delta \gamma}}[sloped, anchor=south]{\sim}
&
F(\delta)(e_{b_1})
\arrow[leftrightarrow]{d}{F(\delta)(\sigma_\gamma)}[sloped, anchor=north]{\sim}
\\
F(\delta \gamma)(e_{b_0})
\arrow[leftrightarrow]{r}[swap]{\sim}
&
F(\delta)(F(\gamma)(e_{b_0}))
\end{tikzcd}~,
\end{equation}
in which the lower equivalence follows from the fact that $F$ is a functor (and so respects composition of morphisms up to canonical equivalence);
\item higher coherence data.
\end{itemize}
By contrast, to describe an object of its \textit{left-lax limit} or of its \textit{right-lax limit}, we must replace the equivalences \Cref{equivce.in.defn.of.limit.of.cats} with morphisms
\[
F(\gamma)(e_{b_0}) \longra e_{b_1}
\qquad
\text{or}
\qquad
F(\gamma)(e_{b_0}) \longla e_{b_1}~,
\]
respectively.\footnote{Thus, the limit is a full subcategory of both the left-lax limit and the right-lax limit.}  More general definitions apply when the functor $F$ is itself only left- or right-lax, such as the gluing diagram \Cref{intro.gluing.diagram.as.llax.functor.to.PrSt} (whose right-lax limit defines the reglued noncommutative stack $\Glue(\cX)$); for instance, in the commutative square \Cref{comm.square.of.equivces.in.defn.of.limit.of.diagram.in.Cat}, the lower morphism will in general no longer be an equivalence.

We provide a comprehensive treatment of these notions in \Cref{section.lax.actions.and.limits}.  In particular, we unpack the general definition of the right-lax limit of a left-lax left $[2]$-module (the simplest nontrivial case) in \Cref{example.limits.of.lax.actions.when.laxness.disagrees}\Cref{example.limits.of.lax.actions.when.laxness.disagrees.rlax.lim.of.llax.action}. We also describe the following notions in \Cref{ex.gluing.stuff.over.brax.two} for a noncommutative stack $\cX$ stratified over the poset $\pos = [2]$: the gluing diagram $\GD(\cX)$, the reglued noncommutative stack $\Glue(\cX) := \lim^\rlax_{\llax.\pos}(\GD(\cX))$, the gluing diagram functor $\cX \xra{\gd} \Glue(\cX)$, and its right adjoint $\Glue(\cX) \xra{\lim_{\sd(\pos)}} \cX$.
\end{remark}

\begin{remark}
\label{rmk.intro.strictification.of.rlax.lim}
We show as \Cref{lem.strictification} that the right-lax limit of a left-lax functor
\[
\begin{tikzcd}[column sep=1.5cm]
\pos
\arrow{r}[description, yshift=-0.05cm]{\llax}{F}
&
\Cat
\end{tikzcd}
\]
may be computed as the strict (i.e.\! ordinary) limit of a certain strict (i.e.\! ordinary) functor
\[
\sd(\pos)
\xra{\Strict(F)}
\Cat
\]
constructed therefrom:
\begin{equation}
\label{intro.equivce.between.rlax.lim.of.llax.functor.and.lim.of.strictification}
\lim^\rlax \left( 
\begin{tikzcd}[column sep=1.5cm]
\pos
\arrow{r}[description, yshift=-0.05cm]{\llax}{F}
&
\Cat
\end{tikzcd}
\right)
\simeq
\lim \left(
\sd(\pos)
\xra{\Strict(F)}
\Cat
\right)
~.\footnote{This equivalence generalizes the identification appearing in the equivalence \Cref{macrocosm.equivalence.for.recollement.in.intro}, which is an instance of the equivalence \Cref{intro.equivce.between.rlax.lim.of.llax.functor.and.lim.of.strictification} in the case that $\pos = [1]$.}
\end{equation}
In addition to its technical utility, this result allows for a more uniform perspective on the metacosm adjunction \Cref{intro.mainthm.metacosm.adjn} and the macrocosm adjunction \Cref{intro.mainthm.macrocosm.adjn}: in both, the right adjoint is computed by taking (strict) limits over $\sd(\pos)$.
\end{remark}

\begin{remark}
\label{rmk.sharpness.of.reconstrn.thm}
\Cref{intro.thm.cosms} is sharp in the sense that the metacosm adjunction \Cref{intro.mainthm.metacosm.adjn} fails to be an equivalence whenever $\pos$ is not down-finite.\footnote{Hence, convergence is analogous at the metacosm level to the down-finiteness of $\pos$.}  Equivalently (as the forgetful functor $\Strat_\pos \ra \PrLSt$ is conservative), when $\pos$ is not down-finite then there exists a $\pos$-stratified noncommutative stack whose macrocosm adjunction \Cref{intro.mainthm.macrocosm.adjn} is not an equivalence.  Using (both the content and terminology of) Theorems \ref{intro.thm.fund.opns} \and \ref{intro.thm.balmer} as well as \Cref{ex.intro.arithmetic}, such a $\pos$-stratified noncommutative stack may be constructed by choosing an injective functor
\[
\pos_\ZZ
\longhookra
\pos
\]
from the specialization poset of $\Spec(\ZZ)$ and taking the pushforward of the adelic stratification of $\Mod_\ZZ$ along it.\footnote{Indeed, a poset is down-finite precisely when it admits no injective functors from $\pos_\ZZ$.}
It is not hard to see that passing to this new stratification of $\Mod_\ZZ$ yields an equivalent macrocosm adjunction, which 
is not an equivalence.
\end{remark}

\begin{remark}
\label{rmk.artinian.conservativity}
The requirement that $\pos$ be down-finite is strictly stronger than the requirement that it be artinian.  Indeed, the specialization poset of $\Spec(\ZZ)$ (depicted in diagram \Cref{speczn.poset.of.Mod.Z}) is artinian but not down-finite.  In fact, it is not hard to see that the assumption that $\pos$ be artinian guarantees that the functor
\begin{equation}
\label{projection.to.fibers.is.it.conservative}
\cX
\xra{(\Phi_p)_{p \in \pos}}
\prod_{p \in \pos} \cX_p
\end{equation}
is conservative; this is directly analogous to the guaranteed surjectivity on underlying sets of the morphism \Cref{morphism.from.disjoint.union.of.strata.of.stratified.scheme} under that same assumption. 
From this perspective, the further assumption that $\pos$ be down-finite may be seen as assuring that the gluing data suffice to recover the noncommutative stack structure of $\cX$.
\end{remark}

\begin{example}[the Goodwillie--Taylor stratification]
\label{ex.goo}
Goodwillie calculus leads to a stratification over a nonartinian poset in which the functor \Cref{projection.to.fibers.is.it.conservative} generally fails to be conservative, as we now explain.\footnote{A version of this stratification appears in work of Glasman \cite{Saul-strat,Saul-Goo} (see \Cref{subsubsection.compare.with.saul}).}  Specifically, we construct a stratification of $\cX := \Fun(\cI,\cY)$, where $\cY$ is any presentable stable $\infty$-category and $\cI$ is any $\infty$-category that admits finite colimits and has a terminal object.  First of all, by \cite[Theorem 6.1.1.10]{LurieHA} (see also \cite{GooCalc3}), for any $n \geq 0$ the inclusion of the full subcategory of $n$-excisive functors is the right adjoint in an adjunction
\[ \begin{tikzcd}[column sep=1.5cm]
\Fun(\cI,\cY)
\arrow[transform canvas={yshift=0.9ex}]{r}{P_n}
\arrow[hookleftarrow, transform canvas={yshift=-0.9ex}]{r}[yshift=-0.2ex]{\bot}
&
\Exc^n(\cI,\cY)
\end{tikzcd}~, \]
whose left adjoint $P_n$ carries a functor to its $n$-excisive approximation.  This inclusion commutes with colimits, and so admits a right adjoint of its own.  We trivially extend this to the case that $n=-1$ by declaring that $\Exc^{-1}(\cI,\cY) := \{ 0 \} \subseteq \Fun(\cI,\cY)$, i.e.\! that only the constant functor at the zero object is $(-1)$-excisive.  Hence, we obtain a stratification
\begin{equation}
\label{goodwillie.stratn}
\begin{tikzcd}[row sep=0cm]
(\ZZ_{\geq -1})^\op
\arrow{r}
&
\Cls_{\Fun(\cI,\cY)}
\\
\rotatebox{90}{$\in$}
&
\rotatebox{90}{$\in$}
\\
n
\arrow[maps to]{r}
&
\ker(P_n)
\end{tikzcd}
\end{equation}
(recall \Cref{ex.stratn.of.scheme.over.totally.ordered.poset}).  Following the same reasoning as is laid out in \Cref{ex.chromatic.stratn.of.spectra} (in the case of $p$-local spectra), we find that the macrocosm adjunction of the stratification \Cref{goodwillie.stratn} may be identified as the adjunction
\[ \begin{tikzcd}[column sep=1.5cm]
\Fun(\cI,\cY)
\arrow[transform canvas={yshift=0.9ex}]{r}
\arrow[leftarrow, transform canvas={yshift=-0.9ex}]{r}[yshift=-0.2ex]{\bot}
&
\Exc^\infty(\cI,\cY)
\end{tikzcd}~, \]
whose unit morphism at a functor $F \in \Fun(\cI,\cY)$ is the canonical morphism
\[
F
\longra
P_\infty F := \lim\left( \cdots \longra P_2 F \longra P_1 F \longra P_0 F \longra P_{-1} F \simeq 0 \right)
\]
to the limit of its Goodwillie--Taylor tower, which is not generally an equivalence.
\end{example}

\begin{remark}[filtrations from stratifications]
\label{rmk.filtrations.from.stratns}
Fix a $\pos$-stratified noncommutative stack $\cX \in \Strat_\pos$, and assume for simplicity that $\pos$ is down-finite.\footnote{Without hypotheses on $\pos$, the filtrations that we define below exist but convergence is more subtle.} For each $p \in \pos$, we have the corresponding recollement \Cref{recollement.in.intro} with $\cZ = \cZ_p$. Writing
\[
\fil^L_p := i_L y
\qquad
\text{and}
\qquad
\fil^p_R := i_R y
~,
\]
we obtain canonical embeddings with retracts
\[ \begin{tikzcd}
\Fun(\pos,\cX)
\arrow[hookleftarrow]{r}{\fil^L_\bullet}
\arrow[bend right]{r}[swap, pos=0.4]{\colim_\pos}
&
\cX
\arrow[hook]{r}{\fil_R^\bullet}
\arrow[leftarrow, bend right]{r}[swap, pos=0.6]{\lim_{\pos^\op}}
&
\Fun(\pos^\op,\cX)
\end{tikzcd}
~;
\]
in particular, every object of $\cX$ obtains a natural ascending $\pos$-filtration as well as a natural descending $\pos^\op$-filtration.\footnote{We use subscripts for ascending filtrations and superscripts for descending filtrations. (This handedness dictates the corresponding convention for associated graded components as either total cofibers or total fibers (\Cref{defn.tcofib.and.tfib}).)} In order to proceed, we introduce the composite adjunction
\[
\begin{tikzcd}[column sep=1.5cm]
\lambda^p
:
\cX_p
\arrow[hook, transform canvas={yshift=0.9ex}]{r}{\nu}
\arrow[leftarrow, transform canvas={yshift=-0.9ex}]{r}[yshift=-0.2ex]{\bot}[swap]{p_R}
&
\cZ_p
\arrow[hook, transform canvas={yshift=0.9ex}]{r}{i_L}
\arrow[leftarrow, transform canvas={yshift=-0.9ex}]{r}[yshift=-0.2ex]{\bot}[swap]{y}
&
\cX
:
\Psi_p
\end{tikzcd}
\]
for each $p \in \pos$, whose right adjoint $\Psi_p$ we refer to as the \bit{$p\th$ reflected geometric localization functor}.\footnote{The functors $\lambda^p$ and $\Psi_p$ play central roles in the theory of reflection (dual to those of $\rho^p$ and $\Phi_p$), which is introduced in \Cref{subsection.verdier}.} Using this notation, for each $p \in \pos$ we identify the $p\th$ associated graded components of these filtrations as total co/fibers (\Cref{defn.tcofib.and.tfib}), namely
\[
\gr^L_p
:=
\gr_p ( \fil^L_\bullet )
:=
\tcofib_{(^\leq p)}(\fil^L_\bullet)
\simeq
\lambda^p \Phi_p
\qquad
\text{and}
\qquad
\gr_R^p
:=
\gr^{p^\circ} ( \fil_R^\bullet )
:=
\tfib_{(^\leq p)^\op}(\fil_R^\bullet)
\simeq
\rho^p \Psi_p
~.
\]
In fact, we also have canonical filtrations corresponding to upwards-closures (rather than downwards-closures) of elements of $\pos$.\footnote{These latter filtrations are more natural in the context of stratified topological spaces, where a distinguished role is played by the closures of strata; see \Cref{rmk.easy.spectral.sequences.for.coh.of.strat.spcs}.} Namely, for each $p \in \pos$, we have the corresponding recollement \Cref{recollement.in.intro} with $\cZ = \bigcup_{q \not\geq p} \cZ_q$. Writing
\[
\fil_L^p
:=
\nu p_L
\qquad
\text{and}
\qquad
\fil^R_p
:=
\nu p_R
~,
\]
we similarly obtain canonical embeddings with retracts
\[ \begin{tikzcd}
\Fun(\pos,\cX)
\arrow[hookleftarrow]{r}{\fil_L^\bullet}
\arrow[bend right]{r}[swap, pos=0.4]{\lim_\pos}
&
\cX
\arrow[hook]{r}{\fil^R_\bullet}
\arrow[leftarrow, bend right]{r}[swap, pos=0.6]{\colim_{\pos^\op}}
&
\Fun(\pos^\op,\cX)
\end{tikzcd}
~.
\]
These have the same associated graded components as the above two filtrations: we have
\[
\gr_L^p
:=
\gr^p(\fil_L^\bullet)
:=
\tfib_{(^\geq p)}(\fil_L^\bullet)
\simeq
\lambda^p \Phi_p
\qquad
\text{and}
\qquad
\gr^R_p
:=
\gr_p(\fil^R_\bullet)
:=
\tcofib_{(^\geq p)^\op}(\fil^R_\bullet)
\simeq
\rho^p \Psi_p
\]
(i.e.\! $\gr_L^p \simeq \gr^L_p$ and $\gr^R_p \simeq \gr_R^p$). Moreover, the latter two filtrations can be obtained from the former two by the formulas
\[
\fil_L^p \cF
\simeq
\cofib
\Big(
\colim_{q \in (^{\not\geq} p)} \fil^L_q \cF
\longra
\cF
\Big)
\qquad
\text{and}
\qquad
\fil^R_p \cF
\simeq
\fib
\Big(
\cF
\longra
\lim_{q^\circ \in (^{\not\geq} p)^\op} \fil_R^q \cF
\Big)
~.
\]
\end{remark}

\begin{remark}[spectral sequences from stratifications]
\label{rmk.spectral.sequences.from.stratns}
Fix a $\pos$-stratified noncommutative stack $\cX \in \Strat_\pos$, and assume that $\pos$ is down-finite. Suppose that we are additionally given the following data:
\begin{itemize}

\item a conservative functor $\pos \xra{d} \ZZ$;

\item an exact functor $\cX \xra{H} \cV$ to a stable $\infty$-category $\cV$ equipped with a t-structure (e.g.\! $\cX \xra{\ulhom_\cX(\cE,-)} \Spectra$ for some $\cE \in \cX$).

\end{itemize}
Then, we obtain a composite
\[
\cX
\xra{\fil^L_\bullet}
\Fun(\pos,\cX)
\xlongra{H}
\Fun(\pos,\cV)
\xlongra{d_!}
\Fun(\ZZ,\cV)
~,
\]
i.e.\! a natural assignment of a filtered object in $\cV$ for each object of $\cX$. In particular, for each $\cF \in \cX$ we obtain a spectral sequence (see e.g.\! \cite[\S 1.2.2]{LurieHA}), which runs
\begin{equation}
\label{usual.filtration.spectral.sequence}
E^1_{s,t}
=
\bigoplus_{p \in d^{-1}(s)}
\pi_{s+t} ( H ( \gr^L_p \cF ) )
\Longrightarrow
\pi_{s+t} ( H ( \cF ) )
~.\footnote{This is guaranteed to converge e.g.\! if $\pos$ is finite or if $H$ is colimit-preserving.}
\end{equation}
Dually, using $\fil^R_\bullet$ in place of $\fil^L_\bullet$, we obtain a spectral sequence running
\begin{equation}
\label{usual.reflected.filtration.spectral.sequence}
E^1_{s,t}
=
\bigoplus_{p \in d^{-1}(-s)}
\pi_{s+t} ( H ( \gr^R_p \cF ) )
\Longrightarrow
\pi_{s+t} ( H ( \cF ) )
~.
\end{equation}
For the descending filtrations $\fil_L^\bullet$ and $\fil_R^\bullet$, we obtain essentially the same spectral sequences using $d_*$ instead of $d_!$, but with the sums replaced by products.
\end{remark}

\begin{remark}
\label{rmk.strictness}
More than being convergent, a $\pos$-stratified noncommutative stack $\cX$ or a quasicoherent sheaf thereon $\cF \in \cX$ may be \bit{strict}. In the former case, this is the condition that $\cX \in \Strat_\pos$ is convergent and moreover its gluing diagram \Cref{intro.gluing.diagram.as.llax.functor.to.PrSt} is a strict (as opposed to left-lax) functor. In both cases, strictness affords a simplified reconstruction theorem.  Moreover, $\cX \in \Strat_\pos$ is strict if and only if all of its objects are strict.\footnote{So, strictness is analogous at the metacosm level to the condition that the depth of the poset $\pos$ is at most $1$. One may likewise contemplate strictness at the nanocosm level, and (in a sense that is evident from the discussion of \Cref{subsection.strict.objects}) the object $\cF \in \cX$ is strict if and only if the pair $(\cE,\cF)$ is strict for all $\cE \in \cX$.}  We study strict objects in \Cref{subsection.strict.objects} and strict stratifications in \Cref{subsection.strict.stratns}.
\end{remark}

\begin{remark}
\label{rmk.extended.intro.variants.of.metacosm}
We establish a number of variations on the metacosm reconstruction of \Cref{intro.thm.cosms}\Cref{intro.main.thm.metacosm}, which we briefly describe here.
\begin{enumerate}

\item\label{rmk.extended.intro.part.stable.stratns}

We establish a theory of stratifications of stable $\infty$-categories (that are assumed to be idempotent-complete but not necessarily presentable), which we refer to as \textit{stable stratifications}. We provide a metacosm equivalence
\[
\begin{tikzcd}[column sep=1.5cm]
\strat_\pos
\arrow[transform canvas={yshift=0.9ex}]{r}{\GD}
\arrow[leftarrow, transform canvas={yshift=-0.9ex}]{r}[yshift=-0.0ex]{\sim}[swap]{\limrlaxfam}
&
\LMod^\rlax_{\llax.\pos}(\St^\idem)
\end{tikzcd}
\]
for stable stratifications as \Cref{thm.stable.metacosm} (under the assumption that $\pos$ is finite).

\item\label{rmk.extended.intro.part.strict.reconstrn}

We specialize both metacosm equivalences to \textit{strict} morphisms among (resp.\! stable) stratifications, which correspond with strict (as opposed to right-lax) morphisms among (the suitable sorts of) left-lax left $\pos$-modules: we establish equivalences
\[
\begin{tikzcd}[column sep=1.5cm]
\Strat_\pos^\strict
\arrow[transform canvas={yshift=0.9ex}]{r}{\GD}
\arrow[leftarrow, transform canvas={yshift=-0.9ex}]{r}[yshift=-0.0ex]{\sim}[swap]{\limrlaxfam}
&
\LMod^{L}_{\llax.\pos}(\PrSt)
\end{tikzcd}
\qquad
\text{and}
\qquad
\begin{tikzcd}[column sep=1.5cm]
\strat_\pos^\strict
\arrow[transform canvas={yshift=0.9ex}]{r}{\GD}
\arrow[leftarrow, transform canvas={yshift=-0.9ex}]{r}[yshift=-0.0ex]{\sim}[swap]{\limrlaxfam}
&
\LMod_{\llax.\pos}(\St^\idem)
\end{tikzcd}
\]
as \Cref{thm.strict.metacosm} (the former when $\pos$ is down-finite, the latter when $\pos$ is finite).

\item We establish the theory of \bit{reflection} for (resp.\! stable) stratifications, which we describe in \Cref{subsection.verdier}. This affords a dual form of reconstruction, which is desirable for reconstructing constructible sheaves (stratifications of which are discussed in \Cref{subsection.cbl}).

\end{enumerate}
\end{remark}

\subsection{Fundamental operations on stratified noncommutative stacks}
\label{subsection.intro.fund.operations}

The right adjoint in the metacosm adjunction \Cref{intro.mainthm.metacosm.adjn} may be viewed as the inclusion of the full subcategory of \textit{convergent} stratifications.  From this point of view, \Cref{intro.thm.cosms} (and its sharpness indicated in \Cref{rmk.sharpness.of.reconstrn.thm}) may be read as the assertion that all stratifications over $\pos$ are convergent if and only if $\pos$ is down-finite.

We view the possibility of nonconvergence not as a bug, but rather as an essential feature.  For example, the adelic stratifications guaranteed by \Cref{intro.thm.balmer} below are utterly fundamental and must constitute valid examples under any reasonable definition, and yet they do not generally converge.  And \Cref{ex.goo} provides compelling further evidence that nonconvergent stratifications should be considered as a common phenomenon indeed.

Of course, nonconvergent stratifications are not so useful on their own.  In order to extract convergent stratifications from nonconvergent ones (and as a key ingredient in the proof of \Cref{intro.thm.cosms}), we therefore establish a \textit{pushforward} operation for stratifications.  Its utility is illustrated in \Cref{ex.intro.arithmetic} below, where we show that a certain pushforward of the (nonconvergent) adelic stratification of $\Mod_\ZZ$ gives a (necessarily convergent) stratification over $[1]$ whose microcosm reconstruction theorem (i.e.\! the pullback square \Cref{microcosm.pullback.square.in.intro}) recovers the arithmetic fracture square \Cref{intro.arithmetic.fracture.square}.

In fact, pushforward is but one in a suite of \bit{fundamental operations} that we provide for constructing new stratifications from old ones.  We indicate their general structure here, and refer the reader to \Cref{subsection.structure.theory} for precise definitions and statements.

\begin{maintheorem}[\Cref{obs.restricted.stratn.over.D}, \Cref{prop.pullback.stratn}, \Cref{prop.quotient.stratn}, \Cref{prop.pushfwd.stratn}, and \Cref{prop.refined.stratn}]
\label{intro.thm.fund.opns}
Let $\pos$ be a poset and let $\cX \in \Strat_\pos$ be a $\pos$-stratified noncommutative stack.
\begin{enumerate}
\item\label{intro.main.thm.operations.restriction} {\bf restriction:} For any down-closed subset $\sD \subseteq \pos$, there is a \bit{restricted stratification} of $\cZ_\sD := \bigcup_{p \in \sD} \cZ_p$ over $\sD$.
\item\label{intro.main.thm.operations.pullback} {\bf pullback:} For any noncommutative stack $\w{\cX}$ equipped with a quotient functor $\w{\cX} \ra \cX$ by a closed noncommutative substack, there is a \bit{pullback stratification} of $\w{\cX}$ over $\pos$ (assuming that $\pos$ is nonempty).
\item\label{intro.main.thm.operations.quotient} {\bf quotient:} For any down-closed subset $\sD \subseteq \pos$, there is a \bit{quotient stratification} of $\cX/\cZ_\sD$ over $\pos \backslash \sD$.
\item\label{intro.main.thm.operations.pushfwd} {\bf pushforward:} 
For any functor $\pos \ra \sQ$ between posets, there is a \bit{pushforward stratification} of $\cX$ over $\posQ$.
\item\label{intro.main.thm.operations.refinement} {\bf refinement:} For any stratification of each stratum $\cX_p$ over a poset $\sR_p$, there is a \bit{refined stratification} of $\cX$ over the wreath product poset $\pos \wr \sR_\bullet$.
\end{enumerate}
\end{maintheorem}

\begin{remark}
Towards proving \Cref{intro.thm.fund.opns}, in \Cref{subsection.fund.opns.on.aligned.subcats} we introduce and study the notion of \bit{alignment} between closed subcategories. This does not seem to have a direct analog in point-set topology (or even in $\infty$-topos theory): in the $\infty$-category of sheaves on a topological space, closed subcategories associated to open subsets are automatically aligned (see \Cref{subsection.cbl}).
One manifestation of this idea is that alignment affords excision- and Mayer--Vietoris-type gluing formulas for closed subcategories.

Given a stratification, all of the closed subcategories that it determines (i.e.\! its values and colimits thereof) are automatically mutually aligned. Our results regarding alignment collectively streamline the arguments that comprise the proof of \Cref{intro.thm.fund.opns}. At the same time, the notion of alignment allows us to obtain generalizations of parts \Cref{intro.main.thm.operations.restriction} \and \Cref{intro.main.thm.operations.quotient} of \Cref{intro.thm.fund.opns} (see \Cref{prop.restricted.stratn} (and \Cref{rmk.restricted.stratn.recovers.that.for.down.closed}) for the former).
\end{remark}

\subsection{$\cO$-monoidal stratifications}
\label{subsection.intro.O.mon.stratns}

One attractive feature of our definition of a stratification is that it generalizes quite straightforwardly to the case of a presentably $\cO$-monoidal stable $\infty$-category $\cR$ (i.e.\! an $\cO$-algebra in the symmetric monoidal $\infty$-category $(\PrLSt,\otimes,\Spectra)$), as we now describe.

First of all, an \textit{ideal} of $\cR$ is a full presentable stable subcategory $\cI \subseteq \cR$ which is contagious under the $\cO$-monoidal structure, and a \textit{closed ideal} is a closed subcategory which is an ideal in a compatible way (\Cref{defn.closed.ideal.subcat}).  Closed ideals form a full subposet $\Idl_\cR \subseteq \Cls_\cR$, and an \bit{$\cO$-monoidal stratification} of $\cR$ is simply a stratification that factors through this subposet.

\begin{example}
\label{ex.intro.support.defines.closed.ideal}
For any closed subset $Z \in \Cls_X$, the corresponding closed subcategory $\QC_Z(X) \in \Cls_{\QC(X)}$ is a closed ideal subcategory.
\end{example}

We have the following macrocosm $\cO$-monoidal reconstruction theorem.

\begin{maintheorem}[\Cref{thm.s.m.reconstrn}]
\label{intro.thm.O.mon.reconstrn}
Let $\cO$ be an $\infty$-operad satisfying the conditions of \Cref{notation.for.operads.etc}\Cref{item.notation.for.operad} (e.g.\! $\EE_n$ for $1\leq n \leq \infty$), and suppose that $\cR$ is a presentably $\cO$-monoidal stable $\infty$-category equipped with an $\cO$-monoidal stratification
\[
\pos
\xlongra{\cI_\bullet}
\Idl_\cR
\]
over a poset $\pos$.
Then, the strata of the stratification inherit canonical $\cO$-monoidal structures, the gluing functors become canonically right-laxly $\cO$-monoidal, and these assemble into an $\cO$-monoidal gluing diagram $\GD^\otimes(\cR)$ that lifts the gluing diagram $\GD(\cR)$, in such a way that we have a canonical identification
\[ \begin{tikzcd}[row sep=0cm, column sep=1.5cm]
&[-1.8cm]
\Alg_\cO(\Cat)
\arrow{r}{\fgt}
&
\Cat
&[-1.7cm]
\\
&
\rotatebox{90}{$\in$}
&
\rotatebox{90}{$\in$}
\\
\limrlaxP ( \GD^\otimes(\cR) )
=:
&
\Glue^\otimes(\cR)
\arrow[maps to]{r}
&
\Glue(\cR)
&
:=
\limrlaxP ( \GD(\cR ) )
\end{tikzcd}~. \]
Moreover, the adjunction
\begin{equation}
\label{intro.thm.O.mon.reconstrn.underlying.adjn}
\begin{tikzcd}[column sep=1.5cm]
\cR
\arrow[transform canvas={yshift=0.9ex}]{r}{\gd}
\arrow[leftarrow, transform canvas={yshift=-0.9ex}]{r}[yshift=-0.2ex]{\bot}[swap]{\lim_{\sd(\pos)}}
&
\Glue(\cR)
\end{tikzcd}
\end{equation}
between $\infty$-categories of \Cref{intro.thm.cosms}\Cref{intro.main.thm.macrocosm} admits a canonical enhancement to an adjunction
\begin{equation}
\label{intro.thm.O.mon.reconstrn.O.monoidal.adjn}
\begin{tikzcd}[column sep=1.5cm]
\cR
\arrow[transform canvas={yshift=0.9ex}]{r}{\gd^\otimes}
\arrow[leftarrow, transform canvas={yshift=-0.9ex}]{r}[yshift=-0.2ex]{\bot}[swap]{\lim_{\sd(\pos)}^\otimes}
&
\Glue^\otimes(\cR)
\end{tikzcd}
\end{equation}
between $\cO$-monoidal $\infty$-categories, whose left adjoint is $\cO$-monoidal and whose right adjoint is right-laxly $\cO$-monoidal.
In particular, if the adjunction \Cref{intro.thm.O.mon.reconstrn.underlying.adjn} is an equivalence between $\infty$-categories (e.g.\! as guaranteed by \Cref{intro.thm.cosms} in the case that $\pos$ is down-finite), then the adjunction \Cref{intro.thm.O.mon.reconstrn.O.monoidal.adjn} is an equivalence between $\cO$-monoidal $\infty$-categories.
\end{maintheorem}

\begin{remark}
Given two $\cO$-monoidal $\infty$-categories, a \textit{right-laxly $\cO$-monoidal functor} between them is a functor between their underlying $\infty$-categories that preserves the $\cO$-monoidal structures up to certain (generally noninvertible) comparison morphisms.
For example, a right-laxly monoidal functor
\[
(\cC,\otimes_\cC,\uno_\cC)
\xlongra{F}
(\cD,\otimes_\cD,\uno_\cD)
\]
between monoidal $\infty$-categories involves the data of natural comparison morphisms
\[
\uno_\cD
\longra
F(\uno_\cC)
\qquad
\text{and}
\qquad
F(X) \otimes_\cD F(Y)
\longra
F(X \otimes_\cC Y)
~.
\]
This and related notions are reviewed in \Cref{subsection.O.monoidal.infty.cats}.
\end{remark}

\begin{remark}
Although we are confident in the existence of a metacosm $\cO$-monoidal reconstruction theorem, we state \Cref{intro.thm.O.mon.reconstrn} at the macrocosm level only.
\end{remark}

\begin{remark}
\label{rmk.fund.opns.for.O.mon.stratns}
It is immediate from \Cref{obs.Idl.in.Cls.stable.under.colims} that the fundamental operations described in \Cref{intro.thm.fund.opns} admit direct analogs for $\cO$-monoidal stratifications.
\end{remark}

\begin{remark}
\label{rmk.image.of.ideals.and.stratns}
Closed ideals in $\cR$ are equivalent data to \textit{central co/augmented idempotent objects} in $\cR$ (see \Cref{defn.idempotents.and.centrality} \and \Cref{prop.closed.ideal.subcats.are.central.idempotent}). It follows that a morphism $\cR \ra \cR'$ in $\Alg_\cO(\PrLSt)$ determines a functor
\begin{equation}
\label{functor.on.closed.ideals}
\Idl_\cR
\longra
\Idl_{\cR'}
~.
\end{equation}
Moreover, by \Cref{obs.yo.comm.for.s.m.stratns}, postcomposition with the functor \Cref{functor.on.closed.ideals} carries $\cO$-monoidal stratifications of $\cR$ to $\cO$-monoidal stratifications of $\cR'$.
\end{remark}

\begin{remark}
\label{rmk.stratn.of.module.cat}
Let $\cR$ be a presentably monoidal stable $\infty$-category equipped with a monoidal stratification
\[
\pos
\xlongra{\cI_\bullet}
\Idl_\cR
~.
\]
For any left $\cR$-module $\cM \in \LMod_\cR(\PrLSt)$, we immediately obtain a stratification
\[
\hspace{3cm}
\begin{tikzcd}[row sep=0cm]
\pos
\arrow{r}
&
\Cls_\cM
\\
\rotatebox{90}{$\in$}
&
\rotatebox{90}{$\in$}
\\
p
\arrow[maps to]{r}
&
\cI_p \otimes_\cR \cM
&[-1cm]
\simeq \LMod_{\uno_{\cI_p}}(\cM)
\end{tikzcd} \]
of $\cM$ over $\pos$ (using \Cref{notation.unit.object.for.I.D}).\footnote{This appears to be closely related to Elias--Hogancamp's theory of categorical diagonalization \cite{ElHog-diag}.}
\end{remark}

\begin{remark}
\label{rmk.table.of.analogies.between.stratns.and.filtrns}
Our work posits a system of analogies between classical algebra and categorified algebra, which is indicated in \Cref{table.of.categorification.analogies}.\footnote{Colimits categorify addition, $\cO$-monoidal structures categorify multiplication, and the distributivity of $\cO$-monoidal structures over colimits categorifies the distributivity of multiplication over addition.  The analogy between presentable stable $\infty$-categories and abelian groups is further evinced e.g.\! by the fact that given compact objects $X,Y \in \cX^\omega$, the sequence
\[
\brax{X}
\longhookra
\brax{X,Y}
\longrsurj
\brax{Y}
\]
(using \Cref{notn.clsd.subcat.gend.by.cpct.objs}) is exact if and only if $X$ and $Y$ are ``linearly independent'', i.e.\! $\ulhom_\cX(X,Y) \simeq 0$.  (A noncompact object of $\cX$ might be thought of as categorifying a nonconvergent infinite sum in an abelian group.)}
\begin{figure}[h]
\begin{tabular}{c||c}
classical algebra
&
categorified algebra
\\
\hline \hline
abelian group (or spectrum)
&
presentable stable $\infty$-category
\\
\hline
$\cO$-ring (spectrum)
&
presentably $\cO$-monoidal stable $\infty$-category
\\
\hline
filtration
&
stratification
\\
\hline
filtered pieces
&
$\{ \cZ_p \}_{p \in \pos}$
\\
\hline
associated graded pieces
&
$\{ \cX_p \}_{p \in \pos}$
\\
\hline
extension data
&
gluing diagram
\end{tabular}
\caption{This table lays out a system of analogies between classical algebra and categorified algebra.
\label{table.of.categorification.analogies}
}
\end{figure}
\end{remark}

\subsection{Adelic reconstruction}
\label{subsection.intro.adelic}

We now return to our scheme $X$. 
Let us write $\pos_X$ for the specialization poset of its underlying topological space: it has the same underlying set, and its relation is defined so that $x \leq y$ if and only if $x \in \ol{y}$.  Then, the closure functor
\[
\begin{tikzcd}[row sep=0cm]
\pos_X
\arrow{r}{\ol{(-)}}
&
\Cls_X
\\
\rotatebox{90}{$\in$}
&
\rotatebox{90}{$\in$}
\\
x
\arrow[maps to]{r}
&
\ol{x}
\end{tikzcd}
\]
defines a stratification of $X$.  Upgrading \Cref{ex.intro.induced.stratn.of.qcoh} via \Cref{ex.intro.support.defines.closed.ideal}, we obtain a symmetric monoidal stratification
\begin{equation}
\label{adelic.stratn.of.scheme}
\begin{tikzcd}[row sep=0cm, column sep=1.5cm]
\pos_X
\arrow{r}{\ol{(-)}}
&
\Cls_X
\arrow{r}{\QC_{(-)}(X)}
&
\Idl_{\QC(X)}
\\
\rotatebox{90}{$\in$}
&
&
\rotatebox{90}{$\in$}
\\
x
\arrow[maps to]{rr}
&
&
\QC_{\ol{x}}(X)
\end{tikzcd}
\end{equation}
of its underlying noncommutative stack $\QC(X)$, which we refer to as its \bit{adelic stratification}.  For each $x \in \pos_X$, the $x\th$ stratum of this symmetric monoidal stratification is
\[
\ker \left( \QC(X^\wedge_{\ol{x}}) \longra \prod_{y < x} \QC(X^\wedge_{\ol{y}}) \right)
~.\footnote{When the subset $(^< x) := (\ol{x} \backslash x ) \subseteq X$ is closed, the $x\th$ stratum may be identified more simply as $\QC( ( X \backslash (^< x) )^\wedge_x)$.}
\]

In general, the poset $\pos_X$ will not be down-finite, and so the adelic stratification of $\QC(X)$ is not guaranteed to converge.  However, writing $d := \dim(X)$ for the dimension of $X$,\footnote{\label{fn.why.fdim}Of course, it is here that we use that our scheme $X$ is finite-dimensional.} we may take the pushforward of the adelic stratification \Cref{adelic.stratn.of.scheme} along the \textit{dimension} functor
\[
\pos_X
\xra{\dim}
[d]
~;
\]
as $[d]$ is finite and hence down-finite, the pushforward symmetric monoidal stratification is guaranteed to converge.  Moreover, as the the fibers of the dimension functor are discrete, the strata of the pushforward symmetric monoidal stratification will simply be products of strata of the adelic stratification.  We illustrate this maneuver in the following fundamental example.

\begin{example}[the adelic stratification of $\ZZ$-modules]
\label{ex.intro.arithmetic}
Suppose that $X = \Spec(\ZZ)$.  The specialization poset of this affine scheme (which is the opposite of the poset of prime ideals of $\ZZ$) is given by
\begin{equation}
\label{speczn.poset.of.Mod.Z}
\pos_\ZZ
:=
\pos_{\Spec(\ZZ)}
=
\left(
\begin{tikzcd}
&
&
(0)
\\
(2)
\arrow{rru}
&
(3)
\arrow{ru}
&
(5)
\arrow{u}
&
\cdots
\arrow{lu}
\end{tikzcd}
\right)
~.
\end{equation}
Then, its adelic stratification
\begin{equation}
\label{adelic.stratification.of.Mod.Z}
\begin{tikzcd}[row sep=0cm]
\pos_\ZZ
\arrow{r}{\cI_\bullet}
&
\Idl_{\Mod_\ZZ}
\\
\rotatebox{90}{$\in$}
&
\rotatebox{90}{$\in$}
\\
\mf{p}
\arrow[maps to]{r}
&
\cI_\mf{p}
\end{tikzcd}
\end{equation}
is described by the formulas
\[
\cI_{(0)}
=
\Mod_\ZZ
\qquad
\text{and}
\qquad
\cI_{(p)}
=
\Mod_\ZZ^{(p)\textup{-torsion}}
~,
\]
i.e.\! it selects the diagram
\[
\begin{tikzcd}
&
&
\Mod_\ZZ
\arrow[hookleftarrow]{d}
\arrow[hookleftarrow]{rd}
\\
\Mod_\ZZ^{(2)\textup{-torsion}}
\arrow[hook]{rru}
&
\Mod_\ZZ^{(3)\textup{-torsion}}
\arrow[hook]{ru}
&
\Mod_\ZZ^{(5)\textup{-torsion}}
&
\cdots
\end{tikzcd}
\]
of closed ideal subcategories of $\Mod_\ZZ$.

We now identify the strata and geometric localization adjunctions, as follows. First of all, recalling \Cref{ex.of.closed.open.decomp.giving.recollement.in.intro}, we identify the $(p)\th$ strata and geometric localization adjunctions as 
\[
\begin{tikzcd}[column sep=2cm, row sep=2cm, ampersand replacement=\&]
\&[-3.2cm]
\cI_{(p)}
:=
\&[-2.3cm]
\Mod_\ZZ^{(p)\textup{-torsion}}
\arrow[hook, transform canvas={yshift=0.9ex}]{r}
\arrow[leftarrow, transform canvas={yshift=-0.9ex}]{r}[yshift=-0.2ex]{\bot}
\arrow[leftrightarrow]{d}[sloped, anchor=north]{\sim}
\&
\Mod_\ZZ
\&[-2.2cm]
=:
\cR
\\
\cR_{(p)}
:=
\&
\&
\Mod_{\ZZ^\wedge_p}^{(p)\textup{-complete}}
\arrow[leftarrow, transform canvas={xshift=-0.6ex, yshift=0.7ex}]{ru}[sloped]{\Phi_{(p)} = \ZZ^\wedge_p \widehat{\otimes}_\ZZ (-) }
\arrow[hook, transform canvas={xshift=0.6ex, yshift=-0.7ex}]{ur}[sloped, yshift=-0.2ex]{\bot}[sloped, swap]{\rho^{(p)} = \fgt}
\end{tikzcd}
~.\footnote{We distinguish between the equivalent $\infty$-categories $\cI_{(p)}$ and $\cR_{(p)}$ according to their inclusions into $\cR$.}
\]
Then, we identify the $(0)\th$ stratum as
\[
\cR_{(0)}
:=
\left( \cI_{(0)} \left/ \underset{p \textup{ prime}}{\dcup} \cI_{(p)} \right. \right)
:=
\left( \Mod_\ZZ \left/ \underset{p \textup{ prime}}{\dcup} \Mod_\ZZ^{(p)\textup{-torsion}} \right. \right)
\simeq
\Mod_\QQ
~,
\]
and the $(0)\th$ geometric localization adjunction as
\[ \begin{tikzcd}[column sep=3cm, ampersand replacement=\&]
\cR
:=
\&[-3.2cm]
\Mod_\ZZ
\arrow[transform canvas={yshift=0.9ex}]{r}{\Phi_{(0)} = \QQ \otimes_\ZZ (-)}
\arrow[hookleftarrow, transform canvas={yshift=-0.9ex}]{r}[yshift=-0.2ex]{\bot}[swap]{\rho^{(0)} = \fgt}
\&
\Mod_\QQ
\&[-3.2cm]
\simeq
\cR_{(0)}
\end{tikzcd}
~.
\]

From here, we see that the symmetric monoidal gluing diagram $\GD^\otimes(\Mod_\ZZ)$ of the adelic stratification \Cref{adelic.stratification.of.Mod.Z} is the diagram
\[
\begin{tikzcd}
&
&
\Mod_\QQ
\arrow[leftarrow]{d}
\arrow[leftarrow]{rd}
\\
\Mod_{\ZZ^\wedge_2}^{(2)\textup{-complete}}
\arrow{rru}
&
\Mod_{\ZZ^\wedge_3}^{(3)\textup{-complete}}
\arrow{ru}
&
\Mod_{\ZZ^\wedge_5}^{(5)\textup{-complete}}
&
\cdots
\end{tikzcd}
\]
of presentably symmetric monoidal stable $\infty$-categories, in which all gluing functors are given by rationalization.\footnote{The poset $\pos_\ZZ$ has no nondegenerate composite morphisms, and so the functor $\GD^\otimes(\Mod_\ZZ)$ is in fact a strict (instead of left-lax) functor.} We may now identify the reglued symmetric monoidal $\infty$-category
\[
\Glue^\otimes(\Mod_\ZZ)
:=
\lim^\rlax_{\pos_\ZZ}(\GD^\otimes(\Mod_\ZZ))
\]
as consisting of tuples of data
\begin{equation}
\label{tuples.in.glued.s.m.cat.for.adelic.stratn.of.Mod.Z}
\left(
~
M_0 \in \Mod_\QQ
~,~
\left(
~
M_p \in \Mod_{\ZZ^\wedge_p}^{(p)\textup{-complete}}
~,~
\begin{tikzcd}
M_0
\arrow{d}
\\
\QQ \otimes_\ZZ M_p
\end{tikzcd}
~
\right)_{p \textup{ prime}}
~
\right)
~,
\end{equation}
equipped with the componentwise symmetric monoidal structure.  This brings us to the symmetric monoidal macrocosm adjunction
\begin{equation}
\label{s.m.macrocosm.adjn.for.Mod.Z}
\begin{tikzcd}[column sep=1.5cm]
\Mod_\ZZ
\arrow[transform canvas={yshift=0.9ex}]{r}{\gd^\otimes}
\arrow[leftarrow, transform canvas={yshift=-0.9ex}]{r}[yshift=-0.2ex]{\bot}[swap]{\lim_{\sd(\pos_\ZZ)}^\otimes}
&
\Glue^\otimes(\Mod_\ZZ)
\end{tikzcd}~,
\end{equation}
whose left adjoint $\gd^\otimes$ takes $M \in \Mod_\ZZ$ to the evident tuple \Cref{tuples.in.glued.s.m.cat.for.adelic.stratn.of.Mod.Z} in which $M_0 := \QQ \otimes_\ZZ M$ and $M_p := M^\wedge_p := \ZZ^\wedge_p \widehat{\otimes}_\ZZ M$ and 
whose right adjoint takes the tuple \Cref{tuples.in.glued.s.m.cat.for.adelic.stratn.of.Mod.Z} to the evident object
\begin{equation}
\label{limit.over.sd.P.Z}
\lim
\left(
\begin{tikzcd}[column sep=0.5cm]
&
&
&
&
M_0
\arrow{lld}
\arrow{ld}
\arrow{d}
\arrow{rd}
\\
&
&
\QQ \otimes_\ZZ M_2
&
\QQ \otimes_\ZZ M_3
&
\QQ \otimes_\ZZ M_5
&
\cdots
\\
M_2
\arrow{rru}
&
&
M_3
\arrow{ru}
&
&
M_5
\arrow{u}
&
&
\cdots
\arrow{lu}
\end{tikzcd}
\right)
\in
\Mod_\ZZ
~.\footnote{The right adjoint $\lim_{\sd(\pos_\ZZ)}^\otimes$ of the symmetric monoidal macrocosm adjunction \Cref{s.m.macrocosm.adjn.for.Mod.Z} is only right-laxly (instead of strictly) symmetric monoidal, as the functors $\rho^{(p)}$ are only right-laxly symmetric monoidal.}
\end{equation}

We can now witness the failure of convergence of the adelic stratification \Cref{adelic.stratification.of.Mod.Z}.  Reorganizing the limit \Cref{limit.over.sd.P.Z} as the pullback
\[
\lim
\left(
\begin{tikzcd}
&
M_0
\arrow{d}
\\
{\displaystyle \prod_{p \textup{ prime}} M_p}
\arrow{r}
&
{\displaystyle \prod_{p \textup{ prime}} (\QQ \otimes_\ZZ M_p)}
\end{tikzcd}
\right)
\in
\Mod_\ZZ
~,
\]
we find that the unit of the adjunction \Cref{s.m.macrocosm.adjn.for.Mod.Z} at an object $M \in \Mod_\ZZ$ is a morphism
\begin{equation}
\label{unit.morphism.at.a.Z.module.for.s.m.macrocosm.adjn}
M
\longra
\lim
\left(
\begin{tikzcd}
&
\QQ \otimes_\ZZ M
\arrow{d}
\\
{\displaystyle \prod_{p \textup{ prime}} M^\wedge_p}
\arrow{r}
&
{\displaystyle \prod_{p \textup{ prime}} (\QQ \otimes_\ZZ M^\wedge_p)}
\end{tikzcd}
\right)
~.
\end{equation}
Recalling the equivalence
\[
M
\xlongra{\sim}
\lim
\left(
\begin{tikzcd}
&
\QQ \otimes_\ZZ M
\arrow{d}
\\
{\displaystyle \prod_{p \textup{ prime}} M^\wedge_p}
\arrow{r}
&
{\displaystyle \QQ \otimes_\ZZ \left( \prod_{p \textup{ prime}}  M^\wedge_p \right) }
\end{tikzcd}
\right)
\]
resulting from the arithmetic fracture square \Cref{intro.arithmetic.fracture.square}, we see that the unit morphism \Cref{unit.morphism.at.a.Z.module.for.s.m.macrocosm.adjn} is not generally an equivalence, because the rationalization functor $\QQ \otimes_\ZZ (-)$ does not commute with infinite products.\footnote{More precisely, the morphism \Cref{unit.morphism.at.a.Z.module.for.s.m.macrocosm.adjn} between pullbacks arises from a natural transformation between cospans which is an equivalence on the two source terms and is induced by the universal property of the product on the common target term.}  For instance, consider the abelian group
\[
M
:=
\bigoplus_{p \textup{ prime}} \ZZ/p
~:
\]
for each prime number $p$ we have $M^\wedge_p \simeq \ZZ/p$, and the morphism
\[
\QQ \otimes_\ZZ
\left( \prod_{p \textup{ prime}} \ZZ/p \right)
\longra
\prod_{p \textup{ prime}}
(\QQ \otimes_\ZZ \ZZ/p)
\simeq
0
\]
is not an equivalence.  Note that this failure of convergence does not contradict \Cref{intro.thm.cosms}, as the poset $\pos_\ZZ$ is not down-finite (because the closure of the generic point $(0) \in \Spec(\ZZ)$ is infinite).

In order to rectify this failure of convergence, we apply \Cref{intro.thm.fund.opns}: more precisely, we take the pushforward of the adelic stratification along the dimension functor
\[
\pos_\ZZ
\xra[(p) \mapsto 0]{(0) \mapsto 1}
[1]
~.
\]
Recalling \Cref{rmk.fund.opns.for.O.mon.stratns}, we see that this yields a symmetric monoidal stratification of $\Mod_\ZZ$ over $[1]$,\footnote{Note that this stratification of $\Mod_\ZZ \simeq \QC(\Spec(\ZZ))$ does not arise from a stratification of $\Spec(\ZZ)$, as the subset $(\Spec(\ZZ) \backslash \{ (0) \}) \subseteq \Spec(\ZZ)$ is not closed.} which determines a symmetric monoidal recollement
\begin{equation}
\label{s.m.recollement.of.Mod.Z}
\begin{tikzcd}[column sep=2cm, row sep=2cm, ampersand replacement=\&]
{\displaystyle \prod_{p \textup{ prime}} \Mod_\ZZ^{(p)\textup{-torsion}}}
\arrow[hook, transform canvas={yshift=0.9ex}]{r}
\arrow[leftarrow, transform canvas={yshift=-0.9ex}]{r}[yshift=-0.2ex]{\bot}
\arrow[leftrightarrow]{d}[sloped, anchor=north]{\sim}
\&
\Mod_\ZZ
\arrow[bend left=30]{r}{\Phi_1 = \Phi_{(0)}}
\arrow[hookleftarrow]{r}[transform canvas={yshift=0.15cm}]{\bot}[swap,transform canvas={yshift=-0.15cm}]{\bot}[description]{\rho^1 = \rho^{(0)}}
\arrow[bend right=30]{r}
\&
\Mod_\QQ
\\
{\displaystyle \prod_{p \textup{ prime}} \Mod_{\ZZ^\wedge_p}^{(p)\textup{-complete}}}
\arrow[leftarrow, transform canvas={xshift=-0.6ex, yshift=0.7ex}]{ru}[sloped]{\Phi_0 = \left( \Phi_{(p)} \right)_{p \textup{ prime}}  }
\arrow[hook, transform canvas={xshift=0.6ex, yshift=-0.7ex}]{ur}[sloped, yshift=-0.2ex]{\bot}[sloped, swap]{\rho^0 = \prod_{p \textup{ prime}} \rho^{(p)} }
\end{tikzcd}
\end{equation}
(in the sense that for $i \in [1]$ the left adjoints $\Phi_i$ are symmetric monoidal and their right adjoints $\rho^i$ are right-laxly symmetric monoidal).  Combining Theorems \ref{intro.thm.cosms} \and \ref{intro.thm.O.mon.reconstrn}, we obtain a macrocosm equivalence
\begin{equation}
\label{adjoint.equivalence.giving.arithmetic.square}
\begin{tikzcd}[column sep=1.5cm]
\Mod_\ZZ
\arrow[transform canvas={yshift=0.9ex}]{r}{\gd^\otimes}
\arrow[leftarrow, transform canvas={yshift=-0.9ex}]{r}[yshift=-0.0ex]{\sim}[swap]{\lim^\otimes_{\sd([1])}}
&
\lim^\rlax \left(
{\displaystyle \prod_{p \textup{ prime}} \Mod_{\ZZ^\wedge_p}^{(p)\textup{-complete}}}
\xra{\Phi_1 \rho^0}
\Mod_\QQ
\right)
\end{tikzcd}
\end{equation}
between presentably symmetric monoidal stable $\infty$-categories.  For each $M \in \Mod_\ZZ$, the unit of the adjoint equivalence \Cref{adjoint.equivalence.giving.arithmetic.square} recovers the microcosm equivalence
\[
M
\xlongra{\sim}
\lim \left(
\begin{tikzcd}
&
\QQ \otimes_\ZZ M
\arrow{d}
\\
{\displaystyle \prod_{p \textup{ prime}} M^\wedge_p}
\arrow{r}
&
{\displaystyle \QQ \otimes_\ZZ \left( \prod_{p \textup{ prime}}  M^\wedge_p \right) }
\end{tikzcd} \right)
~,
\]
i.e.\! the arithmetic fracture square \Cref{intro.arithmetic.fracture.square}.
\end{example}

We generalize the preceding discussion to the setting of tensor-triangular geometry as follows.

\begin{maintheorem}[\Cref{thm.s.m.stratn.over.balmer.spectrum}]
\label{intro.thm.balmer}
Let $\cR$ be a presentably symmetric monoidal stable $\infty$-category, and assume that $\cR$ is rigidly-compactly generated (\Cref{defn.rigidly.cpctly.gend}).  Then, there is a canonical functor
\begin{equation}
\label{adelic.stratn.intro}
\pos_\cR
\longra
\Idl_\cR
\end{equation}
from the specialization poset $\pos_\cR$ of $\Spec(\cR^\omega)$ (i.e.\! the poset of thick prime ideal subcategories of $\cR^\omega$ ordered by inclusion), which is defined in terms of supports.  The functor \Cref{adelic.stratn.intro} satisfies the stratification condition.  So, it defines a symmetric monoidal stratification assuming that it also satisfies the generation condition.
\end{maintheorem}

\noindent We refer to such a symmetric monoidal stratification \Cref{adelic.stratn.intro} as the \bit{adelic stratification} of $\cR$.  We unpack the adelic stratification of $\cR = \Spectra$ as \Cref{ex.adelic.stratn.of.spectra}.

\begin{remark}
The functor \Cref{adelic.stratn.intro} automatically satisfies the generation condition (and so defines a symmetric monoidal stratification) whenever the topological space $\Spec(\cR^\omega)$ has finitely many irreducible components.\footnote{See \Cref{rmk.counterex.to.generation.condition.for.adelic} for an example where it fails.} This holds for example in the case that $\Spec(\cR^\omega)$ is noetherian, which also implies that its specialization poset is down-finite.
\end{remark}


\begin{remark}
Adelic stratifications bring an exciting perspective to tensor-triangular geometry, which seems worthy of further investigation; this is discussed further in \Cref{rmk.stratns.helps.tt.geometry}.
\end{remark}

\subsection{The geometric stratification of genuine $G$-spectra}
\label{subsection.intro.eq.spt}

Let $G$ be a compact Lie group.  As a matter of notation and perspective, we write $\BBGG$ for the noncommutative stack whose quasicoherent sheaves are genuine $G$-spectra:
\[ \QC(\BBGG) := \Spectra^{\gen G}~. \]
We also introduce the following notation.
\begin{itemize}

\item We write $\pos_G$ for the poset of conjugacy classes of closed subgroups of $G$ (ordered by subconjugacy).

\item For any element $H \in \pos_G$, we write $\Weyl(H)  := \Normzer(H)/H$ for its Weyl group (the quotient by it of its normalizer in $G$).\footnote{More invariantly, one can also describe $\Weyl(H)$ as the compact Lie group of $G$-equivariant automorphisms of $G/H$.}

\item We write $\Spectra^{\htpy G} := \Fun(\BG,\Spectra)$ for the $\infty$-category of homotopy $G$-spectra.\footnote{In addition to nicely paralleling the notation $\Spectra^{\gen G}$, the notation $\Spectra^{\htpy G}$ is consistent: this is the homotopy fixedpoints of the trivial $G$-action on the $\infty$-category $\Spectra$.}
\end{itemize}

\begin{maintheorem}[\Cref{thm.geom.stratn.of.SpgG}]
\label{intro.thm.gen.G.spt}
The noncommutative stack $\BBGG$ admits a canonical symmetric monoidal stratification over $\pos_G$, with the following features.
\begin{enumerate}
\item Its stratum corresponding to an element $H \in \pos_G$ is the commutative stack $\sB \Weyl(H)$ (i.e.\! the presentable stable $\infty$-category $\Spectra^{\htpy \Weyl(H)} \simeq \QC(\sB \Weyl(H))$ of homotopy $\Weyl(H)$-spectra).
\item The geometric localization functors are given by geometric fixedpoints:
\[
\QC(\BBGG)
:=
\Spectra^{\gen G}
\xra{\Phi^H}
\Spectra^{\htpy \Weyl(H)}
\simeq
\QC(\sB \Weyl(H)) ~.\footnote{Our notation $\Phi_p$ for the $p\th$ geometric localization functor, and indeed the terminology itself, are motivated by the example of geometric fixedpoints. However, we use the notation $\Phi^H$ instead of $\Phi_H$ in order to adhere to standard conventions in equivariant homotopy theory.} \]
\item For any morphism $H \ra K$ in $\pos_G$, the associated gluing functor
\[
\QC(\sB \Weyl(H))
\simeq
\Spectra^{\htpy \Weyl(H)}
\xra{\Gamma^H_K}
\Spectra^{\htpy \Weyl(K)}
\simeq
\QC(\sB \Weyl(K))
\]
is given by a version of the Tate construction.\footnote{In the case that $G$ is abelian, the gluing functor associated to a morphism $H \ra K$ in $\pos_G$ is the \textit{proper} Tate construction
\[ \Spectra^{\htpy (G/H)} \xra{(-)^{\tate (K/H)}} \Spectra^{\htpy (G/K)} ~, \]
which quotients by norms from all proper subgroups (rather than just the trivial subgroup, as in the usual Tate construction).  When $G$ is not abelian, the corresponding description of the gluing functors is slightly more elaborate (see \Cref{rmk.nonabelian.monodromy}).}
\end{enumerate}
\end{maintheorem}

\noindent In order to emphasize its relationship with the geometric fixedpoints functors, we refer to the symmetric monoidal stratification of \Cref{intro.thm.gen.G.spt} as the \bit{geometric stratification} of $\Spectra^{\gen G}$.

\begin{remark}
\label{remark.stratn.of.SpgG.prob.not.convergent}
In the case that the poset $\pos_G$ is down-finite, it follows from Theorems \ref{intro.thm.gen.G.spt} \and \ref{intro.thm.cosms} that a genuine $G$-spectrum
\[ E \in \Spectra^{\gen G} \]
is equivalent data to its geometric fixedpoints spectra
\[ \left\{ \Phi^H(E) \in \Spectra^{\htpy \Weyl(H)} \right\}_{H \in \pos_G} \]
(considered as homotopy $\Weyl(H)$-spectra) along with gluing data among these; \Cref{intro.thm.O.mon.reconstrn} guarantees that this equivalence is moreover compatible with symmetric monoidal structures.

However, the poset $\pos_G$ is down-finite if and only if the compact Lie group $G$ is in fact a finite group.  We do not know whether the geometric stratification of $\Spectra^{\gen G}$ is convergent in the case that $G$ is positive-dimensional, but we see no reason to expect it to be so.\footnote{On the other hand, the poset $\pos_G$ is always artinian; applying \Cref{rmk.artinian.conservativity} to the geometric stratification of $\Spectra^{\gen G}$ recovers the ``geometric fixedpoints Whitehead theorem'' \cite[\S 1.6]{Greenlees-thesis}.}  In any case, its pushforward to any down-finite poset produces a symmetric monoidal reconstruction theorem for genuine $G$-spectra.  For instance, writing $d := \dim(G)$ we may take its pushforward along the \textit{dimension} functor
\[
\pos_G
\xra{\dim}
[d]
~;
\]
we note that its fibers are down-finite, so in principle this may lead to a fuller understanding of $\Spectra^{\gen G}$ in the case that $G$ is positive-dimensional.

Another symmetric monoidal reconstruction theorem resulting from Theorems \ref{intro.thm.gen.G.spt}, \ref{intro.thm.cosms}, \and \ref{intro.thm.O.mon.reconstrn} (\and \ref{intro.thm.fund.opns}\ref{intro.main.thm.operations.restriction}) is unpacked as \Cref{stratn.of.Sp.g.T.and.Sp.g.proper.T}: writing $\TT$ for the circle group, the geometric stratification of the noncommutative stack $\Spectra^{\gen \TT}$ of genuine $\TT$-spectra over the poset $\pos_\TT \cong (\Ndiv)^\rcone$ (which is not down-finite) restricts to a symmetric monoidal stratification of the noncommutative stack $\Spectra^{\gen^\proper \TT}$ of \textit{proper}-genuine $\TT$-spectra over the poset $\Ndiv$ (which is down-finite).  We use the resulting symmetric monoidal reconstruction theorem to study cyclotomic spectra (and their symmetric monoidal structure) in \cite{AMR-cyclo}.
\end{remark}

\begin{remark}
\label{rmk.SpgG.nearly.commutative}
As indicated by our formulation of \Cref{intro.thm.gen.G.spt}, we view it as providing a sense in which $\BBGG$ is a ``nearly commutative'' stack.\footnote{As a nice coincidence, this also gives a second meaning to the terminology ``geometric stratification''.}  Indeed, its strata are commutative stacks and its gluing functors are right-laxly symmetric monoidal, just as would be the case for a stratified commutative stack.  However, its gluing functors do not appear to be of commutative origin.  This is already apparent in the simplest nontrivial case, where $G = \Cyclic_p$ is the cyclic group of order $p$. In this situation, the geometric stratification of $\Spectra^{\gen \Cyclic_p}$ amounts to a symmetric monoidal recollement, whose gluing functor is the Tate construction
\begin{equation}
\label{Cp.tate.constrn.in.intro.when.remarking.nc.gluing.data}
\Spectra^{\htpy \Cyclic_p}
\xra{(-)^{{\sf t} \Cyclic_p}}
\Spectra
\end{equation}
(as is unpacked further in \Cref{example.genuine.Cp.spectra}), and there does not appear to be a natural example of a commutative (spectral) stack $X$ equipped with a closed-open decomposition \Cref{closed.open.decomp.of.scheme.in.intro} such that $\QC(X^\wedge_Z) \simeq \Spectra^{\htpy \Cyclic_p}$, $\QC(U) \simeq \Spectra$, and the gluing functor
\[
\QC(X^\wedge_Z)
\xra{j^* \ihat_*}
\QC(U)
\]
coincides with the Tate construction \Cref{Cp.tate.constrn.in.intro.when.remarking.nc.gluing.data}.
\end{remark}

\subsection{Stratified topological spaces and constructible sheaves}
\label{subsection.cbl}


We have discussed how the general theory of stratifications applies in the context of quasicoherent sheaves over a scheme (recall \Cref{ex.intro.induced.stratn.of.qcoh}).  In fact, it applies in other sheaf-theoretic contexts as well, as we now explain.

Let $T$ be a topological space, and suppose that
\[ \begin{tikzcd}[column sep=1.5cm]
U
\arrow[hook]{r}[swap]{\sf open}{j}
&
T
\arrow[hookleftarrow]{r}{i}[swap]{\sf closed}
&
Z
\end{tikzcd} \]
is a closed-open decomposition of $T$ (note that the placement is reversed from that of \Cref{ex.of.closed.open.decomp.giving.recollement.in.intro}).  Then, we obtain a recollement
\begin{equation}
\label{recollement.for.sheaves.on.topological.spaces}
\begin{tikzcd}[column sep=1.5cm]
\Shv(U)
\arrow[hook, bend left=45]{r}[description]{j_!}
\arrow[leftarrow]{r}[transform canvas={yshift=0.2cm}]{\bot}[swap,transform canvas={yshift=-0.2cm}]{\bot}[description]{j^! = j^*}
\arrow[bend right=45, hook]{r}[description]{j_*}
&
\Shv(T)
\arrow[bend left=45]{r}[description]{i^*}
\arrow[hookleftarrow]{r}[transform canvas={yshift=0.2cm}]{\bot}[swap,transform canvas={yshift=-0.2cm}]{\bot}[description]{i_* = i_!}
\arrow[bend right=45]{r}[description]{i^!}
&
\Shv(Z)
\end{tikzcd}
\end{equation}
among presentable stable $\infty$-categories of sheaves valued in a presentable stable $\infty$-category (which we omit from our notation).

This may be upgraded as follows.  A \bit{stratification} of $T$ over the poset $\pos$ is a continuous function
\begin{equation}
\label{stratn.of.top.spc}
T
\xlongra{f}
\pos
~,
\end{equation}
where we consider $\pos$ as a topological space via the poset topology on its underlying set (in which the closed subsets are precisely the down-closed subsets).  This determines a functor
\[
\begin{tikzcd}[row sep=0cm]
\pos^\op
\arrow{r}{U_\bullet}
&
\Open_T
\\
\rotatebox{90}{$\in$}
&
\rotatebox{90}{$\in$}
\\
p^\circ
\arrow[maps to]{r}
&
U_p
&[-1.4cm]
:= f^{-1}(^\geq p)
\end{tikzcd}
\]
that satisfies the evident analog of \Cref{defn.intro.comm.stratn}: we have $T = \bigcup_{p \in \pos} U_p$, and for any $p,q \in \pos$ we have $U_p \cap U_q = \bigcup_{p \leq r \textup{ and } q \leq r} U_r$.  From this we obtain a stratification of $\Shv(T)$ over $\pos^\op$, namely the composite
\begin{equation}
\label{stratn.of.shvs.on.a.top.spc}
\begin{tikzcd}[row sep=0cm, column sep=1.5cm]
\pos^\op
\arrow{r}{U_\bullet}
&
\Open_T
\arrow{r}{\Shv}
&
\Cls_{\Shv(T)}
\\
\rotatebox{90}{$\in$}
&
&
\rotatebox{90}{$\in$}
\\
p^\circ
\arrow[maps to]{rr}
&
&
\Shv(U_p)
\end{tikzcd}~.
\end{equation}
For each $p \in \pos$, let us write
\[
T_p := f^{-1}(p)
\xlonghookra{\sigma_p}
T
\]
for the inclusion of the $p\th$ stratum of the stratification \Cref{stratn.of.top.spc} (a locally closed subset).  Then, the $(p^\circ)\th$ stratum of the stratification \Cref{stratn.of.shvs.on.a.top.spc} is $\Shv(T_p)$, and its gluing functor with respect to a morphism $p^\circ \ra q^\circ$ in $\pos^\op$ is the composite
\[
\Shv(T_p)
\xhookra{(\sigma_p)_*}
\Shv(T)
\xra{(\sigma_q)^*}
\Shv(T_q)
~.
\]

Analogous stratifications exist for constructible sheaves. More precisely, the stratification \Cref{stratn.of.shvs.on.a.top.spc} restricts to stratifications of the full subcategories
\[
\Shv^{\pos\textup{-}\cbl}(T)
\subseteq
\Shv^\cbl(T)
\subseteq
\Shv(T)
\]
of $\pos$-constructible sheaves and of constructible sheaves. More generally, for any functor $\posQ \ra \pos$ among posets and any refinement
\[ \begin{tikzcd}[row sep=0cm]
&
\posQ
\arrow{dd}
\\
T
\arrow[dashed]{ru}
\arrow{rd}[swap, sloped]{f}
\\
&
\pos
\end{tikzcd} \]
of the stratification \Cref{stratn.of.top.spc}, the stratification \Cref{stratn.of.shvs.on.a.top.spc} restricts to a $\pos^\op$-stratification of the full subcategory
\[
\Shv^{\posQ\textup{-}\cbl}(T) \subseteq \Shv^\cbl(T)
\]
of $\posQ$-constructible sheaves.\footnote{Alternatively, this stratification may be obtained by taking the pushforward (in the sense of \Cref{intro.thm.fund.opns}) of the $\posQ^\op$-stratification of $\Shv^{\posQ\textup{-}\cbl}(T)$ along the functor $\posQ^\op \ra \pos^\op$.}

\begin{remark}
\label{rmk.easy.spectral.sequences.for.coh.of.strat.spcs}
Assume that $\pos^\op$ is down-finite, and fix a conservative functor $\pos \xra{d} \ZZ$ (e.g.\! the dimension function of strata). Choose any sheaf $\cF \in \Shv(T)$, and fix an exact functor $\Shv(T) \xra{H} \cV$ where $\cV$ has a t-structure (e.g.\! cohomology or cohomology with compact support).

\begin{enumerate}

\item

The stratification \Cref{stratn.of.shvs.on.a.top.spc} determines spectral sequences for the cohomology of $\cF$ in terms of its cohomologies over strata. Indeed, by \Cref{rmk.spectral.sequences.from.stratns}, we obtain spectral sequences
\[
E^1_{s,t}
=
\bigoplus_{r \in d^{-1}(-s)}
\pi_{s+t} ( H ( ( \sigma_r)_! (\sigma_r)^*(\cF) ) )
\Longrightarrow
\pi_{s+t} ( H ( \cF ) )
\]
and
\[
E^1_{s,t}
=
\bigoplus_{r \in d^{-1}(s)}
\pi_{s+t} ( H ( ( \sigma_r)_* (\sigma_r)^!(\cF) ) )
\Longrightarrow
\pi_{s+t} ( H ( \cF ) )
~.\footnote{Note that these two spectral sequences are related by Verdier duality (see \Cref{ex.reflection.and.Verdier}).}
\]

\item\label{item.describe.filtrations.for.sheaves.on.a.top.spc}

Let us describe the four filtrations of $\id_{\Shv(T)}$ that arise from applying \Cref{rmk.filtrations.from.stratns} to the stratification \Cref{stratn.of.shvs.on.a.top.spc}. For each $p \in \pos$, let us denote by
\[ \begin{tikzcd}[column sep=1.5cm]
U_p
:=
f^{-1}(^\geq p)
\arrow[hook]{r}[swap]{\sf open}{j_p}
&
T
\arrow[hookleftarrow]{r}{i_p}[swap]{\sf closed}
&
f^{-1}(^\leq p)
=
\ol{T_p}
=:
Z_p
\end{tikzcd} \]
the corresponding open and closed subsets (note that these are \textit{not} generally complements). Then, for any $p \in \pos$ we have
\[
\fil^L_p
\simeq
(j_p)_! (j_p)^*
~,
\qquad
\fil_R^p
\simeq
(j_p)_* (j_p)^*
~,
\qquad
\fil_L^p
\simeq
(i_p)_* (i_p)^*
~,
\qquad
\text{and}
\qquad
\fil^R_p
\simeq
(i_p)_* (i_p)^!
~.
\]

\item Using part \Cref{item.describe.filtrations.for.sheaves.on.a.top.spc}, we describe the spectral sequences obtained by applying the spectral sequence \Cref{Mobius.spectral.seq.for.Fun.P.V} discussed in \Cref{rmk.spectral.sequence.from.mobius.inversion}, which arises from categorified M\"{o}bius inversion (\Cref{ex.categorified.mobius.inversion}). Applied to the filtration $\fil_L^\bullet$, we obtain a spectral sequence
\[
E^1_{s,t}
=
\bigoplus_{r \in d^{-1}(-s) \cap (^\leq p)}
\pi_{s+t} ( M^r_p \tensoring H( ( i_r )_* (i_r)^* ( \cF ) ) )
\Longrightarrow
\pi_{s+t} ( H ( (\sigma_p)_! (\sigma_p)^* ( \cF ) ) )
~;
\]
taking $H$ to be compactly-supported cohomology, we recover the spectral sequence of \cite[Theorem 1.1]{Petersen-ss}, which computes compactly-supported cohomology over the $p\th$ stratum in terms of those over closures of strata. Next, applied to the filtration $\fil^L_\bullet$, we obtain a spectral sequence
\[
E^1_{s,t}
=
\bigoplus_{r \in d^{-1}(-s) \cap (^\leq p)}
\pi_{s+t}( M^r_p \tensoring H( (j_r)_! (j_r)^* ( \cF) ) )
\Longrightarrow
\pi_{s+t} ( H ( (\sigma_p)_! (\sigma_p)^* ( \cF ) ) )
\]
with the same abutment but different $E^1$ page. Finally, applied to the filtrations $\fil_R^\bullet$ and $\fil^R_\bullet$, we obtain spectral sequences that are Verdier dual to these two (again see \Cref{ex.reflection.and.Verdier}).

\end{enumerate}
\end{remark}




\begin{remark}
Under suitable hypotheses, the $\pos^\op$-stratification of $\Shv^{\pos\textup{-}\cbl}(T)$ admits a completely algebraic description; see \Cref{ex.tamely.conical.stratd.top.space}.
\end{remark}



\begin{remark}
If we consider sheaves valued in a presentably $\cO$-monoidal stable $\infty$-category, the stratification \Cref{stratn.of.shvs.on.a.top.spc} becomes an $\cO$-monoidal stratification.\footnote{In particular, the stratification \Cref{stratn.of.shvs.on.a.top.spc} for an arbitrary target is recovered from the case of $\Spectra$ through \Cref{rmk.stratn.of.module.cat}.}
\end{remark}

\begin{remark}
The stratification \Cref{stratn.of.shvs.on.a.top.spc} of sheaves on a topological space generalizes to a stratification of sheaves on an $\infty$-topos, using the theory of stratified $\infty$-topoi developed by Barwick--Glasman--Haine \cite{BGH-exodromy}.\footnote{In the case of a presheaf $\infty$-topos, this may also be recovered as an instance of the stratification \Cref{stratn.of.Fun.T.V} below.}
\end{remark}



\subsection{Functors to a poset and naive $G$-spectra}
\label{subsection.naive.G.spectra.stratn}

Let $G$ be a compact Lie group. The $\infty$-category of genuine $G$-spectra admits a variant, the $\infty$-category
\[
\Spectra^{\naive G}
:=
\Fun(\Orb_G^\op,\Spectra)
\]
of \bit{naive $G$-spectra}, i.e.\! of spectral presheaves on the orbit $\infty$-category of $G$.\footnote{The terminology ``naive'' stems from the fact that, whereas the $\infty$-category
$\Spectra^{\gen G}$ of genuine $G$-spectra is obtained from the $\infty$-category $\Spaces^{\gen G}_*$ of pointed genuine $G$-spaces by inverting all representation spheres under the smash product, the $\infty$-category $\Spectra^{\naive G}$ of naive $G$-spectra is obtained from $\Spaces^{\gen G}_*$ by inverting merely the spheres with trivial $G$ action under the smash product (i.e.\! by stabilizing).} Naive $G$-spectra provide a natural context for computing (generalized) Bredon co/homology, as well as for understanding genuine $G$-suspension spectra (see e.g.\! \cite[\S \ref{mackey:section.fxn.reps.to.Pic}]{AMR-mackey}) via the factorization
\[ \begin{tikzcd}
\Spaces^{\gen G}_*
\arrow{rr}{\Sigma^\infty_G}
\arrow{rd}[sloped, swap]{\Sigma^\infty}
&
&
\Spectra^{\gen G}
\\
&
\Spectra^{\naive G}
\arrow[dashed]{ru}[sloped, swap]{\Psi}
\end{tikzcd} \]
(which results from the universal property of stabilization).

The $\infty$-category of naive $G$-spectra admits a stratification closely related to the geometric stratification of the $\infty$-category of genuine $G$-spectra of \Cref{intro.thm.gen.G.spt}. In fact, this arises as a special instance of a more general source of stratifications; we return to naive $G$-spectra in \Cref{ex.naive.G.spectra}.

Fix a presentable stable $\infty$-category $\cV$, as well as an $\infty$-category $\cT$ equipped with a functor
\[
\cT
\longra
\pos
~.
\]
Then, we obtain a stratification of the presentable stable $\infty$-category
\[
\Fun(\cT,\cV)
\]
over the poset $\pos^\op$ according to the formula
\begin{equation}
\label{stratn.of.Fun.T.V}
\begin{tikzcd}[row sep=0cm]
\pos^\op
\arrow{r}
&
\Cls_{\Fun(\cT,\cV)}
\\
\rotatebox{90}{$\in$}
&
\rotatebox{90}{$\in$}
\\
p^\circ
\arrow[maps to]{r}
&
\Fun ( \cT_{^\geq p} , \cV )
\end{tikzcd}
~,\footnote{Here and throughout, for any subposet $\sQ \subseteq \pos$ we write $\cT_\sQ := \cT \times_\pos \sQ$ for the fiber product; for any element $p \in \pos$ we simply write $\cT_p := \cT_{\{ p \}}$.}
\end{equation}
where we consider
\[
\Fun ( \cT_{^\geq p} , \cV )
\subseteq
\Fun ( \cT , \cV )
\]
as a closed subcategory via left Kan extension (which is simply extension by zero); its right adjoint is restriction, the right adjoint to which is right Kan extension.

The following features of the stratification \Cref{stratn.of.Fun.T.V} are easily verified.
\begin{enumerate}

\item\label{feature.strata.of.naive.stratn}

 For each $p^\circ \in \pos^\op$, the $(p^\circ)\th$ stratum of the stratification \Cref{stratn.of.Fun.T.V} is
\[
\Fun ( \cT_p , \cV)
~.
\]

\item\label{feature.monodromy.of.naive.stratn}

For any morphism $q^\circ \ra p^\circ$ in $\pos^\op$, the corresponding gluing functor of the stratification \Cref{stratn.of.Fun.T.V} is given as follows. First of all, we define the \textit{$q\th$ stratum of the link of $\cT_p$ in $\cT$} as the limit in the diagram
\[ \begin{tikzcd}
\Link_{\cT_p}(\cT)_q
\arrow{rr}{t}
\arrow{dd}[swap]{s}
\arrow[hook]{rd}
&
&
\cT_q
\\
&
\Ar(\cT)
\arrow{r}[swap]{t}
\arrow{d}{s}
&
\cT
\arrow[hookleftarrow]{u}
\\
\cT_p
\arrow[hook]{r}
&
\cT
\end{tikzcd}
~.
\]
Then, the corresponding gluing functor is the composite
\[
\Gamma^{q^\circ}_{p^\circ}
:
\Fun(\cT_q,\cV)
\xlongra{t^*}
\Fun(\Link_{\cT_p}(\cT)_q,\cV)
\xlongra{s_*}
\Fun(\cT_p,\cV)
\]
of pullback along $t$ followed by right Kan extension along $s$.

\item If the functor $\cT \ra \pos$ is an exponentiable fibration, then the stratification \Cref{stratn.of.Fun.T.V} is strict, i.e.\! the gluing functors strictly compose. Specifically, exponentiability guarantees that links glue: for instance, given any composite $p \ra q \ra r$ in $\pos$, we have an equivalence
\[
\Link_{\cT_p}(\cT)_r
\simeq
\Link_{\cT_p}(\cT)_q
\otimes_{\cT_q}
\Link_{\cT_q}(\cT)_r
\]
(expressing the $r\th$ stratum of the link of $\cT_p$ in $\cT$ as a coend over $\cT_q$). In this case, the gluing diagram
\[
\pos^\op
\xra{\GD(\Fun(\cT,\cV))}
\PrSt
\]
is simply the unstraightening of the cartesian fibration
\[ \begin{tikzcd}
\Fun^\rel_{/\pos} ( \cT , \ul{\cV} )
\arrow{d}
\\
\pos
\end{tikzcd}
~.
\]

\item If $\cV$ is presentably $\cO$-monoidal, then $\Fun(\cT,\cV)$ is presentably $\cO$-monoidal via the pointwise $\cO$-monoidal structure, and with respect to this the stratification \Cref{stratn.of.Fun.T.V} is an $\cO$-monoidal stratification.

\end{enumerate}

\begin{example}
\label{ex.tamely.conical.stratd.top.space}
Let us say that a stratified topological space $T \ra \pos$ is \textit{tamely conical} if the topological space $T$ is paracompact and locally of singular shape and moreover its stratification is conical.\footnote{These are the conditions under which \cite[Theorem A.9.3]{LurieHA} applies.} In this case, if we take
\[
\cT
:=
\Exit(T)
\longra
\pos
\]
to be the \bit{exit-path $\infty$-category} of $T$ equipped with its canonical functor to $\pos$, then the $\pos^\op$-stratification \Cref{stratn.of.Fun.T.V} recovers that of the $\infty$-category $\Shv^{\pos\textup{-}\cbl}(T)$ of $\pos$-constructible sheaves on $T$ obtained in \Cref{subsection.cbl}. In this case, for each $p^\circ \in \pos^\op$, the $(p^\circ)^\th$ stratum is the presentable stable $\infty$-category $\Loc(T_p)$ of local systems on the $p\th$ stratum (according to \Cref{feature.strata.of.naive.stratn}), and the gluing functors are governed by spaces of exiting paths (as described in \Cref{feature.monodromy.of.naive.stratn}).
\end{example}

\begin{remark}
A converse to \Cref{ex.tamely.conical.stratd.top.space} is provided by \cite{Haine-strat}: whenever the functor $\cT \ra \pos$ is conservative (i.e.\! whenever its fibers are $\infty$-groupoids), there exists a $\pos$-stratified topological space $T \ra \pos$ and an equivalence $\cT \simeq \Exit(T)$ in $\Cat_{/\pos}$.\footnote{For instance, this applies to the functor $\Orb_G^\op \ra \pos_G^\op$ considered in \Cref{ex.naive.G.spectra}.}
\end{remark}

\begin{example}[a stratification of naive $G$-spectra]
\label{ex.naive.G.spectra}
Taking
\[
(\cT \longra \pos)
:=
(\Orb_G^\op \longra \pos_G^\op)
\qquad
\text{and}
\qquad
\cV
:=
\Spectra
~,
\]
the stratification \Cref{stratn.of.Fun.T.V} specializes to a stratification
\begin{equation}
\label{stratn.of.Sp.nG}
\begin{tikzcd}[row sep=0cm]
\pos_G
\arrow{r}
&
\Cls_{\Spectra^{\naive G}}
\\
\rotatebox{90}{$\in$}
&
\rotatebox{90}{$\in$}
\\
H
\arrow[maps to]{r}
&
\Fun ( (\Orb_G^\op)_{^\geq H} , \Spectra )
\end{tikzcd}
\end{equation}
of the presentable stable $\infty$-category of naive $G$-spectra. The above features of the stratification \Cref{stratn.of.Fun.T.V} bear upon the stratification \Cref{stratn.of.Sp.nG} as follows.
\begin{enumerate}

\item For each $H \in \pos_G$, the $H\th$ stratum of the stratification \Cref{stratn.of.Sp.nG} is
\[
\Spectra^{\htpy \Weyl(H)}
~.
\]

\item For any nonidentity morphism $H < K$ in $\pos_G$, the corresponding gluing functor of the stratification \Cref{stratn.of.Sp.nG} is given by pullback followed by right Kan extension along the span
\begin{equation}
\label{span.between.B.of.Weyl.groups}
\begin{tikzcd}
( (G/K)^H )_{\htpy (\Weyl(K) \times \Weyl(H))}
\\[-0.7cm]
\rotatebox{90}{$\simeq$}
\\[-0.7cm]
\hom_{\Orb_G} ( G/H , G/K )_{\htpy (\Weyl(K) \times \Weyl(H))}
\\[-0.7cm]
\rotatebox{90}{$\simeq$}
\\[-0.7cm]
\hom_{\Orb_G^\op}((G/K)^\circ,(G/H)^\circ)_{\htpy (\Weyl(H) \times \Weyl(K))}
\arrow{r}
\arrow{d}
&
\sB \Weyl(H)
\\
\sB \Weyl(K)
\end{tikzcd}
~.\footnote{At the level of path components, we have an identification
\[
\pi_0 \left( ( (G/K)^H )_{\htpy (\Weyl(K) \times \Weyl(H))} \right)
\cong
\Weyl(K) ~ \backslash ~ (G/K)^H \slash ~ \Weyl(H)
\]
with the set of double cosets.}
\end{equation}
In the case that $G$ is abelian, the span \Cref{span.between.B.of.Weyl.groups} reduces to the span
\[ \begin{tikzcd}
\sB G/H
\arrow{r}{\sim}
\arrow{d}
&
\sB G/H
\\
\sB G/K
\end{tikzcd}~, \]
so that the gluing functor is given by the homotopy $(K/H)$-fixedpoints functor
\[
\Spectra^{\htpy (G/H)}
\simeq
\Fun ( \sB (G/H) , \Spectra )
\xra{(-)^{\htpy (K/H)}}
\Fun ( \sB (G/K) , \Spectra )
\simeq
\Spectra^{\htpy (G/K)}
~.
\]

\item In the case that $G$ is abelian, the functor
\[
\Orb_G^\op
\longra
\pos_G^\op
\]
is a right fibration (and in particular an exponentiable fibration). Hence, the stratification \Cref{stratn.of.Sp.nG} is strict (corresponding to the fact that homotopy fixedpoints strictly compose).

\item As $\Spectra$ is presentably symmetric monoidal, $\Spectra^{\naive G}$ is presentably symmetric monoidal as well. With respect to this structure, the stratification \Cref{stratn.of.Sp.nG} is a symmetric monoidal stratification.

\end{enumerate}
\end{example}

\begin{remark}
It is not hard to see that the functor
\[
\Spectra^{\naive G}
\xlongra{\Psi}
\Spectra^{\gen G}
\]
defines a morphism in $\Strat_{\pos_G}$ (see \Cref{defn.Strat.P}), where the source is equipped with the stratification \Cref{stratn.of.Sp.nG} and the target is equipped with the geometric stratification of \Cref{intro.thm.gen.G.spt}.\footnote{This may be verified as follows (see \Cref{defn.geometric.prestratn.of.gen.G.spt} for the geometric stratification of $\Spectra^{\gen G}$). It is clear that the $i_L$ inclusions commute. It remains to show that the $y$ projections also commute. For this, let us denote the stratifications by
\[
\pos_G
\xra{\cZ^\naive_\bullet}
\Cls_{\Spectra^{\naive G}}
\qquad
\text{and}
\qquad
\pos_G
\xra{\cZ^\gen_\bullet}
\Cls_{\Spectra^{\gen G}}
~.
\]
Then, we observe that for any $H \in \pos_G$ there are conservative factorizations
\[
\begin{tikzcd}[ampersand replacement=\&]
\Spectra^{\naive G}
\arrow{rr}{\Res^G_H}
\arrow{rd}[sloped, swap]{y}
\&
\&
\Spectra^{\naive H}
\\
\&
\cZ^\naive_H
\arrow[dashed]{ru}
\end{tikzcd}
\qquad
\text{and}
\qquad
\begin{tikzcd}[ampersand replacement=\&]
\Spectra^{\gen G}
\arrow{rr}{\Res^G_H}
\arrow{rd}[sloped, swap]{y}
\&
\&
\Spectra^{\gen H}
\\
\&
\cZ^\gen_H
\arrow[dashed]{ru}
\end{tikzcd}
~.
\]
Hence, the fact that the $y$ projections commute follows from the commutativity of the square
\[ \begin{tikzcd}[ampersand replacement=\&]
\Spectra^{\naive G}
\arrow{r}{\Psi}
\arrow{d}[swap]{\Res^G_H}
\&
\Spectra^{\gen G}
\arrow{d}{\Res^G_H}
\\
\Spectra^{\naive H}
\arrow{r}[swap]{\Psi}
\&
\Spectra^{\gen H}
\end{tikzcd}
~.
\]
}
In fact, considering it as a morphism in $\CAlg(\PrLSt)$, the geometric stratification of $\Spectra^{\gen G}$ may be seen as arising from the stratification \Cref{stratn.of.Sp.nG} of $\Spectra^{\naive G}$ via \Cref{rmk.image.of.ideals.and.stratns}.
\end{remark}

\subsection{Reflection, M\"obius inversion, and Verdier duality}
\label{subsection.verdier}

In \Cref{subsection.cbl}, we established a stratification of (($\pos$-)constructible) sheaves over a $\pos$-stratified topological space. Applying \Cref{intro.thm.cosms}\Cref{intro.main.thm.macrocosm} (in the case that $\pos$ is down-finite), one obtains a reconstruction theorem for such sheaves that involves $*$-push/pull functors (e.g.\! the composite $p_L i_R = i^* j_*$ in the recollement \Cref{recollement.for.sheaves.on.topological.spaces}). On the other hand, particularly in the context of constructible sheaves, it is desirable to instead reconstruct sheaves using $!$-push/pull functors (e.g.\! the composite $p_R i_L = i^! j_!$ in the recollement \Cref{recollement.for.sheaves.on.topological.spaces}).

We establish a means of passing between these two dual reconstruction patterns, as we describe presently.\footnote{This applies primarily in the case that $\pos$ is down-finite, but see also \Cref{rmk.finite.intervals.but.not.down.finite}.} We refer to this theory as \bit{reflection}, since in the case that $\pos = [1]$ it recovers the theory of reflection functors (see \Cref{rmk.connect.with.reflection.functors}). We use this to give a categorification of the M\"obius inversion formula in \Cref{ex.categorified.mobius.inversion}, and we explain a close connection with Verdier duality in \Cref{ex.reflection.and.Verdier}.

Fix a stratification $\pos \xra{\cZ_\bullet} \cX$. Let us recall its \textit{gluing diagram} from \Cref{subsection.intro.stratn.nc.stacks}: this is a left-lax functor
\[
\begin{tikzcd}[column sep=1.5cm]
\pos
\arrow{r}[description, yshift=-0.05cm]{\llax}{\GD(\cX)}
&
\PrSt
\end{tikzcd}
\]
that carries each morphism $p \ra q$ in $\pos$ to the \textit{gluing functor}
\[ \Gamma^p_q : \cX_p \xlonghookra{\rho^p} \cX \xra{\Phi_q} \cX_q~, \]
which is built from the composite \textit{geometric localization} adjunctions
\[ \begin{tikzcd}[column sep=1.5cm]
\Phi_p
:
\cX
\arrow[transform canvas={yshift=0.9ex}]{r}{y}
\arrow[hookleftarrow, transform canvas={yshift=-0.9ex}]{r}[yshift=-0.2ex]{\bot}[swap]{i_R}
&
\cZ_p
\arrow[transform canvas={yshift=0.9ex}]{r}{p_L}
\arrow[hookleftarrow, transform canvas={yshift=-0.9ex}]{r}[yshift=-0.2ex]{\bot}[swap]{\nu}
&
\cX_p
:
\rho^p
\end{tikzcd}
\]
for all $p \in \pos$. By contrast, if we instead begin with the composite \bit{reflected geometric localization} adjunctions
\[ \begin{tikzcd}[column sep=1.5cm]
\lambda^p
:
\cX_p
\arrow[hook, transform canvas={yshift=0.9ex}]{r}{\nu}
\arrow[leftarrow, transform canvas={yshift=-0.9ex}]{r}[yshift=-0.2ex]{\bot}[swap]{p_R}
&
\cZ_p
\arrow[hook, transform canvas={yshift=0.9ex}]{r}{i_L}
\arrow[leftarrow, transform canvas={yshift=-0.9ex}]{r}[yshift=-0.2ex]{\bot}[swap]{y}
&
\cX
:
\Psi_p
\end{tikzcd}
\]
for all $p \in \pos$ (first introduced in \Cref{rmk.filtrations.from.stratns}), we obtain for each morphism $p \ra q$ in $\pos$ the \bit{reflected gluing functor}
\[
\widecheck{\Gamma}^p_q
:
\cX_p
\xlonghookra{\lambda^p}
\cX
\xra{\Psi_q}
\cX_q
~,
\]
and these assemble into the \bit{reflected gluing diagram} of the stratification: a \textit{right}-lax functor
\[
\begin{tikzcd}[column sep=1.5cm]
\pos
\arrow{r}[description, yshift=-0.05cm]{\rlax}{\widecheck{\GD}(\cX)}
&
\PrSt
\end{tikzcd}
~.
\]

\begin{maintheorem}[\Cref{cor.reflection.for.presentable.stratns}]
\label{intro.thm.reflection}
Let $\pos$ be a down-finite poset.
\begin{enumerate}

\item\label{intro.reflection.thm.metacosm} {\bf metacosm:} The reflected gluing diagram functor is an equivalence, as indicated in the canonical commutative diagram
\begin{equation}
\label{metacosm.reflection.equivalence.in.intro.thm}
\begin{tikzcd}[column sep=1.5cm, row sep=1.5cm]
&
{\displaystyle \prod_{p \in \pos} \PrSt}
\\
\LMod_{\rlax.\pos}^L(\PrSt)
\arrow[yshift=0.9ex]{r}{\limllaxfam}
\arrow[leftarrow, yshift=-0.9ex]{r}{\sim}[swap]{\widecheck{\GD}}
\arrow{ru}[sloped]{(\ev_p)_{p \in \pos}}
\arrow{rd}[sloped, swap]{\lim^\llax_{\rlax.\pos}}
&
\Strat_\pos^\strict
\arrow[yshift=0.9ex]{r}[sloped]{\GD}
\arrow[leftarrow, yshift=-0.9ex]{r}{\sim}[sloped, swap]{\limrlaxfam}
\arrow{d}{\fgt}
\arrow{u}[swap]{((-)_p)_{p \in \pos}}
&
\LMod_{\llax.\pos}^L(\PrSt)
\arrow{lu}[sloped]{(\ev_p)_{p \in \pos}}
\arrow{ld}[sloped, swap]{\lim^\rlax_{\llax.\pos}}
\\
&
\PrSt
\end{tikzcd}
~.\footnote{Of course, $\LMod^L_{\rlax.\pos}(\PrSt)$ denotes a certain $\infty$-category whose objects are right-lax functors from $\pos$ to $\PrSt$, and the notation $\limllaxfam$ denotes a certain ``parametrized left-lax limit'' functor. Here we must restrict to the subcategory $\Strat_\pos^\strict \subseteq \Strat_\pos$ of strict morphisms (as introduced in \Cref{rmk.extended.intro.variants.of.metacosm}\Cref{rmk.extended.intro.part.strict.reconstrn}): there is an implicit laxness in our definition of $\Strat_\pos$ that is compatible with the gluing diagram functor $\GD$ but not with the reflected gluing diagram functor $\widecheck{\GD}$. (Dually, one could instead allow for laxness that is compatible with $\widecheck{\GD}$ but not with $\GD$.)}
\end{equation}

\item\label{intro.reflection.thm.macrocosm} {\bf macrocosm:} For each $\pos$-stratified noncommutative stack $\cX \in \Strat_\pos^\strict$, the equivalence on the left in diagram \Cref{metacosm.reflection.equivalence.in.intro.thm} determines an equivalence
\begin{equation}
\label{macrocosm.reflection.equivalence.in.intro.thm}
\begin{tikzcd}[column sep=2cm]
\Fun(\sd(\pos)^\op,\cX)
\supseteq
\lim^\llax_{\rlax.\pos}(\widecheck{\GD}(\cX))
=:
\widecheck{\Glue}(\cX)
\arrow[yshift=0.9ex]{r}{\colim_{\sd(\pos)^\op}}
\arrow[yshift=-0.9ex, leftarrow]{r}{\sim}[swap]{\widecheck{\gd}}
&
\cX
~.
\end{tikzcd}
\end{equation}

\item\label{intro.reflection.thm.microcosm} {\bf microcosm:} For each quasicoherent sheaf $\cF \in \cX \in \Strat_\pos^\strict$ on a $\pos$-stratified noncommutative stack, the equivalence \Cref{macrocosm.reflection.equivalence.in.intro.thm} determines an equivalence
\begin{equation}
\label{microcosm.reflection.equivalence.in.intro.thm}
\colim_{\sd(\pos)^\op}(\widecheck{\gd}(\cF))
=:
\widecheck{\glue}(\cF)
\xlongra{\sim}
\cF
\end{equation}
in $\cX$.

\item\label{intro.reflection.thm.nanocosm} {\bf nanocosm:} For each quasicoherent sheaf $\cE \in \cX \in \Strat_\pos^\strict$ on a $\pos$-stratified noncommutative stack, applying $\ulhom_\cX(-,\cE)$ to the equivalence \Cref{microcosm.reflection.equivalence.in.intro.thm} determines an equivalence
\[
\lim_{([n] \xra{\varphi} \pos) \in \sd(\pos)}
\left(
\ulhom_{\cX_{\varphi(n)}}
(
\widecheck{\Gamma}_\varphi \Psi_{\varphi(0)} \cF
,
\Psi_{\varphi(0)} \cE
)
\right)
\xlongla{\sim}
\ulhom_\cX(\cF,\cE)
~.\footnote{Given an element $([n] \xra{\varphi} \pos) \in \sd(\pos)$, in parallel with the notation $\Gamma_\varphi := \Gamma^{\varphi(n-1)}_{\varphi(n)} \cdots \Gamma^{\varphi(0)}_{\varphi(1)}$ we write $\widecheck{\Gamma}_\varphi := \widecheck{\Gamma}^{\varphi(n-1)}_{\varphi(n)} \cdots \widecheck{\Gamma}^{\varphi(0)}_{\varphi(1)}$.}
\]

\end{enumerate}
\end{maintheorem}

\noindent In particular, under the assumption that $\pos$ is down-finite, Theorems \ref{intro.thm.reflection}\Cref{intro.reflection.thm.macrocosm} \and \ref{intro.thm.cosms}\Cref{intro.main.thm.macrocosm} provide dual macrocosm equivalences
\[
\lim^\llax_{\rlax.\pos}(\widecheck{\GD}(\cX))
=:
\widecheck{\Glue}(\cX)
\underset{\sim}{\xlongla{\widecheck{\gd}}}
\cX
\underset{\sim}{\xlongra{\gd}}
\Glue(\cX)
:=
\lim^\rlax_{\llax.\pos}(\GD(\cX))
\]
in $\PrSt$ for each $\pos$-stratified noncommutative stack $\cX \in \Strat_\pos^\strict$. On the other hand, omitting any reference to stratifications, \Cref{intro.thm.reflection}\Cref{intro.reflection.thm.metacosm} provides a canonical commutative diagram
\begin{equation}
\label{reflection.without.Strat}
\begin{tikzcd}[column sep=1.5cm, row sep=1.5cm]
&
{\displaystyle \prod_{p \in \pos} \PrSt}
\\
\LMod_{\rlax.\pos}^L(\PrSt)
\arrow{ru}[sloped]{(\ev_p)_{p \in \pos}}
\arrow{rd}[sloped, swap]{\lim^\llax_{\rlax.\pos}}
\arrow[leftarrow]{rr}{\widecheck{(-)}}[swap]{\sim}
&
&
\LMod_{\llax.\pos}^L(\PrSt)
\arrow{lu}[sloped]{(\ev_p)_{p \in \pos}}
\arrow{ld}[sloped, swap]{\lim^\rlax_{\llax.\pos}}
\\
&
\PrSt
\end{tikzcd}
\end{equation}
for any down-finite poset $\pos$.\footnote{More systematic notation would allow for the horizontal arrow in diagram \Cref{reflection.without.Strat} to point in both directions. We have written it in this way in order to maintain consistency, so that for any $\cX \in \Strat_\pos^\strict$ we have a canonical equivalence $\widecheck{\GD(\cX)} \simeq \widecheck{\GD}(\cX)$.} We refer to the equivalence
\[
\LMod^L_{\rlax.\pos}(\PrSt)
\xla[\sim]{\widecheck{(-)}}
\LMod^L_{\llax.\pos}(\PrSt)
\]
of diagram \Cref{reflection.without.Strat} as \bit{reflection}. In fact, we prove this equivalence more generally for posets whose intervals are finite (see \Cref{cor.reflection.for.finite.intervals}). Moreover, we give a direct formula for the reflected gluing functors in terms of gluing functors and reversely, as total co/fibers (\Cref{defn.tcofib.and.tfib}): for any nonidentity morphism $p < q$ in $\pos$ we have canonical equivalences
\begin{equation}
\label{gluing.and.reflected.gluing.functors.in.terms.of.each.other.in.general}
\widecheck{\Gamma}^p_q
\simeq
\tfib_{\varphi \in \sd(\pos)^{|p}_{|q}} \Sigma^{-1} \Gamma_\varphi
\qquad
\text{and}
\qquad
\Gamma^p_q
\simeq
\tcofib_{\varphi^\circ \in (\sd(\pos)^{|p}_{|q})^\op} \Sigma \widecheck{\Gamma}_\varphi
\end{equation}
in $\Fun(\cX_p,\cX_q)$ (see \Cref{prop.Gamma.check.as.tfib.of.Gammas.and.Gamma.as.tcofib.of.Gamma.checks} (and \Cref{notation.source.and.or.target.restricted.sd})). Note that if $\pos_{p//q} \cong [n]$ for some $n \geq 1$, then $\sd(\pos)^{|p}_{|q}$ is an $(n-1)$-cube; in particular, if $p < q$ admits no nontrivial factorizations then the equivalences \Cref{gluing.and.reflected.gluing.functors.in.terms.of.each.other.in.general} reduce to the equivalent equivalences
\begin{equation}
\label{gluing.and.reflected.gluing.functors.in.terms.of.each.other.for.consecutive}
\widecheck{\Gamma}^p_q \simeq \Sigma^{-1} \Gamma^p_q
\qquad
\text{and}
\qquad
\Gamma^p_q \simeq \Sigma \widecheck{\Gamma}^p_q
~.
\end{equation}

\begin{remark}
\label{rmk.reflection.concretely}
Observe that a closed subcategory
\[
\cZ
\xlonghookra{i_L}
\cX
\]
determines a closed subcategory
\[
\cZ^\op
\xlonghookra{i_R^\op}
\cX^\op
~,
\]
which we refer to as its \bit{reflected closed subcategory}.\footnote{Here and throughout this subsection, whenever we mention opposites of presentable stable $\infty$-categories we are implicitly referring to the theory of \textit{stable} stratifications (as introduced in \Cref{rmk.extended.intro.variants.of.metacosm}\Cref{rmk.extended.intro.part.stable.stratns}); we generally omit this distinction from the present discussion in order not to clutter our exposition.} In concrete terms, \Cref{intro.thm.reflection}\Cref{intro.reflection.thm.metacosm} may be interpreted as saying that given a stratification of $\cX$ over a down-finite poset $\pos$, passage to reflected closed subcategories determines a stratification of $\cX^\op$ over $\pos$, which we refer to as its \bit{reflected stratification}: writing $\cX^\refl$ for $\cX^\op$ equipped with its reflected stratification, we have an equivalence
\[
\GD(\cX^\refl)
\simeq
\widecheck{\GD}(\cX)^\op
~.\footnote{Indeed, our proof of \Cref{intro.thm.reflection} (which we establish as \Cref{cor.reflection.for.presentable.stratns}) is based on the analogous result for stable stratifications (\Cref{thm.reflection.for.stable.stratns}), the main ingredient in the proof of which is the reflected stable stratification (\Cref{prop.stratn.of.opposite}).}
\]
\end{remark}

\begin{example}
\label{ex.verdier.over.brax.one}
We unpack parts \Cref{intro.reflection.thm.metacosm} \and \Cref{intro.reflection.thm.macrocosm} of \Cref{intro.thm.reflection} in the case that $\pos = [1]$. First of all, we have identifications
\[
\begin{tikzcd}
\LMod_{\rlax.[1]}
\arrow{rr}{\sim}
\arrow{rd}[sloped, swap]{\lim^\llax_{\rlax.[1]}}
&
&
\coCart_{[1]}
\arrow{ld}[sloped, swap]{\Gamma}
\\
&
\Cat
\end{tikzcd}
\qquad
\text{and}
\qquad
\begin{tikzcd}
\LMod_{\llax.[1]}
\arrow{rr}{\sim}
\arrow{rd}[sloped, swap]{\lim^\rlax_{\llax.[1]}}
&
&
\Cart_{[1]^\op}
\arrow{ld}[sloped, swap]{\Gamma}
\\
&
\Cat
\end{tikzcd}
~.
\]
Let us denote by
\[
\begin{tikzcd}
\LMod^L_{\rlax.[1]}(\PrSt)
\arrow[dashed]{r}{\sim}
\arrow[hook]{d}
&
\coCart_{[1]}^L(\PrSt)
\arrow[hook]{d}
\\
\LMod_{\rlax.[1]}
\arrow{r}[swap]{\sim}
&
\coCart_{[1]}
\end{tikzcd}
\qquad
\text{and}
\qquad
\begin{tikzcd}
\LMod^L_{\llax.[1]}(\PrSt)
\arrow[dashed]{r}{\sim}
\arrow[hook]{d}
&
\Cart_{[1]^\op}^L(\PrSt)
\arrow[hook]{d}
\\
\LMod_{\llax.[1]}
\arrow{r}[swap]{\sim}
&
\Cart_{[1]^\op}
\end{tikzcd}
\]
the indicated corresponding subcategories. Now, for any recollement \Cref{recollement.in.intro} we have a canonical equivalence $p_R i_L \simeq \Sigma^{-1} p_L i_R$ (a special case of the equivalences \Cref{gluing.and.reflected.gluing.functors.in.terms.of.each.other.for.consecutive}). It follows that the commutative diagram \Cref{reflection.without.Strat} specializes to a commutative diagram
\begin{equation}
\label{reflection.without.Strat.for.brax.one}
\begin{tikzcd}[column sep=1.5cm, row sep=1.5cm]
&
{\displaystyle \prod_{p \in [1]} \PrSt}
\\
\coCart_{[1]}^L(\PrSt)
\arrow{ru}[sloped]{(\ev_p)_{p \in [1]}}
\arrow{rd}[sloped, swap]{\Gamma}
\arrow[leftarrow]{rr}{\widecheck{(-)}}[swap]{\sim}
&
&
\Cart_{[1]^\op}^L(\PrSt)
\arrow{lu}[sloped]{(\ev_p)_{p \in [1]^\op}}
\arrow{ld}[sloped, swap]{\Gamma}
\\
&
\PrSt
\end{tikzcd}
\end{equation}
in which the equivalence $\widecheck{(-)}$ carries the cartesian unstraightening of a functor $\cX_0 \xra{F} \cX_1$ to the cocartesian unstraightening of the functor $\cX_0 \xra{\Sigma^{-1} F} \cX_1$. Thereafter, the commutativity of the lower triangle in diagram \Cref{reflection.without.Strat.for.brax.one} records the equivalence
\begin{equation}
\label{equivce.between.rlax.lim.and.llax.lim.in.brax.one.case}
\begin{tikzcd}[row sep=0cm]
\lim^\llax_{\rlax.[1]} ( \cX_0 \xra{\Sigma^{-1}F} \cX_1 )
\arrow[leftarrow]{r}{\sim}
&
\lim^\rlax_{\llax.[1]}(\cX_0 \xra{F} \cX_1)
\\
\rotatebox{90}{$\in$}
&
\rotatebox{90}{$\in$}
\\
( Z \longmapsto \Sigma^{-1}F(Z) \longra \fib(\alpha) )
&
( Z \longmapsto F(Z) \xlongla{\alpha} U )
\arrow[maps to]{l}
\end{tikzcd}
\end{equation}
in $\PrLSt$.
\end{example}

\begin{remark}
\label{rmk.connect.with.reflection.functors}
\Cref{ex.verdier.over.brax.one} is closely related to the theory of reflection functors \cite{BGP-refl}. 
Indeed, we recover \cite[Theorem 2.3]{DJW-refl} as follows. Fix a finite poset $\posQ$ equipped with a conservative functor $\posQ \ra [1]$. Additionally fix a functor $\posQ \ra \Cat$, and let us respectively denote by
\[
\cE^+
\longra
\posQ
\qquad
\text{and}
\qquad
\cE^-
\longra
\posQ^\op
\]
its cocartesian and cartesian unstraightenings. These data determine composite functors
\[
\cE^+
\longra
\posQ
\longra
[1]
\qquad
\text{and}
\qquad
(\cE^-)^\op
\longra
\posQ
\longra
[1]
~.
\]
Fix a presentable stable $\infty$-category $\cV$. On the one hand, the functor $\cE^+ \ra [1]$ determines a stratification of
\[
\Fun ( \cE^+ , \cV)
\]
over $[1]^\op$ as in \Cref{subsection.naive.G.spectra.stratn}. On the other hand, the functor $(\cE^-)^\op \ra [1]$ similarly determines a stratification of
\[
\Fun ( (\cE^-)^\op , \cV^\op )
\simeq
\Fun ( \cE^- , \cV)^\op
\]
over $[1]^\op$, which by \Cref{intro.thm.reflection}\Cref{intro.reflection.thm.metacosm} (as interpreted via \Cref{rmk.reflection.concretely}) determines a stratification of
\[
\Fun ( \cE^- , \cV)
\]
over $[1]^\op$.
Unwinding the definitions, we find that the gluing diagram $\GD ( \Fun ( \cE^+ , \cV) )$ of the former as well as the reflected gluing diagram $\widecheck{\GD}( \Fun ( \cE^- , \cV) )$ of the latter both record the composite functor
\[
F
:
\prod_{q \in \posQ_1} \Fun ( \cE_q , \cV )
\longra
\prod_{\alpha \in \Gamma ( \posQ \da [1])} \Fun ( \cE_{\alpha(0)} , \cV )
\xra{
\cF_\bullet
\longmapsto
\left(
{
\prod_{\{\alpha : \alpha(0) = p \}} \cF_\alpha
}
\right)_{p \in \posQ_0}
}
\prod_{p \in \posQ_0} \Fun ( \cE_p , \cV )
~.
\]
Hence, applying \Cref{intro.thm.reflection}\Cref{intro.reflection.thm.metacosm} (and the equivalence \Cref{equivce.between.rlax.lim.and.llax.lim.in.brax.one.case} of \Cref{ex.verdier.over.brax.one} combined with the equivalence $F \simeq \Sigma^{-1} F$ in $\Fun([1],\PrSt)$), we obtain the composite equivalence
\[
\Fun ( \cE^+ , \cV )
\simeq
\lim^\rlax_{\llax.[1]^\op} ( \GD ( \Fun ( \cE^+ , \cV ) ) )
\simeq
\lim^\llax_{\rlax.[1]^\op} ( \widecheck{\GD} ( \Fun ( \cE^- , \cV ) ) )
\simeq
\Fun ( \cE^- , \cV )
~.
\]
\end{remark}

\begin{example}[categorified M\"obius inversion]
\label{ex.categorified.mobius.inversion}
Given a down-finite poset $\pos$ and a presentable stable $\infty$-category $\cV$, the presentable stable $\infty$-category
\[
\cX := \Fun(\pos,\cV)
\]
of $\pos$-filtered objects in $\cV$ admits a stratification
\begin{equation}
\label{stratn.of.Fun.P.V.for.Mobius}
\begin{tikzcd}[row sep=0cm]
\pos
\arrow{r}
&
\Cls_\cX
\\
\rotatebox{90}{$\in$}
&
\rotatebox{90}{$\in$}
\\
p
\arrow[maps to]{r}
&
\Fun((^\leq p),\cV)
\end{tikzcd}
~,
\end{equation}
where we consider
\[
\Fun((^\leq p),\cV)
\subseteq
\Fun(\pos,\cV)
=:
\cX
\]
as a closed subcategory via left Kan extension.\footnote{Beware that this is not an instance of the stratification \Cref{stratn.of.Fun.T.V} considered in \Cref{subsection.naive.G.spectra.stratn}.} Unwinding the definitions, for each $p \in \pos$ we obtain an identification
\[
\begin{tikzcd}[column sep=1.5cm]
\cX
\arrow[hookleftarrow, bend left=45]{r}[description]{i_L}
\arrow{r}[transform canvas={yshift=0.1cm}]{\bot}[swap,transform canvas={yshift=-0.1cm}]{\bot}[description]{y}
\arrow[hookleftarrow, bend right=45]{r}[description]{i_R}
&
\cZ_p
\arrow[bend left=45]{r}[description]{p_L}
\arrow[hookleftarrow]{r}[transform canvas={yshift=0.1cm}]{\bot}[swap,transform canvas={yshift=-0.1cm}]{\bot}[description]{\nu}
\arrow[bend right=45]{r}[description]{p_R}
&
\cX_p
\end{tikzcd}
\qquad
\simeq
\qquad
\begin{tikzcd}
\Fun(\pos,\cV)
\arrow[hookleftarrow, bend left=30]{r}[pos=0.48]{\lkan}
\arrow{r}[transform canvas={yshift=0.15cm}]{\bot}[swap,transform canvas={yshift=-0.15cm}]{\bot}[description]{\res}
\arrow[hookleftarrow, bend right=30]{r}[swap, pos=0.48]{\rkan}
&
\Fun((^\leq p),\cV)
\arrow[bend left=30]{r}[pos=0.45]{\tcofib_{(^\leq p)}}
\arrow[hookleftarrow]{r}[, pos=0.3, transform canvas={yshift=0.15cm}]{\bot}[swap, pos=0.3, transform canvas={yshift=-0.15cm}]{\bot}[description, pos=0.3]{\delta_p}
\arrow[bend right=30]{r}[swap, pos=0.45]{\ev_p}
&[0.5cm]
\cV
\end{tikzcd}
~,
\]
where $\delta_p$ denotes the ``Dirac delta'' functor at $p \in (^\leq p)$ (and the right Kan extension is simply extension by zero). In particular, the reflected gluing diagram is the constant locally cartesian fibration
\[
\widecheck{\GD}(\cX)
\simeq
\cV \times \pos^\op
\xlongra{\pr}
\pos^\op
~,
\]
and the $p\th$ geometric localization and $p\th$ reflected geometric localization functors are respectively the $p\th$ associated graded and $p\th$ filtered components:
\[
\Phi_p(V_\bullet)
:=
p_L y (V_\bullet)
\simeq
\gr_p(V_\bullet)
:=
\tcofib_{(^\leq p)}(V_\bullet)
\qquad
\text{and}
\qquad
\Psi_p(V_\bullet)
:=
p_R y(V_\bullet)
\simeq
\fil_p(V_\bullet)
:=
V_p
~.\footnote{In what follows, we use either or both of these possible notations, depending on our desired emphasis.}
\]

In this situation, \Cref{prop.Gamma.check.as.tfib.of.Gammas.and.Gamma.as.tcofib.of.Gamma.checks} yields a \bit{categorified M\"obius inversion formula} that expresses the $p\th$ associated graded component $\gr_p(V_\bullet) \simeq \Phi_p(V_\bullet)$ in terms of the filtered components $\fil_q(V_\bullet) \simeq \Psi_q(V_\bullet) \simeq V_q$ for various $q \in (^\leq p)$, as we now explain. First of all, the stratification \Cref{stratn.of.Fun.P.V.for.Mobius} endows each object $V_\bullet \in \cX$ with a (descending) $\pos^\op$-filtration
\[
\fil_R^\bullet(V_\bullet)
\in
\Fun(\pos^\op,\cX)
\]
(recall \Cref{rmk.filtrations.from.stratns}), and for each $r^\circ \in \pos^\op$ its $(r^\circ)\th$ associated graded component is
\[
\gr_R^r(V_\bullet)
\simeq
\rho^r ( \Psi_r(V_\bullet))
\simeq
\rho^r(V_r)
\simeq
\delta_r(V_r)
\in
\cX
~.
\]
Applying the functor $\cX \xra{\Phi_p} \cX_p \simeq \cV$, we obtain a $\pos^\op$-filtration
\[
\fil_R^\bullet(\Phi_p(V_\bullet))
:=
\Phi_p(\fil_R^\bullet(V_\bullet))
\in
\Fun(\pos^\op,\cV)
\]
of $\Phi_p(V_\bullet) \simeq \gr_p(V_\bullet) \in \cV$, whose $(r^\circ)\th$ associated graded component is the object
\[
\Phi_p ( \gr_R^r ( V_\bullet ) )
\simeq
\Phi_p ( \rho^r \Psi_r ( V_\bullet ) )
\simeq
\Gamma^r_p(V_r)
\in
\cV
\]
(which is zero whenever $r \not\leq p$, see \Cref{rmk.gluing.fctrs.even.if.no.relation.in.poset}). Applying \Cref{prop.Gamma.check.as.tfib.of.Gammas.and.Gamma.as.tcofib.of.Gamma.checks}\Cref{part.Gamma.as.tcofib.of.Gamma.checks}, for any $r \leq p$ we obtain an identification
\begin{equation}
\label{identify.gluing.functors.for.mobius.inversion}
\Gamma^r_p(V_r)
\simeq
M^r_p
\tensoring
V_r
\end{equation}
in $\cV$, where $M^r_p \in \Spaces^\fin_*$ denotes the finite pointed space
\[
M^r_p
:=
\left\{ \begin{array}{ll}
S^0
~,
&
r=p
\\
\Sigma^2 | \pos_{r//p} \backslash \{r,p\} |
~,
&
r<p
\end{array} \right.
~.\footnote{In the case that $\pos_{r//p} = \{r < p \}$, we have $M^r_p := \Sigma^2(\es) \simeq S^1$.}\footnote{One could also adopt the convention that $M^r_p := \pt$ in the case that $r \not\leq p$.}
\]
Note that the reduced Euler characteristic
\[
\ol{\chi} (M^r_p)
:=
\chi(\Sigma^\infty M^r_p)
\in
\sK_0(\Spectra^\fin)
\cong
\ZZ
\]
is the value $\mu_\pos(r,p) = \mu_{\pos^\op}(p^\circ,r^\circ) \in \ZZ$ of the M\"obius function.

Now, assume that $\cV$ is compactly generated. In this case, we have two inclusions
\[ \begin{tikzcd}
\sK_0(\cX^\omega)
\arrow[hook, yshift=0.9ex]{r}{\widecheck{i}}
\arrow[hook, yshift=-0.9ex]{r}[swap]{i}
&
\hom_\Set(\pos^\delta,\sK_0(\cV^\omega))
\end{tikzcd} \]
of abelian groups as the subgroup of finitely-supported functions, given by the two gluing diagrams:
\[
\widecheck{i}([V_\bullet])(p)
:=
[\Psi_p(V_\bullet)]
=
[\fil_p(V_\bullet)]
=
[V_p]
\qquad
\text{and}
\qquad
i([V_\bullet])(p)
:=
[\Phi_p(V_\bullet)]
=
[\gr_p(V_\bullet)]
~.
\]
Now, using the equivalences \Cref{identify.gluing.functors.for.mobius.inversion}, we obtain the M\"obius inversion formula for $\pos$ (valued in the abelian group $\sK_0(\cV^\omega)$):
\[
\widecheck{i}([V_\bullet])(p)
=
\sum_{r \in (^\leq p)}
i([V_\bullet])(r)
\qquad
\text{and}
\qquad
i([V_\bullet])(p)
=
\sum_{r \in (^\leq p)}
\ol{\chi}(M^r_p)
\cdot
\widecheck{i}([V_\bullet])(r)
=
\sum_{r \in (^\leq p)}
\mu_\pos(r,p)
\cdot
\widecheck{i}([V_\bullet])(r)
~.
\]
\end{example}

\begin{remark}
\label{rmk.spectral.sequence.from.mobius.inversion}
In the context of \Cref{ex.categorified.mobius.inversion}, let us fix a conservative functor $\pos \xra{d} \ZZ$ and an element $p \in \pos$, and let us assume that $\cV$ is equipped with a t-structure.
Then, we obtain two spectral sequences by applying \Cref{rmk.spectral.sequences.from.stratns} to the restricted stratification over the poset $(^\leq p)$: taking $H = \Psi_p = \fil_p = (-)_p$ we obtain a spectral sequence
\begin{equation}
\label{trivial.Mobius.spectral.seq.for.Fun.P.V}
E_{s,t}^1
=
\bigoplus_{r \in d^{-1}(s) \cap (^\leq p)}
\pi_{s+t} ( \gr_r (V_\bullet) )
\Longrightarrow
\pi_{s+t}(V_p)
~,
\end{equation}
while taking $H = \Phi_p = \gr_p$ we obtain a spectral sequence
\begin{equation}
\label{Mobius.spectral.seq.for.Fun.P.V}
E_{s,t}^1
=
\bigoplus_{r \in d^{-1}(-s) \cap (^\leq p)}
\pi_{s+t} ( M^r_p \tensoring V_r )
\Longrightarrow
\pi_{s+t}(\gr_p(V_\bullet))
~.\footnote{The other two spectral sequences that can be constructed in this way (applying \Cref{usual.filtration.spectral.sequence} to $H = \Phi_p$ or \Cref{usual.reflected.filtration.spectral.sequence} to $H = \Psi_p$) collapse immediately.}
\end{equation}
Note that given any $\pos$-stratified noncommutative stack $\cX \in \Strat_\pos$ and any exact functor $\cX \xra{H'} \cV$, we can apply these spectral sequences to (the value under $H'$ of) any of the four filtrations discussed in \Cref{rmk.filtrations.from.stratns}.\footnote{Note that applying the spectral sequence \Cref{trivial.Mobius.spectral.seq.for.Fun.P.V} in this way simply gives the spectral sequences \Cref{usual.filtration.spectral.sequence} \and \Cref{usual.reflected.filtration.spectral.sequence} of \Cref{rmk.spectral.sequences.from.stratns}.}
\end{remark}

\begin{example}[filtered objects and chain complexes]
\label{ex.reflection.and.dold.kan}
Let us specialize \Cref{ex.categorified.mobius.inversion} to the case that our down-finite poset is $\pos = \ZZ_{\geq 0}$. In this case, the gluing diagram is given by
\[
\GD(\cX)
\simeq
\left(
\begin{tikzcd}
\cV
\arrow{r}{\Sigma}[swap]{\sim}
\arrow[bend left=50]{rr}{0}[swap, yshift=-0.15cm]{\Downarrow}
&
\cV
\arrow{r}{\Sigma}[swap]{\sim}
\arrow[bend right=50]{rr}[yshift=0.15cm]{\Uparrow}[swap]{0}
&
\cV
\arrow{r}{\Sigma}[swap]{\sim}
&
\cdots
\arrow{r}{\Sigma}[swap]{\sim}
\arrow[bend left=50]{rr}{0}[swap, yshift=-0.15cm]{\Downarrow}
&
\cV
\arrow{r}{\Sigma}[swap]{\sim}
\arrow[bend right=50]{rr}[yshift=0.15cm]{\Uparrow}[swap]{0}
&
\cV
\arrow{r}{\Sigma}[swap]{\sim}
&
\cdots
\end{tikzcd}
\right)
\in
\LMod^L_{\llax.\ZZ_{\geq 0}}(\PrSt)
~,
\]
and its right-lax limit is the $\infty$-category
\[
\Glue(\cX)
:=
\lim^\rlax_{\llax.\ZZ_{\geq 0}}(\GD(\cX))
\simeq
\Ch_{\geq 0}(\cV)
\]
of \bit{chain complexes} in $\cV$ concentrated in nonnegative degrees.\footnote{Informally, an object of $\Ch_{\geq 0}(\cV)$ may be thought of as a functor $(\ZZ_{\geq 0})^\op \ra \cV$ equipped with a coherent system of nullhomotopies for its $i$-fold composites for all $i \geq 2$. (These are equivalent to gapped objects in $\cV$ (see \cite[Definition 1.2.2.2 and Remark 1.2.2.3]{LurieHA}).)} \Cref{intro.thm.cosms}\Cref{intro.main.thm.macrocosm} grants an equivalence
\[
\Fun ( \ZZ_{\geq 0} , \cV)
\simeq
\Ch_{\geq 0}(\cV)
~,
\]
which is closely related to Lurie's Dold--Kan correspondence for stable $\infty$-categories; more precisely, it recovers a version of \cite[Lemma 1.2.2.4]{LurieHA}.
\end{example}

\begin{warning}
As illustrated by \Cref{ex.reflection.and.dold.kan}, reflection does \textit{not} preserve the property of being a strict (as opposed to lax) left $\pos$-module.
\end{warning}

\begin{example}[Verdier duality]
\label{ex.reflection.and.Verdier}
Let $T$ be a locally compact Hausdorff topological space equipped with a stratification $T \ra \pos$. Assume for simplicity that $\pos$ is finite, and choose any presentable stable $\infty$-category $\cV$.

\begin{enumerate}

\item\label{reflection.and.Verdier.for.sheaves}

Recall that Verdier duality \cite[Theorem 5.5.5.1]{LurieHA} asserts an equivalence
\begin{equation}
\label{Verdier.duality.for.sheaves}
\begin{tikzcd}
\Shv_\cV(T)^\op
\arrow[leftrightarrow]{r}{\DD_T}[swap]{\sim}
&
\Shv_{\cV^\op}(T)
\end{tikzcd}
~.
\end{equation}
On the one hand, by \Cref{subsection.cbl} we have a canonical stratification of
\[
\Shv_\cV(T)
\]
over $\pos^\op$, which by \Cref{intro.thm.reflection}\Cref{intro.reflection.thm.metacosm} (as interpreted via \Cref{rmk.reflection.concretely}) determines a stratification of
\[
\Shv_\cV(T)^\op
\]
over $\pos^\op$. On the other hand, we similarly have a canonical stratification of
\[
\Shv_{\cV^\op}(T)
\]
over $\pos^\op$. It is not hard to see that the equivalence \Cref{Verdier.duality.for.sheaves} respects these $\pos^\op$-stratifications.\footnote{This follows from the general fact that Verdier duality is compatible with open embeddings, in the sense that for any open subset $U \xhookra{j} T$ we have a commutative diagram
\[ \begin{tikzcd}[ampersand replacement=\&]
\Shv_\cV(T)^\op
\arrow[leftrightarrow]{r}{\DD_T}[swap]{\sim}
\&
\Shv_{\cV^\op}(T)
\\
\Shv_\cV(U)^\op
\arrow[hook]{u}{(j_*)^\op}
\arrow[leftrightarrow]{r}{\sim}[swap]{\DD_U}
\&
\Shv_{\cV^\op}(U)
\arrow[hook]{u}[swap]{j_!}
\end{tikzcd}
~.
\]
} In particular, it interchanges the induced filtrations of \Cref{rmk.filtrations.from.stratns}:
\[
\fil_R^\bullet
\simeq
\DD_T
\circ
\fil^L_\bullet
\circ
\DD_T
\qquad
\text{and}
\qquad
\fil^R_\bullet
\simeq
\DD_T
\circ
\fil_L^\bullet
\circ
\DD_T
~.
\]

\item\label{reflection.and.Verdier.for.cbl.sheaves}

Suppose that the dualizing complex $\omega_T \in \Shv_\cV(T)$ is $\pos$-constructible. Then, the Verdier duality equivalence \Cref{Verdier.duality.for.sheaves} extends to a commutative square
\begin{equation}
\label{Verdier.duality.for.sheaves.and.cbl.sheaves}
\begin{tikzcd}[row sep=1.5cm, column sep=1.5cm]
\Shv_\cV(T)^\op
\arrow[leftrightarrow]{r}{\DD_T}[swap]{\sim}
&
\Shv_{\cV^\op}(T)
\\
\Shv_\cV^{\pos\textup{-}\cbl}(T)^\op
\arrow[hook]{u}
\arrow[dashed, leftrightarrow]{r}[swap]{\DD_T^{\pos\textup{-}\cbl}}{\sim}
&
\Shv_{\cV^\op}^{\pos\textup{-}\cbl}(T)
\arrow[hook]{u}
\end{tikzcd}
~.
\end{equation}
The lower two terms in diagram \Cref{Verdier.duality.for.sheaves.and.cbl.sheaves} inherit $\pos^\op$-stratifications from the upper two terms, as in \Cref{subsection.cbl}, such that the entire diagram \Cref{Verdier.duality.for.sheaves.and.cbl.sheaves} respects $\pos^\op$-stratifications.

\end{enumerate}
\end{example}

\begin{remark}
In the situation of \Cref{ex.reflection.and.Verdier}\Cref{reflection.and.Verdier.for.cbl.sheaves}, suppose further that $T \ra \pos$ is tamely conical (as in \Cref{ex.tamely.conical.stratd.top.space}). Then, the lower equivalence of diagram \Cref{Verdier.duality.for.sheaves.and.cbl.sheaves} extends to a commutative square
\begin{equation}
\label{Verdier.duality.for.cbl.sheaves.and.Fun.from.Exit}
\begin{tikzcd}[row sep=1.5cm, column sep=1.5cm]
\Shv_\cV^{\pos\textup{-}\cbl}(T)^\op
\arrow[leftrightarrow]{r}{\DD_T^{\pos\textup{-}\cbl}}[swap]{\sim}
\arrow[leftrightarrow]{d}[sloped, anchor=north]{\sim}
&
\Shv_{\cV^\op}^{\pos\textup{-}\cbl}(T)
\arrow[leftrightarrow]{d}[sloped, anchor=south]{\sim}
\\
\Fun ( \Exit(T) , \cV)^\op
\arrow[dashed, leftrightarrow]{r}[swap]{\sim}
&
\Fun ( \Exit(T) , \cV^\op )
\end{tikzcd}
\end{equation}
of equivalences. The lower two terms in diagram \Cref{Verdier.duality.for.cbl.sheaves.and.Fun.from.Exit} inherit $\pos^\op$-stratifications from the functor $\Exit(T) \ra \pos$ as in \Cref{ex.tamely.conical.stratd.top.space}, and it is not hard to see that the entire diagram \Cref{Verdier.duality.for.cbl.sheaves.and.Fun.from.Exit} respects $\pos^\op$-stratifications.
\end{remark}


\subsection{t-structures}
\label{subsection.t.structures}


As we now describe, stratifications give a method for constructing new t-structures from old ones in the spirit of the construction of perverse sheaves \cite{BBD-perv}; applied to the geometric stratification of $\Spectra^{\gen G}$ of \Cref{intro.thm.gen.G.spt} for a finite group $G$, this technique can also be used to obtain the slice filtration \cite{HillYarnall-slice}.

Let $\cZ_\bullet$ be a stratification of $\cX$ over $\pos$. Suppose that each stratum $\cX_p$ is endowed with a t-structure. Then, by \cite[Proposition 1.4.4.11]{LurieHA} we obtain a t-structure on $\cX$, whose connective objects are precisely those that are taken to connective objects by all geometric localization functors $\cX \xra{\Phi_p} \cX_p$, i.e.\! the composites
\begin{equation}
\label{composites.pL.y.for.t.structures}
\cX
\xlongra{y}
\cZ_p
\xra{p_L}
\cX_p
~.
\end{equation}

Suppose that the functors \Cref{composites.pL.y.for.t.structures} are jointly conservative, e.g.\! as guaranteed by $\pos$ being artinian (recall \Cref{rmk.artinian.conservativity}). Then, this t-structure becomes particularly computable: we can \textit{also} explicitly describe its coconnective objects. Namely, they are precisely those that are taken to coconnective objects by all of the composites
\begin{equation}
\label{composites.pR.y.for.t.structures}
\cX
\xlongra{y}
\cZ_p
\xra{p_R}
\cX_p
~.
\end{equation}
We may see this as follows. Given any down-closed subset $\sD \subseteq \pos$, let us write
\[
\cZ_\sD
:=
\bigcup_{p \in \sD} \cZ_p
\qquad
\text{and}
\qquad
\cX_\sD
:=
\cX / \cZ_\sD
~.
\]
Then, from \Cref{intro.thm.fund.opns} we obtain
\begin{itemize}

\item a restricted stratification of $\cZ_\sD$ over $\sD$, whose $p\th$ stratum is $\cX_p$ for all $p \in \sD$, as well as

\item a quotient stratification of $\cX_\sD$ over $\pos \backslash \sD$, whose $p\th$ stratum is $\cX_p$ for all $p \in \pos \backslash \sD$.

\end{itemize}
Hence, $\cZ_\sD$ and $\cX_\sD$ both inherit t-structures, such that in the recollement
\[ \begin{tikzcd}[column sep=1.5cm]
\cZ_\sD
\arrow[hook, bend left=45]{r}[description]{i_L}
\arrow[leftarrow]{r}[transform canvas={yshift=0.1cm}]{\bot}[swap,transform canvas={yshift=-0.1cm}]{\bot}[description]{\yo}
\arrow[bend right=45, hook]{r}[description]{i_R}
&
\cX
\arrow[bend left=45]{r}[description]{p_L}
\arrow[hookleftarrow]{r}[transform canvas={yshift=0.1cm}]{\bot}[swap,transform canvas={yshift=-0.1cm}]{\bot}[description]{\nu}
\arrow[bend right=45]{r}[description]{p_R}
&
\cX_\sD
\end{tikzcd}~, \]
the functors $y$ and $\nu$ are t-exact, their left adjoints $i_L$ and $p_L$ are right t-exact (i.e.\! preserve connective objects), and their right adjoints $i_R$ and $p_R$ are left t-exact (i.e.\! preserve coconnective objects).\footnote{Indeed, $y$ and $p_L$ are right t-exact by definition, while $i_L$ and $\nu$ are right t-exact by inspection.} It follows that the functors \Cref{composites.pR.y.for.t.structures} preserve coconnective objects, and the same argument as that for the functors \Cref{composites.pL.y.for.t.structures} proves that they too are jointly conservative.


\subsection{Additive and localizing invariants}
\label{subsection.add.loc.invts}

We discuss the interaction of stratifications with additive and localizing invariants \cite{BGT-K}.

Recall that the ind-completion functor on small stable idempotent-complete $\infty$-categories factors as an equivalence
\[ \begin{tikzcd}
\St^\idem
\arrow{rr}{\Ind}
\arrow[dashed]{rd}[sloped, swap]{\sim}
&
&
\PrLSt
\\
&
\PrLomegaSt
\arrow[hook]{ru}
\end{tikzcd} \]
onto the subcategory
\begin{itemize}
\item whose objects are the compactly generated stable $\infty$-categories and
\item whose morphisms are those functors that preserve both colimits and compact objects.
\end{itemize}
In fact, for every morphism $\cC \xra{F} \cD$ in $\St^\idem$, the right adjoint
\[ \begin{tikzcd}[column sep=2cm]
\Ind(\cC)
\arrow[transform canvas={yshift=0.9ex}]{r}{\Ind(F) := F_!}
\arrow[dashed, leftarrow, transform canvas={yshift=-0.9ex}]{r}[yshift=-0.2ex]{\bot}[swap]{F^*}
&
\Ind(\cD)
\end{tikzcd} \]
is automatically colimit-preserving, and it preserves compact objects if and only if $F$ itself admits a right adjoint.  Hence, the composite functor
\[ \begin{tikzcd}
\St
\arrow{rr}{\Ind}
\arrow{rd}[swap, sloped]{(-)^\idem}
&
&
\PrLSt
\\
&
\St^\idem
\arrow{ru}[swap, sloped]{\Ind}
\end{tikzcd} \]
carries
\begin{enumerate}
\item exact sequences to recollements,
\item split-exact sequences to recollements in which $i_R$ preserves colimits, and
\item stable recollements (i.e.\! recollements among stable $\infty$-categories (\Cref{defn.stable.recollement})) to recollements in which $i_R$ preserves both colimits and compact objects.
\end{enumerate}

Fix a stable $\infty$-category $\cC \in \St$. We say that a full stable subcategory of $\cC$ is
\begin{enumerate}
\item \bit{thick} if it is idempotent-complete (relative to $\cC$),
\item \bit{split} if it is thick and its inclusion admits a right adjoint, and
\item \bit{closed} if it is split and the right adjoint to its inclusion admits a further right adjoint.
\end{enumerate}
With the evident notation, we then have a sequence of fully faithful functors
\[
\clssub_\cC
\longhookra
\splitsub_\cC
\longhookra
\thicksub_\cC
\xlonghookra{\Ind}
\Cls_{\Ind(\cC)}
\]
among posets, and we may define three sorts of stratifications of $\cC$ as stratifications of $\Ind(\cC)$ that factor accordingly.

\begin{remark}
A convergent stratification of $\Ind(\cC)$ gives, in particular, a means of reconstructing its full subcategory $\cC \subseteq \Ind(\cC)$. However, this is somewhat unsatisfying, as it will not generally reconstruct $\cC$ in terms of subcategories thereof: neither the geometric localization functors nor the gluing functors for the stratification of $\Ind(\cC)$ need preserve compact objects. On the other hand, given a stratification
\[
\pos
\longra
\clssub_\cC
\]
(as defined just above), the geometric localization functors and gluing functors of the composite stratification
\[
\pos
\longra
\clssub_\cC
\xlonghookra{\Ind}
\Cls_{\Ind(\cC)}
\]
do preserve compact objects. Indeed, these are precisely the stable stratifications introduced in \Cref{rmk.extended.intro.variants.of.metacosm}\Cref{rmk.extended.intro.part.stable.stratns} (under the assumption that $\cC$ is idempotent-complete), and the metacosm reconstruction theorem indicated there expresses $\cC$ entirely in terms of subcategories thereof.
\end{remark}

Now, recall that for a presentable stable $\infty$-category $\cV$, a $\cV$-valued \bit{additive} (resp.\! \bit{localizing}) \bit{invariant} is a functor
\[
\St
\longra
\cV
\]
that
\begin{itemize}
\item preserves zero objects and filtered colimits,
\item inverts Morita equivalences (i.e.\! factors through $\St \xra{(-)^\idem} \St^\idem$), and
\item carries split-exact (resp.\! exact) sequences to co/fiber sequences;
\end{itemize}
key examples include algebraic K-theory (the universal additive invariant), nonconnective algebraic K-theory (the universal localizing invariant), and topological Hochschild homology (a localizing invariant).  It follows that additive invariants carry
\begin{enumerate}
\setcounter{enumi}{1}
\item split-exact sequences to split co/fiber sequences and
\item recollements to doubly-split co/fiber sequences (i.e.\! co/fiber sequences equipped with two splittings),
\end{enumerate}
while localizing invariants carry
\begin{enumerate}
\item exact sequences to co/fiber sequences.
\item split-exact sequences to split co/fiber sequences, and
\item recollements to doubly-split co/fiber sequences.
\end{enumerate}

Putting these observations together, we find that a stratification of $\cC$ in each of the senses above determines a corresponding structure on the value at $\cC$ of any additive and/or localizing invariant. 
For instance, given an additive invariant
\[
\St
\xlongra{F}
\cV
~,
\]
a convergent stratification
\[
\pos
\xra{\cZ_\bullet}
\splitsub_\cC
\]
determines a direct sum decomposition
\[
F(\cC)
\simeq
\bigoplus_{p \in \pos}
F(\cC_p)
~,
\]
where $\cC_p$ denotes the $p\th$ stratum of the stratification: the stable quotient of $\cZ_p$ by $\brax{ \cZ_q }_{q<p}^\thick$ (using \Cref{notation.for.thick.closure}). Similarly, a stratification
\[
\pos
\longra
\thicksub_\cC
\]
induces a $\pos$-filtration (as studied e.g.\! in \Cref{ex.categorified.mobius.inversion}) on the value at $\cC$ of any localizing invariant (e.g.\! nonconnective algebraic K-theory).





\section{Stratified noncommutative geometry}
\label{section.strat}

In this section, we introduce the theory of stratified noncommutative geometry.  From here onwards, for simplicity we revert to standard categorical terminology, in particular opting for the term ``presentable stable $\infty$-category'' over the term ``noncommutative stack'' employed in \S\S\ref{section.intro}-\ref{subsection.intro.detailed.overview}.

This section is organized as follows.
\begin{itemize}

\item[\Cref{subsection.posets}:] We collect some notation and terminology regarding posets.

\item[\Cref{subsection.recollements}:] We prove the macrocosm reconstruction theorem (\Cref{intro.thm.cosms}\Cref{intro.main.thm.macrocosm}) for recollements, i.e.\! stratifications over $[1]$.

\item[\Cref{subsection.closed.subcats}:] We study the basic features of closed subcategories (called ``closed noncommutative substacks'' in \Cref{section.intro}).

\item[\Cref{subsections.stratns}:] We recall the definition of a stratification and related notions.

\item[\Cref{subsection.reconstrn.thm.for.stratns}:] We prove the macrocosm reconstruction theorem (\Cref{intro.thm.cosms}\Cref{intro.main.thm.macrocosm}) as \Cref{macrocosm.thm}.  This follows easily from the metacosm reconstruction theorem (\Cref{intro.thm.cosms}\Cref{intro.main.thm.metacosm}), which we prove in \Cref{section.reconstrn}.  We also explain the entire theory in the particular case of stratifications over $[2]$ as \Cref{ex.gluing.stuff.over.brax.two}.

\item[\Cref{subsection.microcosm.and.nanocosm.morphisms}:] We explain the microcosm and nanocosm morphisms (over an arbitrary poset).

\item[\Cref{subsection.strict.objects}:] We explain the theory of strict objects in stratified presentable stable $\infty$-categories.

\end{itemize}

\begin{local}
In this section, we fix a presentable stable $\infty$-category $\cX$ and a poset $\pos$.
\end{local}

\subsection{Posets}
\label{subsection.posets}

In this subsection, we collect some basic notation, terminology, and facts regarding posets.

\begin{definition}
\label{defn.convex.subset}
A \bit{convex subset} of $\pos$ is a full subposet $\sC \subseteq \pos$ satisfying the condition that if $p,r \in \sC$ and $p \leq q \leq r$ in $\pos$ then also $q \in \sC$.  We write $\Conv_\pos$ for the poset of convex subsets of $\pos$ ordered by inclusion.
\end{definition}

\begin{notation}
For any element $p \in \pos$, we simply write $p \in \Conv_\pos$ (rather than $\{ p \}$) for the corresponding singleton convex subset of $\pos$ that it defines.
\end{notation}

\begin{definition}
A \bit{down-closed subset} of $\pos$ is a full subposet $\sD \subseteq \pos$ satisfying the condition that if $q \in \sD$ and $p \leq q$ then also $p \in \sD$.  We write $\Down_\pos$ for the poset of down-closed subsets of $\pos$ ordered by inclusion.
\end{definition}

\begin{observation}
There is a containment $\Down_\pos \subseteq \Conv_\pos$: a down-closed subset of $\pos$ is automatically convex.
\end{observation}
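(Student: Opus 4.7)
The plan is to verify the convexity condition of \Cref{defn.convex.subset} directly from the definition of a down-closed subset. Fix an arbitrary down-closed subset $\sD \subseteq \pos$; the goal is to show that $\sD$ is also convex as a full subposet of $\pos$.

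First I would take any three elements $p, q, r \in \pos$ with $p, r \in \sD$ and $p \leq q \leq r$, and aim to show $q \in \sD$. Since $r \in \sD$ and $q \leq r$, the defining property of a down-closed subset (if $q' \in \sD$ and $p' \leq q'$ then $p' \in \sD$, applied with $q' = r$ and $p' = q$) immediately yields $q \in \sD$. This exhausts the convexity condition, so $\sD \in \Conv_\pos$; the assignment $\sD \mapsto \sD$ is evidently an order-preserving injection, giving the containment $\Down_\pos \subseteq \Conv_\pos$ as full subposets of the poset of full subposets of $\pos$.

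There is essentially no obstacle here: the argument uses only the element $r \in \sD$ (not $p$) from the convexity hypothesis, so down-closure is in fact strictly stronger than convexity. I would perhaps briefly remark on this asymmetry, and note the obvious dual fact that up-closed subsets are likewise convex, to orient the reader for later uses (e.g.\! the complement $\pos \backslash \sD$ of a down-closed subset is up-closed and hence convex, which will matter for the quotient stratifications of \Cref{intro.thm.fund.opns}\Cref{intro.main.thm.operations.quotient}).
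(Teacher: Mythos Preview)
Your argument is correct and is exactly the direct verification the paper has in mind; in fact the paper states this as an observation without proof, since it follows immediately from the definitions as you show.
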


\begin{notation}
Choose any $\sC \in \Conv_\pos$.
\begin{enumerate}
\item We write
\[ ^\leq \sC := \{ p \in \pos : p \leq q \textup{ for some } q \in \sC \} \in \Down_\pos \]
for the down-closure of $\sC$ in $\pos$.
\item We write
\[ ^< \sC := ( ^\leq \sC ) \backslash \sC \in \Down_\pos \]
for the down-closed subset of $\pos$ obtained by removing the elements of $\sC$ from $^\leq \sC$.
\end{enumerate}
We also write $^{\not\leq} \sC := \pos \backslash (^\leq \sC)$ and $^{\not<} \sC := \pos \backslash (^< \sC)$.
\end{notation}

\begin{definition}
We say that the poset $\pos$ is \bit{down-finite} if for every $p \in \pos$ the subset $(^\leq p) \subseteq \pos$ is finite.
\end{definition}

\begin{definition}
We say that the poset $\pos$ is \bit{artinian} if it admits no injective (or equivalently conservative) functors from $\NN^\op$.
\end{definition}

\begin{definition}
An \bit{interval} in a poset $\pos$ is a subset of the form $\pos_{p//q} \subseteq \pos$.
\end{definition}

\begin{notation}
Given a functor $\pos \ra \posQ$ between posets, for any subset $\sS \subseteq \posQ$ we write $\pos_{\sS} \subseteq \pos$ for its preimage.
\end{notation}

\begin{observation}
\label{obs.colimits.in.posets.are.unions}
For any surjective functor $\cI \xra{G} \cJ$ among $\infty$-categories and any functor $\cJ \xra{F} \pos$, we have a canonical identification $\colim_{\cI}(F G) \simeq \colim_\cJ(F)$.\footnote{In particular, each colimit exists if and only if the other does.}  We use this fact without further comment.
\end{observation}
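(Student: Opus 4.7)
The plan is to deduce the claim from the universal property of colimits, exploiting the fact that the target $\pos$, being a poset, is a $0$-truncated $\infty$-category. Concretely, for any functor $G \colon \cL \to \pos$ from an $\infty$-category $\cL$ and any element $p \in \pos$, the mapping space
\[
\hom_\pos(\colim_\cL G, p) \simeq \lim_{\ell \in \cL} \hom_\pos(G(\ell), p)
\]
is either contractible or empty, and it is contractible precisely when $G(\ell) \leq p$ holds for every object $\ell \in \cL$. Applying this to $F$ and to $F\varphi$, the claim reduces to checking that the two conditions ``$F(k) \leq p$ for all $k \in \cK$'' and ``$F(\varphi(k')) \leq p$ for all $k' \in \cK'$'' are equivalent, for every $p \in \pos$.

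This in turn reduces to the equality of subsets $\{ F(k) : k \in \cK \} = \{ F(\varphi(k')) : k' \in \cK' \}$ of $\pos$. The containment $\supseteq$ is automatic, and the reverse containment follows from the essential surjectivity of $\varphi$ on objects (which is what ``surjective'' means in the $\infty$-categorical setting); this property is preserved under postcomposition with $F$, since equivalent objects of $\cK$ are sent to \emph{equal} elements of the poset $\pos$. Applying Yoneda in $\pos$ then yields the canonical equivalence $\colim_{\cK'}(F\varphi) \simeq \colim_\cK F$, and the same argument shows that one colimit exists if and only if the other does. The only point requiring any care is pinning down the meaning of ``surjective'' — but anything at least as strong as essential surjectivity on objects suffices, so this is not a substantive obstacle.
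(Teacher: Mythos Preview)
Your proof is correct. The paper itself offers no proof for this Observation (it is stated as a fact to be used ``without further comment''), and the subsequent Remark simply articulates the underlying intuition that colimits in posets are unions and hence depend only on the image of the diagram. Your argument via the universal property and the $0$-truncatedness of $\pos$ makes this precise and is exactly the sort of justification the authors are leaving to the reader.
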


\begin{remark}
\Cref{obs.colimits.in.posets.are.unions} may be articulated informally as the assertion that colimits in posets are all simply unions (taking $\cI$ to be a set).
\end{remark}

\subsection{Recollements}
\label{subsection.recollements}

In this subsection, we record the (simple and classical) macrocosm reconstruction theorem for recollements (\Cref{defn.recollement.in.intro}).

\begin{lemma}
\label{lem.reconstrn.for.recollement}
Given a recollement \Cref{recollement.in.intro}, the canonical functor
\[
\cX
\longra
\lim^\rlax \left( \cZ \xra{p_L i_R} \cU \right)
:=
\left\{ \left(
Z \in \cZ
~,~
U \in \cU
~,~
\begin{tikzcd}
U
\arrow{d}
\\
p_L i_R Z
\end{tikzcd}
\right)
\right\}
\]
given by the association
\[
X
\longmapsto
(yX \longmapsto p_L i_R yX \longla p_L X)
:=
\left(
\yo X \in \cZ
~,~
p_L X \in \cU
~,~
\begin{tikzcd}
p_L X
\arrow{d}
\\
p_L i_R \yo X
\end{tikzcd}
\right)
\]
is an equivalence.
\end{lemma}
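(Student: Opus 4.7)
The approach is to construct the inverse functor explicitly and verify invertibility by combining the recollement axioms \Cref{equalities.in.defn.of.recollement} with the natural pullback square built from the units of the four relevant adjunctions.

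First, I would verify the pullback square \Cref{microcosm.pullback.square.in.intro} indicated in the discussion preceding the lemma: for every $\cF \in \cX$, the square
\[
\begin{tikzcd}
\cF \arrow{r} \arrow{d} & \nu p_L \cF \arrow{d} \\
i_R \yo \cF \arrow{r} & \nu p_L i_R \yo \cF
\end{tikzcd}
\]
(with horizontals given by units of $p_L \adj \nu$ and verticals by units of $\yo \adj i_R$) is a pullback in $\cX$. Taking vertical fibers and using that $\nu p_L$ is exact, it suffices to check that the induced map on fibers is an equivalence. The fiber of $\cF \to i_R \yo \cF$ lies in $\ker(\yo) = \im(\nu)$ (since $\yo$ is exact and $\yo i_R \simeq \id_\cZ$ because $i_R$ is fully faithful); for any $\cG \in \im(\nu)$, the unit $\cG \to \nu p_L \cG$ is an equivalence (since $p_L \nu \simeq \id_\cU$, as $\nu$ is fully faithful).

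Next, I would define a putative inverse functor
\[
G: \lim^\rlax \left( \cZ \xra{p_L i_R} \cU \right) \longra \cX
\]
by sending a triple $(Z \in \cZ, U \in \cU, \alpha: U \to p_L i_R Z)$ to the pullback $G(Z,U,\alpha) := i_R Z \times_{\nu p_L i_R Z} \nu U$, where the right vertical is $\nu(\alpha)$ and the lower horizontal is the unit of $p_L \adj \nu$ at $i_R Z$. Naturality in the triple is immediate from the pullback construction.

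Finally, I would verify that $F$ (the functor of the lemma) and $G$ are mutually inverse. The composite $G \circ F$ is naturally equivalent to $\id_\cX$ by the first step applied pointwise. For $F \circ G$, I would exploit that $\yo$ preserves limits (being a right adjoint), that $p_L$ preserves pullbacks (since in stable $\infty$-categories pullbacks coincide with pushouts and $p_L$ preserves colimits), together with the identities $\yo i_R \simeq \id_\cZ$, $\yo \nu \simeq 0$, and $p_L \nu \simeq \id_\cU$, to compute $\yo G(Z,U,\alpha) \simeq Z$ and $p_L G(Z,U,\alpha) \simeq U \times_{p_L i_R Z} p_L i_R Z \simeq U$, with the second projection recovering $\alpha$ as the gluing datum. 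The main obstacle is the naturality bookkeeping needed to confirm that, under these identifications, the canonical gluing datum of $F(G(Z,U,\alpha))$ is indeed $\alpha$; this is formal but requires careful tracking of units.
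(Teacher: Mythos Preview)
Your proposal is correct and follows essentially the same approach as the paper: construct an explicit inverse via the pullback $i_R Z \times_{\nu p_L i_R Z} \nu U$ and verify both composites are identities, the key input being the pullback square coming from the recollement axioms. The only cosmetic differences are that the paper verifies the pullback square by taking fibers of the \emph{horizontal} morphisms (yours takes vertical fibers), and the paper dismisses the $F\circ G$ direction as ``immediately seen to be the identity'' where you spell out the computation of $\yo G$ and $p_L G$.
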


\begin{proof}
We claim that this functor has an inverse, given by the association
\[
\lim \left(
\begin{tikzcd}
&
\nu U
\arrow{d}
\\
i_R Z
\arrow{r}
&
\nu p_L i_R Z
\end{tikzcd}
\right)
\longmapsfrom
\left(
Z \in \cZ
~,~
U \in \cU
~,~
\begin{tikzcd}
U
\arrow{d}
\\
p_L i_R Z
\end{tikzcd}
\right)
~.
\]
Indeed, the composite endofunctor of $\lim^\rlax \left( \cZ \xra{p_L i_R} \cU \right)$ is immediately seen to be the identity.  To see that the composite endofunctor of $\cX$ is also the identity, it suffices to check that for any $X \in \cX$ the commutative square
\begin{equation}
\label{square.that.must.be.pb.in.proof.of.macrocosm.for.recollement}
\begin{tikzcd}[ampersand replacement=\&]
X
\arrow{r}
\arrow{d}
\&
\nu p_L X
\arrow{d}
\\
i_R \yo X
\arrow{r}
\&
\nu p_L i_R \yo X
\end{tikzcd}
\end{equation}
is a pullback square.  As a result of the equality $\im(\nu) = \ker(\yo)$, the fibers of the horizontal morphisms in the commutative square \Cref{square.that.must.be.pb.in.proof.of.macrocosm.for.recollement} are equivalent.
\end{proof}

\begin{warning}
Recollements play a central role in our work.  We generally use the notations of diagram \Cref{recollement.in.intro} for the various functors involved (e.g.\! $i_L$ or $p_R$), unless there is more pertinent notation in a particular context (such as in our study of genuine $G$-spectra). For simplicity and readability we do not decorate these symbols further, so that in a single expression (e.g.\! a composite functor) these various symbols may be referring to \textit{different} recollements -- some of which may not even have been explicitly indicated.  
We hope that the meanings of these functors are always made clear by the context.
\end{warning}

\subsection{Closed subcategories}
\label{subsection.closed.subcats}

In this subsection, we study some basic properties of closed subcategories (a.k.a. closed noncommutative substacks).

\begin{definition}
\label{defn.clsd.subcat}
For simplicity, here we use the term \bit{closed subcategory} of $\cX$ in place of the term ``closed noncommutative substack'' of $\cX$  (\Cref{defn.closed.nc.substack.intro}).  We write $\Cls_\cX$ for the poset of closed subcategories of $\cX$ ordered by inclusion.
\end{definition}

\begin{example}
\label{ex.cpct.objs.gen.clsd.subcat}
Given a set $\{ K_s \in \cX^\omega \}_{s \in S}$ of compact objects of $\cX$, the full stable subcategory
that they generate under colimits is a closed subcategory of $\cX$: the restricted Yoneda embedding commutes with filtered colimits, and hence admits a further right adjoint.
\end{example}

\begin{notation}
\label{notn.clsd.subcat.gend.by.cpct.objs}
In the situation of \Cref{ex.cpct.objs.gen.clsd.subcat}, we write
\[ \brax{K_s }_{s \in S} \in \Cls_\cX \]
for the closed subcategory of $\cX$ generated by the objects $\{K_s \in \cX^\omega \}_{s \in S}$.
\end{notation}

\begin{notation}
Given a full presentable stable subcategory $\cZ \subseteq \cX$, we write
\[ \cZ^\bot := \{ U \in \cX : \ulhom_\cX(Z,U) \simeq 0 \textup{ for all } Z \in \cZ \} \subseteq \cX \]
for its right-orthogonal subcategory.
\end{notation}

\begin{observation}
\label{obs.right.orthogonal.of.full.presentable.stable.subcat.is.presentable.quotient}
A full presentable stable subcategory $\cZ \subseteq \cX$ determines a diagram
\begin{equation}
\label{quotient.by.a.full.presentable.stable.subcategory}
\begin{tikzcd}[column sep=1.5cm]
\cZ
\arrow[hook, yshift=0.9ex]{r}{i}
\arrow[leftarrow, yshift=-0.9ex]{r}[yshift=-0.2ex]{\bot}[swap]{i^R}
&
\cX
\arrow[yshift=0.9ex]{r}{j^L}
\arrow[hookleftarrow, yshift=-0.9ex]{r}[yshift=-0.2ex]{\bot}[swap]{j}
&
\cZ^\bot
\end{tikzcd}
\end{equation}
in $\Cat$, in which the functors $i$ and $j$ are the defining fully faithful inclusions and the functor $j^L$ is determined by the formula
\[
j j^L
\simeq
\cofib
\left(
i i^R
\xlongra{\varepsilon}
\id_\cX
\right)
~.
\]
Moreover, the commutative square
\[ \begin{tikzcd}
\cZ
\arrow[hook]{r}{i}
\arrow{d}
&
\cX
\arrow{d}{j^L}
\\
0
\arrow[hook]{r}
&
\cZ^\bot
\end{tikzcd} \]
is a pushout square in $\PrLSt$: given a morphism
\[
\cX
\xlongra{F}
\cY
\]
in $\PrLSt$ such that $Fi \simeq 0$, we obtain a colimit-preserving factorization
\[ \begin{tikzcd}
\cX
\arrow{r}{j^L}
\arrow{rd}[sloped, swap]{F}
&
\cZ^\perp
\arrow[dashed]{d}{Fj}
\\
&
\cY
\end{tikzcd}~. \]
\end{observation}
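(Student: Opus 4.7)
The plan is to verify each claim in the observation in turn: (i) the right adjoint $i^R$ exists, (ii) the cofiber of the counit $\varepsilon : ii^R \Rightarrow \id_\cX$ defines a functor landing in $\cZ^\bot$, (iii) this cofiber functor is left adjoint to the inclusion $j : \cZ^\bot \hookrightarrow \cX$, and (iv) the resulting square is a pushout in $\PrLSt$. Throughout, I will exploit full faithfulness of $i$ (which yields $i^R i \simeq \id_\cZ$) and the ``full presentable stable subcategory'' hypothesis (which I read as saying $i$ lies in $\PrLSt$, i.e.\ preserves colimits).

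First, since $i$ is a colimit-preserving functor between presentable $\infty$-categories, the adjoint functor theorem produces a right adjoint $i^R$, supplying the left-hand adjunction in diagram \Cref{quotient.by.a.full.presentable.stable.subcategory}. I then define the endofunctor
\[
L := \cofib\!\left( ii^R \xra{\varepsilon} \id_\cX \right) : \cX \longra \cX .
\]
To see that $LX \in \cZ^\bot$ for all $X \in \cX$, I apply $\ulhom_\cX(iZ,-)$ to the defining cofiber sequence for $LX$ and note that, by the adjunction $i \dashv i^R$ and the counit equivalence $i^R i \simeq \id_\cZ$, the canonical map
\[
\ulhom_\cX(iZ,ii^RX) \simeq \ulhom_\cZ(Z,i^RX) \simeq \ulhom_\cX(iZ,X)
\]
is an equivalence; so $\ulhom_\cX(iZ,LX) \simeq 0$. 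Thus $L$ factors (uniquely up to contractible choice) through $j$ as $L \simeq jj^L$ for a functor $j^L : \cX \ra \cZ^\bot$.

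Next I verify $j^L \dashv j$. For any $U \in \cZ^\bot$, the object $i^R(jU)$ corepresents $\ulhom_\cX(i(-),jU)$ on $\cZ$, which is zero by definition of $\cZ^\bot$, so $i^R j \simeq 0$ and therefore $LjU \simeq jU$; using that $j$ is fully faithful, this says $j^L j \simeq \id_{\cZ^\bot}$. For the unit, the natural map $X \ra LX = jj^LX$ coming from the cofiber sequence is the candidate; to check the adjunction identity it suffices to verify that $\ulhom_\cX(X,jU) \simeq \ulhom_\cX(LX,jU)$ for $U \in \cZ^\bot$, and this follows by applying $\ulhom_\cX(-,jU)$ to the cofiber sequence $ii^RX \ra X \ra LX$ and using the vanishing $\ulhom_\cX(ii^RX,jU) \simeq \ulhom_\cZ(i^RX,i^R jU) \simeq 0$ just established.

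For the pushout claim, let $\cX \xra{F} \cY$ be a morphism in $\PrLSt$ with $Fi \simeq 0$. Setting $G := Fj : \cZ^\bot \ra \cY$, colimit-preservation of $F$ gives
\[
Gj^L X \simeq Fjj^LX \simeq F\,\cofib(ii^RX \to X) \simeq \cofib(Fii^RX \to FX) \simeq FX,
\]
so $F \simeq Gj^L$. Conversely, any colimit-preserving $G' : \cZ^\bot \ra \cY$ satisfying $G' j^L \simeq F$ must agree with $Fj$, because $G' \simeq G' j^L j \simeq Fj = G$; the space of such factorizations is contractible by the same $j^L j \simeq \id$ argument applied to the spaces of natural equivalences, proving the universal property. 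The part I expect to require the most care is making this last ``contractibility of factorizations'' argument fully rigorous at the level of $\infty$-categories rather than homotopy categories; the cleanest route is to observe that $j^L : \cX \ra \cZ^\bot$ is a reflective localization in $\PrLSt$, so by the standard characterization of such localizations (e.g.\ \cite[Proposition 5.5.4.20]{LurieHTT} combined with stability), it is the pushout against $0$ of the subcategory of $j^L$-equivalences -- which is precisely the colimit-closure of $\cZ$, namely $\cZ$ itself.
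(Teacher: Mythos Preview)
The paper states this as an \emph{Observation} and gives no proof; it is treated as a standard fact about presentable stable $\infty$-categories that the reader is expected to accept. Your proposal supplies exactly the routine verification that the authors elide, and the steps (i)--(iv) are all correct and constitute the expected argument.

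One small point worth making explicit: in step (iv) you define $G = Fj$ and need $G$ to lie in $\PrLSt$, i.e.\ to preserve colimits. This is not immediate from $F$ preserving colimits, since $j$ need not. But it follows from your computation $Fjj^L \simeq F$: a colimit in $\cZ^\bot$ of a diagram $D$ is computed as $j^L(\colim^\cX jD)$, so $Fj(\colim^{\cZ^\bot} D) \simeq Fjj^L(\colim^\cX jD) \simeq F(\colim^\cX jD) \simeq \colim FjD$. Your concern about contractibility of the space of factorizations is well-placed; the cleanest resolution is indeed to invoke the universal property of $j^L$ as a Bousfield localization, which says precisely that precomposition with $j^L$ gives an equivalence between left adjoints out of $\cZ^\bot$ and left adjoints out of $\cX$ inverting the unit maps $X \to jj^LX$, and you have shown these are exactly the $F$ with $Fi \simeq 0$.
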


\begin{definition}
\label{defn.presentable.quotient}
In light of \Cref{obs.right.orthogonal.of.full.presentable.stable.subcat.is.presentable.quotient}, given a full presentable stable subcategory $\cZ \subseteq \cX$, we write
\[
\cX / \cZ
:=
\cZ^\bot
\]
for its right-orthogonal subcategory and refer to it as the \bit{presentable quotient} of $\cX$ by $\cZ$.
\end{definition}

\begin{observation}
\label{obs.clsd.subcat.gives.recollement}
In the special case of \Cref{obs.right.orthogonal.of.full.presentable.stable.subcat.is.presentable.quotient} where $\cZ \in \Cls_\cX$ is a closed subcategory, diagram \Cref{quotient.by.a.full.presentable.stable.subcategory} (lies in $\PrLSt$ and therefore) extends to a recollement \Cref{recollement.in.intro} in which
\[
i_L := i
~,
\qquad
y := i^R
~,
\qquad
\cU := \cZ^\perp =: \cX / \cZ
~,
\qquad
p_L := j^L
~,
\qquad
\text{and}
\qquad
\nu := j
~;
\]
the functors $p_L$ and $p_R$ are respectively determined by the formulas
\[
\nu p_L
\simeq
\cofib
\left(
i_L y
\xlongra{\varepsilon}
\id_\cX
\right)
\qquad
\text{and}
\qquad
\nu p_R
\simeq
\fib
\left(
\id_\cX
\xlongra{\eta}
i_R y
\right)
~.
\]
Conversely, any recollement \Cref{recollement.in.intro} arises in this way: the functor $i_L$ is the inclusion of a closed subcategory and the functor $\nu$ is the inclusion of its right-orthogonal subcategory.
\end{observation}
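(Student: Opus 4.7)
The plan is to begin from the closed subcategory $\cZ \in \Cls_\cX$ and build the recollement by combining the given chain $i_L \dashv y \dashv i_R$ (with $i_L,i_R$ fully faithful) with the presentable-quotient construction already supplied by the preceding \Cref{obs.right.orthogonal.of.full.presentable.stable.subcat.is.presentable.quotient}. All functors in sight preserve colimits (the right adjoints $y$ and $p_L$ each admit further right adjoints), so the entire diagram lives in $\PrLSt$, and once all six functors and the two middle adjunctions are in place, the three kernel/image equalities of \Cref{defn.recollement.in.intro} reduce to routine verifications from the co/fiber formulas.

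First, I would take $\nu := j \colon \cZ^\perp \hookrightarrow \cX$ and $p_L := j^L$ from \Cref{obs.right.orthogonal.of.full.presentable.stable.subcat.is.presentable.quotient}, so the formula $\nu p_L \simeq \cofib(i_L y \to \id_\cX)$ holds by construction. That $p_L$ lands in $\cZ^\perp$ is then the computation, for $Z \in \cZ$,
\[
\ulhom_\cX(i_L Z, \nu p_L X) \simeq \cofib\bigl(\ulhom_\cZ(Z, y X) \xra{\sim} \ulhom_\cZ(Z, y i_L y X)\bigr) \simeq 0,
\]
the middle equivalence coming from full faithfulness of $i_L$ (so $y i_L \simeq \id_\cZ$). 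The adjunction $p_L \dashv \nu$ follows by applying $\ulhom_\cX(-, \nu U)$ to the cofiber sequence $i_L y X \to X \to \nu p_L X$ and using that $\ulhom_\cX(i_L Z, \nu U) \simeq 0$.

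Next, to produce $p_R$, I need $\nu$ to preserve colimits, which is where the stability and closedness conditions really enter. For any small diagram $F \colon K \to \cZ^\perp$, the cofiber formula for $\nu p_L$ gives
\[
\nu(\colim_K F) \simeq \nu p_L \nu (\colim_K F) \simeq \cofib\bigl(i_L y \colim_K \nu F \to \colim_K \nu F\bigr) \simeq \colim_K \nu F,
\]
since $y$ preserves colimits and $y \nu F(k) \simeq 0$ for all $k$ (as $\nu F(k) \in \cZ^\perp$). Hence $\nu$ is a morphism in $\PrLSt$ and admits a right adjoint $p_R$ by the adjoint functor theorem; the fiber formula $\nu p_R \simeq \fib(\id_\cX \to i_R y)$ is then obtained dually, using that the composite $\ulhom_\cX(i_L Z, X) \to \ulhom_\cX(i_L Z, i_R y X)$ induced by the unit is an equivalence (both sides being $\ulhom_\cZ(Z, yX)$ by the adjunctions $i_L \dashv y$ and $y \dashv i_R$ combined with $y i_L \simeq \id_\cZ$).

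Finally, I would check the three equalities of \Cref{equalities.in.defn.of.recollement}. The equality $\im(\nu) = \ker(y)$ is the characterization $\cZ^\perp = \{X : yX \simeq 0\}$, immediate from $\ulhom_\cX(i_L Z, X) \simeq \ulhom_\cZ(Z, yX)$. The equality $\im(i_L) = \ker(p_L)$ is the cofiber formula: $\nu p_L X \simeq 0$ iff $i_L y X \xra{\sim} X$, i.e.\! $X \in \im(i_L)$; and $p_L i_L \simeq 0$ by the same formula with $X = i_L Z$. The equality $\im(i_R) = \ker(p_R)$ is dual, from the fiber formula. For the converse, given a recollement \Cref{recollement.in.intro}, the adjoint chain $i_L \dashv y \dashv i_R$ with $i_L$ fully faithful exhibits $\im(i_L)$ as a closed subcategory of $\cX$, and the characterization $\im(\nu) = \ker(y) = \im(i_L)^\perp$ identifies the right half with the presentable quotient; the uniqueness of adjoints then matches the remaining functors with those constructed above. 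The only genuinely delicate step in the whole argument is the colimit-preservation of $\nu$, which is where the stability of $\cX$ is essential.
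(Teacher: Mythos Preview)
Your proof is correct in substance, and in fact provides considerably more detail than the paper itself, which states this result as an \emph{Observation} with no proof at all.

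One small presentational wrinkle: in your argument that $\nu$ preserves colimits, the chain
\[
\nu(\colim_K F) \simeq \nu p_L \nu(\colim_K F) \simeq \cofib\bigl(i_L y\, \colim_K \nu F \to \colim_K \nu F\bigr)
\]
reads as circular, since the second step already substitutes $\colim_K \nu F$ for $\nu(\colim_K F)$. What you presumably mean is that colimits in a reflective subcategory are computed as $p_L(\colim^\cX_K \nu F)$, so $\nu(\colim_K F) \simeq \nu p_L(\colim^\cX_K \nu F)$, and \emph{then} the cofiber formula applies with $X = \colim^\cX_K \nu F$. A cleaner route avoids the cofiber formula entirely: since $\cZ^\perp = \ker(y)$ (immediate from $\ulhom_\cX(i_L Z, U) \simeq \ulhom_\cZ(Z, yU)$) and $y$ preserves colimits (this is precisely the content of closedness), the kernel $\cZ^\perp$ is closed under colimits in $\cX$, hence $\nu$ preserves them. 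This is essentially what the paper records as the subsequent \Cref{obs.closed.subcats.closed.under.colimit.closure}.
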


\begin{observation}
Inclusions of closed subcategories are stable under composition.  Also, if $\cZ,\cY \in \Cls_\cX$ with $\cZ \subseteq \cY \subseteq \cX$, then $\cZ \in \Cls_\cY$.  We use these facts implicitly without further comment.\footnote{These facts are amplified in \Cref{subsection.fund.opns.on.aligned.subcats}.}
\end{observation}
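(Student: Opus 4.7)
The two assertions are essentially formal consequences of the characterization of closed subcategories via iterated right adjoints (Definition \ref{defn.closed.nc.substack.intro}), together with the fact that right adjointability is stable under composition.

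For the first claim, suppose $\cZ \in \Cls_\cY$ and $\cY \in \Cls_\cX$. Let $i : \cZ \hookrightarrow \cY$ and $j : \cY \hookrightarrow \cX$ be the two inclusions, and fix chains of adjoints $i \dashv i^R \dashv i^{RR}$ and $j \dashv j^R \dashv j^{RR}$. The composite inclusion $ji : \cZ \hookrightarrow \cX$ is fully faithful (being a composite of fully faithful functors) and fits into the adjoint chain
\[
ji \,\dashv\, i^R j^R \,\dashv\, j^{RR} i^{RR},
\]
exhibiting $\cZ$ as a closed subcategory of $\cX$. (Equivalently, $i^R j^R$ preserves colimits because each of $i^R$ and $j^R$ does, being left adjoints to $i^{RR}$ and $j^{RR}$, so it admits a further right adjoint.) This is the whole argument; there is no substantial obstacle.

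For the second claim, suppose $\cZ,\cY \in \Cls_\cX$ with $\cZ \subseteq \cY$, and write $k : \cY \hookrightarrow \cX$ and $\ell : \cZ \hookrightarrow \cX$ for the defining inclusions, together with their iterated right adjoints $k \dashv k^R \dashv k^{RR}$ and $\ell \dashv \ell^R \dashv \ell^{RR}$. The inclusion $\cZ \hookrightarrow \cY$ is a well-defined fully faithful functor of presentable stable $\infty$-categories (using that $k$ is fully faithful to identify $\cY$ with its image in $\cX$). I would propose the composite
\[
\cY \xlonghookra{k} \cX \xlongra{\ell^R} \cZ
\]
as the candidate right adjoint. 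The adjunction identity is immediate from fully faithfulness of $k$ together with the adjunction $\ell \dashv \ell^R$: for $Z \in \cZ$ and $Y \in \cY$,
\[
\hom_\cY(Z,Y) \simeq \hom_\cX(\ell Z, k Y) \simeq \hom_\cZ(Z, \ell^R k Y).
\]

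It remains to exhibit a further right adjoint of $\ell^R k$, equivalently to show that $\ell^R k$ preserves colimits. But $k$ is a left adjoint (to $k^R$) and $\ell^R$ is a left adjoint (to $\ell^{RR}$), so their composite preserves colimits and hence admits a right adjoint by the adjoint functor theorem for presentable $\infty$-categories. This shows $\cZ \in \Cls_\cY$. The only mild subtlety is confirming that colimits in $\cY$ agree with colimits in $\cX$ when restricted to $\cY$, but this is automatic since the inclusion $k$ is a left adjoint (and in particular colimit-preserving), so no genuine obstacle arises.
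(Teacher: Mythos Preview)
Your proof is correct. The paper states this as an Observation without proof, treating both facts as immediate from the definition (with a footnote pointing to later sections for amplification via the machinery of aligned subcategories). Your direct verification via composition of adjoint chains is exactly the intended argument.
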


\begin{observation}
\label{obs.closed.subcats.closed.under.colimit.closure}
Let $\cZ \subseteq \cX$ be a full stable subcategory that is closed under colimits.  Then, $\cZ$ is a closed subcategory of $\cX$ if and only if its right-orthogonal subcategory $\cZ^\bot \subseteq \cX$ is also closed under colimits.  It follows that for any set $\{ \cZ_s \in \Cls_\cX \}_{s \in S}$ of closed subcategories of $\cX$, the full stable subcategory of $\cX$ that they generate under colimits is also a closed subcategory of $\cX$.\footnote{To show this, writing $\cZ \subseteq \cX$ for the full stable subcategory generated under colimits by the subcategories $\{ \cZ_s \}_{s \in S}$, it suffices to show that $\cZ^\bot = \bigcap_{s \in S} ( (\cZ_s)^\bot )$.  It is immediate that $\cZ^\bot \subseteq \bigcap_{s \in S} ( (\cZ_s)^\bot)$.  To verify the inclusion $\cZ^\bot \supseteq \bigcap_{s \in S} ((\cZ_s)^\bot)$, we observe that this intersection of subcategories of $\cX$ may be computed as a limit in $\PrR$, and therefore its inclusion into $\cX$ admits a left adjoint that evidently annihilates all objects of $\cZ$.}  We use this fact implicitly without further comment.
\end{observation}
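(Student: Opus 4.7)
My plan is to establish the biconditional by constructing the adjoints required by \Cref{defn.closed.nc.substack.intro}, exploiting the cofiber-sequence formula of \Cref{obs.right.orthogonal.of.full.presentable.stable.subcat.is.presentable.quotient} to transfer colimit-preservation between the two inclusions $i_L : \cZ \hookrightarrow \cX$ and $\nu : \cZ^\bot \hookrightarrow \cX$, and then to deduce the consequence about families via the computation $\cZ^\bot = \bigcap_s \cZ_s^\bot$.

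The forward implication is immediate: if $\cZ$ is a closed subcategory, then \Cref{obs.clsd.subcat.gives.recollement} extends its inclusion to a recollement in which $\nu$ admits both a left adjoint $p_L$ and a right adjoint $p_R$, so $\nu$ preserves colimits and $\cZ^\bot$ is closed under colimits in $\cX$. For the converse, I would assume that $\cZ$ is presentable (which is part of the data of being a closed subcategory) and that both $\cZ$ and $\cZ^\bot$ are closed under colimits. The adjoint functor theorem then supplies the right adjoint $y$ to $i_L$, and the goal reduces to showing that $y$ itself preserves colimits (equivalently, admits a right adjoint $i_R$). To do this I would apply \Cref{obs.right.orthogonal.of.full.presentable.stable.subcat.is.presentable.quotient} to furnish a left adjoint $p_L$ to $\nu$ together with a cofiber sequence
\[
i_L y \to \id_\cX \to \nu p_L
\]
in $\Fun(\cX,\cX)$; the colimit-closure of $\cZ^\bot$ gives a right adjoint $p_R$ to $\nu$, so that $\nu p_L$ preserves colimits. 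Since $\id_\cX$ likewise preserves colimits and $\cX$ is stable, the fiber functor $i_L y$ of their comparison map preserves colimits as well. Full faithfulness of $i_L$ together with the fact that its image $\cZ$ is closed under colimits in $\cX$ then yields $y \colim_\cX G \simeq \colim_\cZ (yG)$ from the chain $i_L(y \colim_\cX G) \simeq \colim_\cX (i_L y G) \simeq i_L \colim_\cZ (yG)$ for any diagram $G$.

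For the final assertion about families, let $\cZ \subseteq \cX$ denote the full stable subcategory generated under colimits by $\{\cZ_s\}_{s \in S}$; it is presentable since each $\cZ_s$ has a small set of generators and the union of such sets generates $\cZ$ under colimits. By the biconditional, it suffices to verify that $\cZ^\bot$ is closed under colimits, and this I would reduce to the identification $\cZ^\bot = \bigcap_{s \in S} \cZ_s^\bot$. The inclusion $\subseteq$ is immediate, and for $\supseteq$ I would use that for every $X \in \cX$ the full subcategory $\{Z \in \cX : \ulhom_\cX(Z, X) \simeq 0\}$ is closed under colimits, hence contains all of $\cZ$ as soon as it contains each $\cZ_s$. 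Each $\cZ_s^\bot$ is closed under colimits by the forward direction, and an intersection of colimit-closed full subcategories is visibly colimit-closed. The single nontrivial input is the stability of $\cX$, which is what guarantees that the fiber of a natural transformation of colimit-preserving endofunctors is again colimit-preserving; everything else is bookkeeping with adjunctions.
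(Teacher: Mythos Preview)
Your argument is correct. The biconditional itself is left unproved in the paper (it is stated as an observation), and your proof of it via the cofiber sequence $i_L y \to \id_\cX \to \nu p_L$ is the natural one. For the family assertion you follow the paper's overall strategy of reducing to the identification $\cZ^\bot = \bigcap_{s \in S} \cZ_s^\bot$, but your proof of the inclusion $\supseteq$ takes a different route: the paper observes that $\bigcap_{s \in S} \cZ_s^\bot$ is a limit in $\PrR$, so that its inclusion into $\cX$ automatically admits a left adjoint, which (being colimit-preserving) kills the colimit-closure $\cZ$ of the $\cZ_s$'s; you instead argue pointwise that for any $X \in \cX$ the left-orthogonal $\{Z : \ulhom_\cX(Z,X) \simeq 0\}$ is closed under colimits and hence contains $\cZ$ once it contains each $\cZ_s$. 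Your argument is more elementary and avoids invoking the structure of $\PrR$; the paper's has the incidental advantage of exhibiting $\cZ^\bot$ directly as presentable with a specified left adjoint, which makes the subsequent appeal to the biconditional slightly more self-contained. Both approaches tacitly use that $\cZ$ is presentable (needed for the converse direction of the biconditional), and your justification via a small union of generating sets is the right one.
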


\begin{notation}
\label{notn.clsd.subcat.gend.by.clsd.subcats}
Concordantly with \Cref{notn.clsd.subcat.gend.by.cpct.objs}, given a set $\{ \cZ_s \in \Cls_\cX \}_{s \in S}$ of closed subcategories of $\cX$, we write
\[ \brax{ \cZ_s }_{s \in S} \in \Cls_\cX \]
for the closed subcategory of $\cX$ that they generate under colimits, i.e.\! the colimit of the functor $S \xra{\cZ_\bullet} \Cls_\cX$.\footnote{In \Cref{subsection.intro.detailed.overview}, this was written as $\bigcup_{s \in S} \cZ_s \simeq \colim(S \xra{\cZ_\bullet} \Cls_\cX)$, so as to highlight the analogy with the union of closed subsets of a scheme. Outside of that section, we use the notation $\brax{\cZ_s}_{s \in S}$ because it is more compact.}
\end{notation}

\begin{remark}
Closed subcategories of presentable stable $\infty$-category behave much like closed subsets of a topological space, but they are not completely analogous.  For instance, increasing unions in the poset $\Cls_\cX$ commute with the forgetful functor to $\PrLSt$, whereas increasing unions in the poset of closed subsets of a topological space do not generally commute with the forgetful functor to topological spaces.
\end{remark}

\subsection{Stratifications}
\label{subsections.stratns}

In this subsection, we recall the definitions (originally given in \Cref{subsection.intro.stratn.nc.stacks}) of a stratification and of its strata.

\begin{definition}
A \bit{prestratification} of $\cX$ over $\pos$ is a functor
\[
\begin{tikzcd}[row sep=0cm]
\pos
\arrow{r}{\cZ_\bullet}
&
\Cls_\cX
\\
\rotatebox{90}{$\in$}
&
\rotatebox{90}{$\in$}
\\
p
\arrow[maps to]{r}
&
\cZ_p
\end{tikzcd}
\]
such that $\cX = \brax{\cZ_p}_{p \in \pos}$.
\end{definition}

\begin{notation}
Given a prestratification $\cZ_\bullet$ of $\cX$ over $\pos$, for any $\sD \in \Down_\pos$ we write
\[ \cZ_\sD := \brax{ \cZ_p }_{p \in \sD} \in \Cls_\cX~. \]
Note that $\cZ_{^\leq p} = \cZ_p$; we use the latter notation for simplicity.  Note too that $\cZ_\es = 0$.
\end{notation}

\begin{definition}
\label{defn.stratn}
A prestratification $\cZ_\bullet$ of $\cX$ over $\pos$ is a \bit{stratification} if it satisfies the following \bit{stratification condition}: for any $p,q \in \pos$, there exists a factorization
\[ \begin{tikzcd}
\cZ_{(^\leq p ) \cap (^\leq q)}
\arrow[hook]{r}{i_L}
&
\cZ_p
\\
\cZ_q
\arrow[hook]{r}[swap]{i_L}
\arrow[dashed]{u}
&
\cX
\arrow{u}[swap]{y}
\end{tikzcd}
~.
\]
\end{definition}


\begin{remark}
In the stratification condition, the upper functor $i_L$ is a monomorphism (in fact it is the inclusion of a closed subcategory, as indicated by the notation), and so if there exists a factorization then it is unique.  Moreover, if the stratification condition holds, then its factorization is necessarily the right adjoint
\[ \begin{tikzcd}[column sep=1.5cm]
\cZ_{(^\leq p) \cap (^\leq q)}
\arrow[hook, transform canvas={yshift=0.9ex}]{r}{i_L}
\arrow[dashed, leftarrow, transform canvas={yshift=-0.9ex}]{r}[yshift=-0.2ex]{\bot}[swap]{y}
&
\cZ_q
\end{tikzcd}~; \]
this follows from \Cref{lem.equivalent.characterizations.of.alignment}.
\end{remark}

\begin{observation}
\label{obs.condn.star.vacuous.if.P.totally.ordered}
The stratification condition is automatic if $p \leq q$ or if $q \leq p$.  In particular, in the case that the poset $\pos$ is totally ordered, every prestratification of $\cX$ over $\pos$ is a stratification.
\end{observation}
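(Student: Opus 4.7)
My plan is to verify condition $(\star)$ by hand in each of the two cases $p \leq q$ and $q \leq p$, and then deduce the totally-ordered case as an immediate corollary.

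First I would treat the case $p \leq q$. Since $\cZ_\bullet : \pos \to \Cls_\cX$ is a functor of posets, the hypothesis $p \leq q$ yields an inclusion $\cZ_p \subseteq \cZ_q$ of closed subcategories of $\cX$; consequently $(^\leq p) \cap (^\leq q) = (^\leq p)$ and hence $\cZ_{(^\leq p) \cap (^\leq q)} = \cZ_p$, so that the vertical functor on the left of the diagram of condition $(\star)$ is the identity of $\cZ_p$. It therefore suffices to exhibit any filler $\cZ_q \to \cZ_p$ making the square commute. Using that $\cZ_p \in \Cls_{\cZ_q}$ (inclusions of closed subcategories are stable under composition, as noted just after \Cref{defn.presentable.quotient}), we have a right adjoint $\cZ_q \xra{y} \cZ_p$ to the inclusion $\cZ_p \hookrightarrow \cZ_q$. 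The desired commutativity then follows from the uniqueness of adjoints: the composite $\cZ_q \xhookra{i_L} \cX \xra{y} \cZ_p$ is right adjoint to the composite inclusion $\cZ_p \hookrightarrow \cZ_q \hookrightarrow \cX$, so it agrees with $\cZ_q \xra{y} \cZ_p$.

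Next I would treat the case $q \leq p$. Here $\cZ_q \subseteq \cZ_p$, so $(^\leq p) \cap (^\leq q) = (^\leq q)$ and thus $\cZ_{(^\leq p) \cap (^\leq q)} = \cZ_q$; the vertical functor on the left of the diagram becomes the given inclusion $\cZ_q \hookrightarrow \cZ_p$. The required factorization is then supplied by $\id_{\cZ_q}$, and commutativity amounts to the standard identity $y \circ i_L \simeq \id$ for the recollement of \Cref{obs.clsd.subcat.gives.recollement} associated to the closed subcategory $\cZ_q \subseteq \cX$, applied to the inclusion $\cZ_q \hookrightarrow \cZ_p$.

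Finally, if $\pos$ is totally ordered, then for every pair $p, q \in \pos$ we are in one of the two cases above, so condition $(\star)$ holds for all pairs and every prestratification is a stratification. There is no real obstacle in this argument; the only subtlety worth flagging is the need to invoke transitivity of the closed-subcategory relation and the compatibility of the right adjoints $y$ under composition of closed inclusions, both of which are immediate from the material of \Cref{subsection.closed.subcats}.
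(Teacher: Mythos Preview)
Your argument is correct in substance; the paper offers no proof, treating the observation as immediate. Two minor slips are worth correcting. In the case $p \leq q$, you write that ``the vertical functor on the left'' becomes the identity of $\cZ_p$, but you mean the top horizontal functor $i_L : \cZ_{(^\leq p)\cap(^\leq q)} = \cZ_p \hookrightarrow \cZ_p$; once this is the identity, the factorization is trivial (set the dashed arrow equal to the composite $y \circ i_L$), so your identification of the filler via adjoints, while correct, is more than is needed. In the case $q \leq p$, the identity $y \circ i_L \simeq \id$ you invoke is that for the closed subcategory $\cZ_p \subseteq \cX$ (the $y$ in the diagram is $\cX \to \cZ_p$), not for $\cZ_q$; restricted to objects of $\cZ_q \subseteq \cZ_p$ it gives exactly what you need.
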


\begin{definition}
\label{defn.Cth.stratum.and.geometric.localizn}
Suppose that $\cZ_\bullet$ is a prestratification of $\cX$ over $\pos$, and suppose that $\sC \in \Conv_\pos$.
\begin{enumerate}

\item

The \bit{$\sC\th$ stratum} of the prestratification is the presentable quotient
\[ \cX_\sC :=
 \cZ_{^\leq \sC} \left\slash \cZ_{^< \sC} \right.
~.
\]

\item The \bit{$\sC\th$ geometric localization functor} is the left adjoint in the composite adjunction
\[
\begin{tikzcd}[column sep=1.5cm]
\Phi_\sC
:
\cX
\arrow[transform canvas={yshift=0.9ex}]{r}{y}
\arrow[hookleftarrow, transform canvas={yshift=-0.9ex}]{r}[yshift=-0.2ex]{\bot}[swap]{i_R}
&
\cZ_\sC
\arrow[transform canvas={yshift=0.9ex}]{r}{p_L}
\arrow[hookleftarrow, transform canvas={yshift=-0.9ex}]{r}[yshift=-0.2ex]{\bot}[swap]{\nu}
&
\cX_\sC
:
\rho^\sC
\end{tikzcd}
~.
\]
\end{enumerate}
We also write
\[
L_\sC
:
\cX
\xra{\Phi_\sC}
\cX_\sC
\xlonghookra{\rho^\sC}
\cX
\]
for the composite endofunctor, and we write
\[
\id_\cX
\xra{\eta_\sC}
L_\sC
\]
for the unit morphism.
\end{definition}

\begin{remark}
Considering an element $p \in \pos$ as a convex subset of $\pos$, \Cref{defn.Cth.stratum.and.geometric.localizn} specializes to \Cref{defn.intro.strata.and.geometric.localization.adjunction} of the $p\th$ stratum of a stratification.
\end{remark}

\begin{remark}
For any $\sD \in \Down_\pos \subseteq \Conv_\pos$, the functor $\cZ_\sD \xra{p_L} \cX_\sD$ is an equivalence.  We use both of these notations, depending on the context: we use the notation $\cZ_\sD$ when we mean to consider this as a subcategory of $\cX$ via the inclusion $i_L$, while we use the notation $\cX_\sD$ when we mean to consider this as a subcategory of $\cX$ via the inclusion $\rho^\sD$ (which coincides with $i_R$ in this special case).
\end{remark}

\subsection{The macrocosm reconstruction theorem}
\label{subsection.reconstrn.thm.for.stratns}

This subsection is centered around the macrocosm reconstruction theorem (\Cref{macrocosm.thm}), which we prove using the metacosm reconstruction theorem (which is itself proved in \Cref{section.reconstrn}).  We unpack the entire theory in the case that $\pos = [2]$ in \Cref{ex.gluing.stuff.over.brax.two}.

\begin{local}
In this subsection, we fix a stratification $\cZ_\bullet$ of $\cX$ over $\pos$.
\end{local}

\begin{remark}
We use the language of \textit{modules} to discuss certain definitions and constructions.  This is explained in detail in \Cref{section.lax.actions.and.limits}.  In the interest of keeping the main body of this work relatively self-contained, we summarize the essential points here.
\begin{itemize}
\item By a left/right module over an $\infty$-category, we mean a co/cartesian fibration over it, or equivalently a functor from it(s opposite) to $\Cat$.
\item These modules become \textit{lax} when our fibrations are only \textit{locally} co/cartesian, which (definitionally) correspond to left/right-lax functors to $\Cat$.
\item One can take the strict, left-lax, or right-lax limit of any module (regardless of whether that module is itself strict or left/right-lax).
\item The specific construction that is relevant for us here is the right-lax limit of a left-lax module; the precise definition (in our case of interest) is recalled in \Cref{rmk.recall.defn.of.rlax.lim.of.llax.action}.
\end{itemize}
\end{remark}

\begin{definition}
For any $p,q \in \pos$, the corresponding \bit{gluing functor} is the composite
\[
\Gamma^p_q
:
\cX_p
\xlonghookra{\rho^p}
\cX
\xra{\Phi_q}
\cX_q
~.
\]
\end{definition}

\begin{remark}
\label{rmk.gluing.fctrs.even.if.no.relation.in.poset}
By the stratification condition, the functor $\cX_p \xra{\Gamma^p_q} \cX_q$ is zero whenever $p \not\leq q$.
\end{remark}

\begin{notation}
We define the full subcategory
\[
\GD(\cX)
:=
\{ (X,p) \in \cX \times \pos
:
X \in \cX_p
\}
\subseteq
\cX \times \pos
~,
\]
which we consider as an object of $\Cat_{/\pos}$.
\end{notation}

\begin{observation}
\label{obs.GD.is.a.llax.P.mod}
The functor
\[
\GD(\cX)
\longra
\pos
\]
is a locally cocartesian fibration, whose monodromy functor over each morphism $p \ra q$ in $\pos$ is the gluing functor
\[
\cX_p
\xra{\Gamma^p_q}
\cX_q
~.
\]
We therefore consider it as defining a left-lax left $\pos$-module
\[
\GD(\cX)
\in
\LMod_{\llax.\pos}
:=
\loc.\coCart_\pos
~.
\]
\end{observation}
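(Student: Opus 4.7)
The plan is to construct explicit candidate locally cocartesian lifts built from units of the localization adjunctions $\Phi_q \adj \rho^q$, and to verify the universal property using the full faithfulness of $\rho^q$ together with that same adjunction. The monodromy identification will then fall out of the construction.

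Fix a morphism $p \to q$ in $\pos$ and an object $X_p \in \cX_p$, with corresponding avatar $X := \rho^p X_p \in \cX$ in the fiber $\GD(\cX)_p$. Consider the morphism
\[
e : (X,p) \longra (L_q X , q)
\]
in $\cX \times \pos$ whose $\pos$-component is the given morphism $p \to q$ and whose $\cX$-component is the unit $\eta_q(X) : X \to L_q X = \rho^q \Phi_q X$. The target lies in the image of $\rho^q$ and so defines the object $\Phi_q \rho^p X_p = \Gamma^p_q X_p \in \cX_q$ under the identification $\GD(\cX)_q \simeq \cX_q$; in particular it lies in $\GD(\cX)$.

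To see that $e$ is locally cocartesian, fix any $(Z,q) \in \GD(\cX)$ with $Z = \rho^q Z_q$. Since $\GD(\cX) \subseteq \cX \times \pos$ is a full subcategory, the fiber of $\hom_{\GD(\cX)}((X,p),(Z,q))$ over the chosen morphism $p \to q$ in $\pos$ is simply $\hom_\cX(\rho^p X_p , \rho^q Z_q)$; by the adjunction $\Phi_q \adj \rho^q$, this is identified with $\hom_{\cX_q}(\Gamma^p_q X_p , Z_q)$. On the other hand, the mapping space in the fiber $\GD(\cX)_q$ is $\hom_\cX(\rho^q \Phi_q \rho^p X_p , \rho^q Z_q)$, which by full faithfulness of $\rho^q$ also equals $\hom_{\cX_q}(\Gamma^p_q X_p , Z_q)$. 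Under both identifications, precomposition with $e$ corresponds to the identity, by the standard description of the adjunction isomorphism in terms of the unit $\eta_q$. Hence $e$ is a locally cocartesian lift, and the projection $\GD(\cX) \to \pos$ is a locally cocartesian fibration.

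The target of $e$ reveals the monodromy over $p \to q$ to send $X_p$ to $\Gamma^p_q X_p$ on objects. The main remaining task, and essentially the only obstacle, is to upgrade this pointwise identification to an equivalence of functors $\cX_p \to \cX_q$; this follows from the naturality of $\eta_q$ combined with the essential uniqueness of locally cocartesian lifts, so that the assignment $X_p \mapsto e$ is functorial in $X_p$ up to contractible choice and matches precisely the functor $\Gamma^p_q$. The only care required is notational: one must track consistently whether a given object is being viewed in $\cX_p$, in $\cX_q$, or in the ambient $\cX$ via the embeddings $\rho^p$ and $\rho^q$.
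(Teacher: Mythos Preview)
Your argument is correct and is precisely the verification the paper has in mind: the unit $\eta_q$ of the reflective localization $\Phi_q \dashv \rho^q$ supplies the locally cocartesian lift, and the universal property follows from the adjunction together with full faithfulness of $\rho^q$. The paper records this as an Observation without proof, treating it as immediate; your write-up simply spells out the standard check.
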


\begin{definition}
\label{defn.gluing.diagram}
We refer to the left-lax left $\pos$-module
\[
\GD(\cX)
\in
\LMod_{\llax.\pos}
\]
of \Cref{obs.GD.is.a.llax.P.mod} as the \bit{gluing diagram} of the stratification.
\end{definition}

\begin{definition}
The \bit{glued $\infty$-category} of the stratification is the right-lax limit
\[
\Glue(\cX)
:=
\lim^\rlax_{\llax.\pos} ( \GD(\cX) )
~.
\]
\end{definition}

\begin{remark}
\label{rmk.recall.defn.of.rlax.lim.of.llax.action}
For the reader's convenience, we unpack the definition of the glued $\infty$-category $\Glue(\cX)$. First of all, we write $\sd(\pos)$ for the \textit{subdivision} of $\pos$: the poset of finite nonempty linearly ordered subsets of $\pos$ (\Cref{defn.subdivision.of.a.poset}). Moreover, the functor
\[
\sd(\pos)
\xra{\max}
\pos
\]
carrying each subset to its maximal element is a locally cocartesian fibration (\Cref{t4}\Cref{t4.part.one}), with nontrivial cocartesian monodromy functors given by adjoining new maximal elements. Then, by \Cref{prop.rlax.llaxB.via.sd} we have an identification
\[
\Glue(\cX)
:=
\lim^\rlax_{\llax.\pos} \left( \GD(\cX) \right)
\simeq
\Fun^\cocart_{/\pos} \left( \sd(\pos) , \GD(\cX) \right)
~;
\]
that is, the glued $\infty$-category $\Glue(\cX)$ is equivalent to that of morphisms
\begin{equation}
\label{typical.object.in.rlax.lim.of.gluing.diagram}
\begin{tikzcd}
\sd(\pos)
\arrow[dashed]{rr}
\arrow{rd}[sloped, swap]{\max}
&
&
\GD(\cX)
\arrow{ld}
\\
&
\pos
\end{tikzcd}
\end{equation}
in $\loc.\coCart_\pos$ (i.e.\! functors over $\pos$ that preserve cocartesian morphisms thereover).
\end{remark}

\begin{observation}
\label{obs.glued.cat.is.subcat.of.fctrs.from.sdP}
We can consider the glued $\infty$-category as a full subcategory
\[
\Glue(\cX)
\subseteq
\Fun(\sd(\pos),\cX)
\]
via the composite fully faithful embedding
\[
\hspace{-1cm}
\Glue(\cX)
:=
\lim^\rlax_{\llax.\pos}(\GD(\cX))
\simeq
\Fun^\cocart_{/\pos}(\sd(\pos),\GD(\cX))
\xlonghookra{\ff}
\Fun_{/\pos}(\sd(\pos),\GD(\cX))
\xlonghookra{\ff}
\Fun_{/\pos}(\sd(\pos),\ul{\cX})
\simeq
\Fun(\sd(\pos),\cX)
~.
\]
Explicitly, its image consists of those functors
\[
\sd(\pos)
\xlongra{F}
\cX
\]
satisfying the following conditions.
\begin{enumerate}

\item\label{item.require.value.of.fctr.sdP.to.X.to.land.in.max.subcat}

For every $([n] \xra{\varphi} \pos) \in \sd(\pos)$, we have
\[
F(\varphi)
\in
\rho^{\max(\varphi)}(\cX_{\max(\varphi)})
\subseteq
\cX
~.
\]

\item\label{item.require.cocart.morphisms.in.sdP.go.to.local.equivces}

It carries each morphism in $\sd(\pos)$ of the form
\[ \begin{tikzcd}
{[n]}
\arrow[hook]{rr}{i \longmapsto i}
\arrow[hook]{rd}[sloped, swap]{\varphi}
&
&
{[n+1]}
\\
&
\pos
\arrow[hookleftarrow]{ru}[sloped, swap]{\psi}
\end{tikzcd} \]
(which are precisely the cocartesian morphisms with respect to the locally cocartesian fibration $\sd(\pos) \xra{\max} \pos$) to a morphism
\begin{equation}
\label{morphism.from.F.varphi.to.F.psi.coming.from.fctr.sdP.to.X}
F(\varphi)
\longra
F(\psi)
\end{equation}
in $\cX$ that becomes an equivalence after applying the functor $\cX \xra{\Phi_{\max(\psi)}} \cX_{\max(\psi)}$.\footnote{Assuming condition \Cref{item.require.value.of.fctr.sdP.to.X.to.land.in.max.subcat}, condition \Cref{item.require.cocart.morphisms.in.sdP.go.to.local.equivces} is equivalent to requiring that the morphism \Cref{morphism.from.F.varphi.to.F.psi.coming.from.fctr.sdP.to.X} witnesses $F(\psi)$ as the $L_{\max(\psi)}$-localization of $F(\varphi)$.}

\end{enumerate}
We use these facts without further comment.
\end{observation}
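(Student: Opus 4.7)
The plan is to unpack the composite monomorphism and match its two nontrivial constraints with conditions \Cref{item.require.value.of.fctr.sdP.to.X.to.land.in.max.subcat} and \Cref{item.require.cocart.morphisms.in.sdP.go.to.local.equivces}. A functor $\sd(\pos) \xra{F} \cX$ lifts uniquely to a functor $\sd(\pos) \to \ul{\cX} = \cX \times \pos$ over $\pos$ by pairing with $\max$, so the equivalence $\Fun_{/\pos}(\sd(\pos),\ul{\cX}) \simeq \Fun(\sd(\pos),\cX)$ is tautological. It therefore remains to read off the images of the two fully faithful inclusions $\Fun^\cocart_{/\pos}(\sd(\pos),\GD(\cX)) \hookra \Fun_{/\pos}(\sd(\pos),\GD(\cX)) \hookra \Fun_{/\pos}(\sd(\pos),\ul{\cX})$.

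For the second inclusion, recall that $\GD(\cX) \subseteq \ul{\cX}$ is the full subcategory on pairs $(X,p)$ with $X \in \rho^p(\cX_p)$, and hence for any $\sC \in \Cat_{/\pos}$ the subcategory $\Fun_{/\pos}(\sC,\GD(\cX)) \subseteq \Fun_{/\pos}(\sC,\ul{\cX})$ is full on the objects $G$ satisfying $G(c) \in \rho^{p}(\cX_{p})$ whenever $c \in \sC$ lies over $p \in \pos$. Taking $\sC = \sd(\pos)$ with structure map $\max$, this is exactly condition \Cref{item.require.value.of.fctr.sdP.to.X.to.land.in.max.subcat}.

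For the first inclusion, I will combine two inputs. First, by \Cref{obs.max.and.min.are.loc.cocart.and.loc.cart}, the locally cocartesian morphisms of the fibration $\sd(\pos) \xra{\max} \pos$ are generated (under composition with fiberwise equivalences, which since $\sd(\pos)$ is a poset are identities) by the morphisms of the form $\varphi \hookra \psi$ displayed in the statement. Second, a morphism in $\GD(\cX)$ lying over $p \to q$ in $\pos$, viewed via $\rho^{(-)}$ as a morphism $X \to Y$ in $\cX$ from $X \in \cX_p$ to $Y \in \cX_q$, is locally cocartesian precisely when the induced map in the fiber, namely $\Gamma^p_q(X) := \Phi_q \rho^p(X) \to \Phi_q \rho^q(Y) \simeq Y$ in $\cX_q$, is an equivalence. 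Together these identify preservation of locally cocartesian morphisms with the requirement that the comparison morphism \Cref{morphism.from.F.varphi.to.F.psi.coming.from.fctr.sdP.to.X} is sent to an equivalence by $\Phi_{\max(\psi)}$, which is condition \Cref{item.require.cocart.morphisms.in.sdP.go.to.local.equivces}.

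The only potential obstacle is the second bullet of the previous paragraph: pinning down the characterization of locally cocartesian morphisms in the fibration $\GD(\cX) \to \pos$ in terms of the monodromy functors $\Gamma^p_q$. This however is immediate from \Cref{obs.GD.is.a.llax.P.mod}, which identifies this monodromy with $\Gamma^p_q = \Phi_q \rho^p$, combined with the standard fact that for a locally cocartesian fibration a morphism over $p \to q$ is locally cocartesian iff its classifying map in the target fiber, computed by applying the monodromy, is an equivalence. No further coherence data is needed.
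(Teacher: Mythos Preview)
Your proposal is correct and is precisely the unpacking of definitions that the paper expects the reader to perform: the paper gives no proof of this observation, ending with ``We use these facts without further comment.'' Your identification of condition \Cref{item.require.value.of.fctr.sdP.to.X.to.land.in.max.subcat} with the full-subcategory inclusion $\GD(\cX) \subseteq \ul{\cX}$, and of condition \Cref{item.require.cocart.morphisms.in.sdP.go.to.local.equivces} with preservation of locally cocartesian morphisms via the monodromy description from \Cref{obs.GD.is.a.llax.P.mod}, is exactly right.
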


\begin{notation}
We write
\[
\lim_{\sd(\pos)}
:
\Glue(\cX)
\longhookra
\Fun(\sd(\pos),\cX)
\xra{\lim_{\sd(\pos)}}
\cX
\]
for the composite.
\end{notation}

\begin{observation}
\label{obs.formula.for.mocrocosm.gluing.functor.without.using.Lphi.notation}
The defining inclusion
\[
\ul{\cX}
\xlonghookla{\ff}
\GD(\cX)
\]
is a morphism in $\LMod^\llax_{\llax.\pos} := \Cat_{\loc.\cocart/\pos}$ (though not generally in $\LMod_{\llax.\pos} := \loc.\coCart_\pos$). Over each object $p \in \pos$, this is the right adjoint in the adjunction
\[ \begin{tikzcd}[column sep=1.5cm]
\cX
\arrow[transform canvas={yshift=0.9ex}]{r}{\Phi_p}
\arrow[hookleftarrow, transform canvas={yshift=-0.9ex}]{r}[yshift=-0.2ex]{\bot}[swap]{\rho^p}
&
\cX_p
\end{tikzcd}
~. \]
By \Cref{lemma.ptwise.radjt.has.ptwise.ladjt}, the left adjoints $\Phi_p$ assemble into a morphism
\[
\const(\cX)
:=
\ul{\cX}
\longra
\GD(\cX)
\]
in $\LMod^\rlax_{\llax.\pos}$.  Through the definitional adjunction
\[
\begin{tikzcd}[column sep=1.5cm]
\Cat
\arrow[transform canvas={yshift=0.9ex}]{r}{\const}
\arrow[leftarrow, transform canvas={yshift=-0.9ex}]{r}[yshift=-0.2ex]{\bot}[swap]{\lim^\rlax_{\llax.\pos}}
&
\LMod^\rlax_{\llax.\pos}
\end{tikzcd}
~,
\]
this corresponds to a functor
\begin{equation}
\label{microcosm.gluing.functor.before.naming.it}
\cX
\longra
\lim^\rlax_{\llax.\pos}(\GD(\cX))
~.
\end{equation}
In terms of \Cref{obs.glued.cat.is.subcat.of.fctrs.from.sdP}, the functor \Cref{microcosm.gluing.functor.before.naming.it} is given by the formula
\begin{equation}
\label{formula.for.mocrocosm.gluing.functor.without.using.Lphi.notation}
X
\longmapsto
\left(
([n] \xra{\varphi} \pos)
\longmapsto
\rho_{\varphi(n)} \Phi_{\varphi(n)} \cdots \rho_{\varphi(0)} \Phi_{\varphi(0)} X
\right)
~.
\end{equation}
\end{observation}

\begin{definition}
We refer to the functor \Cref{microcosm.gluing.functor.before.naming.it} as the (\bit{microcosm}) \bit{gluing diagram functor}, and we denote it by
\[
\cX
\xlongra{\gd}
\Glue(\cX)
:=
\lim^\rlax_{\llax.\pos}(\GD(\cX))
~.
\]
\end{definition}

\begin{theorem}
\label{macrocosm.thm}
There is a canonical adjunction
\begin{equation}
\label{adjn.in.reconstrn.thm}
\begin{tikzcd}[column sep=1.5cm]
\cX
\arrow[transform canvas={yshift=0.9ex}]{r}{\gd}
\arrow[leftarrow, transform canvas={yshift=-0.9ex}]{r}[yshift=-0.2ex]{\bot}[swap]{\lim_{\sd(\pos)}}
&
\Glue(\cX)
\end{tikzcd}
~,
\end{equation}
which is an equivalence whenever $\pos$ is down-finite.
\end{theorem}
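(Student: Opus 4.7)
The plan is to deduce \Cref{macrocosm.thm} from the metacosm reconstruction theorem (\Cref{intro.thm.cosms}\Cref{intro.main.thm.metacosm}) by extracting underlying data. First, I would construct the adjunction \Cref{adjn.in.reconstrn.thm} directly. The functor $\gd$ has already been produced as the image under the right adjoint $\lim^\rlax_{\llax.\pos}$ of the morphism $\const(\cX) = \ul{\cX} \to \GD(\cX)$ in $\LMod^\rlax_{\llax.\pos}$ built from the geometric localization functors $\Phi_p$. Because each $\Phi_p$ preserves colimits and the monomorphism $\Glue(\cX) \hookrightarrow \Fun(\sd(\pos),\cX)$ of \Cref{obs.glued.cat.is.subcat.of.fctrs.from.sdP} is jointly reflected by the evaluation functors $\ev_\varphi$, it follows that $\gd$ preserves colimits between presentable $\infty$-categories, so by the adjoint functor theorem it admits a right adjoint.

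Second, I would identify this right adjoint as $\lim_{\sd(\pos)}$. The key computation is that, given $F \in \Glue(\cX) \subseteq \Fun(\sd(\pos),\cX)$ and $\cF \in \cX$, one has
\[
\hom_{\Glue(\cX)}(\gd(\cF),F)
\simeq
\lim_{([n] \xra{\varphi} \pos) \in \sd(\pos)}
\hom_{\cX_{\max(\varphi)}}(\Phi_{\max(\varphi)}\cF, \Phi_{\max(\varphi)}F(\varphi))
\simeq
\hom_\cX\Bigl(\cF,\lim_{\sd(\pos)}F\Bigr),
\]
where the first equivalence is the description of mapping spaces in $\Fun^\cocart_{/\pos}(\sd(\pos),\GD(\cX))$ and the second uses that $\Phi_{\max(\varphi)}F(\varphi) \simeq F(\varphi)$ (since $F(\varphi) \in \rho^{\max(\varphi)}(\cX_{\max(\varphi)})$) together with the adjunctions $\Phi_p \dashv \rho^p$. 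This exhibits $\lim_{\sd(\pos)}$ as right adjoint to $\gd$, establishing the adjunction in general.

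Third, for the equivalence statement when $\pos$ is down-finite, I would invoke the metacosm reconstruction theorem directly. Its unit at $\cX \in \Strat_\pos$ is a morphism
\[
\cX \longra \limrlaxfam(\GD(\cX))
\]
in $\Strat_\pos$, whose underlying morphism in $\PrLSt$ agrees by construction with $\gd : \cX \to \Glue(\cX)$ (since by \Cref{rmk.intro.strictification.of.rlax.lim} and the compatibility of $\limrlaxfam$ with the forgetful functor $\Strat_\pos \to \PrLSt$, the underlying presentable stable $\infty$-category of $\limrlaxfam(\GD(\cX))$ is precisely $\Glue(\cX)$, and the natural transformation is produced by the same universal property). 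The metacosm theorem asserts that this unit is an equivalence whenever $\pos$ is down-finite; extracting underlying $\infty$-categories, we conclude that $\gd$ is an equivalence, and hence so is its right adjoint $\lim_{\sd(\pos)}$.

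The main obstacle in this argument is the second step: a clean verification of the adjunction $\gd \dashv \lim_{\sd(\pos)}$ requires careful bookkeeping of the cocartesian-section condition defining $\Glue(\cX) \subseteq \Fun(\sd(\pos),\cX)$ and its interaction with the localizations $\Phi_{\max(\varphi)}$. The remaining steps — producing $\gd$, establishing colimit-preservation, and importing the equivalence from the metacosm level — are formal once the metacosm theorem is granted, with the genuine work of convergence absorbed into that theorem's proof in \Cref{section.reconstrn}.
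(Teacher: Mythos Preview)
Your overall strategy matches the paper's exactly: the paper's proof of this theorem is a two-line deferral to the metacosm theorem, extracting that $\gd$ lies in $\PrL$ (as the underlying functor of the unit of the metacosm adjunction) and is an equivalence when $\pos$ is down-finite, with the identification of the right adjoint left to the proof of the metacosm theorem. Your Steps~1 and~3 are correct and essentially identical in content.

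The gap is in Step~2. Your displayed first equivalence is not what the ``description of mapping spaces in $\Fun^\cocart_{/\pos}(\sd(\pos),\GD(\cX))$'' actually gives. The value of $\gd(\cF)$ at $([n]\xra{\varphi}\pos)\in\sd(\pos)$, viewed in $\cX_{\max(\varphi)}$, is
\[
\gd(\cF)(\varphi)\;\simeq\;\Gamma_\varphi\,\Phi_{\varphi(0)}(\cF)\;=\;\Phi_{\varphi(n)}\rho^{\varphi(n-1)}\cdots\Phi_{\varphi(1)}\rho^{\varphi(0)}\Phi_{\varphi(0)}(\cF),
\]
\emph{not} $\Phi_{\max(\varphi)}(\cF)$ (see the footnote after the definition of the microcosm gluing diagram functor, or \Cref{obs.nanocosm}). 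So the naive end computing $\hom_{\Glue(\cX)}(\gd(\cF),F)$ has $\Gamma_\varphi\Phi_{\varphi(0)}(\cF)$ in the first slot, and your middle term does not arise from it. Your second equivalence is fine, but it identifies the middle term with $\hom_\cX(\cF,\lim_{\sd(\pos)}F)$, not with the left-hand side.

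The identity $\hom_{\Glue(\cX)}(\gd(\cF),F)\simeq\hom_\cX(\cF,\lim_{\sd(\pos)}F)$ \emph{is} correct, but verifying it genuinely uses the cocartesian condition on $F$: for a natural transformation $\alpha\colon L_\bullet\cF\to F$, each component $\alpha_\varphi\colon L_\varphi\cF\to F(\varphi)$ with $|\varphi|>1$ is forced by the component at the truncation $\varphi|_{[n-1]}$ together with the naturality square over the cocartesian morphism $\varphi|_{[n-1]}\to\varphi$, because $F(\varphi)$ is $L_{\max(\varphi)}$-local. Unwinding this recursively reduces the datum of $\alpha$ to its components on singletons plus the compatibility coming from fiber morphisms in $\sd(\pos)$, which is precisely the datum of a map $\cF\to\lim_{\sd(\pos)}F$. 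You correctly flag this step as the main obstacle, but the argument as written does not carry it out, and the intermediate expression you display is not the right one to pass through.
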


\begin{proof}
By \Cref{metacosm.thm}, the functor $\cX \xra{\gd} \Glue(\cX)$ defines a morphism in $\PrLSt$ (being the image under the forgetful functor $\Strat_\pos \ra \PrLSt$ of the unit of the adjunction \Cref{adjn.of.metacosm.thm}) and is an equivalence whenever $\pos$ is down-finite.  The identification of its right adjoint is contained in the proof of \Cref{metacosm.thm}.
\end{proof}

\begin{definition}
\label{defn.convergent.stratn}
We say that the stratification of $\cX$ over $\pos$ is \bit{convergent} if the adjunction \Cref{adjn.in.reconstrn.thm} is an equivalence.
\end{definition}

\begin{example}
\label{ex.gluing.stuff.over.brax.two}
Suppose that $\pos = [2]$.
\begin{enumerate}

\item

The gluing diagram of the stratification is the lax-commutative triangle
\[
\GD(\cX)
=
\left(
\begin{tikzcd}[row sep=1.5cm]
&
\cX_1
\arrow{rd}[sloped]{\Gamma^1_2}
\\
\cX_0
\arrow{ru}[sloped]{\Gamma^0_1}
\arrow{rr}[transform canvas={xshift=0.9ex, yshift=0.6cm}]{{\rotatebox{90}{$\Rightarrow$}}\eta_1}[swap]{\Gamma^0_2}
&
&
\cX_2
\end{tikzcd}
\right)
~,
\]
in which the natural transformation is the composite
\[
\eta_1
:
\Gamma^0_2 := \Phi_2 \rho^0 
\simeq
\Phi_2 \id_{\cX} \rho^0
\xra{\eta_1}
\Phi_2 \rho^1 \Phi_1 \rho^0
=:
\Gamma^1_2
\Gamma^0_1
~. \]

\item

An object of the glued $\infty$-category $\Glue(\cX)$ amounts to the data of the form
\begin{equation}
\label{object.of.glued.cat.when.poset.is.brax.two}
\begin{tikzcd}[row sep=1.25cm]
&
X_2
\arrow{rr}{\gamma^1_2}
\arrow{dd}[swap]{\gamma^0_2}
&
&
\Gamma^1_2(X_1)
\arrow{dd}{\Gamma^1_2(\gamma^0_1)}
\\
&
&
X_1
\arrow[maps to]{ru}
\\
&
\Gamma^0_2(X_0)
\arrow{rr}[swap, pos=0.3]{\eta_1}
&
&
\Gamma^1_2(\Gamma^0_1(X_0))
\\
X_0
\arrow[maps to]{rr}
\arrow[maps to]{ru}
&
&
\Gamma^0_1(X_0)
\arrow[crossing over, leftarrow]{uu}[pos=0.7]{\gamma^0_1}
\arrow[maps to]{ru}
\end{tikzcd}
~,\footnote{The locally cocartesian fibration $\sd([2]) \xra{\max} [2]$ is illustrated in \Cref{sd.of.brackets.2}.}
\end{equation}
where $X_i \in \cX_i$ for all $i \in [2]$.  One may think of the morphisms $\gamma^i_j$ as \textit{gluing morphisms} (i.e.\! 1-cubes) for this object of $\Glue(\cX)$, and of the commutative square in $\cX_2$ as higher-dimensional gluing data, namely a \textit{gluing square} $\gamma_{012}$.\footnote{The notation $X_j \xra{\gamma^i_j} \Gamma^i_j(X_i)$ for the gluing morphisms is chosen so as to parallel the notation $\cX_i \xra{\Gamma^i_j} \cX_j$ for the gluing functors.  More generally, each conservative functor $[n] \xra{p_\bullet} \pos$ determines a gluing $n$-cube $\gamma_{p_0,\ldots,p_n}$ that is part of the data of an object of the glued $\infty$-category.}

\item

Given an object $X \in \cX$, its gluing diagram is the object $\gd(X) = \Cref{object.of.glued.cat.when.poset.is.brax.two} \in \Glue(\cX)$ in which
\begin{itemize}
\item
$X_i = \Phi_i(X)$ for all $0 \leq i \leq 2$,
\item
the gluing morphism $\gamma^i_j$ is the unit morphism
\[
X_j
:=
\Phi_j(X)
\xra{\eta_i}
\Phi_j(\rho^i(\Phi_i(X)))
=:
\Gamma^i_j(X_i)
\]
for all $0 \leq i < j \leq 2$, and
\item the commutativity of the gluing square $\gamma_{012}$ follows from the commutativity of the square
\[ \begin{tikzcd}[row sep=1.5cm]
\Phi_2
\arrow{r}{\eta_1}
\arrow{d}[swap]{\eta_0}
&
\Phi_2 \rho^1 \Phi_1
\arrow{d}{\eta_0}
\\
\Phi_2 \rho^0 \Phi_0
\arrow{r}[swap]{\eta_1}
&
\Phi_2 \rho^1 \Phi_1 \rho^0 \Phi_0
\end{tikzcd} \]
in $\Fun(\cX,\cX_2)$.
\end{itemize}

\item\label{item.microcosm.in.example.of.brax.two}

Because $\pos=[2]$ is finite and hence down-finite, \Cref{macrocosm.thm} guarantees that each $X \in \cX$ is the limit of its gluing diagram: the equivalence
\[
X
\xlongra{\sim}
\lim_{\sd(\pos)}(\gd(X))
\]
amounts to the limit diagram
\[ \begin{tikzcd}[row sep=1.25cm, column sep=0.5cm]
&
L_2(X)
\arrow{rr}
\arrow{dd}
&
&
L_2(L_1(X))
\arrow{dd}
\\
X
\arrow[crossing over]{rr}
\arrow{ru}
\arrow{dd}
&
&
L_1(X)
\arrow{ru}
\\
&
L_2(L_0(X))
\arrow{rr}
&
&
L_2(L_1(L_0(X)))
\\
L_0(X)
\arrow{rr}
\arrow{ru}
&
&
L_1(L_0(X))
\arrow{ru}
\arrow[leftarrow, crossing over]{uu}
\end{tikzcd} ~. \]

\end{enumerate}
\end{example}

\subsection{The microcosm and nanocosm morphisms}
\label{subsection.microcosm.and.nanocosm.morphisms}

In this subsection, we discuss the microcosm and nanocosm morphisms. In particular, we give a detailed description of the nanocosm morphism in \Cref{rmk.nanocosm}.

\begin{notation}
\label{notn.iterated.Gamma.and.L.for.elt.of.sdP}
For any $([n] \xra{\varphi} \pos) \in \sd(\pos)$, we write
\[
\Gamma_\varphi
:=
\Gamma^{\varphi(n-1)}_{\varphi(n)} \cdots \Gamma^{\varphi(0)}_{\varphi(1)}
\qquad
\text{and}
\qquad
L_\varphi
:=
L_{\varphi(n)} \cdots L_{\varphi(0)}
~.
\]
\end{notation}

\begin{observation}
By definition, for any $([n] \xra{\varphi} \pos) \in \sd(\pos)$ the functors $\Gamma_\varphi$ and $L_\varphi$ participate in the commutative diagram
\[ \begin{tikzcd}
\cX
\arrow{r}{L_\varphi}
\arrow{d}[swap]{\Phi_{\varphi(0)}}
&
\cX
\\
\cX_{\varphi(0)}
\arrow{r}[swap]{\Gamma_\varphi}
&
\cX_{\varphi(n)}
\arrow[hook]{u}[swap]{\rho^{\varphi(n)}}
\end{tikzcd}~. \]
We use this fact without further comment.
\end{observation}

\begin{observation}
\label{obs.functoriality.of.L.phi.in.the.variable.phi}
By definition, the functors $L_\varphi$ for $\varphi \in \sd(\pos)$ are the values of a functor
\begin{equation}
\label{L.bullet.as.a.functor.to.coaugmented.endofunctors.of.X}
\sd(\pos)
\xra{L_\bullet}
\Fun^\ex(\cX,\cX)_{\id_\cX/}
~,
\end{equation}
namely the adjunct of the composite
\[
\cX
\xlongra{\gd}
\Glue(\cX)
\longhookra
\Fun(\sd(\pos),\cX)
\]
equipped with its coaugmentation given by the unit of the adjunction \Cref{adjn.in.reconstrn.thm} of \Cref{macrocosm.thm}.
\end{observation}

\begin{remark}
\label{rmk.shorter.version.of.formula.for.mocrocosm.gluing.functor.using.Lphi.notation}
Using \Cref{notn.iterated.Gamma.and.L.for.elt.of.sdP}, the formula \Cref{formula.for.mocrocosm.gluing.functor.without.using.Lphi.notation} for the composite
\[
\cX
\xlongra{\gd}
\Glue(\cX)
:=
\lim^\rlax_{\llax.\pos}(\GD(\cX))
\longhookra
\Fun(\sd(\pos),\cX)
\]
can be expressed more compactly as
\[
X
\longmapsto
\left(
\varphi
\longmapsto
L_\varphi X
\right)
~.
\]
\end{remark}

\begin{observation}
\label{obs.assemble.Gamma.phi.functorially.in.phi}
For each nonidentity morphism $p < q$ in $\pos$, the functors $\cX_p \xra{\Gamma_\varphi} \cX_q$ for $\varphi \in \sd(\pos)^{|p}_{|q}$ are the values of a factorization
\[ \begin{tikzcd}[row sep=1.5cm, column sep=1.5cm]
\sd(\pos)^{|p}_{|q}
\arrow[dashed]{rr}{\Gamma_\bullet}
&
&
\Fun^\ex(\cX_p,\cX_q)
\\
\sd(\pos)
\arrow{r}[swap]{L_\bullet}
\arrow[hookleftarrow]{u}
&
\Fun^\ex(\cX,\cX)
\arrow{r}[swap]{- \circ \rho^p}
&
\Fun^\ex(\cX_p,\cX)
\arrow[hookleftarrow]{u}[swap]{\rho^q \circ -}
\end{tikzcd}
~.
\]
\end{observation}

\needspace{2\baselineskip}
\begin{definition}
Fix any object $X \in \cX$.
\begin{enumerate}

\item

We define the \bit{reglued object} of $X$ to be
\[
\glue(X)
:=
\lim_{\sd(\pos)} ( \gd(X))
\in
\cX
~.
\]

\item

We define the \bit{microcosm morphism} of $X$ to be the unit morphism
\[
X
\longra
\glue(X)
\]
in $\cX$ of the adjunction \Cref{adjn.in.reconstrn.thm} of \Cref{macrocosm.thm}.

\item

For any $Y \in \cX$, we define the corresponding \bit{nanocosm morphism} to be the composite morphism
\begin{align*}
\ulhom_\cX(Y,X)
& \longra
\ulhom_\cX(Y,\glue(X))
\\
& \simeq
\lim_{\varphi \in \sd(\pos)} \ulhom_\cX(Y,L_\varphi(X))
\\
& \simeq
\lim_{([n] \xra{\varphi} \pos) \in \sd(\pos)}
\left( \ulhom_{\cX_{\varphi(n)}} ( \Phi_{\varphi(n)}Y , \Gamma_\varphi \Phi_{\varphi(0)} X ) \right)
\end{align*}
obtained by applying $\ulhom_\cX(Y,-)$ to the microcosm morphism of $X$.

\end{enumerate}
\end{definition}

\begin{remark}
\label{rmk.nanocosm}
For any objects $X,Y \in \cX$, we unpack the nanocosm morphism
\[
\ulhom_\cX(Y,X)
\longra
\lim_{([n] \xra{\varphi} \pos) \in \sd(\pos)}
\left( \ulhom_{\cX_{\varphi(n)}} ( \Phi_{\varphi(n)}Y , \Gamma_\varphi \Phi_{\varphi(0)} X ) \right)
\]
as follows. First of all, postcomposing with the canonical morphism to the $([n] \xra{\varphi} \pos)\th$ constituent of the limit, we obtain the composite
\[
\ulhom_\cX(Y,X)
\longra
\ulhom_\cX(Y,L_\varphi X)
\simeq
\ulhom_{\cX_{\varphi(n)}} ( \Phi_{\varphi(n)} Y , \Gamma_\varphi \Phi_{\varphi(0)} X)
~.
\]
The functoriality of the diagram
\begin{equation}
\label{sd.P.indexed.diagram.of.spectra}
\sd(\pos)
\xra{([n] \xra{\varphi} \pos) \longmapsto \ulhom_{\cX_{\varphi(n)}} ( \Phi_{\varphi(n)} Y , \Gamma_\varphi \Phi_{\varphi(0)} X)}
\Spectra
\end{equation}
may be described informally as follows.  Observe that every morphism in $\sd(\pos)$ factors as a composite of morphisms whose images under the forgetful functor $\sd(\pos) \ra \bDelta$ are all coface maps $[n] \xra{\delta^i} [n+1]$ (for some $n \geq 0$ and some $0 \leq i \leq n+1$), so it suffices to describe the functoriality of the diagram \Cref{sd.P.indexed.diagram.of.spectra} on such morphisms.  So, let us fix a morphism
\begin{equation}
\label{coface.map.in.sd.P}
\begin{tikzcd}
{[n]}
\arrow[hook]{rr}{\delta^i}
\arrow[hook]{rd}[swap, sloped]{\varphi}
&
&
{[n+1]}
\\
&
\pos
\arrow[hookleftarrow]{ru}[swap, sloped]{\w{\varphi}}
\end{tikzcd}
\end{equation}
in $\sd(\pos)$, and describe the morphism
\begin{equation}
\label{value.on.a.morphism.of.sd.P.indexed.diagram.of.spectra}
\ulhom_{\cX_{\varphi(n)}}(\Phi_{\varphi(n)} Y , \Gamma_\varphi \Phi_{\varphi(0)} X)
\longra
\ulhom_{\cX_{\w{\varphi}(n+1)}}(\Phi_{\w{\varphi}(n+1)} Y , \Gamma_{\w{\varphi}} \Phi_{\w{\varphi}(0)} X)
\end{equation}
in $\Spectra$ which is the image of the morphism \Cref{coface.map.in.sd.P} under the functor \Cref{sd.P.indexed.diagram.of.spectra}.
\begin{itemize}

\item If $i=0$, then $\varphi(n)= \w{\varphi}(n+1)$ and the morphism \Cref{value.on.a.morphism.of.sd.P.indexed.diagram.of.spectra} is obtained by postcomposition with the morphism
\[ \begin{tikzcd}[row sep=1.5cm]
&[-7.1cm]
\Gamma_\varphi \Phi_{\varphi(0)} X
:=
\Gamma^{\varphi(n-1)}_{\varphi(n)} \cdots \Gamma^{\varphi(0)}_{\varphi(1)} \Phi_{\varphi(0)} X
=
&[-1.2cm]
\Gamma^{\w{\varphi}(n)}_{\w{\varphi}(n+1)} \cdots \Gamma^{\w{\varphi}(1)}_{\w{\varphi}(2)} \Phi_{\w{\varphi}(1)} X
\arrow{d}{\eta_{\w{\varphi}(0)}}
\\
\Gamma_{\w{\varphi}} \Phi_{\w{\varphi}(0)} X :=
&
&
\Gamma^{\w{\varphi}(n)}_{\w{\varphi}(n+1)} \cdots \Gamma^{\w{\varphi}(1)}_{\w{\varphi}(2)} \Phi_{\w{\varphi}(1)} L_{\w{\varphi}(0)} X
\end{tikzcd} \]
in $\cX_{\varphi(n)} = \cX_{\w{\varphi}(n+1)}$.

\item If $1 \leq i \leq n$, then $\varphi(n) = \w{\varphi}(n+1)$ and the morphism \Cref{value.on.a.morphism.of.sd.P.indexed.diagram.of.spectra} is obtained by postcomposition with the morphism
\[ \begin{tikzcd}[row sep=1.5cm]
&[-6.7cm]
\Gamma_\varphi \Phi_{\varphi(0)} X
:=
\Gamma^{\varphi(n-1)}_{\varphi(n)} \cdots \Gamma^{\varphi(0)}_{\varphi(1)} \Phi_{\varphi(0)} X
=
&[-1.5cm]
\Gamma^{\w{\varphi}(n)}_{\w{\varphi}(n+1)} \cdots \Gamma^{\w{\varphi}(i-1)}_{\w{\varphi}(i+1)} \cdots \Gamma^{\w{\varphi}(0)}_{\w{\varphi}(1)} \Phi_{\w{\varphi}(0)} X
\arrow{d}{\eta_{\w{\varphi}(i)}}
\\
\Gamma_{\w{\varphi}} \Phi_{\w{\varphi}(0)} X
:=
&
&
\Gamma^{\w{\varphi}(n)}_{\w{\varphi}(n+1)} \cdots \Gamma^{\w{\varphi}(i)}_{\w{\varphi}(i+1)} \Gamma^{\w{\varphi}(i-1)}_{\w{\varphi}(i)} \cdots \Gamma^{\w{\varphi}(0)}_{\w{\varphi}(1)} \Phi_{\w{\varphi}(0)} X
\end{tikzcd} \]
in $\cX_{\varphi(n)} = \cX_{\w{\varphi}(n+1)}$.

\item If $i=n+1$, then the morphism \Cref{value.on.a.morphism.of.sd.P.indexed.diagram.of.spectra} is the composite
\[ \begin{tikzcd}[row sep=1.5cm]
\ulhom_{\cX_{\varphi(n)}}(\Phi_{\varphi(n)} Y , \Gamma_\varphi \Phi_{\varphi(0)} X) =
&[-2.3cm]
\ulhom_{\cX_{\w{\varphi}(n)}}(\Phi_{\w{\varphi}(n)} Y , \Gamma_{\w{\varphi}|_{[n]}} \Phi_{\w{\varphi}(0)} X)
\arrow{d}{\Gamma^{\w{\varphi}(n)}_{\w{\varphi}(n+1)}}
\\
&
\ulhom_{\cX_{\w{\varphi}(n+1)}}( \Gamma^{\w{\varphi}(n)}_{\w{\varphi}(n+1)} \Phi_{\w{\varphi}(n)} Y , \Gamma^{\w{\varphi}(n)}_{\w{\varphi}(n+1)} \Gamma_{\w{\varphi}|_{[n]}} \Phi_{\w{\varphi}(0)} X)
\\[-1.5cm]
&
\rotatebox{90}{$=$}
\\[-1.5cm]
&
\ulhom_{\cX_{\w{\varphi}(n+1)}}( \Gamma^{\w{\varphi}(n)}_{\w{\varphi}(n+1)} \Phi_{\w{\varphi}(n)} Y , \Gamma_{\w{\varphi}} \Phi_{\w{\varphi}(0)} X)
\arrow{d}{\eta_{\w{\varphi}(n)}}
\\
&
\ulhom_{\cX_{\w{\varphi}(n+1)}}( \Phi_{\w{\varphi}(n+1)} Y , \Gamma_{\w{\varphi}} \Phi_{\w{\varphi}(0)} X)
\end{tikzcd} \]
in which the first morphism is obtained by applying the functor
\[
\cX_{\w{\varphi}(n)}
\xra{\Gamma^{\w{\varphi}(n)}_{\w{\varphi}(n+1)}}
\cX_{\w{\varphi}(n+1)}
\]
and the second morphism is obtained by precomposing with the morphism
\[
\Phi_{\w{\varphi}(n+1)} Y
\xra{\eta_{\w{\varphi}(n)}}
\Phi_{\w{\varphi}(n+1)} L_{\w{\varphi}(n)} Y
=
\Gamma^{\w{\varphi}(n)}_{\w{\varphi}(n+1)} \Phi_{\w{\varphi}(n)} Y
~.
\]
\end{itemize}
\end{remark}

\subsection{Strict objects}
\label{subsection.strict.objects}

In this brief subsection, we lay out the general theory of strict objects.

\begin{local}
In this subsection, we fix a stratification $\cZ_\bullet$ of $\cX$ over $\pos$.
\end{local}

\needspace{2\baselineskip}
\begin{definition}
\label{defn.strict.objects}
\begin{enumerate}
\item[]

\item We say that $X \in \cX$ is \bit{convergent} if its microcosm morphism
\[
X
\longra
\glue(X)
:=
\lim_{\sd(\pos)} ( \gd(X) )
\]
is an equivalence.

\item We say that
\[
F \in
\Glue(\cX)
\subseteq
\Fun(\sd(\pos),\cX)
\]
is \bit{strict} if it carries every isominmax morphism in $\sd(\pos)$ (\Cref{defn.iso.min.and.or.max}) to an equivalence in $\cX$.

\item\label{item.defn.of.strict.object} We say that $X \in \cX$ is \bit{strict} if it is convergent and moreover its gluing diagram $\gd(X) \in \Glue(\cX)$ is strict.

\end{enumerate}
\end{definition}

\begin{lemma}
\label{lem.sd.P.localizes.onto.TwAr.P}
The functor
\[ \begin{tikzcd}[row sep=0cm]
\sd(\pos)
\arrow{r}{(\min \ra \max)}
&
\TwAr(\pos)
\\
\rotatebox{90}{$\in$}
&
\rotatebox{90}{$\in$}
\\
([n] \xra{\varphi} \pos)
\arrow[maps to]{r}
&
(\varphi(0) \ra \varphi(n))
\end{tikzcd} \]
witnesses $\TwAr(\pos)$ as the localization of $\sd(\pos)$ with respect to the isominmax morphisms.
\end{lemma}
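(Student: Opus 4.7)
Write $F := (\min \to \max): \sd(\pos) \to \TwAr(\pos)$. Observe that $F$ sends each isominmax inclusion $\varphi \hookrightarrow \psi$ (an inclusion with $\min\varphi = \min\psi$ and $\max\varphi = \max\psi$) to the identity morphism on $(\min\varphi \to \max\varphi) \in \TwAr(\pos)$, so $F$ inverts the class $\cL$ of isominmax morphisms and canonically factors as
\[
\sd(\pos) \longra \sd(\pos)[\cL^{-1}] \xlongra{\bar F} \TwAr(\pos) ~.
\]
The plan is to show that $\bar F$ is an equivalence by invoking the standard criterion (as in Cisinski's Higher Categories and Homotopical Algebra, Ch.~7): it suffices to verify that for each object $d = (p \to q) \in \TwAr(\pos)$, the slice $\sd(\pos) \times_{\TwAr(\pos)} \TwAr(\pos)_{/d}$ (equipped with the pulled-back class of marked morphisms) has weakly contractible localization.

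Unwinding the definitions, an object of the slice consists of a chain $\varphi \in \sd(\pos)$ together with a morphism $(\min\varphi \to \max\varphi) \to (p \to q)$ in $\TwAr(\pos)$, which is precisely the data of a chain $\varphi \subseteq [p, q]_\pos := \{r \in \pos : p \leq r \leq q\}$; thus the slice is canonically identified with the poset $\sd([p, q]_\pos)$ of finite nonempty chains in this closed interval, and the pulled-back class is again the isominmax inclusions. The key geometric step is to exhibit a natural deformation onto the distinguished chain $e := \{p, q\}$ (or $e := \{p\}$ if $p = q$) via the endofunctor $\Xi: \varphi \mapsto \{p\} \cup \varphi \cup \{q\}$ of $\sd([p, q]_\pos)$, together with the natural transformations
\[
\id \xLongrightarrow{\iota_1} \Xi \xLongleftarrow{\iota_2} \const_e
\]
whose components are the evident inclusions. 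Since the components of $\iota_2$ are inclusions between chains both having min $p$ and max $q$, they lie pointwise in $\cL$; after localizing at $\cL$, $\iota_2$ becomes invertible, yielding a composite natural transformation $\id \Rightarrow \const_e$ of endofunctors of the localized slice, which produces the desired contracting homotopy onto $\{e\}$.

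The point requiring most care will be the precise invocation of the localization criterion and the verification that the natural transformation above genuinely witnesses weak contractibility as it is formulated there. This obstacle can be sidestepped by the observation that, for a 1-category, passage to the classifying space is insensitive to inverting a subclass of morphisms (since in the geometric realization every 1-simplex is already a reversible path), whence the required weak contractibility reduces to showing $|\sd([p,q]_\pos)|$ is contractible; and by Thomason's identification $|\sd(\posQ)| \simeq |\posQ|$, this reduces to the trivial observation that $[p,q]_\pos$ has $q$ as a terminal element, hence contractible classifying space.
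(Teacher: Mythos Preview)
Your proposed criterion---that $\bar F$ is an equivalence provided each marked slice $\sd(\pos)\times_{\TwAr(\pos)}\TwAr(\pos)_{/d}$ has weakly contractible localization---is not valid, and Cisinski's Chapter~7 does not contain such a statement. A minimal counterexample: take $F\colon[1]\to\pt$ with $\cL=\{\text{identities}\}$. The unique marked slice is $([1],\text{identities})$, whose localization is $[1]$, which is weakly contractible; yet $[1][\cL^{-1}]=[1]\not\simeq\pt$. What you have written is essentially Quillen's Theorem~A, which shows that $F$ is cofinal (equivalently, that $|F|$ is a weak homotopy equivalence), not that $F$ is a localization at $\cL$. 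Your deformation $\Xi$ and the Thomason argument do correctly establish weak contractibility of the slices, but that is the wrong input for the desired conclusion.

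The paper's argument is different. Writing $\bW$ for the isominmax morphisms, it invokes a Rezk-nerve criterion \cite[Theorem~3.8]{AMG-rnerves}: it suffices that for each $n\geq 0$ the functor
\[
\Fun([n],\sd(\pos))^{\bW}\longrightarrow\hom_{\Cat}([n],\TwAr(\pos))
\]
(from the category whose morphisms are componentwise-$\bW$ natural transformations, to the discrete target) be an $\infty$-groupoid completion. This is then verified by exhibiting a fully faithful left adjoint, which sends a string $(p_0\to q_0)\to\cdots\to(p_n\to q_n)$ to the sequence $\{p_0,q_0\}\subseteq\{p_1,p_0,q_0,q_1\}\subseteq\cdots$. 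Your distinguished object $e=\{p,q\}$ is exactly this left adjoint at level $n=0$, so your instinct is pointing at the right construction; the fix is to reorganize the argument to run level-by-level in the nerve rather than slice-by-slice over $\TwAr(\pos)$.
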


\begin{proof}
Let us write $\bW \subseteq \sd(\pos)$ for the subcategory on the isominmax morphisms, and for any $\cK \in \Cat$ let us write $\Fun(\cK,\sd(\pos))^\bW \subseteq \Fun(\cK,\sd(\pos))$ for the subcategory on the natural transformations that are componentwise in $\bW$. By \cite[Theorem 3.8]{AMG-rnerves}, it suffices to show that for every $n \geq 0$ the evident factorization
\[ \begin{tikzcd}[column sep=2.5cm]
\Fun([n],\sd(\pos))
\arrow{r}{\Fun([n],(\min \ra \max))}
&
\Fun([n],\TwAr(\pos))
\\
\Fun([n],\sd(\pos))^\bW
\arrow[hook]{u}
\arrow[dashed]{r}
&
\hom_\Cat([n],\TwAr(\pos))
\arrow[hook]{u}
\end{tikzcd} \]
is an $\infty$-groupoid completion, which follows from the observation that it admits a fully faithful left adjoint.
\end{proof}

\begin{observation}
\label{obs.strict.microcosm.gluing.diagram.iff.factors.from.sd.to.TwAr}
By \Cref{lem.sd.P.localizes.onto.TwAr.P}, an object $F \in \Glue(\cX) \subseteq \Fun(\sd(\pos),\cX)$ is strict if and only if it admits a factorization
\[ \begin{tikzcd}
\sd(\pos)
\arrow{r}{F}
\arrow{d}[swap]{(\min \ra \max)}
&
\cX
\\
\TwAr(\pos)
\arrow[dashed]{ru}[sloped, swap]{F}
\end{tikzcd} \]
(for which we use the same notation), in which case because localizations are initial we have a canonical equivalence
\[
\lim_{\sd(\pos)}(F)
\xlongla{\sim}
\lim_{\TwAr(\pos)}(F)
~.
\]
In particular, if $X \in \cX$ is strict, then we have a canonical equivalence
\[
X
\xlongra{\sim}
\lim_{\TwAr(\pos)}(\gd(X))
\]
and for any $Y \in \cX$ the nanocosm morphism reduces to an equivalence
\[
\ulhom_\cX(Y,X)
\xlongra{\sim}
\lim_{(p \ra q) \in \TwAr(\pos)} \ulhom_{\cX_q} ( \Phi_q Y , \Gamma^p_q \Phi_p X )
~.
\]
\end{observation}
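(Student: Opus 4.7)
The plan is to address the three assertions in order. For the biconditional, this is formal: by \Cref{defn.strict.objects}, a functor $F \in \Glue(\cX) \subseteq \Fun(\sd(\pos),\cX)$ is strict precisely when it sends each isominmax morphism to an equivalence in $\cX$, and by \Cref{lem.sd.P.localizes.onto.TwAr.P} together with the universal property of localizations, this is exactly the condition that $F$ factor (essentially uniquely) through the localization functor $(\min \ra \max) : \sd(\pos) \to \TwAr(\pos)$.

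For the limit identification $\lim_{\sd(\pos)}(F) \xlongla{\sim} \lim_{\TwAr(\pos)}(F)$, I would invoke the standard $\infty$-categorical fact that any localization functor $L : \cK \to \cK[W^{-1}]$ is initial, whence for any functor $G : \cK[W^{-1}] \to \cC$ into a target admitting the relevant limits, the canonical comparison $\lim_{\cK[W^{-1}]}(G) \to \lim_\cK(G \circ L)$ is an equivalence. Applied to $L = (\min \ra \max)$ and to the factored $F$, this yields the claim.

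The two specializations then follow as immediate corollaries. For strict $X \in \cX$, \Cref{defn.strict.objects}\Cref{item.defn.of.strict.object} says that $X$ is convergent (so that $X \simeq \lim_{\sd(\pos)}(\gd(X))$) and that $\gd(X)$ factors through $\TwAr(\pos)$; combining these with the previous step gives $X \simeq \lim_{\TwAr(\pos)}(\gd(X))$. For the nanocosm morphism, \Cref{obs.nanocosm} presents the underlying $\Spectra$-valued diagram as the composite $\sd(\pos) \xra{\gd(X)} \cX \xra{\ulhom_\cX(Y,-)} \Spectra$, transported along the natural adjunction identifications $\ulhom_\cX(Y, L_\varphi X) \simeq \ulhom_{\cX_{\varphi(n)}}(\Phi_{\varphi(n)}Y, \Gamma_\varphi \Phi_{\varphi(0)} X)$; since $\gd(X)$ factors through $\TwAr(\pos)$ and these identifications are natural in $\varphi$, the whole diagram descends to $\TwAr(\pos)$, and the limit equivalence of the second step delivers the stated formula. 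I foresee no real obstacle: the substantive input \Cref{lem.sd.P.localizes.onto.TwAr.P} is already in hand, and the remaining ingredients (the universal property of localizations and their initiality) are standard.
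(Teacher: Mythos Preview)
Your proposal is correct and is precisely the argument the paper intends: the observation is stated as following ``by \Cref{lem.sd.P.localizes.onto.TwAr.P}'' together with the parenthetical ``because localizations are initial,'' and you have unpacked exactly these two ingredients along with the definitions of strictness and convergence. There is no separate proof in the paper beyond the observation's own text.
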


\section{Fundamental operations}
\label{section.fund.opns}

In this section, we establish our fundamental operations on stratifications.  Towards this end, we first study certain fundamental operations on closed subcategories.  In particular, we introduce and study the notion of one closed subcategory being \textit{aligned} with another.  The notion of alignment allows us to state our fundamental operations on stratifications in greater generality than is done in \Cref{subsection.intro.detailed.overview} as \Cref{intro.thm.fund.opns}, while at the same time streamlining their proofs.  The assertions of \Cref{intro.thm.fund.opns} are recovered as a consequence of the fact that any two closed subcategories determined by a stratification are mutually aligned (\Cref{closed.subcats.are.mutually.aligned}).

This section is organized as follows.
\begin{itemize}

\item[\Cref{subsection.alignment}:] We introduce the notion of alignment and study its basic consequences.

\item[\Cref{subsection.fund.opns.on.aligned.subcats}:] We establish a number of fundamental operations on aligned subcategories.

\item[\Cref{subsection.glue.aligned}:] We establish excision- and Mayer--Vietoris-type gluing results for closed subcategories in the presence of alignment.

\item[\Cref{subsection.structure.theory}:] We prove our suite of fundamental operations on stratifications.

\end{itemize}

\begin{local}
In this section, we fix a presentable stable $\infty$-category $\cX$.
\end{local}

\subsection{Alignment}
\label{subsection.alignment}

In this subsection, we introduce the notion of alignment between closed subcategories and study its basic consequences. We also give an alternative characterization of alignment as \Cref{lem.equivalent.characterizations.of.alignment}.

\begin{local}
In this subsection, we fix two closed subcategories $\cY,\cZ \in \Cls_\cX$.
\end{local}

\begin{definition}
\label{defn.aligned}
We say that $\cZ$ is \bit{aligned} with $\cY$ if there exists a factorization
\[ \begin{tikzcd}
\cY \cap \cZ
\arrow[hook]{r}
&
\cY
\\
\cZ
\arrow[dashed]{u}
\arrow[hook]{r}[swap]{i_L}
&
\cX
\arrow{u}[swap]{y}
\end{tikzcd}
\]
through the intersection (with both the intersection and the factorization considered in $\Cat$).  To indicate that $\cZ$ is aligned with $\cY$, we write either $\cZ \algnd \cY$ or $\cY \algndfrom \cZ$.  We say that $\cY$ and $\cZ$ are \bit{mutually aligned} if $\cY$ is aligned with $\cZ$ and $\cZ$ is aligned with $\cY$, and in this case we write $\cY \mutalgnd \cZ$.  We write
\[ \begin{tikzcd}
&[-1.1cm]
\Cls_\cX^{\mutalgnd \cY}
\arrow[hook]{r}
\arrow[hook]{d}
&
\Cls_\cX^{\algnd \cY}
\arrow[hook]{d}
&[-1.1cm]
:= \{ \cW \in \Cls_\cX : \cW \algnd \cY \}
\\
\{ \cW \in \Cls_\cX : \cW \algndfrom \cY \} =:
&
\Cls_\cX^{\algndfrom \cY}
\arrow[hook]{r}
&
\Cls_\cX
\end{tikzcd} \]
for the evident pullback diagram among full subposets of $\Cls_\cX$.
\end{definition}

\begin{example}
\label{ex.of.one.directional.alignment}
The diagram
\[ \begin{tikzcd}[column sep=2cm, row sep=0cm]
\cY
\arrow[hook, transform canvas={yshift=0.9ex}]{r}{i_L}
\arrow[leftarrow, transform canvas={yshift=-0.9ex}]{r}[yshift=-0.2ex]{\bot}[swap]{y}
&
\cX
\arrow[hookleftarrow, transform canvas={yshift=0.9ex}]{r}{i_L}
\arrow[transform canvas={yshift=-0.9ex}]{r}[yshift=-0.2ex]{\bot}[swap]{y}
&
\cZ
\\
\rotatebox{90}{$=:$}
&
\rotatebox{90}{$=:$}
&
\rotatebox{90}{$=:$}
\\
\Spectra
\arrow[hook, transform canvas={yshift=0.9ex}]{r}{E \mapsto (0 \ra E)}
\arrow[leftarrow, transform canvas={yshift=-0.9ex}]{r}[yshift=-0.2ex]{\bot}[swap]{\ev_1}
&
\Fun([1],\Spectra)
\arrow[hookleftarrow, transform canvas={yshift=0.9ex}]{r}{(E \ra 0) \mapsfrom E}
\arrow[transform canvas={yshift=-0.9ex}]{r}[yshift=-0.2ex]{\bot}[swap]{\fib}
&
\Spectra
\end{tikzcd} \]
depicts the $i_L \adj y$ adjunctions of two closed subcategories $\cY,\cZ \in \Cls_\cX$.  Note that $\cY \cap \cZ = 0$.  The composite
\[
\cZ
\xlonghookra{i_L}
\cX
\xlongra{y}
\cY
\]
is zero, and so $\cZ$ is aligned with $\cY$.  On the other hand, the composite
\[
\cY
\xlonghookra{i_L}
\cX
\xlongra{y}
\cZ
\]
is given by desuspension, and so $\cY$ is not aligned with $\cZ$.
\end{example}

\begin{observation}
\label{obs.mutually.aligned.if.containment}
If either $\cY \subseteq \cZ$ or $\cY \supseteq \cZ$, then $\cY$ and $\cZ$ are mutually aligned.
\end{observation}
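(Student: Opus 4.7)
The plan is to reduce the observation to the fact that for any closed subcategory $\cW \in \Cls_\cX$, the $i_L \adj y$ adjunction has a fully faithful left adjoint, so that $y \circ i_L \simeq \id_\cW$ (this was recorded in \Cref{obs.clsd.subcat.gives.recollement}). After this, the only real content is bookkeeping about which intersection $\cY \cap \cZ$ reduces to in each case.

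By the symmetry of the hypothesis, it suffices to treat the case $\cY \subseteq \cZ$; the case $\cZ \subseteq \cY$ then follows by interchanging the roles of $\cY$ and $\cZ$. Under the assumption $\cY \subseteq \cZ$, we have $\cY \cap \cZ = \cY$, and we must produce both factorizations of \Cref{defn.aligned}. First, for $\cZ \algnd \cY$: the required composite
\[ \cZ \xlonghookra{i_L} \cX \xlongra{y} \cY \]
already has codomain $\cY = \cY \cap \cZ$, so the factorization is tautological. Second, for $\cY \algnd \cZ$: we must show that
\[ \cY \xlonghookra{i_L} \cX \xlongra{y} \cZ \]
factors through $\cY \cap \cZ = \cY \subseteq \cZ$. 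Since $\cY \subseteq \cZ$, the inclusion $\cY \hookra \cX$ factors as $\cY \hookra \cZ \xhookra{i_L} \cX$, and then postcomposition with $y$ for the subcategory $\cZ$ gives (using $y \circ i_L \simeq \id_\cZ$) the original inclusion $\cY \hookra \cZ$, which visibly factors through $\cY$.

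There is no real obstacle here: the entire argument is a direct unwinding of \Cref{defn.aligned} together with the fully faithfulness of $i_L$ for a closed subcategory. The only small point worth highlighting is that the factorization in \Cref{defn.aligned} is automatically unique when it exists, so that verifying the existence suffices and no compatibility with the adjunctions needs to be checked separately.
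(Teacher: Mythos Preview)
Your proof is correct and is exactly the straightforward unwinding that the paper leaves implicit (the paper states this as an Observation without proof). The only minor quibble is that the equivalence $y \circ i_L \simeq \id$ follows simply from $i_L$ being fully faithful (so the unit of the adjunction $i_L \adj y$ is an equivalence), rather than needing the full recollement of \Cref{obs.clsd.subcat.gives.recollement}; but this is a matter of citation, not of argument.
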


\begin{observation}
The pullback diagram
\begin{equation}
\label{no.notation.commutative.square.of.ladjts.among.Y.cap.Z.and.Y.and.Z.and.X}
\begin{tikzcd}
\cY \cap \cZ
\arrow[hook]{r}
\arrow[hook]{d}
&
\cY
\arrow[hook]{d}{i_L}
\\
\cZ
\arrow[hook]{r}[swap]{i_L}
&
\cX
\end{tikzcd}
\end{equation}
lies in $\PrLSt \subset \Cat$.  We use this fact without further comment.
\end{observation}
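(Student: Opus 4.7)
The plan is to deduce the observation from standard facts about $\PrLSt$ combined with the definition of a closed subcategory. I would proceed in two short steps.

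First, I would note that both inclusions $\cY \hookrightarrow \cX$ and $\cZ \hookrightarrow \cX$ are morphisms in $\PrLSt$ by the very definition of $\Cls_\cX$: for any closed subcategory, the inclusion $i_L$ is a left adjoint between presentable stable $\infty$-categories. Consequently, the cospan $\cY \to \cX \leftarrow \cZ$ lies in $\PrLSt$, and since $\PrLSt$ admits all small limits and the forgetful functor $\PrLSt \to \widehat{\Cat}$ preserves them, the pullback of this cospan computed in $\PrLSt$ coincides with the usual pullback in $\Cat$, namely the full subcategory $\cY \cap \cZ \subseteq \cX$.

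This immediately yields the desired conclusion: the object $\cY \cap \cZ$ is presentable stable, and the two structural projections $\cY \cap \cZ \to \cY$ and $\cY \cap \cZ \to \cZ$ (and hence also the diagonal composites to $\cX$) lie in $\PrLSt$. Equivalently, one can verify the latter point directly, observing that $\cY \cap \cZ$ is closed under colimits in $\cX$ (since both $\cY$ and $\cZ$ are, as essential images of left adjoints) and that colimits in $\cY$ and $\cZ$ agree with those computed in $\cX$; stability of $\cY \cap \cZ$ follows for the same reason from the closure of both $\cY$ and $\cZ$ under the zero object, cofibers, and fibers in $\cX$. The statement carries no substantive obstacle; it is essentially a formal consequence of the closure of $\PrLSt$ under small limits inside $\widehat{\Cat}$, and in particular requires no alignment hypothesis relating $\cY$ and $\cZ$.
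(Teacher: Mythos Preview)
Your proposal is correct and matches the spirit of the paper, which treats this as an unproved observation (``We use this fact without further comment''). The argument you give---that the cospan lies in $\PrLSt$ and that limits in $\PrLSt$ are computed in $\widehat{\Cat}$---is precisely the standard justification the paper is implicitly invoking.
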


\begin{local}
\label{local.notn.for.incln.of.intersection.of.closed.subcats}
In this subsection, we use the notation
\begin{equation}
\label{commutative.square.of.ladjts.among.Y.cap.Z.and.Y.and.Z.and.X}
\begin{tikzcd}
\cY \cap \cZ
\arrow[hook]{r}{i_\cY}
\arrow[hook]{d}[swap]{i_\cZ}
&
\cY
\arrow[hook]{d}{i_L}
\\
\cZ
\arrow[hook]{r}[swap]{i_L}
&
\cX
\end{tikzcd}
\end{equation}
for the commutative square \Cref{no.notation.commutative.square.of.ladjts.among.Y.cap.Z.and.Y.and.Z.and.X} of left adjoints, and we use the notation
\begin{equation}
\label{commutative.square.of.radjts.among.Y.cap.Z.and.Y.and.Z.and.X}
\begin{tikzcd}
\cY \cap \cZ
&
\cY
\arrow{l}[swap]{i_\cY^R}
\\
\cZ
\arrow{u}{i_\cZ^R}
&
\cX
\arrow{u}[swap]{y}
\arrow{l}{y}
\end{tikzcd}
\end{equation}
for its corresponding commutative square of right adjoints.
\end{local}

\begin{lemma}
\label{lem.equivalent.characterizations.of.alignment}
The following are equivalent.
\begin{enumerate}

\item\label{part.lem.alignment.itself}

There exists a factorization
\[ \begin{tikzcd}
\cY \cap \cZ
\arrow[hook]{r}{i_\cY}
&
\cY
\\
\cZ
\arrow[dashed]{u}{y'}
\arrow[hook]{r}[swap]{i_L}
&
\cX
\arrow{u}[swap]{y}
\end{tikzcd}~, \]
i.e.\! $\cZ$ is aligned with $\cY$.

\item\label{part.lem.equivalent.characterizations.of.alignment.iY.counit.an.equivalence}

The morphism
\begin{equation}
\label{composite.with.iY.counit.that.is.an.equivalence.iff.Z.aligned.with.Y}
i_\cY i_\cY^R  y  i_L
\xlongra{\varepsilon}
y i_L
\end{equation}
in $\Fun(\cZ,\cY)$ is an equivalence.

\item\label{part.lem.equivalent.characterizations.of.alignment.iZ.counit.an.equivalence}

The morphism
\begin{equation}
\label{composite.with.iZ.counit.that.is.an.equivalence.iff.Z.aligned.with.Y}
y i_L i_\cZ i_\cZ^R
\xlongra{\varepsilon}
y i_L
\end{equation}
in $\Fun(\cZ,\cY)$ is an equivalence.

\item\label{part.lem.equivalent.characterizations.of.alignment.factorization.is.radjt}

The lax-commutative square
\begin{equation}
\label{lax.comm.square.in.part.lem.equivalent.characterizations.of.alignment.factorization.is.radjt}
\begin{tikzcd}
\cY \cap \cZ
\arrow[hook]{r}{i_\cY}[swap, xshift=-0.1cm, yshift=-0.4cm]{\rotatebox{-45}{$\Rightarrow$}}
&
\cY
\\
\cZ
\arrow[hook]{r}[swap]{i_L}
\arrow{u}{i_\cZ^R}
&
\cX
\arrow{u}[swap]{y}
\end{tikzcd}
\end{equation}
determined by either commutative square \Cref{commutative.square.of.ladjts.among.Y.cap.Z.and.Y.and.Z.and.X} or \Cref{commutative.square.of.radjts.among.Y.cap.Z.and.Y.and.Z.and.X} commutes.
\end{enumerate}
Moreover, if these equivalent conditions are satisfied, then the factorization $y'$ admits canonical identifications
\[ 
i_\cY^R y i_L
\simeq
y'
\simeq
i_\cZ^R
~.
\]
\end{lemma}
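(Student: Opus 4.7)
The proof proceeds by establishing the chain \Cref{part.lem.alignment.itself} $\Leftrightarrow$ \Cref{part.lem.equivalent.characterizations.of.alignment.iY.counit.an.equivalence}, then the identification $i_\cY^R y i_L \simeq i_\cZ^R$ via the right-adjoint square, and finally the remaining equivalences by recognizing the counit of \Cref{part.lem.equivalent.characterizations.of.alignment.iZ.counit.an.equivalence} as the mate natural transformation of \Cref{part.lem.equivalent.characterizations.of.alignment.factorization.is.radjt}. First I would prove \Cref{part.lem.alignment.itself} $\Leftrightarrow$ \Cref{part.lem.equivalent.characterizations.of.alignment.iY.counit.an.equivalence}: since $i_\cY: \cY \cap \cZ \hookrightarrow \cY$ is fully faithful, the endofunctor $i_\cY i_\cY^R$ is a colocalization onto the image of $\cY \cap \cZ$, so a functor $F: \cZ \to \cY$ factors through $i_\cY$ if and only if the counit $i_\cY i_\cY^R F \to F$ is an equivalence, in which case $F \simeq i_\cY i_\cY^R F$ forces the factorization to be $i_\cY^R F$. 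Applied to $F = y i_L$ this gives both the equivalence of the first two conditions and the identification $\varphi \simeq i_\cY^R y i_L$.

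Next I would identify $i_\cY^R y i_L \simeq i_\cZ^R$. Passing to right adjoints in the commutative square \Cref{commutative.square.of.ladjts.among.Y.cap.Z.and.Y.and.Z.and.X} produces the commutative square \Cref{commutative.square.of.radjts.among.Y.cap.Z.and.Y.and.Z.and.X}, whose commutativity expresses a canonical equivalence $i_\cY^R \circ y \simeq i_\cZ^R \circ y$ between functors $\cX \to \cY \cap \cZ$ (with the two instances of $y$ denoting the respective right adjoints of the two inclusions $i_L$). Precomposing with $i_L: \cZ \hookrightarrow \cX$ and invoking $y \circ i_L \simeq \id_\cZ$ (which holds since the defining inclusion of a closed subcategory is fully faithful, so its unit is an equivalence), one obtains
\[ i_\cY^R y i_L \simeq i_\cZ^R y i_L \simeq i_\cZ^R \]
as functors $\cZ \to \cY \cap \cZ$. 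Combined with the previous step, this establishes $\varphi \simeq i_\cY^R y i_L \simeq i_\cZ^R$ whenever \Cref{part.lem.alignment.itself} holds.

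For the remaining equivalences, I would use the explicit description of the mate natural transformation $\nu: i_\cY i_\cZ^R \Rightarrow y i_L$ of \Cref{part.lem.equivalent.characterizations.of.alignment.factorization.is.radjt} as the composite
\[ i_\cY i_\cZ^R \xlongra{\sim} y i_L i_\cY i_\cZ^R \simeq y i_L i_\cZ i_\cZ^R \xlongra{\varepsilon} y i_L, \]
in which the first arrow is the unit of $i_L \dashv y$ applied to $i_\cY i_\cZ^R$ (an equivalence by fully faithfulness of $i_L:\cY \hookrightarrow \cX$), the middle equivalence uses the commutative square \Cref{commutative.square.of.ladjts.among.Y.cap.Z.and.Y.and.Z.and.X} (i.e. $i_L i_\cY \simeq i_L i_\cZ$), and the final arrow is the counit of \Cref{part.lem.equivalent.characterizations.of.alignment.iZ.counit.an.equivalence}. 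This presentation makes \Cref{part.lem.equivalent.characterizations.of.alignment.iZ.counit.an.equivalence} $\Leftrightarrow$ \Cref{part.lem.equivalent.characterizations.of.alignment.factorization.is.radjt} immediate. For \Cref{part.lem.equivalent.characterizations.of.alignment.iY.counit.an.equivalence} $\Leftrightarrow$ \Cref{part.lem.equivalent.characterizations.of.alignment.factorization.is.radjt}, the identification $i_\cY^R y i_L \simeq i_\cZ^R$ from the previous step canonically matches the counit $\varepsilon: i_\cY i_\cY^R y i_L \to y i_L$ of \Cref{part.lem.equivalent.characterizations.of.alignment.iY.counit.an.equivalence} with $\nu$, so the two are equivalences simultaneously.

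The only real subtlety — and so the main obstacle — is the final coherence verification that the counit of \Cref{part.lem.equivalent.characterizations.of.alignment.iY.counit.an.equivalence} and the mate $\nu$ of \Cref{part.lem.equivalent.characterizations.of.alignment.factorization.is.radjt} genuinely agree under the identification from the right-adjoint square. This amounts to a diagram chase using the triangle identities for the adjunctions $i_L \dashv y$ and $i_\cY \dashv i_\cY^R$ together with the commutativity of both squares \Cref{commutative.square.of.ladjts.among.Y.cap.Z.and.Y.and.Z.and.X} and \Cref{commutative.square.of.radjts.among.Y.cap.Z.and.Y.and.Z.and.X}; it is standard but requires some care to lay out cleanly.
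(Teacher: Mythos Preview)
Your proposal is correct and follows essentially the same approach as the paper. The one place where you make slightly more work for yourself is the step \Cref{part.lem.equivalent.characterizations.of.alignment.iY.counit.an.equivalence} $\Leftrightarrow$ \Cref{part.lem.equivalent.characterizations.of.alignment.factorization.is.radjt}: you express the mate $\nu$ via the left-adjoint square \Cref{commutative.square.of.ladjts.among.Y.cap.Z.and.Y.and.Z.and.X} and then invoke your unconditional identification $i_\cY^R y i_L \simeq i_\cZ^R$ to match it with the counit of \Cref{part.lem.equivalent.characterizations.of.alignment.iY.counit.an.equivalence}, which leaves the coherence check you flagged. The paper avoids this by observing that the \emph{same} mate $\nu$ can be expressed directly via the right-adjoint square \Cref{commutative.square.of.radjts.among.Y.cap.Z.and.Y.and.Z.and.X} as the composite
\[
i_\cY i_\cZ^R \xlongra[\sim]{\eta} i_\cY i_\cZ^R y i_L \simeq i_\cY i_\cY^R y i_L \xlongra{\varepsilon} y i_L,
\]
where the first arrow is the unit of $i_L \dashv y$ (for $\cZ \hookrightarrow \cX$), the middle equivalence is the right-adjoint square, and the last arrow is precisely the counit of \Cref{part.lem.equivalent.characterizations.of.alignment.iY.counit.an.equivalence}. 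This makes \Cref{part.lem.equivalent.characterizations.of.alignment.iY.counit.an.equivalence} $\Leftrightarrow$ \Cref{part.lem.equivalent.characterizations.of.alignment.factorization.is.radjt} immediate with no separate diagram chase. Your identification $i_\cY^R y i_L \simeq i_\cZ^R$ is exactly what this middle equivalence plus the unit encode, so the content is the same; the paper's packaging just sidesteps the coherence verification.
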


\begin{proof}
We begin by proving the diagram of implications
\[ \begin{tikzcd}
\Cref{part.lem.alignment.itself}
\arrow[Rightarrow]{r}
&
\Cref{part.lem.equivalent.characterizations.of.alignment.iY.counit.an.equivalence}
\arrow[Leftrightarrow]{d}
\\
\Cref{part.lem.equivalent.characterizations.of.alignment.iZ.counit.an.equivalence}
\arrow[Leftrightarrow]{r}
&
\Cref{part.lem.equivalent.characterizations.of.alignment.factorization.is.radjt}
\arrow[Rightarrow]{ul}
\end{tikzcd}~. \]
\begin{itemize}

\item Given a factorization $y'$, we obtain an identification
\[ \begin{tikzcd}[row sep=0cm]
i_\cY i_\cY^R y i_L
\arrow{r}{\varepsilon}
&
y i_L
\\
\rotatebox{90}{$\simeq$}
&
\rotatebox{90}{$\simeq$}
\\
i_\cY i_\cY^R i_\cY y'
\arrow{r}{\sim}[swap]{\varepsilon}
&
i_\cY y'
\end{tikzcd} \]
among morphisms in $\Fun(\cZ,\cY)$.  This proves that $\Cref{part.lem.alignment.itself} \Rightarrow \Cref{part.lem.equivalent.characterizations.of.alignment.iY.counit.an.equivalence}$.

\item Trivially, $\Cref{part.lem.equivalent.characterizations.of.alignment.factorization.is.radjt} \Rightarrow \Cref{part.lem.alignment.itself}$.

\item Considering the lax-commutative square \Cref{lax.comm.square.in.part.lem.equivalent.characterizations.of.alignment.factorization.is.radjt} as being determined by the commutative square \Cref{commutative.square.of.radjts.among.Y.cap.Z.and.Y.and.Z.and.X}, its natural transformation is the composite $i_\cY i_\cZ^R \xra[\sim]{\eta} i_\cY i_\cZ^R y i_L \simeq i_\cY i_\cY^R y i_L \xra{\varepsilon} y i_L$.  This proves that $\Cref{part.lem.equivalent.characterizations.of.alignment.iY.counit.an.equivalence} \Leftrightarrow \Cref{part.lem.equivalent.characterizations.of.alignment.factorization.is.radjt}$.

\item Considering the lax-commutative square \Cref{lax.comm.square.in.part.lem.equivalent.characterizations.of.alignment.factorization.is.radjt} as being determined by the commutative square \Cref{commutative.square.of.ladjts.among.Y.cap.Z.and.Y.and.Z.and.X}, its natural transformation is the composite $i_\cY i_\cZ^R \xra[\sim]{\eta} y i_L i_\cY i_\cZ^R \simeq y i_L i_\cZ i_\cZ^R \xra{\varepsilon} y i_L$.  This proves that $\Cref{part.lem.equivalent.characterizations.of.alignment.iZ.counit.an.equivalence} \Leftrightarrow \Cref{part.lem.equivalent.characterizations.of.alignment.factorization.is.radjt}$.

\end{itemize}
We now conclude by observing that if \Cref{part.lem.equivalent.characterizations.of.alignment.iY.counit.an.equivalence} holds then setting $y' := i_\cY^R y i_L$ defines a factorization.
\end{proof}

\subsection{Fundamental operations on aligned subcategories}
\label{subsection.fund.opns.on.aligned.subcats}

In this subsection we undertake a deeper analysis of alignment, particularly regarding its interactions with colimits and intersections in $\Cls_\cX$ as well as its with quotients of $\cX$ by closed subcategories.

\begin{local}
In this subsection, given two closed subcategories $\cY,\cZ \in \Cls_\cX$ we continue to use the notation $i_\cY$, $i_\cZ$, $i_\cY^R$, and $i_\cZ^R$ of \Cref{local.notn.for.incln.of.intersection.of.closed.subcats}.
\end{local}

\begin{lemma}
\label{lem.colimits.preserve.alignment}
For any closed subcategory $\cY \in \Cls_\cX$, all four functors in the commutative square
\begin{equation}
\label{inclusions.of.posets.of.closed.subcats.with.alignment.conditions}
\begin{tikzcd}
\Cls_\cX^{\mutalgnd \cY}
\arrow[hook]{r}
\arrow[hook]{d}
&
\Cls_\cX^{\algnd \cY}
\arrow[hook]{d}
\\
\Cls_\cX^{\algndfrom \cY}
\arrow[hook]{r}
&
\Cls_\cX
\end{tikzcd}
\end{equation}
preserve colimits.
\end{lemma}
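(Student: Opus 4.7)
The plan is to exploit the fact that the commutative square \Cref{inclusions.of.posets.of.closed.subcats.with.alignment.conditions} is a pullback of fully faithful inclusions of full subposets of $\Cls_\cX$. Once I have shown that the two inclusions into $\Cls_\cX$ (the bottom horizontal and right vertical) preserve colimits, the top two inclusions will follow formally: any diagram in $\Cls_\cX^{\mutalgnd \cY}$ has its colimit computed in $\Cls_\cX$, and by the preservation assumption this colimit lies in both $\Cls_\cX^{\algnd \cY}$ and $\Cls_\cX^{\algndfrom \cY}$, hence in their intersection $\Cls_\cX^{\mutalgnd \cY}$. So given a diagram $\cZ_\bullet : \cI \to \Cls_\cX$ with colimit $\cZ := \brax{\cZ_i}_{i \in \cI}$, the task reduces to showing
\begin{enumerate}
\item[(i)] if each $\cZ_i \algnd \cY$, then $\cZ \algnd \cY$; and
\item[(ii)] if $\cY \algnd \cZ_i$ for each $i$, then $\cY \algnd \cZ$.
\end{enumerate}

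For (i) I would argue directly. The composite $F := y \circ i_L : \cZ \to \cY$ (the inclusion of $\cZ$ followed by the right adjoint of the inclusion of $\cY$) preserves colimits, since $i_L$ is a left adjoint and $y$ has itself a further right adjoint $i_R$ by closedness of $\cY$. The full subcategory of $\cZ$ on objects $W$ with $F(W) \in \cY \cap \cZ \subseteq \cY$ is then closed under colimits (using that $\cY \cap \cZ \hookrightarrow \cY$ preserves colimits, being the inclusion of a closed subcategory), and contains each $\cZ_i$ by hypothesis; so it must exhaust $\cZ = \brax{\cZ_i}$, yielding the required factorization of $F$ through $\cY \cap \cZ$.

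For (ii) the main obstacle is that $y_\cZ$ depends nontrivially on $\cZ$ and must be evaluated on objects of $\cY$ which need not lie in any individual $\cZ_i$. My proposed approach is to promote the colimit $\cZ = \brax{\cZ_i}$ in $\Cls_\cX$ to a colimit of $\cZ_\bullet$ in $\PrLSt$, which by right-adjoint passage gives an equivalence $\cZ \simeq \lim_{i \in \cI} \cZ_i$ in $\PrRSt$ whose structure maps are the right adjoints $y_{\cZ_i, \cZ} : \cZ \to \cZ_i$. Under this identification, $\cY \cap \cZ \hookrightarrow \cZ$ corresponds to $\lim_i (\cY \cap \cZ_i) \hookrightarrow \lim_i \cZ_i$, since $\cY \times_\cX (-)$ commutes with limits. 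For $Y \in \cY$, the object $y_\cZ(i_\cY Y) \in \cZ$ then corresponds to the tuple $(y_{\cZ_i, \cZ} y_\cZ(i_\cY Y))_i \simeq (y_{\cZ_i}(i_\cY Y))_i$, and each entry lies in $\cY \cap \cZ_i$ by the $\cY \algnd \cZ_i$ hypothesis; so the whole tuple lies in $\lim_i(\cY \cap \cZ_i) = \cY \cap \cZ$, giving $y_\cZ(i_\cY Y) \in \cY \cap \cZ$ as required.

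The delicate point will be establishing the identification $\cZ = \colim_i \cZ_i$ in $\PrLSt$ (equivalently $\cZ \simeq \lim_i \cZ_i$ in $\PrRSt$), i.e., that passing from $\Cls_\cX$ to the larger ambient $\infty$-category introduces no extra identifications; this should be extractable from the colimit-closure description of $\brax{\cZ_i}$ in \Cref{obs.closed.subcats.closed.under.colimit.closure}. Should this verification prove inconvenient, a backup is to prove (ii) via a transfinite construction of the $\cZ^\bot$-reflector $\nu p_L : \cX \to \cX$: the condition $\cY \algnd \cZ_i$ is equivalent to $\cY$ being stable under the $\cZ_i^\bot$-reflector $\nu_i p_{L,i}$, so one can build $\nu p_L(i_\cY Y)$ by iteratively applying the $\nu_i p_{L,i}$ for varying $i$, staying inside $\cY$ at each step by alignment and at limit stages by closure of $\cY$ under colimits.
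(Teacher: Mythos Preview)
Your reduction to the two inclusions $\Cls_\cX^{\algnd \cY} \hookrightarrow \Cls_\cX$ and $\Cls_\cX^{\algndfrom \cY} \hookrightarrow \Cls_\cX$ matches the paper's, and your argument for (i) is essentially the same. (One quibble: justifying that $\cY \cap \cZ \hookrightarrow \cY$ preserves colimits by ``being the inclusion of a closed subcategory'' is circular, since that would follow from $\cZ \algnd \cY$ via \Cref{lemma.all.about.aligned.subcats}, which is precisely what you are proving. The needed closure under colimits follows directly from $i_L : \cY \hookrightarrow \cX$ preserving colimits and $\cZ \subseteq \cX$ being closed under them.)

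Your main approach to (ii), however, has a real gap: the claim that $\cZ = \brax{\cZ_i}$ is the colimit of $\cZ_\bullet$ in $\PrLSt$ is false. Take $\cI = \{0,1\}$ discrete with $\cZ_0 = \cZ_1 = \cX$: then $\brax{\cZ_0, \cZ_1} = \cX$, but the coproduct in $\PrLSt$ is $\cX \times \cX$. The forgetful functor $\Cls_\cX \to \PrLSt$ does not preserve colimits, and \Cref{obs.closed.subcats.closed.under.colimit.closure} says nothing of the sort. Your backup via transfinite iteration of reflectors could perhaps be made rigorous, but is heavier than necessary.

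The paper's argument for (ii) is short and quite different: it exploits the joint conservativity of the family $\{y : \cZ \to \cZ_s\}_{s \in S}$ (immediate from $\cZ = \brax{\cZ_s}$). By \Cref{lem.equivalent.characterizations.of.alignment}, showing $\cY \algnd \cZ$ amounts to showing a natural transformation $i_\cZ i_\cY^R \Rightarrow y i_L$ in $\Fun(\cY, \cZ)$ is an equivalence; postcomposing with each $y : \cZ \to \cZ_s$ and invoking the hypothesis $\cY \algnd \cZ_s$ via a brief diagram chase shows this holds after projection to each $\cZ_s$, and joint conservativity finishes.
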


\begin{proof}
Since the commutative square \Cref{inclusions.of.posets.of.closed.subcats.with.alignment.conditions} is a pullback among full subposets of $\Cls_\cX$, it suffices to check that its right vertical functor and its lower horizontal functor both preserve colimits.  We address each of these in turn.

Suppose first that we are given any $\{ \cZ_s \in \Cls_\cX^{\algnd \cY} \}_{s \in S}$, and let us write $\cZ = \brax{\cZ_s}_{s \in S} \in \Cls_\cX$.  For each $s \in S$, by assumption we have a factorization
\begin{equation}
\label{diagram.for.showing.that.colimit.of.aligned.is.aligned}
\begin{tikzcd}
\cY \cap \cZ_s
\arrow[hook]{r}
&
\cY \cap \cZ
\arrow[hook]{r}
&
\cY
\\
\cZ_s
\arrow[hook]{r}[swap]{i_L}
\arrow[dashed]{u}
&
\cZ
\arrow[hook]{r}[swap]{i_L}
&
\cX
\arrow{u}[swap]{y}
\end{tikzcd}~.
\end{equation}
Because all solid functors in the diagram \Cref{diagram.for.showing.that.colimit.of.aligned.is.aligned} preserve colimits, we find that $\cZ$ is aligned with $\cY$, i.e.\! that $\cZ \in \Cls_\cX^{\algnd \cY}$.

Suppose now that we are given any $\{ \cZ_s \in \Cls_\cX^{\algndfrom \cY} \}_{s \in S}$, and let us write $\cZ = \brax{\cZ_s}_{s \in S} \in \Cls_\cX$.  For an arbitrary element $s \in S$, consider the diagram
\begin{equation}
\label{diagram.with.two.natural.transformations.the.lower.of.which.being.an.equivalence.means.that.Y.aligned.with.Z}
\begin{tikzcd}
\cY \cap \cZ_s
\arrow[hook]{r}{i_{\cZ_s}}[swap, xshift=-0.1cm, yshift=-0.4cm]{\rotatebox{-45}{$\Rightarrow$}}
&
\cZ_s
\\
\cY \cap \cZ
\arrow{u}{i^R}
\arrow[hook]{r}{i_\cZ}[swap, xshift=-0.1cm, yshift=-0.4cm]{\rotatebox{-45}{$\Rightarrow$}}
&
\cZ
\arrow{u}[swap]{y}
\\
\cY
\arrow{u}{i_\cY^R}
\arrow[hook]{r}[swap]{i_L}
&
\cX
\arrow{u}[swap]{y}
\end{tikzcd}~,
\end{equation}
in which the functor $i^R$ is the evident right adjoint.  By \Cref{lem.equivalent.characterizations.of.alignment}, to show that $\cY$ is aligned with $\cZ$ it suffices to show that the lower natural transformation in diagram \Cref{diagram.with.two.natural.transformations.the.lower.of.which.being.an.equivalence.means.that.Y.aligned.with.Z} is an equivalence.  Also by \Cref{lem.equivalent.characterizations.of.alignment}, because $\cY$ is aligned with $\cZ_s$, the composite natural transformation
\begin{equation}
\label{composite.natural.trans.in.diagram.with.two.natural.transformations.the.lower.of.which.being.an.equivalence.means.that.Y.aligned.with.Z}
i_{\cZ_s} i^R i_\cY^R
\longra
yyi_L
\end{equation}
in diagram \Cref{diagram.with.two.natural.transformations.the.lower.of.which.being.an.equivalence.means.that.Y.aligned.with.Z} is an equivalence.  This implies that the upper natural transformation in diagram \Cref{diagram.with.two.natural.transformations.the.lower.of.which.being.an.equivalence.means.that.Y.aligned.with.Z} is also an equivalence, as it is given by the composite
\[
i_{\cZ_s} i^R \xra[\sim]{\eta} i_{\cZ_s}
i^R i_\cY^R i_\cY
\xra[\sim]{\Cref{composite.natural.trans.in.diagram.with.two.natural.transformations.the.lower.of.which.being.an.equivalence.means.that.Y.aligned.with.Z}}
y y i_L i_\cY
\simeq
y y i_L i_\cZ \simeq y i_\cZ
~.
\]
So, the lower natural transformation in diagram \Cref{diagram.with.two.natural.transformations.the.lower.of.which.being.an.equivalence.means.that.Y.aligned.with.Z} is indeed an equivalence, because the functors $\{ \cZ \xra{y} \cZ_s \}_{s \in S}$ are jointly conservative.
\end{proof}

\begin{lemma}
\label{lemma.all.about.aligned.subcats}
Let $\cY,\cZ \in \Cls_\cX$ be closed subcategories, and suppose that $\cZ$ is aligned with $\cY$.
\begin{enumerate}

\item\label{part.alignment.lemma.intersection.is.closed}

The functor $i_\cZ$ is the inclusion of $\cY \cap \cZ$ as a closed subcategory of $\cZ$.

\item\label{part.alignment.lemma.induced.map.on.quotients}

Consider the resulting commutative diagram
\begin{equation}
\label{induced.morphism.on.presentable.quotients.when.Y.is.aligned.with.Z}
\begin{tikzcd}
\cY \cap \cZ
\arrow[hook]{r}{i_\cY}
\arrow[hook]{d}[swap]{i_\cZ}
&
\cY
\arrow[hook]{d}{i_L}
\\
\cZ
\arrow[hook]{r}{i_L}
\arrow{d}[swap]{p_L}
&
\cX
\arrow{d}{p_L}
\\
\cZ / (\cY \cap \cZ)
\arrow[dashed]{r}[swap]{i}
&
\cX / \cY
\end{tikzcd}
\end{equation}
in $\PrLSt$, in which $i$ is the canonical morphism between presentable quotients.

\begin{enumeratesub}

\item\label{subpart.alignment.lemma.image.is.closed}

The functor $i$ is the fully faithful inclusion of $\cZ / (\cY \cap \cZ)$ as a closed subcategory of $\cX / \cY$.

\item\label{subpart.alignment.lemma.nu.commutativity}

The lax-commutative square
\begin{equation}
\label{nu.commutativity.for.aligned.subcats}
\begin{tikzcd}
\cZ
\arrow[hook]{r}{i_L}[swap, yshift=-0.4cm]{\rotatebox{-45}{$\Rightarrow$}}
&
\cX
\\
\cZ / (\cZ \cap \cY)
\arrow{r}[swap]{i}
\arrow[hook]{u}{\nu}
&
\cX / \cY
\arrow[hook]{u}[swap]{\nu}
\end{tikzcd}
\end{equation}
determined by the lower commutative square in diagram \Cref{induced.morphism.on.presentable.quotients.when.Y.is.aligned.with.Z} commutes.

\item\label{subpart.alignment.lemma.yo.and.pL.commutativity}

Suppose further that $\cY$ is aligned with $\cZ$.  Then, the lax-commutative square
\begin{equation}
\label{yo.and.pL.commutativity.for.aligned.subcats}
\begin{tikzcd}
\cZ
\arrow{d}[swap]{p_L}
&
\cX
\arrow{l}[swap]{y}[yshift=-0.4cm]{\rotatebox{-45}{$\Rightarrow$}}
\arrow{d}{p_L}
\\
\cZ/(\cY \cap \cZ)
&
\cX / \cY
\arrow{l}{i^R}
\end{tikzcd}
\end{equation}
determined by the lower commutative square in diagram \Cref{induced.morphism.on.presentable.quotients.when.Y.is.aligned.with.Z} commutes.

\end{enumeratesub}

\end{enumerate}

\end{lemma}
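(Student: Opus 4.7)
The overall strategy is to deduce everything from \Cref{lem.equivalent.characterizations.of.alignment} together with the two basic facts about closed subcategories: they are detected by the orthogonal being colimit-closed (\Cref{obs.closed.subcats.closed.under.colimit.closure}), and arbitrary closed subcategories in $\PrL$ realize their ambient as pullbacks (so projections in those pullbacks preserve colimits).

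For part \Cref{part.alignment.lemma.intersection.is.closed}, I will use that alignment supplies a factorization $\varphi \simeq i_\cZ^R \simeq i_\cY^R y i_L$, so $i_\cZ$ automatically has a right adjoint; what remains is to verify that $\varphi$ preserves colimits (which gives the further right adjoint via the adjoint functor theorem). Since $\cY \cap \cZ \simeq \cY \times_\cX \cZ$ in $\PrL$, the projection $i_\cY$ is in $\PrL$ (hence preserves colimits) and is fully faithful. The composite $y^\cY \circ i_L^\cZ$ is a composition of left adjoints (using $y^\cY \dashv i_R^\cY$), so preserves colimits; since $i_\cY \varphi \simeq y^\cY i_L^\cZ$ with $i_\cY$ fully faithful and colimit-preserving, a standard argument yields that $\varphi$ itself preserves colimits.

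For part \Cref{subpart.alignment.lemma.nu.commutativity}, the canonical natural transformation $i_L^\cZ \circ \nu^{\cY \cap \cZ} \to \nu^\cY \circ i$ is the unit of $L_\cY = \nu^\cY p_L^\cY$ applied to $i_L^\cZ \nu^{\cY \cap \cZ}$; it is an equivalence if and only if $y^\cY i_L^\cZ \nu^{\cY \cap \cZ} \simeq 0$. By alignment we identify $y^\cY i_L^\cZ \simeq i_\cY \circ i_\cZ^R$, and then $i_\cZ^R \nu^{\cY \cap \cZ} \simeq 0$ because $\nu^{\cY \cap \cZ}$ lands in the right-orthogonal of $\cY \cap \cZ$ in $\cZ$ (meaningful by part \Cref{part.alignment.lemma.intersection.is.closed}). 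With part \Cref{subpart.alignment.lemma.nu.commutativity} in hand, for part \Cref{subpart.alignment.lemma.image.is.closed} the inclusion $i$ is fully faithful since $\nu^\cY i \simeq i_L^\cZ \nu^{\cY\cap\cZ}$ is a composite of fully faithful functors. To promote $i$ to a closed inclusion, I apply \Cref{obs.closed.subcats.closed.under.colimit.closure} and check that $i\bigl(\cZ/(\cY \cap \cZ)\bigr)^\bot \subseteq \cX/\cY$ is closed under colimits: using part \Cref{subpart.alignment.lemma.nu.commutativity} plus the $\nu$-identification of this orthogonal, it consists of those $W \in \cX/\cY$ with $y^\cZ \nu^\cY W \in \cY \cap \cZ$. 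Closedness under colimits then follows by a three-step chase: $\nu^\cY$ preserves colimits because $\cY^\bot \subseteq \cX$ is colimit-closed (\Cref{obs.closed.subcats.closed.under.colimit.closure} applied to $\cY$), $y^\cZ$ preserves colimits (left adjoint to $i_R^\cZ$), and $\cY \cap \cZ \subseteq \cZ$ is colimit-closed by part \Cref{part.alignment.lemma.intersection.is.closed}.

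For part \Cref{subpart.alignment.lemma.yo.and.pL.commutativity}, I will first derive the formula $i^R \simeq p_L^{\cY\cap\cZ} \circ y^\cZ \circ \nu^\cY$ from the adjunction computation using part \Cref{subpart.alignment.lemma.nu.commutativity}. The asserted commutativity then reduces to showing $p_L^{\cY\cap\cZ} y^\cZ L_\cY \simeq p_L^{\cY\cap\cZ} y^\cZ$; applying $p_L^{\cY\cap\cZ} y^\cZ$ to the cofiber sequence $i_L^\cY y^\cY X \to X \to L_\cY X$, this is equivalent to the vanishing $p_L^{\cY\cap\cZ} y^\cZ i_L^\cY \simeq 0$, i.e., $y^\cZ i_L^\cY$ factoring through $i_\cZ$. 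This factorization is exactly what the additional assumption $\cY \algnd \cZ$ provides through \Cref{lem.equivalent.characterizations.of.alignment}. The main delicate point throughout is part \Cref{subpart.alignment.lemma.image.is.closed}: one might worry it needs mutual alignment, but the orthogonal-and-colimit-closure characterization lets the one-sided hypothesis suffice.
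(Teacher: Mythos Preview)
Your arguments for parts~\ref{part.alignment.lemma.intersection.is.closed} and~\ref{subpart.alignment.lemma.nu.commutativity} are essentially the paper's, and your approach to part~\ref{subpart.alignment.lemma.yo.and.pL.commutativity} (compute $i^R$ explicitly, reduce to the vanishing $p_L^{\cY\cap\cZ} y^\cZ i_L^\cY \simeq 0$, then invoke $\cY \algnd \cZ$) is a valid alternative to the paper's method, which instead constructs a functor $j$ from the alignment $\cY \algnd \cZ$ and then identifies $j \simeq i^R$.

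For part~\ref{subpart.alignment.lemma.image.is.closed}, however, your identification of the orthogonal has a gap. The hom-computation you outline via $\nu^\cY$ and part~\ref{subpart.alignment.lemma.nu.commutativity} gives
\[
\hom_{\cX/\cY}(iV,W) \simeq \hom_\cZ(\nu^{\cY\cap\cZ} V,\, y^\cZ \nu^\cY W),
\]
so the vanishing for all $V$ says $y^\cZ \nu^\cY W \in \bigl((\cY\cap\cZ)^\bot\bigr)^\bot$, which is the $i_R$-image of $\cY\cap\cZ$ in $\cZ$, \emph{not} the literal intersection $\cY\cap\cZ$ (the $i_\cZ$-image). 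The $i_R$-image is not colimit-closed in general, so your three-step chase does not apply as stated. The characterization you wrote down is nonetheless correct, but only because $y^\cZ \nu^\cY W$ automatically lies in $(\cY\cap\cZ)^\bot$ (check via $i_L^\cZ i_\cZ = i_L^\cY i_\cY$ and $y^\cY \nu^\cY = 0$), so both conditions reduce to $y^\cZ \nu^\cY W = 0$; you need to say this. The paper sidesteps this entirely: passing to right adjoints in the lower square of~\eqref{induced.morphism.on.presentable.quotients.when.Y.is.aligned.with.Z} yields $\nu^{\cY\cap\cZ} i^R \simeq y^\cZ \nu^\cY$, and since $\nu^{\cY\cap\cZ}$ is conservative and $y^\cZ$, $\nu^\cY$ preserve colimits, $i^R$ preserves colimits directly.
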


\begin{proof}
We begin by proving part \Cref{part.alignment.lemma.intersection.is.closed}.  Because $i_\cZ$ is fully faithful, it remains to show that its right adjoint $i_\cZ^R$ preserves colimits.  For this, because $i_\cY$ is fully faithful and colimit-preserving, it suffices to show that the composite $i_\cY i_\cZ^R$ preserves colimits, which follows from the equivalence $i_\cY i_\cZ^R \simeq y i_L$ guaranteed by \Cref{lem.equivalent.characterizations.of.alignment}.

We now prove part \Cref{part.alignment.lemma.induced.map.on.quotients}\Cref{subpart.alignment.lemma.nu.commutativity}.  By definition, the natural transformation in the lax-commutative square \Cref{nu.commutativity.for.aligned.subcats} is the composite $i_L \nu \xra{\eta} \nu p_L i_L \nu \simeq \nu i p_L \nu \xra[\sim]{\varepsilon} \nu i$.  To show that it is an equivalence is therefore equivalent to showing that the composite functor
\[ \begin{tikzcd}
\cZ
\arrow[hook]{r}{i_L}
&
\cX
\\
\cZ / (\cY \cap \cZ)
\arrow[hook]{u}{\nu}
\end{tikzcd} \]
lands in the image of the functor
\[ \begin{tikzcd}
\cX
\\
\cX / \cY
\arrow[hook]{u}[swap]{\nu}
~.
\end{tikzcd} \]
This is equivalent to showing that the composite functor
\[ \begin{tikzcd}
&
\cY
\\
\cZ
\arrow[hook]{r}{i_L}
&
\cX
\arrow{u}[swap]{y}
\\
\cZ / (\cY \cap \cZ)
\arrow[hook]{u}{\nu}
\end{tikzcd} \]
is zero.  This follows from the commutativity of the diagram
\[ \begin{tikzcd}
\cY \cap \cZ
\arrow[hook]{r}{i_\cY}
&
\cY
\\
\cZ
\arrow{u}{i_\cZ^R}
\arrow[hook]{r}{i_L}
&
\cX
\arrow{u}[swap]{y}
\\
\cZ / (\cY \cap \cZ)
\arrow[hook]{u}{\nu}
\end{tikzcd} \]
guaranteed by \Cref{lem.equivalent.characterizations.of.alignment}, because its left vertical composite is zero.  So indeed, the lax-commutative square \Cref{nu.commutativity.for.aligned.subcats} is commutative.

We now prove part \Cref{part.alignment.lemma.induced.map.on.quotients}\Cref{subpart.alignment.lemma.image.is.closed}.  By part \Cref{part.alignment.lemma.induced.map.on.quotients}\Cref{subpart.alignment.lemma.nu.commutativity}, the functor $i$ is fully faithful.  Note too that by definition $i$ is colimit-preserving.  Passing to right adjoints in the lower commutative square in diagram \Cref{induced.morphism.on.presentable.quotients.when.Y.is.aligned.with.Z}, we obtain a commutative square
\[ \begin{tikzcd}
\cZ
&
\cX
\arrow{l}[swap]{y}
\\
\cZ / ( \cY \cap \cZ)
\arrow[hook]{u}{\nu}
&
\cX / \cY
\arrow{l}{i^R}
\arrow[hook]{u}[swap]{\nu}
\end{tikzcd}~, \]
which implies that $i^R$ is colimit-preserving.  So indeed, $i$ is the inclusion of a closed subcategory.

We now conclude by proving part \Cref{part.alignment.lemma.induced.map.on.quotients}\Cref{subpart.alignment.lemma.yo.and.pL.commutativity}.  By \Cref{lem.equivalent.characterizations.of.alignment} (with the roles of $\cY$ and $\cZ$ reversed), we have a commutative diagram
\begin{equation}
\label{induced.map.between.presentable.quotients.from.yoneda.restrictions}
\begin{tikzcd}
\cY \cap \cZ
\arrow[hook]{d}[swap]{i_\cZ}
&
\cY
\arrow{l}[swap]{i_\cY^R}
\arrow[hook]{d}{i_L}
\\
\cZ
\arrow{d}[swap]{p_L}
&
\cX
\arrow{l}[swap]{y}
\arrow{d}{p_L}
\\
\cZ / ( \cY \cap \cZ)
&
\cX / \cY
\arrow[dashed]{l}{j}
\end{tikzcd}
\end{equation}
in $\PrLSt$, in which $j$ is the canonical morphism between presentable quotients.  Hence, $j$ fits into a commutative diagram
\[ \begin{tikzcd}
\cZ
\arrow{d}[swap]{p_L}
&
\cX
\arrow{l}[swap]{y}
\\
\cZ / (\cY \cap \cZ)
&
\cX / \cY
\arrow[hook]{u}[swap]{\nu}
\arrow{l}{j}
\end{tikzcd}~. \]
On the other hand, note the commutative square
\[ \begin{tikzcd}
\cZ
&
\cX
\arrow{l}[swap]{y}
\\
\cZ / (\cY \cap \cZ)
\arrow[hook]{u}{\nu}
&
\cX / \cY
\arrow[hook]{u}[swap]{\nu}
\arrow{l}{j}
\end{tikzcd} \]
obtained by passing to right adjoints in the lower commutative square of diagram \Cref{induced.morphism.on.presentable.quotients.when.Y.is.aligned.with.Z}.  Using this, we obtain the identification $j \simeq p_L y \nu \simeq p_L \nu i^R \simeq i_R$.  Thereafter, we see that indeed the lax-commutative square \Cref{yo.and.pL.commutativity.for.aligned.subcats} is precisely the lower commutative square in diagram \Cref{induced.map.between.presentable.quotients.from.yoneda.restrictions}.
\end{proof}

\begin{observation}
\label{obs.if.aligned.preimage.of.closed.is.closed}
Let $\cY,\cZ \in \Cls_\cX$ be closed subcategories, and suppose that $\cZ$ is aligned with $\cY$.  By \Cref{lemma.all.about.aligned.subcats}\Cref{part.alignment.lemma.intersection.is.closed}, we have $(\cY \cap \cZ) \in \Cls_\cZ$.  It follows that $(\cY \cap \cZ) \in \Cls_\cX$, and thereafter that $(\cY \cap \cZ) \in \Cls_\cY$. In other words, all four functors in the pullback diagram
\begin{equation}
\label{intersection.closed.in.both.Y.and.Z}
\begin{tikzcd}
\cY \cap \cZ
\arrow[hook]{r}{i_\cY}
\arrow[hook]{d}[swap]{i_\cZ}
&
\cY
\arrow[hook]{d}{i_L}
\\
\cZ
\arrow[hook]{r}[swap]{i_L}
&
\cX
\end{tikzcd}
\end{equation}
are inclusions of closed subcategories.
\end{observation}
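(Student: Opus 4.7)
The plan is to assemble the claim directly from Lemma \ref{lemma.all.about.aligned.subcats}\Cref{part.alignment.lemma.intersection.is.closed} together with the two general closure properties of $\Cls_{(-)}$ recorded earlier (namely that inclusions of closed subcategories compose, and that if $\cA \subseteq \cB \subseteq \cC$ with $\cA,\cB \in \Cls_\cC$, then $\cA \in \Cls_\cB$). There is no genuinely hard step: the entire content is that alignment converts the pullback square \Cref{intersection.closed.in.both.Y.and.Z} of defining fully faithful inclusions into a square of closed-subcategory inclusions.

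First, I would invoke Lemma \ref{lemma.all.about.aligned.subcats}\Cref{part.alignment.lemma.intersection.is.closed} to obtain that $i_\cZ : \cY \cap \cZ \hookra \cZ$ is the inclusion of a closed subcategory, so $(\cY \cap \cZ) \in \Cls_\cZ$.

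Next, I would show $(\cY \cap \cZ) \in \Cls_\cX$. The inclusion $\cY \cap \cZ \hookra \cX$ factors as the composite $\cY \cap \cZ \xhookra{i_\cZ} \cZ \xhookra{i_L} \cX$. Each of $i_\cZ$ and $i_L$ is the inclusion of a closed subcategory, so each is colimit-preserving and its right adjoint is itself colimit-preserving and admits a further right adjoint. Thus the composite inclusion is colimit-preserving; its right adjoint, being the composite of the two right adjoints, preserves colimits; and that composite right adjoint admits a further right adjoint (the composite of the respective third adjoints). This is precisely the assertion that $(\cY \cap \cZ) \in \Cls_\cX$.

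Finally, to obtain $(\cY \cap \cZ) \in \Cls_\cY$, I would apply the general fact that if $\cA \subseteq \cB \subseteq \cX$ with both $\cA, \cB \in \Cls_\cX$, then $\cA \in \Cls_\cB$ (recorded immediately after \Cref{obs.clsd.subcat.gives.recollement}): taking $\cA = \cY \cap \cZ$ and $\cB = \cY$, the previous step supplies the hypothesis $\cA \in \Cls_\cX$, and $\cB = \cY \in \Cls_\cX$ by assumption. Having verified that all four inclusions in the square \Cref{intersection.closed.in.both.Y.and.Z} are inclusions of closed subcategories, the observation is proved.
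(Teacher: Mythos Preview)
Your argument is correct and follows exactly the approach that the paper itself indicates: the observation is stated with its justification inline, and your three steps (invoke Lemma~\ref{lemma.all.about.aligned.subcats}\ref{part.alignment.lemma.intersection.is.closed}, then use that closed-subcategory inclusions compose, then use that $\cA,\cB \in \Cls_\cX$ with $\cA \subseteq \cB$ implies $\cA \in \Cls_\cB$) are precisely the implicit steps behind the paper's ``It follows \ldots\ and thereafter \ldots'' phrasing. You have simply made explicit what the paper leaves to the reader, correctly citing the closure properties recorded after Observation~\ref{obs.clsd.subcat.gives.recollement}.
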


\begin{lemma}
\label{lem.if.aligned.then.iRs.form.a.pullback.square}
Let $\cY,\cZ \in \Cls_\cX$ be closed subcategories, and suppose that $\cZ$ is aligned with $\cY$. Then, the commutative square
\begin{equation}
\label{comm.square.of.iRs.that.is.a.pullback.assuming.alignment}
\begin{tikzcd}
\cY \cap \cZ
\arrow[hook]{r}{i_R}
\arrow[hook]{d}[swap]{i_R}
&
\cY
\arrow[hook]{d}{i_R}
\\
\cZ
\arrow[hook]{r}[swap]{i_R}
&
\cX
\end{tikzcd}
\end{equation}
in $\Cat$ obtained by taking right adjoints twice in the commutative square \Cref{intersection.closed.in.both.Y.and.Z} in $\Cat$ (which is possible by \Cref{obs.if.aligned.preimage.of.closed.is.closed}) is a pullback square.
\end{lemma}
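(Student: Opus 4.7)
The plan is to prove the pullback property by a direct essential-surjectivity argument, with the key input being the right-adjoint analogue of the alignment identity from \Cref{lem.equivalent.characterizations.of.alignment}.

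First I would observe that all four $i_R$ functors in the square exist by \Cref{obs.if.aligned.preimage.of.closed.is.closed}, and that the $i_R$ square commutes automatically: taking right adjoints twice of the commutative $i_L$ square \Cref{intersection.closed.in.both.Y.and.Z} yields first the commutative $y$ square \Cref{commutative.square.of.radjts.among.Y.cap.Z.and.Y.and.Z.and.X} and then the claimed $i_R$ square. All arrows in the $i_R$ square are fully faithful, so the canonical comparison
\[ \cY \cap \cZ \longra Q := \cY \times^{i_R,\, i_R}_\cX \cZ \]
is fully faithful, and it remains to show essential surjectivity: that any $X \in \cX$ lying in both $\im(i_R : \cY \hookra \cX)$ and $\im(i_R : \cZ \hookra \cX)$ also lies in $\im(i_R : \cY \cap \cZ \hookra \cX)$.

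The crucial ingredient is \Cref{lem.equivalent.characterizations.of.alignment}\Cref{part.lem.alignment.itself}, which by alignment supplies an equivalence $y \circ i_L \simeq i_\cY \circ i_\cZ^R$ of functors $\cZ \to \cY$. Taking right adjoints of both sides (using $(i_L)^R = y$ and $(y)^R = i_R$ for each relevant closed-subcategory pair) produces an equivalence
\[ y \circ i_R \;\simeq\; i_R \circ i_\cY^R \]
of functors $\cY \to \cZ$, in which the left $i_R$ is the further right adjoint for $\cY \subseteq \cX$, the right $i_R$ is that for $\cY \cap \cZ \subseteq \cZ$, and $y$ is that for $\cZ \subseteq \cX$. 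Given $X \simeq i_R(Y)$ with $Y \in \cY$ and $X \simeq i_R(Z)$ with $Z \in \cZ$, applying this identity yields $Z \simeq y(X) \simeq y \circ i_R(Y) \simeq i_R \circ i_\cY^R(Y)$. Setting $W := i_\cY^R(Y) \in \cY \cap \cZ$, composition of right adjoints along $\cY \cap \cZ \subseteq \cZ \subseteq \cX$ (so that $i_R^{\cY \cap \cZ \subseteq \cX} \simeq i_R^{\cZ \subseteq \cX} \circ i_R^{(\cY \cap \cZ) \subseteq \cZ}$) then gives $X \simeq i_R(Z) \simeq i_R \circ i_R(W) \simeq i_R(W)$, which establishes the required essential surjectivity.

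The main obstacle is bookkeeping: the symbols $y$, $i_L$, and $i_R$ simultaneously denote several different functors corresponding to the five closed-subcategory relations $\cY \subseteq \cX$, $\cZ \subseteq \cX$, $\cY \cap \cZ \subseteq \cY$, $\cY \cap \cZ \subseteq \cZ$, and $\cY \cap \cZ \subseteq \cX$; correctly identifying the right adjoints in the key calculation takes some care. Once the right-adjoint identity is in hand, the argument is a short diagram chase. Notably, only one direction of alignment ($\cZ \algnd \cY$) is required, despite the symmetric appearance of the statement.
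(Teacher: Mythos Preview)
Your proposal is correct and takes essentially the same approach as the paper: both derive the key identity $y \circ i_R \simeq i_R \circ i_\cY^R$ (as functors $\cY \to \cZ$) by passing to right adjoints in the alignment square $y \circ i_L \simeq i_\cY \circ i_\cZ^R$ from \Cref{lem.equivalent.characterizations.of.alignment}, and then use this to produce the factorization establishing essential surjectivity of the comparison into the pullback. The paper phrases the final step as showing that the projection $j_\cZ$ from the pullback factors through $i_R : \cY \cap \cZ \hookra \cZ$ via the chain $j_\cZ \simeq y\,i_R\,j_\cZ \simeq y\,i_R\,j_\cY \simeq i_R\,y\,j_\cY$, which is exactly your object-level computation written functorially.
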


\begin{proof}
By \Cref{lem.equivalent.characterizations.of.alignment}, the square
\[ \begin{tikzcd}
\cY \cap \cZ
\arrow[hook]{r}{i_L}
&
\cY
\\
\cZ
\arrow{u}{y}
\arrow[hook]{r}[swap]{i_L}
&
\cX
\arrow{u}[swap]{y}
\end{tikzcd} \]
commutes, which implies that the square
\begin{equation}
\label{iR.and.y.commute.given.alignment}
\begin{tikzcd}
\cY \cap \cZ
\arrow[hook]{d}[swap]{i_R}
&
\cY
\arrow{l}[swap]{y}
\arrow[hook]{d}{i_R}
\\
\cZ
&
\cX
\arrow{l}{y}
\end{tikzcd}
\end{equation}
commutes by passing to right adjoints. Now, consider the solid commutative diagram
\begin{equation}
\label{functor.from.iL.intersection.to.iR.intersection}
\begin{tikzcd}
\cY \cap \cZ
\arrow{rd}
\arrow[hook, bend left=10]{rrd}[sloped]{i_R}
\arrow[hook, bend right=10]{rdd}[sloped, swap]{i_R}
\\
&
\cY \cap_R \cZ
\arrow[hook]{r}[swap]{j_\cY}
\arrow[hook]{d}{j_\cZ}
\arrow[dashed, bend left=10]{lu}
&
\cY
\arrow[hook]{d}{i_R}
\\
&
\cZ
\arrow[hook]{r}[swap]{i_R}
&
\cX
\end{tikzcd}
\end{equation}
in which $\cY \cap_R \cZ$ denotes the pullback in $\Cat$. Because both functors to $\cZ$ in diagram \Cref{functor.from.iL.intersection.to.iR.intersection} are fully faithful, it suffices to show that there exists the dashed factorization of $j_\cZ$. This follows from the sequence of equivalences
\[
j_\cZ
\simeq
y i_R j_\cZ
\simeq
y i_R j_\cY
\simeq
i_R y j_\cY
~,
\]
in which the last equivalence follows from the commutativity of the square \Cref{iR.and.y.commute.given.alignment}.
\end{proof}

\begin{remark}
By \Cref{obs.if.aligned.preimage.of.closed.is.closed}, two closed subcategories $\cY,\cZ \in \Cls_\cX$ are mutually aligned if and only if the diagram
\[ \begin{tikzcd}
\cY \cap \cZ
\arrow[hook]{r}{i_L}
\arrow[hook]{d}[swap]{i_L}
&
\cY
\arrow[hook]{d}{i_L}
\\
\cZ
\arrow[hook]{r}[swap]{i_L}
&
\cX
\end{tikzcd} \]
defines a stratification of $\cX$ over $[1] \times [1]$.
\end{remark}

\begin{remark}
\label{rmk.image.of.composite.need.not.be.closed.if.not.aligned}
Given closed subcategories $\cY,\cZ \in \Cls_\cX$, the most important consequence of $\cZ$ being aligned with $\cY$ is that the image of the composite
\[
\cZ
\xlonghookra{i_L}
\cX
\xlongra{p_L}
\cX/\cY
\]
is a closed subcategory, as guaranteed by \Cref{lemma.all.about.aligned.subcats}\Cref{part.alignment.lemma.induced.map.on.quotients}\Cref{subpart.alignment.lemma.image.is.closed}. This need not hold if $\cZ$ is not aligned with $\cY$.  We may see this as follows.

Let us take $\cX := \Fun(\cI,\Spectra)$, where $\cI$ denotes the category generated by the quiver
\[ \begin{tikzcd}
u
\arrow[bend left]{r}{a}
&
v
\arrow[bend left]{l}{b}
\end{tikzcd}~, \]
i.e.\! the pushout
\[
\cI
:=
\colim \left( \begin{tikzcd}
\pt \sqcup \pt
\arrow{r}{(0,1)}
\arrow{d}[swap]{(1,0)}
&
{[1]}
\\
{[1]}
\end{tikzcd} \right)
~.
\]
Consider the full subcategories
\[
\cY := \{ E_\bullet \in \cX : E_u \simeq 0 \}
\subseteq \cX \supseteq
\{ E_\bullet \in \cX : E_v \simeq 0 \} =: \cZ
~.
\]
They are clearly closed under colimits.  Moreover, via the identifications
\[
\cY
\xra[\sim]{\ev_v}
\Spectra
\xla[\sim]{\ev_u}
\cZ~,
\]
their inclusions' right adjoints are given by the formulas
\[ \begin{tikzcd}[column sep=1.5cm, row sep=0cm]
\cY
\arrow[hook, transform canvas={yshift=0.9ex}]{r}{i_L}
\arrow[dashed, leftarrow, transform canvas={yshift=-0.9ex}]{r}[yshift=-0.2ex]{\bot}[swap]{y}
&
\cX
\arrow[hookleftarrow, transform canvas={yshift=0.9ex}]{r}{i_L}
\arrow[dashed, transform canvas={yshift=-0.9ex}]{r}[yshift=-0.2ex]{\bot}[swap]{y}
&
\cZ
\\
\rotatebox{90}{$\in$}
&
\rotatebox{90}{$\in$}
&
\rotatebox{90}{$\in$}
\\
\fib(E_b)
&
E_\bullet
\arrow[maps to]{r}
\arrow[maps to]{l}
&
\fib(E_a)
\end{tikzcd}~, \]
which preserve colimits so that $\cY$ and $\cZ$ are indeed closed subcategories of $\cX$ (which justifies the notations $i_L$ and $y$).  Note that $\cY \cap \cZ \simeq 0$.  On the other hand, the composite functors
\[
\cY
\xlonghookra{i_L}
\cX
\xlongra{y}
\cZ
\qquad
\text{and}
\qquad
\cY
\xlongla{y}
\cX
\xlonghookla{i_L}
\cZ
\]
may both be identified with desuspension, and in particular are equivalences.  So, $\cY$ is not aligned with $\cZ$ and $\cZ$ is not aligned with $\cY$.

Now, observe the identification
\[ \begin{tikzcd}[row sep=0cm]
\cX
\arrow[hookleftarrow]{r}{\nu}
&
\cX / \cY
\\
\rotatebox{90}{$=:$}
&
\rotatebox{90}{$\simeq$}
\\
\Fun(\cI,\Spectra)
\arrow[hookleftarrow]{r}
&
\{ E_\bullet \in \cX : E_b \textup{ is an equivalence} \}
\end{tikzcd}~, \]
and thereafter the identification of its left adjoint $\cX \xra{p_L} \cX / \cY$ as the assignment
\begin{align*}
\left( \begin{tikzcd}[ampersand replacement=\&]
E_u
\arrow[bend left]{r}{E_a}
\&
E_v
\arrow[bend left]{l}{E_b}
\end{tikzcd} \right)
& \longmapsto
\cofib \left(
\left( \begin{tikzcd}[ampersand replacement=\&]
0
\arrow[bend left]{r}
\&
\fib(E_b)
\arrow[bend left]{l}
\end{tikzcd} \right)
\longra
\left( \begin{tikzcd}[ampersand replacement=\&]
E_u
\arrow[bend left]{r}{E_a}
\&
E_v
\arrow[bend left]{l}{E_b}
\end{tikzcd} \right)
\right)
\\
& \simeq
\left( \begin{tikzcd}[ampersand replacement=\&]
E_u
\arrow[bend left]{r}{E_b E_a}
\&
E_u
\arrow[bend left]{l}{\id}[swap]{\sim}
\end{tikzcd} \right)
~.
\end{align*}
Hence, the composite
\[
\Spectra
\xla[\sim]{\ev_u}
\cZ
\xlonghookra{i_L}
\cX
\xra{p_L}
\cX/\cY
\]
is given by the assignment
\[
E
\longmapsto
\left( \begin{tikzcd}
E
\arrow[bend left]{r}{0}
&
E
\arrow[bend left]{l}{\id}[swap]{\sim}
\end{tikzcd} \right)
~,
\]
and so its image is not even closed under colimits -- nor does it define a fully faithful functor from $\cZ / (\cY \cap \cZ) \simeq \cZ / 0 \simeq \cZ$ to $\cX/\cY$.
\end{remark}

\begin{remark}
In \Cref{lemma.all.about.aligned.subcats}\Cref{part.alignment.lemma.induced.map.on.quotients}\Cref{subpart.alignment.lemma.yo.and.pL.commutativity}, the lax-commutative square \Cref{yo.and.pL.commutativity.for.aligned.subcats} need not commute if $\cY$ is not aligned with $\cZ$.  Indeed, in the situation of \Cref{ex.of.one.directional.alignment}, it may be identified with the canonical lax-commutative square
\[ \begin{tikzcd}
\Spectra
\arrow{d}[swap]{\id}[sloped, anchor=south]{\sim}
&
\Fun([1],\Spectra)
\arrow{l}[swap]{\fib}[yshift=-0.4cm, xshift=0.3cm]{\rotatebox{-45}{$\Rightarrow$}}
\arrow{d}{\ev_0}
\\
\Spectra
&
\Spectra
\arrow{l}{\id}[swap]{\sim}
\end{tikzcd}~. \]
\end{remark}

\begin{prop}
\label{prop.image.and.preimage.of.closed.subcats.are.closed}
For any closed subcategory $\cY \in \Cls_\cX$, taking the image or preimage (in $\Cat$) of a closed subcategory along either functor in the composite
\[
\cY
\xlonghookra{i_L}
\cX
\xlongra{p_L}
\cX/\cY
\]
yields a closed subcategory, and these constructions define adjunctions
\[ \begin{tikzcd}[column sep=1.5cm, row sep=1.5cm]
&
(\Cls_\cX)_{/\cY}
\arrow[hook]{d}
\\
\Cls_\cY
\arrow[dashed]{ru}[sloped]{\sim}
\arrow[hook, transform canvas={yshift=0.9ex}]{r}{i_L}
\arrow[leftarrow, transform canvas={yshift=-0.9ex}]{r}[yshift=-0.2ex]{\bot}[swap]{i_L^{-1}}
&
\Cls_\cX^{\algnd \cY}
\arrow[transform canvas={yshift=0.9ex}]{r}{p_L}
\arrow[hookleftarrow, transform canvas={yshift=-0.9ex}]{r}[yshift=-0.2ex]{\bot}[swap]{p_L^{-1}}
&
\Cls_{\cX/\cY}
\arrow[dashed]{ld}[sloped, swap]{\sim}
\\
&
(\Cls_\cX)_{\cY/}
\arrow[hook]{u}
\end{tikzcd}
\]
with fully faithful images as indicated.
\end{prop}

\begin{proof}
It is immediate that $i_L$ and $p_L^{-1}$ respectively carry closed subcategories of $\cY$ and $\cX/\cY$ to closed subcategories of $\cX$, which are aligned with $\cY$ by \Cref{obs.mutually.aligned.if.containment}.  Moreover, for any $\cZ \in \Cls_\cX^{\algnd \cY}$, we have $i_L^{-1}(\cZ) = (\cY \cap \cZ) \in \Cls_\cY$ by \Cref{obs.if.aligned.preimage.of.closed.is.closed} and $p_L(\cZ) = \cZ / (\cY \cap \cZ) \in \Cls_{\cX/\cY}$ by \Cref{lemma.all.about.aligned.subcats}\Cref{part.alignment.lemma.induced.map.on.quotients}\Cref{subpart.alignment.lemma.image.is.closed}.  The asserted co/reflective adjunctions among posets, as well as the identifications of the resulting fully faithful images, are now immediate.
\end{proof}

\begin{lemma}
\label{lemma.preimages.preserve.colimits}
Fix any closed subcategory $\cY \in \Cls_\cX$.
\begin{enumerate}

\item\label{lemma.part.iL.preimage.preserves.colimits}

The functor $\Cls_\cX^{\algnd \cY} \xra{i_L^{-1}} \Cls_\cY$ preserves colimits.

\item\label{lemma.part.pL.preimage.preserves.colimits}

The functor $\Cls_{\cX/\cY} \xra{p_L^{-1}} \Cls_\cX^{\algnd \cY}$ preserves nonempty colimits.

\end{enumerate}
\end{lemma}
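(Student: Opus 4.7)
The plan is to establish both parts by giving explicit formulas for the preimage functors and then verifying colimit-preservation by direct comparison. By \Cref{lem.colimits.preserve.alignment}, colimits in $\Cls_\cX^{\algnd \cY}$ agree with those in $\Cls_\cX$, and by \Cref{obs.closed.subcats.closed.under.colimit.closure} the colimit of any diagram $\cK \xra{\cZ_\bullet} \Cls_\cX$ is the closed subcategory $\brax{\cZ_s}_{s \in \cK}$ generated under colimits by the $\cZ_s$; analogous statements hold in $\Cls_\cY$ and $\Cls_{\cX/\cY}$. So in both parts the task reduces to checking a set-theoretic identity of closed subcategories.

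To prove \Cref{lemma.part.iL.preimage.preserves.colimits}, I will use the identification $i_L^{-1}(\cZ) = \cY \cap \cZ$ from \Cref{prop.image.and.preimage.of.closed.subcats.are.closed} and aim to show
\[ \cY \cap \brax{\cZ_s}_{s \in \cK}^\cX = \brax{\cY \cap \cZ_s}_{s \in \cK}^\cY. \]
The inclusion $\supseteq$ is immediate. For the reverse, write $\cZ := \brax{\cZ_s}^\cX$; \Cref{lemma.all.about.aligned.subcats}\Cref{part.alignment.lemma.intersection.is.closed} realizes $\cY \cap \cZ$ as a closed subcategory of $\cZ$, so its right adjoint $i_\cZ^R : \cZ \to \cY \cap \cZ$ is colimit-preserving and essentially surjective, giving $\cY \cap \cZ = \brax{i_\cZ^R(\cZ_s)}^{\cY \cap \cZ}$. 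The key step is then to invoke \Cref{lem.equivalent.characterizations.of.alignment} in two instances -- once to identify the composite $\cZ \longhookra \cX \xra{y} \cY$ with $i_\cY \circ i_\cZ^R$ (using $\cZ \algnd \cY$), and once to recognize $\cZ_s \longhookra \cX \xra{y} \cY$ as factoring through $\cY \cap \cZ_s$ as $i_{\cZ_s}^R$ (using $\cZ_s \algnd \cY$). Composing these identifications yields $i_\cZ^R(\cZ_s) \subseteq \cY \cap \cZ_s$ inside $\cY \cap \cZ$; passing along the colimit-preserving inclusion $\cY \cap \cZ \hookrightarrow \cY$ then gives the desired inclusion.

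For \Cref{lemma.part.pL.preimage.preserves.colimits}, my first step will be to establish the formula
\[ p_L^{-1}(\cW) = \brax{\cY, \nu(\cW)}^\cX \]
for any $\cW \in \Cls_{\cX/\cY}$: the inclusion $\supseteq$ is clear, while the reverse follows from the cofiber sequence $i_L y X \to X \to \nu p_L X$ attached to the recollement \Cref{obs.clsd.subcat.gives.recollement}. Given a nonempty diagram $\cK \xra{\cW_\bullet} \Cls_{\cX/\cY}$ with colimit $\cW_\infty$, this formula yields
\[ p_L^{-1}(\cW_\infty) = \brax{\cY, \nu(\cW_\infty)}^\cX \quad\text{and}\quad \brax{p_L^{-1}(\cW_s)}_{s \in \cK}^\cX = \brax{\cY, \nu(\cW_s) : s \in \cK}^\cX, \]
where nonemptiness of $\cK$ is used to absorb $\cY$ into the second expression. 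Since $\nu$ is fully faithful and colimit-preserving (its right adjoint being $p_R$), $\nu(\cW_\infty) \subseteq \cX$ is the closure under colimits and shifts of $\bigcup_s \nu(\cW_s)$, so the two closed subcategories coincide. The empty-colimit case genuinely fails, as $p_L^{-1}(0) = \cY$ while the initial object of $\Cls_\cX^{\algnd \cY}$ is $0$, which justifies the qualifier ``nonempty''.

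The hardest part will be \Cref{lemma.part.iL.preimage.preserves.colimits}: it is a priori surprising that the \emph{right} adjoint $i_L^{-1}$ preserves colimits. The underlying mechanism is that alignment forces $\cY \cap \cZ$ to sit as a \emph{closed} subcategory of $\cZ$ -- not merely a reflective one -- so that $i_\cZ^R$ is itself colimit-preserving. Once this is in hand, the remaining work consists of verifying a compatibility between two applications of \Cref{lem.equivalent.characterizations.of.alignment}, which should be routine but requires some care in keeping track of which inclusion functor is being used.
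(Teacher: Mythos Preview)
Your proof is correct. For part \Cref{lemma.part.pL.preimage.preserves.colimits} your argument is essentially identical to the paper's: both use the cofiber sequence $i_L y X \to X \to \nu p_L X$ together with the colimit-preservation of $\nu$ and the nonemptiness of the indexing set. For part \Cref{lemma.part.iL.preimage.preserves.colimits}, however, the paper takes a shorter route. Rather than passing through the closed subcategory $\cY \cap \cZ \subseteq \cZ$ and invoking \Cref{lem.equivalent.characterizations.of.alignment} twice to compare two right adjoints, the paper directly uses that $y : \cX \to \cY$ preserves colimits (so $y(\brax{\cZ_s}) \subseteq \brax{y(\cZ_s)}^\cY$) together with a single application of alignment (so $y(\cZ_s) \subseteq \cY \cap \cZ_s$); since any element of $\cY \cap \brax{\cZ_s}$ is fixed by $y$, this yields the reverse inclusion in one step. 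Your route does offer a slightly more structural perspective --- the intermediate object $\cY \cap \cZ$ and its closed-subcategory status in $\cZ$ make explicit why the right adjoint $i_L^{-1}$ nonetheless preserves colimits --- but the paper's direct argument avoids the bookkeeping of comparing $i_\cZ^R|_{\cZ_s}$ against $i_{\cZ_s}^R$.
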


\begin{proof}
We first prove part \Cref{lemma.part.iL.preimage.preserves.colimits}.  Let $\{ \cZ_s \in \Cls_\cX^{\algnd \cY} \}_{s \in S}$ be a set of closed subcategories of $\cX$ that are aligned with $\cY$.  We have an evident inclusion $\brax{ i_L^{-1}(\cZ_s)}_{s \in S} \subseteq i_L^{-1}(\brax{\cZ_s}_{s \in S})$.  On the other hand, because $\cX \xra{y} \cY$ preserves colimits, we also have an inclusion $i_L^{-1}(\brax{\cZ_s}_{s \in S}) \subseteq \brax{ i_L^{-1}(\cZ_s)}_{s \in S}$.

We now prove part \Cref{lemma.part.pL.preimage.preserves.colimits}.  Let now $\{ \cZ_s \in \Cls_{\cX/\cY} \}_{s \in S}$ be a nonempty set of closed subcategories of $\cX/\cY$.  We have an evident inclusion $\brax{ p_L^{-1}(\cZ_s)}_{s \in S} \subseteq p_L^{-1}(\brax{\cZ_s}_{s \in S})$.  On the other hand, for any $X \in p_L^{-1}(\brax{\cZ_s}_{s \in S})$, consider the co/fiber sequence $i_L y X \ra X \ra \nu p_L X$.  Because $i_L y X \in p_L^{-1}(0) \subseteq \brax{ p_L^{-1}(\cZ_s)}_{s \in S}$ (using that $S$ is nonempty), to show that $X \in \brax{ p_L^{-1}(\cZ_s)}_{s \in S}$ it suffices to show that $\nu p_L X \in \brax{ p_L^{-1}(\cZ_s)}_{s \in S}$, which follows from the fact that $\cX/\cY \xhookra{\nu} \cX$ preserves colimits.
\end{proof}


\begin{observation}
\label{obs.if.algnd.with.Y.and.W.then.iLinverse.algnd.with.W}
Fix any closed subcategory $\cY \in \Cls_\cX$ and any $\cW \in \Cls_\cY \subseteq \Cls_\cX$.  Then, by the equivalence $\Cref{part.lem.alignment.itself} \Leftrightarrow \Cref{part.lem.equivalent.characterizations.of.alignment.iY.counit.an.equivalence}$ of \Cref{lem.equivalent.characterizations.of.alignment} there exists a factorization
\[ \begin{tikzcd}
\Cls_\cY
&
\Cls_\cX^{\algnd \cY}
\arrow{l}[swap]{i_L^{-1}}
\\
\Cls_\cY^{\algnd \cW}
\arrow[hook]{u}
&
\Cls_\cX^{\algnd \cY} \cap \Cls_\cX^{\algnd \cW}
\arrow[hook]{u}
\arrow[dashed]{l}
\end{tikzcd}~: \]
that is, if $\cZ \in \Cls_\cX$ is aligned with both $\cY$ and $\cW$ then $i_L^{-1}(\cZ) := \cY \cap \cZ$ is aligned with $\cW$.
\end{observation}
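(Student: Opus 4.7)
The plan is to invoke the equivalence $\Cref{part.lem.alignment.itself} \Leftrightarrow \Cref{part.lem.equivalent.characterizations.of.alignment.iY.counit.an.equivalence}$ of \Cref{lem.equivalent.characterizations.of.alignment}, exactly as the observation itself suggests. First I would handle well-definedness: the hypothesis $\cZ \algnd \cY$ ensures via \Cref{prop.image.and.preimage.of.closed.subcats.are.closed} that $\cY \cap \cZ$ is a bona fide closed subcategory of $\cY$, so the question of whether it is aligned with $\cW$ inside $\cY$ is sensible. I would also record that the containment $\cW \subseteq \cY$ yields $\cW \cap (\cY \cap \cZ) \simeq \cW \cap \cZ$ as full subcategories of $\cY$ (and of $\cX$).

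The key step is to identify the composite of functors $\cY \cap \cZ \to \cW$ appearing in the alignment criterion for $\cY$ with the corresponding composite coming from alignment within $\cX$. Since $\cW \xlonghookra{i_L} \cY \xlonghookra{i_L} \cX$ is a composition of fully faithful colimit-preserving inclusions, its right adjoint is the composite $\cX \xra{y} \cY \xra{y} \cW$; combining this with the agreement of the two evident factorizations of $\cY \cap \cZ \longhookra \cX$ yields
\[
\bigl( \cY \cap \cZ \longhookra \cY \xra{y} \cW \bigr)
\simeq
\bigl( \cY \cap \cZ \longhookra \cZ \longhookra \cX \xra{y} \cW \bigr),
\]
where on the right $y$ denotes the right adjoint to the composite inclusion $\cW \longhookra \cX$.

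From here the conclusion is immediate: by $\cZ \algnd \cW$ in $\cX$ and characterization \Cref{part.lem.alignment.itself} of \Cref{lem.equivalent.characterizations.of.alignment}, the composite $\cZ \longhookra \cX \xra{y} \cW$ factors through $\cW \cap \cZ$; precomposing with $\cY \cap \cZ \longhookra \cZ$ and invoking the identification above, we obtain a factorization of $\cY \cap \cZ \longhookra \cY \xra{y} \cW$ through $\cW \cap \cZ \simeq \cW \cap (\cY \cap \cZ)$, which by the same characterization applied inside $\cY$ yields the desired alignment. The only obstacle is bookkeeping, as our notation suppresses which ambient $\infty$-category each $i_L$ and $y$ is taken with respect to; I anticipate no deeper difficulty, and in particular the assumption $\cZ \algnd \cY$ is used only for well-definedness, while alignment within $\cY$ is driven entirely by $\cZ \algnd \cW$.
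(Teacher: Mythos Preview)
Your argument is correct and essentially matches the paper's (very terse) approach: both rest on identifying the composite $\cY \cap \cZ \hookra \cY \xra{y} \cW$ with the restriction of $\cZ \hookra \cX \xra{y} \cW$ along $\cY \cap \cZ \hookra \cZ$, then invoking $\cZ \algnd \cW$. You announce you will use $(1) \Leftrightarrow (2)$ but in fact work entirely with characterization~(1), whereas the paper's hint is most naturally read as passing through~(2) (the counit $i_\cW i_\cW^R \to \id_\cW$ is an equivalence on the image of $\cZ \to \cW$, hence on the smaller image of $\cY \cap \cZ \to \cW$); this is a purely presentational difference.
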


\subsection{Gluing aligned subcategories}
\label{subsection.glue.aligned}

In this brief subsection, we establish gluing formulas for closed subcategories of $\cX$ in the presence of alignment. More precisely, one may view \Cref{lem.excision} (which merely requires alignment) as an excision principle and \Cref{lem.mayer.vietoris} (which requires mutual alignment) as a Mayer--Vietoris principle.\footnote{Recall that closed subcategories of a presentable stable $\infty$-category correspond to open subsets of a topological space, as indicated in \Cref{subsection.cbl}.}

\begin{local}
In this subsection, we fix closed subcategories $\cY,\cZ \in \Cls_\cX$.
\end{local}

\begin{remark}
In this subsection, we implicitly use \Cref{obs.if.aligned.preimage.of.closed.is.closed} (that if $\cZ$ is aligned with $\cY$ then $\cY \cap \cZ$ is a closed subcategory of both $\cY$ and $\cZ$).
\end{remark}

\begin{local}
Given co/reflective localizations
\[ \begin{tikzcd}[column sep=1.5cm]
\cC
\arrow[hook, transform canvas={yshift=0.9ex}]{r}{F}
\arrow[leftarrow, transform canvas={yshift=-0.9ex}]{r}[yshift=-0.2ex]{\bot}[swap]{G}
&
\cX
\end{tikzcd}
\qquad
\text{and}
\qquad
\begin{tikzcd}[column sep=1.5cm]
\cX
\arrow[transform canvas={yshift=0.9ex}]{r}{F'}
\arrow[hookleftarrow, transform canvas={yshift=-0.9ex}]{r}[yshift=-0.2ex]{\bot}[swap]{G'}
&
\cC'
\end{tikzcd} \]
of $\cX$, we write
\[
C_\cC
:=
FG
\xra{\varepsilon_\cC}
\id_\cX
\qquad
\text{and}
\qquad
\id_\cX
\xra{\eta_\cC}
G'F'
=:
L_{\cC'}
\]
for the corresponding co/monads on $\cX$ and their co/unit maps.\footnote{So, in the notation of \Cref{defn.Cth.stratum.and.geometric.localizn} we simply write $L_\sC := L_{\cX_\sC}$.} In particular, given a closed subcategory $\cY \in \Cls_\cX$ we obtain the endofunctors
\[
C_\cY := i_L y
~,~
\qquad
L_\cY := i_R y
~,~
\qquad
L_{\cX/\cY} := \nu p_L
~,~
\qquad
\text{and}
\qquad
C_{\cX/\cY} := \nu p_R
\]
of $\cX$.
\end{local}

\begin{lemma}
\label{lem.excision}
Suppose that $\cZ$ is aligned with $\cY$.
\begin{enumerate}

\item\label{part.excision.coloc} There is a canonical identification
\[
C_{\brax{\cY,\cZ}}
\simeq
\cofib ( \Sigma^{-1} C_\cZ L_{\cX/\cY}
\longra
\Sigma^{-1} L_{\cX/\cY}
\longra
C_\cY
)
~.
\]

\item\label{part.excision.loc} There is a canonical identification
\[
L_{\brax{\cY,\cZ}}
\simeq
\fib ( L_\cZ
\longra
\Sigma C_{\cX/\cZ}
\longra
\Sigma L_\cY C_{\cX/\cZ}
)
~.
\]

\end{enumerate}
\end{lemma}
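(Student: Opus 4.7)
The plan is to derive both formulas from a single key observation: under the alignment hypothesis $\cZ \algnd \cY$, there are canonical identifications $L_{\cX/\cZ}\,L_{\cX/\cY} \simeq L_{\cX/\brax{\cY,\cZ}}$ and dually $C_{\cX/\cY}\,C_{\cX/\cZ} \simeq C_{\cX/\brax{\cY,\cZ}}$, with the composite unit (resp.\ counit) matching the unit (resp.\ counit) of the $\brax{\cY,\cZ}$-recollement. Granting this, part \Cref{part.excision.coloc} will follow by applying the fiber version of the octahedral axiom to the composable pair of units
\[ \id_\cX \xrightarrow{\eta_\cY} L_{\cX/\cY} \xrightarrow{\eta_\cZ L_{\cX/\cY}} L_{\cX/\cZ}\,L_{\cX/\cY}, \]
yielding a fiber sequence $C_\cY \to C_{\brax{\cY,\cZ}} \to C_\cZ L_{\cX/\cY}$ which rotates to identify $C_{\brax{\cY,\cZ}}$ with the cofiber of the composite $\Sigma^{-1}C_\cZ L_{\cX/\cY} \to \Sigma^{-1}L_{\cX/\cY} \to C_\cY$; part \Cref{part.excision.loc} will follow by the dual cofiber octahedron applied to the composable pair of counits $C_{\cX/\cY}\,C_{\cX/\cZ} \to C_{\cX/\cZ} \to \id_\cX$, producing a cofiber sequence $L_\cY C_{\cX/\cZ} \to L_{\brax{\cY,\cZ}} \to L_\cZ$ that rotates appropriately.

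The key step will be showing that the colocalization $C_\cZ$ preserves the full subcategory $\cY^\perp \subseteq \cX$, and dually that $L_\cY$ preserves $\cZ^\perp$. For the former, given $X \in \cY^\perp$ I would compute $y^\cY(C_\cZ X) = y^\cY \circ i_L^\cZ \circ y^\cZ(X)$; by \Cref{lem.equivalent.characterizations.of.alignment}, the functor $y^\cY \circ i_L^\cZ : \cZ \to \cY$ factors as the right adjoint to $\cY \cap \cZ \hookrightarrow \cZ$ followed by the inclusion $\cY \cap \cZ \hookrightarrow \cY$, and composing with $y^\cZ$ recognizes the resulting functor $\cX \to \cY \cap \cZ$ as the right adjoint to the composite inclusion $\cY \cap \cZ \hookrightarrow \cZ \hookrightarrow \cX$ (by composition of adjoints); it vanishes on $X$ because $\cY^\perp \subseteq (\cY \cap \cZ)^\perp$. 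The dual assertion follows from the hom-space calculation $\hom_\cX(Z, y^\cZ i_R^\cY y^\cY X) \simeq \hom_\cY(y^\cY i_L^\cZ Z, y^\cY X)$ (using $i_R^\cY$ right adjoint to $y^\cY$), which via the alignment factorization of $y^\cY i_L^\cZ Z$ and the adjunction $i_L^\cY \dashv y^\cY$ reduces to $\hom_\cX(W, X)$ for some $W \in \cY \cap \cZ \subseteq \cZ$, hence vanishes for $X \in \cZ^\perp$. From these preservation properties, $L_{\cX/\cZ}\,L_{\cX/\cY}$ (resp.\ $C_{\cX/\cY}\,C_{\cX/\cZ}$) lands in $\brax{\cY,\cZ}^\perp = \cY^\perp \cap \cZ^\perp$, is idempotent, and satisfies the universal property of the $\brax{\cY,\cZ}$-(co)localization.

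The main obstacle lies in the bookkeeping of the key step: correctly identifying the factorization of $y^\cY \circ i^\cZ_L$ provided by \Cref{lem.equivalent.characterizations.of.alignment} and recognizing the composite with $y^\cZ$ as the right adjoint to $\cY \cap \cZ \hookrightarrow \cX$, together with verifying that the composites of units (resp.\ counits) from the two individual recollements correctly identify with the unit (resp.\ counit) of the $\brax{\cY,\cZ}$-recollement under the resulting equivalence. Once the key claim is established, the octahedron-based identifications are formal consequences of the naturality of the octahedral axiom and of the factorization of the connecting maps through the intermediate terms $\Sigma^{-1}L_{\cX/\cY}$ and $\Sigma C_{\cX/\cZ}$.
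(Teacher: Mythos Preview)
Your approach is correct and is essentially a reorganization of the paper's argument. The key alignment consequence you isolate --- that $C_\cZ$ preserves $\cY^\perp$ (equivalently, $\ker(C_\cY) \subseteq \ker(C_\cY C_\cZ)$) and that $L_\cY$ preserves $\cZ^\perp$ (equivalently, $\ker(L_\cZ) \subseteq \ker(L_\cZ L_\cY)$) --- is exactly what the paper uses. The paper presents the argument by writing down a morphism of cofiber sequences and checking that the induced map $\cofib \to \id_\cX$ (resp.\ $\id_\cX \to \fib$) becomes an equivalence after applying $C_\cY$ and $C_\cZ$ (resp.\ $L_\cY$ and $L_\cZ$); unwinding this, the cofiber of that map is precisely $L_{\cX/\cZ}L_{\cX/\cY}$, and the check amounts to your claim that it lands in $\brax{\cY,\cZ}^\perp$. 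Your formulation via first identifying the composite localization $L_{\cX/\cZ}L_{\cX/\cY} \simeq L_{\cX/\brax{\cY,\cZ}}$ and then invoking the octahedron is perhaps more transparent conceptually, but the underlying diagram and the single nontrivial input from alignment are identical.
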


\begin{proof}
We begin with part \Cref{part.excision.coloc}. For this, consider the morphism
\begin{equation}
\label{morphism.of.cofiber.sequences.for.C.brax.Y.Z}
\begin{tikzcd}
\Sigma^{-1} C_\cZ L_{\cX/\cY}
\arrow{d}[swap]{\Sigma^{-1} \varepsilon_\cZ L_{\cX/\cY}}
\arrow{r}
&
C_\cY
\arrow[equals]{d}
\arrow{r}
&
\cofib
\arrow{d}
\\
\Sigma^{-1} L_{\cX/\cY}
\arrow{r}
&
C_\cY
\arrow{r}[swap]{\varepsilon_\cY}
&
\id_\cX
\end{tikzcd}
\end{equation}
of cofiber sequences in $\Fun^\ex(\cX,\cX)$, where we simply write $\cofib$ for the indicated cofiber. It suffices to show that the right vertical morphism in diagram \Cref{morphism.of.cofiber.sequences.for.C.brax.Y.Z} becomes an equivalence after applying $C_\cY$ and $C_\cZ$. It is clear that it becomes an equivalence after applying $C_\cZ$. To see that it becomes an equivalence after applying $C_\cY$, it suffices to observe the containment
\[
\ker(C_\cY)
\subseteq
\ker(C_\cY C_\cZ)
\]
resulting from the fact that $\cZ$ is aligned with $\cY$.

We now turn to part \Cref{part.excision.loc}. For this, consider the morphism
\begin{equation}
\label{morphism.of.cofiber.sequences.for.L.brax.Y.Z}
\begin{tikzcd}
\id_\cX
\arrow{r}{\eta_\cZ}
\arrow{d}
&
L_\cZ
\arrow{r}
\arrow[equals]{d}
&
\Sigma C_{\cX/\cZ}
\arrow{d}{\Sigma \eta_\cY C_{\cX/\cZ}}
\\
\fib
\arrow{r}
&
L_\cZ
\arrow{r}
&
\Sigma L_\cY C_{\cX/\cZ}
\end{tikzcd}
\end{equation}
of cofiber sequences in $\Fun^\ex(\cX,\cX)$, where we simply write $\fib$ for the indicated fiber. It suffices to show that the left vertical morphism in diagram \Cref{morphism.of.cofiber.sequences.for.L.brax.Y.Z} becomes an equivalence after applying $L_\cY$ and $L_\cZ$. It is clear that it becomes an equivalence after applying $L_\cY$. To see that it becomes an equivalence after applying $L_\cZ$, it suffices to observe the containment
\[
\ker(L_\cZ) \subseteq \ker(L_\cZ L_\cY)
\]
resulting from the fact that $\cZ$ is aligned with $\cY$.
\end{proof}

\begin{lemma}
\label{lem.mayer.vietoris}
Suppose that $\cY$ and $\cZ$ are mutually aligned.
\begin{enumerate}

\item\label{part.mayer.vietoris.coloc} The commutative square
\begin{equation}
\label{mayer.vietoris.square.for.coloc}
\begin{tikzcd}
C_{\cY \cap \cZ}
\arrow{r}
\arrow{d}
&
C_\cY
\arrow{d}
\\
C_\cZ
\arrow{r}
&
C_{\brax{\cY,\cZ}}
\end{tikzcd}
\end{equation}
in $\Fun^\ex(\cX,\cX)$ is a pushout.

\item\label{part.mayer.vietoris.loc} The commutative square
\begin{equation}
\label{mayer.vietoris.square.for.loc}
\begin{tikzcd}
L_{\cY \cap \cZ}
&
L_\cY
\arrow{l}
\\
L_\cZ
\arrow{u}
&
L_{\brax{\cY,\cZ}}
\arrow{l}
\arrow{u}
\end{tikzcd}
\end{equation}
in $\Fun^\ex(\cX,\cX)$ is a pullback.

\end{enumerate}
\end{lemma}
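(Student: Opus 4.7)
The strategy is to derive both parts from the excision Lemma \ref{lem.excision} via a pasting-of-cofiber-sequences argument, the essential input being a pair of composition identities $C_\cY C_\cZ \simeq C_{\cY \cap \cZ}$ and $L_\cY L_\cZ \simeq L_{\cY \cap \cZ}$ that hold under (mutual) alignment.

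I would begin by establishing these composition identities. For the first, unpacking gives $C_\cY C_\cZ = i_L^\cY y^\cY i_L^\cZ y^\cZ$; Lemma \ref{lem.equivalent.characterizations.of.alignment}\Cref{part.lem.equivalent.characterizations.of.alignment.factorization.is.radjt} applied to $\cZ \algnd \cY$ supplies the factorization $y^\cY i_L^\cZ \simeq i_L^{(\cY \cap \cZ) \to \cY} y^{\cZ \to (\cY \cap \cZ)}$, after which composition of adjacent $i_L$'s (resp.\ $y$'s) yields $C_\cY C_\cZ \simeq C_{\cY \cap \cZ}$. The second identity is dual: taking left adjoints in the pullback square of $i_R$'s of Lemma \ref{lem.if.aligned.then.iRs.form.a.pullback.square} produces a Beck--Chevalley natural transformation $y^\cY i_R^\cZ \to i_R^{(\cY \cap \cZ) \to \cY} y^{\cZ \to (\cY \cap \cZ)}$, which mutual alignment should force to be an equivalence; the same sort of composition of $i_R$'s and $y$'s then gives $L_\cY L_\cZ \simeq L_{\cY \cap \cZ}$.

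For part \Cref{part.mayer.vietoris.coloc}, apply $C_\cY$ to the cofiber sequence $C_\cZ \to \id_\cX \to L_{\cX/\cZ}$ to produce a cofiber sequence $C_{\cY \cap \cZ} \to C_\cY \to C_\cY L_{\cX/\cZ}$. Separately, Lemma \ref{lem.excision}\Cref{part.excision.coloc}, applied with the roles of $\cY$ and $\cZ$ swapped (permissible by mutual alignment), supplies a cofiber sequence $C_\cZ \to C_{\brax{\cY, \cZ}} \to C_\cY L_{\cX/\cZ}$. Assembling these into the pasting diagram
\[
\begin{tikzcd}
C_{\cY \cap \cZ} \arrow{r} \arrow{d} & C_\cY \arrow{r} \arrow{d} & C_\cY L_{\cX/\cZ} \arrow[equals]{d} \\
C_\cZ \arrow{r} & C_{\brax{\cY,\cZ}} \arrow{r} & C_\cY L_{\cX/\cZ}
\end{tikzcd}
\]
in $\Fun^\ex(\cX, \cX)$, the right square is trivially a pushout, so the pasting lemma identifies the left square---which is precisely \eqref{mayer.vietoris.square.for.coloc}---as a pushout. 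Part \Cref{part.mayer.vietoris.loc} follows by the dual argument: apply $L_\cY$ to $C_{\cX/\cZ} \to \id_\cX \to L_\cZ$ to obtain $L_\cY C_{\cX/\cZ} \to L_\cY \to L_{\cY \cap \cZ}$, combine with the cofiber sequence $L_\cY C_{\cX/\cZ} \to L_{\brax{\cY, \cZ}} \to L_\cZ$ furnished by Lemma \ref{lem.excision}\Cref{part.excision.loc}, and paste analogously.

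The main obstacle I anticipate is the verification that the Beck--Chevalley transformation $y^\cY i_R^\cZ \to i_R^{(\cY \cap \cZ) \to \cY} y^{\cZ \to (\cY \cap \cZ)}$ is an equivalence: a pullback square of fully faithful right adjoints does not automatically yield a Beck--Chevalley equivalence, and mutual alignment must be used in an essential way here (beyond the bare pullback of Lemma \ref{lem.if.aligned.then.iRs.form.a.pullback.square}). A secondary and more routine concern will be checking that the vertical maps appearing in the pasting diagrams genuinely match those of the Mayer--Vietoris squares in the statement of the lemma, which amounts to a bookkeeping exercise tracking units and counits through the above composition identities.
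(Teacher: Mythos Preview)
Your approach is correct, but it takes a genuinely different route from the paper's. The paper does not invoke the excision lemma at all; instead it argues directly via joint conservativity. For part~\Cref{part.mayer.vietoris.coloc}, all four corners of the square~\eqref{mayer.vietoris.square.for.coloc} land (pointwise) in $\brax{\cY,\cZ}$, so the total cofiber does too, and an object of $\brax{\cY,\cZ}$ vanishes iff both $C_\cY$ and $C_\cZ$ annihilate it. Applying $C_\cY$ to the square makes both vertical maps equivalences (the left one because $\cZ \algnd \cY$), while applying $C_\cZ$ makes both horizontal maps equivalences (the upper one because $\cY \algnd \cZ$); in either case the resulting square is trivially bicartesian. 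Part~\Cref{part.mayer.vietoris.loc} is handled identically with $L_\cY$ and $L_\cZ$. This bypasses both the composition identities and the pasting diagram, and in particular avoids entirely the map-matching bookkeeping that you correctly identify as the delicate step in your argument.

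Regarding the obstacle you anticipate: the identity $L_\cY L_\cZ \simeq L_{\cY \cap \cZ}$ does not require a Beck--Chevalley argument. Using $\cY \algnd \cZ$, \Cref{lem.equivalent.characterizations.of.alignment}\Cref{part.lem.equivalent.characterizations.of.alignment.factorization.is.radjt} (with the roles of $\cY$ and $\cZ$ swapped) gives a \emph{strictly} commutative square $i_\cZ \circ i_\cY^R \simeq y^\cZ \circ i_L^\cY$; passing to right adjoints of all four functors---which exist by \Cref{obs.if.aligned.preimage.of.closed.is.closed}---yields another strictly commutative square $y^\cY \circ i_R^\cZ \simeq i_R^{(\cY\cap\cZ)\to\cY} \circ i_\cZ^R$, and the composition identity follows at once. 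So the only genuine work in your route is your second concern, verifying that the assembled pasting diagram really has the canonical Mayer--Vietoris maps on its left face; this is doable (for instance by checking compatibility after postcomposing with $\varepsilon_\cY L_{\cX/\cZ}$, which is mono on the relevant hom-spaces since $i_L^\cY$ is fully faithful), but it is exactly what the paper's two-line conservativity argument is designed to avoid.
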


\begin{proof}
We begin with part \Cref{part.mayer.vietoris.coloc}. It suffices to show that the square \Cref{mayer.vietoris.square.for.coloc} becomes a pushout after applying $C_\cY$ and $C_\cZ$.
\begin{itemize}

\item Applying $C_\cY$ to the square \Cref{mayer.vietoris.square.for.coloc}, we see that both vertical morphisms become equivalences, the right by inspection and the left because $\cZ$ is aligned with $\cY$.

\item Applying $C_\cZ$ to the square \Cref{mayer.vietoris.square.for.coloc}, we see that both horizontal morphisms become equivalences, the lower by inspection and the upper because $\cY$ is aligned with $\cZ$.

\end{itemize}
So the square \Cref{mayer.vietoris.square.for.coloc} is indeed a pushout.

We now turn to part \Cref{part.mayer.vietoris.loc}. It suffices to show that the square \Cref{mayer.vietoris.square.for.loc} becomes a pullback after applying $L_\cY$ and $L_\cZ$.
\begin{itemize}

\item Applying $L_\cY$ to the square \Cref{mayer.vietoris.square.for.loc}, we see that both vertical morphisms become equivalences, the right by inspection and the left because $\cY$ is aligned with $\cZ$.

\item Applying $L_\cZ$ to the square \Cref{mayer.vietoris.square.for.loc}, we see that both horizontal morphisms become equivalences, the lower by inspection and the upper because $\cZ$ is aligned with $\cY$.

\end{itemize}
So the square \Cref{mayer.vietoris.square.for.loc} is indeed a pullback.
\end{proof}

\subsection{Fundamental operations on stratifications}
\label{subsection.structure.theory}

In this subsection, we record our fundamental operations on stratifications.  For ease of navigation, it is organized into subsubsections.

\begin{local}
In this subsection, we fix a poset $\pos$, a stratification $\cZ_\bullet$ of $\cX$ over $\pos$, down-closed subsets $\sD,\sE \in \Down_\pos$, and a closed subcategory $\cY \in \Cls_\cX$.
\end{local}

\begin{definition}
We respectively say that the stratification $\cZ_\bullet$ is \bit{aligned} or \bit{mutually aligned} with $\cY$ if each of its values $\cZ_p$ is so.
\end{definition}

\begin{definition}
We name the key outputs of this subsection as follows.
\begin{enumerate}

\item \Cref{prop.restricted.stratn} provides a \bit{restricted stratification} of $\cY$ over $\pos$ (under the assumption that $\cZ_\bullet$ is mutually aligned with $\cY$).

\item Given a functor $\w{\cX} \ra \cX$ that is the quotient by a closed subcategory, \Cref{prop.pullback.stratn} provides a \bit{pullback stratification} of $\w{\cX}$ over $\pos$ (under the assumption that $\pos$ is nonempty).

\item \Cref{prop.quotient.stratn} provides a \bit{quotient stratification} of $\cX / \cY$ over $\pos$ (under the assumption that $\cZ_\bullet$ is aligned with $\cY$).

\item Given a functor $\pos \ra \posQ$ between posets, \Cref{prop.pushfwd.stratn} provides a \bit{pushforward stratification} of $\cX$ over $\posQ$.

\item Given a stratification of each stratum $\cX_p$ over a poset $\sR_p$, \Cref{prop.refined.stratn} provides a \bit{refined stratification} of $\cX$ over the \textit{wreath product} poset $\pos \wr \sR_\bullet$.

\end{enumerate} 
\end{definition}


\subsubsection{Preliminary results}

\begin{observation}
\label{obs.restricted.stratn.over.D}
The evident factorization
\[ \begin{tikzcd}
\pos
\arrow{r}{\cZ_\bullet}
&
\Cls_\cX
\\
\sD
\arrow[hook]{u}
\arrow[dashed]{r}[swap]{\cZ_\bullet}
&
\Cls_{\cZ_\sD}
\arrow[hook]{u}
\end{tikzcd} \]
is a stratification of $\cZ_\sD$ over $\sD$, whose $p\th$ stratum is $\cX_p$ for every $p \in \sD \subseteq \pos$.
\end{observation}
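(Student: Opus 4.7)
The plan is to verify the three conditions required of a stratification in turn, together with the identification of the strata. First, I would observe that the factorization on objects exists by the final assertion of Observation 3.3.10: for each $p \in \sD$, we have $\cZ_p \subseteq \cZ_\sD \subseteq \cX$ as a chain of closed subcategory inclusions, so $\cZ_p \in \Cls_{\cZ_\sD}$. Functoriality on morphisms in $\sD$ is automatic, since the inclusion $\Cls_{\cZ_\sD} \longhookra \Cls_\cX$ is fully faithful. The generation condition $\cZ_\sD = \langle \cZ_p \rangle_{p \in \sD}$ in $\Cls_{\cZ_\sD}$ is immediate from the definition of $\cZ_\sD$ as this same colimit computed in $\Cls_\cX$, together with the fact (which uses Observation 3.3.10) that the inclusion $\Cls_{\cZ_\sD} \longhookra \Cls_\cX$ preserves the relevant colimit-closures.

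Next I would verify condition $(\star)$ for the restricted functor. Fix $p,q \in \sD$. Because $\sD$ is down-closed, $(^\leq p) \cap (^\leq q) \subseteq \sD$, so $\cZ_{(^\leq p) \cap (^\leq q)}$ makes sense both in $\Cls_\cX$ and in $\Cls_{\cZ_\sD}$ and agrees with the computation in $\Cls_\cX$. Applying $(\star)$ for the original stratification supplies a factorization
\[
\begin{tikzcd}
\cZ_{(^\leq p ) \cap (^\leq q)}
\arrow[hook]{r}{i_L}
&
\cZ_p
\\
\cZ_q
\arrow[hook]{r}[swap]{i_L}
\arrow[dashed]{u}
&
\cX
\arrow{u}[swap]{y}
\end{tikzcd}
\]
in $\Cat$. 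To promote this to the required factorization with $\cX$ replaced by $\cZ_\sD$, I would use that the right adjoint $\cX \xra{y} \cZ_p$ to the chain $\cZ_p \subseteq \cZ_\sD \subseteq \cX$ factors by uniqueness as $\cX \xra{y} \cZ_\sD \xra{y} \cZ_p$. Precomposing the displayed diagram with the inclusion $\cZ_q \lhook\joinrel\longra \cZ_\sD$ (whose image under $\cZ_\sD \lhook\joinrel\longra \cX$ is the $i_L$ shown above) then yields exactly the factorization demanded by $(\star)$ for $\sD \xra{\cZ_\bullet} \Cls_{\cZ_\sD}$.

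Finally, for each $p \in \sD$, the $p\th$ stratum of the restricted stratification is by definition $\cZ_p / \cZ_{(^< p) \cap \sD}$, whereas the $p\th$ stratum of the original is $\cX_p := \cZ_p / \cZ_{^< p}$. Since $\sD$ is down-closed and $p \in \sD$, we have $(^< p) \cap \sD = (^< p)$, so these two quotients agree. No step is genuinely difficult here; the only point requiring a bit of care is the observation that intersections and down-closures in $\pos$ are unchanged by restriction to a down-closed subset $\sD$, so that all colimit-type constructions used to define and verify stratifications interact transparently with the inclusion $\Cls_{\cZ_\sD} \longhookra \Cls_\cX$.
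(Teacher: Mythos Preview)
Your proof is correct and is precisely the direct verification that the paper leaves implicit (the paper states this as an Observation without proof). The only minor wrinkle worth noting is that in your verification of condition $(\star)$, you should make explicit that $\cZ_\sD \xhookra{i_L} \cX \xra{y} \cZ_\sD$ is equivalent to the identity (since $i_L$ is fully faithful), so that the composite $\cZ_q \xhookra{i_L} \cZ_\sD \xra{y} \cZ_p$ really does agree with $\cZ_q \xhookra{i_L} \cX \xra{y} \cZ_p$; you gesture at this with ``factors by uniqueness'' but the identification is what actually makes the argument go through.
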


\begin{lemma}
\label{closed.subcats.are.mutually.aligned}
The closed subcategories $\cZ_\sD,\cZ_\sE \in \Cls_\cX$ are mutually aligned and $(\cZ_\sD \cap \cZ_\sE) = \cZ_{\sD \cap \sE}$.
\end{lemma}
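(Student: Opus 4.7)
The plan is to reduce the general statement to the two-element case, which is directly controlled by condition $(\star)$, and then bootstrap to arbitrary $\sD,\sE$ via the colimit-preservation results established earlier in this section.

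The first step handles the case $\sD = {}^{\leq} p$ and $\sE = {}^{\leq} q$ for $p,q \in \pos$, so that $\cZ_\sD = \cZ_p$ and $\cZ_\sE = \cZ_q$. Condition $(\star)$ applied to $(p,q)$ and to $(q,p)$ directly exhibits factorizations of both composites $\cZ_q \xhookra{i_L} \cX \xra{y} \cZ_p$ and $\cZ_p \xhookra{i_L} \cX \xra{y} \cZ_q$ through $\cZ_{({}^{\leq} p) \cap ({}^{\leq} q)}$, so by \Cref{lem.equivalent.characterizations.of.alignment} the subcategories $\cZ_p$ and $\cZ_q$ are mutually aligned. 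The containment $\cZ_{({}^{\leq} p) \cap ({}^{\leq} q)} \subseteq \cZ_p \cap \cZ_q$ is immediate from $\cZ_r \subseteq \cZ_p$ and $\cZ_r \subseteq \cZ_q$ for each $r \in ({}^{\leq} p) \cap ({}^{\leq} q)$. For the reverse containment, any $X \in \cZ_p \cap \cZ_q$ satisfies $X \simeq yX$ for the right adjoint $y \colon \cX \ra \cZ_p$, and by condition $(\star)$ this $y$ restricted to $\cZ_q$ factors through $\cZ_{({}^{\leq} p) \cap ({}^{\leq} q)}$, so $X$ lies there.

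The second step upgrades to arbitrary $\sD,\sE \in \Down_\pos$ via two applications of \Cref{lem.colimits.preserve.alignment}. Fixing $p \in \pos$, the subposet $\Cls_\cX^{\mutalgnd \cZ_p} \subseteq \Cls_\cX$ is closed under colimits and contains $\cZ_q$ for every $q \in \sE$ by step one, so it contains the colimit $\cZ_\sE = \brax{\cZ_q}_{q \in \sE}$. Applying the same lemma again to $\Cls_\cX^{\mutalgnd \cZ_\sE}$, which contains each $\cZ_p$ for $p \in \sD$, we conclude that $\cZ_\sD = \brax{\cZ_p}_{p \in \sD}$ is mutually aligned with $\cZ_\sE$.

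The third step computes the intersection as an iterated colimit. Since $\cZ_\sD$ is aligned with $\cZ_\sE$, \Cref{lemma.preimages.preserve.colimits}\Cref{lemma.part.iL.preimage.preserves.colimits} (applied to $\cY := \cZ_\sE$) yields $\cZ_\sD \cap \cZ_\sE = \brax{\cZ_p \cap \cZ_\sE}_{p \in \sD}$. Applying the same lemma again with $\cY := \cZ_p$ (using that $\cZ_\sE$ is aligned with $\cZ_p$ by step two, and combining with step one) produces
\[
\cZ_p \cap \cZ_\sE
=
\brax{\cZ_p \cap \cZ_q}_{q \in \sE}
=
\brax{\cZ_{({}^{\leq} p) \cap ({}^{\leq} q)}}_{q \in \sE}
~.
\]
Since $\sE$ is down-closed, $\bigcup_{q \in \sE}\bigl(({}^{\leq} p) \cap ({}^{\leq} q)\bigr) = ({}^{\leq} p) \cap \sE$, and since $\sD$ is down-closed, $\bigcup_{p \in \sD}\bigl(({}^{\leq} p) \cap \sE\bigr) = \sD \cap \sE$. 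Thus $\cZ_\sD \cap \cZ_\sE = \brax{\cZ_r}_{r \in \sD \cap \sE} = \cZ_{\sD \cap \sE}$.

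The main obstacle is the identification $\cZ_p \cap \cZ_q = \cZ_{({}^{\leq} p) \cap ({}^{\leq} q)}$ in step one, which requires care in extracting the intersection (computed in $\Cat$) from the factorization clause of condition $(\star)$ (which only names a particular closed subcategory through which the factorization passes); once this is settled, the remainder of the argument is a routine bootstrap via the colimit-preservation properties of alignment and preimage.
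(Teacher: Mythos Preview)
Your proof is correct and takes a genuinely different route from the paper's. The paper constructs the factorization witnessing alignment directly, via a three-stage bootstrap: first the special case $\sD = ({}^{\leq}p)$, $\sE = ({}^{\leq}q)$ (which is condition $(\star)$), then an intermediate case $\sD = ({}^{\leq}p)$ with $\sE$ arbitrary (using that all the relevant functors preserve colimits), and finally the general case by invoking the joint conservativity of the functors $\{\cZ_\sD \xra{y} \cZ_p\}_{p \in \sD}$. Only after establishing the factorization does the paper read off the intersection formula and hence alignment.

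Your argument is more modular: you settle the two-element case completely (alignment plus intersection), then invoke \Cref{lem.colimits.preserve.alignment} twice to propagate mutual alignment to arbitrary $\sD,\sE$, and finally invoke \Cref{lemma.preimages.preserve.colimits}\Cref{lemma.part.iL.preimage.preserves.colimits} twice to compute the intersection. This cleanly separates ``alignment'' from ``intersection'' and outsources the induction to already-proved colimit-preservation lemmas, at the cost of depending on that auxiliary machinery. The paper's argument is more self-contained but requires the slightly ad hoc joint-conservativity step. Both approaches are sound; yours arguably makes better use of the structural results already in hand.

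One small remark on step one: you cite \Cref{lem.equivalent.characterizations.of.alignment} to deduce alignment from condition $(\star)$, but alignment follows directly from the definition once you observe $\cZ_{({}^{\leq}p) \cap ({}^{\leq}q)} \subseteq \cZ_p \cap \cZ_q$. This is harmless but unnecessary.
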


\begin{proof}
We first show that the lax-commutative square
\begin{equation}
\label{lax.comm.square.of.generalizn.of.yo.commutativity}
\begin{tikzcd}
\cZ_{\sD \cap \sE}
\arrow[hook]{r}{i_L}[swap, xshift=-0.1cm, yshift=-0.4cm]{\rotatebox{-45}{$\Rightarrow$}}
&
\cZ_\sD
\\
\cZ_\sE
\arrow[hook]{r}[swap]{i_L}
\arrow{u}{y}
&
\cX
\arrow{u}[swap]{y}
\end{tikzcd}
\end{equation}
determined by the commutative square
\begin{equation}
\label{comm.square.of.iLs.giving.lax.comm.square.of.generalizn.of.yo.commutativity}
\begin{tikzcd}
\cZ_{\sD \cap \sE}
\arrow[hook]{r}{i_L}
\arrow[hook]{d}[swap]{i_L}
&
\cZ_\sD
\arrow[hook]{d}{i_L}
\\
\cZ_\sE
\arrow[hook]{r}[swap]{i_L}
&
\cX
\end{tikzcd}
\end{equation}
commutes.  By an identical argument to that proving the equivalence $\Cref{part.lem.alignment.itself} \Leftrightarrow \Cref{part.lem.equivalent.characterizations.of.alignment.factorization.is.radjt}$ of \Cref{lem.equivalent.characterizations.of.alignment}, it suffices to show that there exists a factorization
\begin{equation}
\label{factorization.for.alignment.from.down.closed}
\begin{tikzcd}
\cZ_{\sD \cap \sE}
\arrow[hook]{r}{i_L}
&
\cZ_\sD
\\
\cZ_\sE
\arrow[hook]{r}[swap]{i_L}
\arrow[dashed]{u}
&
\cX
\arrow{u}[swap]{y}
\end{tikzcd}~.
\end{equation}
In the special case that $\sD = (^\leq p)$ and $\sE = (^\leq q)$, this is precisely the stratification condition.  In order to prove the general case, we first prove the intermediate case that $\sE \in \Down_\pos$ is arbitrary but $\sD = (^\leq p)$ for some $p \in \pos$.  Then, for each $q \in \sE$, we have a factorization
\begin{equation}
\label{factorization.from.condition.star.in.intermediate.case.that.D.is.leq.p}
\begin{tikzcd}
\cZ_{(^\leq p) \cap (^\leq q)}
\arrow[hook]{r}{i_L}
&
\cZ_{(^\leq p) \cap \sE}
\arrow[hook]{r}{i_L}
&
\cZ_p
\\
\cZ_q
\arrow[dashed]{u}
\arrow[hook]{r}[swap]{i_L}
&
\cZ_\sE
\arrow[hook]{r}[swap]{i_L}
&
\cX
\arrow{u}[swap]{y}
\end{tikzcd}
\end{equation}
by the stratification condition.  So, the intermediate case follows from the facts that $\cZ_\sE := \brax{\cZ_q}_{q \in \sE}$ and that all solid morphisms in diagram \Cref{factorization.from.condition.star.in.intermediate.case.that.D.is.leq.p} preserve colimits.  Passing to the general case, for each $p \in \sD$ let us extend the lax-commutative square \Cref{lax.comm.square.of.generalizn.of.yo.commutativity} to a diagram
\begin{equation}
\label{lax.comm.rectangle.with.yo.comm.at.the.bottom}
\begin{tikzcd}
\cZ_{(^\leq p) \cap \sE}
\arrow[hook]{r}{i_L}
&
\cZ_p
\\
\cZ_{\sD \cap \sE}
\arrow[hook]{r}{i_L}[swap, xshift=-0.1cm, yshift=-0.4cm]{\rotatebox{-45}{$\Rightarrow$}}
\arrow{u}{y}
&
\cZ_\sD
\arrow{u}[swap]{y}
\\
\cZ_\sE
\arrow[hook]{r}[swap]{i_L}
\arrow{u}{y}
&
\cX
\arrow{u}[swap]{y}
\end{tikzcd}~,
\end{equation}
in which the upper (commutative) square is obtained by applying the intermediate case to the restricted stratification of $\cZ_\sD$ over $\sD$ of \Cref{obs.restricted.stratn.over.D} (replacing $\sD,\sE \in \Down_\pos$ respectively with $(^\leq p),(\sD \cap \sE) \in \Down_\sD$).  Note too that the intermediate case is precisely the assertion that the composite lax-commutative rectangle of diagram \Cref{lax.comm.rectangle.with.yo.comm.at.the.bottom} is in fact commutative.  So, the lax-commutative square \Cref{lax.comm.square.of.generalizn.of.yo.commutativity} must be commutative because the functors $\{ \cZ_\sD \xra{y} \cZ_p \}_{p \in \sD}$ are jointly conservative.

Now, the commutativity of the square \Cref{comm.square.of.iLs.giving.lax.comm.square.of.generalizn.of.yo.commutativity} implies that $\cZ_{\sD \cap \sE} \subseteq (\cZ_\sD \cap \cZ_\sE)$.  On the other hand, the existence of the factorization \Cref{factorization.for.alignment.from.down.closed} implies that $(\cZ_\sD \cap \cZ_\sE) \subseteq \cZ_{\sD \cap \sE}$, as any object of $(\cZ_\sD \cap \cZ_\sE)$ must lie in the image of the composite $\cZ_\sE \xhookra{i_L} \cX \xra{y} \cZ_\sD$.  So indeed, $(\cZ_\sD \cap \cZ_\sE) = \cZ_{\sD \cap \sE}$.  Hence, the factorization \Cref{factorization.for.alignment.from.down.closed} witnesses $\cZ_\sE$ as being aligned with $\cZ_\sD$.  That $\cZ_\sD$ is aligned with $\cZ_\sE$ follows by reversing the roles of $\sD$ and $\sE$.
\end{proof}

\begin{remark}
Evidently, a prestratification $\pos \xra{\cZ_\bullet'} \Cls_{\cX'}$ satisfies the stratification condition if for all $p,q \in \pos$ we have that
$
\cZ'_q
\algnd
\cZ'_p
$
and
$
(\cZ'_p \cap \cZ'_q) = \cZ'_{(^\leq p) \cap (^\leq q)}
$.
\Cref{closed.subcats.are.mutually.aligned} provides a converse.
\end{remark}

\subsubsection{Restricted stratifications}

\begin{prop}
\label{prop.restricted.stratn}
Suppose that the stratification $\pos \xra{\cZ_\bullet} \Cls_\cX$ is mutually aligned with $\cY \in \Cls_\cX$.
\begin{enumerate}

\item\label{part.restricted.stratn}

The composite functor
\begin{equation}
\label{restricted.stratification}
\begin{tikzcd}[row sep=0cm, column sep=1.5cm]
\pos
\arrow{r}{\cZ_\bullet}
&
\Cls_\cX^{\algnd \cY}
\arrow{r}{i_L^{-1}}
&
\Cls_{\cY}
\\
\rotatebox{90}{$\in$}
&
&
\rotatebox{90}{$\in$}
\\
p
\arrow[maps to]{rr}
&
&
i_L^{-1}(\cZ_p)
\end{tikzcd}
\end{equation}
is a stratification of $\cY$ over $\pos$.

\item\label{part.strata.of.restricted.stratn}

For any $p \in \pos$, the $p\th$ stratum of the stratification \Cref{restricted.stratification} is $i_L^{-1}(\cX_p)$.

\end{enumerate}
\end{prop}

\begin{proof}
We begin with part \Cref{part.restricted.stratn}.  By \Cref{lemma.preimages.preserve.colimits}\Cref{lemma.part.iL.preimage.preserves.colimits}, the composite functor \Cref{restricted.stratification} is a prestratification.  So, it remains to verify the stratification condition.  Choose any $p,q \in \pos$, and consider the diagram
\[ \begin{tikzcd}
&
\cZ_{(^\leq p) \cap (^\leq q)}
\arrow[hook]{rr}{i_L}
\arrow[leftarrow, dashed]{dd}
&
&
\cZ_p
\\
i_L^{-1}(\cZ_{(^\leq p) \cap (^\leq q)})
\arrow[hook]{ru}[sloped]{i_L}
\arrow[hook, crossing over]{rr}[pos=0.7]{i_L}
&
&
i_L^{-1}(\cZ_p)
\arrow[hook]{ru}[sloped, swap]{i_L}
\\
&
\cZ_q
\arrow[hook]{rr}[pos=0.3]{i_L}
&
&
\cX
\arrow{uu}[swap]{y}
\\
i_L^{-1}(\cZ_q)
\arrow[hook]{rr}[swap]{i_L}
\arrow[hook]{ru}[sloped]{i_L}
\arrow[dashed]{uu}
&
&
i_L^{-1}(\cX)
\arrow[crossing over]{uu}[pos=0.3]{y}
\arrow[hook]{ru}[sloped, swap]{i_L}
&
&[-2.6cm]
= \cY
\end{tikzcd} \]
in which the upper and lower squares commute by definition of $i_L^{-1}$ and the right square commutes because $\cY$ is aligned with $\cZ_p$. The back factorization exists because $\cZ_\bullet$ is a stratification, and hence the front factorization exists because the upper square is a pullback.  So, the stratification condition follows from the identification
\[
i_L^{-1}(\cZ_{(^\leq p) \cap (^\leq q)})
:=
i_L^{-1}( \brax{ \cZ_r }_{r \in (^\leq p) \cap (^\leq q)})
\simeq
\brax{ i_L^{-1}(\cZ_r)}_{r \in (^\leq p) \cap (^\leq q)}
\]
resulting from \Cref{lemma.preimages.preserve.colimits}\Cref{lemma.part.iL.preimage.preserves.colimits}.

We now turn to part \Cref{part.strata.of.restricted.stratn}.  Note that $\cY$ is aligned with $\cZ_{^< p}$ by \Cref{lem.colimits.preserve.alignment}.  By \Cref{obs.if.algnd.with.Y.and.W.then.iLinverse.algnd.with.W}, it follows that $i_L^{-1}(\cZ_p)$ is also aligned with $\cZ_{^< p}$.  Using this and \Cref{lemma.preimages.preserve.colimits}\Cref{lemma.part.iL.preimage.preserves.colimits}, we identify the $p\th$ stratum of the stratification \Cref{restricted.stratification} as
\begin{align*}
\frac{i_L^{-1}(\cZ_p)}{i_L^{-1}(\cZ_{^< p})}
&\simeq
\ker( i_L^{-1}(\cZ_p) \xlongra{y} i_L^{-1}(\cZ_{^< p}) )
\simeq
\ker ( i_L^{-1}(\cZ_p) \xlongra{y} i_L^{-1}(\cZ_{^< p}) \xlonghookra{i_L} \cZ_{^< p} )
\\
& \simeq
\ker ( i_L^{-1}( \cZ_p) \xlonghookra{i_L} \cZ_p \xlongra{y} \cZ_{^< p} )
\simeq
i_L^{-1}(\cX_p)
~,
\end{align*}
as desired.
\end{proof}

\begin{remark}
\label{rmk.restricted.stratn.recovers.that.for.down.closed}
Taking $\cY = \cZ_\sD$ in \Cref{prop.restricted.stratn}, we obtain a stratification of $\cZ_\sD$ over $\pos$, whose restriction to $\sD$ is the stratification of $\cZ_\sD$ over $\sD$ of \Cref{obs.restricted.stratn.over.D}.\footnote{In general, if the stratification $\pos \xra{\cZ_\bullet} \Cls_\cX$ has the property that $\cZ_p = \cZ_{(^\leq p) \cap \sD}$ for every $p \in \pos$, then its restriction $\sD \hookra \pos \xra{\cZ_\bullet} \Cls_\cX$ is evidently also a stratification.}
\end{remark}

\subsubsection{Pullback stratifications}

\begin{prop}
\label{prop.pullback.stratn}
Let $\w{\cX}$ be a presentable stable $\infty$-category.  Suppose that $\w{\cX} \xra{\pi} \cX$ is the quotient by a closed subcategory (i.e.\! the functor $p_L$ in a recollement), and suppose further that $\pos$ is nonempty.
\begin{enumerate}

\item\label{part.pullback.stratn}

The composite functor
\begin{equation}
\label{pullback.stratn}
\begin{tikzcd}[column sep=1.5cm, row sep=0cm]
\pos
\arrow{r}{\cZ_\bullet}
&
\Cls_\cX
\arrow{r}{\pi^{-1}}
&
\Cls_{\w{\cX}}
\\
\rotatebox{90}{$\in$}
&
&
\rotatebox{90}{$\in$}
\\
p
\arrow[maps to]{rr}
&
&
\pi^{-1}(\cZ_p)
\end{tikzcd}
\end{equation}
is a stratification of $\w{\cX}$ over $\pos$.

\item\label{part.strata.of.pullback.stratn}

For any $p \in \pos$, the $p\th$ stratum of the stratification \Cref{pullback.stratn} is $\cX_p$ if $(^< p) \not= \es$ and is $\pi^{-1}(\cX_p)$ if $(^< p) = \es$.

\end{enumerate}
\end{prop}

\begin{proof}
We begin with part \Cref{part.pullback.stratn}.  Because $\pos$ is nonempty, the functor \Cref{pullback.stratn} is a prestratification by \Cref{lemma.preimages.preserve.colimits}\Cref{lemma.part.pL.preimage.preserves.colimits}.  So, it remains to verify the stratification condition.  Choose any $p,q \in \pos$, and consider the diagram
\[ \begin{tikzcd}
&
\cZ_{(^\leq p) \cap (^\leq q)}
\arrow[hook]{rr}{i_L}
\arrow[leftarrow, dashed]{dd}
&
&
\cZ_p
\\
\pi^{-1}(\cZ_{(^\leq p) \cap (^\leq q)})
\arrow{ru}
\arrow[hook, crossing over]{rr}[pos=0.7]{i_L}
&
&
\pi^{-1}(\cZ_p)
\arrow{ru}
\\
&
\cZ_q
\arrow[hook]{rr}[pos=0.3]{i_L}
&
&
\cX
\arrow{uu}[swap]{y}
\\
\pi^{-1}(\cZ_q)
\arrow[hook]{rr}[swap]{i_L}
\arrow{ru}
\arrow[dashed]{uu}
&
&
\pi^{-1}(\cX)
\arrow{ru}[sloped, swap]{\pi}
\arrow[crossing over]{uu}[pos=0.3]{y}
&
&[-2.6cm]
= \w{\cX}
\end{tikzcd} \]
in which the upper and lower squares commute by definition of $\pi^{-1}$ and the right square commutes by \Cref{lemma.all.about.aligned.subcats}\Cref{part.alignment.lemma.induced.map.on.quotients}\Cref{subpart.alignment.lemma.yo.and.pL.commutativity} and \Cref{obs.mutually.aligned.if.containment}.  The back factorization exists because $\cZ_\bullet$ is a stratification, and hence the front factorization exists because the upper square is a pullback.  So, the stratification condition follows from the identification
\[
\pi^{-1}(\cZ_{(^\leq p) \cap (^\leq q)})
:=
\pi^{-1}(\brax{\cZ_r}_{r \in (^\leq p) \cap (^\leq q)})
\simeq
\brax{\pi^{-1}(\cZ_r)}_{r \in (^\leq p) \cap (^\leq q)}
\]
resulting from \Cref{lemma.preimages.preserve.colimits}\Cref{lemma.part.pL.preimage.preserves.colimits}.

We now turn to part \Cref{part.strata.of.pullback.stratn}.  In the case that $(^< p) \not= \es$, using \Cref{lemma.preimages.preserve.colimits}\Cref{lemma.part.pL.preimage.preserves.colimits} and \Cref{prop.image.and.preimage.of.closed.subcats.are.closed} we identify the $p\th$ stratum of the stratification \Cref{pullback.stratn} as
\[
\frac{\pi^{-1}(\cZ_p)}{ \brax{\pi^{-1}(\cZ_{p'})}_{p' < p}}
\simeq
\frac{\pi^{-1}(\cZ_p)}{\pi^{-1}(\brax{\cZ_{p'}}_{p' < p})}
=:
\frac{\pi^{-1}(\cZ_p)}{\pi^{-1}(\cZ_{^< p})}
\simeq
\frac{\pi^{-1}(\cZ_p)/\pi^{-1}(0)}{\pi^{-1}(\cZ_{^< p})/\pi^{-1}(0)}
\simeq
\frac{\cZ_p}{\cZ_{^< p}}
=:
\cX_p
~,
\]
as desired.  In the case that $(^< p) = \es$, we identify the $p\th$ stratum of the stratification \Cref{pullback.stratn} as
\[
\frac{\pi^{-1}(\cZ_p)}{ \brax{\pi^{-1}(\cZ_{p'})}_{p' < p}}
=
\frac{\pi^{-1}(\cZ_p)}{0}
=
\pi^{-1}(\cZ_p)
\simeq
\pi^{-1}( \cX_p)~,
\]
as desired.
\end{proof}

\subsubsection{Quotient stratifications}

\begin{prop}
\label{prop.quotient.stratn}
Suppose that the stratification $\pos \xra{\cZ_\bullet} \Cls_\cX$ is aligned with $\cY \in \Cls_\cX$.
\begin{enumerate}

\item\label{part.quotient.stratn}

The composite functor
\begin{equation}
\label{stratn.of.quotient.by.Y}
\begin{tikzcd}[row sep=0cm, column sep=1.5cm]
\pos
\arrow{r}{\cZ_\bullet}
&
\Cls_\cX^{\algnd \cY}
\arrow{r}{p_L}
&
\Cls_{\cX / \cY}
\\
\rotatebox{90}{$\in$}
&
&
\rotatebox{90}{$\in$}
\\
p
\arrow[maps to]{rr}
&
&
p_L(\cZ_p)
\end{tikzcd}
\end{equation}
is a stratification of $\cX / \cY$ over $\pos$.

\item\label{part.strata.of.quotient.stratn}
Suppose further that $\cY$ is aligned with the stratification $\cZ_\bullet$.  For any $p \in \pos$, the subcategory $i_L^{-1}(\cX_p) \subseteq \cX_p$ is closed and the $p\th$ stratum of the stratification \Cref{stratn.of.quotient.by.Y} is $\cX_p/i_L^{-1}(\cX_p)$.

\end{enumerate}
\end{prop}

\begin{proof}
Over the course of the proof, for clarity we write $\cY \xra{\iota} \cX \xra{\pi} \cX/\cY$ for the canonical functors.

We begin with part \Cref{part.quotient.stratn}.  The functor \Cref{stratn.of.quotient.by.Y} is a prestratification by \Cref{prop.image.and.preimage.of.closed.subcats.are.closed} and the fact that $\pi(\cX) = \cX / \cY$. It remains to check the stratification condition.  For any $p,q\in \pos$, we have the solid commutative diagram
\[
\begin{tikzcd}
&
\pi(\cZ_{(^\leq p) \cap (^\leq q)})
\arrow[hook]{rr}{i_L}
\arrow[leftarrow, dashed]{dd}
&
&
\pi(\cZ_p)
\\
\cZ_{(^\leq p) \cap (^\leq q)}
\arrow[crossing over, hook]{rr}[pos=0.7]{i_L}
\arrow{ru}[sloped]{p_L}
&
&
\cZ_p
\arrow{ru}[sloped, swap]{p_L}
\\
&
\pi(\cZ_q)
\arrow[hook]{rr}[pos=0.3]{i_L}
&
&
\pi(\cX)
\arrow[crossing over]{uu}[swap]{y}
&
&[-1.9cm] = \cX/\cY
\\
\cZ_q
\arrow{ru}[sloped]{p_L}
\arrow[dashed]{uu}
\arrow[hook]{rr}[swap]{i_L}
&
&
\cX
\arrow{ru}[sloped, swap]{p_L}
\arrow[crossing over]{uu}[pos=0.3]{y}
\end{tikzcd} ~,
\]
in which
the bottom and top squares commute by the functoriality of presentable quotients and
the right square commutes by \Cref{lemma.all.about.aligned.subcats}\Cref{part.alignment.lemma.induced.map.on.quotients}\Cref{subpart.alignment.lemma.yo.and.pL.commutativity}.  The front factorization exists because $\cZ_\bullet$ is a stratification, and hence the back factorization exists because the functor $\cZ_q \xra{p_L} \pi(\cZ_q)$ is surjective.  So, the stratification condition follows from the identification
\[
\pi(\cZ_{(^\leq p) \cap (^\leq q)})
:=
\pi(\brax{\cZ_r}_{r \in (^\leq p) \cap (^\leq q)})
\simeq
\brax{ \pi^{-1}(\cZ_r)}_{r \in (^\leq p) \cap (^\leq q)}
\]
resulting from \Cref{prop.image.and.preimage.of.closed.subcats.are.closed}.

We now proceed to part \Cref{part.strata.of.quotient.stratn}.  First of all, $\cY$ is aligned with $\cZ_{^< p}$ by \Cref{lem.colimits.preserve.alignment}, and thereafter $\cY \cap \cZ_p$ is aligned with $\cZ_{^< p}$ by \Cref{obs.if.algnd.with.Y.and.W.then.iLinverse.algnd.with.W}.  Hence, the fact that $\iota^{-1}(\cX_p) \in \Cls_{\cX_p}$ follows from \Cref{lemma.all.about.aligned.subcats}\Cref{part.alignment.lemma.induced.map.on.quotients}\Cref{subpart.alignment.lemma.image.is.closed} along with the observation that
\[
\frac{\cY \cap \cZ_p}{\cY \cap \cZ_{^< p}}
\simeq
\cY \cap \cX_p
=:
\iota^{-1}(\cX_p)
~.
\]
Using \Cref{prop.image.and.preimage.of.closed.subcats.are.closed}, we now identify the $p\th$ stratum of the stratification \Cref{stratn.of.quotient.by.Y} as
\[
\frac{\pi(\cZ_p)}{\brax{\pi(\cZ_{p'})}_{p' < p}}
\simeq
\frac{\pi(\cZ_p)}{\pi(\cZ_{^< p})}
\simeq
\frac{\cZ_p / (\cY \cap \cZ_p)}{\cZ_{^< p} / (\cY \cap \cZ_{^< p})}
\simeq
\frac{\cZ_p / \cZ_{^< p}}{ (\cY \cap \cZ_p) / (\cY \cap \cZ_{^< p})}
\simeq
\frac{\cX_p}{\cY \cap \cX_p}
=:
\frac{\cX_p}{\iota^{-1}(\cX_p)}
~,
\]
as desired.
\end{proof}

\begin{observation}
\label{obs.quotient.stratn.from.down.closed.over.smaller.poset}
Taking $\cY = \cZ_\sD$ in \Cref{prop.quotient.stratn}, we obtain a stratification of $\cX / \cZ_\sD =: \cX_{\pos \backslash \sD}$ over $\pos$, whose $p\th$ stratum is $0$ whenever $p \in \sD$ and is $\cX_p$ whenever $p \notin \sD$ (because in this case $((^\leq p) \cap \sD) \subseteq (^< p)$ (and using \Cref{closed.subcats.are.mutually.aligned})). Evidently, the restriction to $(\pos \backslash \sD) \subseteq \pos$ is also a stratification of $\cX_{\pos \backslash \sD}$.\footnote{In general, if the stratification $\pos \xra{\cZ_\bullet} \Cls_\cX$ has the property that $\cZ_p = 0$ for all $p \in \sD$, then its restriction $(\pos \backslash \sD) \hookra \pos \xra{\cZ_\bullet} \Cls_\cX$ is also a stratification.} And in fact, the entire gluing diagram of $\cX/\cZ_\sD$ with respect to this latter stratification is the restriction of that of $\cX$, in the sense that we have a pullback diagram
\[ \begin{tikzcd}
\GD(\cX/\cZ_\sD)
\arrow[hook]{r}
\arrow{d}
&
\GD(\cX)
\arrow{d}
\\
\pos \backslash \sD
\arrow[hook]{r}
&
\pos
\end{tikzcd}~. \]
This follows from the existence of a factorization
\[ \begin{tikzcd}[row sep=1.5cm, column sep=1.5cm]
\GD(\cX/\cZ_\sD)
\arrow[hook]{d}
\arrow[dashed]{rr}
&
&
\GD(\cX)
\arrow[hook]{d}
\\
(\cX/\cZ_\sD) \times (\pos \backslash \sD)
\arrow[hook]{r}[swap]{\nu \times \id_{\pos \backslash \sD}}
&
\cX \times (\pos \backslash \sD)
\arrow[hook]{r}
&
\cX \times \pos
\end{tikzcd}~, \]
which itself results from \Cref{lemma.all.about.aligned.subcats}\Cref{part.alignment.lemma.induced.map.on.quotients}\Cref{subpart.alignment.lemma.yo.and.pL.commutativity} (which applies by \Cref{closed.subcats.are.mutually.aligned}) by passing to right adjoints in the commutative square \Cref{yo.and.pL.commutativity.for.aligned.subcats}.
\end{observation}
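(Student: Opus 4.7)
My plan is to derive the observation in three steps: first produce the stratification of $\cX/\cZ_\sD$ over $\pos$ by invoking \Cref{prop.quotient.stratn} with $\cY = \cZ_\sD$; second, compute its strata and deduce that restricting to $\pos \backslash \sD$ yields a stratification; and third, identify the gluing diagram as a base-change of $\GD(\cX)$. The input hypotheses of \Cref{prop.quotient.stratn} are satisfied because \Cref{closed.subcats.are.mutually.aligned} implies that $\cZ_p$ and $\cZ_\sD$ are mutually aligned for every $p \in \pos$, so both parts \Cref{part.quotient.stratn} and \Cref{part.strata.of.quotient.stratn} of \Cref{prop.quotient.stratn} apply.

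For the strata, \Cref{prop.quotient.stratn}\Cref{part.strata.of.quotient.stratn} identifies the $p\th$ stratum as $\cX_p / i_L^{-1}(\cX_p)$, where $i_L^{-1}(\cX_p) = \cZ_\sD \cap \cX_p$. I would compute $\cZ_\sD \cap \cZ_p$ using \Cref{closed.subcats.are.mutually.aligned}, which yields $\cZ_\sD \cap \cZ_p = \cZ_{\sD \cap (^\leq p)}$. If $p \in \sD$, then $\sD \cap (^\leq p) = (^\leq p)$, hence $\cZ_\sD \cap \cZ_p = \cZ_p$ and so $i_L^{-1}(\cX_p) = \cX_p$, giving stratum $0$. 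If $p \notin \sD$, then any $q \in \sD \cap (^\leq p)$ satisfies $q \neq p$ and hence $q < p$, so $\sD \cap (^\leq p) \subseteq (^< p)$; this gives $\cZ_\sD \cap \cZ_p \subseteq \cZ_{^< p}$, whence $i_L^{-1}(\cX_p) = 0$ and the stratum is $\cX_p$. For the restriction claim, since the quotient stratification has value $0$ on all $p \in \sD$, removing these indices trivially preserves both the generation and the stratification conditions of \Cref{defn.stratn}.

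For the pullback diagram of gluing diagrams, the goal is to produce a functor $\GD(\cX/\cZ_\sD) \to \GD(\cX)$ over $(\pos \backslash \sD) \hookrightarrow \pos$ that exhibits $\GD(\cX/\cZ_\sD)$ as the pullback. Recall that $\GD(\cX) \subseteq \cX \times \pos$ is the full subcategory on pairs $(X,p)$ with $X \in \rho^p(\cX_p)$. It thus suffices to factor the defining inclusion $\GD(\cX/\cZ_\sD) \hookrightarrow (\cX/\cZ_\sD) \times (\pos \backslash \sD)$ through $\cX \times (\pos \backslash \sD) \hookrightarrow \cX \times \pos$ via $\nu \times \id$, in such a way that each fiber $(\cX/\cZ_\sD)_p$ is sent into $\rho^p(\cX_p) \subseteq \cX$ for $p \in \pos \backslash \sD$. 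This factorization is produced by \Cref{lemma.all.about.aligned.subcats}\Cref{part.alignment.lemma.induced.map.on.quotients}\Cref{subpart.alignment.lemma.yo.and.pL.commutativity}, applied with $\cZ = \cZ_p$ and $\cY = \cZ_\sD$ (the hypotheses hold by \Cref{closed.subcats.are.mutually.aligned}): passing to right adjoints in the commuting square \Cref{yo.and.pL.commutativity.for.aligned.subcats} produces a commuting square of $\nu$'s and $i_R$'s which, combined with the identification of strata established above, records exactly that $\nu$ carries the inclusion of $(\cX/\cZ_\sD)_p$ to the inclusion of $\cX_p$.

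The final step is to verify the pullback property. Given $(X,p) \in \GD(\cX)$ with $p \in \pos \backslash \sD$, I need to show that $X$ lies in the image of the composite $(\cX/\cZ_\sD)_p \hookrightarrow \cX/\cZ_\sD \xhookrightarrow{\nu} \cX$; but this is immediate once one knows that this composite has image exactly $\rho^p(\cX_p) \subseteq \cX$, which is precisely the content of the commuting diagram obtained in the previous paragraph combined with the stratum identification. The main obstacle throughout is the identification in step two for $p \notin \sD$, where one crucially needs the stratification property (and not merely the prestratification property) in the guise of \Cref{closed.subcats.are.mutually.aligned} in order to rewrite the intersection $\cZ_\sD \cap \cZ_p$ as $\cZ_{\sD \cap (^\leq p)}$; without this, the argument that the stratum equals $\cX_p$ (rather than some proper quotient thereof) would fail.
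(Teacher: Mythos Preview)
Your proposal is correct and follows essentially the same approach as the paper. You invoke \Cref{prop.quotient.stratn} with $\cY = \cZ_\sD$, compute the strata via \Cref{closed.subcats.are.mutually.aligned} and the containment $(^\leq p) \cap \sD \subseteq (^< p)$ for $p \notin \sD$, and obtain the factorization of gluing diagrams by passing to right adjoints in the square of \Cref{lemma.all.about.aligned.subcats}\Cref{part.alignment.lemma.induced.map.on.quotients}\Cref{subpart.alignment.lemma.yo.and.pL.commutativity}---exactly as the paper does.
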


\subsubsection{Pushforward stratifications}

\begin{prop}
\label{prop.pushfwd.stratn}
Suppose that $\pos \ra \posQ$ is any functor between posets.
\begin{enumerate}
\item\label{item.pushfwd.stratn}
The functor
\begin{equation}
\label{pushfwd.stratn}
\begin{tikzcd}[row sep=0cm]
\posQ
\arrow{r}
&
\Cls_\cX
\\
\rotatebox{90}{$\in$}
&
\rotatebox{90}{$\in$}
\\
q
\arrow[maps to]{r}
&
\cZ_q
&[-1.2cm]
:= \cZ_{\pos_{^\leq q}}
\end{tikzcd}
\end{equation}
defines a stratification of $\cX$ over $\posQ$.
\item\label{item.identify.strata.of.pushfwd.stratn}
For any $q \in \posQ$, the $q\th$ stratum of the stratification \Cref{pushfwd.stratn} is $\cX_{\pos_q}$.
\end{enumerate}
\end{prop}

\begin{proof}
We begin with part \Cref{item.pushfwd.stratn}.  Since $\cX = \brax{\cZ_p}_{p \in \pos}$, then also $\cX = \brax{\cZ_q }_{q \in \posQ}$.  So, it remains to check the stratification condition.  For any $q,r \in \posQ$, we must show that there is a factorization
\[ \begin{tikzcd}
\cZ_{\pos_{(^\leq q) \cap (^\leq r)}}
\arrow[hook]{r}{i_L}
&
\cZ_{\pos_{^\leq q}}
\\
\cZ_{\pos_{\leq r}}
\arrow[hook]{r}[swap]{i_L}
\arrow[dashed]{u}
&
\cX
\arrow{u}[swap]{y}
\end{tikzcd} \]
This follows from \Cref{closed.subcats.are.mutually.aligned} by taking $\sD = \pos_{^\leq q}$ and $\sE = \pos_{^\leq r}$ and noting that $\pos_{^\leq q} \cap \pos_{^\leq r} = \pos_{(^\leq q) \cap (^\leq r)}$.

We now proceed to part \Cref{item.identify.strata.of.pushfwd.stratn}.  We write $\cZ := \cZ_{\pos_{^\leq q}}$ for simplicity, and we apply \Cref{obs.restricted.stratn.over.D} (taking $\sD = \pos_{^\leq q}$) to pass to the restricted stratification
\[
\pos_{^\leq q}
\xra{\cZ_\bullet}
\Cls_\cZ
\]
of $\cZ$ over $\pos_{^\leq q}$ with the same strata.  Writing
\[
\Span
:=
\left\{ \begin{tikzcd}
s
\arrow{r}
\arrow{d}
&
t
\\
u
\end{tikzcd} \right\}
\]
for the walking span, we define a functor $\pos_{^\leq q} \xra{\pi} \Span$ between posets according to the prescriptions
\[
\pi^{-1}(s) = ( ^< \pos_q )
~,
\qquad
\pi^{-1}(t) =  ( ^\leq \pos_q \backslash ^< \pos_q )
~,
\qquad
\text{and}
\qquad
\pi^{-1}(u) =  ( \pos_{^< q} \backslash ^< \pos_q )
~.
\]
By part \Cref{item.pushfwd.stratn}, we obtain a stratification of $\cZ$ over $\Span$.  Thereafter, applying \Cref{obs.quotient.stratn.from.down.closed.over.smaller.poset} (and \Cref{prop.quotient.stratn}) with $\sD = \{ s \ra u \} \in \Down_\Span$, we obtain a quotient stratification
\[ \begin{tikzcd}[row sep=0cm]
\{ t \}
\arrow{r}
&
\Cls_{\cZ / \cZ_u}
\\
\rotatebox{90}{$\in$}
&
\rotatebox{90}{$\in$}
\\
t
\arrow[maps to]{r}
&
\cZ_t / \cZ_s
\end{tikzcd} \]
over the one-element poset (since $( ^\leq t) \cap \{ s \ra u \} = \{ s \}$).  In particular, we find that
\[
\cX_q
:=
\cZ_q / \cZ_{^< q}
:=
\cZ_{\pos_{^\leq q}} / \cZ_{\pos_{^< q}}
=:
\cZ / \cZ_u
\simeq
\cZ_t / \cZ_s
=:
\cZ_{^\leq \pos_q} / \cZ_{^< \pos_q}
=:
\cX_{\pos_q}
~,
\]
as desired.
\end{proof}

\subsubsection{Refined stratifications}

\begin{definition}
Given a functor
\[
\iota_0 \pos
\xra{\sR_\bullet}
\Poset
~,
\]
we define the \bit{wreath product} of $\pos$ with $\sR_\bullet$ to be the poset $\pos \wr \sR_\bullet$ whose objects are pairs $(p,r)$ where $p \in \pos$ and $r \in \sR_p$ equipped with the lexicographic ordering: $(p,r) \leq (p',r')$ in $\pos \wr \sR_\bullet$ if and only if either $p < p'$ in $\pos$ or else $p=p'$ in $\pos$ and $r \leq r'$ in $\sR_p$.  This comes equipped with a canonical functor
\[
\begin{tikzcd}[row sep=0cm]
\pos \wr \sR_\bullet
\arrow{r}
&
\pos
\\
\rotatebox{90}{$\in$}
&
\rotatebox{90}{$\in$}
\\
(p,r)
\arrow[maps to]{r}
&
p
\end{tikzcd}~.
\]
\end{definition}


\begin{prop}
\label{prop.refined.stratn}
Choose any functor $\iota_0 \pos \xra{\sR_\bullet} \Poset$ and, for each $p \in \pos$, a stratification
\begin{equation}
\label{stratn.of.stratum}
\begin{tikzcd}[row sep=0cm]
\sR_p
\arrow{r}{(\cY_p)_\bullet}
&
\Cls_{\cX_p}
\\
\rotatebox{90}{$\in$}
&
\rotatebox{90}{$\in$}
\\
r
\arrow[maps to]{r}
&
(\cY_p)_r
\end{tikzcd}
~.
\end{equation}
\begin{enumerate}
\item\label{item.refined.stratn}
The functor
\begin{equation}
\label{refined.stratn}
\begin{tikzcd}[row sep=0cm]
\pos \wr \sR_\bullet
\arrow{r}{\w{\cZ}_\bullet}
&
\Cls_\cX
\\
\rotatebox{90}{$\in$}
&
\rotatebox{90}{$\in$}
\\
(p,r)
\arrow[maps to]{r}
&
\w{\cZ}_{(p,r)}
&[-1.1cm]
:= p_L^{-1}((\cY_p)_r)
\end{tikzcd}
\end{equation}
defines a stratification of $\cX$ over $\pos \wr \sR_\bullet$.
\item\label{item.identify.strata.of.refined.stratn}
For any $(p,r) \in \pos \wr \sR_\bullet$, the $(p,r)\th$ stratum of the stratification \Cref{refined.stratn} is $(\cX_p)_r$.
\end{enumerate}
\end{prop}

\begin{proof}
We begin with part \Cref{item.refined.stratn}.

We first verify that the functor \Cref{refined.stratn} defines a prestratification.  For this, consider any $p \in \pos$.  If $\sR_p = \es$, then it must be the case that $\cX_p = 0$ and so $\cZ_p = \cZ_{^< p}$.  Otherwise, we have $\cZ_p = \brax{ \w{\cZ}_{(p,r)} }_{r \in \sR_p}$ by \Cref{lemma.preimages.preserve.colimits}\Cref{lemma.part.pL.preimage.preserves.colimits}.  Hence, we find that
\[
\cX
=
\brax{ \cZ_p }_{p \in \pos}
=
\brax{ \cZ_p }_{\{ p \in \pos : \sR_p \not= \es \}}
=
\brax{ \brax{ \w{\cZ}_{(p,r)} }_{r \in \sR_p} }_{\{ p \in \pos : \sR_p \not= \es \}}
=
\brax{ \w{\cZ}_{(p,r)} }_{(p,r) \in \pos \wr \sR_\bullet}
~.
\]
We note here that the same argument shows that for any $\sD \in \Down_\pos$ we have an identification
\begin{equation}
\label{equivalence.on.down.closed.of.wreath}
\w{\cZ}_{(\pos \wr \sR_\bullet)_\sD}
=
\cZ_\sD
\end{equation}
in $\Cls_\cX$.

We now verify the stratification condition.  By \Cref{obs.condn.star.vacuous.if.P.totally.ordered}, it suffices to verify it for incomparable pairs of elements of $\pos \wr \sR_\bullet$.  There are two types of such pairs: pairs $(p,r)$ and $(q,s)$ where $p$ and $q$ are incomparable in $\pos$, and pairs $(p,r)$ and $(p,s)$ where $r$ and $s$ are incomparable in $\sR_p$.  We address these two cases in turn.
\begin{itemize}
\item Choose elements $(p,r),(q,s) \in \pos \wr \sR_\bullet$ such that $p$ and $q$ are incomparable in $\pos$.  Note the equality
\[
(^\leq (p,r) ) \cap ( ^\leq (q,s) )
=
(\pos \wr \sR_\bullet)_{(^\leq p) \cap (^\leq q)}
\]
in $\Down_{\pos \wr \sR_\bullet}$.  Hence, we obtain a diagram
\[ \begin{tikzcd}
\w{\cZ}_{(^\leq (p,r) ) \cap ( ^\leq (q,s) )} =
&[-1cm]
\cZ_{(^\leq p) \cap (^\leq q)}
\arrow[hook]{r}{i_L}
&
\w{\cZ}_{(p,r)}
\arrow[hook]{r}{i_L}
&
\cZ_p
\\
&
\w{\cZ}_{(q,s)}
\arrow[hook]{r}[swap]{i_L}
&
\cZ_{q}
\arrow[hook]{r}[swap]{i_L}
\arrow[dashed]{ul}
&
\cX
\arrow{u}[swap]{y}
\end{tikzcd}~, \]
in which the identification is \Cref{equivalence.on.down.closed.of.wreath} with $\sD = (^\leq p) \cap (^\leq q)$ and the factorization is guaranteed by the stratification condition for the stratification of $\cX$ over $\pos$.

\item Given elements $(p,r) , (p,s) \in \pos \wr \sR_\bullet$ such that $r$ and $s$ are incomparable in $\sR_p$, the factorization
\[ \begin{tikzcd}
\w{\cZ}_{(^\leq (p,r) ) \cap ( ^\leq (p,s) )}
\arrow[hook]{rr}{i_L}
&
&
\w{\cZ}_{(p,r)}
\\
&
&
\cZ_p
\arrow{u}[swap]{y}
\\
\w{\cZ}_{(p,s)}
\arrow[dashed]{uu}
\arrow[hook]{r}[swap]{i_L}
&
\cZ_p
\arrow{ru}[sloped]{\id}
\arrow[hook]{r}[swap]{i_L}
&
\cX
\arrow{u}[swap]{y}
\end{tikzcd} \]
follows from \Cref{prop.pullback.stratn}.
\end{itemize}

We now proceed to part \Cref{item.identify.strata.of.refined.stratn}.  In light of the equalities
\[
^\leq (p,r)
=
\{ (p,r') \in \pos \wr \sR_\bullet : r' \leq r \}
\cup
\{ (p',r') \in \pos \wr \sR_\bullet : p' < p \}
=:
(p, (^\leq r))
\cup
(\pos \wr \sR_\bullet)_{^< p}
\]
and
\[
^< (p,r)
=
\{ (p,r') \in \pos \wr \sR_\bullet : r' <r \}
\cup
\{ (p',r') \in \pos \wr \sR_\bullet : p' < p \}
=:
(p,(^< r))
\cup
(\pos \wr \sR_\bullet)_{^< p}
\]
in $\Down_{\pos \wr \sR_\bullet}$, we find that
\[
\cX_{(p,r)}
:=
\frac{\w{\cZ}_{(p,r)}}{\w{\cZ}_{^< (p,r)}}
\simeq
\frac{\w{\cZ}_{(p,r)} / \w{\cZ}_{(\pos \wr \sR_\bullet)_{^< p}}}{\w{\cZ}_{^<(p,r)} / \w{\cZ}_{(\pos \wr \sR_\bullet)_{^< p}}}
=
\frac{\w{\cZ}_{(p,r)} / \cZ_{^< p} }{\w{\cZ}_{^<(p,r)} / \cZ_{^< p}}
\simeq
\frac{ (\cY_p)_r }{ (\cY_p)_{^< r} }
=:
(\cX_p)_r
~,
\]
as desired, using the identification \Cref{equivalence.on.down.closed.of.wreath} with $\sD = (^< p)$.
\end{proof}

\section{The $\cO$-monoidal reconstruction theorem}
\label{section.O.mon.reconstrn.thm}

In this section, we upgrade our macrocosm reconstruction theorem (\Cref{intro.thm.cosms}\Cref{intro.main.thm.macrocosm}) to one that accounts for operadic structures (\Cref{intro.thm.O.mon.reconstrn}).  We also establish the adelic stratification (\Cref{intro.thm.balmer}), which is a symmetric monoidal stratification of a presentably symmetric monoidal stable $\infty$-category (satisfying mild finiteness hypotheses) over the specialization poset of its Balmer spectrum.

This section is organized as follows.
\begin{itemize}

\item[\Cref{subsection.O.monoidal.infty.cats}:] We fix an $\infty$-operad $\cO$ (satisfying mild conditions) and recall the notions of $\cO$-monoidal $\infty$-categories and laxly $\cO$-monoidal functors.

\item[\Cref{subsection.ideals}:] We study the appropriate notion of an ideal subcategory of a presentably $\cO$-monoidal stable $\infty$-category.

\item[\Cref{subsection.O.monoidal.stratns}:] We define $\cO$-monoidal stratifications of a presentably $\cO$-monoidal stable $\infty$-category. We unpack the chromatic stratification of $\Spectra$ in \Cref{ex.chromatic.stratn.of.spectra}, which organizes the fundamental objects of chromatic homotopy theory.

\item[\Cref{subsection.O.algebra.objects.in.LModrlaxllaxB}:] We define the $\infty$-category that contains the $\cO$-monoidal gluing diagram of an $\cO$-monoidal stratification.

\item[\Cref{subsection.O.monoidal.reconstruction.thm}:] We prove \Cref{intro.thm.O.mon.reconstrn} as \Cref{thm.s.m.reconstrn}.

\item[\Cref{subsection.tt.geometry}:] We recall the basic notions of tensor-triangular geometry and then prove \Cref{intro.thm.balmer} as \Cref{thm.s.m.stratn.over.balmer.spectrum}. We discuss the adelic stratification of $\Spectra$ in \Cref{ex.adelic.stratn.of.spectra}. We explain how symmetric monoidal stratifications contribute to the theory of tensor-triangular geometry in \Cref{rmk.stratns.helps.tt.geometry}.

\end{itemize}

\subsection{Preliminaries on $\cO$-monoidal $\infty$-categories}
\label{subsection.O.monoidal.infty.cats}

In this subsection, we fix an $\infty$-operad $\cO$ satisfying mild conditions and recall the notions of $\cO$-monoidal $\infty$-categories and laxly $\cO$-monoidal functors.

\begin{remark}
We are primarily interested in symmetric monoidal $\infty$-categories.  Indeed, the reader will not lose much by simply reading every instance of the $\infty$-operad ``$\cO$'' as ``$\Comm$'' (a.k.a.\! ``$\EE_\infty$'', a.k.a.\! $\Fin_*$), every instance of ``$\cO$-monoidal'' as ``symmetric monoidal'', and so on.  However, we work in this greater generality because it requires almost no extra effort and yet encompasses other situations of potential interest, notably ($\EE_1$-)monoidal, braided (i.e.\! $\EE_2$-)monoidal, and more generally $\EE_n$-monoidal $\infty$-categories for any $1 \leq n \leq \infty$ (e.g.\! recall \Cref{rmk.stratn.of.module.cat}).
\end{remark}

\needspace{2\baselineskip}
\begin{notation}
\label{notation.for.operads.etc}
\begin{enumerate}
\item[]

\item\label{item.notation.for.operad}

We fix an $\infty$-operad
\[
\cO
~,
\]
which we assume
\begin{enumeratesub}
\item\label{item.require.unital}
 to be unital,
\item\label{item.require.reduced}
to be reduced (i.e.\! to have a contractible $\infty$-category of colors), and
\item\label{item.require.multiplications}
to have a nonempty space of binary operations.
\end{enumeratesub}
We write
\[
(\cO^\otimes \da \Fin_*) \in \Cat_{/\Fin_*}
\]
for its defining object.

\item

Justified by the fact that the functor $\cO^\otimes \ra \Fin_*$ restricts as an equivalence on underlying $\infty$-groupoids (by the assumption that $\cO$ is reduced), we notationally identify objects of $\cO^\otimes$ with their images in $\Fin_*$; for any $n \geq 0$ we write $\ul{n} := \{1,\ldots,n \} \in \Fin$ and $\ul{n}_+ := \{1,\ldots,n\}_+ \in \Fin_*$.

\item

For any $n \geq 0$, we write
\[ \begin{tikzcd}
\cO(n)
\arrow[hook]{r}
\arrow{d}
&
\hom_{\cO^\otimes} ( \ul{n}_+ , \ul{1}_+ )
\arrow{d}
\\
\pt
\arrow[hook]{r}
&
\hom_{\Fin_*} ( \ul{n}_+ , \ul{1}_+ )
\end{tikzcd} \]
for the fiber over the unique active morphism, the space of $n$-ary operations in $\cO$.

\item

We write
\[
\cO^\otimes_\cls \subseteq \cO^\otimes
\]
for the subcategory of \textit{closed} (a.k.a.\! \textit{inert}) morphisms.

\end{enumerate}
\end{notation}

\begin{remark}
A few comments regarding assumptions on the $\infty$-operad $\cO$ are in order.
\begin{enumerate}

\item
All three assumptions of \Cref{notation.for.operads.etc}\Cref{item.notation.for.operad} are motivated by examples and by a desire for simplicity of exposition; we expect that our results go through (mutatis mutandis) in greater generality.

\item
It follows from assumption \Cref{item.require.reduced} of \Cref{notation.for.operads.etc}\Cref{item.notation.for.operad} that $\cO$ is the underlying $\infty$-operad of an ordinary (i.e.\! single-colored) operad in topological spaces or simplicial sets.

\item
Assumption \Cref{item.require.multiplications} of \Cref{notation.for.operads.etc}\Cref{item.notation.for.operad} is primarily useful in that it allows us to simplify our notation, e.g.\! in \Cref{obs.Idl.in.Cls.stable.under.colims}, \Cref{rmk.check.O.mon.stratn.on.yonedaed.units}, and \Cref{obs.yo.comm.for.s.m.stratns}.  However, it also serves to guarantee that the unique morphism $\EE_0 \ra \cO$ from the initial reduced unital $\infty$-operad is not an equivalence; this is convenient, as a number of our results do not hold as stated in this degenerate case.

\item
The additional assumption that $\cO$ is \textit{quadratic} (i.e.\! that for all $n \geq 2$ every $n$-ary operation is ((possibly only noncanonically) equivalent to) an iterated composite of binary operations) would allow us to very slightly simplify certain conditions in \Cref{subsection.ideals} (from quantifying over all $n \geq 2$ to quantifying merely over $n=2$).

\end{enumerate}
\end{remark}

\needspace{2\baselineskip}
\begin{definition}
\label{defn.O.mon.and.laxly}
\begin{enumerate}
\item[]

\item
An \bit{$\cO$-monoidal $\infty$-category} is a reduced Segal functor
\[
\cO^\otimes
\xra{\cC^\otimes}
\Cat
~.
\]
We also write
\[
( \cC^\otimes \da \cO^\otimes)
\in
\coCart_{\cO^\otimes}
\]
for the cocartesian fibration that such a functor classifies, and we write
\[
\cC
:=
\cC^\otimes(\ul{1}_+)
\in
\Cat
\]
for its underlying $\infty$-category.  These assemble into the full subcategory
\[
\Alg_\cO(\Cat)
\subseteq
\Fun( \cO^\otimes , \Cat)
~,
\]
whose morphisms we refer to as \bit{$\cO$-monoidal functors}.

\item\label{item.rlaxly.O.mon}

We define the $\infty$-category whose objects are $\cO$-monoidal $\infty$-categories and whose morphisms are \bit{right-laxly $\cO$-monoidal functors} to be the indicated image in the diagram
\[ \begin{tikzcd}[column sep=1.5cm, row sep=1.5cm]
\Alg_\cO(\Cat)
\arrow[dashed, hook, two heads]{r}
\arrow[hook]{d}[swap]{\ff}
&
\Alg_\cO^\rlax(\Cat)
\arrow[hook]{d}{\ff}
\\
\coCart_{\cO^\otimes}
\arrow[hook, two heads]{r}
&
\Cat^{\cls}_{\cocart/\cO^\otimes}
\arrow[hook, two heads]{r}
\arrow{d}
&
\Cat_{\cocart/\cO^\otimes}
\arrow{d}
\\
&
\coCart_{\cO^\otimes_\cls}
\arrow[hook, two heads]{r}
&
\Cat_{\cocart/\cO^\otimes_\cls}
\end{tikzcd} \]
whose lower right square is a pullback.

\item\label{item.llaxly.O.mon}

We define the $\infty$-category whose objects are $\cO$-monoidal $\infty$-categories and whose morphisms are \bit{left-laxly $\cO$-monoidal functors} to be the indicated image in the diagram
\[ \begin{tikzcd}[column sep=1.5cm, row sep=1.5cm]
\Alg_\cO(\Cat)
\arrow[dashed, hook, two heads]{r}
\arrow[hook]{d}[swap]{\ff}
&
\Alg_\cO^\llax(\Cat)
\arrow[hook]{d}{\ff}
\\
\Cart_{(\cO^\otimes)^\op}
\arrow[hook, two heads]{r}
&
\Cat^{\cls}_{\cart/(\cO^\otimes)^\op}
\arrow[hook, two heads]{r}
\arrow{d}
&
\Cat_{\cart/(\cO^\otimes)^\op}
\arrow{d}
\\
&
\Cart_{(\cO^\otimes_\cls)^\op}
\arrow[hook, two heads]{r}
&
\Cat_{\cart/(\cO^\otimes_\cls)^\op}
\end{tikzcd} \]
whose lower right square is a pullback.

\end{enumerate}
\end{definition}

\begin{notation}
For each $n \geq 0$, we write
\[ \begin{tikzcd}[row sep=0cm]
\cO(n)
\times
\cC^{\times n}
\arrow{r}
&
\cC
\\
\rotatebox{90}{$\in$}
&
\rotatebox{90}{$\in$}
\\
\left( \mu , ( X_i)_{i \in \ul{n}} \right)
\arrow[maps to]{r}
&
\underset{\mu}{\dotimes} (X_i)_{i \in \ul{n}}
\end{tikzcd} \]
for the value of an $n$-ary operation $\mu \in \cO(n)$ on an $n$-tuple $(X_i)_{i \in \ul{n}} \in \cC^{\times n}$ of objects of $\cC$.
\end{notation}

\begin{remark}
\label{rmk.unpack.laxly.O.monoidal.functors}
For each $n \geq 0$, each $\mu \in \cO(n)$, and each $(X_i)_{i \in \ul{n}} \in \cC^{\times n}$, a right-laxly $\cO$-monoidal functor $\cC \xra{F} \cD$ determines a natural comparison morphism
\[
\bigotimes^\cD_\mu (F(X_i) )_{i \in \ul{n}}
\longra
F \left( \bigotimes^\cC_\mu (X_i)_{i \in \ul{n}} \right)
\]
in $\cD$.\footnote{In the case that $n=0$, by assumption the space $\cO(0)$ is contractible, and the comparison morphism determined by its unique point is a morphism
\[
\uno_\cD
\longra
F(\uno_\cC)
~.
\]
}
In fact, directly from the definitions, a right-laxly $\cO$-monoidal functor $\cC \xra{F} \cD$ determines a functor $\Alg_\cO(\cC) \xra{F} \Alg_\cO(\cD)$ on $\infty$-categories of $\cO$-algebras.  Dually, a left-laxly $\cO$-monoidal functor determines comparison morphisms in the opposite direction, and determines a functor on $\infty$-categories of $\cO$-coalgebras.
\end{remark}

\begin{observation}
It follows from \Cref{lemma.ptwise.radjt.has.ptwise.ladjt} that given an adjunction $F \adj G$ between the underlying $\infty$-categories of $\cO$-monoidal $\infty$-categories, the following two types of data are equivalent:
\begin{itemize}
\item the additional structure on the left adjoint $F$ of a left-laxly $\cO$-monoidal functor;
\item the additional structure on the right adjoint $G$ of a right-laxly $\cO$-monoidal functor.\footnote{Indeed, this fact motivates our choice of handedness in parts \Cref{item.rlaxly.O.mon} and \Cref{item.llaxly.O.mon} of \Cref{defn.O.mon.and.laxly}: we take concordance with the handedness of the adjoint as more fundamental than concordance with the handedness of the fibrations.}
\end{itemize}
It follows in particular e.g.\! that the right adjoint of an $\cO$-monoidal functor is canonically right-laxly $\cO$-monoidal.  We will use these facts without further comment.
\end{observation}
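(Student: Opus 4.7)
The plan is to unpack the definitions of the two flavors of laxness so that the observation reduces to a mate-calculus statement about lax natural transformations of functors out of $\cO^\otimes$, after which the cited lemma does the real work.

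First I would reinterpret the conditions of \Cref{defn.O.mon.and.laxly}\Cref{item.rlaxly.O.mon} and \Cref{item.llaxly.O.mon} by straightening the relevant fibrations. Classifying cocartesian fibrations over $\cO^\otimes$, a right-laxly $\cO$-monoidal functor $\cC \ra \cD$ extending given $\cO$-monoidal structures $\cC^\otimes, \cD^\otimes : \cO^\otimes \ra \Cat$ amounts to a right-lax natural transformation $\cC^\otimes \Rightarrow \cD^\otimes$ whose restriction to $\cO^\otimes_\cls \subseteq \cO^\otimes$ is strict. Dually, classifying cartesian fibrations over $(\cO^\otimes)^\op$, a left-laxly $\cO$-monoidal functor corresponds to a left-lax natural transformation $\cC^\otimes \Rightarrow \cD^\otimes$, again strict over $\cO^\otimes_\cls$.

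Next, the adjunction $F \adj G$ on underlying $\infty$-categories induces a pointwise adjunction at each color: the Segal condition identifies $\cC^\otimes(\ul{n}_+) \simeq \cC^{\times n}$ and $\cD^\otimes(\ul{n}_+) \simeq \cD^{\times n}$, so the componentwise adjunctions $F^{\times n} \adj G^{\times n}$ supply the pointwise adjunction data along $\cO^\otimes$. Now \Cref{lemma.ptwise.radjt.has.ptwise.ladjt} converts a right-lax natural transformation $\cC^\otimes \Rightarrow \cD^\otimes$ with pointwise components $G^{\times n}$ into a left-lax natural transformation with pointwise components $F^{\times n}$, and this is a bijective correspondence (up to equivalence), precisely matching the two data sets in the observation.

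The one nontrivial verification is that the strictness condition over $\cO^\otimes_\cls$ matches under this mate correspondence. For each inert morphism $\rho : \ul{n}_+ \ra \ul{m}_+$, the Segal condition forces $\cC^\otimes(\rho)$ and $\cD^\otimes(\rho)$ to be product projections $\cC^{\times n} \ra \cC^{\times m}$ and $\cD^{\times n} \ra \cD^{\times m}$; the naturality square for $G^{\times(-)}$ over $\rho$ commutes strictly (both composites restrict $G$ to the same subset of coordinates), and its mate — the naturality square for $F^{\times(-)}$ over $\rho$ — is a strict equivalence for exactly the same reason. So the strictness condition transports faithfully between the right-lax and left-lax sides. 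The main obstacle is bookkeeping: identifying the conditions of \Cref{defn.O.mon.and.laxly} with lax natural transformations, carefully tracking the role of $\cO^\otimes_\cls$ under straightening, and invoking \Cref{lemma.ptwise.radjt.has.ptwise.ladjt} in the appropriate parametrized form — the lemma itself then supplies all of the genuine mathematical content.
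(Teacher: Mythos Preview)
Your approach is correct and is essentially the paper's: the paper simply declares that the observation follows from \Cref{lemma.ptwise.radjt.has.ptwise.ladjt}, and your proposal supplies exactly the unpacking that this citation requires --- identify the two lax-monoidal structures with the two flavors of lax-equivariant morphism over $\cO^\otimes$, observe the fiberwise adjunctions $F^{\times n} \adj G^{\times n}$, invoke the lemma, and check that strictness over $\cO^\otimes_\cls$ is preserved under mates (which it is, since inert monodromy is a product projection).

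One terminological caveat: in the paper's module conventions, a morphism in $\Cat_{\cocart/\cO^\otimes}$ is \emph{left}-lax equivariant (see \Cref{figure.define.almost.all.modules} and \Cref{lax.equivariance.of.strict.modules.over.walking.arrow}), whereas in operad language it is called \emph{right}-laxly $\cO$-monoidal --- this deliberate mismatch is precisely what the footnote in the Observation flags. Your labels ``right-lax natural transformation'' and ``left-lax natural transformation'' are swapped relative to the paper's module terminology, but the direction in which you apply the lemma is correct (fiberwise right adjoint in $\LMod^\llax$ $\leftrightarrow$ fiberwise left adjoint in $\LMod^\rlax$), so the argument goes through unchanged once the names are aligned.
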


\subsection{Ideals in presentably $\cO$-monoidal $\infty$-categories}
\label{subsection.ideals}

In this subsection, we study the appropriate notion of an ideal subcategory of a presentably $\cO$-monoidal stable $\infty$-category. We also show as \Cref{prop.closed.ideal.subcats.are.central.idempotent} that these are equivalent data to certain idempotent objects.

\begin{local}
\label{notn.presentably.O.mon.R}
For the remainder of this section, we fix a presentably $\cO$-monoidal stable $\infty$-category $\cR$: that is, $\cR$ is a presentable stable $\infty$-category equipped with the structure of an $\cO$-monoidal $\infty$-category such that for all $n \geq 2$ and all $\mu \in \cO(n)$ the functor $\cR^{\times n} \xra{\otimes_\mu} \cR$ commutes with colimits separately in each variable.
\end{local}

\begin{notation}
\label{notation.omit.mu.from.tensor.when.with.unit}
We write $\uno_\cR \in \cR$ for the $\cO$-monoidal unit object of $\cR$.
\end{notation}

\begin{remark}
The object $\uno_\cR \in \cR$ is the unit with respect to all possible monoidal products in $\cR$: for any $n \geq 1$, for any $\mu \in \cO(n)$, and for any $X \in \cR$, there is a canonical equivalence
\[
\bigotimes_\mu (X , \uno_\cR , \ldots, \uno_\cR )
\xlongra{\sim}
X
\]
(where there are $(n-1)$ copies of $\uno_\cR$), and similarly where $X$ is put in a different slot from the first.
\end{remark}

\begin{notation}
\label{notation.omit.mu.from.tensor.when.unambiguous}
We simply write $\otimes := \otimes_\mu$ in any situation where this notation is canonically unambiguous, such as throughout \Cref{obs.omnibus.closed.ideal}. (This unambiguity will then be an implicit assertion.)
\end{notation}


\begin{definition}
\label{defn.ideal.subcat}
A full presentable stable subcategory $\cI \subseteq \cR$ is called an \bit{ideal} if it is contagious under the $\cO$-monoidal structure, i.e.\! for all $n \geq 2$ and all $\mu \in \cO(n)$ there exists a factorization
\[ \begin{tikzcd}
\cI \times \cR^{\times (n-1)}
\arrow{rr}{\otimes_\mu}
\arrow[dashed]{rd}
&[-0.5cm]
&
\cR
\\
&
\cI
\arrow[hook]{ru}[sloped, swap]{\ff}
\end{tikzcd}~. \]
\end{definition}

\begin{notation}
\label{notn.for.ideal.generated.by.stuff}
Given a set $\{ K_s \in \cR \}_{s \in S}$ of objects, we write $\brax{K_s}^\otimes_{s \in S}$ for the ideal that they generate. Likewise, given a subcategory $\cD \subseteq \cR$, we write $\brax{ \cD }^\otimes \subseteq \cR$ for the ideal that it generates.
\end{notation}

\begin{observation}
\label{obs.ideal.and.closed.subcat}
Suppose that $\cI \subseteq \cR$ is an ideal that is also a closed subcategory.  Then, $\cI$ inherits an $\cO$-monoidal structure with unit object $\uno_\cI := y(\uno_\cR) \in \cI$, such that in the adjunction
\begin{equation}
\label{iL.and.y.adjn.from.I.to.R}
\begin{tikzcd}[column sep=1.5cm]
\cI
\arrow[hook, transform canvas={yshift=0.9ex}]{r}{i_L}
\arrow[leftarrow, transform canvas={yshift=-0.9ex}]{r}[yshift=-0.2ex]{\bot}[swap]{y}
&
\cR
\end{tikzcd}
\end{equation}
the left adjoint $i_L$ is left-laxly $\cO$-monoidal and nonunitally $\cO$-monoidal, i.e.\! it preserves tensor products up to natural equivalence but the unit only up to a morphism 
\begin{equation}
\label{counit.morphism.for.unit.object.from.ideal.closed.subcat}
i_L(\uno_\cI)
:=
i_L(y(\uno_\cR))
\xra{\vareps_{\uno_\cR}}
\uno_\cR
~.
\end{equation}
It follows that the right adjoint $y$ is right-laxly $\cO$-monoidal.
\end{observation}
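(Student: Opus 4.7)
The plan is to derive the $\cO$-monoidal structure on $\cI$ by applying the $\cO$-monoidal version of Lurie's theorem on compatible localizations of $\cO$-monoidal $\infty$-categories twice---first to the reflective localization $p_L : \cR \to \cU := \cI^\perp$ complementary to $\cI$, and then to the reflective localization $y : \cR \to \cI$ (viewed via its fully faithful right adjoint $i_R$ supplied by the closed-subcategory structure)---using the smashing property of the recollement to bridge between them.

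First I would apply the theorem to $p_L$: the ideal condition $\cI \otimes \cR \subseteq \cI$ states precisely that the class of $p_L$-equivalences (i.e.\ morphisms with fiber in $\ker(p_L) = \cI$) is closed under tensoring with arbitrary objects of $\cR$. It follows that $\cU$ inherits a presentably $\cO$-monoidal structure making $p_L$ into an $\cO$-monoidal functor. Moreover, because the closed-subcategory hypothesis supplies a further right adjoint $p_R$ to $\nu$, the inclusion $\nu$ preserves all colimits, and so the reflective localization $p_L$ is \emph{smashing}; by the standard characterization this yields a natural equivalence $\nu p_L(\uno_\cR) \otimes_\cR X \xra{\sim} \nu p_L(X)$ for all $X \in \cR$.

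For the second application of the theorem, I must verify that $\cU = \ker(y)$ is closed under tensoring with arbitrary objects of $\cR$. This follows from the smashing identity: any $W \in \cU$ satisfies $W \simeq \nu p_L(W)$, whence $W \otimes_\cR X \simeq \nu p_L(W) \otimes_\cR X \simeq \nu p_L(W \otimes_\cR X) \in \cU$. The theorem then endows $\cI$ with a presentably $\cO$-monoidal structure making $y$ an $\cO$-monoidal functor; in particular, the unit is $y(\uno_\cR) =: \uno_\cI$, and the fully faithful right adjoint $i_R$ becomes right-laxly $\cO$-monoidal.

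Finally, by doctrinal adjunction applied to $i_L \adj y$ with $y$ now $\cO$-monoidal, the left adjoint $i_L$ inherits a canonical left-laxly $\cO$-monoidal structure. Unwinding the definitions, the higher-arity comparison morphisms of $i_L$ are equivalences---both $i_L(X \otimes^\cI Y)$ and $i_L(X) \otimes_\cR i_L(Y)$ coincide with the $\cR$-tensor of objects of $\cI$, which lies in $\cI$ by the ideal property---so $i_L$ is nonunitally $\cO$-monoidal, and its unit-comparison morphism is the counit \Cref{counit.morphism.for.unit.object.from.ideal.closed.subcat}. That $y$ is right-laxly $\cO$-monoidal then follows a fortiori from its being $\cO$-monoidal. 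The principal obstacle is verifying the smashing property in the middle step: the ideal condition alone produces an $\cO$-monoidal structure on $\cU$, but the third adjoint $p_R$ from the closed-subcategory hypothesis is essential to upgrade this to a smashing localization, which in turn supplies the ``dual'' stability of $\cU$ under tensor required to bring the theorem to bear on $y$.
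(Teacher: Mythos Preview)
Your argument has a genuine gap at the ``smashing'' step. You assert that because the closed-subcategory hypothesis supplies $p_R$ (so that $\nu$ preserves colimits), the localization satisfies $\nu p_L(\uno_\cR) \otimes X \simeq \nu p_L(X)$. But colimit-preservation of $\nu$ alone does not yield this tensor-idempotent formula: the two senses of ``smashing'' coincide only under further hypotheses (such as rigid compact generation) not assumed here. Without that formula you cannot conclude that $\cU = \ker(y)$ is closed under tensoring with $\cR$, and so your second invocation of the compatible-localization theorem (to $y \dashv i_R$) is unjustified.

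In fact the observation, read literally, appears to fail. Take $\cR = \Fun([1],\Spectra)$ with the pointwise smash product and $\cI = \{(E \to 0)\}$ --- precisely the closed subcategory $\cZ$ of \Cref{ex.of.one.directional.alignment}, and visibly an ideal since $(E \to 0) \otimes (F_0 \to F_1) = (E \otimes F_0 \to 0)$. Here $y = \fib$, whence $\uno_\cI := y(\uno_\cR) = \fib(\id_\SS) = 0$, which cannot serve as the unit of a nonzero $\cO$-monoidal $\infty$-category. Correspondingly $\cU = \{(W_0 \xra{\sim} W_1)\}$ is \emph{not} closed under tensoring (tensor $\uno_\cR \in \cU$ with $(\SS \to 0)$), and since $\nu p_L(\uno_\cR) = \uno_\cR$ your smashing formula would read $X \simeq \nu p_L(X)$, which is false. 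The paper records this statement as an observation with no proof, and everything downstream uses only \emph{closed ideals} (\Cref{defn.closed.ideal.subcat}), where $y$ is assumed strictly $\cO$-monoidal and the smashing identity does hold (\Cref{obs.omnibus.closed.ideal}); so the general claim here is neither established in the paper nor actually needed.
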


\begin{definition}
\label{defn.closed.ideal.subcat}
An ideal $\cI \subseteq \cR$ which is also a closed subcategory is called a \bit{closed ideal} if the right adjoint $y$ in the adjunction \Cref{iL.and.y.adjn.from.I.to.R} is $\cO$-monoidal.  We write
\[ \Idl_\cR \subseteq \Cls_\cR \]
for the full subposet consisting of the closed ideals.
\end{definition}

\begin{observation}
\label{obs.Idl.in.Cls.stable.under.colims}
Because the $\cO$-monoidal structure on $\cR$ commutes with colimits separately in each variable, the full subposet $\Idl_\cR \subseteq \Cls_\cR$ is stable under colimits.
\end{observation}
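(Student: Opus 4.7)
The plan is to verify both parts of \Cref{defn.closed.ideal.subcat} for the join $\cI := \brax{\cI_s}_{s \in S} \in \Cls_\cR$ of a set of closed ideals: first, that $\cI$ is an ideal in the sense of \Cref{defn.ideal.subcat}, and second, that the right adjoint $y$ to the inclusion $i_L : \cI \hookra \cR$ is $\cO$-monoidal (not merely right-laxly so). The first point is essentially a direct consequence of the colimit-preservation hypothesis on $\otimes_\mu$. By \Cref{obs.closed.subcats.closed.under.colimit.closure}, $\cI$ is generated under colimits by $\bigcup_s \cI_s$. Fixing $n \geq 2$, $\mu \in \cO(n)$, and elements $X_2, \ldots, X_n \in \cR$, I would consider the full subcategory of those $X_1 \in \cR$ for which $\bigotimes_\mu (X_1,X_2,\ldots,X_n) \in \cI$: this subcategory is closed under colimits (because $\otimes_\mu$ preserves colimits separately in the first slot and $\cI$ is closed under colimits in $\cR$) and contains each $\cI_s$, hence contains $\cI$. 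Iterating over the remaining slots shows $\cI$ is an ideal.

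For the second point, I use that $y$ is automatically right-laxly $\cO$-monoidal (as the right adjoint of the nonunitally $\cO$-monoidal functor $i_L$ of \Cref{obs.ideal.and.closed.subcat}), and verify that its structure maps are equivalences. Since $i_L$ is fully faithful and preserves tensor products, this reduces to showing that for each $n \geq 2$, each $\mu \in \cO(n)$, and each $(X_i)_i$ in $\cR$, the canonical morphism
\[
\psi_\mu : C_\cI(X_1) \otimes_\mu \cdots \otimes_\mu C_\cI(X_n) \longra C_\cI(X_1 \otimes_\mu \cdots \otimes_\mu X_n)
\]
is an equivalence in $\cR$, where $C_\cI := i_L y$ (the cases $n=0,1$ being immediate from the definition $\uno_\cI := y(\uno_\cR)$). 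Both source and target of $\psi_\mu$ lie in $\cI$, the source because $\cI$ is an ideal and $C_\cI(X_i) \in \cI$, the target by definition; so it suffices to show that the fiber $K := \fib(\psi_\mu)$ vanishes in $\cR$.

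To detect vanishing, I will exploit the hypothesis that each $\cI_s$ is itself a closed ideal. Because $\cI_s \subseteq \cI$, comparing corepresented functors on $\cI_s$ yields a natural identification $C_{\cI_s} \circ C_\cI \simeq C_{\cI_s}$. Applying $C_{\cI_s}$ to $\psi_\mu$ and using that each $y_{\cI_s}$ is already $\cO$-monoidal (so that $C_{\cI_s}$ preserves $n$-ary tensor products), both sides become canonically identified with $C_{\cI_s}(X_1) \otimes_\mu \cdots \otimes_\mu C_{\cI_s}(X_n)$, and the induced map is the identity. Hence $C_{\cI_s}(K) \simeq 0$, which via the $i_L \adj y$ adjunction for $\cI_s$ says $K \in \cI_s^\bot$. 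Since this holds for every $s$, the object $K$ is right-orthogonal to $\bigcup_s \cI_s$; and because mapping out of a colimit produces a limit of spectra, $K$ is right-orthogonal to the full colimit-closure $\cI$. Combined with $K \in \cI$, this forces $\ulhom_\cR(K,K) \simeq 0$, so $K \simeq 0$.

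The main obstacle is the second point, upgrading the automatic right-laxly $\cO$-monoidal structure on $y$ to a strict one. The argument crucially uses that $\cI$ is generated by the $\cI_s$ \emph{under colimits} (and not merely that $\cI$ contains them): the identification $C_{\cI_s} \circ C_\cI \simeq C_{\cI_s}$ relies only on the containment $\cI_s \subseteq \cI$, but the concluding orthogonality argument -- which detects $K = 0$ from the family $\{C_{\cI_s}(K) = 0\}_{s \in S}$ -- genuinely depends on $\cI$ being the colimit closure.
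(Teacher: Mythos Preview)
Your proof is correct. The paper itself provides no detailed argument for this observation beyond the single clause ``Because the $\cO$-monoidal structure on $\cR$ commutes with colimits separately in each variable'', so you have supplied the verification that the paper leaves implicit.

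Your two-part decomposition is exactly right. The first part (that $\cI = \brax{\cI_s}_{s \in S}$ is an ideal) is what the paper's one-line justification directly addresses; your argument via the colimit-closed subcategory of ``good'' first slots is the standard unpacking. The second part (that $y : \cR \to \cI$ is strictly $\cO$-monoidal, not merely right-laxly so) is the more substantial claim, and your argument is sound: the key point that $C_{\cI_s}(\psi_\mu)$ is an equivalence follows cleanly from the commuting triangle
\[
\begin{tikzcd}
\bigotimes_\mu C_\cI(X_i) \arrow{rr}{\psi_\mu} \arrow{rd}[swap]{\bigotimes_\mu \varepsilon_\cI} & & C_\cI\bigl(\bigotimes_\mu X_i\bigr) \arrow{ld}{\varepsilon_\cI} \\
& \bigotimes_\mu X_i &
\end{tikzcd}
\]
once one observes (as you do) that both slanted maps become equivalences after applying $C_{\cI_s}$: the right one because $\cofib(\varepsilon_\cI) \in \cI^\bot \subseteq \cI_s^\bot$, and the left one because $C_{\cI_s}$ preserves $n$-ary tensor products (from $y^{\cI_s}$ being $\cO$-monoidal and $i_L^{\cI_s}$ being nonunitally $\cO$-monoidal, per \Cref{obs.ideal.and.closed.subcat}). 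The concluding orthogonality argument ($K \in \cI \cap \cI^\bot$ forces $K \simeq 0$) is exactly the right way to close.

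One small remark: the phrase ``Iterating over the remaining slots'' is unnecessary --- \Cref{defn.ideal.subcat} only requires contagion in the first slot, so your first-slot argument already suffices.
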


\begin{observation}
\label{obs.omnibus.closed.ideal}
Suppose that $\cI \subseteq \cR$ is a closed ideal, and consider the recollement
\[ \begin{tikzcd}[column sep=1.5cm]
\cI
\arrow[hook, bend left=45]{r}[description]{i_L}
\arrow[leftarrow]{r}[transform canvas={yshift=0.1cm}]{\bot}[swap,transform canvas={yshift=-0.1cm}]{\bot}[description]{\yo}
\arrow[bend right=45, hook]{r}[description]{i_R}
&
\cR
\arrow[bend left=45]{r}[description]{p_L}
\arrow[hookleftarrow]{r}[transform canvas={yshift=0.1cm}]{\bot}[swap,transform canvas={yshift=-0.1cm}]{\bot}[description]{\nu}
\arrow[bend right=45]{r}[description]{p_R}
&
\cR/\cI
\end{tikzcd}~. \]
It is straightforward to verify the following facts, which we will use without further comment.
\begin{enumerate}

\item\label{obs.item.closed.ideal.colocalization}
The object $i_L(\uno_\cI) \in \cR$ is an idempotent $\cO$-coalgebra object with counit morphism
\Cref{counit.morphism.for.unit.object.from.ideal.closed.subcat}.
Moreover, tensoring with this counit morphism implements the colocalization $i_L \adj y$: for any $X \in \cR$, the diagram
\[ \begin{tikzcd}[row sep=0cm, column sep=1.5cm]
i_L(\uno_\cI) \otimes X
\arrow{r}{\vareps_{\uno_\cR} \otimes \id_X}
&
\uno_\cR \otimes X
\\
\rotatebox{90}{$\simeq$}
&
\rotatebox{90}{$\simeq$}
\\
i_L(\uno_\cI \otimes y(X))
&
X
\\
\rotatebox{90}{$\simeq$}
\\
i_L y(X)
\arrow{ruu}[sloped, swap]{\vareps_X}
\end{tikzcd} \]
canonically commutes.\footnote{That is, for every $\mu \in \cO(2)$, the functor $i_L(\uno_\cI) \otimes_\mu (-)$ is canonically equivalent to the composite $i_L y$ (recall \Cref{notation.omit.mu.from.tensor.when.unambiguous}).} 

\item\label{obs.item.quotient.by.closed.ideal.localization}
There is a canonical $\cO$-monoidal structure on $\cR/\cI$, such that
\begin{enumeratesub}
\item the unit object is $\uno_{\cR/\cI} := p_L(\uno_\cR) \in \cR/\cI$,
\item the functor $p_L$ is $\cO$-monoidal, and
\item the functor $\nu$ is right-laxly $\cO$-monoidal and nonunitally $\cO$-monoidal, i.e.\! it preserves tensor products up to natural equivalence but the unit only up to a morphism
\begin{equation}
\label{unit.morphism.for.unit.object.in.quotient.by.ideal.closed.subcat}
\uno_\cR
\xra{\eta_{\uno_\cR}}
\nu(p_L(\uno_\cR))
=:
\nu(\uno_{\cR/\cI})
~.
\end{equation}
\end{enumeratesub}
Hence, the object $\nu(\uno_{\cR/\cI}) \in \cR$ is an idempotent $\cO$-algebra object with unit morphism \Cref{unit.morphism.for.unit.object.in.quotient.by.ideal.closed.subcat}.
Moreover, tensoring with this unit morphism implements the localization $p_L \adj \nu$: for any $X \in \cR$, the diagram
\[ \begin{tikzcd}[row sep=0cm, column sep=1.5cm]
\uno_\cR \otimes X
\arrow{r}{\eta_{\uno_\cR} \otimes \id_X}
&
\nu(\uno_{\cR/\cI}) \otimes X
\\
\rotatebox{90}{$\simeq$}
&
\rotatebox{90}{$\simeq$}
\\
X
\arrow{rdd}[sloped, swap]{\eta_X}
&
\nu(\uno_{\cR/\cI} \otimes p_L(X))
\\
&
\rotatebox{90}{$\simeq$}
\\
&
\nu(p_L(X))
\end{tikzcd} \]
canonically commutes.\footnote{In particular, $\cR/\cI \xhookra{\nu} \cR$ is also the inclusion of an ideal (which is not generally a closed ideal).}  

\end{enumerate}

\end{observation}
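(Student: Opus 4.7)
Everything will follow from two structural inputs: first, that $y$ is $\cO$-monoidal (the very definition of closed ideal), and second, that $i_L$ is nonunitally $\cO$-monoidal in the sense recorded in \Cref{obs.ideal.and.closed.subcat} (i.e.\! strict on binary operations, lax on the nullary operation). I will treat Part \Cref{obs.item.closed.ideal.colocalization} and Part \Cref{obs.item.quotient.by.closed.ideal.localization} in parallel, each breaking naturally into (a) the construction of the relevant co/algebra structure, (b) verification of idempotency, and (c) the projection formula implementing the co/localization by tensoring.

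For Part \Cref{obs.item.closed.ideal.colocalization}, the canonical $\cO$-coalgebra structure on $\uno_\cI \in \cI$ (arising because any unit object is both an algebra and a coalgebra in any $\cO$-monoidal $\infty$-category) is transported by the left-lax $\cO$-monoidal functor $i_L$ to an $\cO$-coalgebra structure on $i_L(\uno_\cI) \in \cR$, with counit $\vareps_{\uno_\cR}$ as indicated. The comultiplication is the composite $i_L(\uno_\cI) \simeq i_L(\uno_\cI \otimes \uno_\cI) \simeq i_L(\uno_\cI) \otimes i_L(\uno_\cI)$ (using that $i_L$ is strict on binary tensor products), and is visibly an equivalence, giving idempotency. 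The projection formula is the crux: given $X \in \cR$, because $\cI$ is an ideal the object $i_L(\uno_\cI) \otimes X$ lies in the image of $i_L$, so the counit $i_L y (i_L(\uno_\cI) \otimes X) \xra{\sim} i_L(\uno_\cI) \otimes X$ is an equivalence; applying the fact that $y$ is strictly $\cO$-monoidal and that $y i_L \simeq \id_\cI$, we compute $y(i_L(\uno_\cI) \otimes X) \simeq y(i_L(\uno_\cI)) \otimes y(X) \simeq \uno_\cI \otimes y(X) \simeq y(X)$, and combining yields the desired equivalence $i_L(\uno_\cI) \otimes X \simeq i_L y (X)$, naturally in $X$.

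For Part \Cref{obs.item.quotient.by.closed.ideal.localization}, I first establish that the reflective subcategory $\nu(\cR/\cI) = \ker(y) \subseteq \cR$ is closed under tensoring with arbitrary objects of $\cR$: if $y(X) \simeq 0$, then $y(X \otimes Y) \simeq y(X) \otimes y(Y) \simeq 0$ because $y$ is $\cO$-monoidal. By the standard criterion for monoidal Bousfield localizations (as in \cite[\S 2.2.1]{LurieHA} applied to the operad $\cO^\otimes$), this is equivalent to the localization $p_L \dashv \nu$ being compatible with the $\cO$-monoidal structure, which yields the $\cO$-monoidal structure on $\cR/\cI$ with unit $p_L(\uno_\cR)$ and $\cO$-monoidal refinement of $p_L$; it simultaneously gives $\nu$ the structure of a right-laxly $\cO$-monoidal functor with binary comparison morphisms $\nu(A) \otimes \nu(B) \to \nu(A \otimes B)$. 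These comparisons are equivalences: both sides lie in $\nu(\cR/\cI)$ (using closure under tensoring) and become equivalences after applying $p_L$ (which is $\cO$-monoidal), so they must already be equivalences on the subcategory where $p_L$ is a retraction of $\nu$. Hence $\nu$ is nonunitally $\cO$-monoidal, with unit comparison being the morphism \Cref{unit.morphism.for.unit.object.in.quotient.by.ideal.closed.subcat}. The $\cO$-algebra structure on $\nu(\uno_{\cR/\cI})$ then transports from that of $\uno_{\cR/\cI}$; idempotency is immediate from the equivalence $\nu(\uno_{\cR/\cI}) \otimes \nu(\uno_{\cR/\cI}) \xra{\sim} \nu(\uno_{\cR/\cI} \otimes \uno_{\cR/\cI}) \simeq \nu(\uno_{\cR/\cI})$. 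Finally, for the projection formula: the object $\nu(\uno_{\cR/\cI}) \otimes X$ lies in $\nu(\cR/\cI)$ by closure, and applying the $\cO$-monoidal functor $p_L$ gives $p_L(\nu(\uno_{\cR/\cI}) \otimes X) \simeq \uno_{\cR/\cI} \otimes p_L(X) \simeq p_L(X)$, so the unit of the adjunction at $\nu(\uno_{\cR/\cI}) \otimes X$ is an equivalence $\nu(\uno_{\cR/\cI}) \otimes X \simeq \nu(p_L(X)) = L_\cI X$.

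The main obstacle is the coherent construction of the $\cO$-monoidal structure on the presentable quotient $\cR/\cI$ in Part \Cref{obs.item.quotient.by.closed.ideal.localization}; this is the only step that is not purely formal manipulation of adjunctions and unit/counit data. Morally it is an instance of monoidal Bousfield localization, but to handle all $n$-ary operations for a general $\cO$ satisfying the hypotheses of \Cref{notation.for.operads.etc}\Cref{item.notation.for.operad} I would appeal to the operadic localization formalism of \cite[\S 2.2]{LurieHA}, whose hypothesis (closure of local objects under tensoring with arbitrary objects) we have just verified from the closedness of the ideal. Everything else reduces to the two structural inputs noted at the start together with the ambidexterity phenomenon that binary tensor products pass freely across both $i_L$ (by \Cref{obs.ideal.and.closed.subcat}) and $\nu$ (as established in the middle paragraph).
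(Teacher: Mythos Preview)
Your proposal is correct and supplies exactly the kind of verification the paper omits; the paper gives no proof at all for this observation, simply declaring the facts ``straightforward to verify.'' Your two structural inputs (that $y$ is $\cO$-monoidal and that $i_L$ is nonunitally $\cO$-monoidal, from \Cref{obs.ideal.and.closed.subcat}) are precisely the ones the paper has set up, and your use of monoidal Bousfield localization for the $\cO$-monoidal structure on $\cR/\cI$ is the standard and intended route.
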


\needspace{2\baselineskip}
\begin{remark}
\label{rmk.idempotent.algebras.are.simple}
\begin{enumerate}
\item[]

\item\label{item.idempotent.coalgebras.are.simple}

An idempotent $\cO$-coalgebra object in $\cR$ is equivalently an object
\begin{equation}
\label{idempotent.coalgebra}
(C \xlongra{\varepsilon} \uno_\cR) \in \cR_{/\uno_\cR}
\end{equation}
such that for all $n \geq 2$ and all $\mu \in \cO(n)$ the morphism
\begin{equation}
\label{structure.morphism.for.idempotent.coalgebra}
\bigotimes_\mu ( C , \ldots, C)
\xra{\bigotimes_\mu ( \varepsilon, \id_C , \ldots, \id_C)}
\bigotimes_\mu ( \uno_\cR , C , \ldots, C )
\end{equation}
is an equivalence.

\item\label{item.idempotent.algebras.are.simple}

 An idempotent $\cO$-algebra object in $\cR$ is equivalently an object
\begin{equation}
\label{idempotent.algebra}
(\uno_\cR \xlongra{\eta} A) \in \cR_{\uno_\cR/}
\end{equation}
such that for all $n \geq 2$ and all $\mu \in \cO(n)$ the morphism
\begin{equation}
\label{structure.morphism.for.idempotent.algebra}
\bigotimes_\mu ( \uno_\cR , A , \ldots, A)
\xra{\bigotimes_\mu ( \eta , \id_A , \ldots, \id_A)}
\bigotimes_\mu ( A , \ldots, A )
\end{equation}
is an equivalence.
\end{enumerate}
\end{remark}

\needspace{2\baselineskip}
\begin{definition}
\label{defn.idempotents.and.centrality}
\begin{enumerate}
\item[]

\item An \bit{augmented idempotent} in $\cR$ is an object \Cref{idempotent.coalgebra} such that for all $n \geq 2$ and all $\mu \in \cO(n)$ the morphism \Cref{structure.morphism.for.idempotent.coalgebra} is an equivalence.\footnote{So, an augmented idempotent is equivalently an idempotent $\cO$-coalgebra by \Cref{rmk.idempotent.algebras.are.simple}\Cref{item.idempotent.coalgebras.are.simple}.} We say that it is \bit{central} if for all $n \geq 3$, all $\mu \in \cO(n)$, and all $X_1,\ldots,X_{n-2} \in \cR$, the morphism
\[
\bigotimes_\mu ( C , C , X_1, \ldots, X_{n-2})
\xra{\bigotimes_\mu ( \varepsilon, \id_C , \id_{X_1} , \ldots, \id_{X_{n-2}})}
\bigotimes_\mu ( \uno_\cR , C , X_1 \ldots , X_{n-2} )
\]
is an equivalence. We write
\[
\ZAug_\cR \subseteq \cR_{/\uno_\cR}
\]
for the full subcategory on the central augmented idempotents.

\item A \bit{coaugmented idempotent} in $\cR$ is an object \Cref{idempotent.algebra} such that for all $n \geq 2$ and all $\mu \in \cO(n)$ the morphism \Cref{structure.morphism.for.idempotent.algebra} is an equivalence.\footnote{So, a coaugmented idempotent is equivalently an idempotent $\cO$-algebra by \Cref{rmk.idempotent.algebras.are.simple}\Cref{item.idempotent.algebras.are.simple}.} We say that it is \bit{central} if for all $n \geq 3$, all $\mu \in \cO(n)$, and all $X_1,\ldots,X_{n-2} \in \cR$, the morphism
\[
\bigotimes_\mu ( \uno_\cR , A , X_1, \ldots, X_{n-2})
\xra{\bigotimes_\mu ( \eta , \id_A , \id_{X_1} , \ldots, \id_{X_{n-2}})}
\bigotimes_\mu ( A , A , X_1 \ldots , X_{n-2} )
\]
is an equivalence. We write
\[
\ZcoAug_\cR \subseteq \cR_{\uno_\cR/}
\]
for the full subcategory on the central coaugmented idempotents.

\end{enumerate}
\end{definition}

\begin{observation}
\label{obs.centrality.vacuous.or.easy}
In the case that $\cO$ is quadratic, it suffices to verify centrality for ternary operations. For instance, if $\cO = \EE_1$, an augmented idempotent $C \in \ZAug_\cR$ is central if and only if for every $X \in \cR$ the morphisms
\[
C \otimes X
\simeq
C \otimes X \otimes \uno_\cR
\xla{\id_C \otimes \id_X \otimes \varepsilon}
C \otimes X \otimes C
\xra{\varepsilon \otimes \id_X \otimes \id_C}
\uno_\cR \otimes X \otimes C
\simeq X \otimes C
\]
are equivalences, while a coaugmented idempotent $A \in \ZcoAug_\cR$ is central if and only if for every $X \in \cR$ the morphisms
\[
A \otimes X
\simeq
A \otimes X \otimes \uno_\cR
\xra{\id_A \otimes \id_X \otimes \eta}
A \otimes X \otimes A
\xla{\eta \otimes \id_X \otimes \id_A}
\uno_\cR \otimes X \otimes A
\simeq
X \otimes A
\]
are equivalences. If additionally $\cO(2)$ is connected (e.g.\! if $\cO = \EE_n$ for any $2 \leq n \leq \infty$), then the condition of centrality is vacuous: every co/augmented idempotent is automatically central.
\end{observation}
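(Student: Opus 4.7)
The proof proposal addresses the three successive claims in the observation.

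For the first claim---that quadraticity reduces centrality to the ternary case---the plan is to induct on the arity $n \geq 3$. Given a quadratic operad $\cO$, any $\mu \in \cO(n)$ is equivalent to an iterated composite of binary operations, encoded by a binary tree with $n$ leaves. Using the rearrangement symmetries available in the binary-tree decomposition, one chooses a form $\mu \simeq \mu_1 \circ (\mu_L, \mu_R)$ (with $\mu_1 \in \cO(2)$ and $\mu_L, \mu_R$ sub-operations of arities $k_L + k_R = n$) placing both $C$-inputs together in the left subtree, i.e., $k_L \geq 2$. The centrality morphism for $\mu$ then factors as $\bigotimes_{\mu_1}(\bigotimes_{\mu_L}(\varepsilon, \id_C, \id, \ldots), \id)$, reducing to centrality for $\mu_L \in \cO(k_L)$; by induction this reduces to the ternary case when $k_L = 3$ or directly to binary idempotence when $k_L = 2$.

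For the second claim, specializing to $\cO = \EE_1$, the operad is quadratic with $\EE_1(3)$ having exactly $3! = 6$ contractible components indexed by $\sigma \in \Symm_3$. For each $\sigma$, applying the centrality morphism to $(C, C, X)$ and acting by $\varepsilon$ on input $1$ yields one of six morphisms in $\cR$. Four of these correspond to output orderings in which the two $C$'s are adjacent; they reduce immediately to the binary idempotence of $C \otimes C$ tensored with $\id_X$ (or its opposite-handed variant), and thus are equivalences. The remaining two correspond to output orderings in which $X$ separates the two $C$'s, namely the morphisms $\varepsilon \otimes \id_X \otimes \id_C$ and $\id_C \otimes \id_X \otimes \varepsilon$ displayed in the observation's zigzag. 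The coaugmented case is entirely analogous, with $\eta$ in place of $\varepsilon$.

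For the third claim, when $\cO(2)$ is connected, a chosen path from a binary operation $\mu_0$ to its precomposition with the transposition endows $\cR$ with a natural braiding $\beta_{A,B} \colon A \otimes B \xra{\sim} B \otimes A$. Naturality of $\beta$ against $\varepsilon \colon C \to \uno_\cR$ gives a commutative square identifying the ternary centrality morphism $\varepsilon \otimes \id_X \otimes \id_C \colon C \otimes X \otimes C \to \uno \otimes X \otimes C$ (composed with $\beta_{\uno_\cR, X} \otimes \id_C$ on the target) with the composite $(\id_X \otimes \varepsilon \otimes \id_C) \circ (\beta_{C, X} \otimes \id_C)$. The latter is a composition of the braiding (an equivalence) with binary idempotence tensored with $\id_X$ (an equivalence), hence is itself an equivalence; so too is the centrality morphism. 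Other ternary configurations are treated using associativity together with this braiding argument, and Part 1 then bootstraps the ternary case to all higher arities, proving centrality is vacuous.

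The main obstacle is the careful case analysis in Part 1, specifically guaranteeing that in a quadratic operad one can always choose a binary-tree decomposition of $\mu \in \cO(n)$ placing the two $C$-labeled inputs together in a single subtree of arity at least two. For operads like $\EE_1$ this uses associativity, and for $\EE_n$ with $n \geq 2$ it uses associativity together with the braiding; for more general quadratic operads this relies on the existence of rearrangement homotopies between binary-tree decompositions, which requires unpacking the definition of quadraticity in the $\infty$-operadic setting to extract the needed data, and checking that the associated centrality morphisms on distinct decompositions agree up to canonical equivalence.
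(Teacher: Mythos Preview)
The paper states this as an Observation without proof. Your enumeration for $\EE_1$ is correct: of the six ternary operations, four reduce to binary idempotence and the remaining two give exactly the displayed conditions.

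However, the obstacle you flag in Part 1 is a genuine gap, and it also affects your Part 3. Your induction needs a binary-tree decomposition of $\mu \in \cO(n)$ placing inputs $1$ and $2$ in a common proper subtree, and your braiding argument in Part 3 writes ``$C \otimes X \otimes C$'' as an associative triple product in order to regroup and apply binary idempotence. Both moves use associativity, which ``quadratic'' (as the paper defines it: every operation is \emph{some} iterated binary composite) does not supply. Consider an operad with a single commutative unital binary operation $\tau$ and no associativity relation: here $\cO(2)$ is a point (hence connected), yet each $\mu$ has a unique tree. For the $4$-ary $\mu$ with $\bigotimes_\mu(a,b,c,d) = \tau(a,\tau(c,\tau(b,d)))$ the $C$-inputs cannot be brought together, and the resulting condition $\tau(C,\tau(X,\tau(C,Y))) \to \tau(X,\tau(C,Y))$ does not visibly reduce to any ternary one. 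Similarly, the ternary condition $\tau(C,\tau(C,X)) \to \tau(C,X)$ cannot be reduced to binary idempotence by braiding alone, since braiding gives $\tau(C,\tau(C,X)) \simeq \tau(\tau(C,X),C)$ but no way to extract a $\tau(C,C)$ factor without regrouping. For the operads $\EE_n$---the only ones the paper actually uses---associativity is available and your arguments for all three claims go through; but the first and third claims, read literally for arbitrary quadratic $\cO$, appear to require this extra hypothesis.
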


\needspace{2\baselineskip}
\begin{prop}
\label{prop.closed.ideal.subcats.are.central.idempotent}
\begin{enumerate}
\item[]

\item\label{idempotents.are.posets}
The full subcategories
\[
\ZAug_\cR
\subseteq
\cR_{/\uno_\cR}
\qquad
\text{and}
\qquad
\ZcoAug_\cR
\subseteq
\cR_{\uno_\cR/}
\]
are posets.

\item\label{item.identify.closed.ideals.as.idempotents}
There is a canonical commutative diagram
\begin{equation}
\label{comm.triangle.describing.closed.ideals}
\begin{tikzcd}[row sep=2cm]
&
\Idl_\cR
\arrow[transform canvas={xshift=0.5ex, yshift=0.5ex}]{rd}[sloped]{\cI \longmapsto \nu ( \uno_{\cR/\cI})}
\arrow[leftarrow, transform canvas={xshift=-0.5ex, yshift=-0.5ex}]{rd}[yshift=-0.2ex, sloped]{\sim}[swap, sloped]{\brax{\fib(\eta)}^\otimes}
\\
\ZAug_\cR
\arrow[transform canvas={yshift=0.9ex}]{rr}{\cofib(\varepsilon)}
\arrow[leftarrow, transform canvas={yshift=-0.9ex}]{rr}[yshift=-0.2ex]{\sim}[swap]{\fib(\eta)}
\arrow[transform canvas={xshift=-0.5ex, yshift=0.5ex}]{ru}[sloped]{C \longmapsto \brax{C}^\otimes}
\arrow[leftarrow, transform canvas={xshift=0.5ex, yshift=-0.5ex}]{ru}[yshift=-0.2ex, sloped]{\sim}[swap, sloped]{i_L(\uno_\cI) \longmapsfrom \cI}
&
&
\ZcoAug_\cR
\end{tikzcd}
\end{equation}
of equivalences.

\item\label{item.identify.closed.ideal.generated.by.augmented.idempotent}
Given a central augmented idempotent $C \in \ZAug_\cR$, for any $\tau \in \cO(2)$ we have an identification
\[
\brax{C}^\otimes
=
\cI_{C,\tau}
:=
\left\{ X \in \cR : \textup{the morphism } C \otimes_\tau X \xra{\varepsilon \otimes_\tau \id_X} \uno_\cR \otimes_\tau X \simeq X \textup{ is an equivalence} \right\}
~,
\]
and we may identify the right adjoint to its inclusion as
\[
\begin{tikzcd}[column sep=1.5cm]
\cI_{C,\tau}
\arrow[hook, transform canvas={yshift=0.9ex}]{r}
\arrow[dashed, leftarrow, transform canvas={yshift=-0.9ex}]{r}[yshift=-0.2ex]{\bot}[swap]{C \otimes_\tau (-)}
&
\cR
\end{tikzcd}
\]
with counit $C \otimes_\tau (-) \xra{\varepsilon \otimes_\tau \id } \uno_\cR \otimes_\tau (-) \simeq \id_\cR$.

\item\label{item.identify.closed.ideal.from.coaugmented.idempotent}
Given a central coaugmented idempotent $A \in \ZcoAug_\cR$, for any $\tau \in \cO(2)$ we have an identification
\[
\cR / \brax{ \fib(\eta) }^\otimes
=
\cR / \cI_{\fib(\eta) , \tau}
=
\left\{ X \in \cR : \textup{the morphism } X \simeq \uno_\cR \otimes_\tau X \xra{\eta \otimes_\tau \id_X} A \otimes_\tau X \textup{ is an equivalence} \right\}
~,
\]
and we may identify the left adjoint to its inclusion as
\[
\begin{tikzcd}[column sep=1.5cm]
\cR
\arrow[dashed, transform canvas={yshift=0.9ex}]{r}{A \otimes_\tau (-)}
\arrow[hookleftarrow, transform canvas={yshift=-0.9ex}]{r}[yshift=-0.2ex]{\bot}
&
\cR / \cI_{\fib(\eta) , \tau}
\end{tikzcd}
\]
with unit $\id_\cR \simeq \uno_\cR \otimes_\tau (-) \xra{\eta \otimes_\tau \id} A \otimes_\tau (-)$.

\end{enumerate}
\end{prop}

\begin{proof}
We fix arbitrary $C \in \ZAug_\cR$ and $\tau \in \cO(2)$, to which we will refer throughout the proof.

We begin by proving part \Cref{item.identify.closed.ideal.generated.by.augmented.idempotent}, and then use it implicitly through the remainder of the proof. We first verify that $\cI_{C,\tau} \subseteq \cR$ is an ideal. It is clearly a full presentable stable subcategory. Now, for any $n \geq 2$, any $\mu \in \cO(n)$, any $X \in \cI$, and any $Y_1,\ldots,Y_{n-1} \in \cR$, we may factor the morphism
\[
 C \otimes_\tau \bigotimes_\mu ( X , Y_1 , \ldots, Y_{n-1} )
\longra
\uno_\cR \otimes_\tau \bigotimes_\mu ( X , Y_1 , \ldots, Y_{n-1} )
\simeq
\bigotimes_\mu ( X , Y_1 , \ldots, Y_{n-1} )
\]
as the sequence of equivalences
\begin{align}
\label{with.C.involved.pop.out.C.on.X}
C \otimes_\tau \bigotimes_\mu ( X , Y_1 , \ldots, Y_{n-1} )
& \xlongla{\sim}
C \otimes_\tau \bigotimes_\mu ( C \otimes_\tau X , Y_1 , \ldots, Y_{n-1} )
\\
\label{collapse.outer.copy.of.C}
& \xlongra{\sim}
\uno_\cR \otimes_\tau \bigotimes_\mu ( C \otimes_\tau X , Y_1 , \ldots, Y_{n-1} )
\\
\nonumber
& \xlongra{\sim}
\bigotimes_\mu ( C \otimes_\tau X , Y_1 , \ldots, Y_{n-1} )
\\
\label{collapse.C.from.X}
& \xlongra{\sim}
\bigotimes_\mu ( X , Y_1 , \ldots, Y_{n-1} )
\end{align}
in which equivalences \Cref{with.C.involved.pop.out.C.on.X} \and \Cref{collapse.C.from.X} use that $X \in \cI_{C,\tau}$ and equivalence \Cref{collapse.outer.copy.of.C} uses the centrality of $C$. So indeed, the subcategory $\cI_{C,\tau} \subseteq \cR$ is an ideal. Now, we have $C \in \cI_{C,\tau}$ because $C$ is an augmented idempotent, so we obtain the containment $\brax{C}^\otimes \subseteq \cI_{C,\tau}$. On the other hand, clearly $(C \otimes_\tau X) \in \brax{C}^\otimes$ for any $X \in \cR$, which implies that $\brax{C}^\otimes \supseteq \cI_{C,\tau}$. This proves the asserted equality $\brax{C}^\otimes = \cI_{C,\tau}$. To verify that the right adjoint to its inclusion is as asserted, we observe that for any $X \in \cI_{C,\tau}$ and any $Y \in \cR$, we have $(C \otimes_\tau Y) \in \brax{C}^\otimes = \cI_{C,\tau}$ and moreover we have the commutative diagram
\[ \begin{tikzcd}[row sep=1.5cm]
\hom_{\cI_{C,\tau}}(X , C \otimes_\tau Y)
:=
&[-1.5cm]
\hom_\cR(X, C \otimes_\tau Y)
\arrow{r}
\arrow{d}[sloped, anchor=north]{\sim}
&
\hom_\cR(X,Y)
\arrow{d}[sloped, anchor=south]{\sim}
\arrow{ld}[sloped]{C \otimes_\tau (-)}
\\
&
\hom_\cR(C \otimes_\tau X , C \otimes_\tau Y)
\arrow{r}
&
\hom_\cR(C \otimes_\tau X , Y)
\end{tikzcd}~, \]
which implies that its upper morphism is an equivalence. This completes the proof of part \Cref{item.identify.closed.ideal.generated.by.augmented.idempotent}.

We now verify that the ideal $\cI_{C,\tau} \subseteq \cR$ is in fact a closed ideal. First of all, it is a closed subcategory because the right adjoint $\cR \xra{C \otimes_\tau (-)} \cI_{C,\tau}$ preserves colimits. So, it remains to verify that this right adjoint is $\cO$-monoidal. Clearly $\uno_{\cI_{C,\tau}} \simeq C$, and hence this right adjoint preserves unit objects. We now observe that for any $n \geq 2$, any $\mu \in \cO(n)$, and any $Y_1,\ldots,Y_n \in \cR$, we may factor the canonical morphism
\[
\bigotimes_\mu ( C \otimes_\tau Y_i )_{i \in \ul{n}}
\longra
C \otimes_\tau \left( \bigotimes_\mu (Y_i)_{i \in \ul{n}} \right)
\]
as the sequence of equivalences
\[
\bigotimes_\mu ( C \otimes_\tau Y_i )_{i \in \ul{n}}
\xlongla{\sim}
C \otimes_\tau \bigotimes_\mu ( C \otimes_\tau Y_i )_{i \in \ul{n}}
\xlongra{\sim}
C \otimes_\tau \bigotimes_\mu ( Y_i )_{i \in \ul{n}}
\]
using the centrality of $C$. So indeed, $\cI_{C,\tau} \subseteq \cR$ is a closed ideal.

We now verify that the association $C \mapsto \cI_{C,\tau}$ defines a functor
\[
\ZAug_\cR
\xra{\cI_{(-),\tau}}
\Idl_\cR
~:
\]
given a morphism
\begin{equation}
\label{morphism.in.ZAug}
\begin{tikzcd}
C
\arrow{rr}{\alpha}
\arrow{rd}[sloped, swap]{\varepsilon_C}
&
&
C'
\arrow{ld}[sloped, swap]{\varepsilon_{C'}}
\\
&
\uno_\cR
\end{tikzcd}
\end{equation}
in $\ZAug_\cR$, we must verify the inclusion $\cI_{C,\tau} \subseteq \cI_{C',\tau}$. Using the equality $\cI_{C,\tau} = \brax{C}^\otimes$, it suffices to verify that $C \in \cI_{C',\tau}$. For this, we apply the functor $C \otimes_\tau (-)$ to the commutative triangle \Cref{morphism.in.ZAug}, which yields a retraction diagram
\[ \begin{tikzcd}[row sep=1.5cm]
C \otimes_\tau C
\arrow{rr}{\id_C \otimes_\tau \alpha}
\arrow{rd}[sloped, swap]{\id_C \otimes_\tau \varepsilon_C}[sloped]{\sim}
&
&
C \otimes_\tau C'
\arrow{ld}[sloped, swap]{\id_C \otimes_\tau \varepsilon_{C'}}
\\
&
C \otimes_\tau \uno_\cR
\end{tikzcd}~, \]
which proves the claim since $(C \otimes_\tau C') \in \cI_{C',\tau}$ and $\cI_{C',\tau} \subseteq \cR$ is closed under retracts.

We now prove that the subcategory $\ZAug_\cR \subseteq \cR_{/\uno_\cR}$ is a poset, i.e.\! the first half of part \Cref{idempotents.are.posets}. Suppose there exists a morphism $C \ra C'$ in $\ZAug_\cR$. As we have just seen, this implies that $C \in \cI_{C',\tau} \subseteq \cR$. Hence, we find that
\[
\hom_{\ZAug_\cR}(C,C')
:=
\hom_{\cR_{/\uno_\cR}}(C,C')
\simeq
\hom_{(\cI_{C',\tau})_{/C'}}(C,C')
\simeq
\pt
~,
\]
as desired.

Now, given any closed ideal $\cI \in \Idl_\cR$, it is clear that $i_L(\uno_\cI) \in \cR_{/\uno_\cR}$ is a central augmented idempotent, and moreover that a morphism $\cI \subseteq \cI'$ in $\Idl_\cR$ determines a morphism $i_L(\uno_\cI) \ra i_L(\uno_{\cI'})$ in $\cR_{\uno_\cR}$: in other words, the association $\cI \mapsto i_L(\uno_\cI)$ defines a functor
\[
\ZAug_\cR
\xla{i_L(\uno_{(-)})}
\Idl_\cR
~.
\]
From here, we immediately obtain the mutually inverse equivalences on the left in diagram \Cref{comm.triangle.describing.closed.ideals}. It is straightforward to verify the horizontal mutually inverse equivalences in diagram \Cref{comm.triangle.describing.closed.ideals}. Part \Cref{item.identify.closed.ideals.as.idempotents} immediately follows, as do part \Cref{item.identify.closed.ideal.from.coaugmented.idempotent} and the second half of part \Cref{idempotents.are.posets}.
\end{proof}

\begin{cor}
\label{cor.ideal.gend.by.image.of.closed.ideal.is.closed}
Assume that $\cO$ is quadratic and that $\cO(2)$ is connected (e.g.\! $\cO = \EE_n$ for $2 \leq n \leq \infty$). Let $\cR \xra{F} \cR'$ be a morphism in $\Alg_\cO(\PrLSt)$, i.e.\! an $\cO$-monoidal left adjoint functor between presentably $\cO$-monoidal stable $\infty$-categories. Then for any closed ideal $\cI \in \Idl_\cR$, the ideal
\[
\cI' := \brax{F(\cI)}^\otimes \subseteq \cR'
\]
is a closed ideal of $\cR'$, and moreover
\[
i_L(\uno_{\cI'})
\simeq
F(i_L(\uno_\cI))
\in \ZAug_{\cR'}
\qquad
\text{and}
\qquad
\nu(\uno_{\cR'/\cI'})
\simeq
F(\nu(\uno_{\cR/\cI}))
\in \ZcoAug_{\cR'}
~.
\]
\end{cor}

\begin{proof}
Because $F$ is $\cO$-monoidal, it preserves co/augmented idempotents. Moreover, by \Cref{obs.centrality.vacuous.or.easy}, our assumptions on $\cO$ imply that the condition of centrality is vacuous, so that we obtain factorizations
\[
\begin{tikzcd}
\cR_{/\uno_\cR}
\arrow{r}{F}
&
\cR'_{/\uno_{\cR'}}
\\
\ZAug_\cR
\arrow[hook]{u}
\arrow[dashed]{r}[swap]{F}
&
\ZAug_{\cR'}
\arrow[hook]{u}
\end{tikzcd}
\qquad
\text{and}
\qquad
\begin{tikzcd}
\cR_{\uno_\cR/}
\arrow{r}{F}
&
\cR'_{\uno_{\cR'}/}
\\
\ZcoAug_\cR
\arrow[hook]{u}
\arrow[dashed]{r}[swap]{F}
&
\ZcoAug_{\cR'}
\arrow[hook]{u}
\end{tikzcd}
~.
\]
Now, using \Cref{prop.closed.ideal.subcats.are.central.idempotent}\Cref{item.identify.closed.ideals.as.idempotents}, we find that
\[
\cR'
\supseteq
\cI'
:=
\brax{F(\cI)}^\otimes
=
\brax{ F( \brax{i_L(\uno_\cI)}^\otimes ) }^\otimes
=
\brax{ F(i_L(\uno_\cI) ) }^\otimes
\]
is indeed a closed ideal with $i_L(\uno_{\cI'}) \simeq F(i_L(\uno_\cI))$. Using this, we compute that
\[
\nu(\uno_{\cR'/\cI'})
\simeq
\cofib ( i_L(\uno_{\cI'}) \xlongra{\varepsilon} \uno_{\cR'} )
\simeq
\cofib ( F ( i_L(\uno_\cI) \xlongra{\varepsilon} \uno_\cR ) )
\simeq
F ( \cofib ( i_L(\uno_\cI) \xlongra{\varepsilon} \uno_\cR ) )
\simeq
F ( \nu(\uno_{\cR/\cI} ) )
~,
\]
as desired.
\end{proof}

\begin{remark}
Let us assume for simplicity that $\cO = \Comm$, and let us simply write $\ulhom_\cR(-,-)$ for the internal hom bifunctor of $\cR$. Then, in light of \Cref{obs.omnibus.closed.ideal}\Cref{obs.item.closed.ideal.colocalization} we may identify the composite adjoints
\[ \begin{tikzcd}[column sep=1.5cm]
i_L(\uno_\cI) \otimes (-)
:
\cR
\arrow[transform canvas={yshift=0.9ex}]{r}{y}
\arrow[hookleftarrow, transform canvas={yshift=-0.9ex}]{r}[yshift=-0.2ex]{\bot}[swap]{i_R}
&
\cI
\arrow[hook, transform canvas={yshift=0.9ex}]{r}{i_L}
\arrow[leftarrow, transform canvas={yshift=-0.9ex}]{r}[yshift=-0.2ex]{\bot}[swap]{y}
&
\cR
:
\ulhom_\cR(i_L(\uno_\cI),-)
\end{tikzcd}~. \]
If $i_L(\uno_\cI) \in \cR$ is dualizable, then the composite right adjoint admits a further identification
\[
i_R y
\simeq
\ulhom_\cR(i_L(\uno_\cI),-)
\simeq
i_L(\uno_\cI)^\vee \otimes (-)
~, \]
in which case it itself admits a further right adjoint.  Because $y$ is a left adjoint and $i_R$ is fully faithful, this is the case if and only if $i_R$ itself admits a further right adjoint.
Likewise, in light of \Cref{obs.omnibus.closed.ideal}\Cref{obs.item.quotient.by.closed.ideal.localization} we may identify the composite adjoints
\[ \begin{tikzcd}[column sep=1.5cm]
\nu(\uno_{\cR/\cI}) \otimes (-)
:
\cR
\arrow[transform canvas={yshift=0.9ex}]{r}{p_L}
\arrow[hookleftarrow, transform canvas={yshift=-0.9ex}]{r}[yshift=-0.2ex]{\bot}[swap]{\nu}
&
\cR/\cI
\arrow[hook, transform canvas={yshift=0.9ex}]{r}{\nu}
\arrow[leftarrow, transform canvas={yshift=-0.9ex}]{r}[yshift=-0.2ex]{\bot}[swap]{p_R}
&
\cR
:
\ulhom_\cR(\nu(\uno_{\cR/\cI}),-)
\end{tikzcd}~. \]
Now, the dualizability of $\nu(\uno_{\cR/\cI})$ implies the further identification
\[
\nu p_R
\simeq
\ulhom_\cR(\nu(\uno_{\cR/\cI}),-)
\simeq
\nu(\uno_{\cR/\cI})^\vee \otimes (-)
~, \]
which implies that this composite right adjoint itself admits a further right adjoint.  Because $\nu$ is a fully faithful left adjoint, this is the case if and only if $p_R$ admits a further right adjoint.\footnote{It is not hard to see that $i_R$ admits a further right adjoint if and only if $p_R$ does.}  See e.g.\! \cite{BDS-GNdual} for more on these considerations.
\end{remark}

\subsection{$\cO$-monoidal stratifications}
\label{subsection.O.monoidal.stratns}

In this subsection, we define $\cO$-monoidal stratifications and study their basic properties. We also discuss the chromatic stratification of $\Spectra$ (\Cref{ex.chromatic.stratn.of.spectra}).

\begin{local}
For the remainder of this section, we fix a poset $\pos$.
\end{local}

\begin{definition}
\label{definition.O.monoidal.prestratn.and.stratn}
A prestratification of $\cR$ over $\pos$ is an \bit{$\cO$-monoidal prestratification} if it admits a factorization
\[ \begin{tikzcd}
\pos
\arrow{rr}
\arrow[dashed]{rd}
&
&
\Cls_\cR
\\
&
\Idl_\cR
\arrow[hook]{ru}[sloped, swap]{\ff}
\end{tikzcd}~. \]
An $\cO$-monoidal prestratification is an \bit{$\cO$-monoidal stratification} if its underlying prestratification is a stratification.
\end{definition}

\begin{observation}
\label{obs.down.closeds.go.to.closed.ideals.too}
Suppose that
\[
\pos
\xlongra{\cI_\bullet}
\Idl_\cR
\]
is an $\cO$-monoidal prestratification.  By \Cref{obs.Idl.in.Cls.stable.under.colims}, for any $\sD \in \Down_\pos$ we have
\[
\cI_\sD
:=
\brax{\cI_p}_{p \in \sD}
\in
\Idl_\cR
\subseteq
\Cls_\cR
~.
\]
\end{observation}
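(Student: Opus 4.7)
The statement is essentially an immediate corollary of the preceding observation (\Cref{obs.Idl.in.Cls.stable.under.colims}), so the plan is simply to unwind the definitions and invoke that result. First I would recall that, by \Cref{notn.clsd.subcat.gend.by.clsd.subcats}, the object
\[
\cI_\sD := \brax{\cI_p}_{p \in \sD} \in \Cls_\cR
\]
is by definition the colimit, computed in the poset $\Cls_\cR$, of the restricted functor
\[
\sD \longhookra \pos \xlongra{\cI_\bullet} \Cls_\cR.
\]
Because $\cI_\bullet$ is an $\cO$-monoidal prestratification, this functor factors through the full subposet $\Idl_\cR \subseteq \Cls_\cR$; in particular, its values lie in $\Idl_\cR$.

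The main (and essentially only) step is then to invoke \Cref{obs.Idl.in.Cls.stable.under.colims}, which asserts that $\Idl_\cR \subseteq \Cls_\cR$ is stable under colimits (the underlying input being that the $\cO$-monoidal structure on $\cR$ preserves colimits separately in each variable, so that the defining contagiousness condition of an ideal (\Cref{defn.ideal.subcat}) and the $\cO$-monoidality of the right adjoint $y$ (\Cref{defn.closed.ideal.subcat}) both persist after colimits in $\Cls_\cR$). Applying this to the diagram above, the colimit $\cI_\sD$ computed in $\Cls_\cR$ automatically lies in $\Idl_\cR$, as desired. No serious obstacle arises: the genuine content is already contained in \Cref{obs.Idl.in.Cls.stable.under.colims}, and the present observation merely specializes this stability under colimits to the particular shape of colimit $\brax{\cI_p}_{p \in \sD}$ indexed by a down-closed subset $\sD \in \Down_\pos$.
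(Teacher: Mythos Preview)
Your proposal is correct and takes essentially the same approach as the paper: the observation is stated as self-evident, with the justification ``By \Cref{obs.Idl.in.Cls.stable.under.colims}'' already built into the statement, and your write-up simply makes explicit the unwinding of definitions that this citation entails.
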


\begin{notation}
\label{notation.unit.object.for.I.D}
In the setting of \Cref{obs.down.closeds.go.to.closed.ideals.too}, we write
\[ \uno_{\cI_\sD} := y(\uno_\cR) \in \cI_\sD \]
for the $\cO$-monoidal unit object of $\cI_\sD$.
\end{notation}

\begin{remark}
\label{rmk.check.O.mon.stratn.on.yonedaed.units}
Suppose that
\[
\pos
\xlongra{\cI_\bullet}
\Idl_\cR
\]
is an $\cO$-monoidal stratification.  Then, for any $p \leq q$ in $\pos$ we have an equivalence
\[
i_L(\uno_{\cI_p})
\otimes
i_L(\uno_{\cI_q})
\xlongra{\sim}
i_L(\uno_{\cI_p})
~.
\]
More generally, for any $\sD \ra \sE$ in $\Down_\pos$ we have an equivalence
\[
i_L(\uno_{\cI_\sD})
\otimes
i_L(\uno_{\cI_\sE})
\xlongra{\sim}
i_L(\uno_{\cI_\sD})
~.
\]
Conversely, with the evident notation, there exists an (automatically unique) extension
\[ \begin{tikzcd}
\iota_0 \pos
\arrow{r}{\cI_\bullet}
\arrow[hook]{d}
&
\Idl_\cR
\\
\pos
\arrow[dashed]{ru}
\end{tikzcd} \]
if and only if for any $p \leq q$ in $\pos$ the canonical morphism
\[
i_L(\uno_{\cI_p})
\otimes
i_L(\uno_{\cI_q})
\longra
i_L(\uno_{\cI_p})
\]
is an equivalence.
\end{remark}

\begin{observation}
\label{obs.yo.comm.for.s.m.stratns}
An $\cO$-monoidal prestratification
\[
\pos
\xlongra{\cI_\bullet}
\Idl_\cR
\]
is a(n automatically $\cO$-monoidal) stratification if and only if for any $p,q \in \pos$ the canonical morphism
\[
i_L ( \uno_{\cI_p} )
\otimes
i_L(\uno_{\cI_q})
\otimes
i_L \left( \uno_{\cI_{(^\leq p) \cap (^\leq q)}} \right)
\longra
i_L(\uno_{\cI_p})
\otimes
i_L(\uno_{\cI_q})
\]
is an equivalence.
\end{observation}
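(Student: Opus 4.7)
The plan is to reformulate both conditions in terms of the central augmented idempotents $C_p := i_L(\uno_{\cI_p})$, $C_q := i_L(\uno_{\cI_q})$, and $C_{pq} := i_L(\uno_{\cI_{(^\leq p) \cap (^\leq q)}})$ guaranteed by \Cref{prop.closed.ideal.subcats.are.central.idempotent}. By \Cref{obs.omnibus.closed.ideal}\Cref{obs.item.closed.ideal.colocalization}, tensoring with each such $C$ implements the corresponding colocalization onto the closed ideal, and by \Cref{prop.closed.ideal.subcats.are.central.idempotent}\Cref{item.identify.closed.ideal.generated.by.augmented.idempotent}, the asserted canonical morphism $C_p \otimes C_q \otimes C_{pq} \to C_p \otimes C_q$ is an equivalence exactly when $C_p \otimes C_q \in \cI_{(^\leq p) \cap (^\leq q)}$. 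So it will suffice to prove that this membership, for all pairs $p,q$, is equivalent to condition $(\star)$.

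For the ``only if'' direction, I will assume $\cI_\bullet$ is a stratification and apply \Cref{closed.subcats.are.mutually.aligned} to the down-closed subsets $(^\leq p)$ and $(^\leq q)$ to obtain $\cI_p \cap \cI_q = \cI_{(^\leq p) \cap (^\leq q)}$. Since $C_p \otimes (-)$ is the colocalization onto $\cI_p$, we have $C_p \otimes C_q \in \cI_p$; and by centrality of $C_q$ (cf.\ \Cref{defn.idempotents.and.centrality}), equivalently $C_p \otimes C_q \simeq C_q \otimes C_p \in \cI_q$. Hence $C_p \otimes C_q \in \cI_p \cap \cI_q = \cI_{(^\leq p) \cap (^\leq q)}$, as required.

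For the ``if'' direction, I will verify condition $(\star)$ directly. Given $X \in \cI_q$, I need the image of $X$ under the composite $\cI_q \hookrightarrow \cR \to \cI_p$ (with the second functor being the right adjoint $y_p$ of the inclusion of $\cI_p$) to lie in the closed subcategory $\cI_{(^\leq p) \cap (^\leq q)} \subseteq \cI_p$. Applying the fully faithful $\cI_p \hookrightarrow \cR$, this is equivalent to showing $i_L y_p(i_L X) \in \cI_{(^\leq p) \cap (^\leq q)}$ for every $X \in \cI_q$. Using $i_L y_p(-) \simeq C_p \otimes (-)$ and the fact that $i_L X \simeq C_q \otimes i_L X$ (since $i_L X \in \cI_q$), I compute
\[
i_L y_p(i_L X) \simeq C_p \otimes i_L X \simeq (C_p \otimes C_q) \otimes i_L X,
\]
which lies in $\cI_{(^\leq p) \cap (^\leq q)}$ by hypothesis and because that subcategory is an ideal. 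The resulting stratification is automatically $\cO$-monoidal since $\cI_\bullet$ factors through $\Idl_\cR \subseteq \Cls_\cR$ by assumption.

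The only nontrivial step will be the invocation of centrality in the ``only if'' direction, which is essential to place $C_p \otimes C_q$ in $\cI_q$ (as opposed to just $\cI_p$); the remainder is a routine translation through the dictionary between closed ideals and central idempotents, so I anticipate no serious obstacle.
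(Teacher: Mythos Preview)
Your proof is correct and follows the intended approach: the paper states this as an Observation without proof precisely because it amounts to a direct translation of condition~$(\star)$ through the dictionary of \Cref{obs.omnibus.closed.ideal} and \Cref{prop.closed.ideal.subcats.are.central.idempotent} between closed ideals and their augmented idempotents.

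Two minor cleanups. In the ``only if'' direction, the membership $C_p \otimes C_q \in \cI_q$ follows directly from $C_q \in \cI_q$ and the ideal property (contagiousness holds in every slot via the $\Sigma_n$-action on $\cO(n)$), so the appeal to centrality is not needed there --- indeed, centrality as defined in \Cref{defn.idempotents.and.centrality} does not literally assert $C_p \otimes C_q \simeq C_q \otimes C_p$. In the ``if'' direction, the step $C_p \otimes (C_q \otimes i_L X) \simeq (C_p \otimes C_q) \otimes i_L X$ is immediate for $\cO = \Comm$ but wants a word for general $\cO$: the cleanest route is to use that $y_p$ is $\cO$-monoidal (by definition of closed ideal) and $i_L$ is nonunitally $\cO$-monoidal, giving $i_L y_p(C_q \otimes i_L X) \simeq (C_p \otimes C_q) \otimes (C_p \otimes i_L X)$, which lies in $\cI_{(^\leq p) \cap (^\leq q)}$ for the same reason you state.
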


\begin{observation}
Suppose that
\[
\pos
\xlongra{\cI_\bullet}
\Idl_\cR
\]
is an $\cO$-monoidal prestratification.  For each $\posQ \in \Down_\pos$, this restricts to an $\cO$-monoidal prestratification
\[
\posQ
\xlongra{\cI_\bullet} \Idl_{\cI_\posQ}
~.
\]
Hence, for every $p \in \pos$ the geometric localization functor
\[
\Phi_p
:
\cR
\xlongra{y}
\cI_p
\xra{p_L}
\cI_p / \cI_{^< p}
=:
\cR_p
\]
is $\cO$-monoidal.  It follows from the composite adjunction
\[ \begin{tikzcd}[column sep=1.5cm]
\Phi_p
:
\cR
\arrow[transform canvas={yshift=0.9ex}]{r}{y}
\arrow[hookleftarrow, transform canvas={yshift=-0.9ex}]{r}[yshift=-0.2ex]{\bot}[swap]{i_R}
&
\cI_p
\arrow[transform canvas={yshift=0.9ex}]{r}{p_L}
\arrow[hookleftarrow, transform canvas={yshift=-0.9ex}]{r}[yshift=-0.2ex]{\bot}[swap]{\nu}
&
\cR_p
:
\rho^p
\end{tikzcd} \]
that its right adjoint $\rho^p$ is right-laxly $\cO$-monoidal.  So for every $p \leq q$, the gluing functor
\[
\Gamma^p_q
:
\cR_p
\xlonghookra{\rho^p}
\cR
\xra{\Phi_q}
\cR_q
\]
is right-laxly $\cO$-monoidal.
\end{observation}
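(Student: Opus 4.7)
The overall plan is to verify, in order, the four claims comprising this observation: (i) the $\cO$-monoidal prestratification restricts to an $\cO$-monoidal prestratification on $\cI_\posQ$ for each $\posQ \in \Down_\pos$; (ii) each $\Phi_p$ is $\cO$-monoidal; (iii) each $\rho^p$ is right-laxly $\cO$-monoidal; (iv) each $\Gamma^p_q$ is right-laxly $\cO$-monoidal. Claims (iii) and (iv) are formal consequences of (ii), so the real content lies in (i) and (ii).

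For (i), note first that $\cI_\posQ \in \Idl_\cR$ by \Cref{obs.Idl.in.Cls.stable.under.colims}, so it inherits an $\cO$-monoidal structure via \Cref{obs.omnibus.closed.ideal}, with unit $\uno_{\cI_\posQ} := y(\uno_\cR)$. For each $p \in \posQ$, the fully faithful inclusion $\cI_p \subseteq \cI_\posQ$ is automatically that of a closed subcategory, since $\cI_p \hookrightarrow \cR$ and $\cI_\posQ \hookrightarrow \cR$ both are and $\cI_p \subseteq \cI_\posQ$. To verify that $\cI_p$ is an ideal of $\cI_\posQ$, I would exploit that $i_L^{\cI_\posQ} : \cI_\posQ \hookrightarrow \cR$ is nonunitally $\cO$-monoidal (\Cref{obs.omnibus.closed.ideal}\Cref{obs.item.closed.ideal.colocalization}): for $n \geq 2$ and $\mu \in \cO(n)$, $i_L^{\cI_\posQ}$ strictly preserves $\otimes_\mu$. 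Hence an $n$-fold tensor product in $\cI_\posQ$ of an object of $\cI_p$ with arbitrary objects of $\cI_\posQ$ maps under $i_L^{\cI_\posQ}$ to a tensor product in $\cR$ which lies in $\cI_p$ by the ideal property of $\cI_p \subseteq \cR$; full faithfulness then forces it to lie in $\cI_p \subseteq \cI_\posQ$. To promote $\cI_p$ to a closed ideal of $\cI_\posQ$, I must show the right adjoint $y^{\cI_p,\cI_\posQ} : \cI_\posQ \to \cI_p$ is $\cO$-monoidal. It is canonically identified (via the full faithfulness of $\cI_p \subseteq \cR$) with the restriction of $y : \cR \to \cI_p$ along $i_L^{\cI_\posQ}$, and so it is nonunitally $\cO$-monoidal by composing the $\cO$-monoidality of $y$ with the nonunital $\cO$-monoidality of $i_L^{\cI_\posQ}$; preservation of units is the computation $y^{\cI_p,\cI_\posQ}(\uno_{\cI_\posQ}) = y(y^{\cI_\posQ}(\uno_\cR)) \simeq y(\uno_\cR) = \uno_{\cI_p}$. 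The generation condition $\cI_\posQ = \brax{\cI_p}_{p \in \posQ}$ holds by definition.

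Applying (i) with $\posQ = (^\leq p)$, we obtain that $\cI_{^<p}$ is a closed ideal of $\cI_p$. Then $\cR_p := \cI_p/\cI_{^<p}$ carries the canonical $\cO$-monoidal structure of \Cref{obs.omnibus.closed.ideal}\Cref{obs.item.quotient.by.closed.ideal.localization} making $p_L$ $\cO$-monoidal, and combined with the $\cO$-monoidal functor $y : \cR \to \cI_p$ (a constituent of \Cref{defn.closed.ideal.subcat}) we conclude that $\Phi_p = p_L \circ y$ is $\cO$-monoidal, establishing (ii). For (iii), the right adjoint of an $\cO$-monoidal left adjoint between presentably $\cO$-monoidal $\infty$-categories inherits a canonical right-lax $\cO$-monoidal structure via fiberwise passage to right adjoints along $\cO^\otimes \to \Fin_*$, using \Cref{lemma.ptwise.radjt.has.ptwise.ladjt} at each fiber. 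For (iv), $\Gamma^p_q := \Phi_q \circ \rho^p$ composes a right-lax $\cO$-monoidal functor with an $\cO$-monoidal functor, hence is right-lax $\cO$-monoidal. The main subtle point is in step (i): because the inclusion of a closed ideal is only \emph{nonunitally} $\cO$-monoidal, the preservation of arity-$n$ tensor products is automatic only for $n \geq 2$, while the counit comparison $i_L(\uno_{\cI_\posQ}) \to \uno_\cR$ is in general noninvertible and must be kept track of separately in the verification that $y^{\cI_p,\cI_\posQ}$ preserves units.
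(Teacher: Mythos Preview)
Your proposal is correct and follows essentially the same line as the paper's (terse) observation, which offers no proof beyond the chain of ``Hence\ldots It follows\ldots So\ldots'' that you have faithfully expanded. Your decomposition into (i)--(iv) matches the logical structure of the statement exactly, and your identifications $y^{\cI_p,\cI_\posQ} \simeq y^{\cI_p} \circ i_L^{\cI_\posQ}$ and $y^{\cI_p,\cI_\posQ} \circ y^{\cI_\posQ} = y^{\cI_p}$ are the right tools for step (i).

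One small comment on presentation: in your unit computation you silently switch between those two identifications of $y^{\cI_p,\cI_\posQ}$, and you use ``$y$'' for both $y^{\cI_p,\cI_\posQ}$ and $y^{\cI_p}$ in the same line. This is harmless but could be made cleaner. An alternative route for (i) that avoids the nonunital bookkeeping entirely is to use the equivalence $\Idl \simeq \ZAug$ of \Cref{prop.closed.ideal.subcats.are.central.idempotent}: the object $i_L(\uno_{\cI_p})$, viewed in $\cI_\posQ$, is a central augmented idempotent there (since the arity-$\geq 2$ tensor products in $\cI_\posQ$ agree with those in $\cR$), and the closed ideal it generates is exactly $\cI_p$. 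This packages the unit and multiplication checks into a single application of that proposition.
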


\begin{example}[the chromatic stratification of spectra]
\label{ex.chromatic.stratn.of.spectra}
Consider the presentably symmetric monoidal stable $\infty$-category $\cR = \Spectra$ of spectra.  We introduce the following notation.
\begin{itemize}
\item We write $n \in \NN$ for an arbitrary (finite, positive) natural number.
\item We respectively write $K_p(n)$ and $E_{p,n}$ for the $n\th$ Morava K- and E-theory spectra at the prime $p$.  By convention, we also set $K_p(0) = E_{p,0} = \QQ$.
\item For any $E \in \Spectra$, we respectively write $L_E$ and $A_E := \fib(\id_\Spectra \ra L_E)$ for $E$-localization and $E$-acyclification. We simply write $L_{(p)}$ for $p$-localization, $L_{p,n}$ for $E_{p,n}$-localization, and $L_{p,\infty}$ for $(\bigoplus_{0 \leq n < \infty} E_{p,n})$-localization, and similarly for the corresponding acyclifications.\footnote{We use this notation because it is standard, but note that it mildly conflicts with that of \Cref{defn.Cth.stratum.and.geometric.localizn}.}
\end{itemize}
Then, the \bit{chromatic stratification} is a symmetric monoidal stratification of $\Spectra$ over the poset $\pos_\Spectra$ described in \Cref{figure.primes.of.spectra}:\footnote{The adelic stratification of $\Spectra$ is also defined over the poset $\pos_\Spectra$, but it is slightly different (see \Cref{ex.adelic.stratn.of.spectra}).}
\begin{figure}
\[
\pos_{\Spectra}
=
\left(
\begin{tikzcd}
&
&
(0)
\\
((2),1)
\arrow{rru}
&
((3),1)
\arrow{ru}
&
((5),1)
\arrow{u}
&
\cdots
\arrow{lu}
\\
((2),2)
\arrow{u}
&
((3),2)
\arrow{u}
&
((5),2)
\arrow{u}
&
\cdots
\\
((2),3)
\arrow{u}
&
((3),3)
\arrow{u}
&
((5),3)
\arrow{u}
&
\cdots
\\
\vdots
\arrow{u}
&
\vdots
\arrow{u}
&
\vdots
\arrow{u}
&
\cdots
\\
((2),\infty)
\arrow{u}
&
((3),\infty)
\arrow{u}
&
((5),\infty)
\arrow{u}
&
\cdots
\end{tikzcd}
\right)
\]
\caption{The poset $\pos_\Spectra$ is the union of the totally ordered sets $\{ \{ ((p),\infty) \ra \cdots \ra ((p),2) \ra ((p),1) \ra (0) \} \}_{p \textup{ prime}}$ over their common maximal element $(0)$.
\label{figure.primes.of.spectra}}
\end{figure}
namely, it is the functor
\begin{equation}
\label{adelic.stratn.of.spectra}
\begin{tikzcd}[row sep=0cm]
\pos_\Spectra
\arrow{r}{\cI_\bullet}
&
\Idl_\Spectra
\\
\rotatebox{90}{$\in$}
&
\rotatebox{90}{$\in$}
\\
\mf{p}
\arrow[maps to]{r}
&
\cI_\mf{p}
\end{tikzcd}
\end{equation}
defined by the assignments
\[
\cI_{(0)} = \Spectra
~,
\qquad
\cI_{((p),n)}
=
A_{p,n-1} L_{(p)} \Spectra
~,
\qquad
\text{and}
\qquad
\cI_{((p),\infty)} = A_{p,\infty}L_{(p)} \Spectra
~.
\]
So, the minimal strata are equivalent to dissonant $p$-local spectra, while the remaining strata and their geometric localization adjunctions may be identified as
\[
\begin{tikzcd}[column sep=2.5cm]
\Spectra
\arrow[transform canvas={yshift=0.9ex}]{r}{\Phi_{(0)} = \QQ \otimes_\SS (-)}
\arrow[hookleftarrow, transform canvas={yshift=-0.9ex}]{r}[yshift=-0.2ex]{\bot}[swap]{\rho^{(0)}}
&
\Mod_\QQ
\simeq
\Spectra_{(0)}
\end{tikzcd}
\qquad
\text{and}
\qquad
\begin{tikzcd}[column sep=2.5cm]
\Spectra
\arrow[transform canvas={yshift=0.9ex}]{r}{\Phi_{((p),n)} = L_{K_p(n)}}
\arrow[hookleftarrow, transform canvas={yshift=-0.9ex}]{r}[yshift=-0.2ex]{\bot}[swap]{\rho^{((p),n)}}
&
L_{K_p(n)} \Spectra
\simeq
\Spectra_{((p),n)}
\end{tikzcd}
\]
(simply by verifying that their kernels are respectively $A_\QQ \Spectra$ and $A_{K_p(n)} \Spectra$).  The poset $\pos_\Spectra$ is not down-finite, and indeed the chromatic stratification \Cref{adelic.stratn.of.spectra} fails to converge for essentially the same reasons that the adelic stratification of $\Mod_\ZZ$ fails to converge as illustrated in \Cref{ex.intro.arithmetic}.

Of course, the failure of the poset $\pos_\Spectra$ to be down-finite is not simply due to the infinitude of the primes, but also to its failure to be artinian.  Let us therefore study the chromatic stratification of $L_{(p)} \Spectra$, a quotient stratification (in the sense of \Cref{prop.quotient.stratn}) of the chromatic stratification \Cref{adelic.stratn.of.spectra} of $\Spectra$: namely, writing
\[
\pos_\Spectra
\supset
\pos_{L_{(p)} \Spectra}
=
\{ ((p),\infty) \longra \cdots \longra ((p),3) \longra ((p),2) \longra ((p),1) \longra (0) \}
\]
and employing the identification $\pos_{L_{(p)}\Spectra} \cong (\NN^{\lcone\rcone})^\op = \{ \infty \ra \cdots \ra 2 \ra 1 \ra 0 \}$ for notational simplicity, this is the functor
\begin{equation}
\label{quotient.stratn.of.p.local.spectra}
\begin{tikzcd}[row sep=0cm]
\pos_{L_{(p)}\Spectra}
\arrow{r}{\cJ_\bullet}
&
\Idl_{L_{(p)}\Spectra}
\\
\rotatebox{90}{$\in$}
&
\rotatebox{90}{$\in$}
\\
\mf{p}
\arrow[maps to]{r}
&
\cJ_\mf{p}
\end{tikzcd}
\end{equation}
given by
\[
\cJ_{0} = L_{(p)} \Spectra
~,
\qquad
\cJ_{n} = \cI_{((p),n)} = A_{p,n-1} L_{(p)} \Spectra
~,
\qquad
\text{and}
\qquad
\cJ_\infty = \cI_{((p),\infty)} = A_{p,\infty} L_{(p)} \Spectra
~.
\]
In order to understand the behavior of the chromatic stratification \Cref{quotient.stratn.of.p.local.spectra} of $L_{(p)}\Spectra$, we pass further to its quotient stratification over
\[
[n]^\op = \{ n \ra \cdots \ra 0 \} \cong (\NN^{\lcone\rcone})^\op \backslash (^\leq (n+1))
~:
\]
this provides a (necessarily convergent) stratification of
\[
\cJ_0 / \cJ_{n+1}
:=
L_{(p)}\Spectra/A_{p,n}L_{(p)}\Spectra
\simeq
L_{p,n} \Spectra
\]
over $[n]^\op$, whose microcosm reconstruction theorem recovers the $n$-dimensional fracture cube of \cite{ACB-chromfrac} (recall \Cref{ex.gluing.stuff.over.brax.two}). Hence, the chromatic stratification \Cref{quotient.stratn.of.p.local.spectra} fails to converge as a result of the difference between harmonic localization and chromatic completion \cite{Tobi-chromcompl}.
\end{example}

\subsection{$\cO$-algebra objects in $\LMod^\rlax_{\llax.\cB}$}
\label{subsection.O.algebra.objects.in.LModrlaxllaxB}

In this subsection, we define the $\infty$-category that contains the $\cO$-monoidal gluing diagram of an $\cO$-monoidal stratification.

\begin{local}
In this subsection, we fix an $\infty$-category $\cB$.
\end{local}

\begin{observation}
\label{obs.lim.rlax.preserves.products}
In the composite
\[
\LMod_{\llax.\cB}
\longrsurjmono
\LMod^\rlax_{\llax.\cB}
\xra{\lim^\rlax_{\llax.\cB}}
\Cat
~,
\]
all three $\infty$-categories admit finite products and both functors preserve them.
\end{observation}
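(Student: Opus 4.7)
The plan is to compute products in both $\LMod_{\llax.\cB}$ and $\LMod^\rlax_{\llax.\cB}$ explicitly as fiber products over $\cB$, and then to invoke adjointness to handle the right-lax limit functor. First, given locally cocartesian fibrations $\cE_1,\cE_2 \ra \cB$, the fiber product $\cE_1 \times_\cB \cE_2 \ra \cB$ (taken in $\Cat_{/\cB}$) is again a locally cocartesian fibration: for any morphism $b \ra b'$ in $\cB$ and any object $(X_1,X_2) \in \cE_1 \times_\cB \cE_2$ over $b$, choosing locally cocartesian lifts in each factor and pairing them produces a locally cocartesian lift in the fiber product. Since an edge of $\cE_1 \times_\cB \cE_2$ is locally cocartesian if and only if both of its projections are so, the universal property of the fiber product in $\Cat_{/\cB}$ upgrades to universal properties in both $\LMod_{\llax.\cB}$ and $\LMod^\rlax_{\llax.\cB}$. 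The terminal object of either $\infty$-category is $\id_\cB$.

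Second, the functor $\LMod_{\llax.\cB} \longrsurjmono \LMod^\rlax_{\llax.\cB}$ preserves finite products automatically because products are represented by the same object of $\Cat_{/\cB}$ in both $\infty$-categories; moreover $\Cat$ has finite products with terminal object $\pt$. Finally, by \Cref{lax.limit.functors.are.radjts} the functor $\lim^\rlax_{\llax.\cB}$ is the right adjoint to the constant-module functor $\Cat \xra{\const} \LMod^\rlax_{\llax.\cB}$, and so it preserves all limits that exist in its source. In particular, having just established that $\LMod^\rlax_{\llax.\cB}$ admits finite products, we conclude that $\lim^\rlax_{\llax.\cB}$ preserves them.

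There is no substantive obstacle here: the only nontrivial input is the stability of locally cocartesian fibrations under fiber products, which is immediate from the pointwise description of local cocartesian morphisms, and everything else is formal from the adjunction of \Cref{lax.limit.functors.are.radjts}.
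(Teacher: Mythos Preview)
Your proposal is correct. The paper states this as an Observation without proof, and your argument (fiber products in $\Cat_{/\cB}$ compute products in both module $\infty$-categories, together with the right-adjoint property of \Cref{lax.limit.functors.are.radjts}) is exactly the intended justification a reader would supply.
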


\begin{notation}
We write
\[
\Alg_\cO(\LMod^\rlax_{\llax.\cB})
\subseteq
\Fun( \cO^\otimes , \LMod^\rlax_{\llax.\cB})
\]
for the full subcategory on the reduced Segal objects.
\end{notation}

\begin{observation}
\label{obs.lift.limrlax.to.Oalgebras}
In light of \Cref{obs.lim.rlax.preserves.products}, we obtain a canonical lift
\[
\begin{tikzcd}[row sep=1.5cm, column sep=1.5cm]
\Alg_\cO(\LMod^\rlax_{\llax.\cB})
\arrow[dashed]{r}{\lim^\rlax_{\llax.\cB}}
\arrow{d}[swap]{\fgt}
&
\Alg_\cO(\Cat)
\arrow{d}{\fgt}
\\
\LMod^\rlax_{\llax.\cB}
\arrow{r}[swap]{\lim^\rlax_{\llax.\cB}}
&
\Cat
\end{tikzcd}
\]
as the restriction to reduced Segal objects of the value of the functor $\Fun(\cO^\otimes,-)$ on the finite-product-preserving functor $\LMod^\rlax_{\llax.\cB} \xra{\lim^\rlax_{\llax.\cB}} \Cat$.
\end{observation}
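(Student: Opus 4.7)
The proof is essentially a formal consequence of \Cref{obs.lim.rlax.preserves.products}, so my plan is to unpack how ``preserves finite products'' translates into ``preserves reduced Segal objects.'' First I will apply the functor $\Fun(\cO^\otimes,-)$ to the functor $\LMod^\rlax_{\llax.\cB} \xra{\lim^\rlax_{\llax.\cB}} \Cat$; this produces a functor
\[
\Fun(\cO^\otimes, \LMod^\rlax_{\llax.\cB})
\xlongra{\lim^\rlax_{\llax.\cB} \circ (-)}
\Fun(\cO^\otimes,\Cat)
\]
that fits into a commutative square with the evaluation functors at $\ul{1}_+ \in \cO^\otimes$, which in turn participate in the forgetful functors to the underlying $\infty$-categories $\LMod^\rlax_{\llax.\cB}$ and $\Cat$.

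Next I will show that this functor preserves the property of being a reduced Segal object. Recall that by definition, an object $\cC^\otimes \in \Fun(\cO^\otimes,\mathcal{D})$ for $\mathcal{D}$ admitting finite products is an $\cO$-algebra object precisely when it is reduced (i.e.\ $\cC^\otimes(\ul{0}_+)$ is terminal in $\mathcal{D}$) and Segal (i.e.\ for each $n \geq 0$ the canonical morphism $\cC^\otimes(\ul{n}_+) \to \prod_{i \in \ul{n}} \cC^\otimes(\ul{1}_+)$ induced by the inert morphisms $\ul{n}_+ \to \ul{1}_+$ is an equivalence). Both conditions are expressed entirely in terms of finite products and terminal objects in $\mathcal{D}$, so any functor $\mathcal{D} \to \mathcal{D}'$ that preserves finite products (including the empty product) carries reduced Segal objects to reduced Segal objects. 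By \Cref{obs.lim.rlax.preserves.products}, the functor $\lim^\rlax_{\llax.\cB}$ has precisely this property, so the induced functor on $\cO^\otimes$-diagrams restricts to the desired dashed functor $\Alg_\cO(\LMod^\rlax_{\llax.\cB}) \to \Alg_\cO(\Cat)$.

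Finally, the resulting commutative square is obtained by naturality: the forgetful functors $\Alg_\cO(-) \to (-)$ are themselves instances of evaluation at $\ul{1}_+ \in \cO^\otimes$ followed by restriction to the reduced Segal subcategory, and the construction of the lift is compatible with this evaluation. There is essentially no obstacle in this argument; the only subtlety worth flagging is making sure that the codomain restriction (from $\Fun(\cO^\otimes,\Cat)$ to $\Alg_\cO(\Cat)$) is justified, which is exactly the content of the finite-product-preservation hypothesis.
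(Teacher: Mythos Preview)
Your proposal is correct and follows exactly the approach the paper indicates: the observation's statement already contains its own proof (apply $\Fun(\cO^\otimes,-)$ to the product-preserving functor and restrict to reduced Segal objects), and you have faithfully unpacked precisely that argument.
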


\begin{observation}
\label{obs.O.algs.in.LMod.rlax.llax.B.two.ways}
There is a canonical equivalence
\begin{equation}
\label{equivalence.between.descriptions.of.O.algs.in.LMod.rlax.llax.B}
\iota_0 \Alg_\cO(\LMod^\rlax_{\llax.\cB})
\simeq
\hom_{2\Cat} ( \llax(\cB) , \Alg_\cO^\rlax(\Cat) )
\end{equation}
of spaces. Indeed, by \Cref{thm.switching.yoga} we have an equivalence
\[
\hom_{\Cat} ( \cO^\otimes , \LMod^\rlax_{\llax.\cB} )
:=
\hom_{2\Cat} ( \cO^\otimes , 2\Cat_{1\cart/\llax(\cB)^{1\op}} )
\simeq
\hom_{2\Cat} ( \llax(\cB) , \Cat_{\cocart/\cO^\otimes} )
\]
of spaces, through which the equivalence \Cref{equivalence.between.descriptions.of.O.algs.in.LMod.rlax.llax.B} can be obtained as an equivalence of subspaces.\footnote{On the other hand, the $\infty$-categories $\Alg_\cO(\LMod^\rlax_{\llax.\cB})$ and $\hom_{2\Cat}(\llax(\cB) , \Alg_\cO^\rlax(\Cat))$ are not equivalent; this can already be seen in the case that $\cB = \pt$.} 
\end{observation}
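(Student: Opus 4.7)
The plan is to follow the chain of adjunctions indicated in the observation itself, but to pay close attention to how the reduced-Segal condition is preserved along each step so that the equivalence of large hom-spaces restricts to the desired equivalence of subspaces.

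First I would recall from the referenced \Cref{obs.defns.of.rlax.map.of.llax.modules.agree} the identification $\LMod^\rlax_{\llax.\cB} \simeq \Fun^\rlax(\llax(\cB),\Cat)$ as $(\infty,2)$-categorical gadgets, so that
\[
\hom_\Cat(\cO^\otimes,\LMod^\rlax_{\llax.\cB}) \simeq \hom_\Cat(\cO^\otimes,\Fun^\rlax(\llax(\cB),\Cat)).
\]
Then, using the adjunction property of the ``pinched lax product'' $\circledast$ (which governs the $\Hom$-spaces in $2\Cat$ between a strict $\infty$-category and a lax functor $\infty$-category), I would rewrite the right-hand side as $\hom_{2\Cat}(\cO^\otimes \circledast \llax(\cB),\Cat)$, and then apply the same adjunction on the other side to obtain $\hom_{2\Cat}(\llax(\cB),\Fun^\llax(\cO^\otimes,\Cat))$.

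Next I would observe that $\Alg_\cO^\rlax(\Cat)$ is, by \Cref{defn.O.mon.and.laxly}\Cref{item.rlaxly.O.mon}, the subcategory of $\Fun^\rlax(\cO^\otimes,\Cat)$ (equivalently, a suitable subcategory of $\Cat_{\cocart/\cO^\otimes_\cls}$) whose objects satisfy the reduced Segal condition and whose 1-morphisms satisfy the appropriate cocartesian-lifting condition. Dually, $\Alg_\cO(\LMod^\rlax_{\llax.\cB})$ is the full subcategory of $\Fun(\cO^\otimes,\LMod^\rlax_{\llax.\cB})$ on reduced Segal objects. The key step is then to check that under the chain of adjoint equivalences above, a functor $\cO^\otimes \to \LMod^\rlax_{\llax.\cB}$ is reduced Segal if and only if the corresponding $2$-functor $\llax(\cB) \to \Fun^\llax(\cO^\otimes,\Cat)$ lands in the subcategory $\Alg_\cO^\rlax(\Cat) \subseteq \Fun^\llax(\cO^\otimes,\Cat)$. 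Both conditions are pointwise conditions over $\cB$ (since the Segal/reduced conditions are detected by evaluation at the inert morphisms $\ul{n}_+ \to \ul{1}_+$ of $\cO^\otimes$ and at the object $\ul{0}_+$), and evaluation at objects of $\cB$ intertwines both descriptions via the same adjoint correspondence, so this equivalence of conditions is immediate once the correspondence is spelled out.

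The main obstacle I expect is a bookkeeping one rather than a conceptual one: correctly tracking the handedness of laxness through the $\circledast$-adjunction (making sure that right-lax on one side corresponds to left-lax on the other once the roles of source and target are swapped, which is why $\Fun^\rlax$ on the left converts to $\Fun^\llax$ on the right after two applications), and then verifying that these laxness conventions match the conventions built into the definitions of $\Alg_\cO(-)$ and $\Alg_\cO^\rlax(\Cat)$. Once this is set up carefully, restricting the ambient equivalence of spaces to the subspaces cut out by the reduced-Segal condition yields the asserted equivalence \Cref{equivalence.between.descriptions.of.O.algs.in.LMod.rlax.llax.B}. Finally, to justify the parenthetical remark that this can alternatively be verified directly, I would note that both subspaces can be identified as the subspace of $\loc.\coCart_{\cO^\otimes \times \cB}$ consisting of those locally cocartesian fibrations whose fibers over $\cB$ are reduced Segal over $\cO^\otimes$, and this characterization is manifestly symmetric in the two descriptions.
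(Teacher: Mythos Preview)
Your proposal is correct and follows essentially the same approach as the paper: the observation is self-proving, and you have simply expanded on the chain of adjunctions and the restriction to subspaces that the statement already sketches. One small slip worth flagging: you write that $\Alg_\cO^\rlax(\Cat)$ sits inside $\Fun^\rlax(\cO^\otimes,\Cat)$, but under the paper's conventions right-laxly $\cO$-monoidal functors are morphisms in $\Cat^\cls_{\cocart/\cO^\otimes}$, which in module notation corresponds to $\LMod^\llax_{\cO^\otimes}$, i.e.\ a subcategory of $\Fun^\llax(\cO^\otimes,\Cat)$ --- exactly the target of the paper's adjunction chain; this is precisely the handedness bookkeeping you anticipated as the main obstacle, and once corrected everything lines up.
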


\subsection{The $\cO$-monoidal reconstruction theorem}
\label{subsection.O.monoidal.reconstruction.thm}

In this subsection, we prove our $\cO$-monoidal reconstruction theorem.

\begin{theorem}
\label{thm.s.m.reconstrn}
Let $\cR$ be a presentably $\cO$-monoidal stable $\infty$-category, let $\pos$ be a poset, and let
\[
\pos
\xlongra{\cI_\bullet}
\Idl_\cR
\]
be an $\cO$-monoidal stratification.
\begin{enumerate}

\item\label{item.of.thm.s.m.reconstrn.lift.gluing.diagram}
There is a canonical lift
\[ \begin{tikzcd}[column sep=1.5cm]
\pos
\arrow[dashed]{r}[description]{\llax}{\GD^\otimes(\cR)}
\arrow{rd}[description, sloped]{\llax}[swap, sloped, yshift=-0.2ex]{\GD(\cR)}
&
\Alg_\cO^\rlax(\Cat)
\arrow{d}{\fgt}
\\
&
\Cat
\end{tikzcd} \]
of the gluing diagram of the underlying stratification of $\cR$ to an $\cO$-monoidal gluing diagram.

\item\label{item.of.thm.s.m.reconstrn.enhance.gluing.diagram.functor}

There is a canonical morphism
\begin{equation}
\label{s.m.comparison.morphism.in.thm.s.m.reconstrn}
\cR
\xlongra{\gd^\otimes}
\Glue^\otimes(\cR)
:=
\lim^\rlax \left(
\begin{tikzcd}[column sep=1.5cm]
\pos
\arrow{r}[description]{\llax}{\GD^\otimes(\cR)}
&
\Alg_\cO^\rlax(\Cat)
\end{tikzcd}
\right)
\end{equation}
in $\Alg_\cO(\Cat)$ whose image under the functor $\Alg_\cO(\Cat) \xra{\fgt} \Cat$ is the morphism
\begin{equation}
\label{plain.comparison.morphism.in.thm.s.m.reconstrn}
\cR
\xlongra{\gd}
\Glue(\cR)
:=
\lim^\rlax \left(
\begin{tikzcd}[column sep=1.5cm]
\pos
\arrow{r}[description]{\llax}{\GD(\cR)}
&
\Cat
\end{tikzcd}
\right)
\end{equation}
in $\Cat$, so that the adjunction
\[
\begin{tikzcd}[column sep=1.5cm]
\cR
\arrow[transform canvas={yshift=0.9ex}]{r}{\gd}
\arrow[leftarrow, transform canvas={yshift=-0.9ex}]{r}[yshift=-0.2ex]{\bot}[swap]{\lim_{\sd(\pos)}}
&
\Glue(\cR)
\end{tikzcd}
\]
between $\infty$-categories admits a canonical enhancement to an adjunction
\[
\begin{tikzcd}[column sep=1.5cm]
\cR
\arrow[transform canvas={yshift=0.9ex}]{r}{\gd^\otimes}
\arrow[leftarrow, transform canvas={yshift=-0.9ex}]{r}[yshift=-0.2ex]{\bot}[swap]{\lim_{\sd(\pos)}^\otimes}
&
\Glue^\otimes(\cR)
\end{tikzcd}
\]
between $\cO$-monoidal $\infty$-categories whose left adjoint is $\cO$-monoidal and whose right adjoint is right-laxly $\cO$-monoidal.

\end{enumerate}
In particular, if the morphism \Cref{plain.comparison.morphism.in.thm.s.m.reconstrn} is an equivalence then the morphism \Cref{s.m.comparison.morphism.in.thm.s.m.reconstrn} is also an equivalence.
\end{theorem}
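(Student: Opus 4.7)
The overall plan is to exploit Observation \ref{obs.O.algs.in.LMod.rlax.llax.B.two.ways}, which identifies left-lax functors $\pos \to \Alg_\cO^\rlax(\Cat)$ with $\cO$-algebra objects in $\LMod^\rlax_{\llax.\pos}$. Accordingly, for part \ref{item.of.thm.s.m.reconstrn.lift.gluing.diagram}, I would construct $\GD^\otimes(\cR)$ as the full subcategory of $\cR^\otimes \times \pos$ consisting of pairs $(X,p)$ where, identifying $X \in \cR^\otimes_{\ul{n}_+}$ with a tuple $(X_1, \ldots, X_n) \in \cR^{\times n}$, each $X_i$ lies in the stratum $\cR_p$. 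Projecting to $\cO^\otimes \times \pos$ yields a locally cocartesian fibration whose fiber over $(\ul{n}_+, p)$ is $\cR_p^{\times n}$, whose restriction to $\cO^\otimes \times \{p\}$ recovers the canonical $\cO$-monoidal structure on $\cR_p$ (which exists because each $\cI_p$ is a closed ideal), and whose restriction to $\{\ul{1}_+\} \times \pos$ is the original $\GD(\cR)$. The crucial compatibility across the two directions is that the $\pos$-monodromy over each morphism $p \to q$ --- the gluing functor $\Gamma^p_q = \Phi_q \circ \rho^p$ --- is right-laxly $\cO$-monoidal, being the composite of the $\cO$-monoidal left adjoint $\Phi_q$ with the right-laxly $\cO$-monoidal right adjoint $\rho^p$. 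This establishes $\GD^\otimes(\cR)$ as an object of $\Alg_\cO(\LMod^\rlax_{\llax.\pos})$, which via Observation \ref{obs.O.algs.in.LMod.rlax.llax.B.two.ways} is the desired lift.

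For part \ref{item.of.thm.s.m.reconstrn.enhance.gluing.diagram.functor}, I would apply Observation \ref{obs.lift.limrlax.to.Oalgebras} to canonically lift $\lim^\rlax_{\llax.\pos}$ to a functor $\Alg_\cO(\LMod^\rlax_{\llax.\pos}) \to \Alg_\cO(\Cat)$ compatibly with the forgetful functors to $\Cat$, producing $\Glue^\otimes(\cR) := \lim^\rlax_{\llax.\pos}(\GD^\otimes(\cR)) \in \Alg_\cO(\Cat)$. The gluing diagram morphism $\gd^\otimes$ arises in exact parallel to the unenhanced construction: the family $\{\Phi_p\}_{p \in \pos}$, each $\Phi_p$ being $\cO$-monoidal, assembles into a morphism $\const(\cR) \to \GD^\otimes(\cR)$ in $\Alg_\cO(\LMod^\rlax_{\llax.\pos})$, and $\gd^\otimes$ is its adjunct morphism in $\Alg_\cO(\Cat)$ under the analog for $\Alg_\cO$ of the adjunction $\const \dashv \lim^\rlax_{\llax.\pos}$ of \Cref{lax.limit.functors.are.radjts}. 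Its underlying functor in $\Cat$ is, by construction, the unenhanced $\gd$ of \Cref{intro.thm.cosms}\ref{intro.main.thm.macrocosm}, so the right adjoint $\lim^\otimes_{\sd(\pos)}$ is automatically right-laxly $\cO$-monoidal (as the right adjoint of an $\cO$-monoidal functor between $\cO$-monoidal $\infty$-categories). The final sentence of the theorem is then immediate, as a morphism in $\Alg_\cO(\Cat)$ is an equivalence if and only if its image in $\Cat$ under the forgetful functor is.

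The main obstacle will be rigorously verifying the coherences implicit in the above construction: establishing that the naive subcategory $\GD^\otimes(\cR) \subseteq \cR^\otimes \times \pos$ genuinely carries compatible fibration structures --- reduced Segal over $\cO^\otimes$ at each $p$, and locally cocartesian over $\pos$ at each $\ul{n}_+$ --- which together assemble into an $\cO$-algebra object in $\LMod^\rlax_{\llax.\pos}$, as opposed to merely the underlying 1-categorical data of $\cO$-monoidal strata with right-laxly $\cO$-monoidal gluing functors. This reduces to careful bookkeeping of the locally cocartesian lifts of morphisms in $\cO^\otimes \times \pos$ together with the fact that the adjunctions $\Phi_p \dashv \rho^p$ are $\cO$-monoidal/right-laxly $\cO$-monoidal adjunctions in the structured $(\infty,2)$-categorical sense; the content of the argument is not concentrated in any single step but in the systematic packaging of these coherences via the equivalence of Observation \ref{obs.O.algs.in.LMod.rlax.llax.B.two.ways}.
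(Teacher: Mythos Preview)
Your proposal is correct and follows essentially the same approach as the paper. The paper makes the ``careful bookkeeping'' you anticipate precise by invoking \Cref{lemma.functors.into.LMod.rlax.llax.B} (which characterizes functors into $\LMod^\rlax_{\llax.\pos}$ via locally cocartesian fibrations over $\cO^\otimes \times \pos$ satisfying two explicit conditions), and it uses this lemma twice: once to show that $\GD^\otimes(\cR) \subseteq \cR^\otimes \times \pos$ defines an object of $\Alg_\cO(\LMod^\rlax_{\llax.\pos})$, and again to show that the morphism $\cR^\otimes \times \pos \to \GD^\otimes(\cR)$ (obtained from the fiberwise $(\Phi_p)^{\times S}$ via \Cref{lemma.ptwise.radjt.has.ptwise.ladjt}) actually lies in $\Alg_\cO(\LMod^\rlax_{\llax.\pos})$ rather than merely in $\LMod^\rlax_{\llax.(\cO^\otimes \times \pos)}$ --- this second application is the part your final paragraph underemphasizes, but it is handled by the same mechanism.
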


\begin{proof}
We begin with part \Cref{item.of.thm.s.m.reconstrn.lift.gluing.diagram}. By \Cref{obs.O.algs.in.LMod.rlax.llax.B.two.ways}, it is equivalent to enhance the gluing diagram
\[
\GD(\cR)
\in
\LMod_{\llax.\pos}
\subseteq
\LMod^\rlax_{\llax.\pos}
\]
to an $\cO$-monoidal gluing diagram
\[
\GD^\otimes(\cR)
\in
\Alg_\cO(\LMod^\rlax_{\llax.\pos})
\subseteq
\Fun ( \cO^\otimes , \LMod^\rlax_{\llax.\pos} )
~. \]
We use \Cref{lemma.functors.into.LMod.rlax.llax.B} to construct this as a locally cocartesian fibration over $\cO^\otimes \times \pos$, which we define to be the full subcategory
\[ \begin{tikzcd}
\GD^\otimes(\cR)
\arrow[hook]{r}{\ff}
\arrow{d}
&
\cR^\otimes \times \pos
\arrow{ld}
\\
\cO^\otimes \times \pos
\end{tikzcd} \]
on the objects
\[
\{ ((X_1,\ldots,X_n),p) \in \cR^\otimes \times \pos : X_i \in \cR_p \subseteq \cR \textup{ for all } i \}
~.\footnote{Because we are working over both $\pos$ and $\cO^\otimes \times \pos$ in this proof, we avoid the potentially ambiguous notation $\ul{\cR^\otimes}$ for $\cR^\otimes \times \pos$.}
\]
We first observe that this is indeed a locally cocartesian fibration over $\cO^\otimes \times \pos$: over a morphism $(S_+,p) \xra{\w{\alpha}} (T_+ ,q)$ in $\cO^\otimes \times \pos$ lying over a morphism $S_+ \xra{{\alpha}} T_+$ in $\cO^\otimes$, for any $X \in (\cR_p)^{\times S}$ and $Y \in (\cR_q)^{\times T}$ we have the string of equivalences
\begin{align}
\label{check.loc.cocart.over.Finstar.x.P.step.one}
\hom^{\w{\alpha}}_{\cR^\otimes \times \pos} ( ( (\rho^p)^{\times S}(X) , p ) , ( (\rho^q)^{\times T}(Y) , q ) )
&\simeq
\hom^{{\alpha}}_{\cR^\otimes} ( (\rho^p)^{\times S}(X) , (\rho^q)^{\times T}(Y) )
\\
\label{check.loc.cocart.over.Finstar.x.P.step.two}
&\simeq
\hom_{\cR^{\times T}} ( {\alpha}_* (\rho^p)^{\times S}(X) , (\rho^q)^{\times T}(Y) )
\\
\label{check.loc.cocart.over.Finstar.x.P.step.three}
&\simeq
\hom_{(\cR_q)^{\times T}} ( (\Phi_q)^{\times T} {\alpha}_* ( \rho^p)^{\times S}(X) , Y )
\\
\label{check.loc.cocart.over.Finstar.x.P.step.four}
&\simeq
\hom_{(\cR_q)^{\times T}} ( {\alpha}_* ( \Phi_q)^{\times S} (\rho^p)^{\times S}(X) , Y )
\\
\nonumber
& =:
\hom_{(\cR_q)^{\times T}} ( {\alpha}_* (\Phi_q \rho^p)^{\times S}(X) , Y )
\\
\nonumber
& =:
\hom_{(\cR_q)^{\times T}} ( {\alpha}_* ( \Gamma^p_q)^{\times S}(X) , Y )
~,
\end{align}
in which
\begin{itemize}
\item equivalence \Cref{check.loc.cocart.over.Finstar.x.P.step.one} follows from the fact that $\pos$ is a poset,
\item equivalence \Cref{check.loc.cocart.over.Finstar.x.P.step.two} follows from the fact that $\cR^\otimes \ra \cO^\otimes$ is a locally cocartesian fibration,
\item equivalence \Cref{check.loc.cocart.over.Finstar.x.P.step.three} follows from the adjunction $\Phi_q \adj \rho^q$ (or really the adjunction $(\Phi_q)^{\times T} \adj (\rho^q)^{\times T}$), and
\item equivalence \Cref{check.loc.cocart.over.Finstar.x.P.step.four} follows from the fact that $\Phi_q$ is $\cO$-monoidal.
\end{itemize}
This identification of the cocartesian monodromy in the locally cocartesian fibration $\GD^\otimes(\cR) \da (\cO^\otimes \times \pos)$ immediately implies condition \Cref{condition.pb.to.brax.two} of \Cref{lemma.functors.into.LMod.rlax.llax.B}, and for its condition \Cref{condition.pb.to.C} we observe that for each $p \in \pos$ the pullback to $\cO^\otimes \times \{ p \}$ is the cocartesian fibration $(\cR_p)^\otimes \da \cO^\otimes$.  Thus, we have indeed constructed a functor
\[
\cO^\otimes
\xra{\GD^\otimes(\cR)}
\LMod^\rlax_{\llax.\pos}
~.
\]
This is moreover a reduced Segal functor, which evaluates on each object $\ul{n}_+ \in \cO^\otimes$ as the locally cocartesian fibration $\GD(\cR)^{\times_\pos n} \da \pos$ (the $n$-fold fiber product with itself of the locally cocartesian fibration $\GD(\cR) \da \pos$ (recall \Cref{obs.lim.rlax.preserves.products})).  In other words, it defines an object
\[
\GD^\otimes(\cR)
\in
\Alg_\cO(\LMod^\rlax_{\llax.\pos})
\]
lifting the object $\GD(\cR) \in \LMod^\rlax_{\llax.\pos}$, as desired.

We now proceed to part \Cref{item.of.thm.s.m.reconstrn.enhance.gluing.diagram.functor}.


 We begin by constructing a morphism
\begin{equation}
\label{morphism.onlimrlaxes.from.RotimesxP.to.MotimesR}
\lim^\rlax_{\llax.\pos}(\cR^\otimes \times \pos)
\longra
\lim^\rlax_{\llax.\pos}(\GD^\otimes(\cR))
~.
\end{equation}
By definition, we have a morphism
\[
\cR^\otimes \times \pos
\longla
\GD^\otimes(\cR)
\]
in $\LMod^\llax_{\llax.(\cO^\otimes \times \pos)}$, which on each fiber is a right adjoint: over the object $(S_+,p) \in \Fin_* \times \pos$ it is the product right adjoint
\[
\cR^{\times S}
\xla{(\rho^p)^{\times S}}
(\cR_p)^{\times S}
~.
\]
By \Cref{lemma.ptwise.radjt.has.ptwise.ladjt}, the fiberwise left adjoints
\[
\cR^{\times S}
\xra{(\Phi_p)^{\times S}} (\cR_p)^{\times S}
\]
therefore assemble into a morphism
\begin{equation}
\label{fiberwise.ladjts.in.LMod.rlax.llax.Finstar.x.P}
\cR^\otimes \times \pos
\longra
\GD^\otimes(\cR)
\end{equation}
in $\LMod^\rlax_{\llax.(\cO^\otimes \times \pos)}$.  We consider this morphism as a point in the lower right space in the diagram
\begin{equation}
\label{many.subspaces.of.loc.cocart.over.brax.one.x.Finstar.x.P}
\begin{tikzcd}[row sep=1.5cm]
\hom_\Cat ( [1] , \Alg_\cO(\LMod^\rlax_{\llax.\pos}) )
\arrow[hook]{d}
\\
\hom_\Cat ( [1] , \Fun ( \cO^\otimes , \LMod^\rlax_{\llax.\pos} ) )
\\[-1.5cm]
\rotatebox{90}{$\simeq$}
\\[-1.5cm]
\hom_\Cat ( [1] \times \cO^\otimes , \LMod^\rlax_{\llax.\pos} )
\arrow[hook]{d}
\\
\iota_0 \loc.\coCart_{[1] \times \cO^\otimes \times \pos}
\arrow[hookleftarrow]{r}
&
\hom_\Cat ( [1] , \LMod^\rlax_{\llax.(\cO^\otimes \times \pos)} )
\ni
\Cref{fiberwise.ladjts.in.LMod.rlax.llax.Finstar.x.P}
\end{tikzcd}
\end{equation}
of spaces, in which the upper vertical inclusion is definitional and the other two inclusions follow from \Cref{lemma.functors.into.LMod.rlax.llax.B}.  As such, we aim to show that the point \Cref{fiberwise.ladjts.in.LMod.rlax.llax.Finstar.x.P} lies in the upper left space of diagram \Cref{many.subspaces.of.loc.cocart.over.brax.one.x.Finstar.x.P}.  So, let us consider its image
\begin{equation}
\label{object.in.loc.coCart.over.brax.one.x.Finstar.x.P}
\left(
\begin{tikzcd}
\cE
\arrow{d}
\\
{[1] \times \cO^\otimes \times \pos}
\end{tikzcd}
\right)
\in
\iota_0 \loc.\coCart_{[1] \times \cO^\otimes \times \pos}
~.
\end{equation}
To first show that the point \Cref{object.in.loc.coCart.over.brax.one.x.Finstar.x.P} factors through the lower vertical inclusion in diagram \Cref{many.subspaces.of.loc.cocart.over.brax.one.x.Finstar.x.P}, we verify conditions \Cref{condition.pb.to.C} and \Cref{condition.pb.to.brax.two} of \Cref{lemma.functors.into.LMod.rlax.llax.B} in turn.
\begin{enumerate}
\item
For each $p \in \pos$, the pullback
\[ \begin{tikzcd}[row sep=1.5cm, column sep=2.5cm]
\cE_{| [1] \times \cO^\otimes \times \{p \}}
\arrow{r}
\arrow{d}
&
\cE
\arrow{d}
\\
{[1] \times \cO^\otimes}
\arrow{r}[swap]{(\id_{[1] \times \cO^\otimes} , \const_p )}
&
{[1] \times \cO^\otimes \times \pos}
\end{tikzcd} \]
is indeed a cocartesian fibration: it is classified by the morphism $\cR \xra{\Phi_p} \cR_p$ in $\Alg_\cO(\Cat)$.
\item
Any pair of a morphism $(i,S_+) \xra{(\leq,\alpha)} (j,T_+)$ in $[1] \times \cO^\otimes$ and a morphism $p \leq q$ in $\pos$ determines a functor $[2] \ra [1] \times \cO^\otimes \times \pos$ classifying the commutative triangle
\[ \begin{tikzcd}
(i,S_+,p)
\arrow{d}
\arrow{rd}
\\
(i,S_+,q)
\arrow{r}
&
(j,T_+,q)
\end{tikzcd}~, \]
and we must show that the resulting pullback
\begin{equation}
\label{pb.to.brax.two.in.pf.of.s.m.reconstrn}
\begin{tikzcd}
\cE_{|[2]}
\arrow{r}
\arrow{d}
&
\cE
\arrow{d}
\\
{[2]}
\arrow{r}
&
{[1] \times \cO^\otimes \times \pos}
\end{tikzcd}
\end{equation}
defines a cocartesian fibration over $[2]$.  By what we have already seen, this holds when $i=j$ (because both of the locally cocartesian fibrations $(\cR^\otimes \times \pos) \da (\cO^\otimes \times \pos)$ and $\GD^\otimes(\cR) \da (\cO^\otimes \times \pos)$ satisfy condition \Cref{condition.pb.to.brax.two}).
In the remaining case where $i=0$ and $j=1$, the pullback \Cref{pb.to.brax.two.in.pf.of.s.m.reconstrn} is the cocartesian fibration over $[2]$ classifying the commutative triangle
\[ \begin{tikzcd}
\cR^{\times S}
\arrow{d}[swap]{\id_{\cR^{\times S}}}
\arrow{rd}
\\
\cR^{\times S}
\arrow{r}
&
(\cR_p)^{\times T}
\end{tikzcd} \]
in $\Cat$ in which both rightward functors coincide with the diagonal composite in the commutative square
\[ \begin{tikzcd}
\cR^{\times S}
\arrow{r}{\alpha_*}
\arrow{d}[swap]{(\Phi_p)^{\times S}}
&
\cR^{\times T}
\arrow{d}{(\Phi_p)^{\times T}}
\\
(\cR_p)^{\times S}
\arrow{r}[swap]{\alpha_*}
&
(\cR_p)^{\times T}
\end{tikzcd} \]
in $\Cat$ (which commutes because $\cR \xra{\Phi_p} \cR_p$ is $\cO$-monoidal).
\end{enumerate}
Hence, the point \Cref{object.in.loc.coCart.over.brax.one.x.Finstar.x.P} does indeed factor through the lower vertical inclusion in diagram \Cref{many.subspaces.of.loc.cocart.over.brax.one.x.Finstar.x.P}.  Thereafter, considered as a point in $\hom_\Cat ( [1] , \Fun(\cO^\otimes , \LMod^\rlax_{\llax.\pos} ) )$, i.e.\! as a morphism in $\Fun(\cO^\otimes , \LMod^\rlax_{\llax.\pos} )$, its source and target evidently both lie in the full subcategory $\Alg_\cO(\LMod^\rlax_{\llax.\pos}) \subseteq \Fun(\cO^\otimes , \LMod^\rlax_{\llax.\pos} )$: its source is the composite
\begin{equation}
\label{source.of.parametrized.ladjt.over.Finstar.x.P}
\cO^\otimes
\xra{\cR^\otimes}
\Cat
\xra{- \times \pos}
\coCart_\pos
\longhookra
\loc.\coCart_\pos
=:
\LMod_{\llax.\pos}
\longrsurjmono
\LMod^\rlax_{\llax.\pos}~,
\end{equation}
while its target is the object $\GD^\otimes(\cR)$.  Therefore, the point \Cref{object.in.loc.coCart.over.brax.one.x.Finstar.x.P} lies in the uppermost space of diagram \Cref{many.subspaces.of.loc.cocart.over.brax.one.x.Finstar.x.P}, as we aimed to show.  Hence, we may take its postcomposition
\[
[1]
\longra
\Alg_\cO(\LMod^\rlax_{\llax.\pos})
\xra{\lim^\rlax_{\llax.\pos}}
\Alg_\cO(\Cat)
~,
\]
which provides the desired morphism \Cref{morphism.onlimrlaxes.from.RotimesxP.to.MotimesR}.

Now, as observed above, the object
\[
(\cR^\otimes \times \pos) \in
\Alg_\cO(\LMod^\rlax_{\llax.\pos})
\subseteq
\Fun ( \cO^\otimes , \LMod^\rlax_{\llax.\pos} )
\]
factors as the composite \Cref{source.of.parametrized.ladjt.over.Finstar.x.P}, so that we may identify the source $\lim^\rlax_{\llax.\pos}(\cR^\otimes \times \pos)$ of the morphism \Cref{morphism.onlimrlaxes.from.RotimesxP.to.MotimesR} in $\Alg_\cO(\Cat) \subseteq \Fun(\cO^\otimes,\Cat)$ in simple terms: it is the composite
\[
\cO^\otimes
\xra{\cR^\otimes}
\Cat
\xra{- \times \pos^\op}
\Cart_{\pos^\op}
\xlongra{\Gamma}
\Cat
~,
\]
which classifies the $\infty$-category $\Fun(\pos^\op,\cR)$ equipped with its pointwise $\cO$-monoidal structure.  This receives a canonical morphism
\[
\cR
\longra
\Fun(\pos^\op,\cR)
\]
in $\Alg_\cO(\Cat)$.\footnote{This canonical morphism factors through the strict limit $\lim_{\llax.\pos}(\cR^\otimes \times \pos)$, which can be similarly identified with $\Fun(|\pos| , \cR) \simeq \Fun ( |\pos^\op| , \cR)$ equipped with its pointwise $\cO$-monoidal structure.}
So, we obtain a composite comparison morphism
\[
\cR
\longra
\Fun ( \pos^\op , \cR)
\simeq
\lim^\rlax_{\llax.\pos} ( \cR^\otimes \times \pos)
\xra{\Cref{morphism.onlimrlaxes.from.RotimesxP.to.MotimesR}}
\lim^\rlax_{\llax.\pos} ( \GD^\otimes(\cR) )
\]
in $\Alg_\cO(\Cat)$.  Moreover, by construction, upon applying the forgetful functor
\[
\fgt
:
\Alg_\cO(\Cat)
\xlonghookra{\ff}
\Fun(\cO^\otimes,\Cat)
\xra{\ev_{\ul{1}_+}}
\Cat
\]
we recover the morphism
\[
\cR
\xlongra{\gd}
\lim^\rlax_{\llax.\pos} ( \GD(\cR) )
~,
\]
as desired.
\end{proof}

\begin{remark}
It is not possible to prove \Cref{thm.s.m.reconstrn} directly from \Cref{macrocosm.thm}, because a presentably $\cO$-monoidal $\infty$-category is not defined by a diagram in $\PrLSt$ (as the tensor product functors are required to be multi-cocontinuous rather than cocontinuous).
\end{remark}

\subsection{Symmetric monoidal stratifications and tensor-triangular geometry}
\label{subsection.tt.geometry}

In this subsection, we construct the adelic stratification of a presentably symmetric monoidal stable $\infty$-category (satisfying mild finiteness hypotheses) as \Cref{thm.s.m.stratn.over.balmer.spectrum}.  This is based in the theory of tensor-triangular geometry, which we begin by reviewing; we refer the reader to the survey \cite{Stevenson-tour} for more background on this topic, which highlights the interaction between the small and presentable settings.  We unpack the adelic stratification of $\Spectra$ in \Cref{ex.adelic.stratn.of.spectra}, and we explain how symmetric monoidal stratifications contribute to the theory of tensor-triangular geometry in \Cref{rmk.stratns.helps.tt.geometry}.

\begin{observation}
The homotopy category of a stable $\infty$-category is canonically triangulated, and (presentably) symmetric monoidal structures descend to (resp.\! exact and coproduct-preserving) symmetric monoidal structures.  Through this, one can largely apply results concerning triangulated categories to stable $\infty$-categories without any modification; for instance, the condition of an object being zero can be checked in the homotopy category, and the projection to the homotopy category preserves co/products (indeed, this is true for any $\infty$-category).
We use this fact without further comment.
\end{observation}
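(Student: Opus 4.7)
The plan is to reduce this observation to three separate (and largely classical) assertions: the existence of the triangulated structure on the homotopy category $h\cC$ of a stable $\infty$-category $\cC$, the descent of an $\cO$-monoidal structure on $\cC$ to one on $h\cC$ compatible with the triangulated structure, and the preservation of the exactness/colimit-preservation properties of the tensoring functors under this passage. The first step is not original: I would simply cite \cite[Theorem 1.1.2.14]{LurieHA} (together with the surrounding discussion identifying distinguished triangles as images of cofiber sequences and $\Sigma$ as the suspension), which gives $h\cC$ the structure of a triangulated category whose translation functor and distinguished triangles arise canonically from the stable structure on $\cC$.

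For the descent of the symmetric monoidal structure, I would use the fact that the homotopy category functor $h\colon \Cat \to \Cat_1$ preserves finite products (since products of quasicategories are computed levelwise and passing to homotopy categories commutes with finite products of categories). Concretely, an $\cO$-monoidal structure on $\cC$ is encoded by a reduced Segal functor $\cC^\otimes\colon \cO^\otimes \to \Cat$ with $\cC^\otimes(\ul 1_+)\simeq \cC$, and postcomposing with $h$ produces $h\cC^\otimes\colon \cO^\otimes \to \Cat_1 \hookrightarrow \Cat$. The Segal condition $\cC^\otimes(\ul n_+)\simeq \cC^{\times n}$ is preserved because $h$ preserves finite products, and reducedness is automatic, so we obtain an $\cO$-monoidal structure on $h\cC$ whose $n$-ary operations $\otimes_\mu$ are (by construction) the images under $h$ of the corresponding operations on $\cC$.

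To get exactness and coproduct-preservation of these descended tensor products, I would argue one variable at a time. Fixing $n\ge 2$, an operation $\mu\in\cO(n)$, and objects $X_1,\dots,X_{i-1},X_{i+1},\dots,X_n \in \cC$, the functor $\otimes_\mu(X_1,\dots,-,\dots,X_n)\colon \cC \to \cC$ is an exact functor of stable $\infty$-categories (either by hypothesis, in the presentable case where it is even colimit-preserving, or as an iterated partial evaluation of a multi-exact operation). Exact functors between stable $\infty$-categories descend to exact functors of triangulated categories because $h$ sends cofiber sequences to distinguished triangles by the definition of the triangulation. For the presentable case, colimit-preserving functors descend to coproduct-preserving functors: coproducts in $h\cC$ are the same as coproducts in $\cC$ (both are computed by $\pi_0$ of mapping spaces, which pass to $\Hom$-sets in $h\cC$), so the value on a coproduct in $h\cC$ is computed by the value on the underlying coproduct in $\cC$, which is a coproduct again.

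The only mild subtlety — which is really a matter of organization rather than difficulty — is packaging these statements so that the resulting $\cO$-monoidal structure on $h\cC$ is compatible with the triangulation in the appropriate sense (i.e.\ so that we obtain an exact $\cO$-monoidal structure on a triangulated category in the classical sense, e.g.\ a distributive/triangulated symmetric monoidal structure à la Balmer when $\cO=\Comm$). This is handled by the one-variable-at-a-time argument above together with compatibility of $h$ with the shift $\Sigma$, and is where the assertion that ``one can largely apply results concerning triangulated categories'' becomes routine. The two auxiliary claims — that vanishing of an object can be tested in $h\cC$ and that the adjoint functor theorem hypotheses coincide — then reduce to the facts that $h$ is conservative on objects (an object is zero in $\cC$ iff it is zero in $h\cC$, both being tested by the vanishing of the identity) and that $h$ detects preservation of coproducts (by the same identification of coproducts as above), so no separate argument is needed beyond the three steps outlined.
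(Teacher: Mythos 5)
Your argument is correct, and there is nothing to compare it against: the paper offers no proof of this Observation, asserting it as standard background to be ``used without further comment.'' Your sketch supplies exactly the standard justification — \cite[Theorem 1.1.2.14]{LurieHA} for the triangulation, finite-product-preservation of the homotopy-category functor for descent of the $\cO$-monoidal structure, and the identification of coproducts and cofiber sequences in the homotopy category with those in $\cC$ for the exactness, coproduct, vanishing, and adjoint-functor-theorem claims. The one slightly hand-wavy step (that passing to homotopy categories preserves finite products) is justified by computing hom-sets as $\pi_0$ of mapping spaces, which commutes with finite products; with that noted, the write-up is complete at the level of rigor the paper intends.
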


\begin{local}
\label{local.R.is.s.m.cpctly.gend}
For the remainder of this section, we specialize \Cref{notn.presentably.O.mon.R} to further assume that $\cO = \Comm$, i.e.\! that $\cR$ is a presentably symmetric monoidal stable $\infty$-category.  We assume moreover that $\cR$ is compactly generated, and that its full subcategory $\cR^\omega$ of compact objects inherits a symmetric monoidal structure (i.e.\! the unit object is compact and the tensor product of compact objects is again compact).
\end{local}

\begin{definition}
\label{defn.rigidly.cpctly.gend}
We say that $\cR$ is \bit{rigidly-compactly generated} if (in addition to the hypotheses of \Cref{local.R.is.s.m.cpctly.gend}) its full subcategory of dualizable (a.k.a.\! rigid) objects is precisely $\cR^\omega \subseteq \cR$.
\end{definition}

\begin{definition}
A full proper stable subcategory $\mf{p} \subsetneq \cR^\omega$ is called a \bit{thick prime ideal} if
\begin{itemize}
\item it is idempotent-complete,
\item it is contagious under the symmetric monoidal structure, and
\item for all $X,Y \in \cR^\omega$, if $X \otimes Y \in \mf{p}$ then $X \in \mf{p}$ or $Y \in \mf{p}$.
\end{itemize}
We write $\pos_\cR$ for the poset of thick prime ideal subcategories of $\cR^\omega$ ordered by inclusion.
\end{definition}

\begin{definition}
\label{defn.balmer.spectrum.and.support}
The \bit{Balmer spectrum} of $\cR^\omega$ is the topological space $\Spec(\cR^\omega) \in \Top$ defined as follows.  First of all, the underlying set of $\Spec(\cR^\omega)$ is that of thick prime ideals in $\cR^\omega$.  Then, for any object $X \in \cR^\omega$, we define its \bit{support} to be the subset
\[
\supp(X)
:=
\{
\mf{p} \in \Spec(\cR^\omega)
:
X \not\in \mf{p}
\}
~.
\]
Finally, the topology on $\Spec(\cR^\omega)$ is obtained by declaring that the subsets $\{ \supp(X) \subseteq \Spec(\cR^\omega) \}_{X \in \cR^\omega}$ are closed.\footnote{In fact, these subsets form a basis, so that every closed subset is of the form
\[
\bigcap_{s \in S}
\supp(X_s)
=
\{ \mf{p} \in \Spec(\cR^\omega)
:
\{ X_s \}_{s \in S} \cap \mf{p} = \es
\}
\]
for some set $\{ X_s \in \cR^\omega \}_{s \in S}$ of objects of $\cR^\omega$.}
\end{definition}

\begin{remark}
The specialization poset of the topological space $\Spec(\cR^\omega) \in \Top$ is precisely $\pos_\cR$: the membership $\mf{p} \in \ol{\{\mf{q}\}}$ is equivalent to the containment $\mf{p} \subseteq \mf{q}$.  So, we may consider the support of an object $X \in \cR^\omega$ either as a closed subset of $\Spec(\cR^\omega)$ or as a down-closed subset of $\pos_\cR$.
\end{remark}

\begin{remark}
Let $X$ be a qcqs scheme.  Thomason proved that there is a canonical isomorphism
\begin{equation}
\label{iso.of.topological.spaces.or.lrs}
\Spec(\Perf(X)) \cong X
\end{equation}
of topological spaces \cite[Theorem 3.15]{Thomason-classification},\footnote{See also \cite{Neeman-chrom} for an affine version of this result, which originates in \cite{Hopkins-global}.} with the correspondence being given by the support of perfect complexes.  Thereafter, Balmer upgraded the topological space $\Spec(\cR^\omega)$ to a ringed topological space \cite[Definition 6.1]{Balmer-specofprime}, in such a way that the isomorphism
\Cref{iso.of.topological.spaces.or.lrs} 
naturally upgrades to one of ringed topological spaces (and therefore one of schemes) \cite[Theorem 6.3]{Balmer-specofprime}.
\end{remark}

\begin{notation}
For each $\mf{p} \in \pos_\cR$, we define the full subcategories
\[
\cI^\omega_\mf{p}
:=
\{ X \in \cR^\omega : \supp(X) \subseteq ( ^\leq \mf{p} ) \}
\subseteq
\cR^\omega
\qquad
\text{and}
\qquad
\cI_\mf{p}
:=
\Ind(\cI^\omega_\mf{p})
=
\brax{\cI^\omega_\mf{p}}
\subseteq
\cR
~.
\]
\end{notation}

\begin{observation}
For each $\mf{p} \in \pos_\cR$, the subcategory $\cI_\mf{p} \subseteq \cR$ is obviously closed (recall \Cref{ex.cpct.objs.gen.clsd.subcat}), and in fact it is a closed ideal by \cite[Theorem 3.3.3]{HPS} 
(which is an abstraction of \cite[Corollary 8]{Miller-finite}).  As the assignment $\mf{p} \mapsto \cI_\mf{p}$ is order-preserving, we therefore obtain a functor
\begin{equation}
\label{hopefully.adelic.stratn}
\begin{tikzcd}[row sep=0cm]
\pos_\cR
\arrow{r}{\cI_\bullet}
&
\Idl_\cR
\\
\rotatebox{90}{$\in$}
&
\rotatebox{90}{$\in$}
\\
\mf{p}
\arrow[maps to]{r}
&
\cI_\mf{p}
\end{tikzcd}
~.
\end{equation}
\end{observation}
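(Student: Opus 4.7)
The plan is to verify the two ingredients identified in the observation and then assemble them into the functor \eqref{hopefully.adelic.stratn}.

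First I would record that $\cI_\mf{p} \subseteq \cR$ is a closed subcategory. Since $\cI_\mf{p} := \Ind(\cI^\omega_\mf{p})$ is the full stable subcategory generated under colimits by the set of compact objects $\cI^\omega_\mf{p} \subseteq \cR^\omega$, this is an immediate instance of \Cref{ex.cpct.objs.gen.clsd.subcat}: the restricted Yoneda embedding along the inclusion commutes with filtered colimits, hence admits a further right adjoint. So $\cI_\mf{p} \in \Cls_\cR$.

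Second, I would promote this to the assertion that $\cI_\mf{p}$ is a closed ideal in the sense of \Cref{defn.closed.ideal.subcat}. On the level of compact objects, $\cI^\omega_\mf{p}$ is by construction a thick ideal of $\cR^\omega$ (it is closed under tensoring with arbitrary objects of $\cR^\omega$ because supports satisfy $\supp(X \otimes Y) = \supp(X) \cap \supp(Y)$, and $(^\leq \mf{p})$ is downward-closed under intersections). The translation between thick ideals of $\cR^\omega$ and closed ideals of $\cR$ in the rigidly-compactly generated setting is precisely \cite[Theorem 3.3.3]{HPS} (see also \cite[Corollary 8]{Miller-finite}), which I would invoke directly. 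Concretely, that result guarantees that the ind-completion of a thick tensor-ideal is contagious under the (colimit-preserving) tensor product on $\cR$, and that the inclusion $\cI_\mf{p} \hookra \cR$ has a colimit-preserving right adjoint which is symmetric monoidal — that is, $\cI_\mf{p} \in \Idl_\cR$.

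Third, I would verify order-preservation: if $\mf{p} \subseteq \mf{q}$ in $\pos_\cR$, then $(^\leq \mf{p}) \subseteq (^\leq \mf{q})$ in the specialization poset of $\Spec(\cR^\omega)$, so that $\cI^\omega_\mf{p} \subseteq \cI^\omega_\mf{q}$, and passing to ind-completions gives $\cI_\mf{p} \subseteq \cI_\mf{q}$. Since $\pos_\cR$ and $\Idl_\cR$ are posets, this order-preserving assignment at once furnishes the desired functor \eqref{hopefully.adelic.stratn}. The main (but expected) obstacle is justifying the appeal to \cite[Theorem 3.3.3]{HPS} for the closedness of the ideal structure; this is precisely where the hypothesis that $\cR$ is rigidly-compactly generated (so that duality on $\cR^\omega$ is available) is used, and all other steps are formal consequences of the definitions.
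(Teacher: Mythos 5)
Your proposal is correct and follows essentially the same route as the paper: closedness of $\cI_\mf{p}$ via \Cref{ex.cpct.objs.gen.clsd.subcat}, the closed-ideal property via \cite[Theorem 3.3.3]{HPS} (where rigid-compact generation enters), and order-preservation from the evident containment of supports. The only cosmetic point is that for the ideal property you only need the containment $\supp(X \otimes Y) \subseteq \supp(X) \cap \supp(Y)$ (which follows from ideality of $\mf{p}$ alone), not the full equality.
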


\begin{definition}
Whenever the functor \Cref{hopefully.adelic.stratn} is a symmetric monoidal stratification, we refer to it as the \bit{adelic stratification} of $\cR$ over $\pos_\cR$.
\end{definition}

\begin{theorem}
\label{thm.s.m.stratn.over.balmer.spectrum}
Suppose that $\cR$ is a rigidly-compactly generated presentably symmetric monoidal stable $\infty$-category, and suppose that $\cR = \brax{ \cI_\mf{p}}_{\mf{p} \in \pos_\cR}$.  Then, the functor \Cref{hopefully.adelic.stratn} defines a symmetric monoidal stratification of $\cR$ over $\pos_\cR$.
\end{theorem}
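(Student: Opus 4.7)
My plan is to verify the stratification condition $(\star)$ of Definition \ref{defn.stratn}, as the other requirements are either given or trivial: the functor $\cI_\bullet$ takes values in $\Idl_\cR$ by the invoked theorem of [HPS]; monotonicity is immediate from the definition $\cI^\omega_\mf{r} = \{X \in \cR^\omega : \supp(X) \subseteq (^\leq \mf{r})\}$ and the obvious containment of support conditions; and generation is assumed. By Observation \ref{obs.yo.comm.for.s.m.stratns}, condition $(\star)$ amounts, for each $\mf{p}, \mf{q} \in \pos_\cR$ with $\sD := (^\leq \mf{p}) \cap (^\leq \mf{q})$, to the claim that the comparison morphism
\[
i_L(\uno_{\cI_\mf{p}}) \otimes i_L(\uno_{\cI_\mf{q}}) \otimes i_L(\uno_{\cI_\sD}) \longra i_L(\uno_{\cI_\mf{p}}) \otimes i_L(\uno_{\cI_\mf{q}})
\]
is an equivalence. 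By Proposition \ref{prop.closed.ideal.subcats.are.central.idempotent}\Cref{item.identify.closed.ideal.generated.by.augmented.idempotent} this is equivalent to the membership $i_L(\uno_{\cI_\mf{p}}) \otimes i_L(\uno_{\cI_\mf{q}}) \in \cI_\sD$. Since the left-hand side lies in $\cI_\mf{p} \cap \cI_\mf{q}$ (each factor is in the respective ideal), I would reduce to proving the sharper statement $\cI_\mf{p} \cap \cI_\mf{q} \subseteq \cI_\sD$ (the reverse inclusion being immediate from functoriality).

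To prove this containment, I would reduce to compact objects. The inclusion $i_L \colon \cI_\mf{p} \hookrightarrow \cR$ is a closed embedding, so both $i_L$ and its right adjoint $y$ are cocontinuous; hence $i_L$ preserves compact objects and $(\cI_\mf{p})^\omega = \cI^\omega_\mf{p}$. The same applies to $\cI_\mf{q}$, and the intersection $\cI_\mf{p} \cap \cI_\mf{q}$ is compactly generated by $\cI^\omega_\mf{p} \cap \cI^\omega_\mf{q}$, while $(\cI_\sD)^\omega$ equals the thick $\otimes$-ideal of $\cR^\omega$ generated by $\bigcup_{\mf{r} \in \sD}\cI^\omega_\mf{r}$. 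So it suffices to show that every compact $X$ with $\supp(X) \subseteq \sD$ lies in $(\cI_\sD)^\omega$.

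The final step would invoke the Thomason--Balmer classification of thick $\otimes$-ideals in the rigid tensor-triangulated category $\cR^\omega$ (valid since $\cR$ is rigidly-compactly generated): such ideals correspond bijectively to Thomason subsets of $\Spec(\cR^\omega)$ via $\cJ \mapsto \bsupp(\cJ) := \bigcup_{Y \in \cJ}\supp(Y)$, and this correspondence respects intersections and unions. Under this correspondence, the desired containment $(X)^\thick \subseteq (\cI_\sD)^\omega$ becomes the containment of Thomason subsets $\supp(X) \subseteq \bigcup_{\mf{r} \in \sD} \bsupp(\cI^\omega_\mf{r})$. The hard part is establishing this latter containment: for each $\mf{s} \in \supp(X)$, one has $\mf{s} \in \sD$, and one must witness $\mf{s}$ as lying in the support of a compact object in some $\cI^\omega_\mf{r}$ with $\mf{r} \in \sD$. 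I would argue this by combining compact witnesses $X_1 \in \cI^\omega_\mf{p}$ and $X_2 \in \cI^\omega_\mf{q}$ with $\mf{s} \in \supp(X_i)$ via the identity $\supp(X_1 \otimes X_2) = \supp(X_1) \cap \supp(X_2)$, and further refining the resulting object to land in $\cI^\omega_\mf{r}$ for a suitable $\mf{r} \in \sD$ using the structure of the Balmer spectrum. This last refinement — producing a compact object whose support is bounded above by a specific element of $\sD$ rather than just contained in $\sD$ — is the main technical obstacle, and is the essential point where the rigid-compactness hypothesis enters.
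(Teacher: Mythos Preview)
Your overall approach coincides with the paper's: both use Observation~\ref{obs.yo.comm.for.s.m.stratns} to reduce condition~$(\star)$ to checking that $i_L(\uno_{\cI_\mf{p}})\otimes i_L(\uno_{\cI_\mf{q}})\in\cI_\sD$, and then pass to compact generators using cocontinuity of~$\otimes$. The paper's proof is much terser than yours---it simply records that $\supp(X\otimes Y)\subseteq\supp(X)\cap\supp(Y)$ for compact $X,Y$ and asserts that the conclusion follows, without spelling out why a compact object with support in $\sD := (^{\leq}\mf{p})\cap(^{\leq}\mf{q})$ must lie in $\cI_\sD$. So the step you worry about is one the paper leaves entirely implicit.

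There is, however, a real gap in your proposed resolution of that step. After (correctly) translating the question via Balmer's classification into a containment of Thomason subsets, you suggest witnessing a point $\mf{s}\in\sD$ by choosing compact objects $X_1\in\cI^\omega_\mf{p}$ and $X_2\in\cI^\omega_\mf{q}$ with $\mf{s}$ in their supports and tensoring them. But the resulting object $X_1\otimes X_2$ is just another compact with $\mf{s}\in\supp(X_1\otimes X_2)\subseteq\sD$---precisely the type of object you started with---so you have made no progress toward bounding the support by a single $(^{\leq}\mf{r})$ with $\mf{r}\in\sD$. The ``further refinement using the structure of the Balmer spectrum'' that you invoke \emph{is} the entire content of the missing step, not a step toward it. In short, your proposal is more explicit than the paper's about where the difficulty lies, but it is no more complete at that point, and the fix you sketch is circular.
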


\begin{proof}
By assumption, the functor \Cref{hopefully.adelic.stratn} is a symmetric monoidal prestratification.  Note that if $X,Y \in \cR^\omega$ then
\[
\supp(X \otimes Y) \subseteq \supp(X) \cap \supp(Y)
\]
by definition of a thick prime ideal.  Since the symmetric monoidal structure commutes with colimits separately in each variable, the stratification condition follows from \Cref{obs.yo.comm.for.s.m.stratns}.
\end{proof}

\begin{remark}
\label{rmk.counterex.to.generation.condition.for.adelic}
We indicate an example in which the condition that $\cR = \brax{ \cI_\mf{p}}_{\mf{p} \in \pos_\cR}$ appearing in \Cref{thm.s.m.stratn.over.balmer.spectrum} fails to hold.\footnote{We thank Scott Balchin for pointing out this example to us.} Let $S$ be a countably infinite set, let $S^+$ denote its one-point compactification, let $R := \hom_\Top( S^+ ,\FF_2)$ denote the commutative ring of continuous $\FF_2$-valued functions on $S^+$, and let $\cR := \QC(\Spec(R))$. Then there are canonical homeomorphisms $S^+ \cong \Spec(R) \cong \Spec(\cR^\omega)$, and in particular the specialization poset $\pos_\cR$ is discrete (as $R$ has Krull dimension 0). However, the functor
\[
\cR
\xra{(y)_{\mf{p} \in \pos_\cR}}
\prod_{\mf{p} \in \pos_\cR} \cI_{\mf{p}}
\]
is not an equivalence.
\end{remark}



\begin{example}[the adelic stratification of spectra]
\label{ex.adelic.stratn.of.spectra}
The adelic stratification of $\Spectra$ is quite similar to its chromatic stratification (\Cref{ex.chromatic.stratn.of.spectra}): namely, it is the functor
\[
\begin{tikzcd}[row sep=0cm]
\pos_\Spectra
\arrow{r}{\cI_\bullet^f}
&
\Idl_\Spectra
\\
\rotatebox{90}{$\in$}
&
\rotatebox{90}{$\in$}
\\
\mf{p}
\arrow[maps to]{r}
&
\cI_\mf{p}^f
\end{tikzcd}
\]
defined by the assignments
\[
\cI_{(0)}^f = \Spectra
~,
\qquad
\cI_{((p),n)}^f
=
A_{p,n-1}^f L_{(p)}^f \Spectra
~,
\qquad
\text{and}
\qquad
\cI_{((p),\infty)}^f = 0
~,
\]
where
\begin{itemize}

\item we identify the poset of primes in the Balmer spectrum as $\pos_\Spectra$ (depicted in \Cref{figure.primes.of.spectra}) by \cite[Corollary 9.5]{Balmer-spectroix} (see also \cite{HS-nilptwo}),

\item we use the superscript $f$ to denote the finite localization/acyclification functors \cite[Definition 3]{Miller-finite}, and

\item the identifications of the minimal strata as zero follows from the fact that finite spectra are harmonic \cite[Corollary 4.5]{Rav-loc}.

\end{itemize}
Note that the telescope conjecture asserts that the morphisms $L_{p,n-1}^f \ra L_{p,n-1}$ (or equivalently the morphisms $A_{p,n-1}^f \ra A_{p,n-1}$) are equivalences.
\end{example}

\begin{remark}
\label{rmk.stratns.helps.tt.geometry}
We view the theory of symmetric monoidal stratifications as an important complement to the study of tensor-triangular geometry, for the following two reasons.
\begin{enumerate}

\item

While the Balmer spectrum has a universal property \cite[Theorem 3.2]{Balmer-specofprime} it can be quite difficult to compute.  By contrast, our general theory of (symmetric monoidal) stratifications is substantially more flexible.
\begin{enumeratesub}
\item
For instance, in addition to its rather subtle stratification indicated in \Cref{ex.chromatic.stratn.of.spectra}, the $\infty$-category $\Spectra$ of spectra admits an ``arithmetic'' stratification over $\pos_{\Mod_\ZZ}$, which behaves just as that of $\Mod_\ZZ$ itself as described in \Cref{ex.intro.arithmetic}.
\item
Likewise, as we prove in \Cref{thm.geom.stratn.of.SpgG}, for a compact Lie group $G$, the $\infty$-category $\Spectra^{\gen G}$ of genuine $G$-spectra admits a relatively straightforward stratification over the poset $\pos_G$ of closed subgroups of $G$; compare this with the computations of its Balmer spectrum \cite{BS-spec-SpgG,Tobiplusplus-Specgenfabgrp,Tobiplus-SpecofcpctLie}.
\end{enumeratesub}
This flexibility allows for the systematic study of tensor-triangulated categories that is compatible with, but not bound to, their Balmer spectra; and it is of course further augmented by the fundamental operations for (symmetric monoidal (recall \Cref{rmk.fund.opns.for.O.mon.stratns})) stratifications developed in \Cref{subsection.structure.theory}.

\item

Our theory of symmetric monoidal stratifications appears to provide a compelling framework for studying the ``presheaf of triangulated categories'' that serves as motivation throughout the literature on tensor-triangular geometry (originating with \cite{Balmer-pshvs}), enhancing as it does the presheaf of commutative rings introduced in \cite[Definition 6.1]{Balmer-specofprime}.  In this vein, we view our symmetric monoidal reconstruction theorem (\Cref{thm.s.m.reconstrn}) as encoding a form of descent for this (pre?)sheaf.  In particular, we expect that our theory straightforwardly recovers the reconstruction results of e.g.\! \cite{Balmer-suppfilt,BalmerFavi-gluing}.\footnote{Of course, this notion of descent is necessarily $\infty$-categorical, and cannot be carried through at the level of homotopy categories.  In particular, we expect that such recovery would repair the failure of uniqueness of gluings that arises in \cite{BalmerFavi-gluing}, which appears to come of working with homotopy categories instead of $\infty$-categories.}

\end{enumerate}
\end{remark}

\section{The geometric stratification of genuine $G$-spectra}
\label{section.genuine}

In this section, we prove our symmetric monoidal stratification of genuine $G$-spectra (\Cref{intro.thm.gen.G.spt}).  This gives a reconstruction theorem for genuine $G$-spectra when $G$ is a finite group, which we unpack in a number of examples.

\begin{local}
In this section, we write $G$ for an arbitrary compact Lie group, and we write $H$ for an arbitrary closed subgroup of $G$.
\end{local}

This section is organized as follows.
\begin{itemize}

\item[\Cref{subsection.stratn.of.SpgG}:] We set our conventions regarding genuine $G$-spectra and prove \Cref{intro.thm.gen.G.spt} as \Cref{thm.geom.stratn.of.SpgG}.

\item[\Cref{subsection.tate.constrn}:] We study the gluing functors of the geometric stratification of genuine $G$-spectra, which are versions of the Tate construction.

\item[\Cref{subsection.examples.of.SpgG}:] We unpack our reconstruction theorem for genuine $G$-spectra in the cases where $G \in \{ \Cyclic_p , \Cyclic_{p^2}, \Cyclic_{pq}, \Symm_3 \}$ (for $p$ and $q$ distinct primes).  We also discuss the geometric stratification of genuine $\TT$-spectra and the resulting reconstruction theorem for proper-genuine $\TT$-spectra.

\item[\Cref{subsection.categorical.fixedpoints}:] We specialize our nanocosm reconstruction theorem to give a formula for the categorical $H$-fixedpoints of genuine $G$-spectra (when $G$ is finite).

\end{itemize}


\subsection{The geometric stratification of genuine $G$-spectra}
\label{subsection.stratn.of.SpgG}

In this subsection, we establish the symmetric monoidal stratification of genuine $G$-spectra as \Cref{thm.geom.stratn.of.SpgG}.  We begin by laying out our notation and recalling the facts that we need; for further background on genuine $G$-spectra, we refer the reader to \cite{LMS,May-Alaska,ManMay-eq}.

\needspace{2\baselineskip}
\begin{notation}
\label{notn.omnibus.genuine.stuff}
\begin{enumerate}
\item[]

\item We write
\[ \Spaces^{\gen G} \]
for the $\infty$-category of \textit{genuine $G$-spaces}.

\item We write
\[ \Orb_G \subseteq \Spaces^{\gen G} \]
for the \textit{orbit $\infty$-category} of $G$, the full subcategory on those objects of the form $G/H$.

\item\label{item.of.notn.PG}
We write $\pos_G$ for the poset of conjugacy classes of closed subgroups of $G$ ordered by subconjugacy (the posetification (i.e.\! homwise $(-1)$-truncation) of $\Orb_G$).

\item We write
\[ \Spectra^{\gen G} \]
for the $\infty$-category of \textit{genuine $G$-spectra}, i.e.\! the stable $\infty$-category of spectral presheaves on $\Orb_G$ with the representation spheres inverted under the symmetric monoidal structure.

\item We write
\[ \begin{tikzcd}[column sep=2cm, row sep=0cm]
\Spaces^{\gen G}_*
\arrow[transform canvas={yshift=0.9ex}]{r}{\Sigma^\infty_G}
\arrow[leftarrow, transform canvas={yshift=-0.9ex}]{r}[yshift=-0.2ex]{\bot}[swap]{\Omega^\infty_G}
&
\Spectra^{\gen G}
\end{tikzcd} \]
for the adjoint functors of (\textit{genuine $G$-})\textit{suspension spectrum} and (\textit{pointed genuine $G$-})\textit{infinite loopspace}.

\item We write
\[
\Spectra^{\htpy G}
:=
\Fun(\BG,\Spectra)
\]
for the $\infty$-category of \textit{homotopy $G$-spectra}.

\item We write
\[ \begin{tikzcd}[column sep=2cm, row sep=0cm]
\Spectra^{\gen G}
\arrow[transform canvas={yshift=0.9ex}]{r}{U_G}
\arrow[hookleftarrow, transform canvas={yshift=-0.9ex}]{r}[yshift=-0.2ex]{\bot}[swap]{\beta_G}
&
\Spectra^{\htpy G}
\end{tikzcd} \]
for the adjunction -- a reflective localization -- whose left adjoint is the forgetful functor and whose right adjoint is the \textit{Borel-complete genuine $G$-spectrum} functor.\footnote{That is, $\beta_G$ is the inclusion of the full subcategory of \textit{Borel-complete} genuine $G$-spectra, i.e.\! those objects $E \in \Spectra^{\gen G}$ such that the canonical map $E^H \ra E^{\htpy H}$ (from genuine $H$-fixedpoints to homotopy $H$-fixedpoints) is an equivalence for all closed subgroups $H \leq G$.}
We may also omit the subscripts, simply writing $U \adj \beta$ instead of $U_G \adj \beta_G$.

\end{enumerate}
\end{notation}

\begin{warning}
\Cref{notn.omnibus.genuine.stuff}\Cref{item.of.notn.PG} introduces a mild clash: given closed subgroups $H$ and $K$ of $G$, we may write $H \leq K$ when $H$ is subconjugate to $K$ but not necessarily actually contained in it. On the other hand, in such situations we generally assume (without real loss of generality) that $H$ is in fact contained in $K$. To emphasize that we truly mean containment, we use the notation $\subseteq$.
\end{warning}

\begin{remark}
We will often refer to the set $\{ G/H \in \Orb_G \}_{H \in \pos_G}$ (and variants thereof).  This may appear to be ill-defined, as the objects of $\pos_G$ are only conjugacy classes of subgroups of $G$.  However, a conjugation relation $H' = gHg^{-1}$ determines an equivalence $G/H' \simeq G/H$.  Thus, this notation is effectively unambiguous.
\end{remark}

\begin{notation}
We respectively write
\[
\Normzer(H) := \Normzer_G(H)
\qquad
\text{and}
\qquad
\Weyl(H)
:=
\Weyl_G(H)
:=
\Normzer(H)/H
\]
for the normalizer and Weyl group of the closed subgroup $H \leq G$.
\end{notation}

\begin{observation}
We record the following facts, which we use without further comment.

\begin{enumerate}
\item
The set $\{ G/H \in \Orb_G \subseteq \Spaces^{\gen G} \}_{H \in \pos_G}$ of orbits compactly generates $\Spaces^{\gen G}$: by Elmendorf's theorem, the restricted Yoneda functor is an equivalence
\[ 
\Spaces^{\gen G}
\xlongra{\sim}
\Fun(\Orb_G^\op , \Spaces)
~. \]
Under this identification, the genuine $H$-fixedpoints functor $(-)^H$ corresponds to evaluation at the object $(G/H)^\circ \in \Orb_G^\op$.

\item
The set $\{ \Sigma^\infty_G (G/H)_+ \in \Spectra^{\gen G} \}_{H \in \pos_G}$ of suspension spectra of orbits compactly generates $\Spectra^{\gen G}$.


\item
The $\infty$-categories $\Spaces^{\gen G}$ and $\Spaces^{\gen G}_*$ are both presentably symmetric monoidal, with their respective cartesian product (denoted $\times$) and smash product (denoted $\wedge$) defined pointwise: that is, these symmetric monoidal structures commute with taking genuine fixedpoints.

\item
The $\infty$-category $\Spectra^{\gen G}$ is presentably symmetric monoidal via the smash product (denoted $\otimes$).

\item
The genuine $G$-suspension spectrum functor
\[ \Spaces^{\gen G}_* \xra{\Sigma^\infty_G} \Spectra^{\gen G} \]
is symmetric monoidal.


\item
The Weyl group $\Weyl(H)$ is (the underlying $\infty$-group of) the compact Lie group of $G$-equivariant automorphisms of $G/H$.


\item
Given a normal closed subgroup $H \in \pos_G$, the categorical $H$-fixedpoints functor fits into a commutative square
\[ \begin{tikzcd}[row sep=1.5cm, column sep=1.5cm]
\Spaces_*^{\gen (G/H)}
\arrow[leftarrow]{r}{(-)^H}
\arrow[leftarrow]{d}[swap]{\Omega^\infty_{G/H}}
&
\Spaces_*^{\gen G}
\arrow[leftarrow]{d}{\Omega^\infty_G}
\\
\Spectra^{\gen (G/H)}
\arrow[leftarrow]{r}[swap]{(-)^H}
&
\Spectra^{\gen G}
\end{tikzcd} \]
that is obtained by passing to right adjoints in the commutative square
\[ \begin{tikzcd}[row sep=1.5cm, column sep=1.5cm]
\Spaces_*^{\gen (G/H)}
\arrow{r}{\Res^{G/H}_G}
\arrow{d}[swap]{\Sigma^\infty_{G/H}}
&
\Spaces_*^{\gen G}
\arrow{d}{\Sigma^\infty_G}
\\
\Spectra^{\gen (G/H)}
\arrow{r}[swap]{\Res^{G/H}_G}
&
\Spectra^{\gen G}
\end{tikzcd} \]
in $\CAlg(\PrL)$ (which itself is deduced from the universal property of genuine $G/H$-spectra). For an arbitrary closed subgroup $H \in \pos_G$, the categorical $H$-fixedpoints functor is the composite
\[
(-)^H
:
\Spectra^{\gen G}
\xra{\Res^G_{\Normzer(H)}}
\Spectra^{\gen \Normzer(H)}
\xra{(-)^H}
\Spectra^{\gen \Weyl(H)}
~.
\]

\item Categorical fixedpoints compose: if $K \leq H  \leq \Normzer_G(K) \leq G$ then the triangle
\[ \begin{tikzcd}[row sep=1.5cm, column sep=1.5cm]
\Spectra^{\gen G}
\arrow{r}{(-)^K}
\arrow{rd}[sloped, swap]{(-)^H}
&
\Spectra^{\gen \Weyl_G(K)}
\arrow{d}{(-)^{H/K}}
\\
&
\Spectra^{\gen \Weyl_G(H)}
\end{tikzcd} \]
commutes.\footnote{Note the canonical isomorphism $\Weyl_{\Weyl_G(K)}(H/K) \cong \Weyl_G(H)$.}

\item At the level of underlying homotopy $\Weyl(H)$-spectra, categorical $H$-fixedpoints are corepresented by $\Sigma^\infty_G(G/H)_+$: the diagram
\[ \begin{tikzcd}[row sep=1cm, column sep=3.5cm]
\Spectra^{\gen G}
\arrow{r}{(-)^H}
\arrow{rd}[sloped, swap]{\ulhom_{\Spectra^{\gen G}}(\Sigma^\infty_G(G/H)_+,-)}
&
\Spectra^{\gen \Weyl(H)}
\arrow{d}{U}
\\
&
\Spectra^{\htpy \Weyl(H)}
\end{tikzcd} \]
canonically commutes.

\end{enumerate}
\end{observation}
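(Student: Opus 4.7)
The statement is an omnibus observation collecting standard structural facts about genuine $G$-spectra used throughout the section, so the ``proof'' is essentially a matter of assembling references from the classical and $\infty$-categorical literature on equivariant stable homotopy theory rather than of producing new arguments. The plan is to dispatch the nine items by grouping them thematically and invoking results from \cite{LMS, May-Alaska, ManMay-eq}, together with their modern $\infty$-categorical reformulations.

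For the first two items, I would invoke Elmendorf's theorem, which identifies $\Spaces^{\gen G}$ with the presheaf $\infty$-category $\Fun(\Orb_G^\op,\Spaces)$; the compact generation by orbits and the identification of $(-)^H$ with evaluation at $(G/H)^\circ$ are then immediate from the Yoneda lemma, and the compact generation of $\Spectra^{\gen G}$ by $\{\Sigma^\infty_G(G/H)_+\}_{H\in\pos_G}$ follows because stabilization and the inversion of the (compact) representation spheres preserve compact generators. For items (3)--(5), the presentable symmetric monoidal structures on $\Spaces^{\gen G}$ and $\Spaces^{\gen G}_*$ arise from the pointwise (cartesian, resp.\ smash) structure under Elmendorf's equivalence; the presentable symmetric monoidal structure on $\Spectra^{\gen G}$ is obtained by the universal property of stabilization and inversion of representation spheres, as developed in the $\infty$-categorical setting, and the symmetric monoidality of $\Sigma^\infty_G$ is built into this construction.

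For item (6), the identification of $\Weyl(H)$ with $\Aut^G_G(G/H)$ is a direct point-set computation using that a $G$-equivariant self-map of $G/H$ is determined by the image of $eH$, which must lie in $(G/H)^H = \Normzer(H)/H$. Items (7) and (8) are obtained by passage to right adjoints from the universal properties defining the restriction functors $\Res^{G/H}_G$ and by the analogous universal property for iterated quotients, composing to give $(-)^H \simeq (-)^{H/K}\circ(-)^K$ whenever $K \trianglelefteq H$; the reduction to the normalizer case is definitional. Item (9) is the standard adjunction computation: at the level of underlying homotopy spectra, $\ulhom_{\Spectra^{\gen G}}(\Sigma^\infty_G(G/H)_+,-)$ computes the mapping spectrum, which by the orbit-fixedpoint adjunction recovers $U(-)^H$.

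The only conceivable obstacle, since each individual statement is well-known, is ensuring that the classical (point-set-level) formulations of \cite{LMS,May-Alaska,ManMay-eq} are matched with consistent $\infty$-categorical incarnations; this is handled by appealing to the now-standard treatments (e.g.\ those referenced in \cite{GM-gen,Bar-Mack,NS}) that establish the equivalences between model-categorical and $\infty$-categorical genuine equivariant stable homotopy theory, after which every assertion is a direct consequence of a cited result.
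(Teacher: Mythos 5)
The paper offers no proof of this observation: it explicitly records these facts "without further comment," deferring to the background references \cite{LMS,May-Alaska,ManMay-eq} cited at the start of the section. Your proposal is correct and matches this treatment — each item is a standard fact (Elmendorf's theorem, the universal property of stabilization/inversion of representation spheres, the orbit–fixedpoint adjunction, and the point-set identification of $\Weyl(H)$ with $\Aut_G(G/H)$), and your sketches of why each holds are accurate.
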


\begin{notation}
\label{notn.no.extra.notn.for.categorical.fixedpts}
We often simply write
\[
(-)^H
:
\Spectra^{\gen G}
\xra{(-)^H}
\Spectra^{\gen \Weyl(H)}
\xlongra{U}
\Spectra^{\htpy \Weyl(H)}
\]
for the composite.\footnote{This is in contrast with our conventions for geometric fixedpoints appearing in \Cref{defn.geom.H.fps}.} Our meaning will always be clear from context.
\end{notation}

\begin{notation}
We denote by $\tensoring$ the action on $\Spectra^{\gen G}$ of $\Spaces^{\gen G}_*$. So by definition, for any $X \in \Spaces^{\gen G}_*$ and $E \in \Spectra^{\gen G}$ we have
\[
X
\tensoring
E
\simeq
\Sigma^\infty_G X \otimes E
\in
\Spectra^{\gen G}
~.
\]
\end{notation}

\begin{definition}
\label{defn.geometric.prestratn.of.gen.G.spt}
The \bit{geometric prestratification} of $\Spectra^{\gen G}$ over $\pos_G$ is the functor
\[
\hspace{3.5cm}
\begin{tikzcd}[column sep=1.5cm, row sep=0cm]
\pos_G
\arrow{r}{\Spectra^{\gen G}_{^\leq \bullet}}
&
\Cls_{\Spectra^{\gen G}}
\\
\rotatebox{90}{$\in$}
&
\rotatebox{90}{$\in$}
\\
H
\arrow[maps to]{r}
&
\Spectra^{\gen G}_{^\leq H}
&[-1.8cm]
:=
\brax{\Sigma^\infty_G (G/K)_+}_{K \leq H}
\end{tikzcd} \]
sending an element $H \in \pos_G$ to the closed subcategory generated by the set
\[ \{ \Sigma^\infty_G ( G/K)_+ \in (\Spectra^{\gen G})^\omega \}_{K \leq H} \]
of compact objects (recall \Cref{ex.cpct.objs.gen.clsd.subcat}).
\end{definition}

\begin{definition}
A \bit{family} is an element of the poset $\Down_{\pos_G}$, i.e.\! a set of closed subgroups of $G$ that is closed under subconjugacy.  To align with standard notation, we denote an arbitrary family by $\ms{F} \in \Down_{\pos_G}$, and given an element $\sD \in \Down_{\pos_G}$ we also write $\ms{F}_\sD := \sD$.
\end{definition}

\begin{local}
In this subsection, in the course of proving that the geometric prestratification is in fact a symmetric monoidal stratification, we may write
\[ \cI_H := \Spectra^{\gen G}_{^\leq H} ~, \]
for brevity. Similarly, for any family $\ms{F} \in \Down_{\pos_G}$, we may write
\[
\cI_\ms{F}
:=
\brax{ \cI_K }_{K \in \ms{F}}
\simeq
\brax{ \Sigma^\infty_G (G/K)_+ }_{K \in \ms{F}}
~.
\]
\end{local}

\begin{notation}
For any family $\ms{F} \in \Down_{\pos_G}$, we write $\EFunptd \in \Spaces^{\gen G}$ for the genuine $G$-space characterized by the fact that
\[ ( \EFunptd )^H \simeq \left\{ \begin{array}{ll}
\pt~, & H \in \ms{F} \\
\es~, & H \not\in \ms{F}
\end{array} \right.
~.\footnote{Said differently, $(\EFunptd \da \ms{O}_G) \in \RFib(\ms{O}_G) \simeq \Fun(\Orb_G^\op,\Spaces) \simeq \Spaces^{\gen G}$ fits into a pullback square
\[ \begin{tikzcd}[ampersand replacement=\&]
\EFunptd
\arrow[hook]{r}{\ff}
\arrow{d}
\&
\Orb_G
\arrow{d}
\\
\ms{F}
\arrow[hook]{r}[swap]{\ff}
\&
\pos_G
\end{tikzcd}~. \]
}
\]
\end{notation}

\begin{definition}
\label{defn.isotropy.separation.sequence}
For any family $\ms{F} \in \Down_{\pos_G}$, the corresponding \bit{isotropy separation sequence} is the cofiber sequence
\begin{equation}
\label{isotropy.separation.sequence}
\EF
\longra
S^0
\longra
\wEF
\end{equation}
in $\Spaces^{\gen G}_*$, where the first morphism is obtained by applying the functor $\Spaces^{\gen G} \xra{(-)_+} \Spaces_*^{\gen G}$ to the unique morphism $\EFunptd \ra \pt$ in $\Spaces^{\gen G}$.
\end{definition}

\begin{observation}
Applying the genuine $H$-fixedpoints functor $(-)^H$ to the isotropy separation sequence \Cref{isotropy.separation.sequence}, we obtain the cofiber sequence
\[ 
\left(
\EF
\longra
S^0
\longra
\wEF
\right)^H
\simeq
\left\{ \begin{array}{ll}
S^0 \xlongra{\sim} S^0 \longra \pt
~,
&
H \in \ms{F}
\\
\pt \longra S^0 \xlongra{\sim} S^0
~,
&
H \not\in \ms{F}
\end{array} \right. \]
in $\Spaces_*$.  Extending \Cref{defn.idempotents.and.centrality} and \Cref{obs.centrality.vacuous.or.easy} to the unstable setting in the evident way, we find that the objects
\[
( \EF \longra S^0 ) \in (\Spaces^{\gen G}_*)_{/S^0}
\qquad
\text{and}
\qquad
( \Sigma^\infty_G \EF \longra \Sigma^\infty_G S^0 \simeq \SS ) \in ( \Spectra^{\gen G} )_{/ \SS}
\]
are central augmented idempotents and that the objects
\[
( S^0 \longra \wEF ) \in (\Spaces^{\gen G}_*)_{S^0/}
\qquad
\text{and}
\qquad
( \SS \simeq \Sigma^\infty_G S^0 \longra \Sigma^\infty_G \wEF ) \in ( \Spectra^{\gen G} )_{\SS /}
\]
are central coaugmented idempotents. We use these facts without further comment.
\end{observation}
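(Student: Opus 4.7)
The plan is to verify the two claims of the observation fiberwise, exploiting that genuine $H$-fixedpoints can be computed as evaluation at $(G/H)^\circ$ under Elmendorf's equivalence $\Spaces^{\gen G}_* \simeq \Fun(\Orb_G^\op,\Spaces_*)$. Under this model, $(-)^H$ preserves all limits and colimits and is symmetric monoidal (since both smash products are computed pointwise on the $\Orb_G^\op$-diagrams), which immediately yields the asserted cofiber sequence of $H$-fixedpoints from the definition of $\EFunptd$: namely $(\EFunptd)^H \simeq \pt$ or $\es$ according to whether $H \in \ms{F}$, so $(\EF)^H \simeq S^0$ or $\pt$ and $(\wEF)^H \simeq \pt$ or $S^0$, respectively, and the maps are determined.

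For the idempotency claims, first observe that because $\cO = \Comm$ is quadratic and $\Comm(2)$ is connected, centrality is automatic by the evident unstable extension of \Cref{obs.centrality.vacuous.or.easy}. Hence by (the unstable analog of) \Cref{rmk.idempotent.algebras.are.simple}\Cref{item.idempotent.coalgebras.are.simple}, it suffices to verify that the morphism
\[
\EF \wedge \EF
\xra{\varepsilon \wedge \id}
S^0 \wedge \EF \simeq \EF
\]
is an equivalence in $\Spaces^{\gen G}_*$, and dually that the morphism $\wEF \simeq S^0 \wedge \wEF \xra{\eta \wedge \id} \wEF \wedge \wEF$ is an equivalence. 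Applying $(-)^H$ for each closed subgroup $H \leq G$ reduces each of these to the identity map on $S^0 \wedge S^0 \simeq S^0$ when $H \in \ms{F}$ (resp.\ $H \notin \ms{F}$) and the identity map on $\pt$ otherwise; joint conservativity of the family $\{(-)^H\}_{H \in \pos_G}$ concludes the unstable claims.

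For the stable versions, the symmetric monoidal functor $\Spaces^{\gen G}_* \xra{\Sigma^\infty_G} \Spectra^{\gen G}$ sends $S^0$ to the monoidal unit $\SS$ and, being symmetric monoidal, carries central augmented (resp.\ central coaugmented) idempotents in $\Spaces^{\gen G}_*$ to central augmented (resp.\ central coaugmented) idempotents in $\Spectra^{\gen G}$ as in the proof of \Cref{cor.ideal.gend.by.image.of.closed.ideal.is.closed}. No step here should pose a real obstacle: the main point is simply that $(-)^H$ is a jointly conservative family of symmetric monoidal left-and-right adjoints on pointed genuine $G$-spaces, so every structural condition can be checked on the elementary fixedpoint computations above.
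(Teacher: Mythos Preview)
The paper presents this statement as an Observation without a formal proof, indicating that the claims follow directly from the defining property of $\EFunptd$ together with the (unstable analogs of) the cited definition and observation on idempotents. Your argument correctly makes these implicit steps explicit: the fiberwise computation via Elmendorf's equivalence, the automatic centrality since $\Comm(2)$ is connected, the check of idempotency via joint conservativity of the $(-)^H$, and the transport along the symmetric monoidal functor $\Sigma^\infty_G$. This is precisely the reasoning the paper leaves to the reader.
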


\begin{observation}
\label{obs.EFQ.colocalizes.SpgG.into.ZQ}
For any family $\ms{F} \in \Down_{\pos_G}$, 
the counit of the adjunction
\[ \begin{tikzcd}[column sep=2cm, row sep=0cm]
\cI_\ms{F}
\arrow[hook, transform canvas={yshift=0.9ex}]{r}{i_L}
\arrow[leftarrow, transform canvas={yshift=-0.9ex}]{r}[yshift=-0.2ex]{\bot}[swap]{y}
&
\Spectra^{\gen G}
\end{tikzcd} \]
at an object $X \in \Spectra^{\gen G}$ is the morphism
\begin{equation}
\label{counit.of.ZQ.into.SpgG}
\EF \tensoring X
\longra
S^0 \tensoring X
\simeq
X~.
\end{equation}
In particular, the full subcategory $i_L(\cI_\ms{F}) \subseteq \Spectra^{\gen G}$ consists of those objects $X \in \Spectra^{\gen G}$ such that the counit morphism \Cref{counit.of.ZQ.into.SpgG} is an equivalence.
\end{observation}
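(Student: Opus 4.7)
The plan is to reduce the claim to an application of \Cref{prop.closed.ideal.subcats.are.central.idempotent}\Cref{item.identify.closed.ideal.generated.by.augmented.idempotent} by identifying the closed subcategory $\cI_\ms{F} \in \Cls_\cR$ with the closed ideal $\brax{\Sigma^\infty_G \EF}^\otimes$ generated by the central augmented idempotent $\Sigma^\infty_G \EF \to \SS$. Once this identification is in hand, both assertions follow immediately: the colocalization onto $\brax{\Sigma^\infty_G \EF}^\otimes$ is explicitly computed by $\Sigma^\infty_G \EF \otimes (-)$ with counit given by augmenting the first tensor factor, and its image consists precisely of those $X$ for which this counit is an equivalence, which recovers the displayed morphism $\EF \tensoring X \to X$ and the description of $i_L(\cI_\ms{F})$ as the locus where it is invertible.

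I establish the identification $\cI_\ms{F} = \brax{\Sigma^\infty_G \EF}^\otimes$ by double inclusion. For the containment $\cI_\ms{F} \subseteq \brax{\Sigma^\infty_G \EF}^\otimes$, I observe that for each $K \in \ms{F}$ the projection $\EFunptd \times G/K \to G/K$ is an equivalence in $\Spaces^{\gen G}$, since any closed subgroup $H$ with $(G/K)^H \neq \es$ must be subconjugate to $K$ and hence lie in $\ms{F}$, so $(\EFunptd)^H \simeq \pt$ and both sides have identical $H$-fixedpoints. Smashing with disjoint basepoints and applying $\Sigma^\infty_G$ yields $\Sigma^\infty_G \EF \otimes \Sigma^\infty_G(G/K)_+ \xra{\sim} \Sigma^\infty_G(G/K)_+$, showing that every generator of $\cI_\ms{F}$ lies in $\brax{\Sigma^\infty_G \EF}^\otimes$.

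For the reverse containment, I first note that $\Sigma^\infty_G \EF \in \cI_\ms{F}$: via Elmendorf's theorem, $\EFunptd$ is the colimit over $\Orb_{G,\ms{F}}$ of the canonical diagram of orbits $G/K$ with $K \in \ms{F}$, so $\Sigma^\infty_G \EF$ is the corresponding colimit of $\Sigma^\infty_G(G/K)_+$. It remains to show that $\cI_\ms{F}$ is in fact an ideal; since it is closed under colimits and the tensor product is bilinear with respect to colimits, it suffices to verify $\Sigma^\infty_G(G/K)_+ \otimes X \in \cI_\ms{F}$ for each $K \in \ms{F}$ and every $X \in \cR$. By the equivariant projection formula this object is equivalent to $\Ind^G_K \Res^G_K X$, and expressing $\Res^G_K X \in \Spectra^{\gen K}$ as a colimit of its compact generators $\Sigma^\infty_K(K/L)_+$ (for $L \leq K$) exhibits $\Sigma^\infty_G(G/K)_+ \otimes X$ as a colimit of $\Ind^G_K \Sigma^\infty_K(K/L)_+ \simeq \Sigma^\infty_G(G/L)_+$; since $L \leq K \in \ms{F}$ forces $L \in \ms{F}$ by down-closedness, each such term lies in $\cI_\ms{F}$.

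The principal technical point is the verification that $\cI_\ms{F}$ is an ideal, which hinges on the equivariant projection formula $\Sigma^\infty_G(G/K)_+ \otimes X \simeq \Ind^G_K \Res^G_K X$ (a standard consequence of the Wirthm\"uller-type behavior of induction along closed subgroup inclusions). Everything else is routine manipulation of genuine fixedpoints and presentations of $\EFunptd$ as a colimit of orbits.
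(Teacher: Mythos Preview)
The paper states this as an observation without proof, so there is no explicit argument to compare against. Your proof is correct, and it takes a structurally interesting route: you identify $\cI_\ms{F}$ with the closed ideal $\brax{\Sigma^\infty_G \EF}^\otimes$ and then invoke \Cref{prop.closed.ideal.subcats.are.central.idempotent}\Cref{item.identify.closed.ideal.generated.by.augmented.idempotent}. One small imprecision: the phrase ``expressing $\Res^G_K X$ as a colimit of its compact generators'' should read ``since $\Res^G_K X$ lies in the localizing subcategory generated by $\{\Sigma^\infty_K(K/L)_+\}_{L \leq K}$''---but the argument underneath is fine, since $\Ind^G_K$ preserves colimits.

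It is worth noting that your approach inverts the paper's logical order. In the paper, \Cref{obs.can.prestratn.of.SpgG.is.sm} deduces that $\cI_\ms{F}$ is a closed ideal \emph{from} the present observation; you instead establish the ideal property directly (via the projection formula) as an ingredient in proving it. Both orders work. The more direct argument the paper presumably has in mind avoids the ideal machinery entirely: one checks that (i) $\EF \tensoring X \in \cI_\ms{F}$ because $\EFunptd$ is a colimit of orbits $G/K$ with $K \in \ms{F}$, and (ii) the cofiber $\wEF \tensoring X$ satisfies $\ulhom(\Sigma^\infty_G(G/K)_+, \wEF \tensoring X) \simeq 0$ for every $K \in \ms{F}$, since $\Res^G_K \wEF \simeq \pt$. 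This is shorter and needs no projection formula, but your approach has the virtue of tying the statement cleanly to the general framework of \S\ref{subsection.ideals}.
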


\begin{observation}
\label{obs.can.prestratn.of.SpgG.is.sm}
It follows from \Cref{obs.EFQ.colocalizes.SpgG.into.ZQ} that for any family $\ms{F} \in \Down_{\pos_G}$, the closed subcategory $\cI_\ms{F} \subseteq \Spectra^{\gen G}$ is a closed ideal subcategory (as anticipated by the notation), with symmetric monoidal unit object $\Sigma^\infty_G \EF \simeq i_L(y(\uno_{\Spectra^{\gen G}}))$.  In particular, there exists a factorization
\[ \begin{tikzcd}
\pos
\arrow{rr}{\cI_\bullet}
\arrow[dashed]{rd}
&
&
\Cls_{\Spectra^{\gen G}}
\\
&
\Idl_{\Spectra^{\gen G}}
\arrow[hook]{ru}[sloped, swap]{\ff}
\end{tikzcd}~: \]
the geometric prestratification of $\Spectra^{\gen G}$ is a symmetric monoidal prestratification.
\end{observation}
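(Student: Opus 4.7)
The plan is to exploit Observation~\ref{obs.EFQ.colocalizes.SpgG.into.ZQ} --- which identifies the essential image $i_L(\cI_\ms{F}) \subseteq \cR$ as the subcategory of objects $X$ for which the counit $\EF \tensoring X \to X$ is an equivalence --- in combination with the idempotent-theoretic characterization of closed ideals given by Proposition~\ref{prop.closed.ideal.subcats.are.central.idempotent}. I would structure the proof in three steps.

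First, I would verify that $(\Sigma^\infty_G \EF \to \SS) \in \cR_{/\uno_\cR}$ is a central augmented idempotent in the sense of Definition~\ref{defn.idempotents.and.centrality}. Centrality is automatic since $\cO = \Comm$ has connected space of binary operations (Observation~\ref{obs.centrality.vacuous.or.easy}). For idempotence, I observe that the unpointed genuine $G$-space $\EFunptd$ is idempotent under cartesian product: its functor of points on $\Orb_G$ is the characteristic function of $\ms{F} \subseteq \pos_G$, so $\EFunptd \times \EFunptd$ has the same $H$-fixedpoints as $\EFunptd$ for every closed subgroup $H$. Pointing with $(-)_+$ (which is symmetric monoidal from $(\Spaces^{\gen G}, \times)$ to $(\Spaces^{\gen G}_*, \wedge)$) and then applying the symmetric monoidal functor $\Sigma^\infty_G$ transports this idempotence to the required equivalence $\Sigma^\infty_G \EF \otimes \Sigma^\infty_G \EF \xra{\sim} \Sigma^\infty_G \EF$ compatibly with the augmentation to $\SS$.

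Second, by Proposition~\ref{prop.closed.ideal.subcats.are.central.idempotent}(\ref{item.identify.closed.ideal.generated.by.augmented.idempotent}), this central augmented idempotent determines a closed ideal $\brax{\Sigma^\infty_G \EF}^\otimes \subseteq \cR$, identified as the full subcategory of those $X \in \cR$ for which $\Sigma^\infty_G \EF \otimes X \to \SS \otimes X \simeq X$ is an equivalence --- exactly the characterization given by Observation~\ref{obs.EFQ.colocalizes.SpgG.into.ZQ} for $i_L(\cI_\ms{F})$. Hence $\brax{\Sigma^\infty_G \EF}^\otimes = i_L(\cI_\ms{F})$ as subcategories of $\cR$, so $\cI_\ms{F}$ inherits the structure of a closed ideal.

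Third, the asserted identification of the unit object is then immediate: Proposition~\ref{prop.closed.ideal.subcats.are.central.idempotent}(\ref{item.identify.closed.ideal.generated.by.augmented.idempotent}) tells us that the colocalization $i_L y$ is given by $\Sigma^\infty_G \EF \otimes (-)$, so $i_L(y(\uno_\cR)) \simeq \Sigma^\infty_G \EF \otimes \SS \simeq \Sigma^\infty_G \EF$. The dashed factorization through $\Idl_\cR \subseteq \Cls_\cR$ is then a matter of functoriality: the original functor $H \mapsto \cI_H$ factors as $\pos_G \xra{({}^{\leq}{-})} \Down_{\pos_G} \xra{\ms{F} \mapsto \cI_\ms{F}} \Idl_\cR$, with values in closed ideals by what we have just proved. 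The only nontrivial verification in the whole argument is the idempotence of $\EFunptd$, which is an easy fixedpoint-by-fixedpoint calculation; the remainder is bookkeeping within the framework of \S\ref{subsection.ideals}.
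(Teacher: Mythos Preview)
Your proposal is correct and follows exactly the implicit argument of the paper: the observation immediately preceding \Cref{obs.EFQ.colocalizes.SpgG.into.ZQ} already records that $(\Sigma^\infty_G \EF \to \SS)$ is a central augmented idempotent, and \Cref{obs.EFQ.colocalizes.SpgG.into.ZQ} identifies the colocalization $i_Ly$ as $\Sigma^\infty_G\EF \otimes (-)$; your invocation of Proposition~\ref{prop.closed.ideal.subcats.are.central.idempotent}\ref{item.identify.closed.ideal.generated.by.augmented.idempotent} then makes explicit the identification $\cI_\ms{F} = \cI_{\Sigma^\infty_G\EF,\otimes}$ as closed ideals. The paper leaves this deduction tacit (it is stated as an observation), but your three steps are precisely what is being asserted.
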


\begin{observation}
\label{obs.if.X.in.stratum.RH.then.X.equivt.to.X.smash.EdeltaH}
It follows from \Cref{obs.EFQ.colocalizes.SpgG.into.ZQ} that the unit of the adjunction
\[ \begin{tikzcd}[column sep=2cm, row sep=0cm]
\cI_H
\arrow[transform canvas={yshift=0.9ex}]{r}{p_L}
\arrow[hookleftarrow, transform canvas={yshift=-0.9ex}]{r}[yshift=-0.2ex]{\bot}[swap]{\nu}
&
\Spectra^{\gen G}_H
\end{tikzcd} \]
at an object $X \in i_L( \cI_H) \subseteq \Spectra^{\gen G}$ is the morphism
\[
X
\simeq
S^0 \tensoring X
\longra
\wEFltH \tensoring X
~.
\]
Hence, the full subcategory $i_L(\nu(\Spectra^{\gen G}_H)) \subseteq \Spectra^{\gen G}$ consists of those objects $X \in \Spectra^{\gen G}$ such that in the canonical commutative square
\begin{equation}
\label{square.connecting.X.to.smash.with.EdeltaH}
\begin{tikzcd}[row sep=1.5cm]
\EFH \tensoring X
\arrow{r}
\arrow{d}
&
X
\arrow{d}
\\
( \EFH \wedge \wEFltH ) \tensoring X
\arrow{r}
&
\wEFltH \tensoring X
\end{tikzcd}
\end{equation}
the upper and right morphisms are equivalences. In turn, this is the case if and only if the square \Cref{square.connecting.X.to.smash.with.EdeltaH} consists entirely of equivalences.
\end{observation}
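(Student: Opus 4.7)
The plan is to verify the three sub-assertions of the observation in sequence. First, I would identify the unit of the adjunction $p_L \dashv \nu$ using the recollement determined by the closed inclusion $\cI_{^<H} \subseteq \cI_H$. For any $X \in \cI_H$, this recollement yields a canonical cofiber sequence $i_L y X \to X \to \nu p_L X$ in $\cI_H$, whose last map is the unit $\eta_X$. By \Cref{obs.EFQ.colocalizes.SpgG.into.ZQ} applied to the family $(^< H) \in \Down_{\pos_G}$, the first map is identified with the morphism $\EFltH \tensoring X \to S^0 \tensoring X \simeq X$ coming from the isotropy separation sequence of \Cref{defn.isotropy.separation.sequence}. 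Since the action $\tensoring$ preserves colimits in its pointed-$G$-space argument, taking cofibers along $\EFltH \to S^0 \to \wEFltH$ yields $\nu p_L X \simeq \wEFltH \tensoring X$ with $\eta_X$ given by the claimed formula.

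Next, an object $X \in \cR$ lies in $i_L(\nu(\cR_H))$ precisely when both $X \in i_L(\cI_H)$ and, viewed inside $\cI_H$, $X$ lies in $\nu(\cR_H)$ (equivalently $\eta_X$ is an equivalence). By \Cref{obs.EFQ.colocalizes.SpgG.into.ZQ} applied to $(^\leq H)$, the first condition is exactly that the top morphism $\EFH \tensoring X \to X$ of the square be an equivalence; by the identification of $\eta_X$ just obtained, the second condition is exactly that the right morphism $X \to \wEFltH \tensoring X$ be an equivalence. For the final biconditional, note that the bottom morphism of the square is obtained from the top by smashing with $\wEFltH$, while the left morphism is obtained from the right by smashing with $\EFH$; since smashing with a fixed object of $\Spaces^{\gen G}_*$ preserves equivalences, the top and right being equivalences forces all four to be, and the converse is trivial.

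I do not expect any serious obstacle here: the only point requiring care is to recognize that the recollement computing $\nu p_L$ takes place \emph{inside} $\cI_H$ rather than inside $\cR$, so that \Cref{obs.EFQ.colocalizes.SpgG.into.ZQ} applies at the correct level to identify the colocalization $i_L y$ as $\EFltH \tensoring (-)$ and thereby force the cofiber to be $\wEFltH \tensoring (-)$.
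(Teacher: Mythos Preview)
Your proposal is correct and is precisely the unpacking the paper intends: the observation is stated without a formal proof, simply asserting that it ``follows from \Cref{obs.EFQ.colocalizes.SpgG.into.ZQ}'', and your three steps (identifying $\eta_X$ via the recollement cofiber sequence and the isotropy separation sequence, reading off the two membership conditions as the top and right morphisms of the square, and propagating equivalences by smashing) are exactly the intended details. Your remark that the recollement lives inside $\cI_H$ rather than $\cR$ is the only genuine care point, and you handle it correctly.
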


\begin{notation}
For brevity, we write
\[ \sE \delta_H := ( \EFH \wedge \wEFltH ) \in \Spaces^{\gen G}_*~. \]
This notation is motivated by the Dirac delta function: this pointed genuine $G$-space is characterized by the fact that
\[
( \sE \delta_H )^K
\simeq
\left\{
\begin{array}{ll}
S^0~,
&
K = H
\\
\pt~,
&
K \not= H
\end{array}
\right.
~.
\]
\end{notation}

\begin{observation}
The object $\sE \delta_H \in \Spaces^{\gen G}_*$ is idempotent with respect to the smash product. 
\end{observation}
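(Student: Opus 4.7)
The plan is to reduce idempotence of $\sE\delta_H$ to the separate idempotence of its two factors. Writing $\sE\delta_H := \EFH \wedge \wEFltH$ and using the symmetry of the smash product on $\Spaces^{\gen G}_*$ to rearrange, we have a canonical equivalence
\[
\sE\delta_H \wedge \sE\delta_H
\simeq (\EFH \wedge \EFH) \wedge (\wEFltH \wedge \wEFltH),
\]
so it suffices to exhibit canonical equivalences $\EFH \wedge \EFH \simeq \EFH$ and $\wEFltH \wedge \wEFltH \simeq \wEFltH$.

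Both of these follow directly from the observation immediately preceding the statement. The augmentation $\EFH \to S^0$ underlies a central augmented idempotent in $(\Spaces^{\gen G}_*)_{/S^0}$, so by the unstable analog of \Cref{rmk.idempotent.algebras.are.simple}\Cref{item.idempotent.coalgebras.are.simple} the structure morphism
\[
\EFH \wedge \EFH \xlongra{\varepsilon \wedge \id} S^0 \wedge \EFH \simeq \EFH
\]
is an equivalence; dually, the coaugmentation $S^0 \to \wEFltH$ gives a central coaugmented idempotent, so that $\wEFltH \simeq S^0 \wedge \wEFltH \xra{\eta \wedge \id} \wEFltH \wedge \wEFltH$ is an equivalence.

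I do not anticipate any real obstacle, as the argument is essentially bookkeeping given the idempotence properties already recorded. As a sanity check, one can also verify the conclusion directly via Elmendorf's theorem: since the smash product on $\Spaces^{\gen G}_*$ is computed pointwise on genuine fixedpoints, $(\sE\delta_H \wedge \sE\delta_H)^K \simeq (\sE\delta_H)^K \wedge (\sE\delta_H)^K$, which matches $(\sE\delta_H)^K$ for every $K \in \pos_G$ because $S^0 \wedge S^0 \simeq S^0$ and any smash product involving the basepoint is contractible.
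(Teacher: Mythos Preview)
Your proposal is correct and follows exactly the approach the paper intends: the observation is stated without proof in the paper, being immediate from the preceding observation that $\EFH$ and $\wEFltH$ are respectively augmented and coaugmented idempotents in $\Spaces^{\gen G}_*$. Your argument spells this out precisely, and the sanity check via pointwise fixedpoints is a nice redundant verification.
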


\begin{notation}
We define the family
\[ (^{\not\geq} H ) := \{ K \in \pos_G : K \not\geq H \} \in \Down_{\pos_G} ~. \]
\end{notation}

\begin{definition}
\label{defn.geom.H.fps}
The \bit{geometric $H$-fixedpoints} functor
\[
\Spectra^{\gen G}
\xra{\Phi^H_\gen}
\Spectra^{\gen \Weyl(H)}
\]
is defined by the formula
\[
\Phi_\gen^H(X)
:=
( \wEFgeomHfps \tensoring X)^H
~.
\]
We will be primarily interested in the composite
\[
\Phi^H
:
\Spectra^{\gen G}
\xra{\Phi^H_\gen}
\Spectra^{\gen \Weyl(H)}
\xlongra{U}
\Spectra^{\htpy \Weyl(H)}
~, \]
which we refer to by the same name.
\end{definition}

\begin{remark}
For any normal closed subgroup $H \leq G$, there is a canonical commutative diagram
\begin{equation}
\label{align.localized.G.spt.with.G.mod.H.spt}
\begin{tikzcd}[row sep=1.5cm, column sep=1.5cm]
\Spectra^{\gen G}
\arrow{r}{p_L}
\arrow{rd}[sloped, swap]{\Phi^H_\gen}
&
\Spectra^{\gen G} / \cI_{\ms{F}_{^{\not\geq} H}}
\arrow[leftrightarrow]{d}[sloped, anchor=south]{\sim}
\\
&
\Spectra^{\gen (G/H)}
\end{tikzcd}~.
\end{equation}
Recall from \Cref{prop.quotient.stratn} \and \Cref{obs.quotient.stratn.from.down.closed.over.smaller.poset} that the geometric stratification of $\Spectra^{\gen G}$ over $\pos_G$ determines a quotient stratification of $\Spectra^{\gen G} / \cI_{\ms{F}_{^{\not\geq} H}}$ over $\pos_G \backslash (^{\not\geq} H)$. Under the equivalence in diagram \Cref{align.localized.G.spt.with.G.mod.H.spt}, this corresponds to the geometric stratification of $\Spectra^{\gen (G/H)}$ over $\pos_{G/H}$ (recall the third isomorphism theorem).
\end{remark}

\begin{observation}
\label{obs.geom.fixedpoints.by.any.other.name}
One may also define the functor $\Phi^H$ (but not the functor $\Phi^H_\gen$) using the family $(^< H) \in \Down_{\pos_G}$, in fact using any family $\ms{F} \in \Down_{\pos_G}$ that does not contain $H$ and such that moreover $H \in \pos_G \backslash \ms{F}$ is a minimal element. Namely, for any such family we have a canonical equivalence
\[
\Phi^H (- )
\simeq
( \wEF \tensoring (-))^H
\]
in $\Fun ( \Spectra^{\gen G} , \Spectra^{\htpy \Weyl(H)})$.

To explain this, observe that $(^< H) \in \Down_{\pos_G}$ is the initial such family, so that for any such family $\ms{F} \in \Down_{\pos_G}$ we have a canonical morphism
\begin{equation}
\label{morphism.between.Etildes.for.geomfps}
\w{\sE} \ms{F}_{^< H}
\longra
\wEF
\end{equation}
in $\Spaces^{\gen G}_*$ determined by the inclusion $(^< H) \subseteq \ms{F}$. Then, we claim that for any $X \in \Spectra^{\gen G}$, the composite functor
\begin{equation}
\label{composite.taking.any.morphism.between.suitable.Etildes.for.geom.Hfps.to.equivce}
\Spaces^{\gen G}_*
\xra{\Sigma^\infty_G}
\Spectra^{\gen G}
\xra{(-) \otimes X}
\Spectra^{\gen G}
\xra{(-)^H}
\Spectra^{\gen \Weyl(H)}
\xlongra{U}
\Spectra^{\htpy \Weyl(H)}
\end{equation}
carries the morphism \Cref{morphism.between.Etildes.for.geomfps} to an equivalence (although its truncation ending at $\Spectra^{\gen \Weyl(H)}$ does not generally do so). Indeed, this follows from the fact that in the commutative diagram
\[ \begin{tikzcd}
\Spectra^{\gen G}
\arrow{r}{(-)^H}
\arrow{d}[swap]{\Res^G_H}
&
\Spectra^{\gen \Weyl(H)}
\arrow{r}{U}
&
\Spectra^{\htpy \Weyl(H)}
\arrow{d}{\fgt}
\\
\Spectra^{\gen H}
\arrow{rr}[swap]{(-)^H}
&
&
\Spectra
\end{tikzcd}~, \]
the left vertical functor is symmetric monoidal and carries the morphism $\Sigma^\infty_G \Cref{morphism.between.Etildes.for.geomfps}$ to an equivalence while the right vertical functor is conservative. Hence, the composite \Cref{composite.taking.any.morphism.between.suitable.Etildes.for.geom.Hfps.to.equivce} carries the span
\[
\wEFgeomHfps
\longla
\w{\sE} \ms{F}_{^< H}
\longra
\wEF
\]
in $\Spaces^{\gen G}_*$ to a span of equivalences.
\end{observation}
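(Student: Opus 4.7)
The plan is to follow the sketch that the observation itself already outlines, turning the informal hint into a clean argument in four steps. First I would establish that $(^{<}H)$ is the minimal family of the described type: if $\ms{F}\in \Down_{\pos_G}$ satisfies $H\notin\ms{F}$ and $H$ is minimal in $\pos_G\setminus\ms{F}$, then any $K<H$ must lie in $\ms{F}$, since otherwise $K\in\pos_G\setminus\ms{F}$ would contradict minimality of $H$. Consequently $(^{<}H)\subseteq\ms{F}$, giving a morphism $\EFunptd_{^{<}H}\to \EF$ in $\Spaces^{\gen G}$ whose cofiber construction (applied to the isotropy separation sequences, with identity on $S^0$) yields the canonical comparison $\w{\sE}\ms{F}_{^{<}H}\to\wEF$ of \Cref{morphism.between.Etildes.for.geomfps}.

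Next I would reduce to checking an assertion about $H$-spaces. Writing the composite \Cref{composite.taking.any.morphism.between.suitable.Etildes.for.geom.Hfps.to.equivce} as $U\circ(-)^H\circ((-)\tensoring X)\circ\Sigma^\infty_G$, I would slot the comparison morphism into the commutative diagram displayed in the observation. Since $\fgt:\Spectra^{\htpy\Weyl(H)}\to\Spectra$ is conservative, it suffices to show that the bottom composite $\Spectra^{\gen H}\xrightarrow{(-)^H}\Spectra$, applied to $\Res^G_H\Sigma^\infty_G(-)\otimes\Res^G_H X$, sends the morphism to an equivalence. Because $\Res^G_H$ is a symmetric monoidal left adjoint commuting with $\Sigma^\infty$, this further reduces to showing that $\Res^G_H$ already sends the unstable morphism $\w{\sE}\ms{F}_{^{<}H}\to\wEF$ to an equivalence in $\Spaces^{\gen H}_*$.

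The key step, then, is a fixedpoint comparison for closed subgroups $K\leq H$. At $K=H$: since $H$ lies in neither $\ms{F}$ nor $(^{<}H)$, both restricted spaces have $H$-fixedpoints $S^0$, and the comparison is the identity. At $K\subsetneq H$: as a proper closed subgroup $K$ is subconjugate but not conjugate to $H$, so $K<H$ in $\pos_G$, whence $K\in (^{<}H)\subseteq\ms{F}$; thus both restricted spaces have $K$-fixedpoints $\pt$. The comparison is therefore an equivalence at every closed subgroup of $H$, hence an equivalence in $\Spaces^{\gen H}_*$ by Elmendorf's theorem.

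The main obstacle is essentially bookkeeping: making precise the claim that the comparison of the two cofiber constructions really is detected on fixedpoints for subgroups of $H$ (rather than of $G$), together with the mild care needed to check that $\Res^G_H\wEF$ computes as $\w{\sE}(\ms{F}\cap\pos_H)$ as a pointed genuine $H$-space. Once that identification is in hand, the four steps above assemble immediately into the claimed equivalence $\Phi^H(-)\simeq(\wEF\tensoring(-))^H$.
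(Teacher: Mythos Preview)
Your proof is correct and follows essentially the same approach as the paper: reduce via the conservative forgetful functor $\Spectra^{\htpy\Weyl(H)}\to\Spectra$ to checking that $\Res^G_H$ carries the comparison map $\w{\sE}\ms{F}_{^{<}H}\to\wEF$ to an equivalence, which you verify explicitly on fixedpoints (a detail the paper leaves implicit). The only small point you elide is that to conclude $\Phi^H(-)\simeq(\wEF\tensoring(-))^H$ you must apply your argument to both legs of the span $\wEFgeomHfps\leftarrow\w{\sE}\ms{F}_{^{<}H}\to\wEF$ (since $\Phi^H$ is defined using the family $(^{\not\geq}H)$, not $(^{<}H)$), but this is immediate from what you have shown.
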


\begin{observation}
Geometric fixedpoints functors compose: if $K \leq H  \leq \Normzer_G(K) \leq G$ then the triangle
\[ \begin{tikzcd}[row sep=1.5cm, column sep=1.5cm]
\Spectra^{\gen G}
\arrow{r}{\Phi^K_\gen}
\arrow{rd}[sloped, swap]{\Phi^H_\gen}
&
\Spectra^{\gen \Weyl_G(K)}
\arrow{d}{\Phi^{H/K}_\gen}
\\
&
\Spectra^{\gen \Weyl_G(H)}
\end{tikzcd} \]
commutes. We use this fact without further comment.
\end{observation}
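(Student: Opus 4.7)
\textbf{The plan} is to unfold the definition $\Phi^H_\gen(X) := (\wEFgeomHfps \tensoring X)^H$ and to use that categorical fixedpoints compose --- which applies because $K$ is normal in $H$ by virtue of $H \leq \Normzer_G(K)$ --- to rewrite
$$\Phi^H_\gen(X) \simeq \bigl( (\wEFgeomHfps \tensoring X)^K \bigr)^{H/K}.$$
Since $\Phi^{H/K}_\gen(\Phi^K_\gen(X)) := \bigl( \widetilde{E}\ms{F}_{(^{\not\geq} H/K)} \tensoring \Phi^K_\gen(X) \bigr)^{H/K}$ by definition, it suffices to produce a natural equivalence
$$(\wEFgeomHfps \tensoring X)^K \simeq \widetilde{E}\ms{F}_{(^{\not\geq} H/K)} \tensoring \Phi^K_\gen(X)$$
of functors $\Spectra^{\gen G} \to \Spectra^{\gen \Weyl_G(K)}$, and then to apply $(-)^{H/K}$.

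First I would show that the canonical comparison $(-)^K \to \Phi^K_\gen(-)$ becomes an equivalence on inputs of the form $\wEFgeomHfps \tensoring X$. Smashing the isotropy separation sequence for the family $\ms{F}_{(^{\not\geq} K)}$ with $\wEFgeomHfps \tensoring X$ and applying $(-)^K$ yields a cofiber sequence whose left term is $\bigl( E\ms{F}_{(^{\not\geq} K)} \tensoring \wEFgeomHfps \tensoring X \bigr)^K$. This vanishes because $E\ms{F}_{(^{\not\geq} K)} \wedge \wEFgeomHfps \simeq \pt$ already in $\Spaces^{\gen G}_*$: for any closed subgroup $L \leq G$, the $L$-fixedpoints of $E\ms{F}_{(^{\not\geq} K)}$ are $S^0$ iff $L \not\geq K$, while those of $\wEFgeomHfps$ are $S^0$ iff $L \geq H$; since $K \leq H$, these two conditions are mutually exclusive, so the $L$-fixedpoints of the smash product are contractible for every $L$.

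Next I would invoke the symmetric monoidality of $\Phi^K_\gen$ --- a classical input in equivariant stable homotopy theory --- to split
$$\Phi^K_\gen(\wEFgeomHfps \tensoring X) \simeq \Phi^K_\gen(\wEFgeomHfps) \otimes \Phi^K_\gen(X),$$
and then identify $\Phi^K_\gen(\wEFgeomHfps) \simeq \Sigma^\infty_{\Weyl_G(K)} \widetilde{E}\ms{F}_{(^{\not\geq} H/K)}$ in $\Spectra^{\gen \Weyl_G(K)}$. Since $\Phi^K_\gen$ is compatible with suspension spectra, this reduces to an equivalence of $\Weyl_G(K)$-spaces $(\wEFgeomHfps)^K \simeq \widetilde{E}\ms{F}_{(^{\not\geq} H/K)}$, which I would verify by computing $M$-fixedpoints for each $M \leq \Weyl_G(K)$ with preimage $\widetilde{M} \leq \Normzer_G(K)$ (necessarily containing $K$): these are $(\wEFgeomHfps)^{\widetilde{M}}$, equal to $S^0$ iff $\widetilde{M} \geq H$, iff $M \geq H/K$ in $\Weyl_G(K)$, which is exactly the defining characterization of $\widetilde{E}\ms{F}_{(^{\not\geq} H/K)}$. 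Assembling the two steps and taking $(-)^{H/K}$ produces the desired equivalence. The hard part will be this last identification together with the monoidality input; the other steps are direct fixedpoint computations and formal manipulations.
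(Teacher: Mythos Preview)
The paper does not give a proof of this observation; it is stated as a classical fact and used without further comment. So there is no paper proof to compare against, and your outline should be judged on its own merits.

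Your approach is the standard one via universal spaces, and steps 1--3 are sound (granting that $\Phi^K_\gen$ is symmetric monoidal and commutes with suspension spectra at the genuine level, both of which are standard though the paper only records them for $\Phi^K$). The point that needs more care is the biconditional in step 4. The implication ``$M \geq H/K$ in $\pos_{\Weyl_G(K)} \Rightarrow \widetilde{M} \geq H$ in $\pos_G$'' is easy (conjugation by a lift of an element of $\Weyl_G(K)$). But the converse asks: if some $G$-conjugate $gHg^{-1}$ lies in $\widetilde{M}$, must some $\Normzer_G(K)$-conjugate of $H$ lie in $\widetilde{M}$? This is not automatic, since $G$-conjugacy and $\Normzer_G(K)$-conjugacy of subgroups of $\Normzer_G(K)$ can genuinely differ. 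You assert this step without argument, and it is precisely where the content lies.

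One way to close the gap: even if $(\widetilde{E}\ms{F}_{(^{\not\geq} H)})^K = \widetilde{E}\ms{G}$ for a family $\ms{G}$ possibly strictly contained in $\ms{F}_{(^{\not\geq} H/K)}$, one still has $H/K \notin \ms{G}$ and $H/K$ minimal in the complement of $\ms{G}$ (since a proper subgroup of $H$ cannot contain a conjugate of $H$). So $\widetilde{E}\ms{G} \wedge \widetilde{E}\ms{F}_{(^{\not\geq} H/K)} \simeq \widetilde{E}\ms{F}_{(^{\not\geq} H/K)}$, and it remains to show that smashing with $\widetilde{E}\ms{F}_{(^{\not\geq} H/K)}$ does not change $(\widetilde{E}\ms{G} \tensoring \Phi^K_\gen(X))^{H/K}$; this follows by checking that the relevant cofiber is built from orbits $\Weyl_G(K)/L$ with $L \in \ms{F}_{(^{\not\geq} H/K)}$, all of which have vanishing $H/K$-categorical fixed points after smashing with $\widetilde{E}\ms{G}$. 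Alternatively, one can sidestep the issue entirely by invoking the characterization of $\Phi^K_\gen$ as the quotient functor killing the orbits $G/L$ with $L \not\geq K$, from which composability is formal.
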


\begin{observation}
\label{obs.geom.fps.is.sm}
The geometric $H$-fixedpoints functor
\[
\Spectra^{\gen G}
\xra{\Phi^H}
\Spectra^{\htpy \Weyl(H)}
\]
is symmetric monoidal.
\end{observation}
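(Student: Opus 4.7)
The plan is to exhibit $\Phi^H$ as a composite of symmetric monoidal functors, first reducing to the case where $H$ is normal in $G$. Using that restriction preserves the central augmented idempotent $\wEFgeomHfps$ (i.e., $(\wEFgeomHfps)_{|\Normzer(H)}$ plays the role of $\wEFgeomHfps$ for $\Normzer(H)$-spectra) together with the definition of categorical $H$-fixedpoints through $\Res^G_{\Normzer(H)}$ (\Cref{notn.no.extra.notn.for.categorical.fixedpts}), one obtains
\[ \Phi^H \simeq U \circ \Phi^H_\gen \circ \Res^G_{\Normzer(H)}, \]
where $\Phi^H_\gen$ on the right is now understood for the normal pair $H \triangleleft \Normzer(H)$. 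Restriction $\Res^G_{\Normzer(H)}$ is symmetric monoidal by the universal property of genuine equivariant spectra, and $U$ is symmetric monoidal by \Cref{notn.omnibus.genuine.stuff}. It therefore suffices to show that $\Phi^H_\gen \colon \Spectra^{\gen G} \to \Spectra^{\gen (G/H)}$ is symmetric monoidal whenever $H$ is normal in $G$.

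In the normal case, diagram \Cref{align.localized.G.spt.with.G.mod.H.spt} identifies $\Phi^H_\gen$ with the composite $\Spectra^{\gen G} \xra{p_L} \Spectra^{\gen G}/\cI_{\ms{F}_{^{\not\geq}H}} \xlongra{\sim} \Spectra^{\gen (G/H)}$. By \Cref{obs.can.prestratn.of.SpgG.is.sm}, the subcategory $\cI_{\ms{F}_{^{\not\geq}H}}$ is a closed ideal of $\Spectra^{\gen G}$, and so \Cref{obs.omnibus.closed.ideal}\Cref{obs.item.quotient.by.closed.ideal.localization} equips the presentable quotient with a canonical symmetric monoidal structure for which $p_L$ is symmetric monoidal. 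The remaining task is to promote the underlying $\infty$-categorical equivalence $\Spectra^{\gen G}/\cI_{\ms{F}_{^{\not\geq}H}} \simeq \Spectra^{\gen (G/H)}$ to a symmetric monoidal equivalence, comparing the monoidal structure inherited from the quotient with the native smash product on $\Spectra^{\gen (G/H)}$.

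The main obstacle lies in this last compatibility of monoidal structures. A direct route proceeds from the formula $\Phi^H_\gen(X) = (\wEFgeomHfps \tensoring X)^H$: the idempotence of $\wEFgeomHfps \in \Spaces^{\gen G}_*$ under smash product yields a natural equivalence
\[ \wEFgeomHfps \tensoring (X \otimes Y) \simeq (\wEFgeomHfps \tensoring X) \otimes (\wEFgeomHfps \tensoring Y), \]
realizing $\wEFgeomHfps \tensoring (-)$ as a symmetric monoidal localization onto the full subcategory $\nu(\Spectra^{\gen (G/H)}) \subseteq \Spectra^{\gen G}$ of $H$-geometric objects. On this subcategory the lax symmetric monoidal structure on genuine $H$-fixedpoints $(-)^H$ becomes strict, since its lax structure maps have fibers controlled by restrictions to proper subgroups $K \not\geq H$, and these vanish on $H$-geometric inputs. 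Organizing the coherence data uniformly may be accomplished by matching generators: both monoidal structures under comparison make $\{\Sigma^\infty_{G/H}(G/K)_+\}_{K \supseteq H}$ into a family of compact rigid generators with prescribed pairwise tensor products, and this data determines the symmetric monoidal structure uniquely.
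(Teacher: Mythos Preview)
The paper does not prove this; it is recorded as an Observation among other standard facts about genuine $G$-spectra, with the reader referred to the literature for background. Your proof therefore supplies more than the paper does.

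Your argument is correct in outline. One small point: the symmetric monoidality of $U$ is not actually stated in \Cref{notn.omnibus.genuine.stuff}, though it is standard. The genuine soft spot is the final step, where you must show that the equivalence $\Spectra^{\gen G}/\cI_{\ms{F}_{^{\not\geq}H}} \simeq \Spectra^{\gen(G/H)}$ of \Cref{align.localized.G.spt.with.G.mod.H.spt} respects the symmetric monoidal structures. Your two sketches---the lax-to-strict claim for $(-)^H$ and the generator-matching argument---both gesture at correct phenomena, but neither is fully justified as written: the assertion that the fibers of the lax structure maps of $(-)^H$ are ``controlled by restrictions to subgroups $K \not\geq H$'' is not substantiated, and matching generators and their pairwise tensor products does not by itself pin down a symmetric monoidal structure. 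A tighter version of your first sketch: by \Cref{obs.omnibus.closed.ideal}\Cref{obs.item.quotient.by.closed.ideal.localization} the inclusion $\nu$ is nonunitally symmetric monoidal, so the quotient's monoidal structure is just the restricted smash product on the full subcategory of $\wEFgeomHfps$-local objects; meanwhile the composite $\wEFgeomHfps \tensoring \Res^{G/H}_G(-)$ is symmetric monoidal into that same full subcategory and is inverse to the restriction of $(-)^H$ there, which forces the identification to be symmetric monoidal. Note that this still uses that \Cref{align.localized.G.spt.with.G.mod.H.spt} is an equivalence, which the paper also states without proof.
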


\begin{observation}
\label{obs.geom.fps.commutes.w.suspension}
The geometric $H$-fixedpoints functor fits into a canonical commutative diagram
\[ \begin{tikzcd}[row sep=1.5cm, column sep=1.5cm]
\Spaces^{\gen G}_*
\arrow{r}{\Sigma^\infty_G}
\arrow{d}[swap]{(-)^H}
&
\Spectra^{\gen G}
\arrow{d}{\Phi^H}
\\
\Spaces^{\htpy \Weyl(H)}_*
\arrow{r}[swap]{\Sigma^\infty}
&
\Spectra^{\htpy \Weyl(H)}
\end{tikzcd}~. \]
\end{observation}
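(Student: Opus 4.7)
The plan is to construct a canonical natural transformation $\Sigma^\infty \circ (-)^H \to \Phi^H \circ \Sigma^\infty_G$ of functors $\Spaces^{\gen G}_* \to \Spectra^{\htpy \Weyl(H)}$ and then verify it is an equivalence by reduction to the orbit generators of $\Spaces^{\gen G}_*$. For the construction, I would unwind Definition \ref{defn.geom.H.fps} together with the symmetric monoidality of $\Sigma^\infty_G$ to obtain, for each $X \in \Spaces^{\gen G}_*$, an equivalence
\[
\Phi^H(\Sigma^\infty_G X) \;\simeq\; U\bigl((\Sigma^\infty_G(\wEFgeomHfps \wedge X))^H\bigr).
\]
The unit $Y \to \Omega^\infty_G \Sigma^\infty_G Y$ of the adjunction $\Sigma^\infty_G \dashv \Omega^\infty_G$, postcomposed with $U\circ(-)^H$, yields a natural map $Y^H \to \Omega^\infty U((\Sigma^\infty_G Y)^H)$ for all $Y \in \Spaces^{\gen G}_*$. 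Applied with $Y := \wEFgeomHfps \wedge X$ and combined with the characterizing equivalence $(\wEFgeomHfps)^H \simeq S^0$ (whence $(\wEFgeomHfps \wedge X)^H \simeq X^H$), passage to the $\Sigma^\infty \dashv \Omega^\infty$ adjoint gives the desired natural transformation $\Sigma^\infty(X^H) \to \Phi^H(\Sigma^\infty_G X)$.

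Next, I would argue that both $\Sigma^\infty \circ (-)^H$ and $\Phi^H \circ \Sigma^\infty_G$ are cocontinuous functors $\Spaces^{\gen G}_* \to \Spectra^{\htpy \Weyl(H)}$. The left adjoints $\Sigma^\infty$ and $\Sigma^\infty_G$ preserve colimits; the pointed $H$-fixedpoints functor on genuine $G$-spaces preserves colimits (as it preserves $G$-cofiber sequences and filtered colimits); smashing with $\wEFgeomHfps$ preserves colimits; and the composite $U \circ (-)^H$ restricted to the essential image of $\wEFgeomHfps \tensoring (-)$ is cocontinuous, so that $\Phi^H$ itself is cocontinuous. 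Since $\Spaces^{\gen G}_*$ is generated under colimits by the set of orbits $\{(G/K)_+\}_{K \leq G}$, it suffices to verify the natural transformation is an equivalence on these objects.

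On $X = (G/K)_+$, the left-hand side is simply $\Sigma^\infty((G/K)^H)_+$ as a homotopy $\Weyl(H)$-spectrum. On the right-hand side, $\Phi^H(\Sigma^\infty_G(G/K)_+)$ is the classical ``geometric'' summand of $\Sigma^\infty_G(G/K)_+$: smashing with $\wEFgeomHfps$ annihilates all isotropy $K' \not\supseteq H$, and among subconjugacy classes $K' \leq H$ this forces $K' = H$, leaving the single contribution $\Sigma^\infty((G/K)^H)_+$ as a homotopy $\Weyl(H)$-spectrum. The main obstacle is exactly this last input; it is a standard computation in equivariant stable homotopy theory and can be established either by a direct tom Dieck-type analysis of the isotropy separation sequence of Definition \ref{defn.isotropy.separation.sequence} applied to $\ms{F} = \ms{F}_{\not\geq H}$, or bypassed altogether via the universal property of $\Spectra^{\gen G}$ as the symmetric monoidal stabilization of $\Spaces^{\gen G}_*$ inverting representation spheres: any symmetric monoidal cocontinuous functor out of $\Spectra^{\gen G}$ is determined by its restriction along $\Sigma^\infty_G$, so the commutativity of the square is forced once one identifies the underlying pointed-space operation of $\Phi^H$ with $(-)^H$, which is the defining feature of geometric fixedpoints.
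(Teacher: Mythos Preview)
The paper states this as an \emph{Observation} and gives no proof; it is treated as a standard fact about geometric fixedpoints. So there is nothing to compare against directly, and your proposal should be assessed on its own merits.

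Your overall strategy---construct a natural comparison map, reduce via cocontinuity to the orbit generators $(G/K)_+$, and invoke the classical identification $\Phi^H(\Sigma^\infty_G(G/K)_+)\simeq\Sigma^\infty((G/K)^H)_+$---is correct and standard. Two remarks:

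First, the universal-property route you sketch at the end is circular as written, given the paper's Definition~\ref{defn.geom.H.fps}. Saying ``the underlying pointed-space operation of $\Phi^H$ is $(-)^H$, which is the defining feature of geometric fixedpoints'' presupposes exactly what the square asserts. The universal property of $\Spectra^{\gen G}$ produces a \emph{unique} symmetric monoidal left adjoint extending $\Sigma^\infty\circ(-)^H$; identifying that functor with the paper's $\Phi^H:=U\circ(\wEFgeomHfps\tensoring-)^H$ is the actual content, and requires the tom Dieck input you mention. Concretely: for $Y:=\wEFgeomHfps\wedge X$ one has $Y^K\simeq *$ for all $K\lneq H$, so every term in the tom Dieck splitting of $U((\Sigma^\infty_G Y)^H)$ indexed by a proper subconjugate of $H$ vanishes, leaving only the top summand $\Sigma^\infty(Y^H)\simeq\Sigma^\infty(X^H)$. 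That is the non-circular argument.

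Second, a small simplification: the cocontinuity of $U\circ(-)^H$ on $\Spectra^{\gen G}$ does not need the restriction to the image of $\wEFgeomHfps\tensoring(-)$; it holds globally because this functor is corepresented by the compact object $\Sigma^\infty_G(G/H)_+$ (as recorded in the paper) and is exact.
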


\begin{observation}
\label{obs.geomHfps.smashes.to.localizer}
There is a unique nonzero morphism
\[
\sE \delta_H
\longra
\wEFgeomHfps
\]
in $\Spaces^{\gen G}_*$, and it becomes an equivalence
\[
\sE \delta_H
\simeq
\sE \delta_H
\wedge
\sE \delta_H
\xlongra{\sim}
\wEFgeomHfps
\wedge
\sE \delta_H
\]
upon smashing it with its source.
\end{observation}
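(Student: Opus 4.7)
The plan is to use Elmendorf's theorem to identify $\Spaces^{\gen G}_*$ with pointed presheaves on $\Orb_G$ and thereby reduce the problem to pointwise computations on fixed-point spaces. The relevant data is: $(\sE\delta_H)^K \simeq S^0$ when $K$ is conjugate to $H$ and $\pt$ otherwise, while $(\wEFgeomHfps)^K \simeq S^0$ when $K \geq H$ in $\pos_G$ and $\pt$ otherwise; on each of these $S^0$'s the corresponding Weyl group acts trivially, since they arise as fixed-point data.

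First I would construct the desired nonzero morphism by smashing the canonical projection $\EFH \to S^0$ in $\Spaces^{\gen G}_*$ with $\wEFgeomHfps$, yielding
\[ \EFH \wedge \wEFgeomHfps \longra S^0 \wedge \wEFgeomHfps \simeq \wEFgeomHfps, \]
and then identifying the source with $\sE\delta_H$ by a pointwise fixed-point check: $(\EFH)^K \wedge (\wEFgeomHfps)^K$ is $S^0$ precisely when both $K \leq H$ and $K \geq H$, i.e.\! when $K = H$.

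For uniqueness, Elmendorf realizes $\hom_{\Spaces^{\gen G}_*}(\sE\delta_H, \wEFgeomHfps)$ as a limit over $\Orb_G^\op$ of pointed mapping spaces $\hom_*((\sE\delta_H)^K, (\wEFgeomHfps)^K)$. Each factor is contractible for $K$ not conjugate to $H$ (the source being $\pt$), while at $K = H$ we obtain $\hom_*(S^0, S^0) \simeq S^0$, a discrete two-point space. The functoriality constraints --- those from morphisms $G/H \to G/K$ with $K > H$ and from the $\Weyl(H)$-action at $K = H$ --- impose no further conditions, since the source is $\pt$ at every $K \neq H$ and the $\Weyl(H)$-action on $S^0$ is trivial. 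Hence the full mapping space is $S^0$, yielding exactly two morphisms: the basepoint (the zero morphism) and a unique nonzero one.

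For the equivalence claim, I would verify pointwise that smashing the nonzero morphism with $\sE\delta_H$ produces an equivalence. Both $\sE\delta_H \wedge \sE\delta_H$ (by idempotence) and $\wEFgeomHfps \wedge \sE\delta_H$ (by an analogous pointwise computation: the only $K$ with both factors nontrivial is $K = H$) are naturally equivalent to $\sE\delta_H$, and the induced morphism is the identity on $H$-fixed points --- being the smash of two identity maps on $S^0$ --- and the unique map between singletons at all other $K$. The one nontrivial point is keeping track of the Elmendorf identifications and the compatibility with the $\Orb_G$-functoriality, but as all nontrivial data concentrates at the single orbit $G/H$ with trivial Weyl action, this bookkeeping reduces to inspection.
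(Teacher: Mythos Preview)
Your argument is correct. The paper states this as an Observation without proof, treating it as immediate from the fixed-point characterizations of $\sE\delta_H$ and $\wEFgeomHfps$; your Elmendorf-based pointwise verification is exactly the routine check the paper leaves implicit, and your construction of the nonzero morphism (smashing $\EFH \to S^0$ with $\wEFgeomHfps$ and identifying the source with $\sE\delta_H$) is the natural one.
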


\begin{theorem}
\label{thm.geom.stratn.of.SpgG}
The geometric prestratification of $\Spectra^{\gen G}$ over $\pos_G$ is a symmetric monoidal stratification.  Moreover,
\begin{enumerate}
\item\label{item.identify.Hth.stratum.of.canonical.stratn} its $H\th$ stratum is the $\infty$-category
\[ \Spectra^{\gen G}_H \simeq \Spectra^{\htpy \Weyl(H)} \]
of homotopy $\Weyl(H)$-spectra, and
\item\label{item.identify.Hth.geom.localizn.functor.of.canonical.stratn} its $H\th$ geometric localization functor is the geometric $H$-fixedpoints functor
\[
\Spectra^{\gen G}
\xra{\Phi^H}
\Spectra^{\htpy \Weyl(H)}
\simeq
\Spectra^{\gen G}_H
~.
\]
\end{enumerate}
\end{theorem}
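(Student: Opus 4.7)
The plan is to verify condition $(\star)$ first, and then separately identify the strata and geometric localization functors with $\Spectra^{\htpy \Weyl(H)}$ and $\Phi^H$ respectively.

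For condition $(\star)$: by \Cref{obs.can.prestratn.of.SpgG.is.sm} the geometric prestratification is symmetric monoidal, so by \Cref{obs.yo.comm.for.s.m.stratns} it suffices to check that for each pair $H, K \in \pos_G$ the canonical morphism
\[
i_L(\uno_{\cI_H}) \otimes i_L(\uno_{\cI_K}) \otimes i_L\bigl(\uno_{\cI_{(^\leq H)\cap(^\leq K)}}\bigr) \longra i_L(\uno_{\cI_H}) \otimes i_L(\uno_{\cI_K})
\]
is an equivalence. Under the identifications $i_L(\uno_{\cI_\ms{F}}) \simeq \Sigma^\infty_G(\EFunptd)_+$ and the symmetric monoidality of $\Sigma^\infty_G$, this reduces to the assertion that the natural map from $E(^\leq H)_+ \wedge E(^\leq K)_+ \wedge E((^\leq H) \cap (^\leq K))_+$ to $E(^\leq H)_+ \wedge E(^\leq K)_+$ is an equivalence in $\Spaces^{\gen G}_*$. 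This is straightforward: comparing genuine $L$-fixedpoints for each closed subgroup $L \leq G$, both sides are $S^0$ when $L \in (^\leq H) \cap (^\leq K)$ and contractible otherwise, so the underlying genuine $G$-space $E(^\leq H) \times E(^\leq K)$ models $E((^\leq H) \cap (^\leq K))$.

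For the strata and geometric localization functors, I would show that the geometric $H$-fixedpoints functor $\Phi^H : \cR \to \Spectra^{\htpy \Weyl(H)}$ factors through $\cR_H$ as an equivalence. First, $\Phi^H$ vanishes on $\cI_{<H}$: since $\Phi^H$ preserves colimits, it suffices to compute on generators $\Sigma^\infty_G(G/K)_+$ for $K < H$, and by \Cref{obs.geom.fps.commutes.w.suspension} we have $\Phi^H(\Sigma^\infty_G(G/K)_+) \simeq \Sigma^\infty((G/K)^H_+)$, which is zero because $H \not\leq K$ in $\pos_G$ forces $(G/K)^H = \es$. This gives a canonical factorization $\cR \to \cR_H \xra{\ol{\Phi^H}} \Spectra^{\htpy \Weyl(H)}$, and the plan is to show that $\ol{\Phi^H}$ is an equivalence.

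For the equivalence, I would use \Cref{obs.if.X.in.stratum.RH.then.X.equivt.to.X.smash.EdeltaH} to identify $\cR_H$ (via $\nu$) with the full subcategory of $\cR$ consisting of objects $X$ satisfying $X \simeq X \otimes \Sigma^\infty_G \sE\delta_H$. On such objects, the symmetric monoidality of $\Phi^H$ (\Cref{obs.geom.fps.is.sm}) together with \Cref{obs.geomHfps.smashes.to.localizer} allows one to compute mapping spectra through $\Phi^H$, yielding full faithfulness. For essential surjectivity, note that $\Phi^H(\Sigma^\infty_G((G/H)_+ \wedge \sE\delta_H)) \simeq \Sigma^\infty((G/H)^H_+) \simeq \Sigma^\infty \Weyl(H)_+$, which is a compact generator of $\Spectra^{\htpy \Weyl(H)}$; since $\ol{\Phi^H}$ is colimit-preserving, this suffices.

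The main obstacle will be establishing that $\ol{\Phi^H}$ is fully faithful on $\cR_H$, i.e.\! that the mapping spectrum computation
\[
\ulhom_{\cR}(X, Y) \xlongra{\sim} \ulhom_{\Spectra^{\htpy \Weyl(H)}}(\Phi^H X, \Phi^H Y)
\]
holds for $X, Y$ that are $\sE\delta_H$-local. The difficulty is that mapping spectra in genuine $G$-spectra do not immediately reduce to geometric fixedpoints; one must leverage the idempotence of $\sE\delta_H$ (\Cref{obs.geomHfps.smashes.to.localizer}) and the interaction between genuine and geometric fixedpoints on $\sE\delta_H$-local spectra, essentially a form of the Greenlees--May splitting. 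Once this is established, the stratification and identification of its strata and geometric localization functors follow.
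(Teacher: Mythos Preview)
Your verification of condition~$(\star)$ is essentially identical to the paper's: both reduce via \Cref{obs.yo.comm.for.s.m.stratns} and \Cref{obs.can.prestratn.of.SpgG.is.sm} to a fixedpoint check on smash products of classifying spaces of families.

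For parts~\Cref{item.identify.Hth.stratum.of.canonical.stratn} and~\Cref{item.identify.Hth.geom.localizn.functor.of.canonical.stratn}, your organization differs from the paper's, though the content is close. You construct the comparison functor $\ol{\Phi^H}: \cR_H \to \Spectra^{\htpy \Weyl(H)}$ directly and aim to prove it is an equivalence by checking full faithfulness and essential surjectivity on $\sE\delta_H$-local objects. The paper instead works Morita-theoretically: it first establishes the identity $(i_L\nu p_L y(X))^H \simeq \Phi^H(X)$ via a short chain of manipulations with the idempotent $\sE\delta_H$, observes that $p_L(\Sigma^\infty_G(G/H)_+)$ is a compact generator of $\cR_H$, and then simply computes its endomorphism ring spectrum as $\Sigma^\infty_+\Weyl(H)$. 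The identification of the geometric localization functor follows from the same computation with a general object $X$ in the second slot. Your ``main obstacle'' --- the mapping-spectrum comparison for $\sE\delta_H$-local objects --- is precisely what this computation handles; once you reduce full faithfulness to the compact generator (which you implicitly do in your essential-surjectivity step), the two arguments converge. The paper's route is more economical because it never needs to discuss full faithfulness or locality in the abstract.

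One small point you should tighten: showing that $\Phi^H$ vanishes on $\cI_{^<H}$ only gives a factorization of $\Phi^H|_{\cI_H}$ through $\cR_H$. To obtain $\Phi^H \simeq \ol{\Phi^H}\circ p_L \circ y$ on all of $\cR$, you also need $\Phi^H(X) \simeq \Phi^H(i_Ly(X))$, i.e.\ that $\Phi^H$ is invariant under tensoring with $\Sigma^\infty_G\EFH$. This is immediate from symmetric monoidality of $\Phi^H$ (\Cref{obs.geom.fps.is.sm}) and $\Phi^H(\Sigma^\infty_G\EFH)\simeq\SS$ (\Cref{obs.geom.fps.commutes.w.suspension}), but it should be stated.
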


\begin{proof}
Applying \Cref{obs.can.prestratn.of.SpgG.is.sm}, we see that it suffices to show that the geometric prestratification is a stratification. For this, we first verify the two asserted identifications for the geometric prestratification of $\Spectra^{\gen G}$ over $\pos_G$, and then we use these identifications to verify that it is indeed a stratification.

Towards verifying the two identifications, for any $X \in \Spectra^{\gen G}$, referring to the functors in the diagram
\[ \begin{tikzcd}[column sep=2cm, row sep=0cm]
\Spectra^{\gen G}
\arrow[hookleftarrow, transform canvas={yshift=0.9ex}]{r}{i_L}
\arrow[transform canvas={yshift=-0.9ex}]{r}[yshift=-0.2ex]{\bot}[swap]{y}
&
\cI_H
\arrow[transform canvas={yshift=0.9ex}]{r}{p_L}
\arrow[hookleftarrow, transform canvas={yshift=-0.9ex}]{r}[yshift=-0.2ex]{\bot}[swap]{\nu}
&
\Spectra^{\gen G}_H
\end{tikzcd} \]
we compute in $\Spectra^{\htpy \Weyl(H)}$ that
\begin{align}
\label{identify.H.fps.as.H.geomfps.step.one}
( i_L \nu p_L y (X) )^H
&\simeq
( \sE \delta_H \tensoring X )^H
\\
\label{identify.H.fps.as.H.geomfps.step.two}
&\simeq
( (\wEFgeomHfps \wedge \sE \delta_H) \tensoring X )^H
\\
\nonumber
&\simeq
( \wEFgeomHfps \tensoring ( \sE \delta_H \tensoring X ))^H
\\
\nonumber
&:=
\Phi^H ( \sE \delta_H \tensoring X )
\\
\label{identify.H.fps.as.H.geomfps.step.four}
& \simeq
\Phi^H ( \Sigma^\infty_G ( \sE \delta_H) ) \otimes \Phi^H(X)
\\
\label{identify.H.fps.as.H.geomfps.step.five}
& \simeq
\Sigma^\infty ( ( \sE \delta_H)^H ) \otimes \Phi^H (X)
\\
\label{identify.H.fps.as.H.geomfps.step.six}
& \simeq
\Phi^H(X)
~,
\end{align}
where
\begin{itemize}
\item equivalence \Cref{identify.H.fps.as.H.geomfps.step.one} follows from \Cref{obs.if.X.in.stratum.RH.then.X.equivt.to.X.smash.EdeltaH},
\item equivalence \Cref{identify.H.fps.as.H.geomfps.step.two} follows from \Cref{obs.geomHfps.smashes.to.localizer},
\item equivalence \Cref{identify.H.fps.as.H.geomfps.step.four} follows from \Cref{obs.geom.fps.is.sm},
\item equivalence \Cref{identify.H.fps.as.H.geomfps.step.five} follows from \Cref{obs.geom.fps.commutes.w.suspension}, and
\item equivalence \Cref{identify.H.fps.as.H.geomfps.step.six} follows from the equivalence $(\sE\delta_H)^H \simeq S^0$ in $\Spaces^{\htpy \Weyl(H)}_*$.
\end{itemize}
Now, to verify part \Cref{item.identify.Hth.stratum.of.canonical.stratn}, we begin by observing via the recollement
\[ \begin{tikzcd}[column sep=1.5cm]
\cI_{^< H}
\arrow[hook, bend left=45]{r}[description]{i_L}
\arrow[leftarrow]{r}[transform canvas={yshift=0.1cm}]{\bot}[swap,transform canvas={yshift=-0.1cm}]{\bot}[description]{\yo}
\arrow[bend right=45, hook]{r}[description]{i_R}
&
\cI_H
\arrow[bend left=40]{r}[description]{p_L}
\arrow[hookleftarrow]{r}[transform canvas={yshift=0.1cm}]{\bot}[swap,transform canvas={yshift=-0.1cm}]{\bot}[description]{\nu}
\arrow[bend right=40]{r}[description]{p_R}
&
\Spectra^{\gen G}_H
\end{tikzcd} \]
that the object $p_L(\Sigma^\infty_G(G/H)_+) \in \Spectra^{\gen G}_H$ is a compact generator, so that it suffices to verify that the composite morphism
\begin{equation}
\label{composite.morphism.from.susp.spectrum.of.Weyl.group.to.endomorphisms.in.XH}
\begin{aligned}
\Sigma^\infty_+ \Weyl(H)
& \simeq
\Sigma^\infty \End_{\Spaces^{\gen G}_*}( ( G/H)_+ )
\\
& \xra{\Sigma^\infty_G}
\ulEnd_{\Spectra^{\gen G}}(\Sigma^\infty_G(G/H)_+)
\\
& \underset{\sim}{\xlongra{y}}
\ulEnd_{\cI_H}(\Sigma^\infty_G(G/H)_+)
\\
& \xra{p_L}
\ulEnd_{\Spectra^{\gen G}_H}(p_L(\Sigma^\infty_G(G/H)_+))
\end{aligned}
\end{equation}

of ring spectra is an equivalence.  For this, by adjunction we compute that
\begin{align}
\nonumber
\ulEnd_{\Spectra^{\gen G}_H}(p_L(\Sigma^\infty_G(G/H)_+))
&:=
\ulhom_{\Spectra^{\gen G}_H}( p_L(\Sigma^\infty_G(G/H)_+) , p_L(\Sigma^\infty_G(G/H)_+) )
\\
\nonumber
&\simeq
\ulhom_{\cI_H}(\Sigma^\infty_G(G/H)_+ , \nu p_L(\Sigma^\infty_G(G/H)_+) )
\\
\nonumber
&\simeq
\ulhom_{\Spectra^{\gen G}} ( \Sigma^\infty_G(G/H)_+ , i_L \nu p_L ( \Sigma^\infty_G(G/H)_+ ) )
\\
\nonumber
&\simeq
( i_L \nu p_L ( \Sigma^\infty_G(G/H)_+ ) )^H
\\
\label{get.to.Weyl.step.one}
&\simeq
\Phi^H ( \Sigma^\infty_G(G/H)_+ )
\\
\label{get.to.Weyl.step.two}
&\simeq
\Sigma^\infty ( ( (G/H)_+ )^H )
\\
\nonumber
&\simeq
\Sigma^\infty ( \hom_{\Spaces^{\gen G}}(G/H,G/H)_+ )
\\
\nonumber
&\simeq
\Sigma^\infty ( \Weyl(H)_+ )
~,
\end{align}
where equivalence \Cref{get.to.Weyl.step.one} follows from the equivalences \Cref{identify.H.fps.as.H.geomfps.step.one}-\Cref{identify.H.fps.as.H.geomfps.step.six} and equivalence \Cref{get.to.Weyl.step.two} follows from \Cref{obs.geom.fps.commutes.w.suspension}.  This string of equivalences of spectra evidently underlies the composite morphism \Cref{composite.morphism.from.susp.spectrum.of.Weyl.group.to.endomorphisms.in.XH} of ring spectra, which proves part \Cref{item.identify.Hth.stratum.of.canonical.stratn}.  To verify part \Cref{item.identify.Hth.geom.localizn.functor.of.canonical.stratn}, we compute for any $X \in \Spectra^{\gen G}$ that
\begin{align}
\nonumber
\ulhom_{\Spectra^{\gen G}_H} ( p_L ( \Sigma^\infty_G(G/H)_+ ) , p_L y (X) )
&\simeq
\ulhom_{\cI_H} ( \Sigma^\infty_G ( G/H)_+ , \nu p_L y (X) )
\\
\nonumber
&\simeq
\ulhom_{\Spectra^{\gen G}} ( \Sigma^\infty_G ( G/H)_+ , i_L \nu p_L y (X) )
\\
\nonumber
&\simeq
( i_L \nu p_L y (X) )^H
\\
\label{get.to.H.geom.fps.of.X.step.two}
&\simeq
\Phi^H(X)~,
\end{align}
where equivalence \Cref{get.to.H.geom.fps.of.X.step.two} follows from equivalences \Cref{identify.H.fps.as.H.geomfps.step.one}-\Cref{identify.H.fps.as.H.geomfps.step.six}.

We now verify that the geometric prestratification of $\Spectra^{\gen G}$ over $\pos_G$ is indeed a stratification. Using Observations \ref{obs.yo.comm.for.s.m.stratns} \and \ref{obs.EFQ.colocalizes.SpgG.into.ZQ}, it suffices to observe that for any $\sD,\sD' \in \Down_{\pos_G}$ the morphism
\[
(\sE \ms{F}_\sD)_+
\wedge
(\sE \ms{F}_{\sD'})_+
\wedge
(\sE \ms{F}_{\sD \cap \sD'})_+
\longra
(\sE \ms{F}_\sD)_+
\wedge
(\sE \ms{F}_{\sD'})_+
\]
in $\Spaces^{\gen G}_*$ is an equivalence.
\end{proof}



\subsection{The proper Tate construction}
\label{subsection.tate.constrn}

In this brief subsection we discuss the gluing functors of the geometric stratification of genuine $G$-spectra, which are versions of the Tate construction.



\begin{observation}
\label{obs.gluing.functors.for.SpgG}
By definition, the $H\th$ geometric localization functor of the geometric stratification of genuine $G$-spectra is the left adjoint in the composite adjunction
\[ \begin{tikzcd}[column sep=2cm, row sep=0cm]
\Phi^H
:
\Spectra^{\gen G}
\arrow[transform canvas={yshift=0.9ex}]{r}{\Res^G_{\Normzer (H)}}
\arrow[leftarrow, transform canvas={yshift=-0.9ex}]{r}[yshift=-0.2ex]{\bot}[swap]{\coInd^G_{\Normzer (H)}}
&
\Spectra^{\gen \Normzer (H)}
\arrow[transform canvas={yshift=0.9ex}]{r}{\Phi^H_\gen}
\arrow[hookleftarrow, transform canvas={yshift=-0.9ex}]{r}[yshift=-0.2ex]{\bot}[swap]{\rho^H_\gen}
&
\Spectra^{\gen \Weyl (H)}
\arrow[transform canvas={yshift=0.9ex}]{r}{U_{\Weyl(H)}}
\arrow[hookleftarrow, transform canvas={yshift=-0.9ex}]{r}[yshift=-0.2ex]{\bot}[swap]{\beta_{\Weyl(H)}}
&
\Spectra^{\htpy \Weyl (H)}
:
\rho^H
\end{tikzcd}
~.
 \]
It follows that for any $H \leq K$ in $\pos_G$, the gluing functor $\Gamma^H_K$ is the composite
\begin{equation}
\label{gluing.functor.for.SpgG.in.general}
\begin{tikzcd}[column sep=2cm, row sep=0.5cm]
&
\Spectra^{\gen \Normzer (K)}
\arrow{r}{\Phi^K_\gen}
&
\Spectra^{\gen \Weyl (K)}
\arrow{r}{U_{\Weyl(K)}}
&
\Spectra^{\htpy \Weyl (K)}
\\
\Spectra^{\gen G}
\arrow{ru}[sloped]{\Res^G_{\Normzer (K)}}
\\
&
\Spectra^{\gen \Normzer (H)}
\arrow{lu}[sloped, swap]{\coInd^G_{\Normzer (H)}}
\arrow[hookleftarrow]{r}[swap]{\rho^H_\gen}
&
\Spectra^{\gen \Weyl (H)}
\arrow[hookleftarrow]{r}[swap]{\beta_{\Weyl(H)}}
&
\Spectra^{\htpy \Weyl (H)}
\arrow[dashed]{uu}[swap]{\Gamma^H_K}
\end{tikzcd}~.
\end{equation}
When $H$ and $K$ are both normal subgroups of $G$ (which is automatic when $G$ is abelian), then the composite \Cref{gluing.functor.for.SpgG.in.general} reduces to the composite
\[
\begin{tikzcd}[row sep=1.5cm, column sep=1.5cm]
\Spectra^{\gen ( G/K) }
\arrow{r}{U_{G/K}}
&
\Spectra^{\htpy ( G/K)}
\\
\Spectra^{\gen (G/H)}
\arrow[hookleftarrow]{r}[swap]{\beta_{G/H}}
\arrow{u}{\Phi^{K/H}_\gen}
&
\Spectra^{\htpy ( G/H)}
\arrow[dashed]{u}[swap]{\Gamma^H_K}
\end{tikzcd}~.
\]
\end{observation}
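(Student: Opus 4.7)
My plan is to reduce the observation to routine bookkeeping around \Cref{thm.geom.stratn.of.SpgG} and the factorization of categorical fixedpoints through the normalizer. First, I would identify the asserted three-fold composite factorization of $\Phi^H$. By \Cref{thm.geom.stratn.of.SpgG}\Cref{item.identify.Hth.geom.localizn.functor.of.canonical.stratn}, the geometric localization functor is the composite $U_{\Weyl(H)} \circ \Phi^H_\gen$, with $\Phi^H_\gen(X) := (\wEFgeomHfps \tensoring X)^H$ landing in $\Spectra^{\gen \Weyl(H)}$. By the standard decomposition of categorical $H$-fixedpoints through $\Res^G_{\Normzer(H)}$ (recorded in \Cref{notn.omnibus.genuine.stuff}) and the fact that restriction is symmetric monoidal, I would verify the identification $\Phi^H_\gen \simeq \Phi^H_\gen \circ \Res^G_{\Normzer(H)}$, where on the right $\Phi^H_\gen$ now uses the restriction $\Res^G_{\Normzer(H)}(\wEFgeomHfps)$; by \Cref{obs.geom.fixedpoints.by.any.other.name} (applied to $\Normzer(H)$, with $H$ now a normal subgroup) this agrees with the geometric fixedpoints functor for $\Normzer(H)$.

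Second, I would read off the right adjoint $\rho^H$ by taking right adjoints componentwise: $\Res^G_{\Normzer(H)} \dashv \coInd^G_{\Normzer(H)}$ is the Wirthm\"uller adjunction, $\Phi^H_\gen \dashv \rho^H_\gen$ is the standard fully faithful inclusion of $\Spectra^{\gen \Weyl(H)} \hookrightarrow \Spectra^{\gen \Normzer(H)}$ on objects with geometric support at $H$, and $U_{\Weyl(H)} \dashv \beta_{\Weyl(H)}$ is the Borel-completion adjunction from \Cref{notn.omnibus.genuine.stuff}. The assembled composite is then $\rho^H$.

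Third, I would compute $\Gamma^H_K := \Phi^K \circ \rho^H$ by simply concatenating the two three-fold composites; the resulting six-functor string is exactly the composite displayed in \Cref{gluing.functor.for.SpgG.in.general}. Finally, when $H$ and $K$ are both normal in $G$ (automatic if $G$ is abelian), $\Normzer(H) = \Normzer(K) = G$, so $\Res^G_{\Normzer(K)}$ and $\coInd^G_{\Normzer(H)}$ both collapse to identity functors. To simplify the middle, I would use that for normal inclusions $H \trianglelefteq K \trianglelefteq G$ there is a canonical identification $\Phi^K_\gen \circ \rho^H_\gen \simeq \Phi^{K/H}_\gen$ as functors $\Spectra^{\gen (G/H)} \to \Spectra^{\gen (G/K)}$ (a consequence of composition of geometric fixedpoints under the identification \Cref{align.localized.G.spt.with.G.mod.H.spt}), yielding the desired reduced composite.

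The main obstacle, to the extent there is one, is purely notational rather than mathematical: keeping the various $(-)^H$ functors (genuine, homotopy, via restriction to normalizer) correctly aligned with their sources and targets. In particular, the single key non-formal input is checking that $\Phi^H_\gen \circ \Res^G_{\Normzer(H)}$ recovers $\Phi^H_\gen$ as endofunctor-valued on the correct slice of equivariant categories; this is where \Cref{obs.geom.fixedpoints.by.any.other.name} (the independence of $\Phi^H$ from the precise family used) does the essential work, since the restriction of the family $(^{\not\geq} H)_G$ to $\Normzer(H)$ need not literally coincide with $(^{\not\geq} H)_{\Normzer(H)}$, but both are permissible families for defining $\Phi^H$.
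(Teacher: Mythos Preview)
Your proposal is correct and matches the paper's approach. The paper treats this observation as immediate from definitions (it begins ``By definition'' and offers no separate proof), so your argument is in fact more explicit than what appears there; in particular, you are right to isolate the family-restriction point---that $\Res^G_{\Normzer(H)}(\wEF[^{\not\geq}H]_G)$ need not literally equal $\wEF[^{\not\geq}H]_{\Normzer(H)}$---as the one step requiring the independence result of \Cref{obs.geom.fixedpoints.by.any.other.name}, and your handling of the normal case via $\Phi^H_\gen \rho^H_\gen \simeq \id$ together with the composition of geometric fixedpoints is exactly the intended simplification.
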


\begin{observation}
\label{H.to.K.gluing.is.zero.when.K.not.leq.NH}
The subcomposite
\[
\Spectra^{\gen \Normzer (H)}
\xra{\coInd_{\Normzer(H)}^G}
\Spectra^{\gen G}
\xra{\Res^G_{\Normzer(K)}}
\Spectra^{\gen \Normzer(K)}
\xra{\Phi^K_\gen}
\Spectra^{\gen \Weyl (K)}
\]
of the composite \Cref{gluing.functor.for.SpgG.in.general} is zero whenever $\Normzer (H) \not\geq K$.
\end{observation}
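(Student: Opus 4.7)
The plan is to reduce the composite to $\Phi^K_\gen \circ \coInd^G_{\Normzer(H)}$ and then annihilate it using the projection formula for coinduction. By construction (\Cref{obs.gluing.functors.for.SpgG}), the subcomposite $\Phi^K_\gen \circ \Res^G_{\Normzer(K)}: \Spectra^{\gen G} \to \Spectra^{\gen \Weyl(K)}$ agrees with the geometric $K$-fixed points functor $\Phi^K_\gen$ on $\Spectra^{\gen G}$, so the total composite is $\Phi^K_\gen \circ \coInd^G_{\Normzer(H)}$. By \Cref{defn.geom.H.fps}, we have $\Phi^K_\gen(E) = (\w{\sE}\ms{F}_{(^{\not\geq} K)} \tensoring E)^K$, where $\w{\sE}\ms{F}_{(^{\not\geq} K)} \in \Spaces^{\gen G}_*$ is the isotropy-separation space for the family of closed subgroups not containing a conjugate of $K$, and $(-)^K$ denotes categorical $K$-fixed points valued in $\Spectra^{\gen \Weyl(K)}$.

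Next, I will invoke the projection formula for coinduction,
\[ Y \tensoring \coInd^G_{\Normzer(H)} X \simeq \coInd^G_{\Normzer(H)}(\Res^G_{\Normzer(H)} Y \tensoring X), \]
which follows by adjunction from the symmetric monoidality of $\Res^G_{\Normzer(H)}$ (the internal hom exchange $\Res \, \ulhom(Y,Z) \simeq \ulhom(\Res Y, \Res Z)$ being the key input). Setting $Y = \w{\sE}\ms{F}_{(^{\not\geq} K)}$, the problem reduces to showing that $\Res^G_{\Normzer(H)}(\w{\sE}\ms{F}_{(^{\not\geq} K)})$ is equivariantly contractible as a pointed $\Normzer(H)$-space. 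For any closed subgroup $J \leq \Normzer(H)$, its $J$-fixed points coincide with those of $\w{\sE}\ms{F}_{(^{\not\geq} K)}$ in $\Spaces^{\gen G}_*$, which are $\pt$ when $K \not\leq_G J$ and $S^0$ when $K \leq_G J$. But $J \leq \Normzer(H)$ implies $J \leq_G \Normzer(H)$, so $K \leq_G J$ would force $K \leq_G \Normzer(H)$, contradicting the hypothesis. Hence every $J$-fixed point space is $\pt$, confirming contractibility; applying $\Sigma^\infty_{\Normzer(H)}$ then yields $0$ in $\Spectra^{\gen \Normzer(H)}$.

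Combining these steps, $\w{\sE}\ms{F}_{(^{\not\geq} K)} \tensoring \coInd^G_{\Normzer(H)} X \simeq \coInd^G_{\Normzer(H)}(0) \simeq 0$ in $\Spectra^{\gen G}$, so its categorical $K$-fixed points vanish, completing the argument. The only technical point to verify carefully is the projection formula for coinduction in the compact Lie setting, which follows formally by adjunction from the symmetric monoidality of restriction without requiring any appeal to Wirthm\"uller duality; I therefore do not expect a serious obstacle.
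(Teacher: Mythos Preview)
The paper states this as an Observation without proof; your route via the projection formula for $\coInd$ together with the equivariant contractibility of $\Res^G_{\Normzer(H)}\w{\sE}\ms{F}_{(^{\not\geq}K)}$ is a standard way to supply the argument, and your fixed-point check for the latter is correct.

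The gap is in your justification of the projection formula $Y \otimes \coInd X \simeq \coInd(\Res Y \otimes X)$. The internal-hom exchange $\Res\,\ulhom(Y,Z)\simeq\ulhom(\Res Y,\Res Z)$ is equivalent, upon passing to left adjoints, to the projection formula for $\Ind$---not for $\coInd$. The projection formula for the \emph{right} adjoint of a symmetric monoidal functor is not formal: for instance, with $f^*\colon\Spectra\to\Fun(\sB\Cyclic_p,\Spectra)$ the trivial-action functor and $f_*=(-)^{\htpy\Cyclic_p}$, the analogous formula fails for $Y$ an infinite coproduct since $(-)^{\htpy\Cyclic_p}$ does not commute with coproducts. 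What makes the formula hold in $\Spectra^{\gen G}$ is the additional fact that $\coInd^G_{\Normzer(H)}$ preserves colimits (because $\Res$ preserves compact objects---the restriction of any orbit is a finite $\Normzer(H)$-CW spectrum). With this in hand, both sides commute with colimits in $Y$ and one reduces to dualizable $Y$, where the formula does follow from $\Res$ being symmetric monoidal. So your argument can be completed, but not ``formally by adjunction,'' and the missing input (colimit-preservation of $\coInd$, i.e.\ compactness-preservation of $\Res$) is exactly the finiteness that also underlies Wirthm\"uller duality.
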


\begin{definition}
\label{defn.proper.H.tate.in.terms.of.genuine}
We define the \bit{proper $H$-Tate construction} to be the composite functor
\[ \begin{tikzcd}
(-)^{\tate H}
:
\Spectra^{\htpy G}
\arrow[hook]{r}{\beta}
&
\Spectra^{\gen G}
\arrow{r}{\Phi^H_\gen}
\arrow[bend right=15]{rr}[swap]{\Phi^H}
&
\Spectra^{\gen \Weyl(H)}
\arrow{r}{U}
&
\Spectra^{\htpy \Weyl(H)}
\end{tikzcd}~. \]
\end{definition}


\begin{remark}
\label{rmk.genzd.tate.reduces.to.tate.Cp}
We make \Cref{defn.proper.H.tate.in.terms.of.genuine} here in the interest of self-containment, but in fact the proper $H$-Tate construction
\[
\Spectra^{\htpy G}
\xra{(-)^{\tate H}}
\Spectra^{\htpy \Weyl(H)}
\]
admits a description making no reference to genuine equivariant homotopy theory, at least assuming that $G$ is finite. Namely, we prove as \cite[Proposition \ref{mackey:prop.proper.Tate.from.genuine.G.objs}]{AMR-mackey} that it is given by quotienting by norms from all proper subgroups of $H$: it is the lower composite in the left Kan extension diagram
\[ \begin{tikzcd}[row sep=1.5cm, column sep=1.5cm]
\Spectra^{\htpy G}
\arrow{r}{(-)^{\htpy H}}[swap, xshift=-0.8cm, yshift=-0.8cm]{\rotatebox{55}{$\Leftarrow$}}
\arrow{d}[swap]{p}
&
\Spectra^{\htpy \Weyl(H)}
\\
\Spectra^{\htpy G} /^\St \cI
\arrow[dashed]{ru}
\end{tikzcd}~, \]
where $p$ denotes the projection to the stable quotient by the thick ideal subcategory $\cI \subseteq \Spectra^{\htpy G}$ generated by the objects $\{ \Sigma^\infty(G/K)_+ \in \Spectra^{\htpy G} \}_{K < H}$.
In particular, when $G=H=\Cyclic_p$ for a prime $p$, this recovers the ordinary Tate construction
\[
(-)^{\tate \Cyclic_p}
\simeq
(-)^{\st \Cyclic_p}
:=
\cofib \left(
(-)_{\htpy \Cyclic_p}
\xra{\Nm_{\Cyclic_p}}
(-)^{\htpy \Cyclic_p}
\right)
~.
\]
\end{remark}

\begin{remark}
\label{rmk.nonabelian.monodromy}
Assuming that $G$ is finite, as \cite[Theorem \ref{mackey:intro.thm.gluing.functors}]{AMR-mackey} we identify the gluing functor
\[
\Spectra^{\htpy \Weyl(H)}
\xra{\Gamma^H_K}
\Spectra^{\htpy \Weyl(K)}
\]
for any $H \leq K$ in $\pos_G$: writing
\[
\tilde{C}(H,K)
:=
\{ g \in G : H \subseteq gKg^{-1} \subseteq \Normzer(H) \}
~,
\]
it is given by the formula
\[
E
\longmapsto
\bigoplus_{[g] \in \Normzer(H) \backslash \tilde{C}(H,K) / \Normzer(K)}
\Ind_{(\Normzer(H) \cap ( \Normzer(gKg^{-1} ) ) )/(gKg^{-1})}^{\Weyl(K)} E^{\tate (gKg^{-1})/H}
~.
\]
(In particular, by \Cref{rmk.genzd.tate.reduces.to.tate.Cp} this description also makes no reference to genuine equivariant homotopy theory.)
\end{remark}





\begin{observation}
\label{obs.NS.proper.tate.is.p.primary}
We record here the following arithmetic facts surrounding the proper Tate construction, which we use in \Cref{subsection.examples.of.SpgG}.
\begin{enumerate}

\item\label{vanishes.if.not.prime.power.order}
By \cite[Lemma II.6.7]{NS}, if $G$ is a finite group whose order is not a prime power, then the proper Tate construction $(-)^{\tate G}$ vanishes.

\item\label{Cp.tate.vanishes.if.p.acts.invertibly}
By \cite[Lemma I.2.8]{NS}, if $E \in \Spectra^{\htpy \Cyclic_p}$ and $p$ acts invertibly on $\pi_n E$ for all $n \in \ZZ$, then $E^{\tate \Cyclic_p} \simeq 0$.

\item\label{Cp.tate.is.p.complete}
By the Segal conjecture \cite{Lin-Segal,AGM-Segal} combined with \cite[Theorem I.3.1]{NS}, for any $E \in \Spectra^{\htpy \Cyclic_p}$ the spectrum $E^{\tate \Cyclic_p} \in \Spectra$ is $p$-complete.
\end{enumerate}
\end{observation}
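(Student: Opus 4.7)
The plan is to recognize that all three parts are citation-style assertions that follow from the Nikolaus--Scholze references once we have identified the proper Tate construction of this paper with the constructions they study. The identification is already supplied by \Cref{rmk.genzd.tate.reduces.to.tate.Cp}: the functor $(-)^{\tate H} \colon \Spectra^{\htpy G} \to \Spectra^{\htpy \Weyl(H)}$ defined via genuine equivariant homotopy theory coincides with the cofiber of the sum of norms from all proper subgroups, and in the case $G = H = \Cyclic_p$ it reduces further to the ordinary Tate construction $(-)^{\st \Cyclic_p}$. Once this is in hand, each assertion is a direct invocation.

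For part \Cref{vanishes.if.not.prime.power.order}, I would first apply \Cref{rmk.genzd.tate.reduces.to.tate.Cp} with $H = G$ to identify $(-)^{\tate G} \colon \Spectra^{\htpy G} \to \Spectra$ with the projection to the stable quotient $\Spectra^{\htpy G}/^\St \cI$, where $\cI \subseteq \Spectra^{\htpy G}$ is the thick ideal generated by $\{\Sigma^\infty(G/K)_+\}_{K < G}$. When $|G|$ is not a prime power, one can choose proper subgroups $H_1, \ldots, H_r < G$ whose indices $[G:H_i]$ have greatest common divisor $1$; then a $\ZZ$-linear combination of the composites $\Sigma^\infty_+ G/H_i \otimes (-) \to (-)$ (using counits of the various induction/restriction adjunctions) realizes the identity on the unit, so that $\uno \in \cI$ and hence $\cI = \Spectra^{\htpy G}$. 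This is precisely the argument of \cite[Lemma II.6.7]{NS} transported through the identification of \Cref{rmk.genzd.tate.reduces.to.tate.Cp}, and I would simply cite that reference.

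For parts \Cref{Cp.tate.vanishes.if.p.acts.invertibly} \and \Cref{Cp.tate.is.p.complete}, the key preliminary step is to specialize \Cref{rmk.genzd.tate.reduces.to.tate.Cp} to the case $G = H = \Cyclic_p$, which yields a canonical equivalence
\[
(-)^{\tate \Cyclic_p} \simeq (-)^{\st \Cyclic_p} := \cofib\!\left( (-)_{\htpy \Cyclic_p} \xra{\Nm_{\Cyclic_p}} (-)^{\htpy \Cyclic_p} \right)
\]
of functors $\Spectra^{\htpy \Cyclic_p} \to \Spectra$. Part \Cref{Cp.tate.vanishes.if.p.acts.invertibly} then follows immediately from \cite[Lemma I.2.8]{NS}, and part \Cref{Cp.tate.is.p.complete} follows immediately from \cite[Theorem I.3.1]{NS} combined with the Segal conjecture \cite{Lin-Segal, AGM-Segal} (which guarantees that the sphere spectrum $\SS^{\tate \Cyclic_p}$ is $p$-complete, and thus that $E^{\tate \Cyclic_p}$ is $p$-complete for every $E$ via the $\SS^{\tate \Cyclic_p}$-module structure).

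The main (minor) obstacle is ensuring that the definition of $(-)^{\tate \Cyclic_p}$ via the composite $\beta \circ \Phi^H_\gen \circ U$ of \Cref{defn.proper.H.tate.in.terms.of.genuine} coincides strictly with the ordinary Tate construction; this compatibility is precisely the content of \Cref{rmk.genzd.tate.reduces.to.tate.Cp} (and is proved in \cite[Proposition \ref{mackey:prop.proper.Tate.from.genuine.G.objs}]{AMR-mackey}). With that identification granted, no further work is required beyond citing the stated references.
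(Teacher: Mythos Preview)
Your proposal is correct and matches the paper's approach exactly: the observation is stated as a list of citations with no separate proof, and the bridge to the Nikolaus--Scholze references is precisely \Cref{rmk.genzd.tate.reduces.to.tate.Cp}, as you identify. You have in fact gone slightly beyond the paper by spelling out the logic behind each citation (especially the thick-ideal argument for part~\Cref{vanishes.if.not.prime.power.order} and the $\SS^{\tate \Cyclic_p}$-module reasoning for part~\Cref{Cp.tate.is.p.complete}), but this is entirely consistent with what the paper intends.
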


\begin{warning}
In \cite{AMR-trace}, for brevity we omit the word ``proper'' from the terminology ``proper Tate construction''.
\end{warning}


\subsection{Examples of reconstruction of genuine $G$-spectra}
\label{subsection.examples.of.SpgG}

In this subsection, we give a number of examples of reconstruction (via \Cref{macrocosm.thm}) that follow from the geometric stratification of genuine $G$-spectra (\Cref{thm.geom.stratn.of.SpgG}).  It is straightforward but notationally cumbersome to describe the symmetric monoidal structures (which result from \Cref{thm.s.m.reconstrn}), and so we omit them from the present discussion.

\begin{local}
In this subsection, in the interest of uniformity, even in the case that $G$ is the trivial group we may include the forgetful functor
\[
\Spectra^{\gen G}
\xlongra{U}
\Spectra^{\htpy G}
\]
in our notation.
\end{local}

\begin{remark}
In this subsection, we continue to distinguish between the two geometric $H$-fixedpoints functors appearing in the commutative diagram
\[ \begin{tikzcd}
\Spectra^{\gen G}
\arrow{r}{\Phi^H_\gen}
\arrow{rd}[sloped, swap]{\Phi^H}
&
\Spectra^{\gen \Weyl(H)}
\arrow{d}{U}
\\
&
\Spectra^{\htpy \Weyl(H)}
\end{tikzcd}~, \]
as introduced in \Cref{defn.geom.H.fps}.  The $H\th$ geometric localization functor for the geometric stratification of genuine $G$-spectra is the functor $\Phi^H$, but we also use its identification as the composite $U \Phi^H_\gen$ in order to describe the structure maps in the right-lax limit (as first indicated in \Cref{rmk.lax.limits.have.structure.maps}), which are given by the unit maps of various adjunctions of the form $U \adj \beta$.
\end{remark}

\begin{notation}
\label{notation.SphWbullet}
We write
\[
\begin{tikzcd}[column sep=1.5cm]
\pos_G
\arrow{r}[description, yshift=-0.05cm]{\llax}{\Spectra^{\htpy \Weyl_G(\bullet)}}
&
\PrSt
\end{tikzcd}
\]
for the gluing diagram of the geometric stratification of genuine $G$-spectra.
\end{notation}

\begin{example}[genuine $\Cyclic_p$-spectra]
\label{example.genuine.Cp.spectra}
Let $\Cyclic_p$ denote the cyclic group of order $p$, where $p$ is a prime.  Its poset of conjugacy classes of closed subgroups is
\[
\pos_{\Cyclic_p}
=
\left\{
e
\longra
\Cyclic_p
\right\}
~.
\]
Theorems \ref{thm.geom.stratn.of.SpgG} \and \ref{macrocosm.thm} provide an equivalence
\begin{equation}
\label{reconstrn.of.Sp.g.Cp}
\hspace{-1cm}
\begin{aligned}
\Spectra^{\gen \Cyclic_p}
& \underset{\sim}{\xlongra{\gd}}
\lim^\rlax_{\llax.\pos_{\Cyclic_p}} \left( \Spectra^{\htpy \Weyl_{\Cyclic_p}(\bullet)} \right)
:=
\lim^\rlax \left( \Spectra^{\htpy \Cyclic_p} \xra{(-)^{\tate \Cyclic_p}} \Spectra \right)
\\
& := 
\left\{ 
\left(
E_0 \in \Spectra^{\htpy \Cyclic_p}
~,~
E_1 \in \Spectra
~,
\begin{tikzcd}
E_1
\arrow{d}
\\
(E_0)^{\tate \Cyclic_p}
\end{tikzcd} 
\right)
\right\}
\end{aligned}
~.
\end{equation}
Via the equivalence \Cref{reconstrn.of.Sp.g.Cp}, a genuine $\Cyclic_p$-spectrum $E \in \Spectra^{\gen \Cyclic_p}$ is specified by the data of
\begin{itemize}
\item its underlying homotopy $\Cyclic_p$-spectrum
\[
E_0
:=
UE
\in
\Spectra^{\htpy \Cyclic_p}
~,
\]
\item its geometric $\Cyclic_p$-fixedpoints spectrum
\[
E_1
:=
\Phi^{\Cyclic_p} E
:=
U \Phi^{\Cyclic_p}_\gen E
\in
\Spectra
~,
\]
and
\item the gluing data of a comparison map
\[
U \Phi^{\Cyclic_p}_\gen
\left(
E
\longra
\beta U E
\right)
=:
\left(
E_1
\longra
(E_0)^{\tate \Cyclic_p}
\right)
\]
from $E_1$ to the $\Cyclic_p$-Tate construction of $E_0$ (recall \Cref{rmk.genzd.tate.reduces.to.tate.Cp}).
\end{itemize}
In other words, we have a recollement
\begin{equation}
\label{recollement.of.Sp.g.Cp}
\begin{tikzcd}[column sep=1.5cm]
\Spectra^{\htpy \Cyclic_p}
\arrow[hook, bend left=45]{r}[description]{i_L}
\arrow[leftarrow]{r}[transform canvas={yshift=0.2cm}]{\bot}[swap,transform canvas={yshift=-0.2cm}]{\bot}[description]{U}
\arrow[bend right=45, hook]{r}[description]{\beta}
&
\Spectra^{\gen \Cyclic_p}
\arrow[bend left=45]{r}[description, pos=0.55]{\Phi^{\Cyclic_p}}
\arrow[hookleftarrow]{r}[transform canvas={yshift=0.2cm}]{\bot}[swap,transform canvas={yshift=-0.2cm}]{\bot}[description]{\rho^{\Cyclic_p}}
\arrow[bend right=45]{r}[description, pos=0.55]{p_R}
&
\Spectra
\end{tikzcd}~.
\end{equation}
\end{example}

\begin{remark}
It is not hard to see the Wirthm\"{u}ller isomorphism $\Ind_e^{\Cyclic_p} \simeq \coInd_e^{\Cyclic_p}$ within the context of \Cref{example.genuine.Cp.spectra}.  Indeed, writing $\pt \simeq \sB e \xra{\iota} \BC_p$ for the canonical basepoint, the adjoint functors $\Ind_e^{\Cyclic_p} \adj \Res_e^{\Cyclic_p} \adj \coInd_e^{\Cyclic_p}$ are obtained as the horizontal composites in the diagram
\[
\begin{tikzcd}[column sep=1.5cm]
\Spectra
\arrow[bend left=45]{r}[description, pos=0.45]{\iota_!}
\arrow[leftarrow]{r}[transform canvas={yshift=0.2cm}]{\bot}[swap,transform canvas={yshift=-0.2cm}]{\bot}[description, pos=0.55]{\iota^*}
\arrow[bend right=45]{r}[description, pos=0.45]{\iota_*}
&
\Spectra^{\htpy \Cyclic_p}
\arrow[hook, bend left=45]{r}[description]{i_L}
\arrow[leftarrow]{r}[transform canvas={yshift=0.2cm}]{\bot}[swap,transform canvas={yshift=-0.2cm}]{\bot}[description]{U}
\arrow[bend right=45, hook]{r}[description]{\beta}
&
\Spectra^{\gen \Cyclic_p}
\end{tikzcd}~.
\]
Note that for any $E \in \Spectra$ we have
\[
\iota_!(E)
\simeq
\coprod_{\Cyclic_p/e} E
\simeq
\bigoplus_{\Cyclic_p/e} E
\simeq
\prod_{\Cyclic_p/e} E
\simeq
\iota_*(E)
~:
\]
both adjoints to the forgetful functor $\iota^*$ are given by inducing up from $e$ to $\Cyclic_p$.  On the other hand, in the recollement \Cref{recollement.of.Sp.g.Cp}, we see that for any $E \in \Spectra^{\htpy \Cyclic_p}$ we have
\[
i_L(E)
=
( E \longmapsto E^{\tate \Cyclic_p} \longla 0)
\qquad
\text{and}
\qquad
\beta(E)
=
( E \longmapsto E^{\tate \Cyclic_p} \xlongla{\sim} E^{\tate \Cyclic_p} )
\]
(via the identification of \Cref{lem.reconstrn.for.recollement}).  Hence, the equivalence
\[
\Ind_e^{\Cyclic_p}
:=
i_L \iota_!
\simeq
\beta \iota_*
=:
\coInd_e^{\Cyclic_p}
\]
follows from the fact that the $\Cyclic_p$-Tate construction vanishes on any homotopy $\Cyclic_p$-spectra that are induced from the proper subgroup $e < \Cyclic_p$.
\end{remark}

\begin{example}[genuine $\Cyclic_{p^2}$-spectra]
\label{example.genuine.Cpsquared.spectra}
Let $\Cyclic_{p^2}$ denote the cyclic group of order $p^2$, where $p$ is a prime.  Its poset of conjugacy classes of closed subgroups is
\[
\pos_{\Cyclic_{p^2}}
=
\left\{
\begin{tikzcd}[column sep=0.5cm]
&
\Cyclic_p
\arrow{rd}
\\
e
\arrow{ru}
\arrow{rr}
&
&
\Cyclic_{p^2}
\end{tikzcd}
\right\}
~. \]
Theorems \ref{thm.geom.stratn.of.SpgG} \and \ref{macrocosm.thm} provide an equivalence
\begin{equation}
\label{gen.Cpsquared.spectra.as.rlax.lim}
\Spectra^{\gen \Cyclic_{p^2}}
\underset{\sim}{\xlongra{\gd}}
\lim^\rlax_{\llax.\pos_{\Cyclic_{p^2}}} \left( \Spectra^{\htpy \Weyl_{\Cyclic_{p^2}}(\bullet)} \right)
:=
\lim^\rlax
\left(
\begin{tikzcd}[row sep=1.5cm]
&
\Spectra^{\htpy \Cyclic_p}
\arrow{rd}[sloped]{(-)^{\tate \Cyclic_p}}
\\
\Spectra^{\htpy \Cyclic_{p^2}}
\arrow{ru}[sloped]{(-)^{\tate \Cyclic_p}}
\arrow{rr}[transform canvas={yshift=0.6cm}]{\rotatebox{90}{$\Rightarrow$}}[swap]{(-)^{\tate \Cyclic_{p^2}}}
&
&
\Spectra
\end{tikzcd}
\right)
~.
\end{equation}
Via the equivalence \Cref{gen.Cpsquared.spectra.as.rlax.lim}, a genuine $\Cyclic_{p^2}$-spectrum $E \in \Spectra^{\gen \Cyclic_{p^2}}$ is specified by the following data, which is precisely that of an object of this right-lax limit.\footnote{Right-lax limits of left-lax left $[2]$-modules are described in \Cref{example.limits.of.lax.actions.when.laxness.disagrees}\Cref{example.limits.of.lax.actions.when.laxness.disagrees.rlax.lim.of.llax.action}. See also \Cref{ex.gluing.stuff.over.brax.two} for a discussion of macrocosm and microcosm reconstruction for an arbitrary stratification over $[2]$.}
\begin{itemize}

\item First of all, it determines the objects
\begin{itemize}

\item $E_0 := U E \in \Spectra^{\htpy \Cyclic_{p^2}}$,

\item $E_1 := \Phi^{\Cyclic_p} E := U \Phi^{\Cyclic_p}_\gen E \in \Spectra^{\htpy \Cyclic_p}$, and

\item $E_2 := \Phi^{\Cyclic_{p^2}} E := U \Phi^{\Cyclic_{p^2}}_\gen E \in \Spectra$,

\end{itemize}
the homotopy-equivariant spectra underlying the genuine-equivariant spectra which are its geometric fixedpoints with respect to the various subgroups of $\Cyclic_{p^2}$.

\item Thereafter, the unit maps of various adjunctions of the form $U \adj \beta$ yield
\begin{itemize}
\item a map
\begin{equation}
\label{str.map.one.to.two}
U \Phi^{\Cyclic_p}_\gen
\left(
E
\longra
\beta U E
\right)
=:
\left(
E_1
\longra
\left( U \Phi^{\Cyclic_p}_\gen \beta \right)
E_0
\right)
=:
\left(
E_1
\longra
( E_0 )^{\tate \Cyclic_p}
\right)
\end{equation}
in $\Spectra^{\htpy \Cyclic_p}$,
\item a map
\begin{equation}
\label{str.map.zero.to.two}
U \Phi^{\Cyclic_{p^2}}_\gen
\left(
E
\longra
\beta U E
\right)
=:
\left(
E_2
\longra
\left( U \Phi^{\Cyclic_{p^2}}_\gen \beta \right) E_0
\right)
=:
\left(
E_2
\longra
(E_0)^{\tate \Cyclic_{p^2}}
\right)
\end{equation}
in $\Spectra$, and
\item a map
\begin{equation}
\label{str.map.zero.to.one}
U \Phi^{\Cyclic_p}_\gen
\left(
\Phi^{\Cyclic_p}_\gen E
\longra
\beta U \Phi^{\Cyclic_p}_\gen E
\right)
\simeq
\left(
U \Phi^{\Cyclic_{p^2}}_\gen E
\longra
\left( U \Phi^{\Cyclic_p}_\gen \beta \right) \left( U \Phi^{\Cyclic_p}_\gen E \right)
\right)
=:
\left(
E_2
\longra
( E_1 )^{\tate \Cyclic_p}
\right)
\end{equation}
in $\Spectra$.
\end{itemize}

\item Finally, these maps fit into a commutative square
\begin{equation}
\label{comm.square.for.gen.Cpsquared.spt}
\begin{tikzcd}[row sep=1.5cm]
E_2
\arrow{r}{\Cref{str.map.zero.to.one}}
\arrow{d}[swap]{\Cref{str.map.zero.to.two}}
&
(E_1)^{\tate \Cyclic_p}
\arrow{d}{\Cref{str.map.one.to.two}^{\tate \Cyclic_p}}
\\
(E_0)^{\tate \Cyclic_{p^2}}
\arrow{r}
&
\left( (E_0)^{\tate \Cyclic_p} \right)^{\tate \Cyclic_p}
\end{tikzcd}
\end{equation}
in $\Spectra$, as a consequence of the commutativity of the diagram
\[ \begin{tikzcd}[row sep=1.5cm]
\left( U \Phi^{\Cyclic_p}_\gen \right) \left( \Phi^{\Cyclic_p}_\gen \right)
\arrow{r}
\arrow{d}
&
\left( U \Phi^{\Cyclic_p}_\gen \right) \beta U \left( \Phi^{\Cyclic_p}_\gen \right)
\arrow{d}
\\
\left( U \Phi^{\Cyclic_p}_\gen \right) \left( \Phi^{\Cyclic_p}_\gen \right) U \beta
\arrow{r}
&
\left( U \Phi^{\Cyclic_p}_\gen \right) \beta U \left( \Phi^{\Cyclic_p}_\gen \right) \beta U
\end{tikzcd} \]
in $\Fun(\Spectra^{\gen \Cyclic_{p^2}},\Spectra)$ and the canonical equivalence $\Phi^{\Cyclic_{p^2}}_\gen \simeq \Phi^{\Cyclic_p}_\gen \Phi^{\Cyclic_p}_\gen$.

\end{itemize}
Indeed, the lower morphism in the commutative square \Cref{comm.square.for.gen.Cpsquared.spt} is precisely the component at $E_0 \in \Spectra^{\htpy \Cyclic_{p^2}}$ of the natural transformation in the lax-commutative triangle appearing in equivalence \Cref{gen.Cpsquared.spectra.as.rlax.lim}.
\end{example}

\begin{remark}
\label{rmk.strict.Cpsquared.spectra}
Note that we have an equivalence
\[
(-)^{\tate \Cyclic_{p^2}}
\simeq
\left( (-)^{\htpy \Cyclic_p} \right)^{\tate \Cyclic_p}
\]
in $\Fun(\Spectra^{\htpy \Cyclic_{p^2}} , \Spectra)$ (see e.g.\! \cite[Lemma \ref{mackey:lem.projection.formula.categorical.then.geometric.is.geometric}]{AMR-mackey}). Using this, we can apply results of Nikolaus--Scholze to identify certain genuine $\Cyclic_{p^2}$-spectra $E \in \Spectra^{\gen \Cyclic_{p^2}}$ as strict (\Cref{defn.strict.objects}\Cref{item.defn.of.strict.object}). Strictness amounts to the assertion that the underlying homotopy $\Cyclic_{p^2}$-spectrum $E_0 := \Phi^e E \in \Spectra^{\htpy \Cyclic_{p^2}}$ satisfies the condition that the morphism
\[
(E_0)^{\tate \Cyclic_{p^2}}
\longra
\left( (E_0)^{\tate \Cyclic_p} \right)^{\tate \Cyclic_p}
\]
is an equivalence (so that the morphism \Cref{str.map.zero.to.two} is uniquely determined by the morphisms \Cref{str.map.one.to.two} and \Cref{str.map.zero.to.one}). Namely, this condition is guaranteed to hold assuming that the underlying spectrum $E_0 \in \Spectra$
\begin{itemize}

\item is bounded below by \cite[Lemma I.2.1]{NS}, or alternatively

\item admits a $\ZZ$-module structure by \cite[Footnote 9]{NS} (see also \cite[Lemma I.2.7]{NS}).

\end{itemize}
In fact, similar arguments can be applied to give a simplified description of the strict objects of $\Spectra^{\gen \Cyclic_{p^n}}$, as described in \cite[Remark II.4.8]{NS} (see 
\cite[\S 4]{Shah-recstrat}).\footnote{Note that bounded below objects of $\Spectra^{\gen \Cyclic_{p^n}}$ need not be strict (assuming $n \geq 3$).} Indeed, \cite[Theorem \ref{mackey:intro.thm.gen.Cpn.Z.mods}]{AMR-mackey} applies them to give a simplified description of $\ZZ$-linear genuine $\Cyclic_{p^n}$-spectra (a.k.a.\! derived Mackey functors).
\end{remark}

\begin{example}[genuine $\Cyclic_{pq}$-spectra]
\label{example.gen.Cpq.spt}
Let $\Cyclic_{pq} \cong \Cyclic_p \times \Cyclic_q$ denote the cyclic group of order $pq$, where $p$ and $q$ are distinct primes.  Its poset of conjugacy classes of closed subgroups is
\[
\pos_{\Cyclic_{pq}}
=
\left\{
\begin{tikzcd}
e
\arrow{r}
\arrow{d}
&
\Cyclic_p
\arrow{d}
\\
\Cyclic_q
\arrow{r}
&
\Cyclic_{pq}
\end{tikzcd}
\right\}
~. \]
Theorems \ref{thm.geom.stratn.of.SpgG} \and \ref{macrocosm.thm} provide an equivalence
\begin{equation}
\label{lax.comm.diagram.for.Cpq}
\Spectra^{\gen \Cyclic_{pq}}
\underset{\sim}{\xlongra{\gd}}
\lim^\rlax_{\llax.\pos_{\Cyclic_{pq}}} \left( \Spectra^{\htpy \Weyl_{\Cyclic_{pq}}(\bullet)} \right)
:=
\lim^\rlax
\left(
\begin{tikzcd}[row sep=2cm, column sep=1.5cm]
\Spectra^{\htpy \Cyclic_{pq}}
\arrow{r}{(-)^{\tate \Cyclic_p}}
\arrow{d}[swap]{(-)^{\tate \Cyclic_q}}
\arrow{rd}[sloped]{(-)^{\tate \Cyclic_{pq}}}[sloped, transform canvas={xshift=0.5cm, yshift=0.5cm}]{\Uparrow}[sloped, swap, transform canvas={xshift=-0.5cm, yshift=-0.5cm}]{\Downarrow}
&
\Spectra^{\htpy \Cyclic_q}
\arrow{d}{(-)^{\tate \Cyclic_q}}
\\
\Spectra^{\htpy \Cyclic_p}
\arrow{r}[swap]{(-)^{\tate \Cyclic_p}}
&
\Spectra
\end{tikzcd}
\right)
~.
\end{equation}
By \Cref{obs.NS.proper.tate.is.p.primary}, all three functors $\Spectra^{\htpy \Cyclic_{pq}} \ra \Spectra$ appearing in the lax-commutative diagram in equivalence \Cref{lax.comm.diagram.for.Cpq} are zero (the two composite functors by parts \Cref{Cp.tate.vanishes.if.p.acts.invertibly} \and \Cref{Cp.tate.is.p.complete}, the direct functor by part \Cref{vanishes.if.not.prime.power.order}).  It follows that via the equivalence \Cref{lax.comm.diagram.for.Cpq}, a genuine $\Cyclic_{pq}$-spectrum $E \in \Spectra^{\gen \Cyclic_{pq}}$ is completely specified by the data of
\begin{itemize}
\item the objects
\[ \begin{tikzcd}[row sep=1.5cm]
E_{00}
:=
U E
\in \Spectra^{\htpy \Cyclic_{pq}}
&
E_{01}
:=
\Phi^{\Cyclic_p} E
\in \Spectra^{\htpy \Cyclic_q}
\\
E_{10}
:=
\Phi^{\Cyclic_q} E
\in \Spectra^{\htpy \Cyclic_p}
&
E_{11}
:=
\Phi^{\Cyclic_{pq}} E
\in \Spectra
\end{tikzcd} \]
and
\item the structure maps
\[ \begin{tikzcd}[row sep=0.5cm, column sep=0.5cm]
&
(E_{00})^{\tate \Cyclic_p}
&
E_{01}
\arrow{l}
\\
(E_{00})^{\tate \Cyclic_q}
&
&
E_{01}^{\tate \Cyclic_q}
\\
E_{10}
\arrow{u}
&
E_{10}^{\tate \Cyclic_p}
&
E_{11}
\arrow{u}
\arrow{l}
\end{tikzcd}~.\footnote{These data are organized so as to reflect their positions within the diagram appearing in the equivalence \Cref{lax.comm.diagram.for.Cpq}.}
\]
\end{itemize}
\end{example}

\begin{remark}
\Cref{example.gen.Cpq.spt} makes manifest the equivalence
\[
\Spectra^{\gen \Cyclic_p} \otimes \Spectra^{\gen \Cyclic_q}
\xlongra{\sim}
\Spectra^{\gen \Cyclic_{pq}}
\]
(where the tensor product is taken in $\PrLSt$).  In other words, a genuine $\Cyclic_{pq}$-spectrum is equivalent data to a genuine $\Cyclic_p$-object in genuine $\Cyclic_q$-spectra (and vice versa).\footnote{We refer the reader to the brief discussion of \cite[\S\S\ref{mackey:section.defn.gen.G.objects}-\ref{mackey:section.stratn.of.tensor.products}]{AMR-mackey} for further support regarding these assertions.}
\end{remark}

\begin{example}[genuine $\Symm_3$-spectra]
\label{ex.gen.Sthree.spectra}
Let $\Symm_3$ denote the symmetric group on three letters.  Its poset of conjugacy classes of closed subgroups is
\[
\pos_{\Symm_3}
=
\left\{
\begin{tikzcd}
e
\arrow{r}
\arrow{d}
&
\Cyclic_2
\arrow{d}
\\
\Cyclic_3
\arrow{r}
&
\Symm_3
\end{tikzcd}
\right\}
~, \]
where $\Cyclic_3 = \sA_3$ denotes the alternating group (the normal subgroup of sign-preserving symmetries) and $\Cyclic_2$ denotes the equivalence class of the three (non-normal) order-two subgroups generated by the three transpositions.  Theorems \ref{thm.geom.stratn.of.SpgG} \and \ref{macrocosm.thm} provide an equivalence
\begin{equation}
\label{reconstrn.of.Sp.g.Sthree}
\Spectra^{\gen \Symm_3}
\underset{\sim}{\xlongra{\gd}}
\lim^\rlax_{\llax.\pos_{\Symm_3}} \left( \Spectra^{\htpy \Weyl_{\Symm_3}(\bullet)} \right)
:=
\lim^\rlax
\left(
\begin{tikzcd}[row sep=2cm, column sep=1.5cm]
\Spectra^{\htpy \Symm_3}
\arrow{r}{(-)^{\tate \Cyclic_2}}
\arrow{d}[swap]{(-)^{\tate \Cyclic_3}}
\arrow{rd}[sloped]{(-)^{\tate \Symm_3}}[sloped, transform canvas={xshift=0.5cm, yshift=0.5cm}]{\Uparrow}[sloped, swap, transform canvas={xshift=-0.5cm, yshift=-0.5cm}]{\Downarrow}
&
\Spectra
\arrow{d}
\\
\Spectra^{\htpy \Cyclic_2}
\arrow{r}[swap]{(-)^{\tate \Cyclic_2}}
&
\Spectra
\end{tikzcd}
\right)
~.
\end{equation}
By \Cref{obs.NS.proper.tate.is.p.primary}, the functors $(-)^{\tate \Symm_3}$ and $((-)^{\tate \Cyclic_3})^{\tate \Cyclic_2}$ are zero (the former by part \Cref{vanishes.if.not.prime.power.order}, the latter by parts \Cref{Cp.tate.vanishes.if.p.acts.invertibly} \and \Cref{Cp.tate.is.p.complete}).  Moreover, by \Cref{H.to.K.gluing.is.zero.when.K.not.leq.NH}, the gluing functor corresponding to the relation $\Cyclic_2 \ra \Symm_3$ is also zero.  Therefore, via the equivalence \Cref{reconstrn.of.Sp.g.Sthree}, a genuine $\Symm_3$-spectrum $E \in \Spectra^{\gen \Symm_3}$ is completely specified by the data of
\begin{itemize}
\item the objects
\[ \begin{tikzcd}[row sep=1.5cm]
E_{00}
:=
U E
\in \Spectra^{\htpy \Symm_3}
&
E_{01}
:=
\Phi^{\Cyclic_2} E
\in \Spectra
\\
E_{10}
:=
\Phi^{\Cyclic_3} E
\in \Spectra^{\htpy \Cyclic_2}
&
E_{11}
:=
\Phi^{\Symm_3} E
\in \Spectra
\end{tikzcd} \]
and

\item the structure maps
\[ \begin{tikzcd}[row sep=0.5cm, column sep=0.5cm]
&
(E_{00})^{\tate \Cyclic_2}
&
E_{01}
\arrow{l}
\\
(E_{00})^{\tate \Cyclic_3}
\\
E_{10}
\arrow{u}
&
(E_{10})^{\tate \Cyclic_2}
&
E_{11}
\arrow{l}
\end{tikzcd}~.
\]

\end{itemize}
\end{example}

\begin{example}[genuine and proper-genuine $\TT$-spectra]
\label{stratn.of.Sp.g.T.and.Sp.g.proper.T}
Let $\TT$ denote the circle group.  Its poset of conjugacy classes of closed subgroups admits an identification
\[
\pos_\TT \cong (\Ndiv)^\rcone
\]
as the right cone on the poset of natural numbers ordered by divisibility (under which the subgroup $\Cyclic_n \leq \TT$ corresponds to the element $n \in \Ndiv \subseteq (\Ndiv)^\rcone$), which we use implicitly for notational convenience.  The gluing diagram
\[
\begin{tikzcd}[column sep=1.5cm]
(\Ndiv)^\rcone
\arrow{r}[description, yshift=-0.05cm]{\llax}{\Spectra^{\htpy \Weyl_\TT(\bullet)}}
&
\PrSt
\end{tikzcd}
\]
of the geometric stratification of genuine $\TT$-spectra may be depicted as
\[
\begin{tikzcd}
&
\Spectra^{\htpy (\TT/\Cyclic_2)}
\arrow{r}
&
\Spectra^{\htpy (\TT/\Cyclic_4)}
&[-0.8cm]
\cdots
\arrow{rdd}
\\
\Spectra^{\htpy \TT}
\arrow{ru}
\arrow{rd}
\arrow{rr}
\arrow{rdd}
\arrow{rru}
&
&
\Spectra^{\htpy (\TT/\Cyclic_6)}
\arrow[leftarrow, crossing over]{lu}
&
\cdots
\arrow{rd}
\\
&
\Spectra^{\htpy (\TT/\Cyclic_3)}
\arrow{ru}
&
&
\cdots
\arrow{r}
&
\Spectra
&[-5.8cm]
\cdots
\\
&
\cdots
&
&
\cdots
\arrow{ru}
\end{tikzcd}
~:
\]
\begin{itemize}
\item its values are described by the assignments
\[
r
\longmapsto
\begin{tikzcd}[column sep=1.5cm]
\Spectra^{\htpy(\TT/\Cyclic_r)}
\arrow[leftrightarrow]{r}{(\TT/\Cyclic_r) \cong \TT}[swap]{\sim}
&
\Spectra^{\htpy \TT}
\end{tikzcd}
\qquad
\text{and}
\qquad
\infty
\longmapsto
\Spectra^{\htpy (\TT/\TT)} \simeq \Spectra
~,
\]
\item it assigns to morphisms $r \ra rs$ and $r \ra \infty$ the horizontal functors in the diagrams
\[ \begin{tikzcd}[row sep=1.5cm, column sep=2.5cm]
\Spectra^{\htpy (\TT/\Cyclic_r)}
\arrow{r}{(-)^{\tate (\Cyclic_{rs}/\Cyclic_r)}}
\arrow{d}[sloped, anchor=south]{\sim}[swap]{(\TT/\Cyclic_r) \cong \TT}
&
\Spectra^{\htpy (\TT/\Cyclic_{rs})}
\arrow{d}[sloped, anchor=north]{\sim}{(\TT/\Cyclic_{rs}) \cong (\TT/\Cyclic_s)}
\\
\Spectra^{\htpy \TT}
\arrow{r}{(-)^{\tate \Cyclic_s}}
\arrow[dashed]{rd}[swap, sloped]{(-)^{\tate \Cyclic_s}}
&
\Spectra^{\htpy (\TT/\Cyclic_s)}
\arrow{d}[sloped, anchor=north]{\sim}{(\TT/\Cyclic_s) \cong \TT}
\\
&
\Spectra^{\htpy \TT}
\end{tikzcd}
\qquad
\text{and}
\qquad
\begin{tikzcd}[row sep=1.5cm, column sep=2.5cm]
\Spectra^{\htpy ( \TT/\Cyclic_r)}
\arrow{r}{(-)^{\tate ( \TT/\Cyclic_r)}}
\arrow{d}[sloped, anchor=south]{\sim}[swap]{(\TT/\Cyclic_r) \cong \TT}
&
\Spectra
\\
\Spectra^{\htpy \TT}
\arrow{ru}[sloped, swap]{(-)^{\tate \TT}}
\end{tikzcd}
\]
(in which the notation for the dashed functor is mildly abusive), and
\item we have suppressed the natural transformations
\[
(-)^{\tate \Cyclic_{rs}}
\longra
\left( (-)^{\tate \Cyclic_r} \right)^{\tate \Cyclic_s}
\]
(not to mention higher coherences) for typographical ease.
\end{itemize}
Theorems \ref{thm.geom.stratn.of.SpgG} \and \ref{macrocosm.thm} provide an adjunction
\begin{equation}
\label{macrocosm.adjn.for.Sp.g.T}
\begin{tikzcd}[column sep=2cm]
\Spectra^{\gen \TT}
\arrow[transform canvas={yshift=0.9ex}]{r}{\gd}
\arrow[leftarrow, transform canvas={yshift=-0.9ex}]{r}[yshift=-0.2ex]{\bot}[swap]{\lim_{\sd((\Ndiv)^\rcone)}}
&
\lim^\rlax_{\llax.(\Ndiv)^\rcone} \left( \Spectra^{\htpy \Weyl_\TT(\bullet)} \right)
\end{tikzcd}
~.
\end{equation}
However, \Cref{macrocosm.thm} does not guarantee that the adjunction \Cref{macrocosm.adjn.for.Sp.g.T} is an equivalence, because the poset $(\Ndiv)^\rcone$ is not down-finite (recall \Cref{remark.stratn.of.SpgG.prob.not.convergent}).  On the other hand, there is evidently a restricted stratification
\begin{equation}
\label{factorizn.of.stratn.of.Sp.g.T.to.stratn.of.Sp.g.proper.T}
\begin{tikzcd}
(\Ndiv)^\rcone
\arrow{r}{\Spectra^{\gen \TT}_{^\leq \bullet}}
&
\Cls_{\Spectra^{\gen \TT}}
\\
\Ndiv
\arrow[hook]{u}
\arrow[dashed]{r}
&
\Cls_{\Spectra^{\gen^\proper \TT}}
\arrow[hook]{u}
\end{tikzcd}
\end{equation}
of the presentable stable $\infty$-category $\Spectra^{\gen^\proper \TT}$ of \textit{proper}-genuine $\TT$-spectra (recall \Cref{obs.restricted.stratn.over.D}).\footnote{The right vertical functor of diagram \Cref{factorizn.of.stratn.of.Sp.g.T.to.stratn.of.Sp.g.proper.T} arises from the fact that we may identify proper-genuine $\TT$-spectra as the closed subcategory $\Spectra^{\gen^\proper \TT} \in \Cls_{\Spectra^{\gen \TT}}$ consisting of those objects $E \in \Spectra^{\gen \TT}$ such that the canonical morphism $E^\TT \ra E^{\htpy \TT}$ is an equivalence.}  As the poset $\Ndiv$ is down-finite, \Cref{macrocosm.thm} provides an equivalence
\[
\Spectra^{\gen^\proper \TT}
\underset{\sim}{\xlongra{\gd}}
\lim^\rlax_{\llax.\Ndiv} \left( \Spectra^{\htpy \Weyl_{\TT}(\bullet)} \right)
~.
\]
\end{example}

\subsection{Categorical fixedpoints via stratifications}
\label{subsection.categorical.fixedpoints}

In this subsection, we describe categorical fixedpoints of genuine $G$-spectra as well as restriction and transfer morphisms among them in terms of the geometric stratification.

\begin{local}
In this subsection, we assume that the group $G$ is finite.
\end{local}

\begin{observation}
The poset $\pos_G$ is finite, and hence the geometric stratification of $\Spectra^{\gen G}$ over it converges by \Cref{macrocosm.thm}. We use this fact without further comment.
\end{observation}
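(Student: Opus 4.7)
The plan is to observe that this is essentially a chain of three immediate implications. First I would note that since $G$ is finite, its underlying set is finite, so $G$ has only finitely many subsets and therefore only finitely many subgroups; passing to conjugacy classes can only reduce this count, so the set of conjugacy classes of (closed) subgroups of $G$ is finite. This shows that $\pos_G$ is a finite poset.

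Next I would observe that any finite poset $\pos$ is automatically down-finite: for each element $p \in \pos$, the subset $(^\leq p) \subseteq \pos$ is a subset of a finite set and hence finite.

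Finally I would invoke \Cref{thm.geom.stratn.of.SpgG}, which provides the geometric stratification of $\Spectra^{\gen G}$ over $\pos_G$, and then apply \Cref{macrocosm.thm}: since $\pos_G$ is down-finite, the macrocosm adjunction
\[
\begin{tikzcd}[column sep=1.5cm]
\Spectra^{\gen G}
\arrow[transform canvas={yshift=0.9ex}]{r}{\gd}
\arrow[leftarrow, transform canvas={yshift=-0.9ex}]{r}[yshift=-0.2ex]{\bot}[swap]{\lim_{\sd(\pos_G)}}
&
\Glue(\Spectra^{\gen G})
\end{tikzcd}
\]
is an equivalence, which is precisely the condition that the stratification converges in the sense of \Cref{defn.convergent.stratn}. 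Since every step is a direct citation or a one-line set-theoretic observation, there is no real obstacle here; the content of the observation lies entirely in combining finiteness of $G$ with the previously established theorems.
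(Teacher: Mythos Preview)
Your proposal is correct and matches the paper's reasoning exactly: the paper states this as an observation without proof, relying on the ambient assumption that $G$ is finite (declared at the start of the subsection), the trivial implication finite $\Rightarrow$ down-finite, and the direct citation of \Cref{macrocosm.thm}.
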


\begin{observation}
\label{obs.categorical.fixedpoints}
Given a genuine $G$-spectrum $E \in \Spectra^{\gen G}$, using the nanocosm reconstruction of \Cref{intro.thm.cosms}\Cref{intro.main.thm.nanocosm} (recall \Cref{rmk.nanocosm}), we may identify its categorical $H$-fixedpoints via the equivalences
\begin{align}
\nonumber
E^H
& \simeq
\hom_{\Spectra^{\gen G}} ( \Sigma^\infty_G(G/H)_+ , E )
\\
\nonumber
& \simeq
\lim_{([n] \xra{\varphi} \pos_G ) \in \sd(\pos_G)}
\hom_{\Spectra^{\htpy \Weyl(\varphi(n))}}
(
\Phi^{\varphi(n)} ( \Sigma^\infty_G(G/H)_+)
,
\Gamma_\varphi \Phi^{\varphi(0)} E
)
\\
\label{reduce.nanocosm.for.categorical.fixedpoints.by.commuting.fixedpoints.past.suspension}
& \simeq
\lim_{([n] \xra{\varphi} \pos_G ) \in \sd(\pos_G)}
\hom_{\Spectra^{\htpy \Weyl(\varphi(n))}}
(
\Sigma^\infty ((G/H)^{\varphi(n)})_+
,
\Gamma_\varphi \Phi^{\varphi(0)} E
)
\\
\label{reduce.to.leq.H.for.categorical.fixedpoints}
& \simeq
\lim_{([n] \xra{\varphi} (^\leq H) ) \in \sd(^\leq H)}
\hom_{\Spectra^{\htpy \Weyl(\varphi(n))}}
(
\Sigma^\infty ((G/H)^{\varphi(n)})_+
,
\Gamma_\varphi \Phi^{\varphi(0)} E
)
\end{align}
in $\Spectra^{\htpy \Weyl(H)}$, in which
\begin{itemize}

\item equivalence \Cref{reduce.nanocosm.for.categorical.fixedpoints.by.commuting.fixedpoints.past.suspension} follows from \Cref{obs.geom.fps.commutes.w.suspension} and

\item equivalence \Cref{reduce.to.leq.H.for.categorical.fixedpoints} follows from the facts

\begin{itemize}

\item that the functor
\[
\sd(^\leq H)
\longra
\sd(\pos_G)
\]
is a fully faithful right fibration and

\item that for any $K \not\leq H$ in $\pos_G$ we have
\[
\Sigma^\infty ((G/H)^{K})_+
\simeq
\Sigma^\infty (\es)_+
\simeq
0
~.
\]

\end{itemize}

\end{itemize}
\end{observation}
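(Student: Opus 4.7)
The plan is to verify each of the four equivalences in the stated chain in turn; all of them follow readily from machinery already in place, and the statement itself already indicates the key input for each step. The only real content is the fourth reduction.

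The first equivalence is standard: at the level of underlying homotopy $\Weyl(H)$-spectra, categorical $H$-fixedpoints are corepresented by $\Sigma^\infty_G(G/H)_+$, as recorded just above \Cref{notn.no.extra.notn.for.categorical.fixedpts}. The second equivalence is a direct invocation of the nanocosm reconstruction of \Cref{intro.thm.cosms}\Cref{intro.main.thm.nanocosm} (in the form spelled out in \Cref{obs.nanocosm}), applied to the geometric stratification established in \Cref{thm.geom.stratn.of.SpgG}. Convergence is automatic because $\pos_G$ is finite whenever $G$ is, and hence down-finite. The third equivalence is an application of \Cref{obs.geom.fps.commutes.w.suspension}, which yields $\Phi^{\varphi(n)}(\Sigma^\infty_G(G/H)_+) \simeq \Sigma^\infty((G/H)^{\varphi(n)})_+$ in $\Spectra^{\htpy \Weyl(\varphi(n))}$ for each $\varphi \in \sd(\pos_G)$.

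The fourth equivalence is the main obstacle, though it too is routine. First, observe that the inclusion $\sd(^\leq H) \hookrightarrow \sd(\pos_G)$ is a fully faithful right fibration whose image is a sieve: a morphism $\varphi \to \psi$ in $\sd(\pos_G)$ exhibits $\varphi$ as a restriction of $\psi$ to a subsimplex, so if $\psi$ factors through $(^\leq H)$ then so does $\varphi$. Second, a conservative order-preserving $\varphi : [n] \to \pos_G$ satisfies $\varphi \in \sd(^\leq H)$ if and only if $\varphi(n) \leq H$, since $\varphi(0) \leq \cdots \leq \varphi(n)$. Third, if $\varphi(n) \not\leq H$, then for every $g \in G$ the conjugate $g^{-1}\varphi(n)g$ is also not subconjugate to $H$, so no coset $gH$ is fixed by $\varphi(n)$; thus $(G/H)^{\varphi(n)} = \emptyset$, so $\Sigma^\infty((G/H)^{\varphi(n)})_+ \simeq 0$ and the corresponding mapping spectrum in the diagram vanishes.

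To conclude, we invoke the standard fact that for a sieve inclusion $j : \cD \hookrightarrow \cC$ between small $\infty$-categories and a diagram $F : \cC \to \cE$ valued in a pointed $\infty$-category, if $F$ vanishes on the complementary cosieve $\cC \setminus \cD$, then the unit morphism $F \to j_\ast j^\ast F$ is an equivalence and hence $\lim_\cC F \simeq \lim_\cD F|_\cD$. This holds because $j$ is fully faithful, so $(j_\ast j^\ast F)(c) = F(c)$ for $c \in \cD$; and for $c \notin \cD$, the comma category $c / j$ is empty by the sieve property, so $(j_\ast j^\ast F)(c) = \ast = 0$, matching $F(c)$ by hypothesis. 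Applying this with $\cC = \sd(\pos_G)$, $\cD = \sd(^\leq H)$, and $F$ the diagram from step three completes the chain of equivalences.
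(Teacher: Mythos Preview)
Your proof is correct and follows essentially the same approach as the paper: the observation in the paper is justified only by bullet points naming the two key facts (the fully faithful right fibration and the vanishing of $(G/H)^K$ for $K \not\leq H$), and you have simply fleshed out those bullets, correctly identifying the sieve property and the right Kan extension argument that makes the reduction go through.
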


\begin{example}[categorical $e$-fixedpoints]
Suppose that $H = e \leq G$ is the trivial subgroup. Then, for any genuine $G$-spectrum $E \in \Spectra^{\gen G}$, the composite equivalence of \Cref{obs.categorical.fixedpoints} reduces to an equivalence
\[
E^e
\simeq
\hom_{\Spectra^{\htpy G}} ( \Sigma^\infty ((G/e)^e)_+ , U E )
\simeq
\hom_{\Spectra^{\htpy G}} ( \Sigma^\infty (G/e)_+ , U E )
\simeq
U E
\]
in $\Spectra^{\htpy G}$.
\end{example}

\begin{example}[categorical $\Cyclic_p$-fixedpoints]
Suppose that $H = G = \Cyclic_p$ (and recall \Cref{example.genuine.Cp.spectra}). For any genuine $\Cyclic_p$-spectrum $E \in \Spectra^{\gen \Cyclic_p}$, the composite equivalence of \Cref{obs.categorical.fixedpoints} reduces to an equivalence
\[
\hspace{-2cm}
E^{\Cyclic_p}
\simeq
\lim \left( \begin{tikzcd}
&
\hom_{\Spectra} ( \Sigma^\infty ( ( \Cyclic_p / \Cyclic_p)^{\Cyclic_p})_+ , \Phi^{\Cyclic_p} E )
\arrow{d}
\\
\hom_{\Spectra^{\htpy \Cyclic_p}} ( \Sigma^\infty ( (\Cyclic_p / \Cyclic_p)^e )_+ , U E )
\arrow{r}
&
\hom_{\Spectra} ( \Sigma^\infty ( ( \Cyclic_p / \Cyclic_p)^{\Cyclic_p} )_+ , (U E)^{\tate \Cyclic_p} )
\end{tikzcd} \right)
\simeq
\lim \left( \begin{tikzcd}
&
\Phi^{\Cyclic_p} E
\arrow{d}
\\
(UE)^{\htpy \Cyclic_p}
\arrow{r}
&
(UE)^{\tate \Cyclic_p}
\end{tikzcd} \right)
\]
in $\Spectra$.
\end{example}

\begin{example}[categorical $\Cyclic_{p^2}$-fixedpoints]
Suppose that $H = G = \Cyclic_{p^2}$ (and recall \Cref{example.genuine.Cpsquared.spectra}). For any genuine $\Cyclic_{p^2}$-spectrum $E \in \Spectra^{\gen \Cyclic_{p^2}}$, the composite equivalence of \Cref{obs.categorical.fixedpoints} yields a limit diagram
\[ \begin{tikzcd}
&
\Phi^{\Cyclic_{p^2}} E
\arrow{rr}
\arrow{dd}
&
&
(\Phi^{\Cyclic_p} E)^{\tate \Cyclic_p}
\arrow{dd}
\\
E^{\Cyclic_{p^2}}
\arrow[crossing over]{rr}
\arrow{ru}
\arrow{dd}
&
&
(\Phi^{\Cyclic_p} E)^{\htpy \Cyclic_p}
\arrow{ru}
\\
&
(UE)^{\tate \Cyclic_{p^2}}
\arrow{rr}
&
&
((UE)^{\tate \Cyclic_p})^{\tate \Cyclic_p}
\\
(UE)^{\htpy \Cyclic_{p^2}}
\arrow{rr}
\arrow{ru}
&
&
((UE)^{\tate \Cyclic_p})^{\htpy \Cyclic_p}
\arrow{ru}
\arrow[leftarrow, crossing over]{uu}
\end{tikzcd} \]
in $\Spectra$.
\end{example}

\begin{local}
For the remainder of this subsection, we fix a subgroup $K \subseteq H$ of the chosen subgroup $H \subseteq G$.
\end{local}

\begin{definition}
\label{defn.relative.Weyl.group}
The \bit{relative Weyl group} of the nested pair $K \subseteq H$ of subgroups of $G$ is
\[
\Weyl(K \subseteq H)
:=
\Weyl_G(K \subseteq H)
:=
\frac{\Normzer_G(K) \cap \Normzer_G(H)}{K}
~,
\]
the quotient by $K$ of the intersection of the normalizers of $K$ and $H$ in $G$.\footnote{More invariantly, one can also describe $\Weyl(K \subseteq H)$ as the group of automorphisms of the object $(G/K \ra G/H) \in \Ar(\Orb_G) \subseteq \Ar(\Spaces^{\gen G})$.} By definition, this comes equipped with homomorphisms
\[ \begin{tikzcd}[row sep=0cm]
\Weyl(K)
&
\Weyl(K \subseteq H)
\arrow{l}
\arrow{rr}
&&
\Weyl(H)
\\
\rotatebox{90}{$=:$}
&
\rotatebox{90}{$=:$}
&
&
\rotatebox{90}{$=:$}
\\
{\displaystyle \frac{\Normzer(K)}{K} }
&
{\displaystyle \frac{\Normzer(K) \cap \Normzer(H)}{K} }
\arrow{l}
\arrow{r}
&
{\displaystyle \frac{\Normzer(K) \cap \Normzer(H)}{\Normzer(K) \cap H} }
\arrow{r}
&
{\displaystyle \frac{\Normzer(H)}{H} }
\end{tikzcd}~. \]
\end{definition}

\begin{observation}
\label{obs.restriction.between.catl.fixedpts}
Restriction defines a natural transformation
\[ \begin{tikzcd}
\Spectra^{\gen G}
\arrow{r}{(-)^H}[swap, yshift=-0.4cm]{\rotatebox{45}{$\Leftarrow$}}
\arrow{d}[swap]{(-)^K}
&
\Spectra^{\htpy \Weyl_G(H)}
\arrow{d}
\\
\Spectra^{\htpy \Weyl_G(K)}
\arrow{r}
&
\Spectra^{\htpy \Weyl_G(K \subseteq H)}
\end{tikzcd}~, \]
which is corepresented by the morphism
\[
\Sigma^\infty_G(G/K \longra G/H)_+
\]
in $\Spectra^{\gen G}$. In terms of nanocosm reconstruction, for any genuine $G$-spectrum $E \in \Spectra^{\gen G}$ it may be expressed as the composite
\begin{align}
\label{identify.categorical.H.fixedpoints.in.describing.restriction}
E^H
& \simeq
\lim_{([n] \xra{\varphi} (^\leq H) ) \in \sd(^\leq H)}
\hom_{\Spectra^{\htpy \Weyl(\varphi(n))}}
(
\Sigma^\infty ((G/H)^{\varphi(n)})_+
,
\Gamma_\varphi \Phi^{\varphi(0)} E
)
\\
\label{restrict.to.small.diagram.in.describing.restriction}
& \longra
\lim_{([n] \xra{\varphi} (^\leq K) ) \in \sd(^\leq K)}
\hom_{\Spectra^{\htpy \Weyl(\varphi(n))}}
(
\Sigma^\infty ((G/H)^{\varphi(n)})_+
,
\Gamma_\varphi \Phi^{\varphi(0)} E
)
\\
\label{change.from.G.mod.H.to.G.mod.K.in.describing.restriction}
& \longra
\lim_{([n] \xra{\varphi} (^\leq K) ) \in \sd(^\leq K)}
\hom_{\Spectra^{\htpy \Weyl(\varphi(n))}}
(
\Sigma^\infty ((G/K)^{\varphi(n)})_+
,
\Gamma_\varphi \Phi^{\varphi(0)} E
)
\\
\label{identify.categorical.K.fixedpoints.in.describing.restriction}
& \simeq
E^K
~,
\end{align}
where
\begin{itemize}

\item the equivalences \Cref{identify.categorical.H.fixedpoints.in.describing.restriction} \and \Cref{identify.categorical.K.fixedpoints.in.describing.restriction} follow from \Cref{obs.categorical.fixedpoints},

\item the morphism \Cref{restrict.to.small.diagram.in.describing.restriction} is that on limits induced by the functor
\[
\sd(^\leq K)
\longra
\sd(^\leq H)
~,
\]
and

\item the morphism \Cref{change.from.G.mod.H.to.G.mod.K.in.describing.restriction} is that on limits determined by a morphism in $\Fun(\sd(^\leq K),\Spectra)$ whose component at an object $([n] \xra{\varphi} (^\leq K)) \in \sd(^\leq K)$ is precomposition with the morphism
\[
\Sigma^\infty ( ( G/K \longra G/H)^{\varphi(n)} )_+
\]
in $\Spectra^{\htpy \Weyl(\varphi(n))}$.
\end{itemize}
\end{observation}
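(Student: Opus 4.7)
The plan is to derive the stated description as a consequence of the naturality of the nanocosm morphism of \Cref{intro.thm.cosms}\Cref{intro.main.thm.nanocosm} (as unpacked in \Cref{obs.nanocosm}) in its corepresenting argument, applied to the canonical morphism $\Sigma^\infty_G(G/K \longra G/H)_+$ in $\Spectra^{\gen G}$.

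First I would verify that the restriction natural transformation is corepresented as stated. For any closed subgroup $L \subseteq G$, the functor $E \mapsto E^L$ valued in $\Spectra^{\htpy \Weyl_G(L)}$ is corepresented by $\Sigma^\infty_G(G/L)_+$ (by the convention of \Cref{notn.no.extra.notn.for.categorical.fixedpts}). The quotient map of orbits $G/K \to G/H$ is equivariant for $\Normzer_G(K) \cap \Normzer_G(H)$, so precomposition induces the desired natural transformation $E^H \to E^K$ compatibly with the homomorphisms out of $\Weyl_G(K \subseteq H)$ in \Cref{defn.relative.Weyl.group}; this simultaneously identifies the transformation and its corepresenting morphism.

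Next I would invoke naturality of the nanocosm morphism in its source. Applied to $\Sigma^\infty_G(G/K)_+ \longra \Sigma^\infty_G(G/H)_+$ in $\Spectra^{\gen G}$, this yields a commutative square whose left vertical is the restriction $E^H \to E^K$ and whose right vertical is the map of limits over $\sd(\pos_G)$ induced by the natural transformation of $\sd(\pos_G)$-indexed diagrams in $\Spectra$ whose component at $([n] \xra{\varphi} \pos_G)$ is precomposition with $\Phi^{\varphi(n)}$ applied to $\Sigma^\infty_G(G/K \longra G/H)_+$. By \Cref{obs.geom.fps.commutes.w.suspension}, this component is identified with precomposition by $\Sigma^\infty((G/K \longra G/H)^{\varphi(n)})_+$.

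Finally I would reduce the indexing categories to match the statement. Since $(G/L)^{\varphi(n)} = \es$ whenever $\varphi(n) \not\leq L$, \Cref{obs.categorical.fixedpoints} already computes both $E^H$ and $E^K$ as limits over $\sd(^\leq H)$ and $\sd(^\leq K)$ respectively, and the above naturality square factors through the fully faithful inclusion $\sd(^\leq K) \subseteq \sd(^\leq H)$. This factorization decomposes the right vertical as the composite of \Cref{restrict.to.small.diagram.in.describing.restriction} (restrict the indexing poset from $\sd(^\leq H)$ to $\sd(^\leq K)$, still with source $\Sigma^\infty((G/H)^{\varphi(n)})_+$) followed by \Cref{change.from.G.mod.H.to.G.mod.K.in.describing.restriction} (precompose with $\Sigma^\infty((G/K \longra G/H)^{\varphi(n)})_+$ over $\sd(^\leq K)$), yielding the claim. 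The argument is pure functoriality and presents no real obstacle, as all needed inputs are already established; the only mild subtlety is to verify that the order of the two reductions is immaterial, which follows from the fact that each comes from a natural transformation of diagrams on $\sd(^\leq H)$ whose restriction to the complement of $\sd(^\leq K)$ consists of maps into a zero object.
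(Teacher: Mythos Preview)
Your proposal is correct and follows essentially the same approach as the paper: the observation is justified in the paper by the same three ingredients you identify (the corepresentation by $\Sigma^\infty_G(G/K \to G/H)_+$, the nanocosm formula from \Cref{obs.categorical.fixedpoints}, and the reduction of indexing posets via vanishing of $(G/L)^{\varphi(n)}$ when $\varphi(n) \not\leq L$). Your additional remark about the order of the two reductions being immaterial is a nice clarification that the paper leaves implicit.
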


\begin{observation}
\label{obs.transfer.between.catl.fixedpts}
Transfer defines a natural transformation
\[ \begin{tikzcd}
\Spectra^{\gen G}
\arrow{r}{(-)^H}[swap, yshift=-0.4cm]{\rotatebox{45}{$\Rightarrow$}}
\arrow{d}[swap]{(-)^K}
&
\Spectra^{\htpy \Weyl_G(H)}
\arrow{d}
\\
\Spectra^{\htpy \Weyl_G(K)}
\arrow{r}
&
\Spectra^{\htpy \Weyl_G(K \subseteq H)}
\end{tikzcd}~, \]
which is corepresented by a morphism
\begin{equation}
\label{morphism.corepresenting.transfer}
\Sigma^\infty_G(G/H)_+
\longra
\Sigma^\infty_G(G/K)_+
\end{equation}
in $\Spectra^{\gen G}$.\footnote{The morphism \Cref{morphism.corepresenting.transfer} may be obtained by applying the functor $\Spectra^{\gen H} \xra{\Ind_H^G} \Spectra^{\gen G}$ to the morphism
\[
\Sigma^\infty_H(H/H)_+
\longra
\Sigma^\infty_H(H/K)_+
\simeq
\coInd_K^H(\Sigma^\infty_K(K/K)_+)
\]
corresponding to the identity morphism
\[
\Res^H_K(\Sigma^\infty_H(H/H)_+)
\longra
\Sigma^\infty_K(K/K)_+
\]
in $\Spectra^{\gen K}$.} In terms of nanocosm reconstruction, for any genuine $G$-spectrum $E \in \Spectra^{\gen G}$ it may be expressed as the composite
\begin{align}
\label{identify.categorical.K.fixedpoints.in.describing.transfer}
E^K
& \simeq
\lim_{([n] \xra{\varphi} (^\leq K) ) \in \sd(^\leq K)}
\hom_{\Spectra^{\htpy \Weyl(\varphi(n))}}
(
\Sigma^\infty ((G/K)^{\varphi(n)})_+
,
\Gamma_\varphi \Phi^{\varphi(0)} E
)
\\
\label{expand.categorical.K.fixedpoints.to.a.limit.over.sd.leq.H}
& \simeq
\lim_{([n] \xra{\varphi} (^\leq H) ) \in \sd(^\leq H)}
\hom_{\Spectra^{\htpy \Weyl(\varphi(n))}}
(
\Sigma^\infty ((G/K)^{\varphi(n)})_+
,
\Gamma_\varphi \Phi^{\varphi(0)} E
)
\\
\label{morphism.in.describing.transfer}
& \longra
\lim_{([n] \xra{\varphi} (^\leq H) ) \in \sd(^\leq H)}
\hom_{\Spectra^{\htpy \Weyl(\varphi(n))}}
(
\Sigma^\infty ((G/H)^{\varphi(n)})_+
,
\Gamma_\varphi \Phi^{\varphi(0)} E
)
\\
\label{identify.categorical.H.fixedpoints.in.describing.transfer}
& \simeq
E^H
~,
\end{align}
where
\begin{itemize}
\item the equivalences \Cref{identify.categorical.K.fixedpoints.in.describing.transfer}, \Cref{expand.categorical.K.fixedpoints.to.a.limit.over.sd.leq.H}, \and \Cref{identify.categorical.H.fixedpoints.in.describing.transfer} follow from \Cref{obs.categorical.fixedpoints}, and

\item the morphism \Cref{morphism.in.describing.transfer} is that on limits determined by a morphism in $\Fun(\sd(^\leq H),\Spectra)$ whose component at an object $([n] \xra{\varphi} (^\leq H)) \in \sd(^\leq H)$ is precomposition with the morphism
\[
\Sigma^\infty((G/H)^{\varphi(n)})_+
\simeq
\Phi^{\varphi(n)} ( \Sigma^\infty_G(G/H)_+)
\xra{\Phi^{\varphi(n)}\Cref{morphism.corepresenting.transfer}}
\Phi^{\varphi(n)} ( \Sigma^\infty_G(G/K)_+)
\simeq
\Sigma^\infty((G/K)^{\varphi(n)})_+
\
\]
in $\Spectra^{\htpy \Weyl(\varphi(n))}$ (using \Cref{obs.geom.fps.commutes.w.suspension}).
\end{itemize}
\end{observation}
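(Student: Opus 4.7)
The plan is to combine naturality of the nanocosm reconstruction (\Cref{obs.nanocosm}) in its first argument with corepresentability of the transfer by the canonical morphism $\Sigma^\infty_G(G/H)_+ \to \Sigma^\infty_G(G/K)_+$ in $\Spectra^{\gen G}$. Observations \Cref{obs.categorical.fixedpoints} and \Cref{obs.restriction.between.catl.fixedpts} serve as the template: the former gives the nanocosm formulas for $(-)^H$ and $(-)^K$, and the latter treats the dual natural transformation (restriction) corepresented by the opposite map $\Sigma^\infty_G(G/K \to G/H)_+$. The structure of the proof for transfer is entirely parallel, with only the direction of the corepresenting morphism reversed; consequently, the same nanocosm machinery will convert corepresentability into the asserted composite formula.

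First I would recall the construction of the transfer at the level of corepresenting objects, as in the footnote: one begins with the identity map $\Res^H_K(\Sigma^\infty_H(H/H)_+) \to \Sigma^\infty_K(K/K)_+$ in $\Spectra^{\gen K}$, transposes it under the adjunction $\Res^H_K \dashv \coInd^H_K$ to a morphism $\Sigma^\infty_H(H/H)_+ \to \Sigma^\infty_H(H/K)_+$ in $\Spectra^{\gen H}$, and then induces up to $\Spectra^{\gen G}$ via $\Ind^G_H$. Applying $\hom_{\Spectra^{\gen G}}(-,E)$ and using the standard identifications $\hom_{\Spectra^{\gen G}}(\Sigma^\infty_G(G/L)_+,E) \simeq E^L$ for $L \in \{H,K\}$ produces the asserted natural transformation $E^K \to E^H$.

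Next I would invoke \Cref{obs.nanocosm}: a morphism $X \to Y$ in $\Spectra^{\gen G}$ induces a morphism on nanocosm limits whose component at $([n] \xra{\varphi} \pos_G) \in \sd(\pos_G)$ is precomposition with $\Phi^{\varphi(n)}(X \to Y)$. Applied to $X := \Sigma^\infty_G(G/H)_+ \to \Sigma^\infty_G(G/K)_+ =: Y$, and using \Cref{obs.geom.fps.commutes.w.suspension} to identify $\Phi^{\varphi(n)}(\Sigma^\infty_G(G/L)_+) \simeq \Sigma^\infty((G/L)^{\varphi(n)})_+$, this yields precisely the map \Cref{morphism.in.describing.transfer}. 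What then remains is to verify that the two nanocosm presentations of the source $E^K$ — one indexed by $\sd(^\leq K)$ (as in \Cref{obs.categorical.fixedpoints}) and the other by $\sd(^\leq H)$ (as needed to match the target) — are canonically equivalent, which is the content of the intermediate equivalence \Cref{expand.categorical.K.fixedpoints.to.a.limit.over.sd.leq.H}.

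The main technical point is to justify this expansion of the $\sd(^\leq K)$-indexed limit into an $\sd(^\leq H)$-indexed one. The argument should mirror the reduction \Cref{reduce.to.leq.H.for.categorical.fixedpoints} used in \Cref{obs.categorical.fixedpoints}: the inclusion $\sd(^\leq K) \hookra \sd(^\leq H)$ is a fully faithful right fibration (if $\varphi \to \psi$ in $\sd$ and $\psi(m) \leq K$, then automatically $\varphi(n) \leq \psi(m) \leq K$), and for any $\varphi \in \sd(^\leq H) \setminus \sd(^\leq K)$ one has $(G/K)^{\varphi(n)} = \emptyset$, forcing $\Sigma^\infty((G/K)^{\varphi(n)})_+ \simeq 0$ and hence vanishing of the corresponding hom-spectrum. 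The formal extraction of a limit equivalence from this vanishing is exactly dual to the reduction already carried out in \Cref{obs.categorical.fixedpoints}, and I expect this to be the only delicate point: once it is in place, the composite description of the transfer map is a direct readoff from naturality of the nanocosm.
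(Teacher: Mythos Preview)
Your proposal is correct and matches the paper's approach: the observation is justified in the paper exactly as you describe, by naturality of the nanocosm formula in its first argument (\Cref{obs.nanocosm}) applied to the corepresenting transfer map, together with \Cref{obs.geom.fps.commutes.w.suspension} for the componentwise identification and the same right-fibration/vanishing argument from \Cref{obs.categorical.fixedpoints} for the expansion from $\sd(^\leq K)$ to $\sd(^\leq H)$. The paper embeds these justifications directly in the bullet points of the statement rather than in a separate proof, and your expansion-step argument (that $\sd(^\leq K) \hookrightarrow \sd(^\leq H)$ is a fully faithful right fibration with the extra terms vanishing because $(G/K)^L = \emptyset$ for $L \not\leq K$) is precisely what is meant by ``follows from \Cref{obs.categorical.fixedpoints}.''
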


\section{The metacosm reconstruction theorem}
\label{section.reconstrn}

In this section, we prove the metacosm reconstruction theorem (\Cref{intro.thm.cosms}\Cref{intro.main.thm.metacosm}), which easily implies the macrocosm reconstruction theorem (\Cref{intro.thm.cosms}\Cref{intro.main.thm.macrocosm}) as proved in \Cref{section.strat}. It is organized as follows.
\begin{itemize}

\item[\Cref{subsection.stratns.of.rlax.lims}:] We establish a canonical stratification of certain right-lax limits.

\item[\Cref{subsection.metacosm}:] We prove \Cref{intro.thm.cosms}\Cref{intro.main.thm.metacosm} as \Cref{metacosm.thm}.  Recall that this is an adjunction, which is an equivalence when the poset is down-finite.  Its left adjoint takes a stratified presentable stable $\infty$-category to its gluing diagram; its right adjoint is essentially constructed in \Cref{subsection.stratns.of.rlax.lims}.

\item[\Cref{subsection.strict.stratns}:] We explain the theory of strict stratifications.

\end{itemize}

\begin{local}
In this section, we fix a poset $\pos$.
\end{local}

\subsection{Stratifications of right-lax limits}
\label{subsection.stratns.of.rlax.lims}

In this subsection we prove the omnibus \Cref{prop.metacosm.input.first.get.stratn}, which establishes a canonical stratification of certain right-lax limits as well as a number of its essential properties.

\begin{definition}
\label{defn.LMod.rlax.L}
A \bit{presentable stable left-lax left $\pos$-module} is a left-lax left $\pos$-module whose fibers are presentable stable $\infty$-categories and whose monodromy functors are exact and accessible.  These assemble into a subcategory
\[
\LMod^{\rlax,L}_{\llax.\pos}(\PrSt)
\subseteq
\LMod^\rlax_{\llax.\pos}
\]
whose morphisms are those morphisms in $\LMod^\rlax_{\llax.\pos}$ that are fiberwise left adjoints.
\end{definition}

\begin{local}
In this subsection, we fix a presentable stable left-lax left $\pos$-module
\[
(\cE \da \pos)
\in
\LMod^{\rlax,L}_{\llax.\pos}(\PrSt)
~.
\]
For any morphism $p \ra q$ in $\pos$ we write
\[
\cE_p
\xra{\Gamma^p_q}
\cE_q
\]
for its corresponding cocartesian monodromy functor.
\end{local}

\begin{local}
In this subsection, we write
\[
\cX := \lim^\rlax_{\llax.\pos}(\cE) \in \Cat
~,
\]
and for any subposet $\posQ \subseteq \pos$ we write
\[
\cX
:=
\lim^\rlax_{\llax.\pos}(\cE)
\xra{\Phi_\posQ}
\lim^\rlax_{\llax.\posQ}(\cE)
\]
for the restriction functor.
\end{local}

\begin{observation}
\label{obs.limrlax.is.accessible}
It follows from \Cref{lem.strictification} that $\cX$ is accessible.
\end{observation}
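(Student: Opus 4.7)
The strategy is to reduce to the strict (i.e.\! ordinary) limit description provided by \Cref{lem.strictification} and then apply standard closure properties of accessible $\infty$-categories. Concretely, \Cref{lem.strictification} supplies a canonical equivalence
\[
\cX
:=
\lim^\rlax_{\llax.\pos}(\cE)
\simeq
\lim_{\sd(\pos)} ( \Strict(\cE) )
\]
in $\Cat$, where $\Strict(\cE) : \sd(\pos) \to \Cat$ is a strict functor constructed from the left-lax left $\pos$-module $\cE$. Since $\sd(\pos)$ is a small $\infty$-category, it suffices to show that $\Strict(\cE)$ lands in the (non-full) subcategory $\Catacc \subseteq \Cat$ of accessible $\infty$-categories and accessible functors, for then the result follows from the fact that this subcategory is stable under small limits in $\Cat$ (cf.\! \cite[Proposition 5.4.7.3]{LurieHTT}).

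To verify this, I would analyze $\Strict(\cE)$ value-by-value. By the construction recalled in the proof of \Cref{lem.strictification}, for each object $([n] \xra{\varphi} \pos) \in \sd(\pos)$ the $\infty$-category $\Strict(\cE)(\varphi)$ is built out of the fibers $\cE_{\varphi(0)} , \ldots, \cE_{\varphi(n)}$ together with the cocartesian monodromy functors $\Gamma^{\varphi(i)}_{\varphi(i+1)}$ between them, essentially as a finite iterated lax pullback (or equivalently as the $\infty$-category of sections of a pullback of $\cE$ along $\varphi$). By hypothesis (\Cref{defn.LMod.rlax.L}), each fiber $\cE_p$ is presentable, hence accessible, and each monodromy functor $\Gamma^p_q$ is accessible; accessible $\infty$-categories are stable under finite limits in $\Cat$ and under functor categories indexed by finite posets, with accessible functors, so each $\Strict(\cE)(\varphi)$ is accessible and the transition functors (given by restriction along inclusions $[n] \hookrightarrow [n+1]$ in $\sd(\pos)$) are accessible as well.

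Combining these two steps concludes the proof. The only step that requires any genuine care is the bookkeeping in the second paragraph, namely unwinding the explicit formula for $\Strict(\cE)$ provided by \Cref{lem.strictification} in order to see that each of its values is a finite-limit construction applied to the accessible $\infty$-categories $\cE_p$ along the accessible functors $\Gamma^p_q$; once this is in hand, closure of $\Catacc$ under small limits finishes the argument immediately. I do not anticipate any serious obstacle, since both inputs --- the strictification formula and the stability of $\Catacc$ under small limits --- are already available.
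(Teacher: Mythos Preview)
Your overall plan --- express $\cX$ as the strict limit $\lim_{\sd(\pos)}(\Strict(\cE))$ via \Cref{lem.strictification} and then invoke closure of accessible $\infty$-categories under small limits along accessible functors --- is correct and is exactly what the paper intends.

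However, your description of $\Strict(\cE)$ is inaccurate. You describe $\Strict(\cE)(\varphi)$ as the right-lax limit of $\varphi^*\cE$ over $[n]$ (``the $\infty$-category of sections of a pullback of $\cE$ along $\varphi$''), but the formula actually given in \Cref{lem.strictification} is
\[
\Strict(\cE)_\varphi \;=\; \Fun\bigl( \sd(\pos)_{\varphi/\isomax} ,\, \cE_{\max(\varphi)} \bigr),
\]
a functor category from a small poset into the \emph{single} fiber $\cE_{\max(\varphi)}$; compare the $\pos=[1]$ example immediately following the lemma, where the value at $1 \in \sd([1])$ is $\Fun([1],\cE_1)$, not $\cE_1$. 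The correct formula is in fact easier to handle: since $\cE_{\max(\varphi)}$ is presentable and the indexing poset is small, each $\Strict(\cE)_\varphi$ is presentable. Note too that $\sd(\pos)_{\varphi/\isomax}$ need not be \emph{finite} when $\pos$ is not down-finite, so your appeals to finite posets and finite iterated lax pullbacks must be weakened to ``small'' throughout --- smallness is all that accessibility requires. The transition functors are also not merely restrictions as you indicate; the lemma's explicit description shows that postcomposing $\Strict(\cE)_\alpha$ with any evaluation $\ev_\beta$ yields an evaluation followed by a finite composite of monodromy functors $\Gamma^p_q$. Since $\pos$ is small, one may choose a single regular cardinal $\kappa$ such that every $\Gamma^p_q$ preserves $\kappa$-filtered colimits, whence (as colimits in the target functor category are computed pointwise) each $\Strict(\cE)_\alpha$ is $\kappa$-accessible.
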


\begin{observation}
It follows from \Cref{lem.strictification} that $\cX$ is stable. We use this fact without further comment.
\end{observation}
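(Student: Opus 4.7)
The plan is to reduce the claim about the right-lax limit to a claim about an ordinary strict limit, where stability becomes a routine consequence of the fact that stable $\infty$-categories with exact functors are closed under limits in $\Cat$.

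First, I would invoke \Cref{lem.strictification} to obtain a canonical equivalence
\[
\cX
\;=\;
\lim^\rlax_{\llax.\pos}(\cE)
\;\simeq\;
\lim_{\sd(\pos)} \Strict(\cE)
~,
\]
where $\Strict(\cE) \colon \sd(\pos) \to \Cat$ is the strict functor associated to $\cE$. On a simplex $([n] \xra{\varphi} \pos)$, this functor takes a value built from fibers $\cE_{\varphi(i)}$ (which are stable by hypothesis that $\cE \in \LMod^{\rlax,L}_{\llax.\pos}(\PrSt)$) together with composites of the cocartesian monodromy functors $\Gamma^{\varphi(i)}_{\varphi(i+1)}$ (which are exact by the same hypothesis). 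In particular, $\Strict(\cE)$ factors through the (non-full) subcategory $\St \subseteq \Cat$ of stable $\infty$-categories and exact functors.

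Next, I would appeal to the fact that the forgetful functor $\St \to \Cat$ preserves all small limits: a limit in $\Cat$ of stable $\infty$-categories along exact functors is again stable, with its zero object, biproducts, and fiber/cofiber sequences all computed pointwise. Applying this to the diagram $\Strict(\cE)$ shows that the strict limit $\lim_{\sd(\pos)} \Strict(\cE)$ is stable, and hence so is $\cX$ via the equivalence above. There is no serious obstacle here; the only subtlety is recognizing that the hypotheses built into \Cref{defn.LMod.rlax.L} (stable fibers together with exact accessible monodromy) are precisely what is needed to ensure that $\Strict(\cE)$ takes values in $\St$, after which stability descends to the limit automatically. (An entirely parallel argument, applied to accessible $\infty$-categories and accessible functors in place of $\St$, yields the accessibility statement of \Cref{obs.limrlax.is.accessible}.)
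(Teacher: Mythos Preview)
Your argument is correct and is exactly the natural unpacking of the paper's one-line observation: reduce via \Cref{lem.strictification} to a strict limit over $\sd(\pos)$, then use that stable $\infty$-categories are closed under limits along exact functors in $\Cat$. One small imprecision worth flagging: according to \Cref{lem.strictification} the value $\Strict(\cE)_\varphi$ is the functor category $\Fun\bigl(\sd(\pos)_{\varphi/\isomax},\,\cE_{\max(\varphi)}\bigr)$, involving only the single fiber $\cE_{\max(\varphi)}$ rather than all the $\cE_{\varphi(i)}$; but this only makes the stability of the values more immediate, and the transition maps (built from restriction along maps of indexing posets and post-composition with the exact monodromy functors) remain exact, so your conclusion stands.
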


\needspace{2\baselineskip}
\begin{prop}
\label{prop.metacosm.input.first.get.stratn}
\begin{enumerate}
\item[]

\item\label{metacosm.input.presentable}
The $\infty$-category $\cX := \lim^\rlax_{\llax.\pos}(\cE)$ is cocomplete, and hence presentable by \Cref{obs.limrlax.is.accessible}.

\item\label{metacosm.input.conservative}
The functor
\[
\cX
\xra{(\Phi_p)_{p \in \pos}}
\prod_{p \in \pos} \cE_p
\]
is conservative.

\item\label{metacosm.input.cocts.restrn.to.any.subposet}
For any subposet $\posQ \subseteq \pos$, the restriction functor
\[
\cX
:=
\lim^\rlax_{\llax.\pos}(\cE)
\xra{\Phi_\posQ}
\lim^\rlax_{\llax.\posQ}(\cE)
\]
preserves colimits, and hence admits a right adjoint by part \Cref{metacosm.input.presentable}.

\item\label{metacosm.input.describe.adjts}
Choose any $\sD \in \Down_\pos$.

\begin{enumeratesub}

\item\label{metacosm.input.describe.iL}
The restriction functor
\[
\cX
:=
\lim^\rlax_{\llax.\pos}(\cE)
\xlongra{y}
\lim^\rlax_{\llax.\sD}(\cE)
=:
\cZ_\sD
\]
admits not only a right adjoint $i_R$ as guaranteed by part \Cref{metacosm.input.cocts.restrn.to.any.subposet} but also a fully faithful left adjoint $i_L$, whose image consists of those objects $X \in \cX$ such that $\Phi_q(X) \simeq 0$ for all $q \in \pos \backslash \sD$.  In particular, for any $p \in \pos$, we may consider
\begin{equation}
\label{pth.closed.subcat.in.stratn.of.rlax.lim}
\cZ_p
:=
\lim^\rlax_{\llax.(^\leq p)}(\cE)
\end{equation}
as a closed subcategory of $\cX$ via $i_L$.

\item\label{metacosm.input.describe.nu}
The right adjoint $\nu$ to the restriction functor
\[
\cX
:=
\lim^\rlax_{\llax.\pos}(\cE)
\xra{p_L}
\lim^\rlax_{\llax.(\pos\backslash\sD)}(\cE)
\]
guaranteed by part \Cref{metacosm.input.cocts.restrn.to.any.subposet} is fully faithful, and its image consists of those objects $X \in \cX$ such that $\Phi_q(X) \simeq 0$ for all $q \in \sD$.

\end{enumeratesub}

\item\label{metacosm.input.get.stratn}
The closed subcategories \Cref{pth.closed.subcat.in.stratn.of.rlax.lim} assemble into a stratification
\begin{equation}
\label{the.stratn.of.rlax.lim}
\begin{tikzcd}[row sep=0cm]
\pos
\arrow{r}{\cZ_\bullet}
&
\Cls_\cX
&[-1.2cm]
:= \Cls_{\lim^\rlax_{\llax.\pos}(\cE)}
\\
\rotatebox{90}{$\in$}
&
\rotatebox{90}{$\in$}
\\
p
\arrow[maps to]{r}
&
\cZ_p
&
:= \lim^\rlax_{\llax.(^\leq p)}(\cE)
\end{tikzcd}
~.
\end{equation}
Moreover, our existing notation is consistent with this stratification in the following ways.
\begin{enumeratesub}

\item\label{notational.consistency.for.ZD}

For any $\sD \in \Down_\pos$, we have
\[
\cZ_\sD
:=
\lim^\rlax_{\llax.\sD}(\cE)
\simeq
\brax{ \lim^\rlax_{\llax.(^\leq p)}(\cE) }_{p \in \sD}
=:
\brax{\cZ_p}_{p \in \sD}
~.
\]

\item\label{notational.consistency.for.PhiC}

For any $\sC \in \Conv_\pos$, the $\sC\th$ stratum of the stratification \Cref{the.stratn.of.rlax.lim} is
\[
\cX_\sC
:=
\cZ_{^\leq \sC} / \cZ_{^< \sC}
\simeq
\lim^\rlax_{\llax.\sC}(\cE)
~,
\]
and its $\sC\th$ geometric localization functor
\[
\cX
:=
\lim^\rlax_{\llax.\pos}(\cE)
\xra{\Phi_\sC}
\lim^\rlax_{\llax.\sC}(\cE)
\simeq
\cX_\sC
\]
is the restriction functor.

\item\label{gluing.in.X.is.mdrmy.in.E}

For any $p<q$ in $\pos$, the lax-commutative square
\begin{equation}
\label{lax.comm.square.that.actually.commutes.to.show.mdrmy.is.as.expected.in.lim.rlax.llax.P.E}
\begin{tikzcd}
\cE_p
\arrow[hook]{r}{\rho^p}[swap, xshift=0.2cm, yshift=-0.45cm]{\rotatebox{45}{$\Leftarrow$}}
\arrow[equals]{d}
&
\lim^\rlax_{\llax.\pos}(\cE)
\arrow{d}{\Phi_{\{p<q\}}}
&[-1.4cm]
=: \cX
\\
\cE_p
\arrow[hook]{r}[swap]{\rho^p}
&
\lim^\rlax_{\llax.\{p<q\}}(\cE)
\end{tikzcd}
\end{equation}
determined by the commutative square
\begin{equation}
\label{comm.square.of.restrns.to.show.mdrmy.is.as.expected.in.lim.rlax.P.E}
\begin{tikzcd}
\cE_p
\arrow[equals]{d}
&
\lim^\rlax_{\llax.\pos}(\cE)
\arrow{d}{\Phi_{\{p<q\}}}
\arrow{l}[swap]{\Phi_p}
&[-1.4cm]
=: \cX
\\
\cE_p
&
\lim^\rlax_{\llax.\{p<q\}}(\cE)
\arrow{l}{\Phi_p}
\end{tikzcd}
\end{equation}
commutes.  In particular, for every morphism $p \ra q$ in $\pos$, there is a canonical identification
\[ \begin{tikzcd}[row sep=0cm]
\cX_p
\arrow{r}{\Gamma^p_q}
&
\cX_q
\\
\rotatebox{90}{$\simeq$}
&
\rotatebox{90}{$\simeq$}
\\
\cE_p
\arrow{r}[swap]{\Gamma^p_q}
&
\cE_q
\end{tikzcd} \]
between the corresponding gluing functor for $\cX$ (with respect to the stratification \Cref{the.stratn.of.rlax.lim}) and the corresponding monodromy functor of $\cE$.

\end{enumeratesub}

\end{enumerate}
\end{prop}

\begin{warning}
In the statement and proof of \Cref{prop.metacosm.input.first.get.stratn}, we use notation corresponding to recollements (such as $i_L$, $y$, etc.) even before those recollements have been established.
\end{warning}

\begin{definition}
\label{defn.stable.recollement}
A \bit{stable recollement} is a diagram \Cref{recollement.in.intro} among stable $\infty$-categories such that there are equalities \Cref{equalities.in.defn.of.recollement}. (In particular, we use the same notation for the functors involved in a stable recollement as we do for those involved in a recollement.)
\end{definition}

\begin{remark}
A recollement in the sense of \Cref{defn.recollement.in.intro} is simply a stable recollement among presentable stable $\infty$-categories.
\end{remark}

\begin{observation}
\Cref{lem.reconstrn.for.recollement} applies not just to recollements but to stable recollements: neither the statement nor the proof relies on presentability in any way.  We will use this fact without further comment.
\end{observation}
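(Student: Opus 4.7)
The plan is to simply verify by inspection that nothing in the proof of \Cref{lem.reconstrn.for.recollement} invokes presentability. Concretely, I would re-read that proof and note that each ingredient it uses is available in the strictly weaker setting of a stable recollement: the defining equalities $\im(i_L) = \ker(p_L)$, $\im(\nu) = \ker(\yo)$, $\im(i_R) = \ker(p_R)$ and the adjunctions constituting \Cref{recollement.in.intro} are exactly the hypotheses of \Cref{defn.stable.recollement}; the construction of the candidate inverse uses only finite limits (a pullback of a cospan), which exist in any stable $\infty$-category; and the verification that the composite endofunctor of $\cX$ is the identity reduces to showing that the commutative square \Cref{square.that.must.be.pb.in.proof.of.macrocosm.for.recollement} is a pullback, which in the proof is deduced from the fact that the fibers of its horizontal morphisms are equivalent (an assertion purely about the equality $\im(\nu) = \ker(\yo)$) together with the general fact that in a stable $\infty$-category a square of cofiber sequences with an equivalence on fibers is a pullback.

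The other half of the argument, that the composite endofunctor of $\lim^\rlax(\cZ \xra{p_L i_R} \cU)$ is the identity, is a direct diagrammatic verification using only the unit and counit of the adjunctions in \Cref{recollement.in.intro} and the fact that $i_R$ and $\nu$ are fully faithful; again no appeal to presentability or to smallness hypotheses is made. Hence both the statement of \Cref{lem.reconstrn.for.recollement} and its proof go through verbatim after replacing ``recollement'' with ``stable recollement'', and the right-lax limit $\lim^\rlax(\cZ \xra{p_L i_R} \cU)$ is formed in $\Cat$ (rather than in $\PrL$) with no change in meaning since the defining pullback is a pullback in $\Cat$.

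There is no genuine obstacle: this is an inspection-level remark whose only content is to widen the hypotheses of \Cref{lem.reconstrn.for.recollement}. The only mild care required is to confirm that the right-lax limit $\lim^\rlax(\cZ \xra{p_L i_R} \cU)$ is indeed defined at the level of $\Cat$ (not only $\PrL$), which is immediate from its description as the strict pullback $\cZ \times_\cU \Fun([1],\cU)$ along the target map, a construction that is purely $\infty$-categorical. Accordingly the proof of the observation consists of the single sentence already written, pointing the reader back to the proof of \Cref{lem.reconstrn.for.recollement}.
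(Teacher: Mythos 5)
Your proposal is correct and matches the paper's treatment: the observation is precisely an inspection-level remark, and the paper itself offers no proof beyond the assertion that the statement and proof of \Cref{lem.reconstrn.for.recollement} use only the defining adjunctions, fully faithfulness, and finite limits, all of which are available for stable recollements. Your walk-through of the two halves of that proof confirms this accurately.
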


\begin{lemma}
\label{prop.metacosm.input.first.get.stratn.for.brax.one}
\Cref{prop.metacosm.input.first.get.stratn} holds when $\pos = [1]$.
\end{lemma}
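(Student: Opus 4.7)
The plan is to compute $\cX := \lim^\rlax_{\llax.[1]}(\cE)$ explicitly and verify each claim by direct inspection. A presentable stable left-lax left $[1]$-module $\cE \da [1]$ is equivalently the data of a colimit-preserving exact functor $\cE_0 \xra{\Gamma} \cE_1$ between presentable stable $\infty$-categories. The subdivision $\sd([1])$ is the span $\{0\} \ra \{0,1\} \la \{1\}$; under $\max$ we have $\{0\} \mapsto 0$ and $\{1\},\{0,1\} \mapsto 1$, so the sole nontrivial cocartesian edge is $\{0\} \ra \{0,1\}$ covering $0 < 1$. Unwinding the definition of the right-lax limit as $\Fun^\cocart_{/[1]}(\sd([1]),\cE)$ (\Cref{rmk.recall.defn.of.rlax.lim.of.llax.action}) therefore identifies
\[
\cX \simeq \left\{ (Z \in \cE_0,\ U \in \cE_1,\ U \ra \Gamma Z ) \right\}~.
\]

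With this model in hand, parts \Cref{metacosm.input.presentable}, \Cref{metacosm.input.conservative}, \and \Cref{metacosm.input.cocts.restrn.to.any.subposet} follow immediately. First, colimits exist and are computed componentwise, using that $\Gamma$ preserves colimits to assemble the structure map; this gives cocompleteness, and hence presentability in view of \Cref{obs.limrlax.is.accessible}. Second, the functor $(\Phi_0,\Phi_1) : \cX \ra \cE_0 \times \cE_1$ sending $(Z,U,\alpha) \mapsto (Z,U)$ is conservative because an equivalence of triples is precisely a pair of componentwise equivalences (the compatibility datum then being inverted automatically). Third, restriction to any subposet $\posQ \subseteq [1]$ is either the identity or a coordinate projection, both of which preserve colimits.

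For part \Cref{metacosm.input.describe.adjts}, the only nontrivial down-closed subset is $\sD = \{0\}$. I would exhibit the adjoints to $\Phi_0$ by the formulas $i_L(Z) := (Z,\, 0,\, 0 \ra \Gamma Z)$ and $i_R(Z) := (Z,\, \Gamma Z,\, \id_{\Gamma Z})$, and the right adjoint to $\Phi_1$ by $\nu(U) := (0,\, U,\, U \ra 0)$. Each is routinely verified by computing hom-spaces between triples and observing that the compatibility squares either land in or emanate from a zero object, so are automatic. Full faithfulness follows, and the essential image characterizations via the vanishing of $\Phi_q$ on the complementary subset are immediate from inspection of the zero-entry coordinate.

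Finally, part \Cref{metacosm.input.get.stratn} is bookkeeping. Condition $(\star)$ is automatic since $[1]$ is totally ordered (\Cref{obs.condn.star.vacuous.if.P.totally.ordered}), so $\cZ_0 \subseteq \cX$ (the essential image of $i_L$) together with $\cZ_1 = \cX$ defines a stratification of $\cX$ over $[1]$. The consistency claims \Cref{notational.consistency.for.ZD}, \Cref{notational.consistency.for.PhiC}, \and \Cref{gluing.in.X.is.mdrmy.in.E} are direct: the stratum $\cX_1 := \cZ_1/\cZ_0$ is identified with $\cE_1$ via $p_L = \Phi_1$, and the gluing functor $\Phi_1 \rho^0 = \Phi_1 i_R$ sends $Z \mapsto \Gamma Z$, recovering the cocartesian monodromy of $\cE$; the lax-commutative square appearing in \Cref{gluing.in.X.is.mdrmy.in.E} is trivial here since $\{0 < 1\} = [1]$. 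No step presents a genuine obstacle; the entire argument reduces to reading each assertion off the explicit triple description, and this base case will serve as the inductive anchor for the general proof of \Cref{prop.metacosm.input.first.get.stratn}.
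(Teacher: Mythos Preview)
Your approach is essentially the same as the paper's: both compute $\cX$ explicitly as the $\infty$-category of triples $(Z,U,\alpha)$ and write down the recollement functors by hand. However, there is one genuine error. You assert that the monodromy $\Gamma$ is colimit-preserving, but the definition of a presentable stable left-lax left $\pos$-module only requires the monodromy functors to be exact and accessible. This matters: key examples such as the Tate construction $(-)^{\tate \Cyclic_p}$ are not colimit-preserving. Your argument for cocompleteness (``colimits exist and are computed componentwise, using that $\Gamma$ preserves colimits to assemble the structure map'') therefore invokes an unavailable hypothesis.

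The fix is easy and is exactly what the paper does. Given a diagram $i \mapsto (Z_i,U_i,\alpha_i)$ in $\cX$, the colimit is $(\colim_i Z_i,\ \colim_i U_i,\ \beta)$ where $\beta$ is the composite
\[
\colim_i U_i \xra{\colim_i \alpha_i} \colim_i \Gamma(Z_i) \longra \Gamma(\colim_i Z_i)~,
\]
the second map being the canonical comparison (which exists for any functor). A direct mapping-space computation verifies the universal property; no hypothesis on $\Gamma$ beyond functoriality is needed. With this correction, the rest of your argument (conservativity, the explicit adjoints $i_L$, $i_R$, $\nu$, and the stratification bookkeeping) goes through unchanged and matches the paper.
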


\begin{proof}
It is immediate that we have a stable recollement
\begin{equation}
\label{stable.recollement.in.proof.of.stratification.from.module.over.brax.one}
\begin{tikzcd}[column sep=1.5cm]
\cE_0
\arrow[hook, bend left=45]{r}[description]{i_L}
\arrow[leftarrow]{r}[transform canvas={yshift=0.1cm}]{\bot}[swap,transform canvas={yshift=-0.1cm}]{\bot}[description]{\yo}
\arrow[bend right=45, hook]{r}[description]{i_R}
&
\cX
\arrow[bend left=45]{r}[description]{p_L}
\arrow[hookleftarrow]{r}[transform canvas={yshift=0.1cm}]{\bot}[swap,transform canvas={yshift=-0.1cm}]{\bot}[description]{\nu}
\arrow[bend right=45]{r}[description]{p_R}
&
\cE_1
\end{tikzcd}
\end{equation}
in which, writing
\begin{equation}
\label{typical.object.in.limrlax.over.brax.one.in.proof.of.main.metacosm.ingredient}
( E_0 \longmapsto \Gamma^0_1(E_0) \xlongla{\gamma} E_1 )
\end{equation}
for an arbitrary object of $\cX$ (where $E_i \in \cE_i$ for $i \in [1]$),
\begin{itemize}
\item the three functors with source $\cX$ are defined by the formulas
\[
y \Cref{typical.object.in.limrlax.over.brax.one.in.proof.of.main.metacosm.ingredient} := E_0
~,
\qquad
p_L \Cref{typical.object.in.limrlax.over.brax.one.in.proof.of.main.metacosm.ingredient} := E_1
~,
\qquad
\text{and}
\qquad
p_R \Cref{typical.object.in.limrlax.over.brax.one.in.proof.of.main.metacosm.ingredient} := \fib(\gamma)
~,
\]
\item the two functors with source $\cE_0$ are defined by the formulas
\[
i_L(E) := ( E \longmapsto \Gamma^0_1(E) \longla 0 )
\qquad
\text{and}
\qquad
i_R(E) := ( E \longmapsto \Gamma^0_1(E) \xlongla{\sim} \Gamma^0_1(E) )
~,
\]
and
\item the one functor with source $\cE_1$ is defined by the formula
\[
\nu(E) := ( 0 \longmapsto 0 \longla E )
~.
\]
\end{itemize}
In particular, we have an evident identification $\Gamma^0_1 \simeq p_L i_R$.  Moreover, applying \Cref{lem.reconstrn.for.recollement} to the stable recollement \Cref{stable.recollement.in.proof.of.stratification.from.module.over.brax.one}, it is straightforward to verify that any functor $\cI \xra{F} \cX$ has a colimit
\[
\left(
\colim_\cI(yF)
\longmapsto
\Gamma^0_1(\colim_\cI(y F))
\simeq
p_L i_R ( \colim_\cI(y F))
\longla
\colim_\cI(p_L i_R y F)
\xla{\eta_{y \adj i_R}}
\colim_\cI ( p_L F)
\right)
~,
\]
so that $\cX$ is cocomplete.  The remaining claims are now evident.
\end{proof}

\begin{lemma}
\label{prop.metacosm.input.first.get.stratn.for.brax.n}
\Cref{prop.metacosm.input.first.get.stratn} holds when $\pos = [n] \in \bDelta$ (for any $n \geq 0$).
\end{lemma}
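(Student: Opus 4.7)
The proof will proceed by induction on $n$, with base cases $n=0$ (where $\lim^\rlax_{\llax.[0]}(\cE) \simeq \cE_0$) and $n=1$ (furnished by \Cref{prop.metacosm.input.first.get.stratn.for.brax.one}). For the inductive step, assume the result is established for $[n-1]$. Writing $\sD := [n-1] \subseteq [n]$ and $\cZ := \lim^\rlax_{\llax.[n-1]}(\cE|_{[n-1]})$, the induction hypothesis provides that $\cZ$ is presentable stable with a stratification over $[n-1]$ satisfying all conditions of the proposition. Since $\Phi_{n-1}: \cZ \to \cE_{n-1}$ preserves colimits by the induction hypothesis (part \Cref{metacosm.input.cocts.restrn.to.any.subposet}) and $\Gamma^{n-1}_n$ preserves colimits by the presentable stability hypothesis on $\cE$, the composite
\[
G: \cZ \xra{\Phi_{n-1}} \cE_{n-1} \xra{\Gamma^{n-1}_n} \cE_n
\]
defines a morphism in $\PrSt$.

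The key step is to establish a canonical equivalence
\[
\lim^\rlax_{\llax.[n]}(\cE) \simeq \lim^\rlax\left(\cZ \xra{G} \cE_n\right)
\]
witnessing the former as an instance of the latter at the level of presentable stable left-lax left $[1]$-modules. Unpacking the definition $\lim^\rlax_{\llax.[n]}(\cE) := \Fun^\cocart_{/[n]}(\sd([n]), \cE)$, this equivalence arises from the decomposition of $\sd([n])$ with respect to the join $[n] \cong [n-1] \star \{n\}$: every object $([k] \xra{\varphi} [n]) \in \sd([n])$ either factors through $[n-1]$ (contributing to $\cZ$) or has $\varphi(k) = n$ (contributing to $\cE_n$ up to the gluing structure map, via the cocartesian morphisms adjoining $n$). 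Direct verification shows that a cocartesian section over $\sd([n])$ is equivalent data to a triple $(X \in \cZ,\, Y \in \cE_n,\, \gamma: Y \to G(X))$, matching the description of $\lim^\rlax(G)$ in \Cref{prop.metacosm.input.first.get.stratn.for.brax.one}.

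Having this identification, the $n=1$ case applied to $G$ yields that $\cX := \lim^\rlax_{\llax.[n]}(\cE)$ is presentable stable (proving part \Cref{metacosm.input.presentable}) and participates in a recollement with closed subcategory $\cZ$ and quotient $\cE_n$, realizing $\cZ$ as the image $i_L(\cZ) \subseteq \cX$ expected by part \Cref{metacosm.input.describe.adjts} for $\sD = [n-1]$. The remaining parts follow by combining this recollement with the induction hypothesis. Part \Cref{metacosm.input.conservative}: the recollement functors $y$ and $p_L$ are jointly conservative, so combining with conservativity of $(\Phi_p)_{p \in [n-1]}$ on $\cZ$ gives conservativity of $(\Phi_p)_{p \in [n]}$ on $\cX$. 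Part \Cref{metacosm.input.cocts.restrn.to.any.subposet}: for any $\posQ \subseteq [n]$ the restriction $\Phi_\posQ$ factors as a composite of restrictions to smaller subposets, each of which preserves colimits by the $n=1$ case or by induction. Part \Cref{metacosm.input.describe.adjts} for general $\sD$: combine the $\sD = [n-1]$ case with the induction hypothesis applied to $\sD \cap [n-1] \in \Down_{[n-1]}$. Part \Cref{metacosm.input.get.stratn}: condition $(\star)$ is inherited from the stratification of $\cZ$ together with the recollement structure, and the notational consistency subparts \Cref{notational.consistency.for.ZD}, \Cref{notational.consistency.for.PhiC}, and \Cref{gluing.in.X.is.mdrmy.in.E} are direct unpackings combining the $n=1$ identifications with induction.

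The principal technical hurdle is the decomposition equivalence $\lim^\rlax_{\llax.[n]}(\cE) \simeq \lim^\rlax(\cZ \xra{G} \cE_n)$; all other verifications reduce in a routine manner to combining this single inductive reduction with the $n=1$ case and the inductive hypothesis on $[n-1]$.
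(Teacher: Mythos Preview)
Your inductive strategy—peeling off one element of $[n]$ and reducing to the $n=1$ case—is sound, and the paper proceeds similarly. However, your identification of the gluing functor $G$ is incorrect, and this invalidates the key decomposition equivalence.

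You set $G = \Gamma^{n-1}_n \circ \Phi_{n-1}$, so that $G(Z)$ depends only on the top component $X_{n-1}$ of $Z \in \cZ$. Already for $n=2$ this fails: as spelled out in \Cref{example.limits.of.lax.actions.when.laxness.disagrees}\Cref{example.limits.of.lax.actions.when.laxness.disagrees.rlax.lim.of.llax.action}, an object of $\lim^\rlax_{\llax.[2]}(\cE)$ comprises $(X_0, X_1, X_2)$ together with \emph{three} structure maps $\gamma^0_1, \gamma^1_2, \gamma^0_2$ and a commutative square, whereas your $\lim^\rlax(\cZ \xra{G} \cE_2)$ records only $(X_0, X_1, \gamma^0_1)$, $X_2$, and the single map $\gamma^1_2: X_2 \to \Gamma^1_2(X_1)$. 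The datum $\gamma^0_2$ and the square are genuinely lost—this is exactly the point of \Cref{rmk.why.compatibility.cubes}—so the forgetful functor $\cX \to \lim^\rlax(\cZ \xra{G} \cE_2)$ is not an equivalence. The correct gluing functor $\cZ \to \cE_n$ sends $Z$ to a limit over $\sd([n-1])$ of monodromy pushforwards; for $n=2$ it is the pullback $\Gamma^0_2(X_0) \times_{\Gamma^1_2 \Gamma^0_1(X_0)} \Gamma^1_2(X_1)$.

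The paper sidesteps this by splitting at the \emph{bottom}: taking $\alpha^{-1}(0) = \{0\}$ makes the closed piece $\cE_0$ (presentable by hypothesis) and the open piece $\cY := \lim^\rlax_{\llax.\{1<\cdots<n\}}(\cE)$ (presentable by induction), and the requisite pullback square is then extracted from \Cref{lem.strictification} via composability of right Kan extensions along $\sd([n]) \xra{\sd(\alpha)} \sd([1]) \to \pt$, without ever writing the gluing functor explicitly. Your split direction could be made to work, but you would need essentially the same lemma (applied to the functor $[n] \to [1]$ with $\alpha^{-1}(0) = [n-1]$) to identify $G$ correctly.
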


\begin{proof}
The claim is immediate if $n=0$, and if $n=1$ this is the content of \Cref{prop.metacosm.input.first.get.stratn.for.brax.one}.  So suppose that $n \geq 2$.  Let us write $\cY := \lim^\rlax_{\llax.\{1 < \cdots < n\}}(\cE)$.

Consider the functor $[n] \xra{\pi} [1]$ characterized by the fact that $\pi^{-1}(0)=\{0\}$.  In light of \Cref{lem.strictification}, using the composability of right Kan extensions with respect to the composite $\sd([n]) \xra{\sd(\pi)} \sd([1]) \ra \pt$, we obtain a pullback square
\[
\begin{tikzcd}
\cX
\arrow{r}
\arrow{d}
&
\Fun([1],\cY)
\arrow{d}{t}
\\
\cE_0
\arrow{r}
&
\cY
\end{tikzcd}
\]
in which the left vertical functor and the composite $\cX \ra \Fun([1],\cY) \xra{s} \cY$ are the canonical restriction functors. 
This immediately yields a stable recollement
\begin{equation}
\label{recollement.Ezero.X.Y.in.proof.of.stratn.for.brax.n}
\begin{tikzcd}[column sep=1.5cm]
\cE_0
\arrow[hook, bend left=45]{r}[description]{i_L}
\arrow[leftarrow]{r}[transform canvas={yshift=0.1cm}]{\bot}[swap,transform canvas={yshift=-0.1cm}]{\bot}[description]{\yo}
\arrow[bend right=45, hook]{r}[description]{i_R}
&
\cX
\arrow[bend left=45]{r}[description]{p_L}
\arrow[hookleftarrow]{r}[transform canvas={yshift=0.1cm}]{\bot}[swap,transform canvas={yshift=-0.1cm}]{\bot}[description]{\nu}
\arrow[bend right=45]{r}[description]{p_R}
&
\cY
\end{tikzcd}~,
\end{equation}
in which the functors $y$ and $p_L$ are the canonical restriction functors.  Note moreover that $\cE_0$ is presentable by assumption, $\cY$ is presentable by induction, and the composite functor $p_L i_R$ is accessible in light of \Cref{obs.limrlax.is.accessible}.  So, it follows from \Cref{prop.metacosm.input.first.get.stratn.for.brax.one} that $\cX$ is presentable: that is, we have proved part \Cref{metacosm.input.presentable}.

Using the recollement \Cref{recollement.Ezero.X.Y.in.proof.of.stratn.for.brax.n} and \Cref{lem.reconstrn.for.recollement}, we see by induction that the functor
\[
\cX
\xra{(\Phi_i)_{i \in [n]}}
\prod_{i \in [n]} \cE_i
\]
is conservative and preserves colimits; in particular, we have proved part \Cref{metacosm.input.conservative}.  Since any subposet $\posQ \subseteq [n]$ whose inclusion is not an isomorphism is of the form $\posQ \cong \coprod_{j=1}^k [i_j]$ where $i_j < n$ for all $j$, we then also see by induction (with respect to parts \Cref{metacosm.input.conservative} \and \Cref{metacosm.input.cocts.restrn.to.any.subposet}) that the restriction functor
\[
\cX
\xra{\Phi_\posQ}
\lim^\rlax_{\llax.\posQ}(\cE)
\]
preserves colimits.  So, we have proved part \Cref{metacosm.input.cocts.restrn.to.any.subposet}.

We now turn to part \Cref{metacosm.input.describe.adjts}.  If $\sD = \es$ then part \Cref{metacosm.input.describe.adjts} is trivial, while if $\sD = \{0\}$ then part \Cref{metacosm.input.describe.adjts} follows from the recollement \Cref{recollement.Ezero.X.Y.in.proof.of.stratn.for.brax.n} (and part \Cref{metacosm.input.conservative} applied to $\cY$).  So, we may assume that $\sD = (^\leq i) = [i]$ where $1 \leq i \leq n$.  Noting the factorization
\[
\cX
\longra
\cY
\longra
\lim^\rlax_{\llax.([n]\backslash [i])}(\cE)
\]
of the restriction functor, we find that part \Cref{metacosm.input.describe.adjts}\Cref{metacosm.input.describe.nu} follows from induction and the recollement \Cref{recollement.Ezero.X.Y.in.proof.of.stratn.for.brax.n}.  So it remains to prove part \Cref{metacosm.input.describe.adjts}\Cref{metacosm.input.describe.iL}.  For this, we introduce the notation
\[
\cW_i := \lim^\rlax_{\llax.\{1 < \cdots < i\}}(\cE)
\]
and make the following observations.
\begin{itemize}

\item By induction, we have $\cW_i \in \Cls_\cY$.

\item Replacing $[n]$ with $[i$], the recollement \Cref{recollement.Ezero.X.Y.in.proof.of.stratn.for.brax.n} becomes an analogous recollement
\begin{equation}
\label{recollement.Ezero.Zi.Wi.in.proof.of.stratn.for.brax.n}
\begin{tikzcd}[column sep=1.5cm]
\cE_0
\arrow[hook, bend left=45]{r}[description]{i_L}
\arrow[leftarrow]{r}[transform canvas={yshift=0.1cm}]{\bot}[swap,transform canvas={yshift=-0.1cm}]{\bot}[description]{\yo}
\arrow[bend right=45, hook]{r}[description]{i_R}
&
\cZ_i
\arrow[bend left=45]{r}[description]{p_L}
\arrow[hookleftarrow]{r}[transform canvas={yshift=0.1cm}]{\bot}[swap,transform canvas={yshift=-0.1cm}]{\bot}[description]{\nu}
\arrow[bend right=45]{r}[description]{p_R}
&
\cW_i
\end{tikzcd}~.
\end{equation}

\item
The diagram
\begin{equation}
\label{comm.diagram.giving.yoneda.functor.from.X.to.Zi}
\begin{tikzcd}
\cE_0
\arrow[equals]{d}
\arrow[leftarrow]{r}{y}
&
\cX
\arrow{r}{p_L}
\arrow{d}[swap]{y}
&
\cY
\arrow{d}{y}
\\
\cE_0
\arrow[leftarrow]{r}[swap]{y}
&
\cZ_i
\arrow{r}[swap]{p_L}
&
\cW_i
\end{tikzcd}
\end{equation}
among restriction functors commutes.

\item
The fully faithful inclusion $i_R$ of recollement \Cref{recollement.Ezero.X.Y.in.proof.of.stratn.for.brax.n} (resp.\! \Cref{recollement.Ezero.Zi.Wi.in.proof.of.stratn.for.brax.n}) has image consisting of those objects $X \in \cX$ (resp.\! $X \in \cZ_i$) such that for all $j \in \{1 < \cdots < n\}$ (resp.\! $j \in \{ 1 < \cdots < i \}$) the structure morphism $\Phi_j(X) \ra \Gamma^0_j(\Phi_0(X))$ is an equivalence.  It follows that the lax-commutative square
\[ \begin{tikzcd}
\cE_0
\arrow[hook]{r}{i_R}[swap, yshift=-0.4cm]{\rotatebox{45}{$\Leftarrow$}}
\arrow[equals]{d}
&
\cX
\arrow{d}{y}
\\
\cE_0
\arrow[hook]{r}[swap]{i_R}
&
\cZ_i
\end{tikzcd} \]
determined by the left commutative square in diagram \Cref{comm.diagram.giving.yoneda.functor.from.X.to.Zi} commutes.

\end{itemize}
Using these observations and applying \Cref{lem.reconstrn.for.recollement} to the recollements \Cref{recollement.Ezero.X.Y.in.proof.of.stratn.for.brax.n} and \Cref{recollement.Ezero.Zi.Wi.in.proof.of.stratn.for.brax.n}, we find that the restriction functor $\cX \xra{y} \cZ_i$ is described by the formula
\[ \begin{tikzcd}[row sep=0cm]
\cZ_i
\simeq
&[-1.5cm]
\lim^\rlax \left( \cE_0 \xra{p_L i_R} \cW_i \right)
\arrow[leftarrow]{r}{y}
&
\lim^\rlax \left( \cE_0 \xra{p_L i_R} \cY \right)
&[-1.4cm]
\simeq \cX
\\
&
\rotatebox{90}{$\in$}
&
\rotatebox{90}{$\in$}
\\
&
(E \longmapsto p_L i_R (E) \longla y(Y))
&
(E \longmapsto p_L i_R (E) \longla Y)
\arrow[maps to]{l}
\end{tikzcd}~, \]
so that it admits a left adjoint described by the formula
\[ \begin{tikzcd}[row sep=0cm]
\cZ_i
\simeq
&[-1.3cm]
\lim^\rlax \left( \cE_0 \xra{p_L i_R} \cW_i \right)
\arrow{r}{i_L}
&
\lim^\rlax \left( \cE_0 \xra{p_L i_R} \cY \right)
&[-1.7cm]
\simeq \cX
\\
&
\rotatebox{90}{$\in$}
&
\rotatebox{90}{$\in$}
\\
&
( E \longmapsto p_L i_R(E) \longla W)
\arrow[maps to]{r}
&
( E \longmapsto p_L i_R(E) \longla i_L(W))
\end{tikzcd}~, \]
which by induction is fully faithful and has image as desired.

We now conclude with part \Cref{metacosm.input.get.stratn}.  Observe that the closed subcategories
\[
\left\{ \cZ_i := \lim^\rlax_{\llax.(^\leq i)}(\cE) \in \Cls_\cX \right\}_{i \in [n]}
\]
evidently assemble into a functor $[n] \xra{\Cref{the.stratn.of.rlax.lim}} \Cls_\cX$, which is clearly a prestratification and hence is a stratification by \Cref{obs.condn.star.vacuous.if.P.totally.ordered}.  Moreover, assertion \Cref{metacosm.input.get.stratn}\Cref{notational.consistency.for.ZD} is trivial, and assertion \Cref{metacosm.input.get.stratn}\Cref{notational.consistency.for.PhiC} follows from part \Cref{metacosm.input.describe.adjts}\Cref{metacosm.input.describe.nu} (applied to $\sD$ instead of $\pos$).  To prove part \Cref{metacosm.input.get.stratn}\Cref{gluing.in.X.is.mdrmy.in.E}, in light of the commutative diagram
\[ \begin{tikzcd}[row sep=1.5cm]
\cE_p
\arrow[hook]{rr}{\rho^p}
\arrow[hook]{rd}[sloped, swap]{\rho^p}
&
&
\cX
\arrow{rr}{\Phi_q}
\arrow{rd}[sloped]{\Phi_{[n]_{p//q}}}
&
&
\cE_q
\\
&
\lim^\rlax_{\llax.[n]_{p//q}}(\cE)
\arrow{rr}[swap]{\id}
\arrow[hook]{ru}[sloped]{\rho^{[n]_{p//q}}}
&
&
\lim^\rlax_{\llax.[n]_{p//q}}(\cE)
\arrow{ru}[sloped, swap]{\Phi_q}
\end{tikzcd}~, \]
we see that it suffices to assume that $p=0$ and $q=n$.  Moreover, applying part \Cref{metacosm.input.conservative} of \Cref{prop.metacosm.input.first.get.stratn.for.brax.one}, we see that it suffices to prove that the natural transformation of diagram \Cref{lax.comm.square.that.actually.commutes.to.show.mdrmy.is.as.expected.in.lim.rlax.llax.P.E} becomes an equivalence upon postcomposition with the functor
\[
\lim^\rlax_{\llax.\{0<n\}}(\cE)
\xra{\Phi_n}
\cE_n
~:
\]
that is, that the natural transformation in the diagram
\[ \begin{tikzcd}
&
\cZ_{n-1}
\arrow[hook]{d}{\rho^{[n-1]}}
\\
\cE_0
\arrow[hook]{ru}[sloped]{\rho^0}
\arrow[hook]{r}{\rho^0}[swap, yshift=-0.4cm]{\rotatebox{45}{$\Leftarrow$}}
\arrow[hook]{d}[swap]{\rho^0}
&
\cX
\arrow{d}{\Phi_n}
\\
\lim^\rlax_{\llax.\{0<n\}}(\cE)
\arrow{r}[swap]{\Phi_n}
&
\cE_n
\end{tikzcd} \]
is an equivalence.  By \Cref{lem.strictification}, every object of $\cZ_{n-1} := \lim^\rlax_{\llax.[n-1]}(\cE)$ is the limit of a diagram indexed by the finite poset $\sd([n-1])$; by our inductive hypothesis, for the image $\rho^0(X) \in \cZ_{n-1}$ of any object $X \in \cE_0$, this diagram is equivalent to its right Kan extension from the full subposet $\sd_0([n-1]) \subseteq \sd([n-1])$ on those objects $([i] \hookra [n-1]) \in \sd([n-1])$ whose image contains $0 \in [n-1]$.  Note that this finite limit is preserved by the composite $\cZ_{n-1} \xhookra{\rho^{[n-1]}} \cX \xra{\Phi_n} \cE_n$ of exact functors.  Because $(\{ 0 \} \hookra [n-1]) \in \sd_0([n-1])$ is an initial object, it follows that the composite functor $\cE_0 \xhookra{\rho^0} \cX \xra{\Phi_n} \cE_n$ is canonically equivalent to the monodromy functor $\cE_0 \ra \cE_n$, which proves the claim.
\end{proof}

\begin{proof}[Proof of \Cref{prop.metacosm.input.first.get.stratn}]
Observe the equivalence
\begin{equation}
\label{proof.of.metacosm.ingredient.rewrite.X.as.limit.over.Delta.over.P}
\cX
:=
\lim^\rlax_{\llax.\pos}(\cE)
\simeq
\lim_{([n] \da \pos)^\circ \in (\bDelta_{/\pos})^\op} \left( \lim^\rlax_{\llax.[n]} ( \cE ) \right)
~.
\end{equation}
It follows from \Cref{prop.metacosm.input.first.get.stratn.for.brax.n} that the functor $(\bDelta_{/\pos})^\op \xra{\limrlaxfam(\cE)} \Cat$ factors through the subcategory $\PrLSt \subset \Cat$: each $\infty$-category $\lim^\rlax_{\llax.[n]}(\cE)$ is presentable by its part \Cref{metacosm.input.presentable}, and for each morphism $[m] \ra [n]$ in $\bDelta_{/\pos}$ the corresponding restriction functor $\lim^\rlax_{\llax.[m]}(\cE) \la \lim^\rlax_{\llax.[n]}(\cE)$ preserves colimits by its parts \Cref{metacosm.input.conservative} \and \Cref{metacosm.input.cocts.restrn.to.any.subposet}.  Hence, the identification \Cref{proof.of.metacosm.ingredient.rewrite.X.as.limit.over.Delta.over.P} shows that $\cX$ is presentable; that is, we have proved part \Cref{metacosm.input.presentable}.  Using part \Cref{metacosm.input.conservative} of \Cref{prop.metacosm.input.first.get.stratn.for.brax.n}, equivalence \Cref{proof.of.metacosm.ingredient.rewrite.X.as.limit.over.Delta.over.P} also proves part \Cref{metacosm.input.conservative}. Thereafter, the evident functoriality of equivalence \Cref{proof.of.metacosm.ingredient.rewrite.X.as.limit.over.Delta.over.P} in the variable $\pos$ proves part \Cref{metacosm.input.cocts.restrn.to.any.subposet}.

We now prove part \Cref{metacosm.input.describe.adjts}\Cref{metacosm.input.describe.iL}. Given our fixed element $\sD \in \Down_\pos$, observe the adjunction
\[
\begin{tikzcd}[column sep=1.5cm]
\bDelta_{/\sD}
\arrow[hook, transform canvas={yshift=0.9ex}]{r}
\arrow[leftarrow, transform canvas={yshift=-0.9ex}]{r}[yshift=-0.2ex]{\bot}[swap]{(-)\cap \sD}
&
\bDelta_{/\pos}
\end{tikzcd}
\]
in which the right adjoint is given by intersection with $\sD \subseteq \pos$; thereafter, observe its opposite adjunction
\begin{equation}
\label{adjn.betw.Delta.over.D.op.and.Delta.over.P.op}
\begin{tikzcd}[column sep=1.5cm]
(\bDelta_{/\pos})^\op
\arrow[transform canvas={yshift=0.9ex}]{r}{((-)\cap \sD)^\op}
\arrow[hookleftarrow, transform canvas={yshift=-0.9ex}]{r}[yshift=-0.2ex]{\bot}
&
(\bDelta_{/\sD})^\op
\end{tikzcd}
~.
\end{equation}
Using the unit of the adjunction \Cref{adjn.betw.Delta.over.D.op.and.Delta.over.P.op}, we obtain a morphism
\begin{equation}
\label{morphism.in.Fun.bDelta.over.P.op.Cat.using.unit.of.adjn.betw.Delta.over.D.op.and.Delta.over.P.op}
\begin{tikzcd}[column sep=1.5cm]
(\bDelta_{/\pos})^\op
\arrow{rr}{\id}[swap, yshift=-0.3cm]{\Downarrow}
\arrow{rd}[sloped, swap]{((-)\cap \sD)^\op}
&
&
(\bDelta_{/\pos})^\op
\arrow{r}{\limrlaxfam(\cE)}
&
\Cat
\\
&
(\bDelta_{/\sD})^\op
\arrow[hook]{ru}
\end{tikzcd}
\end{equation}
in $\Fun((\bDelta_{/\pos})^\op,\Cat)$, which upon taking limits over $(\bDelta_{/\pos})^\op$ yields a morphism
\begin{equation}
\label{diagrammatic.restriction.morphism.over.Delta.over.P.from.limrlaxllaxE.over.bullet.to.same.over.bullet.intersect.D}
\lim_{([n] \da \pos)^\circ \in (\bDelta_{/\pos})^\op} \left( \lim^\rlax_{\llax.[n]}(\cE) \right)
\longra
\lim_{([n] \da \pos)^\circ \in (\bDelta_{/\pos})^\op} \left( \lim^\rlax_{\llax.([n] \cap \sD)}(\cE) \right)
~.
\end{equation}
On the one hand, the source of the morphism \Cref{diagrammatic.restriction.morphism.over.Delta.over.P.from.limrlaxllaxE.over.bullet.to.same.over.bullet.intersect.D} is identified as $\cX$ via equivalence \Cref{proof.of.metacosm.ingredient.rewrite.X.as.limit.over.Delta.over.P}.  On the other hand, because the functor $(\bDelta_{/\pos})^\op \xra{((-) \cap \sD)^\op} (\bDelta_{/\sD})^\op$ is initial (being a left adjoint), we may identify the target of the morphism \Cref{diagrammatic.restriction.morphism.over.Delta.over.P.from.limrlaxllaxE.over.bullet.to.same.over.bullet.intersect.D} as
\[
\lim_{([n] \da \pos)^\circ \in (\bDelta_{/\pos})^\op} \left( \lim^\rlax_{\llax.([n] \cap \sD)}(\cE) \right)
\simeq
\lim_{([n] \da \sD)^\circ \in (\bDelta_{/\sD})^\op} \left( \lim^\rlax_{\llax.[n]}(\cE) \right)
\simeq
\lim^\rlax_{\llax.\sD}(\cE)
~.
\]
Hence, the morphism \Cref{diagrammatic.restriction.morphism.over.Delta.over.P.from.limrlaxllaxE.over.bullet.to.same.over.bullet.intersect.D} is the restriction morphism
\[
\cX
:=
\lim^\rlax_{\llax.\pos}(\cE)
\xlongra{y}
\lim^\rlax_{\llax.\sD}(\cE)
~.
\]
We now make the following observations regarding the morphism \Cref{morphism.in.Fun.bDelta.over.P.op.Cat.using.unit.of.adjn.betw.Delta.over.D.op.and.Delta.over.P.op} in $\Fun((\bDelta_{/\pos})^\op,\Cat)$.
\begin{itemize}
\item
For each object $([n] \da \pos)^\circ \in (\bDelta_{/\pos})^\op$, the component of the morphism \Cref{morphism.in.Fun.bDelta.over.P.op.Cat.using.unit.of.adjn.betw.Delta.over.D.op.and.Delta.over.P.op} is the restriction functor
\begin{equation}
\label{component.at.n.over.P.is.lim.rlax.restricting.down.to.int.with.D}
\lim^\rlax_{\llax.[n]}(\cE)
\xlongra{y}
\lim^\rlax_{\llax.([n] \cap \sD)}(\cE)
~.
\end{equation}
By part \Cref{metacosm.input.describe.adjts}\Cref{metacosm.input.describe.iL} of \Cref{prop.metacosm.input.first.get.stratn.for.brax.n}, the functor \Cref{component.at.n.over.P.is.lim.rlax.restricting.down.to.int.with.D} admits a fully faithful left adjoint $i_L$, whose image consists of those objects $X \in \lim^\rlax_{\llax.[n]}(\cE)$ such that $\Phi_q(X) \simeq 0$ for all $q \in ([n] \cap (\pos \backslash \sD))$.
\item
For each morphism $([m] \da \pos)^\circ \ra ([n] \da \pos)^\circ$ in $(\bDelta_{/\pos})^\op$, i.e.\! for each commutative triangle
\[ \begin{tikzcd}
{[m]}
\arrow{rd}
&
&
{[n]}
\arrow{ll}
\arrow{ld}
\\
&
\pos
\end{tikzcd}~, \]
the component of the morphism \Cref{morphism.in.Fun.bDelta.over.P.op.Cat.using.unit.of.adjn.betw.Delta.over.D.op.and.Delta.over.P.op} is the commutative square
\begin{equation}
\label{comm.square.of.restrn.functors.from.m.to.n.cap.D}
\begin{tikzcd}
\lim^\rlax_{\llax.[m]}(\cE)
\arrow{r}{y}
\arrow{d}
&
\lim^\rlax_{\llax.([m] \cap \sD)}(\cE)
\arrow{d}
\\
\lim^\rlax_{\llax.[n]}(\cE)
\arrow{r}[swap]{y}
&
\lim^\rlax_{\llax.([n] \cap \sD)}(\cE)
\end{tikzcd}
\end{equation}
of restriction functors.  Moreover, the lax-commutative square
\[ \begin{tikzcd}
\lim^\rlax_{\llax.[m]}(\cE)
\arrow[hookleftarrow]{r}{i_L}[swap, yshift=-0.45cm]{\rotatebox{-30}{$\Leftarrow$}}
\arrow{d}
&
\lim^\rlax_{\llax.([m] \cap \sD)}(\cE)
\arrow{d}
\\
\lim^\rlax_{\llax.[n]}(\cE)
\arrow[hookleftarrow]{r}[swap]{i_L}
&
\lim^\rlax_{\llax.([n] \cap \sD)}(\cE)
\end{tikzcd} \]
determined by the commutative square \Cref{comm.square.of.restrn.functors.from.m.to.n.cap.D} is in fact commutative as a result of our characterization of both functors $i_L$.
\end{itemize}
Hence, we find that the morphism \Cref{morphism.in.Fun.bDelta.over.P.op.Cat.using.unit.of.adjn.betw.Delta.over.D.op.and.Delta.over.P.op} in $\Fun((\bDelta_{/\pos})^\op,\Cat)$ admits a left adjoint
\begin{equation}
\label{ladjt.of.morphism.in.Fun.bDelta.over.P.op.Cat.using.unit.of.adjn.betw.Delta.over.D.op.and.Delta.over.P.op}
\lim^\rlax_{\llax.(\bullet \cap \sD)}(\cE)
\longra
\limrlaxfam(\cE)
\end{equation}
whose components are fully faithful.  Therefore, upon taking limits over $(\bDelta_{/\pos})^\op$, we obtain a fully faithful left adjoint
\[
\begin{tikzcd}[column sep=1.5cm]
\lim^\rlax_{\llax.\sD}(\cE)
\arrow[hook, dashed, transform canvas={yshift=0.9ex}]{r}{i_L}
\arrow[leftarrow, transform canvas={yshift=-0.9ex}]{r}[yshift=-0.2ex]{\bot}[swap]{y}
&
\lim^\rlax_{\llax.\pos}(\cE)
\end{tikzcd}
~.
\]
In order to characterize its image, we note that by construction, for any $([n] \da \pos)^\circ \in (\bDelta_{/\pos})^\op$ we have a commutative square
\begin{equation}
\label{comm.square.iL.and.y.on.rlax.limits.including.restriction.to.D}
\begin{tikzcd}
\lim^\rlax_{\llax.\sD}(\cE)
\arrow[hook]{r}{i_L}
\arrow{d}[swap]{y}
&
\lim^\rlax_{\llax.\pos}(\cE)
\arrow{d}{y}
\\
\lim^\rlax_{\llax.([n]\cap \sD)}(\cE)
\arrow[hook]{r}[swap]{i_L}
&
\lim^\rlax_{\llax.[n]}(\cE)
\end{tikzcd}~.
\end{equation}
Taking $n=0$, the commutative square \Cref{comm.square.iL.and.y.on.rlax.limits.including.restriction.to.D} immediately implies that for any $X \in \lim^\rlax_{\llax.\sD}(\cE)$ and any $q \in \pos \backslash \sD$ we have $\Phi_q(i_L(X)) \simeq 0$.  On the other hand, given an object $X \in \lim^\rlax_{\llax.\pos}(\cE)$ such that $\Phi_q(X) \simeq 0$ whenever $q \in \pos \backslash \sD$, again using the commutative square \Cref{comm.square.iL.and.y.on.rlax.limits.including.restriction.to.D} with $n=0$, by part \Cref{metacosm.input.conservative} we see that the counit morphism $i_L y X \ra X$ is an equivalence.  So, we have proved part \Cref{metacosm.input.describe.adjts}\Cref{metacosm.input.describe.iL}.

Part \Cref{metacosm.input.describe.adjts}\Cref{metacosm.input.describe.nu} follows from an essentially identical argument to part \Cref{metacosm.input.describe.adjts}\Cref{metacosm.input.describe.iL}.

We now conclude with part \Cref{metacosm.input.get.stratn}.  We first observe that the closed subcategories
\[
\left\{ \cZ_p := \lim^\rlax_{\llax.(^\leq p)}(\cE) \in \Cls_\cX \right\}_{p \in \pos}
\]
evidently assemble into a functor $\pos \xra{\Cref{the.stratn.of.rlax.lim}} \Cls_\cX$, which is a prestratification by part \Cref{metacosm.input.conservative} and satisfies the stratification condition as a result of part \Cref{metacosm.input.describe.adjts}\Cref{metacosm.input.describe.iL} (applied to both $\pos$ and $(^\leq p)$).  Moreover, assertion \Cref{metacosm.input.get.stratn}\Cref{notational.consistency.for.ZD} follows from part \Cref{metacosm.input.conservative} (applied to $\sD$ instead of $\pos$), and assertion \Cref{metacosm.input.get.stratn}\Cref{notational.consistency.for.PhiC} follows from part \Cref{metacosm.input.describe.adjts}\Cref{metacosm.input.describe.nu} (applied to $\sD$ instead of $\pos$).  To prove part \Cref{metacosm.input.get.stratn}\Cref{gluing.in.X.is.mdrmy.in.E}, writing $\sC \in \Conv_\pos$ for the convex hull of the subset $\{p,q\} \subseteq \pos$ (i.e.\! the full subposet on those elements $r \in \pos$ such that there exist morphisms $p \ra r \ra q$), in light of the commutative diagram
\[ \begin{tikzcd}
\cE_p
\arrow[hook]{rr}{\rho^p}
\arrow[hook]{rd}[sloped, swap]{\rho^p}
&
&
\cX
\arrow{rr}{\Phi_q}
\arrow{rd}[sloped]{\Phi_\sC}
&
&
\cE_q
\\
&
\lim^\rlax_{\llax.\sC}(\cE)
\arrow{rr}[swap]{\id}
\arrow[hook]{ru}[sloped]{\rho^\sC}
&
&
\lim^\rlax_{\llax.\sC}(\cE)
\arrow{ru}[sloped, swap]{\Phi_q}
\end{tikzcd}~, \]
we see that it suffices to assume that $p \in \pos$ is initial and $q \in \pos$ is terminal.  Now, consider the full subposet $\sd_{p,q}(\pos) \subseteq \sd(\pos)$ consisting of those objects $([i] \hookra \pos) \in \sd(\pos)$ that contain both elements $p$ and $q$ in their image, and consider the morphisms
\begin{equation}
\label{ladjt.in.pshvs.of.Cats.on.sd.p.q.P}
\const_{\cE_p}
\longla
\limrlaxfam(\cE)
\end{equation}
in $\Fun(\sd_{p,q}(\pos)^\op,\Cat)$ whose components are given by restriction.  Because the inclusion $\sd_{p,q}(\pos) \subseteq \sd(\pos)$ is final (so that its opposite is initial) and moreover $\sd_{p,q}(\pos)$ has contractible $\infty$-groupoid completion (as it has an initial object), applying the functor $\lim_{\sd_{p,q}(\pos)^\op}$ to the morphism \Cref{ladjt.in.pshvs.of.Cats.on.sd.p.q.P} yields the morphism
\[
\cE_p
\xla{\Phi_p}
\lim^\rlax_{\llax.\pos}(\cE)
\]
in $\Cat$.  On the other hand, by \Cref{metacosm.input.get.stratn}\Cref{gluing.in.X.is.mdrmy.in.E} of \Cref{prop.metacosm.input.first.get.stratn.for.brax.n}, the morphism \Cref{ladjt.in.pshvs.of.Cats.on.sd.p.q.P} admits a right adjoint
\begin{equation}
\label{radjt.in.pshvs.of.Cats.on.sd.p.q.P}
\const_{\cE_p}
\longra
\limrlaxfam(\cE)
\end{equation}
in $\Fun(\sd_{p,q}(\pos)^\op,\Cat)$.  The component at the object $([1] \xra{\{p<q\}} \pos)^\circ \in \sd_{p,q}(\pos)^\op$ of the limiting cone of the morphism \Cref{ladjt.in.pshvs.of.Cats.on.sd.p.q.P} is the commutative square \Cref{comm.square.of.restrns.to.show.mdrmy.is.as.expected.in.lim.rlax.P.E}, and so the component at that same object of the limiting cone of the morphism \Cref{radjt.in.pshvs.of.Cats.on.sd.p.q.P} is the desired commutative square \Cref{lax.comm.square.that.actually.commutes.to.show.mdrmy.is.as.expected.in.lim.rlax.llax.P.E}.
\end{proof}

\subsection{The metacosm reconstruction theorem}
\label{subsection.metacosm}

In this subsection, we prove the metacosm reconstruction theorem as \Cref{metacosm.thm}.

\begin{definition}
\label{defn.Strat.P}
Let $\cX$ and $\cX'$ be $\pos$-stratified presentable stable $\infty$-categories. We define a morphism between them to be a left adjoint functor $\cX \ra \cX'$ satisfying the condition that for every $p \in \pos$ there exist (necessarily unique) factorizations
\[ \begin{tikzcd}
\cX
\arrow{r}
&
\cX'
\\
\cZ_p
\arrow[hook]{u}{i_L}
\arrow[dashed]{r}
&
\cZ_p'
\arrow[hook]{u}[swap]{i_L}
\end{tikzcd}
\qquad
\text{and}
\qquad
\begin{tikzcd}
\cX
\arrow{r}
\arrow{d}[swap]{y}
&
\cX'
\arrow{d}{y}
\\
\cZ_p
\arrow[dashed]{r}
&
\cZ_p'
\end{tikzcd}
~.
\]
In this way, we obtain an $\infty$-category
\[
\Strat_\pos
\]
that we refer to as that of \bit{$\pos$-stratified presentable stable $\infty$-categories}.
\end{definition}

\begin{observation}
\label{obs.forget.from.Strat.to.PrL.conservative}
The forgetful functor $\Strat_\pos \ra \PrLSt$ is conservative.
\end{observation}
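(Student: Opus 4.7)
The plan is to show that any morphism $F \colon \cX \to \cX'$ in $\Strat_\pos$ whose underlying morphism in $\PrLSt$ is an equivalence is itself an equivalence in $\Strat_\pos$. Because the two factorization conditions of \Cref{defn.Strat.P} depend only on the equivalence class of a left adjoint, the mapping space $\hom_{\Strat_\pos}(\cX, \cX')$ is a union of connected components of $\hom_{\PrLSt}(\cX, \cX')$. Consequently it suffices to exhibit, as a morphism in $\Strat_\pos$, an inverse $G \colon \cX' \to \cX$ of $F$: the resulting homotopies $G \circ F \simeq \id_\cX$ and $F \circ G \simeq \id_{\cX'}$ in $\PrLSt$ will then automatically lie in the respective subspaces $\hom_{\Strat_\pos}$, since their endpoints do. I would therefore let $G$ denote an inverse of $F$ in $\PrLSt$ and verify the two factorization conditions for $G$.

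First I would show that for each $p \in \pos$ the induced left adjoint $F|_p \colon \cZ_p \to \cZ'_p$ furnished by the $i_L$-compatibility of $F$ is itself an equivalence. Full faithfulness follows from the equivalence $F \circ i_L \simeq i_L \circ F|_p$ combined with the full faithfulness of $i_L$ on both sides and of $F$. For essential surjectivity, given $Z' \in \cZ'_p$ I would set $X := G(i_L Z') \in \cX$, so that $F(X) \simeq i_L Z'$; the $y$-compatibility of $F$ gives $F|_p(y X) \simeq y F(X) \simeq Z'$, and then the $i_L$-compatibility gives $F(i_L y X) \simeq i_L F|_p(y X) \simeq i_L Z' \simeq F(X)$. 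The standard mate compatibility for these two factorizations identifies the image under $F$ of the counit $i_L y X \to X$ with the counit $i_L y (FX) \to FX$; as $FX \simeq i_L Z'$ lies in the essential image of $i_L \colon \cZ'_p \hookrightarrow \cX'$, the latter counit is an equivalence, and since $F$ is conservative so is the former, witnessing $X \in \cZ_p$ with $F|_p(X) \simeq Z'$.

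With each $F|_p$ now known to be an equivalence, set $G|_p := (F|_p)^{-1}$. Composing the $i_L$-compatibility $F \circ i_L \simeq i_L \circ F|_p$ with $G$ on the left and $G|_p$ on the right yields $G \circ i_L \simeq i_L \circ G|_p$, which is the $i_L$-factorization for $G$; composing the $y$-compatibility $y \circ F \simeq F|_p \circ y$ with $G$ on the right and $G|_p$ on the left yields $y \circ G \simeq G|_p \circ y$, which is the $y$-factorization. Hence $G$ is a morphism in $\Strat_\pos$. The essential surjectivity of $F|_p$ in the previous paragraph is the only nontrivial step; everything else is formal adjoint manipulation. Its subtlety lies in the fact that we have no a priori control over how $G$ interacts with the stratification and so must locate $G(i_L Z')$ inside $\cZ_p$ by simultaneously invoking both factorization conditions for $F$ together with the characterization of $\cZ_p \subseteq \cX$ via the counit $i_L y \to \id_\cX$.
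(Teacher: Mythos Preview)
Your proof is correct. The paper states this as an observation without proof, treating it as evident; your argument spells out what the authors presumably regard as routine.

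One remark: the counit argument establishing $X \in \cZ_p$ is redundant. Essential surjectivity of $F|_p$ is already secured by $F|_p(yX) \simeq Z'$, since $yX \in \cZ_p$. At that point $F|_p$ is an equivalence, and your subsequent formal manipulation---composing the $i_L$- and $y$-compatibilities of $F$ with $G$ and $G|_p := (F|_p)^{-1}$---immediately yields both factorization conditions for $G$, without any need to locate $X$ itself in $\cZ_p$. So the step you flag as ``the only nontrivial step'' is in fact unnecessary. That said, it is not wrong: it directly verifies the $i_L$-condition for $G$ at the object level, and the ``standard mate compatibility'' you invoke can be justified by the uniqueness (up to equivalence) of the recollement decomposition $i_L y X \to X \to \nu p_L X$ with first term in $i_L(\cZ_p)$ and cofiber in $\nu(\cX/\cZ_p)$.
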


\begin{notation}
\label{notn.stratn.of.one.rlaxlim.at.a.time}
For any $(\cE \da \pos) \in \LMod^{\rlax,L}_{\llax.\pos}(\PrSt)$, we write
\[
\limrlaxfam(\cE)
\in \Strat_\pos
\]
for the $\pos$-stratified presentable stable $\infty$-category $\lim^\rlax_{\llax.\pos}(\cE)$ of \Cref{prop.metacosm.input.first.get.stratn}.
\end{notation}

\begin{observation}
Given a morphism
\begin{equation}
\label{morphism.in.LMod.rlax.L.llax.pos.PrSt.that.gives.morphism.in.Strat.P}
\cE
\longra
\cE'
\end{equation}
in $\LMod^{\rlax,L}_{\llax.\pos}(\PrSt)$, the induced functor
\begin{equation}
\label{induced.morphism.on.rlax.lims.for.showing.that.lim.rlax.llax.bullet.defines.a.functor}
\lim^\rlax_{\llax.\pos}(\cE)
\longra
\lim^\rlax_{\llax.\pos}(\cE')
\end{equation}
lies in $\Strat_\pos$: in other words, we may upgrade \Cref{notn.stratn.of.one.rlaxlim.at.a.time} to a functor
\[
\LMod^{\rlax,L}_{\llax.\pos}(\PrSt)
\xra{\limrlaxfam}
\Strat_\pos
~.
\]
Indeed, the functor \Cref{induced.morphism.on.rlax.lims.for.showing.that.lim.rlax.llax.bullet.defines.a.functor} preserves colimits by parts \Cref{metacosm.input.conservative} and \Cref{metacosm.input.cocts.restrn.to.any.subposet} of \Cref{prop.metacosm.input.first.get.stratn}, it obviously commutes with the restriction functors $y$, and it commutes with their left adjoints $i_L$ by \Cref{prop.metacosm.input.first.get.stratn}\Cref{metacosm.input.describe.adjts}\Cref{metacosm.input.describe.iL}. We use this fact without further comment.
\end{observation}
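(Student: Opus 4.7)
The plan is to unpack \Cref{defn.Strat.P} and verify its three conditions for the functor $F : \lim^\rlax_{\llax.\pos}(\cE) \to \lim^\rlax_{\llax.\pos}(\cE')$ induced by a morphism $\cE \to \cE'$ in $\LMod^{\rlax,L}_{\llax.\pos}(\PrSt)$. Namely, $F$ must (a) preserve colimits, (b) restrict compatibly along the fully faithful inclusions $i_L : \cZ_p \hookra \cX$ for every $p \in \pos$, and (c) commute with the right adjoints $y : \cX \to \cZ_p$ for every $p \in \pos$. The stratifications on source and target are those supplied by \Cref{prop.metacosm.input.first.get.stratn}, so I may use the concrete descriptions provided there throughout.

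For (c), I would observe that, by \Cref{prop.metacosm.input.first.get.stratn}\Cref{metacosm.input.get.stratn}\Cref{notational.consistency.for.PhiC}, the right adjoint $y$ in the stratification of $\lim^\rlax_{\llax.\pos}(\cE)$ is literally the restriction functor $\lim^\rlax_{\llax.\pos}(\cE) \to \lim^\rlax_{\llax.(^\leq p)}(\cE)$ along the inclusion $(^\leq p) \subseteq \pos$. Since $F$ arises functorially from a morphism of left-lax left $\pos$-modules and restriction to subposets is manifestly natural in the module variable, condition (c) is immediate from a diagram chase. For (a), I would combine the joint conservativity of $(\Phi_p)_{p \in \pos}$ (\Cref{prop.metacosm.input.first.get.stratn}\Cref{metacosm.input.conservative}) with the fact that each $\Phi_p$ preserves colimits (\Cref{prop.metacosm.input.first.get.stratn}\Cref{metacosm.input.cocts.restrn.to.any.subposet}): these two together imply that colimits in $\lim^\rlax_{\llax.\pos}(\cE)$ are detected and computed fiberwise. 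Because $F$ commutes with the $\Phi_p$ by (c), and the fiberwise components $\cE_p \to \cE'_p$ preserve colimits by the very definition of $\LMod^{\rlax,L}_{\llax.\pos}(\PrSt)$ in \Cref{defn.LMod.rlax.L}, the induced functor $F$ preserves colimits as required.

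For (b), I would appeal to the characterization in \Cref{prop.metacosm.input.first.get.stratn}\Cref{metacosm.input.describe.adjts}\Cref{metacosm.input.describe.iL} of the essential image of $i_L : \cZ_p \hookra \cX$ as consisting of those objects $X \in \cX$ satisfying $\Phi_q(X) \simeq 0$ for all $q \notin (^\leq p)$. Since $F$ commutes with each $\Phi_q$ by the argument for (c) and is exact, in particular sending zero objects to zero objects, it preserves this vanishing condition and therefore restricts to the required factorization through $\cZ'_p \hookra \cX'$. I expect no genuine obstacle in any of these three steps — the argument is a formal unraveling of the constructions set up in the preceding subsection — but the point demanding the most care is (a), where one must bear in mind that although $\lim^\rlax_{\llax.\pos}(\cE)$ is a priori only a right-lax limit (accessed through the strictification of \Cref{lem.strictification}), its colimits are nevertheless controlled by the collection $(\Phi_p)_{p \in \pos}$ of fiber projections.
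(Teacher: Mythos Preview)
Your proposal is correct and follows essentially the same approach as the paper: the observation's proof is already embedded in its statement, and you have simply unpacked it. One minor citation slip: the identification of $y$ with the restriction functor is the content of \Cref{prop.metacosm.input.first.get.stratn}\Cref{metacosm.input.describe.adjts}\Cref{metacosm.input.describe.iL}, not \Cref{notational.consistency.for.PhiC} (the latter concerns the geometric localization $\Phi_\sC$ rather than $y$ itself), but this does not affect the argument.
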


\begin{observation}
For any $\pos$-stratified presentable stable $\infty$-category $\cX \in \Strat_\pos$, its gluing diagram
\[
\GD(\cX)
\in
\LMod_{\llax.\pos}
\]
is in fact a presentable stable left-lax left $\pos$-module. We use this fact without further comment.
\end{observation}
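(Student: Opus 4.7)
The plan is to verify two conditions from \Cref{defn.LMod.rlax.L}: the fibers of $\GD(\cX) \to \pos$ are presentable stable $\infty$-categories, and the monodromy functors are exact and accessible. Both will follow by unwinding constructions already assembled in \Cref{subsections.stratns} and \Cref{subsection.reconstrn.thm.for.stratns}.

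For the fibers, recall from \Cref{obs.GD.is.a.llax.P.mod} that the fiber of $\GD(\cX)$ over $p \in \pos$ is the stratum $\cX_p := \cZ_p / \cZ_{^< p}$. The subcategory $\cZ_p \in \Cls_\cX$ is a closed subcategory of the presentable stable $\infty$-category $\cX$, hence is itself presentable stable; and $\cZ_{^<p} \in \Cls_{\cZ_p}$ by \Cref{closed.subcats.are.mutually.aligned}, so by \Cref{defn.presentable.quotient} the quotient $\cX_p$ is presentable stable as well.

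For the monodromy, the functor associated to a morphism $p \leq q$ in $\pos$ is the gluing functor
\[
\Gamma^p_q : \cX_p \xlonghookra{\rho^p} \cX \xra{\Phi_q} \cX_q,
\]
where $\Phi_q$ is the left adjoint and $\rho^p$ the right adjoint of the geometric localization adjunctions from \Cref{defn.Cth.stratum.and.geometric.localizn}. The functor $\Phi_q$ is a left adjoint between presentable stable $\infty$-categories, hence is cocontinuous and exact. The functor $\rho^p$ is the composite $\cX_p \xhookra{\nu} \cZ_p \xhookra{i_R} \cX$ of right adjoints between presentable $\infty$-categories, hence is accessible (any right adjoint of a left adjoint between presentable $\infty$-categories is accessible), and it is exact as a right adjoint between stable $\infty$-categories. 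The composite $\Gamma^p_q = \Phi_q \circ \rho^p$ is therefore both exact and accessible.

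No real obstacle arises here, since all of the ingredients have been packaged in the preceding subsections; the content is simply to note that the recollement \Cref{recollement.in.intro} producing the strata of a stratification is a recollement of \emph{presentable} stable $\infty$-categories (rather than merely stable ones), so that all of the relevant adjoints exist and preserve the expected class of colimits. Combining these observations yields the factorization
\[
\pos \xra{\llax} \LMod^{\rlax,L}_{\llax.\pos}(\PrSt) \lhook\joinrel\longra \LMod_{\llax.\pos},
\]
lifting $\GD(\cX)$ as required.
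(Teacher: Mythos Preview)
Your proof is correct and is precisely the unwinding that the paper leaves implicit (the paper gives no proof for this observation, treating it as immediate from the definitions). One minor point: your citation of \Cref{closed.subcats.are.mutually.aligned} for the fact that $\cZ_{^<p} \in \Cls_{\cZ_p}$ is slightly off---that lemma concerns mutual alignment, whereas what you need here is the more elementary observation (recorded just after \Cref{obs.clsd.subcat.gives.recollement}) that if $\cZ \subseteq \cY$ are both closed in $\cX$ then $\cZ$ is closed in $\cY$.
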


\begin{theorem}
\label{metacosm.thm}
There is a canonical adjunction
\begin{equation}
\label{adjn.of.metacosm.thm}
\begin{tikzcd}[column sep=1.5cm]
\Strat_\pos
\arrow[transform canvas={yshift=0.9ex}]{r}{\GD}
\arrow[hookleftarrow, transform canvas={yshift=-0.9ex}]{r}[yshift=-0.2ex]{\bot}[swap]{\limrlaxfam}
&
\LMod^{\rlax,L}_{\llax.\pos}(\PrSt)
\end{tikzcd}
\end{equation}
whose right adjoint is fully faithful, which is an equivalence whenever $\pos$ is down-finite.
\end{theorem}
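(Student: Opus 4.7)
The plan is to prove the theorem in three stages: construct the adjunction, verify the counit is an equivalence (giving full faithfulness of $\limrlaxfam$), and verify the unit is an equivalence under the down-finite hypothesis.

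First, I would observe that $\limrlaxfam$ takes values in $\Strat_\pos$ via \Cref{prop.metacosm.input.first.get.stratn}: each $(\cE \da \pos) \in \LMod^{\rlax,L}_{\llax.\pos}(\PrSt)$ gives $\lim^\rlax_{\llax.\pos}(\cE)$ a canonical $\pos$-stratification whose $p\th$ closed subcategory is $\lim^\rlax_{\llax.(^\leq p)}(\cE)$. The unit at $\cX \in \Strat_\pos$ is the microcosm gluing diagram functor $\gd \colon \cX \to \limrlaxfam(\GD(\cX))$; one checks directly that this lies in $\Strat_\pos$ using \Cref{prop.metacosm.input.first.get.stratn}\Cref{metacosm.input.describe.adjts}, since both sides have stratifications with the same strata $\cX_p$. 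The adjunction $\GD \dashv \limrlaxfam$ is then obtained from the universal property of $\lim^\rlax_{\llax.\pos}$ as a right adjoint to constant inclusion (\Cref{lax.limit.functors.are.radjts}), upgraded to account for the stratified structure.

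Next, the counit $\GD(\limrlaxfam(\cE)) \to \cE$ is an equivalence: by \Cref{prop.metacosm.input.first.get.stratn}\Cref{metacosm.input.get.stratn}\Cref{gluing.in.X.is.mdrmy.in.E}, the gluing functors of the canonical stratification on $\limrlaxfam(\cE)$ are canonically identified with the monodromy functors of $\cE$, and the $p\th$ stratum is $\cE_p$. This identifies the left-lax $\pos$-module $\GD(\limrlaxfam(\cE))$ with $\cE$ as objects of $\LMod^{\rlax,L}_{\llax.\pos}(\PrSt)$. Hence $\limrlaxfam$ is fully faithful.

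For the down-finite case, I would induct. For $\pos$ finite, induct on $|\pos|$: the case $\pos = \es$ is trivial, and for the step pick a maximal $p \in \pos$ so that $\sD := \pos \setminus \{p\}$ is down-closed, yielding a recollement $\cZ_\sD \hookrightarrow \cX \twoheadrightarrow \cX_p$ (\Cref{obs.restricted.stratn.over.D}, \Cref{obs.quotient.stratn.from.down.closed.over.smaller.poset}). The key is producing a compatible recollement $\Glue(\cZ_\sD) \hookrightarrow \Glue(\cX) \twoheadrightarrow \cX_p$: since $p$ is maximal, $\sd(\pos)$ decomposes as $\sd(\sD) \sqcup \{\sigma \in \sd(\pos) : \max(\sigma) = p\}$, and the latter is fibered over $\sd(\sD)^\rcone$ in a way that implements exactly the recollement structure (essentially as in the argument of \Cref{prop.metacosm.input.first.get.stratn.for.brax.n} specialized to $\pos$ with a top element, with $\cY$ replaced by $\Glue(\cZ_\sD)$). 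By induction $\cZ_\sD \simeq \Glue(\cZ_\sD)$, and the gluing functors of both recollements are computed by the same push–pull sequence along $\rho^p \colon \cX_p \hookrightarrow \cX \to \cZ_\sD$, so by \Cref{lem.reconstrn.for.recollement} the functor $\gd \colon \cX \to \Glue(\cX)$ is an equivalence. Finally, for $\pos$ down-finite but not finite, apply this finite case to each restricted stratification on $\cZ_{^\leq p}$ (which is finite) for $p \in \pos$: this shows the unit induces an equivalence on each closed subcategory $\cZ_p$, hence on each stratum, and since $\pos$ is artinian the joint conservativity of $(\Phi_p)_{p \in \pos}$ (\Cref{rmk.artinian.conservativity}) combined with compatibility of $\gd$ with the $i_L \dashv y$ adjunctions forces $\gd$ to be an equivalence.

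The main obstacle is establishing the compatible recollement of $\Glue(\cX)$ in the inductive step, specifically showing that the gluing functor $\Glue(\cZ_\sD) \to \cX_p$ agrees under $\gd$ with the original gluing functor $\cZ_\sD \to \cX_p$ of $\cX$. This requires carefully unpacking how the right-lax limit over $\pos$ factors through the right-lax limit over $\sD$ when one splits off a maximal element, and identifying the resulting composite structure morphism; the argument is essentially the $[n]$-case from \Cref{prop.metacosm.input.first.get.stratn.for.brax.n} transported to an arbitrary poset with top element, relying crucially on \Cref{prop.metacosm.input.first.get.stratn}\Cref{metacosm.input.describe.adjts} to identify the left adjoint $i_L$ on rlax limits.
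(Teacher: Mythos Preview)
Your proposal is correct and follows the paper's strategy closely: the counit identification via \Cref{prop.metacosm.input.first.get.stratn}\Cref{metacosm.input.get.stratn}\Cref{gluing.in.X.is.mdrmy.in.E}, and the finite-$\pos$ induction (removing a maximal element and comparing recollements via \Cref{lem.reconstrn.for.recollement}) match the paper exactly, including your identification of the main obstacle as matching the gluing functors of the two recollements. There are two places where the paper does more than you indicate. First, the adjunction itself: the paper establishes it via an explicit chain of equivalences of hom-spaces, passing through $\LMod^{\llax,R}_{\llax.\pos}(\PrSt)$ using \Cref{lemma.ptwise.radjt.has.ptwise.ladjt} and checking carefully that the $\Strat_\pos$-condition on one side corresponds to factorization through $\GD(\cX) \hookrightarrow \ul{\cX}$ on the other; your ``upgraded to account for the stratified structure'' is exactly where this work lives. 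Second, the passage from finite to down-finite $\pos$: the paper writes both $\cX$ and $\Glue(\cX)$ as limits over $(\Down^\fin_\pos)^\op$ of their restrictions to finite down-closed pieces and applies the finite case componentwise, whereas you invoke conservativity. Your route works, but your citation of \Cref{rmk.artinian.conservativity} is slightly off target---what actually drives the argument is joint conservativity of the restriction functors $(y_p)_{p \in \pos}$ (equivalently, the generation condition $\cX = \brax{\cZ_p}_{p \in \pos}$, which holds without any artinian hypothesis), together with the verification that $\gd$ restricted to each $\cZ_p$ coincides with the unit for the restricted stratification of $\cZ_p$ over $(^\leq p)$. The paper's limit presentation packages this last compatibility automatically.
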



\begin{proof}
Fix arbitrary objects $\cX \in \Strat_\pos$ and $(\cE \da \pos) \in \LMod^{\rlax,L}_{\llax.\pos}(\PrSt)$.  The adjunction \Cref{adjn.of.metacosm.thm} may be extracted from the commutative diagram
\begin{equation}
\label{diagram.of.hom.spaces.proving.metacosm.adjn}
\begin{tikzcd}
\hom_{\Strat_\pos} ( \cX , \limrlaxfam(\cE) )
\arrow[hook]{r}
\arrow[leftrightarrow]{dd}[sloped, anchor=north]{\sim}
&
\hom_{\PrLSt} ( \cX, \lim^\rlax_{\llax.\pos}(\cE) )
\arrow[hook]{r}
\arrow[leftrightarrow]{d}[sloped, anchor=south]{\sim}
&
\hom_\Cat ( \cX, \lim^\rlax_{\llax.\pos}(\cE) )
\arrow[leftrightarrow]{d}[sloped, anchor=south]{\sim}
\\
&
\hom_{\LMod^{\rlax,L}_{\llax.\pos}(\PrSt)} (\ul{\cX},\cE)
\arrow[hook]{r}
\arrow[leftrightarrow]{d}[sloped, anchor=south]{\sim}
&
\hom_{\LMod^\rlax_{\llax.\pos}} (\ul{\cX},\cE)
\\
\hom_{\LMod^{\llax,R}_{\llax.\pos}(\PrSt)} ( \cE , \GD(\cX) )
\arrow[hook]{r}
\arrow[leftrightarrow]{d}[sloped, anchor=north]{\sim}
&
\hom_{\LMod^{\llax,R}_{\llax.\pos}(\PrSt)} ( \cE , \ul{\cX} )
\\
\hom_{\LMod^{\rlax,L}_{\llax.\pos}(\PrSt)} ( \GD(\cX) , \cE )
\end{tikzcd}
\end{equation}

in $\Spaces$ that we explain presently.
\begin{itemize}

\item The equivalence in the right column of diagram \Cref{diagram.of.hom.spaces.proving.metacosm.adjn} follows from the adjunction $\const \adj \lim^\rlax_{\llax.\pos}$.

\item The notation $\LMod^{\llax,R}_{\llax.\pos}(\PrSt)$ has the evident meaning, analogous to the notation $\LMod^{\rlax,L}_{\llax.\pos}(\PrSt)$ introduced in \Cref{defn.LMod.rlax.L}.\footnote{It is also explained in \Cref{notn.systematic.for.lax.modules.valued.in.St.idem.and.PrSt.etc}.}

\item By parts \Cref{metacosm.input.conservative} and \Cref{metacosm.input.cocts.restrn.to.any.subposet} of \Cref{prop.metacosm.input.first.get.stratn}, a functor $\cX \ra \lim^\rlax_{\llax.\pos}(\cE)$ preserves colimits if and only if for every $p \in \pos$ the composite functor $\cX \ra \lim^\rlax_{\llax.\pos}(\cE) \ra \cE_p$ preserves colimits.  Hence, in diagram \Cref{diagram.of.hom.spaces.proving.metacosm.adjn} the equivalence in the right column factors as the upper equivalence in the middle column.

\item The lower equivalences in the left and middle columns of diagram \Cref{diagram.of.hom.spaces.proving.metacosm.adjn} follow directly from \Cref{lemma.ptwise.radjt.has.ptwise.ladjt}: over each object $p \in \pos$, these equivalences are obtained by passage between adjoints.

\item In diagram \Cref{diagram.of.hom.spaces.proving.metacosm.adjn}, we deduce the factorization of the composite equivalence in the middle column as the upper equivalence in the left column as follows.

\begin{itemize}

\item Given a morphism $\cX \ra \limrlaxfam(\cE)$ in $\Strat_\pos$, it is immediate that for each $p \in \pos$ there exists a factorization
\[ \begin{tikzcd}
\cX
\arrow{r}
\arrow{d}[swap]{\Phi_p}
&
\lim^\rlax_{\llax.\pos}(\cE)
\arrow{r}
&
\cE_p
\\
\cX_p
\arrow[dashed, bend right=10]{rru}
\end{tikzcd} \]
that is necessarily a left adjoint.  This proves the downwards factorization.

\item Suppose we are given a morphism $\ul{\cX} \la \cE$ in $\LMod^{\llax,R}_{\llax.\pos}(\PrSt)$ that admits a factorization
\begin{equation}
\label{factorization.through.gluing.diagram}
\begin{tikzcd}
\ul{\cX}
&
\cE
\arrow{l}
\arrow[dashed]{ld}
\\
\GD(\cX)
\arrow[hook]{u}{\ff}
\end{tikzcd}~.
\end{equation}
Fix any $p \in \pos$, and note that the existence of the factorization \Cref{factorization.through.gluing.diagram} implies (and in fact is equivalent to) the existence for every $q \in \pos$ of a factorization
\begin{equation}
\label{factorization.through.strata.of.composite.with.projection.to.fiber}
\begin{tikzcd}
\cX
\arrow{r}
\arrow{d}[swap]{\Phi_q}
&
\lim^\rlax_{\llax.\pos}(\cE)
\arrow{d}{\Phi_q}
\\
\cX_q
\arrow[dashed]{r}
&
\cE_q
\end{tikzcd}~.
\end{equation}
We make the following observations.
\begin{itemize}

\item

In the diagram
\[ \begin{tikzcd}
{\displaystyle\prod_{q \not\leq p} \cX_q}
\arrow{r}
&
{\displaystyle\prod_{q \not\leq p} \cE_q}
\\
\cX
\arrow{u}{ ( \Phi_q)_{q \not\leq p} }
\arrow{r}
&
\lim^\rlax_{\llax.\pos}(\cE)
\arrow{u}[swap]{ ( \Phi_q)_{q \not\leq p} }
\\
\cZ_p
\arrow[hook]{u}{i_L}
\arrow[dashed]{r}
&
\lim^\rlax_{\llax.(^\leq p)}(\cE)
\arrow[hook]{u}[swap]{i_L}
\end{tikzcd}~, \]
the upper square commutes as a result of the factorizations \Cref{factorization.through.strata.of.composite.with.projection.to.fiber} and the left vertical composite is zero as a result of the stratification condition.  Because the right vertical composite is a fiber sequence by \Cref{prop.metacosm.input.first.get.stratn}\Cref{metacosm.input.describe.adjts}\Cref{metacosm.input.describe.iL}, we obtain the indicated factorization.

\item

The existence of a factorization
\[ \begin{tikzcd}
\cX
\arrow{r}
\arrow{d}[swap]{y}
&
\lim^\rlax_{\llax.\pos}(\cE)
\arrow{d}{y}
\\
\cZ_p
\arrow[dashed]{r}
&
\lim^\rlax_{\llax.(^\leq p)}(\cE)
\end{tikzcd} \]
is equivalent to the assertion that if an object $X \in \cX$ is in the kernel of the functor $\cX \xra{y} \cZ_p$ then it is sent to zero under the composite $\cX \ra \lim^\rlax_{\llax.\pos}(\cE) \xra{y} \lim^\rlax_{\llax.(^\leq p)}(\cE)$.  This latter assertion follows from the diagram
\[ \begin{tikzcd}
\cX
\arrow{r}
\arrow{d}[swap]{y}
&
\lim^\rlax_{\llax.\pos}(\cE)
\arrow{d}{y}
\\
\cZ_p
\arrow{d}[swap]{(\Phi_q)_{q \leq p}}
&
\lim^\rlax_{\llax.(^\leq p)}(\cE)
\arrow{d}{(\Phi_q)_{q \leq p}}
\\
{\displaystyle\prod_{q \leq p} \cX_q}
\arrow{r}
&
{\displaystyle\prod_{q \leq p} \cE_q}
\end{tikzcd}~, \]
which commutes on account of the factorizations \Cref{factorization.through.strata.of.composite.with.projection.to.fiber} and in which the lower right vertical functor is conservative by \Cref{prop.metacosm.input.first.get.stratn}\Cref{metacosm.input.conservative} (applied to the poset $(^\leq p)$).

\end{itemize}
It follows that our chosen morphism $\ul{\cX} \la \cE$ in $\LMod^{\llax,R}_{\llax.\pos}(\PrSt)$ corresponds to a morphism not just in $\PrLSt$ but in $\Strat_\pos$: i.e., it proves the upwards factorization.

\end{itemize}

\end{itemize}

We now prove that the counit of the adjunction \Cref{adjn.of.metacosm.thm} is an equivalence.  Unwinding the equivalences of diagram \Cref{diagram.of.hom.spaces.proving.metacosm.adjn}, we see that the counit is given by the following sequence of operations.
\begin{itemize}
\item Begin with the counit morphism
\begin{equation}
\label{first.morphism.in.studying.counit.in.metacosm.adjn}
\ul{\lim^\rlax_{\llax.\pos}(\cE)}
\longra
\cE
\end{equation}
in $\LMod^\rlax_{\llax.\pos}$ of the adjunction $\const \adj \lim^\rlax_{\llax.\pos}$, which lies in $\LMod^{\rlax,L}_{\llax.\pos}(\PrSt)$: over each $p \in \pos$ it restricts as the left adjoint
\[
\lim^\rlax_{\llax.\pos}(\cE)
\xra{\Phi_p}
\cE_p
~. \]
\item Use \Cref{lemma.ptwise.radjt.has.ptwise.ladjt} to pass to the corresponding morphism
\begin{equation}
\label{second.morphism.in.studying.counit.in.metacosm.adjn}
\ul{\lim^\rlax_{\llax.\pos}(\cE)}
\longla
\cE
\end{equation}
in $\LMod^{\llax,R}_{\llax.\pos}(\PrSt)$ to the morphism \Cref{first.morphism.in.studying.counit.in.metacosm.adjn} in $\LMod^{\rlax,L}_{\llax.\pos}(\PrSt)$, which restricts over each $p \in \pos$ as the right adjoint
\[
\lim^\rlax_{\llax.\pos}(\cE)
\xlonghookla{\rho^p}
\cE_p
~.
\]
\item Observe the factorization of the morphism \Cref{second.morphism.in.studying.counit.in.metacosm.adjn} in $\LMod^{\llax,R}_{\llax.\pos}(\PrSt)$ as
\begin{equation}
\label{factorizn.from.E.to.GD.of.its.own.glued.cat}
\begin{tikzcd}
\ul{\lim^\rlax_{\llax.\pos}(\cE)}
&
\cE
\arrow{l}
\arrow[dashed]{ld}
\\
\GD(\limrlaxfam(\cE) )
\arrow[hook]{u}
\end{tikzcd}~.
\end{equation}
\item Use \Cref{lemma.ptwise.radjt.has.ptwise.ladjt} to pass to the corresponding morphism
\begin{equation}
\label{the.counit.of.the.metacosm.adjn}
\GD(\limrlaxfam(\cE) )
\longra
\cE
\end{equation}
in $\LMod^{\rlax,L}_{\llax.\pos}(\PrSt)$ to the factorization of diagram \Cref{factorizn.from.E.to.GD.of.its.own.glued.cat} in $\LMod^{\llax,R}_{\llax.\pos}(\PrSt)$.
\end{itemize}
Evidently, the factorization of diagram \Cref{factorizn.from.E.to.GD.of.its.own.glued.cat} restricts as an equivalence over each $p \in \pos$.  In fact, it is an equivalence by \Cref{prop.metacosm.input.first.get.stratn}\Cref{metacosm.input.get.stratn}\Cref{gluing.in.X.is.mdrmy.in.E}.  Hence the counit \Cref{the.counit.of.the.metacosm.adjn} is also an equivalence.

We now study the unit of the adjunction \Cref{adjn.of.metacosm.thm}.  In order to verify that its component at the object $\cX \in \Strat_\pos$ to be an equivalence, by \Cref{obs.forget.from.Strat.to.PrL.conservative} it suffices to show that the underlying morphism
\begin{equation}
\label{underlying.morphism.in.PrL.of.unit.of.metacosm.adjn}
\cX
\longra
\lim^\rlax_{\llax.\pos}(\GD(\cX))
\end{equation}
in $\PrLSt$ is an equivalence.  Unwinding the equivalences of diagram \Cref{diagram.of.hom.spaces.proving.metacosm.adjn}, we see that the morphism \Cref{underlying.morphism.in.PrL.of.unit.of.metacosm.adjn} is the composite
\[
\cX
\longra
\lim^\rlax_{\llax.\pos}(\cX)
\longra
\lim^\rlax_{\llax.\pos}(\GD(\cX))
\]
in which the first functor is the unit of the adjunction $\const \adj \lim^\rlax_{\llax.\pos}$ and the second morphism is obtained by applying \Cref{lemma.ptwise.radjt.has.ptwise.ladjt} to the defining morphism $\ul{\cX} \hookla \GD(\cX)$ in $\LMod^{\llax,R}_{\llax.\pos}(\PrSt)$ (which restricts over each $p \in \pos$ as the right adjoint $\cX \xhookla{\rho^p} \cX_p$) and then applying the functor $\lim^\rlax_{\llax.\pos}$.

We now prove that the morphism \Cref{underlying.morphism.in.PrL.of.unit.of.metacosm.adjn} is an equivalence under the assumption that $\pos$ is finite.  We proceed by induction on the number of elements of $\pos$, the base case where $\pos=\es$ being trivial.  So, choose any maximal element $\infty \in \pos$, and write $\pos' := \pos \backslash \{ \infty\} \in \Down_\pos$ for its complement.  This defines a functor $\pos \xra{\pi} [1]$ with $\pi^{-1}(0) = \pos'$ and $\pi^{-1}(1) = \{\infty\}$.  Taking pushforwards of stratifications along $\pi$ via \Cref{prop.pushfwd.stratn} allows us to consider the morphism \Cref{underlying.morphism.in.PrL.of.unit.of.metacosm.adjn} as lying in $\Strat_{[1]}$.  To show that the morphism \Cref{underlying.morphism.in.PrL.of.unit.of.metacosm.adjn} is an equivalence, by \Cref{obs.clsd.subcat.gives.recollement} and \Cref{lem.reconstrn.for.recollement} it suffices to show that the lax-commutative square
\begin{equation}
\label{a.priori.lax.comm.square.for.proving.reconstrn.for.finite.posets}
\begin{tikzcd}[row sep=1.5cm]
\cX_{\pos'}
\arrow[hook]{r}{i_R}[swap, xshift=1.7cm, yshift=-0.9cm]{\rotatebox{30}{$\Leftarrow$}}
\arrow{d}[sloped, anchor=north]{\sim}
&
\cX
\arrow{r}{p_L}
&
\cX_\infty
\arrow{d}[sloped, anchor=south]{\sim}
\\
\lim^\rlax_{\llax.\pos'}(\GD(\cX))
\arrow[hook]{r}[swap]{i_R}
&
\lim^\rlax_{\llax.\pos}(\GD(\cX))
\arrow{r}[swap]{p_L}
&
\lim^\rlax_{\llax.\{\infty\}}(\GD(\cX))
\end{tikzcd}
\end{equation}
(whose left vertical morphism is an equivalence by induction) commutes.  By \Cref{lem.strictification}, every object of $\cX_{\pos'}$ is a limit indexed over the finite poset $\sd(\pos')$ of objects lying in the images of the fully faithful inclusions $\cX_p \xhookra{\rho^p} \cX_{\pos'}$ for elements $p \in \pos'$; because all functors in the diagram \Cref{a.priori.lax.comm.square.for.proving.reconstrn.for.finite.posets} are exact, it suffices to show that its natural transformation is an equivalence when restricted along each such fully faithful inclusion.  After this restriction, the source is precisely the gluing functor
\[
\Gamma^p_\infty
:
\cX_p
\xlonghookra{\rho^p}
\cX
\xra{\Phi_\infty}
\cX_\infty
~;
\]
by \Cref{prop.metacosm.input.first.get.stratn}\Cref{metacosm.input.get.stratn}\Cref{gluing.in.X.is.mdrmy.in.E} the target is (canonically equivalent to) the gluing functor $\Gamma^p_\infty$ as well, and unwinding the construction of the morphism \Cref{underlying.morphism.in.PrL.of.unit.of.metacosm.adjn} we see that the natural transformation in diagram \Cref{a.priori.lax.comm.square.for.proving.reconstrn.for.finite.posets} is indeed an equivalence.  So when $\pos$ is finite the morphism \Cref{underlying.morphism.in.PrL.of.unit.of.metacosm.adjn} is indeed an equivalence.

We now prove that the morphism \Cref{underlying.morphism.in.PrL.of.unit.of.metacosm.adjn} is an equivalence under the assumption that $\pos$ is down-finite.  Let us write $\Down^\fin_\pos \subseteq \Down_\pos$ for the full subposet on the finite down-closed subsets of $\pos$.  Consider the composite
\begin{equation}
\label{composite.in.pshvs.of.cats.over.Down.fin.P}
\const_\cX
\longra
\cZ_\bullet
\longra
\limrlaxfam(\GD(\cX))
\end{equation}
in $\Fun ( ( \Down^\fin_\pos)^\op , \PrLSt)$, in which
\begin{itemize}
\item the functor $\cZ_\bullet$ takes a morphism $\sD_0^\circ \ra \sD_1^\circ$ in $(\Down^\fin_\pos)^\op$ corresponding to a morphism $\sD_0 \la \sD_1$ in $\Down^\fin_\pos$ to the functor $\cZ_{\sD_0} \xra{y} \cZ_{\sD_1}$,
\item the functor $\limrlaxfam(\GD(\cX))$ takes a morphism $\sD_0^\circ \ra \sD_1^\circ$ in $(\Down^\fin_\pos)^\op$ corresponding to a morphism $\sD_0 \la \sD_1$ in $\Down^\fin_\pos$ to the restriction functor
\[
\lim^\rlax_{\llax.\sD_0}(\GD(\cX))
\longra
\lim^\rlax_{\llax.\sD_1}(\GD(\cX))
\]
(recall that this lies in $\PrLSt$ by parts \Cref{metacosm.input.presentable} and \Cref{metacosm.input.cocts.restrn.to.any.subposet} of \Cref{prop.metacosm.input.first.get.stratn}),
\item the component at $\sD^\circ \in (\Down^\fin_\pos)^\op$ of the first morphism is the functor $\cX \xra{y} \cZ_\sD$, and
\item the component at $\sD^\circ \in (\Down^\fin_\pos)^\op$ of the second morphism is the functor
\[
\cZ_\sD
\longra
\lim^\rlax_{\llax.\sD}(\GD(\cX))
\]
obtained as the instance of the functor \Cref{underlying.morphism.in.PrL.of.unit.of.metacosm.adjn} in the case of the restricted stratification of \Cref{obs.restricted.stratn.over.D}.
\end{itemize}
Applying the functor $\lim_{(\Down^\fin_\pos)^\op}$ to the composite \Cref{composite.in.pshvs.of.cats.over.Down.fin.P}, we obtain the upper composite in the commutative diagram
\begin{equation}
\label{factorizn.of.und.fctr.of.metacosm.unit.through.limit.of.closeds}
\begin{tikzcd}
\cX
\arrow{r}
\arrow{rrd}[sloped, swap]{\Cref{underlying.morphism.in.PrL.of.unit.of.metacosm.adjn}}
&
\lim_{\sD^\circ \in (\Down^\fin_\pos)^\op} \cZ_\sD
\arrow{r}
&
\lim_{\sD^\circ \in (\Down^\fin_\pos)^\op} \left( \lim^\rlax_{\llax.\sD}(\GD(\cX)) \right)
\\
&
&
\lim^\rlax_{\llax.\pos}(\GD(\cX))
\arrow{u}
\end{tikzcd}
\end{equation}
in $\PrLSt$.  Because $\pos$ is down-finite, the canonical morphism
\[
\colim
\left(
\Down^\fin_\pos
\xra{\fgt}
\Cat
\right)
\xlongra{\sim}
\pos
\]
in $\Cat$ is an equivalence.  This implies that in diagram \Cref{factorizn.of.und.fctr.of.metacosm.unit.through.limit.of.closeds}, the upper left horizontal morphism is an equivalence (by definition of a prestratification) and also the right vertical morphism is an equivalence (note that $\Down^\fin_\pos$ is filtered).  Meanwhile, because each $\sD \in \Down^\fin_\pos$ is finite, the second morphism in the composite \Cref{composite.in.pshvs.of.cats.over.Down.fin.P} is an equivalence, which implies that the upper right horizontal morphism in diagram \Cref{factorizn.of.und.fctr.of.metacosm.unit.through.limit.of.closeds} is an equivalence.  So the morphism \Cref{underlying.morphism.in.PrL.of.unit.of.metacosm.adjn} is an equivalence.
\end{proof}

\subsection{Strict stratifications}
\label{subsection.strict.stratns}

In this brief subsection, we lay out the general theory of strict stratifications.

\needspace{2\baselineskip}
\begin{definition}
\label{defn.strict.stratns}
\begin{enumerate}
\item[]

\item We say that
$
\mathscr{F}
\in
\LMod^{\rlax,L}_{\llax.\pos}(\PrSt)
$
is \bit{strict} if it lies in the full subcategory
\[
\LMod^{\rlax,L}_{\pos}(\PrSt)
\subseteq
\LMod^{\rlax,L}_{\llax.\pos}(\PrSt)
~.
\]

\item We say that $\cX \in \Strat_\pos$ is \bit{strict} if it is convergent (\Cref{defn.convergent.stratn}) and moreover its gluing diagram $\GD(\cX) \in \LMod^{\rlax,L}_{\llax.\pos}(\PrSt)$ is strict.

\end{enumerate}
\end{definition}

\begin{observation}
\label{obs.stratn.strict.iff.all.objects.strict}
Note that $\cX \in \Strat_\pos$ is strict if and only if it is convergent and its gluing functors strictly compose, i.e.\! for every composable sequence $p \ra q \ra r$ in $\pos$ the morphism
\[
\Gamma^p_r
\xra{\eta_q}
\Gamma^q_r \Gamma^p_q
\]
in $\Fun(\cX_p,\cX_r)$ is an equivalence.  It follows that $\cX$ is strict if and only if every object $X \in \cX$ is strict (\Cref{defn.strict.objects}).
\end{observation}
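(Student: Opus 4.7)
The plan is to prove the two equivalences — strictness of $\cX$ iff convergence together with strict composition of gluing functors, and strictness of $\cX$ iff strictness of every object — separately, using that strictness of $\GD(\cX)$ as a left-lax $\pos$-module admits a concrete interpretation.

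For the first equivalence, \Cref{defn.strict.stratns} unfolds strictness of $\cX$ as convergence of $\cX$ together with the condition that $\GD(\cX) \in \LMod^{\rlax,L}_{\llax.\pos}(\PrSt)$ lies in the subcategory $\LMod^{\rlax,L}_\pos(\PrSt)$. Under the fibrational description of (left-lax) left $\pos$-modules (as used throughout \Cref{section.lax.actions.and.limits}), this says exactly that the locally cocartesian fibration $\GD(\cX) \da \pos$ of \Cref{obs.GD.is.a.llax.P.mod} is in fact a cocartesian fibration, which by a standard characterization holds iff its locally cocartesian monodromy functors strictly compose. Since these monodromy functors are by construction the gluing functors $\Gamma^p_q$ (again by \Cref{obs.GD.is.a.llax.P.mod}), and the comparison morphisms witnessing composability are built from the units $\eta_q : \id \Rightarrow L_q = \rho^q \Phi_q$, this is exactly the claimed condition.

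For the forward direction of the second equivalence, I will assume $\cX$ is strict. Convergence of $\cX$ is equivalent to convergence of every object (by \Cref{macrocosm.thm}, as the microcosm morphism \Cref{intro.mainthm.microcosm.morphism} is the component of the unit of \Cref{adjn.in.reconstrn.thm}), so every $X \in \cX$ is convergent. Using the first equivalence, iterated strict composition gives for each $\varphi = (p = p_0 < p_1 < \cdots < p_n = r) \in \sd(\pos)$ a canonical identification
\[
L_\varphi(X) = \rho^{p_n} \Phi_{p_n} \cdots \rho^{p_0} \Phi_{p_0}(X) \simeq \rho^{r} \Gamma^{p_{n-1}}_{p_n} \cdots \Gamma^{p_0}_{p_1} \Phi_p(X) \simeq \rho^r \Gamma^p_r \Phi_p(X)
\]
depending only on $(p,r)$. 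Under this identification, the morphism $L_\varphi(X) \to L_\psi(X)$ induced by any isominmax morphism $\varphi \to \psi$ in $\sd(\pos)$ becomes the identity on $\rho^r \Gamma^p_r \Phi_p(X)$ (the inserted $\eta_{q}$ units all becoming equivalences by strict composition), so $\gd(X)$ is strict.

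For the converse — which I expect to be the main technical step — assume every $X \in \cX$ is strict. Then every $X$ is convergent, so $\cX$ is convergent. By the first equivalence, it suffices to show that for every composable $p \ra q \ra r$ in $\pos$, the morphism $\Gamma^p_r \xra{\eta_q} \Gamma^q_r \Gamma^p_q$ is an equivalence in $\Fun(\cX_p,\cX_r)$. Given $Y \in \cX_p$, set $X := \rho^p(Y) \in \cX$ and apply strictness of $X$ to the isominmax morphism $\{p < r\} \hookra \{p < q < r\}$ in $\sd(\pos)$: using $\Phi_p \rho^p \simeq \id_{\cX_p}$ and the equivalence $\Phi_r \rho^p = \Gamma^p_r$, the two values of $\gd(X)$ compute as $\rho^r \Gamma^p_r(Y)$ and $\rho^r \Gamma^q_r \Gamma^p_q(Y)$, and the induced morphism is $\rho^r$ applied to $\Gamma^p_r(Y) \xra{\eta_q} \Gamma^q_r \Gamma^p_q(Y)$. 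Since $\rho^r$ is fully faithful and $Y \in \cX_p$ is arbitrary, we conclude. The main obstacle is bookkeeping: keeping straight the three notions of strictness (for the module $\GD(\cX)$, for an object $X \in \cX$, and for $\cX$ itself) and verifying that strict composition of the $\Gamma^p_q$'s is precisely the cocartesian fibration condition on $\GD(\cX) \da \pos$.
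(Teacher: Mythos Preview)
Your proof is correct and is essentially what the paper has in mind: the Observation is stated without proof precisely because both equivalences are direct unpackings of the definitions, and your argument supplies those details accurately. The one small remark is that for the forward direction of the second equivalence you should note (as you implicitly use) that a general isominmax morphism in $\sd(\pos)$ factors as a composite of single-element insertions, so it suffices to check those; otherwise there is nothing to add.
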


\begin{remark}
Choose any $\mathscr{F} \in \LMod^{\rlax,L}_{\llax.\pos}(\PrSt)$.  Considering it as an object $\mathscr{F} \in \LMod_{\llax.\pos}$, through \Cref{lem.strictification} we obtain an object $\Strict(\mathscr{F}) \in \LMod_{\sd(\pos)}$ and an equivalence
\[
\lim^\rlax_{\llax.\pos}(\mathscr{F})
\simeq
\lim_{\sd(\pos)}(\Strict(\mathscr{F}))
~.
\]
However, in contrast with \Cref{obs.strict.microcosm.gluing.diagram.iff.factors.from.sd.to.TwAr}, the strictness of $\mathscr{F}$ is \textit{not} equivalent to the existence of a factorization
\begin{equation}
\label{factorizn.of.strictification.of.possibly.strict.diagram.in.Pr.St}
\begin{tikzcd}
\sd(\pos)
\arrow{r}{\Strict(\mathscr{F})}
\arrow{d}[swap]{(\min \ra \max)}
&
\Cat
\\
\TwAr(\pos)
\arrow[dashed]{ru}
\end{tikzcd}~.
\end{equation}
This distinction is already visible when $\pos = [2]$, in which case the factorization \Cref{factorizn.of.strictification.of.possibly.strict.diagram.in.Pr.St} exists if and only if the $\infty$-category $\mathscr{F}_2$ is an $\infty$-groupoid.
\end{remark}

\begin{observation}
\label{obs.TwAr.is.localization.of.sd.in.loc.coCart}
The commutative triangle
\begin{equation}
\label{min.to.max.from.sd.P.to.TwAr.P}
\begin{tikzcd}
\sd(\pos)
\arrow{rr}{(\min \ra \max)}
\arrow{rd}[sloped, swap]{\max}
&
&
\TwAr(\pos)
\arrow{ld}[sloped, swap]{t}
\\
&
\pos
\end{tikzcd}
\end{equation}
defines a morphism in $\loc.\coCart_\pos$, and moreover $\TwAr(\pos) \in \coCart_\pos \subseteq \loc.\coCart_\pos$. Moreover, by \Cref{lem.sd.P.localizes.onto.TwAr.P} (recall \Cref{defn.iso.min.and.or.max}), the functor
\[
\sd(\pos) \xra{(\min \ra \max)} \TwAr(\pos)
\]
is precisely the localization at the comparison morphisms in the locally cocartesian fibration $\sd(\pos) \xra{\max} \pos$ as well as their locally cocartesian pushforwards. It follows that the morphism \Cref{min.to.max.from.sd.P.to.TwAr.P} is the initial morphism from $\sd(\pos) \in \loc.\coCart_\pos$ to an object of the full subcategory $\coCart_\pos \subseteq \loc.\coCart_\pos$.
\end{observation}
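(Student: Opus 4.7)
The plan is to address the three claims in turn, leveraging \Cref{lem.sd.P.localizes.onto.TwAr.P} as the key ingredient for the second and third. First, I would verify directly that $(\min \ra \max)$ defines a morphism in $\loc.\coCart_\pos$. For this, I check that $\TwAr(\pos) \xra{t} \pos$ is a cocartesian fibration: the cocartesian lift of a morphism $p \ra q$ in $\pos$ at an object $(r \ra p) \in \TwAr(\pos)$ is simply postcomposition $(r \ra p) \ra (r \ra q)$. Then I verify that the locally cocartesian morphisms in $\sd(\pos) \xra{\max} \pos$ — namely the inclusions $([n] \xra{\varphi} \pos) \ra ([n+1] \xra{\w{\varphi}} \pos)$ extending $\varphi$ by a new maximal element — are sent by $(\min \ra \max)$ to cocartesian morphisms in $\TwAr(\pos) \xra{t} \pos$. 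This is an immediate inspection.

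For the second claim, I would invoke \Cref{lem.sd.P.localizes.onto.TwAr.P}, which identifies $\TwAr(\pos)$ as the localization of $\sd(\pos)$ at the isominmax morphisms. It then remains to identify the class of isominmax morphisms with the class generated by (a) the comparison morphisms in the locally cocartesian fibration $\sd(\pos) \xra{\max} \pos$ — that is, for each composable pair $p \ra q \ra r$ in $\pos$ and each $\varphi \in \max^{-1}(p)$, the canonical morphism in $\max^{-1}(r)$ from the direct cocartesian pushforward of $\varphi$ along $p \ra r$ to the composite of the cocartesian pushforwards along $p \ra q$ and $q \ra r$ — together with (b) their locally cocartesian pushforwards. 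The key observation is that every morphism in $\sd(\pos)$ factors canonically as a locally cocartesian morphism followed by a fiberwise morphism (since $\sd(\pos) \xra{\max} \pos$ is locally cocartesian), and every fiberwise isominmax morphism $\varphi \ra \psi$ in $\max^{-1}(p)$ can be realized by iterated comparison morphisms: both $\varphi$ and $\psi$ arise from iterated cocartesian pushforwards of $([0] \xra{\const_{\varphi(0)}} \pos)$ along their respective chains, all ending at $p$, and are related by the corresponding comparison morphisms.

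Third, for the initiality claim, I would use the universal property of localization. Given any $(\cF \da \pos) \in \coCart_\pos$ and any morphism $\sd(\pos) \xra{F} \cF$ in $\loc.\coCart_\pos$, I must show that $F$ factors uniquely through $(\min \ra \max)$ as a morphism in $\coCart_\pos$. Because $\cF \da \pos$ is an actual cocartesian fibration (not merely locally cocartesian), the comparison morphisms in $\cF$ are equivalences; since $F$ sends locally cocartesian morphisms to locally cocartesian morphisms and preserves composition, it therefore sends every comparison morphism in $\sd(\pos)$, and every locally cocartesian pushforward thereof, to an equivalence in $\cF$. By the second claim combined with \Cref{lem.sd.P.localizes.onto.TwAr.P}, $F$ inverts all isominmax morphisms and hence factors uniquely through $(\min \ra \max)$; the resulting functor $\TwAr(\pos) \ra \cF$ over $\pos$ automatically preserves cocartesian morphisms, since $(\min \ra \max)$ sends locally cocartesian morphisms in $\sd(\pos)$ to cocartesian morphisms in $\TwAr(\pos)$ (as established in the first claim) and $F$ sends the former to cocartesian morphisms in $\cF$.

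The main obstacle will be the second claim: making precise the assertion that the isominmax morphisms (those inverted by \Cref{lem.sd.P.localizes.onto.TwAr.P}) coincide, up to generation under composition and 2-out-of-3, with the class of comparison morphisms together with their locally cocartesian pushforwards. Resolving this requires a careful factorization argument in $\sd(\pos)$, exploiting that every object of $\sd(\pos)$ is generated under iterated cocartesian pushforward from an initial object $([0] \xra{\const_p} \pos)$ in its own down-closure. Once this generation statement is established, the rest of the argument proceeds cleanly via the abstract universal properties.
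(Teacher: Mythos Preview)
Your proposal is correct and follows the same approach as the paper. In the paper this result is stated as an Observation with the justifications given inline (citing \Cref{lem.sd.P.localizes.onto.TwAr.P}) rather than as a separate proof; your proposal correctly expands on precisely these points, and you have rightly identified that the main content is the identification of the isominmax morphisms with (the class generated by) the comparison morphisms and their locally cocartesian pushforwards.
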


\begin{observation}
\label{obs.strict.stratn.gives.macrocosm.regluing.over.sdP}
By \Cref{obs.TwAr.is.localization.of.sd.in.loc.coCart}, for any $\cE \in \coCart_\pos$ we have an equivalence
\[
\lim^\rlax_\pos(\cE)
:=
\Fun^\cocart_{/\pos} ( \sd(\pos) , \cE)
\xla[\sim]{(\min \ra \max)^*}
\Fun^\cocart_{/\pos} ( \TwAr(\pos) , \cE )
=:
\Gamma_{\pos^\op} \left( \cE^\cocartdual \right)
~.
\]
In particular, if $\cX \in \Strat_\pos$ is strict, then taking $\cE = \GD(\cX)$ gives a canonical equivalence
\[
\cX
\xlongra{\sim}
\Gamma_{\pos^\op} \left( \GD(\cX)^\cocartdual \right)
~.
\]
\end{observation}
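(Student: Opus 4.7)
The plan is to extract both equivalences from the universal property recorded in \Cref{obs.TwAr.is.localization.of.sd.in.loc.coCart}, combined with the macrocosm reconstruction from \Cref{macrocosm.thm}. First I would establish the general equivalence for $\cE \in \coCart_\pos$. Since \Cref{obs.TwAr.is.localization.of.sd.in.loc.coCart} asserts that the morphism $\sd(\pos) \xra{(\min \ra \max)} \TwAr(\pos)$ is the initial morphism in $\loc.\coCart_\pos$ from $\sd(\pos)$ into the subcategory $\coCart_\pos \subseteq \loc.\coCart_\pos$, for every $\cE$ in this subcategory, precomposition with $(\min \ra \max)$ induces an equivalence of mapping spaces in $\loc.\coCart_\pos$. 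These mapping spaces are by definition $\Fun^\cocart_{/\pos}(\sd(\pos),\cE) =: \lim^\rlax_\pos(\cE)$ on the source side and $\Fun^\cocart_{/\pos}(\TwAr(\pos),\cE) =: \Gamma_{\pos^\op}(\cE^\cocartdual)$ on the target side, yielding the first claimed equivalence.

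Next I would deduce the second claim. By \Cref{defn.strict.stratns}, strictness of $\cX \in \Strat_\pos$ requires that $\GD(\cX)$ lie in $\LMod^{\rlax,L}_\pos(\PrSt) \subseteq \LMod^{\rlax,L}_{\llax.\pos}(\PrSt)$, i.e.\ that the underlying locally cocartesian fibration $\GD(\cX) \da \pos$ is in fact cocartesian, so $\GD(\cX) \in \coCart_\pos$. Hence the first part applies to give an equivalence $\lim^\rlax_\pos(\GD(\cX)) \simeq \Gamma_{\pos^\op}(\GD(\cX)^\cocartdual)$. On the other hand, strictness of $\cX$ also includes convergence, so the macrocosm morphism $\cX \xra{\gd} \lim^\rlax_{\llax.\pos}(\GD(\cX))$ of \Cref{macrocosm.thm} is an equivalence; composing these two equivalences produces the desired canonical equivalence $\cX \xlongra{\sim} \Gamma_{\pos^\op}(\GD(\cX)^\cocartdual)$.

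Since the bulk of the work is done by \Cref{obs.TwAr.is.localization.of.sd.in.loc.coCart} and \Cref{macrocosm.thm}, there is no serious obstacle; the only subtle point is bookkeeping the notational identifications $\lim^\rlax_\pos(-)=\Fun^\cocart_{/\pos}(\sd(\pos),-)$ and $\Gamma_{\pos^\op}(-^\cocartdual)=\Fun^\cocart_{/\pos}(\TwAr(\pos),-)$, and verifying that the equivalence produced on mapping spaces by the universal property agrees under these identifications with the one induced by $(\min \ra \max)^*$.
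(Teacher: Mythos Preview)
Your proposal is correct and follows exactly the approach the paper takes: the observation in the paper carries no separate proof, and its entire justification is the phrase ``By \Cref{obs.TwAr.is.localization.of.sd.in.loc.coCart},'' which you have correctly unpacked. One small terminological point: you speak of ``mapping spaces'' in $\loc.\coCart_\pos$, but $\Fun^\cocart_{/\pos}(-,-)$ is a hom-$\infty$-category, not a hom-space; the passage from the initial-morphism statement to an equivalence of hom-$\infty$-categories uses that $(\min \ra \max)$ is a localization of underlying $\infty$-categories (\Cref{lem.sd.P.localizes.onto.TwAr.P}), so that precomposition is fully faithful on $\Fun_{/\pos}$ and hence on $\Fun^\cocart_{/\pos}$, with essential surjectivity following because a cocartesian-morphism-preserving functor into a genuine cocartesian fibration $\cE$ automatically inverts the comparison morphisms.
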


\section{Variations on the metacosm reconstruction theorem}
\label{section.variations}

In this section, we provide three variations on metacosm reconstruction (\Cref{intro.thm.cosms}\Cref{intro.main.thm.metacosm}, proved as \Cref{metacosm.thm}). It is organized as follows.
\begin{itemize}

\item[\Cref{subsection.closed.split.thick.subcats}:] We recall some preliminary notions regarding various sorts of subcategories of an idempotent-complete stable $\infty$-category that is not necessarily presentable.

\item[\Cref{subsection.stable.stratns}:] We extend our theory of stratifications to the case of idempotent-complete stable $\infty$-categories that are not necessarily presentable; for disambiguation, we refer to these as \textit{stable stratifications}. We establish metacosm reconstruction for stable stratifications over finite posets as \Cref{thm.stable.metacosm}.

\item[\Cref{subsection.strict.metacosm}:] Our definitions of morphisms in the $\infty$-categories of (resp.\! stable) stratifications require commutativity for $i_L$ and $y$. We show as \Cref{thm.strict.metacosm} that additionally requiring commutativity for $i_R$ corresponds to strict (as opposed to possibly right-lax) morphisms between left-lax left modules over our poset. We refer to such morphisms between stratifications as \textit{strict}.

\item[\Cref{subsection.reflection}:] We establish the theory of \textit{reflection} (as discussed in \Cref{subsection.verdier}) for (resp.\! stable) stratifications over a finite poset: this is a dual form of reconstruction, which is functorial for strict morphisms between stratifications. We begin by establishing reflection for stable stratifications (which are the more natural context for reflection) as \Cref{thm.reflection.for.stable.stratns}. Using this, we establish reflection for stratifications (i.e.\! \Cref{intro.thm.reflection}) as \Cref{cor.reflection.for.presentable.stratns}.\footnote{More precisely, we prove part \Cref{intro.reflection.thm.metacosm} of \Cref{intro.thm.reflection}; parts \Cref{intro.reflection.thm.macrocosm}-\Cref{intro.reflection.thm.nanocosm} follow trivially therefrom. (We have stated \Cref{intro.thm.reflection} in four parts primarily to highlight the parallel with \Cref{intro.thm.cosms}.)} We also give formulas expressing the gluing functors and reflected gluing functors in terms of each other as \Cref{prop.Gamma.check.as.tfib.of.Gammas.and.Gamma.as.tcofib.of.Gamma.checks}.

\end{itemize}

\begin{local}
In this section, we fix a poset $\pos$ and an idempotent-complete stable $\infty$-category $\cC$.
\end{local}

\begin{remark}
\label{rmk.stable.always.idempotent.cplt.for.ease.of.language}
It is straightforward to treat the more general case of stable $\infty$-categories that are not necessarily idempotent-complete. We restrict to idempotent-complete stable $\infty$-categories merely to ease our language (e.g.\! so that we can recover $\cC \simeq \Ind(\cC)^\omega \subseteq \Ind(\cC)$ as the compact objects of its ind-completion).
\end{remark}

\begin{notation}
\label{notn.systematic.for.lax.modules.valued.in.St.idem.and.PrSt.etc}
We extend the notation $\LMod^{\rlax,L}_{\llax.\pos}(\PrSt)$ of \Cref{defn.LMod.rlax.L} to a systematic notational scheme for the various $\infty$-categories of lax left $\pos$-modules that appear in this section.
\begin{itemize}

\item The subscript on $\LMod$ indicates the handedness of the lax left $\pos$-modules that we consider.

\item The parenthetical indicates the restrictions placed both on the fibers and monodromy functors of objects as well as on the fiberwise behavior of morphisms. (Those that arise are $\St^\idem$, $\PrLomegaSt$, $\PrLSt$, and $\PrSt$.)

\item A superscript $\llax$ or $\rlax$ on $\LMod$ indicates the handedness of the laxness that we allow for the morphisms. (The absence of either of these indicates that we require strictly $\pos$-equivariant morphisms.)

\item A superscript $L$ on $\LMod$ indicates that morphisms are additionally required to be fiberwise left adjoints. (This will only arise in the case that the parenthetical is $\PrSt$.)

\end{itemize}
\end{notation}

\subsection{Closed, split, and thick subcategories}
\label{subsection.closed.split.thick.subcats}

In this subsection, we recall some preliminary notions regarding various sorts of subcategories of an idempotent-complete stable $\infty$-category that is not necessarily presentable.

\begin{definition}
\label{defn.thick.split.closed}
A full stable subcategory $\cZ \subseteq \cC$ is called
\begin{enumerate}

\item

\bit{thick} if it is idempotent-complete,

\item

\bit{split} if it is thick and its inclusion extends to a diagram
\[ \begin{tikzcd}[column sep=1.5cm]
\cZ
\arrow[hook, transform canvas={yshift=0.9ex}]{r}
\arrow[leftarrow, dashed, transform canvas={yshift=-0.9ex}]{r}[yshift=-0.2ex]{\bot}
&
\cC
\end{tikzcd}~, \]
and

\item\label{defn.small.closed}
\bit{closed} if it is thick and its inclusion extends to a diagram
\[ \begin{tikzcd}[column sep=1.5cm]
\cZ
\arrow[hook, bend left]{r}
\arrow[dashed,leftarrow]{r}[transform canvas={yshift=0.05cm}]{\bot}[swap,transform canvas={yshift=-0.05cm}]{\bot}
\arrow[dashed,bend right, hook]{r}
&
\cC
\end{tikzcd}
~. \]

\end{enumerate}
These various sorts of full stable subcategories of $\cC$ assemble into posets ordered by inclusion, which we respectively denote by
\[
\thicksub_\cC
~,
\qquad
\splitsub_\cC
~,
\qquad
\text{and}
\qquad
\clssub_\cC
~.
\]
\end{definition}

\begin{observation}
\label{obs.cls.and.Cls}
If $\cC$ is presentable, then there is a canonical equivalence
\[
\Cls_\cC
\simeq
\clssub_\cC
~.\footnote{It follows that the terminology of \Cref{defn.thick.split.closed}\Cref{defn.small.closed} is unambiguous.}
\]
\end{observation}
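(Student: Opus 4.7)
The plan is to observe that both posets, viewed as subposets of the poset of full stable subcategories of $\cC$ under inclusion, are determined by the same data when $\cC$ is presentable. Unpacking \Cref{defn.closed.nc.substack.intro} and \Cref{defn.thick.split.closed}\Cref{defn.small.closed}, both $\Cls_\cC$ and $\clssub_\cC$ consist of full stable subcategories $\cZ \subseteq \cC$ whose inclusion extends to an adjoint triple $i_L \adj y \adj i_R$ with $i_L$ and $i_R$ fully faithful; the only difference is that $\Cls_\cC$ requires $\cZ$ to be presentable stable, while $\clssub_\cC$ merely requires $\cZ$ to be thick (idempotent-complete). Since the orderings on both posets are by inclusion in $\cC$, it will suffice to show that these two conditions on $\cZ$ are equivalent in the presence of the adjunction data.

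The containment $\Cls_\cC \subseteq \clssub_\cC$ is immediate: every presentable $\infty$-category is accessible, and hence idempotent-complete.

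For the reverse containment, I would argue that any $\cZ \in \clssub_\cC$ is automatically presentable. The key observation is that, since $y$ is a left adjoint (to $i_R$), it preserves all colimits, and so the coreflection $i_L \adj y$ of $\cC$ onto $\cZ$ is accessible. Invoking the standard fact (dual to the corresponding statement for accessible localizations) that accessible coreflective subcategories of presentable $\infty$-categories are themselves presentable, we conclude that $\cZ$ is presentable; its stability is inherited from $\cC$ via the fully faithful colimit-preserving inclusion $i_L$. The main obstacle is purely bookkeeping: verifying that the needed presentability result applies to stable coreflections; the substantive content is a routine unpacking of definitions.
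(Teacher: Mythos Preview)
Your proposal is correct. The paper states this as an Observation without proof, so there is no argument to compare against beyond the implicit claim that the two definitions visibly agree once $\cC$ is presentable. Your unpacking of the two directions is accurate, and the key point you identify---that $y$ preserves colimits because it is left adjoint to $i_R$, forcing the coreflection to be accessible---is exactly what makes the reverse containment work.

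One small remark: the ``standard fact'' you invoke about accessible coreflective subcategories of presentable $\infty$-categories being presentable is true, but it is less commonly cited verbatim than its reflective analog. If you wanted a more directly referenceable route, you could instead observe that the right-orthogonal $\cU := \cZ^\bot = \ker(y)$ is an accessible reflective localization of $\cC$ (the reflector $p_L = \cofib(i_L y \to \id_\cC)$ preserves colimits), hence presentable; then $\cZ \simeq \ker(p_L)$ is the fiber of an accessible functor between presentable $\infty$-categories over $0$, hence accessible by HTT~5.4.6.6, and therefore presentable since it is cocomplete. This is equivalent to your argument but routes through statements that appear explicitly in HTT.
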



\begin{observation}
We record a number of basic facts surrounding \Cref{defn.thick.split.closed}, which we thereafter use without further comment.\footnote{Many of these facts have already been discussed in \Cref{subsection.add.loc.invts}.}
\begin{enumerate}

\item There are fully faithful inclusions
\[
\clssub_\cC
\longhookra
\splitsub_\cC
\longhookra
\thicksub_\cC
~.
\]

\item The poset $\thicksub_\cC$ has all colimits.

\item Ind-completion defines a fully faithful colimit-preserving functor
\[
\thicksub_\cC
\xlonghookra{\Ind}
\Cls_{\Ind(\cC)}
~,
\]
whose image consists of those closed subcategories $\cZ \in \Cls_{\Ind(\cC)}$ that are compactly generated.\footnote{The inclusion $\cZ \xlonghookra{i_L} \Ind(\cC)$ automatically preserves compact objects, as its right adjoint $\Ind(\cC) \xra{y} \cZ$ preserves colimits.}

\item The image of the composite functor 
\[
\splitsub_\cC
\longhookra
\thicksub_\cC
\xlonghookra{\Ind}
\Cls_{\Ind(\cC)}
\]
consists of those closed subcategories $\cZ \in \Cls_{\Ind(\cC)}$ such that the functor
\[
\cZ
\xlonghookra{i_R}
\Ind(\cC)
\]
preserves colimits, or equivalently such that the composite functor
\[
\Ind(\cC)
\xlongra{y}
\cZ
\xlonghookra{i_R}
\Ind(\cC)
\]
preserves colimits.\footnote{This implies that $\cZ$ is compactly generated, with compact objects the image of the composite $\cC \simeq \Ind(\cC)^\omega \hookra \Ind(\cC) \xra{y} \cZ$.}

\item The image of the composite functor
\[
\clssub_\cC
\longhookra
\splitsub_\cC
\longhookra
\thicksub_\cC
\xlonghookra{\Ind}
\Cls_{\Ind(\cC)}
\]
consists of those closed subcategories $\cZ \in \Cls_{\Ind(\cC)}$ such that the functor
\[
\cZ
\xlonghookra{i_R}
\Ind(\cC)
\]
preserves colimits and compact objects, or equivalently such that the composite functor
\[
\Ind(\cC)
\xlongra{y}
\cZ
\xlonghookra{i_R}
\Ind(\cC)
\]
preserves colimits and compact objects.

\end{enumerate}
\end{observation}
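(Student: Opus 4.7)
The plan is to dispatch the five items essentially in order, since later parts build on earlier ones, noting that most assertions reduce to standard facts about ind-completion and the adjoint functor theorem in the presentable stable setting.

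Item (1) is immediate from the definitions in \Cref{defn.thick.split.closed}: each stronger condition is built on top of the previous one together with the existence of an extra adjoint, and inclusion of posets is automatically fully faithful. For item (2), given a set $\{\cZ_s \in \thicksub_\cC\}_{s \in S}$, I would construct $\bigvee_s \cZ_s$ as the idempotent-completion of the smallest full stable subcategory of $\cC$ containing all $\cZ_s$; this is clearly thick, and the universal property among thick subcategories containing all the $\cZ_s$ is transparent.

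For item (3), fullness and faithfulness of $\Ind$ on $\thicksub_\cC$ follow because $\Ind$ is fully faithful as a functor on idempotent-complete stable $\infty$-categories and because inclusions of thick subcategories ind-complete to inclusions of closed subcategories (the right adjoint $y := \Ind(i^R)$ commutes with filtered colimits by construction, so the ind-completed inclusion is indeed closed by \Cref{ex.cpct.objs.gen.clsd.subcat} and \Cref{obs.closed.subcats.closed.under.colimit.closure}). Colimit-preservation reduces to the observation that $\Ind(\bigvee_s \cZ_s) = \brax{\Ind(\cZ_s)}_{s \in S}$, both being the closed subcategory of $\Ind(\cC)$ generated by the compact objects of the $\cZ_s$. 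The image characterization is then by definition: any compactly generated closed subcategory of $\Ind(\cC)$ arises as $\Ind$ of its compact objects (which form a thick subcategory of $\cC$).

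For items (4) and (5), I would argue by passing back and forth between the small and presentable settings. Given $\cZ \in \thicksub_\cC$ with $\Ind(\cZ) \in \Cls_{\Ind(\cC)}$, the existence of a right adjoint to $i_L : \cZ \hookrightarrow \cC$ is equivalent, by ind-completion and the adjoint functor theorem, to $y := \Ind(\cC) \to \Ind(\cZ)$ preserving compact objects, which (since $y$ is a localization with fully faithful right adjoint $i_R$) is equivalent in turn to $i_R$ preserving colimits and hence to $i_R y$ preserving colimits. For the equivalence of the two formulations in each item, I would use that $i_R$ is fully faithful, so $i_R$ preserves colimits iff $i_R y$ does (since $y$ is essentially surjective up to the image), and analogously for preservation of compact objects in item (5). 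The one thing worth noting is item (5)'s characterization of closed subcategories in $\clssub_\cC$: the right adjoint of $i_R$ is itself a right adjoint iff $i_R$ preserves colimits of compact objects (by yet another application of the adjoint functor theorem in $\PrLomegaSt$), which matches the stated condition.

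The main obstacle, such as it is, will be keeping straight which direction of adjoint preserves which structure; everything else is bookkeeping. No single step is deep, but the correspondence in items (4) and (5) between properties of the small-category inclusion and colimit/compact-object preservation of the ind-completed right adjoint is the substantive content and deserves the most care.
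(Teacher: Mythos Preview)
The paper treats this statement as an Observation and provides no proof at all (beyond a footnote pointing back to the informal discussion in \S\ref{subsection.add.loc.invts}), so your proposal is already more detailed than the paper. The overall strategy---unpack the definitions and translate back and forth via $\Ind$ and the adjoint functor theorem---is correct and is the natural one.

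Two small slips are worth flagging. In item (3), you write ``the right adjoint $y := \Ind(i^R)$'' for a general thick subcategory $\cZ$, but a merely thick subcategory need not have any right adjoint $i^R$ at the small level; that is precisely what distinguishes thick from split. The correct argument is the one you also cite: $\Ind(\cZ) \subseteq \Ind(\cC)$ is generated by compact objects of $\Ind(\cC)$ (namely the objects of $\cZ$), so \Cref{ex.cpct.objs.gen.clsd.subcat} applies directly. In item (5), the sentence ``the right adjoint of $i_R$ is itself a right adjoint iff $i_R$ preserves colimits of compact objects'' is garbled; what you want is that the small-level $y$ admits a further right adjoint (making $\cZ$ closed) iff the large $i_R$ preserves compact objects, which---given that $i_R$ already preserves colimits from item (4)---is exactly the condition that $i_R$ is $\Ind$ of a small functor $\cZ \to \cC$. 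Neither slip affects the correctness of the overall argument.
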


\begin{notation}
\label{notation.for.thick.closure}
Given a set $\{ \cZ_s \in \thicksub_\cC \}_{s \in S}$ of thick subcategories of $\cC$, we write
\[
\brax{ \cZ_s }^\thick_{s \in S} \in \thicksub_\cC
\]
for the thick subcategory that they generate, i.e.\! the colimit of the functor $S \xra{\cZ_\bullet} \thicksub_\cC$.
\end{notation}

\begin{notation}
Given a thick subcategory $\cZ \in \thicksub_\cC$, we may write
\[
\cC /^{\St^\idem} \cZ
\in
\St^\idem
\]
for the idempotent-complete stable quotient of $\cC$ by $\cZ$, i.e.\! the cofiber of the inclusion in $\St^\idem$. However, we usually simply write
\[
\cC / \cZ
:=
\cC /^{\St^\idem} \cZ
\]
for this, which is unambiguous since we restrict our attention to the idempotent-complete context (recall \Cref{rmk.stable.always.idempotent.cplt.for.ease.of.language}) and due to \Cref{obs.prbl.stable.quotient.is.idem.cplt.stable.quotient}.
\end{notation}

\begin{remark}
Concretely, the idempotent-complete stable quotient of $\cC$ by a thick subcategory $\cZ \in \thicksub_\cC$ may be realized as the full subcategory
\[
\cC /^{\St^\idem} \cZ
\simeq
(\Ind(\cC)/\Ind(\cZ))^\omega \subseteq \Ind(\cC)/\Ind(\cZ)
\]
of compact objects of the corresponding presentable quotient.\footnote{By contrast, the stable quotient $\cC /^\St \cZ$ (i.e.\! the cofiber of the inclusion in $\St$) may be realized as the image of the composite
\[
\cC
\simeq
\Ind(\cC)^\omega
\longhookra
\Ind(\cC)
\xra{p_L}
\Ind(\cC)/\Ind(\cZ)
~;
\]
its idempotent-completion recovers $\cC /^{\St^\idem} \cZ$.} On the other hand, the idempotent-complete stable quotient of $\cC$ by a split subcategory $\cZ \in \splitsub_\cC$ may be realized more simply as $\ker(\cC \ra \cZ)$.\footnote{In particular, in this case the canonical morphism $\cC /^\St \cZ \ra \cC /^{\St^\idem} \cZ$ is an equivalence.}
\end{remark}

\begin{observation}
\label{obs.prbl.stable.quotient.is.idem.cplt.stable.quotient}
If $\cC$ is presentable and $\cZ \subseteq \cC$ is a full presentable stable subcategory, then the idempotent-complete stable quotient and the presentable quotient of $\cC$ by $\cZ$ coincide: the canonical morphism
\[
\cC /^{\St^\idem} \cZ
\longra
\cC / \cZ
\]
is an equivalence. Indeed, the presentable quotient satisfies the universal property of the stable quotient: given any stable $\infty$-category $\cD$ and any exact functor $\cC \xra{F} \cD$ such that $F i_L \simeq 0$, the morphism
\[
F
\longra
\nu p_L F
\]
is an equivalence (because for each $X \in \cC$ the cofiber sequence $i_L y X \ra X \ra \nu p_L X$ is carried by $F$ to a cofiber sequence). We use this fact without further comment.
\end{observation}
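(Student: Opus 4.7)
The plan is to show that the presentable quotient $\cC/\cZ$ satisfies the universal property of the idempotent-complete stable quotient $\cC/^{\St^\idem}\cZ$. Recall that $\cC/^{\St^\idem}\cZ$ is defined as the cofiber of the inclusion $\cZ \hookrightarrow \cC$ computed in $\St^\idem$, and so is characterized by the following universal property: for any idempotent-complete stable $\infty$-category $\cD$ and any exact functor $F\colon \cC \to \cD$ with $F \circ i_L \simeq 0$, there is an essentially unique exact factorization through $\cC/^{\St^\idem}\cZ$. Since $\cC/\cZ$ is presentable and hence in particular idempotent-complete stable, the composite $\cC \xra{p_L} \cC/\cZ$ annihilates $i_L(\cZ) \subseteq \cC$ and therefore induces a canonical comparison functor $\cC/^{\St^\idem}\cZ \to \cC/\cZ$, and it suffices to verify that $\cC/\cZ$ also enjoys this same universal property.

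First I would construct the factorization. Given an idempotent-complete stable $\infty$-category $\cD$ and an exact functor $F \colon \cC \to \cD$ with $F \circ i_L \simeq 0$, set $\tilde F := F \circ \nu \colon \cC/\cZ \to \cD$, which is exact since $\nu$ is exact (as the right adjoint of an exact functor between stable $\infty$-categories). The unit morphism $\eta \colon \id_\cC \to \nu p_L$ induces a natural transformation $F \to F \nu p_L = \tilde F \circ p_L$; evaluated at an object $X \in \cC$, this is the third morphism in the cofiber sequence $F(i_L y X) \to F(X) \to F(\nu p_L X)$ obtained from the co/unit co/fiber sequence $i_L y X \to X \to \nu p_L X$. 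Since $F \circ i_L \simeq 0$, the first term is zero, so the map $F \to \tilde F \circ p_L$ is an equivalence.

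Next I would verify uniqueness. Suppose $\bar F \colon \cC/\cZ \to \cD$ is another exact functor with $\bar F \circ p_L \simeq F$. Composing with the fully faithful inclusion $\nu$ gives $\bar F \simeq \bar F \circ p_L \circ \nu \simeq F \circ \nu = \tilde F$, using that $p_L \circ \nu \simeq \id_{\cC/\cZ}$. Hence the factorization is essentially unique, which establishes the universal property and so yields the desired equivalence $\cC/^{\St^\idem}\cZ \xrightarrow{\sim} \cC/\cZ$.

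The argument is essentially formal once one sets up the universal properties correctly, so no step stands out as genuinely hard; the only point requiring a moment's care is that the test category $\cD$ in the universal property of $\cC/^{\St^\idem}\cZ$ must be idempotent-complete, but since any presentable stable $\infty$-category (such as $\cC/\cZ$) is automatically idempotent-complete, this poses no obstacle. The key mechanism throughout is the co/recollement co/fiber sequence $i_L y X \to X \to \nu p_L X$ in $\cC$, which converts the hypothesis $F \circ i_L \simeq 0$ directly into an equivalence $F \simeq F \nu p_L$.
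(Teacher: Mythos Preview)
Your proposal is correct and follows essentially the same approach as the paper: the key mechanism in both is the cofiber sequence $i_L y X \to X \to \nu p_L X$, which under the hypothesis $F i_L \simeq 0$ forces $F \simeq F\nu p_L$. The paper's argument (embedded in the ``Indeed, \ldots'' clause) is terser and phrases this as verifying the universal property of the stable quotient $\cC/^\St\cZ$ for arbitrary stable $\cD$, whereas you spell out existence and uniqueness separately and restrict to idempotent-complete $\cD$; these are equivalent since $\cC/\cZ$ is presentable and hence idempotent-complete.
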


\begin{observation}
The functor
\[
\St^\idem
\xra{\Ind}
\PrLSt
\]
preserves colimits. In particular, given a thick subcategory $\cZ \in \thicksub_\cC$ we have an equivalence
\[
\Ind( \cC / \cZ)
\simeq
\Ind(\cC) / \Ind(\cZ)
~.
\]
We use this fact without further comment.
\end{observation}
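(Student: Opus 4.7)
The plan is to reduce the observation to standard facts about ind-completion already recorded in the excerpt. Recall that $\Ind$ factors as
\[
\St^\idem \xra[\sim]{\Ind} \PrLomegaSt \longhookra \PrLSt ,
\]
with the first arrow an equivalence. Since an equivalence preserves all colimits, what remains is to verify that the inclusion $\PrLomegaSt \hookra \PrLSt$ preserves colimits. The standard way to do this is to transport both sides through passage to right adjoints: colimits in $\PrLSt$ correspond to limits in $\PrRSt$ that are simply computed in $\Cat$, and morphisms in $\PrLomegaSt$ correspond to morphisms in $\PrRSt$ whose right adjoints preserve filtered colimits. One then checks by hand, using standard results on compactly generated categories, that the resulting limit is again compactly generated with compact objects detected pointwise. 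The first clause follows.

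For the second clause, I would apply the preservation of colimits to the cofiber sequence $\cZ \hookra \cC \ra \cC /^{\St^\idem} \cZ$ in $\St^\idem$. This yields a cofiber sequence
\[
\Ind(\cZ) \longhookra \Ind(\cC) \longra \Ind( \cC /^{\St^\idem} \cZ )
\]
in $\PrLSt$. Since $\cZ$ is thick, $\Ind(\cZ) \in \Cls_{\Ind(\cC)}$ is a closed subcategory (as noted in the excerpt just before our statement), so the cofiber in $\PrLSt$ coincides with the presentable quotient $\Ind(\cC)/\Ind(\cZ)$. Comparing gives the desired equivalence
\[
\Ind ( \cC /^{\St^\idem} \cZ ) \simeq \Ind(\cC)/\Ind(\cZ) .
\]

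Alternatively, and more directly, one can avoid any appeal to the general colimit-preservation statement and argue as follows using only the concrete description of $\cC /^{\St^\idem} \cZ$ already given: the presentable quotient $\Ind(\cC)/\Ind(\cZ)$ is compactly generated, because its inclusion $\nu$ into $\Ind(\cC)$ preserves colimits (by closedness of $\Ind(\cZ)$), hence its left adjoint $p_L$ preserves compact objects, so the images of the compact generators of $\Ind(\cC)$ compactly generate the quotient. Therefore
\[
\Ind(\cC)/\Ind(\cZ) \simeq \Ind ( ( \Ind(\cC)/\Ind(\cZ) )^\omega ) =: \Ind ( \cC /^{\St^\idem} \cZ ) ,
\]
matching the definition recalled just above.

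The only mild obstacle is the first clause in full generality, since arbitrary colimits in $\PrLSt$ can be unwieldy; but for every use of this observation later in the paper, it suffices to know preservation of pushouts and (hence) cofibers, which is immediate from the cofiber argument above.
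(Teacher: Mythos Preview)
The paper states this observation without proof; it is simply asserted as a known fact and used freely thereafter. Your argument is a correct way to supply the missing details.

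Your alternative argument for the second clause is particularly clean and matches the paper's implicit reasoning: since $\Ind(\cZ) \in \Cls_{\Ind(\cC)}$ is closed, the recollement structure gives that $\nu$ preserves colimits, hence $p_L$ preserves compact objects, hence $\Ind(\cC)/\Ind(\cZ)$ is compactly generated; then the concrete description $\cC /^{\St^\idem} \cZ \simeq (\Ind(\cC)/\Ind(\cZ))^\omega$ (recorded in the remark just above the observation) yields the equivalence immediately. This is self-contained and uses only material already in the paper.

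For the first clause in full generality, your sketch via $\PrLomegaSt \hookra \PrLSt$ and passage to right adjoints is the standard route, but the verification that limits in $\PrRSt$ of diagrams with filtered-colimit-preserving transition maps remain compactly generated is a genuine result (essentially \cite[Lemma 5.3.2.9 and Proposition 5.5.7.6]{LurieHTT}) rather than a quick check. You are right that the paper only ever uses the cofiber case, for which your direct argument suffices.
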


\begin{observation}
The inclusion of a closed subcategory $\cZ \in \clssub_\cC$ extends to a stable recollement
\begin{equation}
\label{stable.recollement.from.closed.subcat}
\begin{tikzcd}[column sep=1.5cm]
\cZ
\arrow[hook, bend left=45]{r}[description]{i_L}
\arrow[leftarrow]{r}[transform canvas={yshift=0.1cm}]{\bot}[swap,transform canvas={yshift=-0.1cm}]{\bot}[description]{\yo}
\arrow[bend right=45, hook]{r}[description]{i_R}
&
\cC
\arrow[bend left=45]{r}[description]{p_L}
\arrow[hookleftarrow]{r}[transform canvas={yshift=0.1cm}]{\bot}[swap,transform canvas={yshift=-0.1cm}]{\bot}[description]{\nu}
\arrow[bend right=45]{r}[description]{p_R}
&
\cC
&[-1.8cm]
/ \cZ
\end{tikzcd}
~.
\end{equation}
We use this fact without further comment.
\end{observation}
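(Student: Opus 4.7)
The natural strategy is to pass to ind-completions, apply the already-established presentable version, and then verify that all six functors restrict to compact objects. Concretely, since $\cZ \in \clssub_\cC$, the criterion recalled above places $\Ind(\cZ)$ in $\Cls_{\Ind(\cC)}$ in such a way that the inclusion $\Ind(\cZ) \xhookra{i_R} \Ind(\cC)$ preserves both colimits and compact objects. Hence \Cref{obs.clsd.subcat.gives.recollement} applies, supplying a stable recollement
\[ \begin{tikzcd}[column sep=1.5cm]
\Ind(\cZ)
\arrow[hook, bend left=45]{r}[description]{i_L}
\arrow[leftarrow]{r}[transform canvas={yshift=0.1cm}]{\bot}[swap,transform canvas={yshift=-0.1cm}]{\bot}[description]{\yo}
\arrow[bend right=45, hook]{r}[description]{i_R}
&
\Ind(\cC)
\arrow[bend left=45]{r}[description]{p_L}
\arrow[hookleftarrow]{r}[transform canvas={yshift=0.1cm}]{\bot}[swap,transform canvas={yshift=-0.1cm}]{\bot}[description]{\nu}
\arrow[bend right=45]{r}[description]{p_R}
&
\Ind(\cC)/\Ind(\cZ)
\end{tikzcd}
\]
with the usual kernel/image relations.

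My next step is to verify that each of the six functors preserves compact objects, so that the recollement restricts to subcategories of compact objects. For the left triple, $i_L$ preserves compacts because its right adjoint $y$ preserves colimits (as $y$ itself admits the further right adjoint $i_R$); $y$ preserves compacts because $i_R$ preserves colimits; and $i_R$ preserves compacts by the defining property of $\cZ \in \clssub_\cC$. For the right triple, $p_L$ preserves compacts since its right adjoint $\nu$ preserves colimits (as the inclusion of the closed subcategory $\Ind(\cZ)^\perp$). For $\nu$ and $p_R$, I will use the stable-recollement identities $\nu p_L \simeq \cofib(i_L y \to \id)$ and $\nu p_R \simeq \fib(\id \to i_R y)$: applied to a compact $K \in \cC$, both are finite co/limits of objects obtained from $K$ via the compact-preserving functors $i_L, y, i_R$, hence compact. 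Since compact objects of the quotient are (retracts of) $p_L$ applied to compacts of $\cC$, this shows that $\nu$ preserves compacts; thereafter, the fully faithfulness of $\nu$ together with the compactness of each $\nu p_R K$ shows $p_R$ does as well.

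Restricting the six functors accordingly yields a stable recollement with middle term $\cC$, left term $\Ind(\cZ)^\omega \simeq \cZ$, and right term
\[
(\Ind(\cC)/\Ind(\cZ))^\omega \simeq \cC /^{\St^\idem} \cZ
\]
by the very definition of the idempotent-complete stable quotient. The three equalities $\im(i_L)=\ker(p_L)$, $\im(\nu)=\ker(y)$, $\im(i_R)=\ker(p_R)$ among full subcategories of $\cC$ follow by intersecting the corresponding equalities in $\Ind(\cC)$ with the subcategory of compact objects. The only mildly subtle point is the compact-preservation of $\nu$ and $p_R$, for which the stable-recollement formulas for $\nu p_L$ and $\nu p_R$ are the clean input; everything else is routine.
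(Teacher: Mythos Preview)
Your argument is correct, but it takes a longer detour than necessary. The paper states this as an observation with no proof, and the reason is that the presentable argument of \Cref{obs.clsd.subcat.gives.recollement} goes through verbatim in the small stable setting: since $\cZ$ is split, the remark immediately preceding this observation identifies $\cC/^{\St^\idem}\cZ$ with $\ker(y) \subseteq \cC$, and then the formulas $\nu p_L \simeq \cofib(i_L y \to \id)$ and $\nu p_R \simeq \fib(\id \to i_R y)$ use only stability (not presentability or the adjoint functor theorem) to produce the remaining adjoints and verify the three kernel/image equalities. Your route through ind-completion and then checking compact-preservation of all six functors recovers the same recollement, and the checks you make are all sound; it just imports more machinery than the one-line direct construction requires. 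The main thing your approach buys is an explicit identification of the small recollement as the compact objects of the large one, which is conceptually pleasant but not needed here.
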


\subsection{Stratifications of stable $\infty$-categories}
\label{subsection.stable.stratns}

In this subsection, we extend our theory of stratifications to the case of idempotent-complete stable $\infty$-categories that are not necessarily presentable; we refer to these as \textit{stable stratifications}. We establish metacosm reconstruction for stable stratifications over finite posets as \Cref{thm.stable.metacosm}. We state this result as quickly as possible; much of the rest of the subsection is devoted to its proof. Although we define stable stratifications in terms of stratifications of ind-completions, we also characterize them in a way that does not make reference to ind-completions as \Cref{prop.characterize.stable.stratns}.

\begin{definition}
\label{defn.stable.stratn}
A \bit{stable stratification} of $\cC$ over $\pos$ is a functor
\[ 
\begin{tikzcd}[row sep=0cm]
\pos
\arrow{r}{\cZ_\bullet}
&
\clssub_\cC
\\
\rotatebox{90}{$\in$}
&
\rotatebox{90}{$\in$}
\\
p
\arrow[maps to]{r}
&
\cZ_p
\end{tikzcd}
\]
such that the composite functor
\[
\begin{tikzcd}[row sep=0cm]
\pos
\arrow{r}{\cZ_\bullet}
&
\clssub_\cC
\arrow{r}{\Ind}
&
\Cls_{\Ind(\cC)}
\\
\rotatebox{90}{$\in$}
&
&
\rotatebox{90}{$\in$}
\\
p
\arrow[maps to]{rr}
&
&
\Ind(\cZ_p)
\end{tikzcd}
\]
is a stratification. In this situation, we may also say that $\cC$ is \bit{stably $\pos$-stratified}.
\end{definition}

\begin{definition}
We define the $\infty$-category
\[
\strat_\pos
\]
of \bit{stably $\pos$-stratified idempotent-complete stable $\infty$-categories} analogously to the $\infty$-category $\Strat_\pos$ of \Cref{defn.Strat.P}: its objects are stably $\pos$-stratified idempotent-complete stable $\infty$-categories, and its morphisms are those exact functors that commute with both the $i_L$ inclusions and the $y$ projections.
\end{definition}

\begin{observation}
\label{obs.image.of.strat.in.Strat}
Ind-completion defines a faithful functor
\[
\strat_\pos
\xlonghookra{\Ind}
\Strat_\pos
~.
\]
Explicitly, an object $\cX \in \Strat_\pos$ is in its image precisely when its underlying presentable stable $\infty$-category $\cX \in \PrLSt$ is compactly generated and moreover there exists a factorization
\[ \begin{tikzcd}
\pos
\arrow{r}
\arrow[dashed]{rd}
&
\Cls_\cX
\\
&
\clssub_{\cX^\omega}
\arrow[hook]{u}[swap]{\Ind}
\end{tikzcd}
\]
of its defining functor, and a morphism $\cX \ra \cX'$ in $\Strat_\pos$ between objects in its image lies in its image precisely when its underlying morphism $\cX \ra \cX'$ in $\PrLSt$ preserves compact objects (i.e.\! lies in the subcategory $\PrLomegaSt \subseteq \PrLSt$).
\end{observation}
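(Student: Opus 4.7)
The plan is to deduce this from the well-known fact that $\Ind \colon \St^\idem \hookrightarrow \PrLSt$ is a fully faithful functor whose essential image is $\PrLomegaSt$, the subcategory of compactly generated presentable stable $\infty$-categories and compact-preserving morphisms between them. The functor $\Ind \colon \strat_\pos \to \Strat_\pos$ is constructed directly: an object $(\cC,\cZ_\bullet)$ of $\strat_\pos$ yields the stratification $p \mapsto \Ind(\cZ_p)$ of $\Ind(\cC)$ by definition, and a morphism $\cC \to \cC'$ in $\strat_\pos$ (exact, commuting with $i_L$ and $y$) extends via $\Ind$ to a compact-preserving colimit-preserving functor $\Ind(\cC) \to \Ind(\cC')$ whose commutativity with $i_L$ and $y$ is inherited from the source, since $\Ind$ preserves both compositions and right adjoints of compact-preserving left adjoints.

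Faithfulness is immediate: the forgetful functor $\Strat_\pos \to \PrLSt$ is conservative (\Cref{obs.forget.from.Strat.to.PrL.conservative}) and similarly $\strat_\pos \to \St^\idem$ is faithful by construction, so faithfulness of $\strat_\pos \hookrightarrow \Strat_\pos$ reduces to faithfulness of $\Ind \colon \St^\idem \hookrightarrow \PrLSt$, which holds since $\Ind$ is fully faithful.

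For the characterization of the essential image on objects, suppose $\cX \in \Strat_\pos$ is compactly generated with stratification factoring as $\pos \to \clssub_{\cX^\omega} \xhookra{\Ind} \Cls_\cX$. Then setting $\cC := \cX^\omega \in \St^\idem$ and $\cZ_p := (\cX^\omega \cap \cX_p) \in \clssub_\cC$, we obtain a candidate stable stratification $\pos \to \clssub_\cC$. By hypothesis each $\cX_p$ is compactly generated with compact objects $\cZ_p$, so $\Ind(\cZ_p) \simeq \cX_p$ as closed subcategories of $\cX = \Ind(\cC)$, showing that this candidate is indeed a stable stratification and its image under $\Ind$ recovers $\cX \in \Strat_\pos$. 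Conversely, any $\cX$ in the image of $\Ind$ clearly has these properties.

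For morphisms, a morphism $\cC \to \cC'$ in $\strat_\pos$ gives rise via $\Ind$ to a morphism in $\PrLomegaSt \subseteq \PrLSt$, i.e.\! one preserving compact objects; conversely, given $\cX, \cX'$ in the image of $\Ind$ and a morphism $\cX \to \cX'$ in $\Strat_\pos$ whose underlying functor preserves compact objects, restricting to compact objects yields an exact functor $\cX^\omega \to (\cX')^\omega$, and its commutativity with $i_L$ and $y$ on compact objects follows by restricting the corresponding diagrams from $\Strat_\pos$ (using that all relevant functors preserve compact objects). The main (minor) subtlety will be checking that the factorization of the defining functor through $\clssub_{\cX^\omega}$ is compatible with morphisms, i.e.\! that $i_L$ and $y$ for the restricted stratification on $\cX^\omega$ are indeed the restrictions of the ambient $i_L$ and $y$; this follows because $i_L$ is compact-preserving by the factorization hypothesis, and its right adjoint $y$ is compact-preserving because the further right adjoint $i_R$ is colimit-preserving (as $\cZ_p \in \Cls_\cX$).
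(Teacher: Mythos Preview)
The paper states this as an Observation without proof, so there is no paper proof to compare against. Your argument is essentially correct and supplies the routine verification that the paper omits.

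One small correction: in your final paragraph, you justify that $y$ preserves compact objects by saying ``the further right adjoint $i_R$ is colimit-preserving (as $\cZ_p \in \Cls_\cX$)''. Membership in $\Cls_\cX$ alone only guarantees that $i_R$ exists, not that it preserves colimits. What actually ensures this is the factorization hypothesis through $\clssub_{\cX^\omega}$: the closed subcategory is compact-closed, so the small-level $i_R$ exists and the big-level $i_R$ is $\Ind$ of it, hence colimit-preserving. Since you are already working under that hypothesis, the conclusion stands, but the parenthetical should invoke it rather than mere membership in $\Cls_\cX$.

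Also a minor notational point: you write $\cX_p$ for the closed subcategory indexed by $p$, but in the paper's conventions $\cX_p$ denotes the $p$th stratum (the quotient $\cZ_p / \cZ_{^<p}$); the closed subcategory itself is $\cZ_p$.
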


\begin{theorem}
\label{thm.stable.metacosm}
Assume that $\pos$ is finite. Then, the metacosm adjunction \Cref{adjn.of.metacosm.thm} restricts to an equivalence
\begin{equation}
\label{small.metacosm.equivalence}
\begin{tikzcd}[column sep=1.5cm]
\strat_\pos
\arrow[transform canvas={yshift=0.9ex}]{r}{\GD}
\arrow[leftarrow, transform canvas={yshift=-0.9ex}]{r}[yshift=-0.0ex]{\sim}[swap]{\limrlaxfam}
&
\LMod^\rlax_{\llax.\pos}(\St^\idem)
\end{tikzcd}
~.
\end{equation}
\end{theorem}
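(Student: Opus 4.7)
The plan is to deduce the asserted equivalence from the presentable metacosm equivalence of Theorem \ref{metacosm.thm} by restricting along two fully faithful inclusions given by ind-completion. Since $\pos$ is finite it is in particular down-finite, so Theorem \ref{metacosm.thm} already supplies an equivalence $\GD : \Strat_\pos \xrightarrow{\sim} \LMod^{\rlax,L}_{\llax.\pos}(\PrSt)$ with inverse $\limrlaxfam$. On the one hand, by Observation \ref{obs.image.of.strat.in.Strat} fiberwise ind-completion defines a fully faithful functor $\Ind : \strat_\pos \hookra \Strat_\pos$. On the other hand, applying $\Ind$ to fibers and to monodromy functors (using that an exact functor between idempotent-complete stable $\infty$-categories induces a colimit- and compact-object-preserving functor on ind-completions) defines a fully faithful functor
\[
\Ind_* : \LMod^\rlax_{\llax.\pos}(\St^\idem) \longhookra \LMod^{\rlax,L}_{\llax.\pos}(\PrSt).
\]
It suffices to show that the presentable metacosm equivalence intertwines these two inclusions, for then the restricted functors form the claimed equivalence \eqref{small.metacosm.equivalence} automatically.

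For the gluing-diagram functor $\GD$: given a stable stratification $\cC \in \strat_\pos$, I would verify that each stratum $\Ind(\cC)_p := \Ind(\cZ_p)/\Ind(\cZ_{^<p})$ is canonically equivalent to $\Ind(\cC_p) := \Ind(\cZ_p /^{\St^\idem} \cZ_{^<p})$, since $\Ind$ preserves cofiber sequences of idempotent-complete stable $\infty$-categories (equivalently, preserves presentable quotients by compactly generated closed subcategories). Thereafter, the gluing functors $\Gamma^p_q = \Phi_q \rho^p$ of the ind-completed stratification preserve compact objects, because they factor as ind-completions of the gluing functors on $\cC_\bullet$. This yields the factorization
\[
\begin{tikzcd}
\strat_\pos \arrow[r, "\Ind", hook] \arrow[d, dashed, "\GD"'] & \Strat_\pos \arrow[d, "\GD"] \\
\LMod^\rlax_{\llax.\pos}(\St^\idem) \arrow[r, "\Ind_*", hook] & \LMod^{\rlax,L}_{\llax.\pos}(\PrSt).
\end{tikzcd}
\]

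For the right-lax limit functor $\limrlaxfam$: given $\mathscr{F} \in \LMod^\rlax_{\llax.\pos}(\St^\idem)$, the key claim is that the presentable right-lax limit $\lim^\rlax_{\llax.\pos}(\Ind_*\mathscr{F}) \in \PrLSt$ is compactly generated, with subcategory of compact objects canonically equivalent to a stable stratification on the corresponding right-lax limit of $\mathscr{F}$ computed directly in $\St^\idem$. Using the strictification result Lemma \ref{lem.strictification}, this right-lax limit identifies with the strict limit indexed by $\sd(\pos)$ of a certain diagram in $\Cat$. Because $\pos$ is finite, $\sd(\pos)$ is finite as well, so we are computing a finite limit; and finite limits in $\PrLSt$ whose structure morphisms lie in $\PrLomegaSt$ preserve compact generation, with passage to compact objects computing the corresponding limit in $\St^\idem$. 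Equivalently, $\Ind$ commutes with these finite limits, which gives the desired intertwining for $\limrlaxfam$.

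The main obstacle will be the right-lax-limit direction, specifically verifying that projections out of the right-lax limit preserve compact objects when $\pos$ is finite. This is precisely where the finiteness (not merely down-finiteness) of $\pos$ becomes essential: in the inductive construction of Lemma \ref{prop.metacosm.input.first.get.stratn.for.brax.n} one must check at each step of a recollement-style gluing that compact objects are preserved, and for a finite $\pos$ this reduces to a finite iteration in which the hypotheses on $\mathscr{F} \in \LMod^\rlax_{\llax.\pos}(\St^\idem)$ propagate. Once this is established, the unit and counit of the presentable metacosm adjunction restrict to equivalences at the stable level, completing the proof.
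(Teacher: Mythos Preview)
Your overall strategy matches the paper's: since $\pos$ is finite (hence down-finite), the presentable metacosm equivalence holds, and it suffices to show each adjoint restricts along the respective ind-completion inclusion. The $\GD$ direction is essentially as in the paper, but note that writing the gluing functors of $\Ind(\cC)$ as ind-completions of small gluing functors on $\cC$ already presupposes that each $\cZ_{^<p} := \brax{\cZ_q}^\thick_{q<p}$ is a \emph{closed} (not merely thick) subcategory of $\cC$, so that the recollement defining $\cC_p$ exists at the small level. This is the content of Corollary~\ref{cor.union.over.finite.down.closed.gives.small.closed} (via Lemma~\ref{lem.excision}), which the paper invokes explicitly.

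For the $\limrlaxfam$ direction, your route via Lemma~\ref{lem.strictification} and the slogan ``$\Ind$ commutes with finite limits of diagrams in $\PrLomegaSt$'' is different from the paper's and does plausibly establish that $\cX := \lim^\rlax_{\llax.\pos}(\Ind_*\cE)$ is compactly generated (the paper instead argues this by induction on $|\pos|$, removing a minimal element). However, compact generation is not the whole story: by Observation~\ref{obs.image.of.strat.in.Strat}, one must further show that each closed subcategory $\cZ_p \in \Cls_\cX$ is \emph{compact-closed}, i.e.\ that the functor $i_R : \cZ_p \hookrightarrow \cX$ preserves compact objects. Your final paragraph flags ``projections'' preserving compacts as the main obstacle, but that addresses $\Phi_p$, not $i_R$; the paper shows that compact-preservation of $\Phi_p$ is comparatively easy (it follows once $\rho^p$ is seen to preserve colimits).

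The genuine obstacle, and the bulk of the paper's proof, is compact-preservation of $i_R$. The paper first characterizes compact objects of $\cX$ as those with compact $\Phi_p$-image for all $p$ (via nanocosm reconstruction and finiteness of $\sd(\pos)$), then uses microcosm reconstruction for $\cZ_p$ over the finite poset $(^\leq p)$ to express any object as a finite $\sd(^\leq p)$-limit of objects in the images of the various $\w{\rho}^r$, and finally uses the identity $i_R \circ \w{\rho}^r = \rho^r$ together with compact-preservation of each $\Gamma^r_q$ to conclude. Your finite-limits principle does not supply this: it identifies $\cX^\omega$ and $\cZ_p^\omega$ with the small right-lax limits, and $i_L^\omega$, $y^\omega$ with extension-by-zero and restriction, but gives no direct reason why $y^\omega$ admits a right adjoint in $\St^\idem$.
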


\begin{definition}
\label{defn.ind.aligned}
We say that two thick subcategories $\cY,\cZ \in \thicksub_\cC$ are (resp.\! \bit{mutually}) \bit{aligned} if the two closed subcategories $\Ind(\cY),\Ind(\cZ) \in \Cls_{\Ind(\cC)}$ are (resp.\! mutually) aligned.
\end{definition}

\begin{remark}
In the case that $\cC$ is presentable and $\cY, \cZ \in \Cls_\cC \simeq \clssub_\cC \subseteq \thicksub_\cC$, it is not hard to see that \Cref{defn.ind.aligned} coincides with \Cref{defn.aligned}.
\end{remark}

\begin{definition}
We respectively say that a closed subcategory of $\Ind(\cC)$ is \bit{compact-thick}, \bit{compact-split}, or \bit{compact-closed} if it is the ind-completion of a thick, split, or closed subcategory of $\cC$.
\end{definition}

\begin{lemma}
\label{lem.ind.aligned.implies.thick.union.closed}
Let $\cY,\cZ \in \clssub_\cC$ be closed subcategories, and suppose that $\cZ$ is aligned with $\cY$. Then, the thick subcategory $\brax{\cY,\cZ}^\thick \subseteq \cC$ generated by $\cY$ and $\cZ$ is a closed subcategory.
\end{lemma}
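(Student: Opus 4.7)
The goal is to show that $\cW := \brax{\cY, \cZ}^\thick \subseteq \cC$ lies in $\clssub_\cC$. Passing to ind-completions identifies $\Ind(\cW) \cong \w\cW := \brax{\Ind(\cY), \Ind(\cZ)}$, which is a closed subcategory of $\Ind(\cC)$ by \Cref{obs.closed.subcats.closed.under.colimit.closure}; for $\cW$ to be closed in the small sense, the three adjoints $i_L^{\w\cW}$, $y^{\w\cW}$, $i_R^{\w\cW}$ of this closed recollement must each preserve compact objects so as to descend to $\St^\idem$. The inclusion $i_L^{\w\cW}$ preserves compact objects because its right adjoint $y^{\w\cW}$ preserves colimits (being in $\PrL$ as part of the closed recollement), and $y^{\w\cW}$ itself preserves compact objects because its right adjoint $i_R^{\w\cW}$ also preserves colimits. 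The delicate step is therefore preservation of compacts by $i_R^{\w\cW}$.

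For this, I would invoke the excision formula \Cref{lem.excision}\Cref{part.excision.loc}: since $\Ind(\cZ)$ is aligned with $\Ind(\cY)$, the localization endofunctor $L_{\w\cW} := i_R^{\w\cW} y^{\w\cW}$ on $\Ind(\cC)$ sits in a canonical cofiber sequence
\[
L_{\w\cW} \longra L_{\Ind(\cZ)} \longra \Sigma L_{\Ind(\cY)} C_{\Ind(\cC)/\Ind(\cZ)}
\]
of exact endofunctors. The functors $L_{\Ind(\cY)} = i_R^\cY y^\cY$ and $L_{\Ind(\cZ)} = i_R^\cZ y^\cZ$ preserve compact objects because the assumption $\cY, \cZ \in \clssub_\cC$ means that in each case both $y$ and $i_R$ come from ind-completions of functors in $\St^\idem$. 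Moreover the recollement cofiber sequence $\nu p_R \longra \id \longra i_R y$ identifies $C_{\Ind(\cC)/\Ind(\cZ)}(X) \simeq \fib(X \longra L_{\Ind(\cZ)}(X))$, which for $X \in \cC$ is a fiber in $\Ind(\cC)$ of a morphism between compact objects and hence itself lies in $\cC$ (since $\cC$ is thick). Taking fibers of natural transformations between compact-object-preserving functors preserves this property, so $L_{\w\cW}$ preserves compact objects.

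To upgrade this from $L_{\w\cW} = i_R^{\w\cW} y^{\w\cW}$ to $i_R^{\w\cW}$ alone, I would observe that $\w\cW$ is compactly generated by the set $\{y^{\w\cW}(X) : X \in \cC\}$: every $Y \in \w\cW$ equals $y^{\w\cW}(i_L^{\w\cW}(Y))$, and writing $i_L^{\w\cW}(Y) \in \Ind(\cC)$ as a filtered colimit of compact objects of $\cC$ exhibits $Y$ as such a filtered colimit of elements of this set. Consequently the compact objects of $\w\cW$ are the thick closure of $\{y^{\w\cW}(X)\}_{X \in \cC}$, and on a generator $W = y^{\w\cW}(X)$ we have $i_R^{\w\cW}(W) = L_{\w\cW}(X) \in \cC$ by the previous step. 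Since $i_R^{\w\cW}$ is exact and $\cC \subseteq \Ind(\cC)$ is thick, this propagates to every compact object of $\w\cW$. The main obstacle in the argument is precisely the excision step: the alignment hypothesis is exactly what lets us express $L_{\w\cW}$ explicitly in terms of functors whose compact-object-preservation can be checked directly, without which preservation by $i_R^{\w\cW}$ would not follow merely from the individual closedness of $\cY$ and $\cZ$.
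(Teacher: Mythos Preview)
Your proof is correct and follows the paper's strategy: use the excision formula of \Cref{lem.excision} to express the endofunctor $L_{\w\cW} = i_R^{\w\cW} y^{\w\cW}$ on $\Ind(\cC)$ in terms of functors already known to preserve colimits and compact objects (the paper cites part~\Cref{part.excision.coloc}, but your invocation of part~\Cref{part.excision.loc} is what directly treats $i_R y$). Two minor points: your early claim that $y^{\w\cW}$ preserves compacts because $i_R^{\w\cW}$ preserves colimits is premature (that colimit-preservation is part of what is being established, and only follows once you have the excision description of $L_{\w\cW}$); and the final passage from $L_{\w\cW}$ to $i_R^{\w\cW}$ is more directly achieved via $i_R^{\w\cW} \simeq L_{\w\cW} \, i_L^{\w\cW}$ (using $y^{\w\cW} i_L^{\w\cW} \simeq \id$), since $i_L^{\w\cW}$ always preserves colimits and compact objects.
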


\begin{proof}
Note the identification
\[
\Ind( \brax{\cY,\cZ}^\thick )
=
\brax{\Ind(\cY),\Ind(\cZ)}
\in
\Cls_{\Ind(\cC)}
~.
\]
Now, by \Cref{lem.excision}\Cref{part.excision.coloc} (and the fact that $\Ind(\cZ)$ is aligned with $\Ind(\cY)$), we have that the composite
\[
\Ind(\cC)
\xlongra{y}
\brax{\Ind(\cY),\Ind(\cZ)}
\xlonghookra{i_R}
\Ind(\cC)
\]
preserves colimits and compact objects, which proves the claim.
\end{proof}

\begin{cor}
\label{cor.union.over.finite.down.closed.gives.small.closed}
Fix a stable stratification $\cZ_\bullet$ of $\cC$ over $\pos$. For every finite down-closed subset $\sD \in \Down_\pos^\fin$, the thick subcategory
\[
\brax{\cZ_p}^\thick_{p \in \sD} \subseteq \cC
\]
generated by the corresponding closed subcategories is itself a closed subcategory.
\end{cor}

\begin{proof}
This follows by applying Lemmas \ref{lem.ind.aligned.implies.thick.union.closed} \and \ref{closed.subcats.are.mutually.aligned} inductively.
\end{proof}


\begin{proof}[Proof of \Cref{thm.stable.metacosm}]
Under the assumption that $\pos$ is finite (and hence down-finite), the metacosm adjunction \Cref{adjn.of.metacosm.thm} is an equivalence by \Cref{metacosm.thm}. It therefore suffices to prove that there exist factorizations
\begin{equation}
\label{factorizn.of.ladjt.for.small.reconstruction}
\begin{tikzcd}
\Strat_\pos
\arrow{r}{\GD}[swap]{\sim}
&
\LMod^{\rlax,L}_{\llax.\pos}(\PrSt)
\\
\strat_\pos
\arrow[hook]{u}{\Ind}
\arrow[dashed]{r}
&
\LMod^\rlax_{\llax.\pos}(\St^\idem)
\arrow[hook]{u}[swap]{\LMod^\rlax_{\llax.\pos}(\Ind)}
\end{tikzcd}
\end{equation}
and
\begin{equation}
\label{factorizn.of.radjt.for.small.reconstruction}
\begin{tikzcd}[column sep=1.5cm]
\Strat_\pos
&
\LMod^{\rlax,L}_{\llax.\pos}(\PrSt)
\arrow{l}{\sim}[swap]{\limrlaxfam}
\\
\strat_\pos
\arrow[hook]{u}{\Ind}
&
\LMod^\rlax_{\llax.\pos}(\St^\idem)
\arrow[hook]{u}[swap]{\LMod^\rlax_{\llax.\pos}(\Ind)}
\arrow[dashed]{l}
\end{tikzcd}
~.
\end{equation}

We first prove that factorization \Cref{factorizn.of.ladjt.for.small.reconstruction} exists.\footnote{In fact, for this factorization to exist it suffices that $\pos$ merely be down-finite.} Fix a stable stratification
\[
\pos \xra{\cZ_\bullet} \clssub_\cC
~,
\]
and consider the composite stratification
\[
\pos \xra{\cZ_\bullet} \clssub_\cC \xra{\Ind} \Cls_{\Ind(\cC)}
~.
\]
Because $\pos$ is finite, every $\sD \in \Down_\pos$ is finite. Hence, by \Cref{cor.union.over.finite.down.closed.gives.small.closed}, for every $\sD \in \Down_\pos$ the closed subcategory
\[
\Ind(\cZ_\bullet)_\sD
:=
\brax{\Ind(\cZ_p)}_{p \in \sD}
=
\Ind( \brax{ \cZ_p}^\thick_{p \in \sD} )
\in
\Cls_{\Ind(\cC)}
\]
is compact-closed. It follows that for every $p \in \pos$, all functors in the recollement
\[ \begin{tikzcd}
\Ind(\cZ_\bullet)_{^< p}
\arrow[hook, bend left=45]{r}[description]{i_L}
\arrow[leftarrow]{r}[transform canvas={yshift=0.1cm}]{\bot}[swap,transform canvas={yshift=-0.1cm}]{\bot}[description]{\yo}
\arrow[bend right=45, hook]{r}[description]{i_R}
&
\Ind(\cZ_\bullet)_p
\arrow[bend left=45]{r}[description]{p_L}
\arrow[hookleftarrow]{r}[transform canvas={yshift=0.1cm}]{\bot}[swap,transform canvas={yshift=-0.1cm}]{\bot}[description]{\nu}
\arrow[bend right=45]{r}[description]{p_R}
&
\Ind(\cC)_p
\end{tikzcd} \]
preserve colimits and compact objects, and hence all functors in the composite adjunction
\[
\begin{tikzcd}[column sep=1.5cm]
\Phi_p
:
\Ind(\cC)
\arrow[transform canvas={yshift=0.9ex}]{r}{y}
\arrow[hookleftarrow, transform canvas={yshift=-0.9ex}]{r}[yshift=-0.2ex]{\bot}[swap]{i_R}
&
\Ind(\cZ_\bullet)_p
\arrow[transform canvas={yshift=0.9ex}]{r}{p_L}
\arrow[hookleftarrow, transform canvas={yshift=-0.9ex}]{r}[yshift=-0.2ex]{\bot}[swap]{\nu}
&
\Ind(\cC)_p
:
\rho^p
\end{tikzcd}
\]
preserve colimits and compact objects. This implies that factorization \Cref{factorizn.of.ladjt.for.small.reconstruction} exists on objects, and thereafter it is straightforward to see that it exists on morphisms as well.

We now prove that factorization \Cref{factorizn.of.radjt.for.small.reconstruction} exists. To avoid unnecessary notation involving ind-completions, we simply begin with an object
\[
(\cE \da \pos) \in \LMod^\rlax_{\llax.\pos}(\PrLomegaSt)
~,
\]
and prove that its image under the composite
\[ \begin{tikzcd}[column sep=1.5cm]
\Strat_\pos
&
\LMod^\rlax_{\llax.\pos}(\PrSt)
\arrow{l}{\sim}[swap]{\limrlaxfam}
\\
&
\LMod^{\rlax,L}_{\llax.\pos}(\PrLomegaSt)
\arrow[hook]{u}
\end{tikzcd} \]
lies in the image of the inclusion
\[
\strat_\pos
\xlonghookra{\Ind}
\Strat_\pos
~,
\]
as described in \Cref{obs.image.of.strat.in.Strat}. To further simplify our notation, we write
\[
\cX
:=
\lim^\rlax_{\llax.\pos}(\cE)
~,
\]
and for any $p \in \pos$ and any $\sC \in \Conv_\pos$ we write
\[
\cZ_p
:=
\lim^\rlax_{\llax.(^\leq p)}(\cE)
\qquad
\text{and}
\qquad
\cX_\sC
:=
\lim^\rlax_{\llax.\sC}(\cE)
~,
\]
as justified by \Cref{prop.metacosm.input.first.get.stratn}\Cref{metacosm.input.get.stratn}; in particular, we have $\cX_p = \cE_p$.

As a preliminary observation, we note that for every $p,q \in \pos$ the composite functor
\begin{equation}
\label{p.q.gluing.functor.for.stratn.of.rlaxlimfam.from.PrLomegaSt}
\cX_p
\xlonghookra{\rho^p}
\cX
\xra{\Phi_q}
\cX_q
\end{equation}
is the monodromy functor $\cE_p \xra{\cE_{p \leq q}} \cE_q$ if $p \leq q$ (by \Cref{prop.metacosm.input.first.get.stratn}\Cref{metacosm.input.get.stratn}\Cref{gluing.in.X.is.mdrmy.in.E}) and is zero if $p \not\leq q$; in particular, in either case it preserves colimits and compact objects.

Now, fix any $p \in \pos$. The functor
\[
\cX \xra{(\Phi_q)_{q \in \pos}} \prod_{q \in \pos} \cX_q
\]
is conservative (because $\pos$ is finite) and preserves colimits. Combining this with the fact that for all $q \in \pos$ the composite functor \Cref{p.q.gluing.functor.for.stratn.of.rlaxlimfam.from.PrLomegaSt} preserves colimits, it follows that in the adjunction
\[ \begin{tikzcd}[column sep=1.5cm]
\cX
\arrow[transform canvas={yshift=0.9ex}]{r}{\Phi_p}
\arrow[hookleftarrow, transform canvas={yshift=-0.9ex}]{r}[yshift=-0.2ex]{\bot}[swap]{\rho^p}
&
\cX_p
\end{tikzcd} \]
the right adjoint $\rho^p$ preserves colimits, which implies that the left adjoint $\Phi_p$ preserves compact objects.

We now claim that the converse also holds: if an object $X \in \cX$ satisfies the condition that $\Phi_p(X) \in \cX_p$ is compact for all $p \in \pos$, then it is compact. To see this, for any filtered diagram $\cI \xra{Y_\bullet} \cX$ we compute that
\begin{align}
\label{show.Phi.p.jointly.detect.compactness.use.nanocosm.first.time}
\ulhom_\cX(X , \colim_\cI(Y_\bullet))
& \simeq
\lim_{([n] \xra{\varphi} \pos) \in \sd(\pos)}
\left(
\ulhom_{\cX_{\varphi(n)}} ( \Phi_{\varphi(n)}(X) , \Gamma_\varphi \Phi_{\varphi(0)}(\colim_\cI(Y_\bullet)))
\right)
\\
\label{show.Phi.p.jointly.detect.compactness.use.that.gluing.functors.preserve.colimits}
& \simeq
\lim_{([n] \xra{\varphi} \pos) \in \sd(\pos)}
\left(
\ulhom_{\cX_{\varphi(n)}} ( \Phi_{\varphi(n)}(X) , \colim_\cI (\Gamma_\varphi \Phi_{\varphi(0)}(Y_\bullet)))
\right)
\\
\label{show.Phi.p.jointly.detect.compactness.use.that.Phi.p.X.is.compact}
& \simeq
\lim_{([n] \xra{\varphi} \pos) \in \sd(\pos)}
\left(
\colim_\cI
\left(
\ulhom_{\cX_{\varphi(n)}} ( \Phi_{\varphi(n)}(X) , \Gamma_\varphi \Phi_{\varphi(0)}(Y_\bullet))
\right)
\right)
\\
\label{show.Phi.p.jointly.detect.compactness.use.that.I.is.filtered.and.sd.P.is.finite}
& \simeq
\colim_\cI
\left(
\lim_{([n] \xra{\varphi} \pos) \in \sd(\pos)}
\left(
\ulhom_{\cX_{\varphi(n)}} ( \Phi_{\varphi(n)}(X) , \Gamma_\varphi \Phi_{\varphi(0)}(Y_\bullet))
\right)
\right)
\\
\label{show.Phi.p.jointly.detect.compactness.use.nanocosm.second.time}
& \simeq
\colim_\cI
\left(
\ulhom_\cX ( X , Y_\bullet)
\right)
~,
\end{align}
where
\begin{itemize}

\item equivalences \Cref{show.Phi.p.jointly.detect.compactness.use.nanocosm.first.time} \and \Cref{show.Phi.p.jointly.detect.compactness.use.nanocosm.second.time} use nanocosm reconstruction (recall \Cref{rmk.nanocosm}),

\item equivalence \Cref{show.Phi.p.jointly.detect.compactness.use.that.gluing.functors.preserve.colimits} follows from the fact that the composite
\[
\Gamma_\varphi \Phi_{\varphi(0)}
:=
\Gamma^{\varphi(n-1)}_{\varphi(n)} \cdots \Gamma^{\varphi(0)}_{\varphi(1)} \Phi_{\varphi(0)}
:=
\Phi_{\varphi(n)} \rho^{\varphi(n-1)} \cdots \Phi_{\varphi(1)} \rho^{\varphi(0)} \Phi_{\varphi(0)}
\]
preserves colimits,

\item equivalence \Cref{show.Phi.p.jointly.detect.compactness.use.that.Phi.p.X.is.compact} uses the assumption that $\Phi_p(X) \in \cX_p$ is compact for all $p \in \pos$, and

\item equivalence \Cref{show.Phi.p.jointly.detect.compactness.use.that.I.is.filtered.and.sd.P.is.finite} uses the facts that $\cI$ is filtered and $\sd(\pos)$ is finite (because $\pos$ is finite).

\end{itemize}
So in fact, an object $X \in \cX$ is compact if and only if the object $\Phi_p(X) \in \cX_p$ is compact for all $p \in \pos$. It now follows that for every $p \in \pos$ the functor $\rho^p$ preserves compact objects, because for every $q \in \pos$ the functor \Cref{p.q.gluing.functor.for.stratn.of.rlaxlimfam.from.PrLomegaSt} preserves compact objects.

We now verify that $\cX$ is compactly generated. We proceed by induction on the cardinality of $\pos$, the base case where $\pos = \es$ being trivial. So, assume that $\pos \not= \es$, choose any minimal element $-\infty \in \pos$, and write $\pos' := \pos \backslash \{ - \infty \} \in \Conv_\pos$ for its complement. This defines a functor $\pos \xra{\pi} [1]$ with $\pi^{-1}(0) = \{ -\infty \}$ and $\pi^{-1}(1) = \pos'$. Taking pushforwards of stratifications along $\pi$ via \Cref{prop.pushfwd.stratn} yields a recollement
\[ \begin{tikzcd}[column sep=1.5cm]
\cX_{-\infty}
\arrow[hook, bend left=45]{r}[description]{i_L}
\arrow[leftarrow]{r}[transform canvas={yshift=0.1cm}]{\bot}[swap,transform canvas={yshift=-0.1cm}]{\bot}[description]{\yo}
\arrow[bend right=45, hook]{r}[description]{i_R}
&
\cX
\arrow[bend left=45]{r}[description]{p_L}
\arrow[hookleftarrow]{r}[transform canvas={yshift=0.1cm}]{\bot}[swap,transform canvas={yshift=-0.1cm}]{\bot}[description]{\nu}
\arrow[bend right=45]{r}[description]{p_R}
&
\cX_{\pos'}
\end{tikzcd} \]
in which $\cX_{-\infty} = \cE_{-\infty}$ is compactly generated by assumption and $\cX_{\pos'} = \lim^\rlax_{\llax.\pos'}(\cE)$ is compactly generated by induction. Moreover, by \Cref{prop.metacosm.input.first.get.stratn}\Cref{metacosm.input.describe.adjts} the functors $i_L$ and $\nu$ are both given by extension by 0, and so preserve compact objects by our above characterization of compact objects in $\cX$ (applied also to $\cX_{\pos'}$). Since $i_L$ and $\nu$ also both preserve colimits, we find that the smallest cocomplete full subcategory of $\cX$ containing its compact objects is in fact all of $\cX$, i.e.\! that $\cX$ is compactly generated.

We now show that for every $p \in \pos$ the closed subcategory $\cZ_p \in \Cls_\cX$ is compact-closed. For this, we use the restricted stratification of $\cZ_p$ over $(^\leq p)$ of \Cref{obs.restricted.stratn.over.D}, which converges since the poset $(^\leq p)$ is finite. For each $q \in (^\leq p)$, we use the notation
\[ \begin{tikzcd}[column sep=1.5cm]
\cZ_p
\arrow[transform canvas={yshift=0.9ex}]{r}{\w{\Phi}_q}
\arrow[hookleftarrow, transform canvas={yshift=-0.9ex}]{r}[yshift=-0.2ex]{\bot}[swap]{\w{\rho}^q}
&
\cX_q
\end{tikzcd} \]
for the corresponding geometric localization adjunction, and we write
\[
\w{L}_q := \w{\rho}^q \w{\Phi}_q
\]
for the corresponding idempotent endofunctor of $\cZ_p$. Consider the commutative diagram
\begin{equation}
\label{diagram.for.showing.iR.preserves.colimits.and.compacts.for.small.reconstrn}
\begin{tikzcd}[column sep=2cm, row sep=1.5cm]
&
\lim^\rlax_{\llax.(^\leq p)} ( \GD ( \cZ_p))
\arrow[hook]{d}
\\
\cZ_p
\arrow{ru}[sloped]{\gd}[sloped, swap]{\sim}
\arrow{r}{\gd'}
\arrow{rd}[sloped]{\sim}[sloped, swap]{\id_{\cZ_p}}
&
\Fun(\sd(^\leq p) , \cZ_p)
\arrow[hook]{r}{\Fun ( \sd(^\leq p) , i_R )}
\arrow{d}{\lim_{\sd(^\leq p)}}
&
\Fun ( \sd(^\leq p) , \cX)
\arrow{d}{\lim_{\sd(^\leq p)}}
\\
&
\cZ_p
\arrow[hook]{r}[swap]{i_R}
&
\cX
\end{tikzcd}
\end{equation}
in $\Cat$ in which
\begin{itemize}

\item the functor $\gd'$ is described by the formula
\[ \begin{tikzcd}[row sep=0cm]
\cZ_p
\arrow{r}{\gd'}
&
\Fun(\sd(^\leq p),\cZ_p)
\\
\rotatebox{90}{$\in$}
&
\rotatebox{90}{$\in$}
\\
X
\arrow[maps to]{r}
&
\left(
\left( [n] \xlongra{\varphi} (^\leq p) \right)
\longmapsto
\w{L}_\varphi(X)
\right)
\end{tikzcd}
\]
where we write
\[
\w{L}_\varphi
:=
\w{L}_{\varphi(n)} \cdots \w{L}_{\varphi(0)}
\]
for brevity (recall \Cref{obs.formula.for.mocrocosm.gluing.functor.without.using.Lphi.notation} (and \Cref{rmk.shorter.version.of.formula.for.mocrocosm.gluing.functor.using.Lphi.notation})),

\item the commutativity of the left two triangles follow from macrocosm reconstruction (\Cref{macrocosm.thm}) for the stratification of $\cZ_p$ over $(^\leq p)$, and

\item the square commutes because $\sd(^\leq p)$ is finite (since $(^\leq p)$ is finite) and $i_R$ is exact.

\end{itemize}
Observe that the functor
\[
\Fun(\sd(^\leq p),\cX) \xra{\lim_{\sd(^\leq p)}} \cX
\]
carries pointwise colimits to colimits and carries pointwise compact objects to compact objects (both using the facts that $\cX$ is stable and that $\sd(^\leq p)$ is finite). So, using the commutativity of diagram \Cref{diagram.for.showing.iR.preserves.colimits.and.compacts.for.small.reconstrn}, to show that the functor
\[
\cZ_p
\xlonghookra{i_R}
\cX
\]
preserves colimits and compact objects, it suffices to show that the composite
\[
\cZ_p
\xlongra{\gd'}
\Fun ( \sd(^\leq p) , \cZ_p)
\xhookra{\Fun ( \sd(^\leq p),i_R)}
\Fun ( \sd(^\leq p) , \cX)
\]
carries colimits to pointwise colimits and carries compact objects to pointwise compact objects, or equivalently that for every $([n] \xra{\varphi} (^\leq p)) \in \sd(^\leq p)$ the composite
\[
\cZ_p
\xra{\w{L}_\varphi}
\cZ_p
\xlonghookra{i_R}
\cX
\]
preserves colimits and compact objects. For this, it suffices to show that the composite
\[
\cZ_p
\xra{\w{L}_\varphi}
\cZ_p
\xlonghookra{i_R}
\cX
\xra{(\Phi_q)_{q \in \pos}}
\prod_{q \in \pos} \cX_q
\]
carries colimits to pointwise colimits and carries compact objects to pointwise compact objects -- the former because the functor $(\Phi_q)_{q \in \pos}$ is conservative (because $\pos$ is finite) and preserves colimits, the latter by our above characterization of compact objects in $\cX$. Equivalently, it suffices to show that for every $q \in \pos$ the composite
\begin{equation}
\label{functor.from.Zp.to.Xq.for.small.reconstrn}
\cZ_p
\xra{\w{L}_\varphi}
\cZ_p
\xlonghookra{i_R}
\cX
\xra{\Phi_q}
\cX_q
\end{equation}
preserves colimits and compact objects. To see this, first observe the factorization of the composite \Cref{functor.from.Zp.to.Xq.for.small.reconstrn} according to the commutative diagram
\[ \begin{tikzcd}
\cZ_p
\arrow{r}{\w{L}_\varphi}
\arrow{d}[swap]{\w{L}_{\varphi_{|[n-1]}}}
&
\cZ_p
\arrow[hook]{r}{i_R}
&
\cX
\arrow{r}{\Phi_q}
&
\cX_q
\\
\cZ_p
\arrow{r}[swap]{\w{\Phi}_{\varphi(n)}}
&
\cX_{\varphi(n)}
\arrow[hook]{u}{\w{\rho}^{\varphi(n)}}
\arrow[hook]{ru}[sloped, swap]{\rho^{\varphi(n)}}
\end{tikzcd}~, \]
in which the square commutes by definition and the commutativity of the triangle follows from the commutativity of the triangle
\[ \begin{tikzcd}
\cZ_p
\arrow[hook]{r}{i_R}
&
\cX
\\
\cZ_{\varphi(n)}
\arrow[hook]{u}{i_R}
\arrow[hook]{ru}[sloped, swap]{i_R}
\end{tikzcd}
~.
\]
Now, because the composite functor \Cref{p.q.gluing.functor.for.stratn.of.rlaxlimfam.from.PrLomegaSt} (replacing $p$ with $\varphi(n)$) preserves colimits and compact objects, it follows that the functor
\[
\cZ_p
\xlonghookra{i_R}
\cX
\]
preserves colimits and compact objects, i.e.\! the closed subcategory $\cZ_p \in \Cls_\cX$ is indeed compact-closed.

We have shown that the factorization \Cref{factorizn.of.radjt.for.small.reconstruction} exists on objects, and thereafter it is straightforward to see that it exists on morphisms as well.
\end{proof}

\begin{remark}
One may interpret our proof of \Cref{thm.stable.metacosm} as establishing the commutativity of the functor
\[
\St^\idem
\xra{\Ind}
\PrSt
\]
with certain right-lax limits.
\end{remark}

\begin{prop}
\label{prop.characterize.stable.stratns}
Choose any functor
\[ 
\begin{tikzcd}[row sep=0cm]
\pos
\arrow{r}{\cZ_\bullet}
&
\clssub_\cC
\\
\rotatebox{90}{$\in$}
&
\rotatebox{90}{$\in$}
\\
p
\arrow[maps to]{r}
&
\cZ_p
\end{tikzcd}
~,
\]
and consider the composite functor
\[
\begin{tikzcd}[row sep=0cm]
\pos
\arrow{r}{\cZ_\bullet}
&
\clssub_\cC
\arrow{r}{\Ind}
&
\Cls_{\Ind(\cC)}
\\
\rotatebox{90}{$\in$}
&
&
\rotatebox{90}{$\in$}
\\
p
\arrow[maps to]{rr}
&
&
\Ind(\cZ_p)
\end{tikzcd}
~.
\]
\begin{enumerate}

\item\label{item.stable.prestratns} The composite $\Ind(\cZ_\bullet)$ is a prestratification if and only if $\brax{\cZ_p}^\thick_{p \in \pos} = \cC$.

\item\label{stable.stratn.condition}

The composite $\Ind(\cZ_\bullet)$ satisfies the stratification condition if and only if for every $p,q \in \pos$,

\begin{enumeratesub}

\item\label{stable.stratn.condition.thick.is.closed}

the thick subcategory
\[
\cZ_{(^\leq p) \cap (^\leq q)}
:=
\brax{\cZ_r}^\thick_{r \in (^\leq p ) \cap (^\leq q)}
\subseteq
\cC
\]
is in fact a closed subcategory, and

\item\label{stable.stratn.condition.usual.factorizn}

there exists a factorization
\[ \begin{tikzcd}
\cZ_{(^\leq p ) \cap (^\leq q)}
\arrow[hook]{r}{i_L}
&
\cZ_p
\\
\cZ_q
\arrow[hook]{r}[swap]{i_L}
\arrow[dashed]{u}
&
\cC
\arrow{u}[swap]{y}
\end{tikzcd}
~.
\]

\end{enumeratesub}

\end{enumerate}
\end{prop}

\begin{lemma}
\label{lem.ind.aligned.implies.intersection.closed}
Let $\cY,\cZ \in \clssub_\cC$ be closed subcategories, and suppose that $\cZ$ is aligned with $\cY$. Then, the intersection $(\cY \cap \cZ) \subseteq \cC$ is a closed subcategory, and moreover we have an identification
\[
\Ind(\cY \cap \cZ) = (\Ind(\cY) \cap \Ind(\cZ))
\in
\Cls_{\Ind(\cC)}
~.
\]
\end{lemma}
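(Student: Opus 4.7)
The plan is to identify $\Ind(\cY) \cap \Ind(\cZ) = \Ind(\cY \cap \cZ)$ as subcategories of $\Ind(\cC)$, from which both conclusions of the lemma will follow. Set $\cW := \cY \cap \cZ \subseteq \cC$, which is immediately a full stable idempotent-complete subcategory. Note that by \Cref{obs.if.aligned.preimage.of.closed.is.closed}, the intersection $\Ind(\cY) \cap \Ind(\cZ)$ is already known to be a closed subcategory of $\Ind(\cC)$ in the large sense, using only the given one-sided alignment of $\Ind(\cZ)$ with $\Ind(\cY)$.

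The containment $\Ind(\cW) \subseteq \Ind(\cY) \cap \Ind(\cZ)$ is immediate. For the reverse inclusion, given $X \in \Ind(\cY) \cap \Ind(\cZ)$, I would write $X \simeq \colim_i X_i$ as a filtered colimit of compact objects $X_i \in \cC$ and apply the two colocalization endofunctors $C_\cY := i_L^\cY y^\cY$ and $C_\cZ := i_L^\cZ y^\cZ$ of $\Ind(\cC)$. Since $X$ lies in both $\Ind(\cY)$ and $\Ind(\cZ)$, we have $X \simeq C_\cY C_\cZ X \simeq \colim_i C_\cY C_\cZ X_i$ (both $C_\cY$ and $C_\cZ$ preserve colimits). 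Each term $C_\cY C_\cZ X_i$ is compact in $\Ind(\cC)$, because $\cY, \cZ \in \clssub_\cC$ forces $C_\cY$ and $C_\cZ$ to preserve compact objects. It also lies in $\Ind(\cY) \cap \Ind(\cZ)$ by the alignment hypothesis: the functor $y^\cY \circ i_L^\cZ$ factors through $\Ind(\cY) \cap \Ind(\cZ) \subseteq \Ind(\cY)$ (by \Cref{lem.equivalent.characterizations.of.alignment}), so $i_L^\cY(y^\cY i_L^\cZ(y^\cZ X_i))$ is in the subcategory $\Ind(\cY) \cap \Ind(\cZ)$ of $\Ind(\cC)$. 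Therefore $C_\cY C_\cZ X_i \in \cC \cap \Ind(\cY) \cap \Ind(\cZ) = \cW$, and $X$ is a filtered colimit of objects of $\cW$, i.e.\! $X \in \Ind(\cW)$.

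To conclude that $\cW \in \clssub_\cC$, I would verify that the three adjoints $i_L \dashv y \dashv i_R$ for $\Ind(\cW) \subseteq \Ind(\cC)$ all restrict to functors between $\cW$ and $\cC$. The inclusion $i_L$ does so trivially. For the remaining two, I would apply \Cref{lem.if.aligned.then.iRs.form.a.pullback.square}: the pullback square of $i_R$'s has its right and bottom inclusions in $\PrLomegaSt$ (being the $\Ind$ of $\St^\idem$-morphisms), and a direct computation of filtered colimits in $\Ind(\cC)$ shows that the remaining pullback inclusions also preserve filtered colimits. In particular, $i_R : \Ind(\cW) \hookrightarrow \Ind(\cC)$ is a fully faithful filtered-colimit-preserving functor, so it reflects compact objects; combined with the identification $\Ind(\cW)^\omega = \cW$ from the previous paragraph, this forces $i_R(\cW) \subseteq \Ind(\cC)^\omega = \cC$, so $i_R$ preserves compacts. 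That $y$ preserves compacts then follows, being equivalent to $i_R$ preserving filtered colimits.

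The main obstacle is the bookkeeping in the last paragraph with the two a priori distinct fully faithful embeddings $i_L, i_R : \Ind(\cW) \hookrightarrow \Ind(\cC)$: one must use the pullback-of-$i_R$'s structure to extract colimit-preservation and hence reflection of compacts, while separately using the $C_\cY C_\cZ$-based identification to intrinsically pin down the compact objects of $\Ind(\cW)$ as $\cW$ itself.
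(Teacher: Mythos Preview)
Your second paragraph is correct, and the overall route matches the paper's: identify $\Ind(\cY\cap\cZ)=\Ind(\cY)\cap\Ind(\cZ)$ (you do this explicitly via $C_\cY C_\cZ$, while the paper does it via the intersection chain in its final display), and then invoke the $i_R$-pullback square of \Cref{lem.if.aligned.then.iRs.form.a.pullback.square} to see that $i_R$ preserves colimits.

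The gap is in your third paragraph, where you deduce that $i_R$ preserves compacts from the fact that it \emph{reflects} them. That implication runs the wrong way: reflection gives $i_R(X)\in\cC\Rightarrow X\in\cW$, whereas you need $X\in\cW\Rightarrow i_R(X)\in\cC$. The missing input is the right-adjoint analogue of your own $C_\cY C_\cZ$ trick. Passing to right adjoints in the commuting square of \Cref{lem.equivalent.characterizations.of.alignment}\Cref{part.lem.equivalent.characterizations.of.alignment.factorization.is.radjt} produces the square \Cref{iR.and.y.commute.given.alignment} from the proof of \Cref{lem.if.aligned.then.iRs.form.a.pullback.square}; precomposing with the $i_L$-inclusion $i_{\Ind(\cY)}$ of $\Ind(\cW)$ into $\Ind(\cY)$ identifies the left edge $\Ind(\cW)\xhookra{i_R}\Ind(\cZ)$ of the $i_R$-pullback square as the composite
\[
\Ind(\cW) \xhookra{i_{\Ind(\cY)}} \Ind(\cY) \xhookra{\Ind(i_R)} \Ind(\cC) \xra{y} \Ind(\cZ)~.
\]
Each factor preserves compacts (the first because $\Ind(\cW)\in\Cls_{\Ind(\cY)}$ so the right adjoint of $i_{\Ind(\cY)}$ preserves colimits, the second and third because $\cY,\cZ\in\clssub_\cC$). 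Composing with the bottom edge $\Ind(i_R)$ of the pullback square then shows the full $i_R$ from $\Ind(\cW)$ to $\Ind(\cC)$ preserves compacts. Equivalently, one obtains $L_{\Ind(\cW)}\simeq L_{\Ind(\cZ)} L_{\Ind(\cY)}$, the $L$-version of your $C_\cY C_\cZ$.
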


\begin{proof}
Consider the pullback square
\begin{equation}
\label{iL.inclusions.into.Ind.C.for.showing.ind.aligned.implies.intersection.closed}
\begin{tikzcd}[column sep=1.5cm]
\Ind(\cY) \cap \Ind(\cZ)
\arrow[hook]{r}{i_{\Ind(\cY)}}
\arrow[hook]{d}[swap]{i_{\Ind(\cZ)}}
&
\Ind(\cY)
\arrow[hook]{d}{i_L = \Ind(i_L)}
\\
\Ind(\cZ)
\arrow[hook]{r}[swap]{i_L = \Ind(i_L) }
&
\Ind(\cC)
\end{tikzcd}
\end{equation}
in $\PrLSt$. By \Cref{obs.if.aligned.preimage.of.closed.is.closed} (and the fact that $\Ind(\cZ)$ is aligned with $\Ind(\cY)$), we have that $(\Ind(\cY) \cap \Ind(\cZ)) \in \Cls_{\Ind(\cC)}$. Thereafter, by \Cref{lem.if.aligned.then.iRs.form.a.pullback.square}, the commutative square
\begin{equation}
\label{iR.inclusions.into.Ind.C.for.showing.ind.aligned.implies.intersection.closed}
\begin{tikzcd}[column sep=1.5cm]
\Ind(\cY) \cap \Ind(\cZ)
\arrow[hook]{r}{i_R}
\arrow[hook]{d}[swap]{i_R}
&
\Ind(\cY)
\arrow[hook]{d}{i_R = \Ind(i_R)}
\\
\Ind(\cZ)
\arrow[hook]{r}[swap]{i_R = \Ind(i_R) }
&
\Ind(\cC)
\end{tikzcd}
\end{equation}
in $\Cat$ obtained by taking right adjoints twice in the commutative square \Cref{iL.inclusions.into.Ind.C.for.showing.ind.aligned.implies.intersection.closed} is a pullback square. In particular, the identifications $i_R = \Ind(i_R)$ in the pullback square \Cref{iR.inclusions.into.Ind.C.for.showing.ind.aligned.implies.intersection.closed} imply that it lies in $\PrLSt$. It follows that $(\Ind(\cY) \cap \Ind(\cZ)) \in \Cls_{\Ind(\cC)}$ is compact-closed. Now, using the fact that $(\Ind(\cY) \cap \Ind(\cZ)) \xhookra{i_L} \Ind(\cC)$ preserves compact objects (because its right adjoint preserves colimits), we obtain the composite identification
\[
\clssub_\cC
\ni
(\Ind(\cY) \cap \Ind(\cZ))^\omega
=
( (\Ind(\cY) \cap \Ind(\cZ)) \cap \Ind(\cC)^\omega)
=
( \Ind(\cY) \cap \Ind(\cZ) \cap \cC)
=
(\cY \cap \cZ)
~,
\]
which proves both assertions.
\end{proof}

\begin{proof}[Proof of \Cref{prop.characterize.stable.stratns}]
Part \Cref{item.stable.prestratns} is clear. So, we proceed to part \Cref{stable.stratn.condition}. First of all, it is clear that for any $p,q \in \pos$, if the functor $\cZ_\bullet$ satisfies conditions \Cref{stable.stratn.condition.thick.is.closed} \and \Cref{stable.stratn.condition.usual.factorizn} then the composite $\Ind(\cZ_\bullet)$ satisfies the stratification condition (using the fact that the functor $\thicksub_\cC \xhookra{\Ind} \Cls_{\Ind(\cC)}$ commutes with colimits). Conversely, suppose that the composite $\Ind(\cZ_\bullet)$ satisfies the stratification condition. Then, by \Cref{closed.subcats.are.mutually.aligned} we have that the closed subcategories $\Ind(\cZ_p) , \Ind(\cZ_q) \in \Cls_{\Ind(\cC)}$ are mutually aligned and moreover
\begin{equation}
\label{intersection.of.Inds.is.Ind.of.union.of.leq.p.and.leq.q}
\Cls_{\Ind(\cC)}
\ni
(\Ind(\cZ_p) \cap \Ind(\cZ_q))
=
\brax{ \Ind(\cZ_r) }_{r \in (^\leq p) \cap (^\leq q)}
=
\Ind( \brax{ \cZ_r }_{r \in (^\leq p) \cap (^\leq q)}^\thick )
=:
\Ind( \cZ_{(^\leq p) \cap (^\leq q)} )
~.
\end{equation}
In particular, the closed subcategories $\cZ_p , \cZ_q \in \clssub_\cC$ are mutually aligned, which by \Cref{lem.ind.aligned.implies.intersection.closed} implies that
\begin{equation}
\label{intersection.of.Inds.is.Ind.of.intersection}
\Ind( \cZ_p \cap \cZ_q ) =
( \Ind(\cZ_p) \cap \Ind(\cZ_q) )
\in
\Cls_{\Ind(\cC)}
~.
\end{equation}
As the functor $\thicksub_\cC \xhookra{\Ind} \Cls_{\Ind(\cC)}$ is fully faithful, the identifications \Cref{intersection.of.Inds.is.Ind.of.union.of.leq.p.and.leq.q} \and \Cref{intersection.of.Inds.is.Ind.of.intersection} along with \Cref{lem.ind.aligned.implies.intersection.closed} now imply that we have an identification
\[
\clssub_\cC
\ni
(\cZ_p \cap \cZ_q)
=
\Ind( \cZ_{(^\leq p) \cap (^\leq q)} )^\omega
~,
\]
i.e.\! the functor $\cZ_\bullet$ satisfies condition \Cref{stable.stratn.condition.thick.is.closed}. Now, by \Cref{lem.equivalent.characterizations.of.alignment} the square
\[ \begin{tikzcd}
\Ind(\cZ_p) \cap \Ind(\cZ_q)
\arrow[hook]{r}{i_L}
&
\Ind(\cZ_p)
\\
\Ind(\cZ_q)
\arrow{u}{y}
\arrow[hook]{r}[swap]{i_L}
&
\Ind(\cC)
\arrow{u}[swap]{y}
\end{tikzcd} \]
commutes, and by \Cref{lem.ind.aligned.implies.intersection.closed} all four of its functors preserve compact objects (because their right adjoints preserve colimits). Hence, again using identification \Cref{intersection.of.Inds.is.Ind.of.union.of.leq.p.and.leq.q} we see that the functor $\cZ_\bullet$ satisfies condition \Cref{stable.stratn.condition.usual.factorizn}.
\end{proof}

\subsection{Strict morphisms among stratifications}
\label{subsection.strict.metacosm}

In this brief subsection, we introduce strict morphisms among (resp.\! stable) stratifications and show as \Cref{thm.strict.metacosm} that they correspond through metacosm reconstruction to strict (as opposed to possibly right-lax) morphisms between left-lax left $\pos$-modules.

\begin{definition}
We say that a morphism $\cX \ra \cX'$ in $\Strat_\pos$ or in $\strat_\pos$ is \bit{strict} if for every $p \in \pos$ there exists a (necessarily unique) factorization
\[ \begin{tikzcd}
\cX
\arrow{r}
&
\cX'
\\
\cZ_p
\arrow[hook]{u}{i_R}
\arrow[dashed]{r}
&
\cZ_p'
\arrow[hook]{u}[swap]{i_R}
\end{tikzcd}
\]
(with the evident notation). We denote by
\[
\Strat^\strict_\pos
\subseteq
\Strat_\pos
\qquad
\text{and}
\qquad
\strat^\strict_\pos
\subseteq
\strat_\pos
\]
the respective subcategories on the strict morphisms.
\end{definition}


\needspace{2\baselineskip}
\begin{theorem}
\label{thm.strict.metacosm}
\begin{enumerate}
\item[]

\item\label{strict.presentable.metacosm}

Assume that $\pos$ is down-finite. Then, the metacosm equivalence \Cref{adjn.of.metacosm.thm} of \Cref{metacosm.thm} restricts to an equivalence
\[ \begin{tikzcd}[column sep=1.5cm]
\Strat_\pos^\strict
\arrow[transform canvas={yshift=0.9ex}]{r}{\GD}
\arrow[leftarrow, transform canvas={yshift=-0.9ex}]{r}[yshift=-0.0ex]{\sim}[swap]{\limrlaxfam}
&
\LMod^L_{\llax.\pos}(\PrSt)
\end{tikzcd}
~.
\]

\item\label{strict.small.metacosm}

Assume that $\pos$ is finite. Then, the metacosm equivalence \Cref{small.metacosm.equivalence} of \Cref{thm.stable.metacosm} restricts to an equivalence
\[
\begin{tikzcd}[column sep=1.5cm]
\strat_\pos^\strict
\arrow[transform canvas={yshift=0.9ex}]{r}{\GD}
\arrow[leftarrow, transform canvas={yshift=-0.9ex}]{r}[yshift=-0.0ex]{\sim}[swap]{\limrlaxfam}
&
\LMod_{\llax.\pos}(\St^\idem)
\end{tikzcd}
~.
\]

\end{enumerate}
\end{theorem}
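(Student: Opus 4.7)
The plan is to show that both asserted equivalences are restrictions of the metacosm equivalences already established in Theorems \ref{metacosm.thm} and \ref{thm.stable.metacosm}; the only new content is a compatibility between the two notions of ``strict'' on the two sides of the equivalence. Since the inclusions $\Strat^\strict_\pos \hookra \Strat_\pos$ and $\LMod^L_{\llax.\pos}(\PrSt) \hookra \LMod^{\rlax,L}_{\llax.\pos}(\PrSt)$ (and their stable analogs) are bijective on objects and are inclusions of (non-full) subcategories, the task reduces to verifying, for each morphism $F : \cX \ra \cX'$ in $\Strat_\pos$, that $F$ is strict in the sense of \Cref{defn.strict.stratns} if and only if the corresponding morphism $\GD(F)$ in $\LMod^{\rlax,L}_{\llax.\pos}(\PrSt)$ is strict, i.e.\! lifts (necessarily uniquely) to $\LMod^L_{\llax.\pos}(\PrSt)$. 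Part \Cref{strict.presentable.metacosm} will then follow formally from \Cref{metacosm.thm}, and part \Cref{strict.small.metacosm} from \Cref{thm.stable.metacosm} by the same argument applied fiberwise through the inclusion $\strat_\pos \xlonghookra{\Ind} \Strat_\pos$ of \Cref{obs.image.of.strat.in.Strat}.

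First I would fix $F : \cX \ra \cX'$ in $\Strat_\pos$ and reconstruct, directly from the proof of \Cref{metacosm.thm}, the induced functor $F_p : \cX_p \ra \cX'_p$ on each stratum and the lax-commutative square
\[
\begin{tikzcd}[row sep=1cm]
\cX_p \arrow[r,"\Gamma^p_q"]\arrow[d,swap,"F_p"] & \cX_q \arrow[d,"F_q"] \\
\cX'_p \arrow[r,swap,"\Gamma'^p_q"] & \cX'_q
\end{tikzcd}
\]
attached to each morphism $p \ra q$ in $\pos$. The next step is to observe that the natural transformation in this square is an equivalence for every $p \ra q$ exactly when $\GD(F)$ lifts to $\LMod^L_{\llax.\pos}(\PrSt)$, since this subcategory is cut out precisely by the condition that the fiberwise data assemble into a strict (rather than merely right-lax) natural transformation of left-lax $\pos$-modules.

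The key computation is then to identify when this transformation is an equivalence in terms of commutation properties of $F$. Using the definitions $\Phi_p = p_L y$ and $\rho^p = i_R \nu$, together with the fact that $F$ commutes with $i_L$ and $y$, one checks that $F$ automatically restricts to each $\cZ_p$ and $\cZ_{^< p}$ and hence commutes with the localization $p_L$ and with $\nu$ (the latter because $\nu$ is fully faithful with image the kernel $\ker(y) \subseteq \cZ_p$, which is preserved by $F$). Tracing through the definitions yields the identification
\[
F \rho^p \simeq F i_R \nu
\qquad \text{versus} \qquad
\rho'^p F_p \simeq i_R \nu F_p \simeq i_R F \nu~,
\]
so that the comparison map in the displayed square is an equivalence (for every $p$) if and only if $F$ commutes with $i_R$ for every $p$, i.e.\! if and only if $F \in \Strat_\pos^\strict$. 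Combining this with the first two steps gives the desired restriction of the metacosm equivalence.

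The main obstacle is a clean bookkeeping of the Beck--Chevalley comparison map and verifying the assertion that $F$ commutes with $\nu$ automatically; this requires care because $\nu$ is not directly listed as one of the data whose commutation is imposed in \Cref{defn.Strat.P}, and one must explicitly use both the universal property of the presentable quotient $\cZ_p / \cZ_{^< p}$ and the characterization of the image of $\nu$ as $\ker(y)$. With this lemma in hand, all remaining assertions follow formally from Theorems \ref{metacosm.thm} and \ref{thm.stable.metacosm} without further input.
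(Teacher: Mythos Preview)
Your overall strategy is sound and amounts to a repackaging of the paper's approach: rather than showing separately that $\GD$ and $\limrlaxfam$ each restrict to the strict subcategories, you aim to prove directly that a morphism $F$ in $\Strat_\pos$ is strict if and only if $\GD(F)$ is. Since the ambient metacosm adjunction is already an equivalence, the two formulations are logically interchangeable, and your observation that $F$ automatically commutes with $\nu$ (and with $p_L$) is correct and handles the easy direction.

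However, your displayed computation only establishes that easy direction. From $F\rho^p \simeq F i_R \nu$ and $\rho'^p F_p \simeq i'_R \nu' F_p \simeq i'_R F\nu$ you correctly deduce that the lax comparison $F_q\Gamma^p_q \Rightarrow \Gamma'^p_q F_p$ is an equivalence for all $q$ precisely when $F i_R = i'_R F$ \emph{on the image of $\nu$}, that is on $\nu(\cX_p)\subseteq\cZ_p$. (Even this step tacitly uses that $(\Phi'_q)_{q\in\pos}$ is jointly conservative, which holds because $\pos$ is down-finite and hence artinian; you should say so.) But strictness of $F$ demands $F i_R = i'_R F$ on \emph{all} of $\cZ_p$, and this inclusion is proper whenever $p$ is not minimal. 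The passage from the weaker statement to the stronger one is the substantive half of the argument, and it does not follow from what you have written.

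Closing this gap requires microcosm reconstruction for the restricted stratification of $\cZ_p$ over the finite poset $(^\leq p)$: every object of $\cZ_p$ is a finite limit of objects in the images of the functors $\widetilde{\rho}^r\colon\cX_r\hookrightarrow\cZ_p$ for $r\leq p$, and one has $i_R\,\widetilde{\rho}^r = \rho^r$. One must then verify $F\widetilde{\rho}^r = \widetilde{\rho}'^r F_r$, which follows by rerunning the conservativity argument for the restricted stratification (its gluing diagram being the restriction of $\GD(F)$, hence still strict). These are precisely the ingredients the paper supplies when showing that $\limrlaxfam$ factors. So your approach does go through, but the ``main obstacle'' you flag---commutation with $\nu$---is the routine part; the real work is the extension from $\nu(\cX_p)$ to all of $\cZ_p$, which you have not addressed.
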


\begin{proof}
We begin with part \Cref{strict.presentable.metacosm}. On the one hand, it is clear that there exists a factorization
\[ \begin{tikzcd}
\Strat_\pos
\arrow{r}{\GD}[swap]{\sim}
&
\LMod^{\rlax,L}_{\llax.\pos}(\PrSt)
\\
\Strat^\strict_\pos
\arrow[dashed]{r}
\arrow[hook, two heads]{u}
&
\LMod^L_{\llax.\pos}(\PrSt)
\arrow[hook, two heads]{u}
\end{tikzcd} ~. \]
So, it remains to show that there exists a factorization
\[ \begin{tikzcd}[column sep=1.5cm]
\Strat_\pos
&
\LMod^{\rlax,L}_{\llax.\pos}(\PrSt)
\arrow{l}[swap]{\limrlaxfam}{\sim}
\\
\Strat^\strict_\pos
\arrow[hook, two heads]{u}
&
\LMod^L_{\llax.\pos}(\PrSt)
\arrow[hook, two heads]{u}
\arrow[dashed]{l}
\end{tikzcd} ~. \]
Given a morphism $\cE \ra \cE'$ in $\LMod^L_{\llax.\pos}(\PrSt)$, for each $p \in \pos$ we obtain a commutative square
\begin{equation}
\label{comm.square.of.yo.restrns.for.proving.strict.maps.between.modules.gives.strict.map.in.Strat}
\begin{tikzcd}
\lim^\rlax_{\llax.\pos}(\cE)
\arrow{r}
\arrow{d}[swap]{y}
&
\lim^\rlax_{\llax.\pos}(\cE')
\arrow{d}{y}
\\
\lim^\rlax_{\llax.(^\leq p)}(\cE)
\arrow{r}
&
\lim^\rlax_{\llax.(^\leq p)}(\cE')
\end{tikzcd}
~,
\end{equation}
and it suffices to show that the corresponding lax-commutative square
\begin{equation}
\label{lax.comm.square.with.iRs.for.proving.strict.maps.between.modules.gives.strict.map.in.Strat}
\begin{tikzcd}
\lim^\rlax_{\llax.\pos}(\cE)
\arrow{r}[yshift=-0.8cm]{\rotatebox{-45}{$\Leftarrow$}}
&
\lim^\rlax_{\llax.\pos}(\cE')
\\
\lim^\rlax_{\llax.(^\leq p)}(\cE)
\arrow[hook]{u}{i_R}
\arrow{r}
&
\lim^\rlax_{\llax.(^\leq p)}(\cE')
\arrow[hook]{u}[swap]{i_R}
\end{tikzcd}
\end{equation}
commutes. For this, we use the restricted stratifications of $\lim^\rlax_{\llax.(^\leq p)}(\cE)$ and $\lim^\rlax_{\llax.(^\leq p)}(\cE')$ over $(^\leq p)$ of \Cref{obs.restricted.stratn.over.D}, which converge since the poset $(^\leq p)$ is finite (because $\pos$ is down-finite). To simplify our notation, we write
\[
\cX
:=
\lim^\rlax_{\llax.\pos}(\cE)
\qquad
\text{and}
\qquad
\cX'
:=
\lim^\rlax_{\llax.\pos}(\cE')
~,
\]
and for any $p \in \pos$ we write
\[
\cZ_p
:=
\lim^\rlax_{\llax.\pos}(\cE)
~,
\qquad
\cZ_p'
:=
\lim^\rlax_{\llax.\pos}(\cE')
~,
\qquad
\cX_p
:=
\cE_p
~,
\qquad
\text{and}
\qquad
\cX_p'
:=
\cE_p'
~,
\]
as justified by \Cref{prop.metacosm.input.first.get.stratn}\Cref{metacosm.input.get.stratn}. Moreover, for each $q \in (^\leq p)$, we use the notation
\[
\begin{tikzcd}[column sep=1.5cm]
\lim^\rlax_{\llax.(^\leq p)}(\cE)
\arrow[transform canvas={yshift=0.9ex}]{r}{\w{\Phi}_q}
\arrow[hookleftarrow, transform canvas={yshift=-0.9ex}]{r}[yshift=-0.2ex]{\bot}[swap]{\w{\rho}^q}
&
\cE_q
\end{tikzcd}
\qquad
\text{and}
\qquad
\begin{tikzcd}[column sep=1.5cm]
\lim^\rlax_{\llax.(^\leq p)}(\cE')
\arrow[transform canvas={yshift=0.9ex}]{r}{\w{\Phi}_q}
\arrow[hookleftarrow, transform canvas={yshift=-0.9ex}]{r}[yshift=-0.2ex]{\bot}[swap]{\w{\rho}^q}
&
\cE_q'
\end{tikzcd}
\]
for the corresponding geometric localization adjunctions. Now, for each $q \in (^\leq p)$, we may extend the commutative square \Cref{comm.square.of.yo.restrns.for.proving.strict.maps.between.modules.gives.strict.map.in.Strat} to a commutative diagram
\[
\begin{tikzcd}
\cX
\arrow{r}
\arrow{d}{y}
\arrow[bend right]{dd}[swap]{\Phi_q}
&
\cX'
\arrow{d}[swap]{y}
\arrow[bend left]{dd}{\Phi_q}
\\
\cZ_p
\arrow{r}
\arrow{d}{\w{\Phi}_q}
&
\cZ_p'
\arrow{d}[swap]{\w{\Phi}_q}
\\
\cX_q
\arrow{r}
&
\cX'_q
\end{tikzcd}
~,
\]
which determines the lower two squares in the lax-commutative diagram
\begin{equation}
\label{lax.comm.diagram.with.iRs.and.rhos.for.proving.strict.maps.between.modules.gives.strict.map.in.Strat}
\begin{tikzcd}
{\displaystyle \prod_{p \in \pos} \cX_p}
\arrow{r}
&
{\displaystyle \prod_{p \in \pos} \cX_p'}
\\
\cX
\arrow{r}[yshift=-0.8cm]{\rotatebox{-45}{$\Leftarrow$}}
\arrow{u}{(\Phi_p)_{p \in \pos}}
&
\cX'
\arrow{u}[swap]{(\Phi_p)_{p \in \pos}}
\\
\cZ_p
\arrow[hook]{u}[swap]{i_R}
\arrow{r}[yshift=-0.8cm]{\rotatebox{-45}{$\Leftarrow$}}
&
\cZ_p'
\arrow[hook]{u}{i_R}
\\
\cX_q
\arrow{r}
\arrow[hook]{u}[swap]{\w{\rho}^q}
\arrow[bend left, hook]{uu}{\rho^q}
&
\cX'_q
\arrow[hook]{u}{\w{\rho}^q}
\arrow[bend right, hook]{uu}[swap]{\rho^q}
\end{tikzcd}
\end{equation}
whose middle square is \Cref{lax.comm.square.with.iRs.for.proving.strict.maps.between.modules.gives.strict.map.in.Strat}. Because the upper two vertical functors in diagram \Cref{lax.comm.diagram.with.iRs.and.rhos.for.proving.strict.maps.between.modules.gives.strict.map.in.Strat} are conservative, its composite natural transformation is an equivalence by \Cref{prop.metacosm.input.first.get.stratn}\Cref{metacosm.input.get.stratn}\Cref{gluing.in.X.is.mdrmy.in.E} (and the fact that $\Phi_p \rho^q \simeq 0$ whenever $q \not< p$). Meanwhile, the same argument (applied to the poset $(^\leq p)$) shows that the lower natural transformation in diagram \Cref{lax.comm.diagram.with.iRs.and.rhos.for.proving.strict.maps.between.modules.gives.strict.map.in.Strat} is also an equivalence. So, the upper natural transformation in diagram \Cref{lax.comm.diagram.with.iRs.and.rhos.for.proving.strict.maps.between.modules.gives.strict.map.in.Strat} is an equivalence on every object in the image of $\w{\rho}^q$. Now, microcosm reconstruction for $\cZ_p$ (\Cref{intro.thm.cosms}\Cref{intro.main.thm.microcosm}) implies that each of its objects is a limit over $\sd(^\leq p)$ of objects in the image of $\w{\rho}^q$ for various $q \in (^\leq p)$ (using the fact that $\sd(^\leq p)$ is finite (because $(^\leq p)$ is finite because $\pos$ is down-finite)). So, because $\sd(^\leq p)$ is finite, the upper natural transformation in diagram \Cref{lax.comm.diagram.with.iRs.and.rhos.for.proving.strict.maps.between.modules.gives.strict.map.in.Strat} is indeed an equivalence; in other words, the lax-commutative square \Cref{lax.comm.square.with.iRs.for.proving.strict.maps.between.modules.gives.strict.map.in.Strat} commutes.

Now, part \Cref{strict.small.metacosm} follows from part \Cref{strict.presentable.metacosm} and the fact that the commutative squares
\[
\begin{tikzcd}
\strat^\strict_\pos
\arrow[hook]{r}
\arrow[hook]{d}
&
\strat_\pos
\arrow[hook]{d}{\Ind}
\\
\Strat^\strict_\pos
\arrow[hook]{r}
&
\Strat_\pos
\end{tikzcd}
\qquad
\text{and}
\qquad
\begin{tikzcd}
\LMod_{\llax.\pos}(\St^\idem)
\arrow[hook]{r}
\arrow[hook]{d}
&
\LMod^\rlax_{\llax.\pos}(\St^\idem)
\arrow[hook]{d}{\LMod^\rlax_{\llax.\pos}(\Ind)}
\\
\LMod^L_{\llax.\pos}(\PrSt)
\arrow[hook]{r}
&
\LMod^{\rlax,L}_{\llax.\pos}(\PrSt)
\end{tikzcd}
\]
are both pullbacks.
\end{proof}

\subsection{Reflection}
\label{subsection.reflection}

In this subsection, we establish the theory of \textit{reflection} for (resp.\! stable) stratifications. We establish reflection for stable stratifications over a finite poset as \Cref{thm.reflection.for.stable.stratns}; using this, we establish reflection for stratifications over a down-finite poset as \Cref{cor.reflection.for.presentable.stratns}.

The key input to the proof of \Cref{thm.reflection.for.stable.stratns} is the fact that a stable stratification of $\cC$ over a finite poset determines a \textit{reflected} stable stratification of $\cC^\op$ over the same poset, which we prove as \Cref{prop.stratn.of.opposite}. Another important input to the proof of \Cref{cor.reflection.for.presentable.stratns} (in addition to \Cref{thm.reflection.for.stable.stratns}) is the fact that a stratification of a presentable stable $\infty$-category may also be considered as a stable stratification thereof, which we prove as \Cref{prop.stratn.gives.stable.stratn}.\footnote{This may be contrasted with the fact that stable stratifications are \textit{definitionally} related to stratifications (although recall \Cref{prop.characterize.stable.stratns}).}

We also give a direct formula for the gluing functors in terms of the reflected gluing functors and reversely, as total co/fibers; this is recorded as \Cref{prop.Gamma.check.as.tfib.of.Gammas.and.Gamma.as.tcofib.of.Gamma.checks}.

\begin{local}
In this subsection we assume that our poset $\pos$ is finite, except in Corollaries \ref{cor.reflection.for.presentable.stratns} \and \ref{cor.reflection.for.finite.intervals} and \Cref{rmk.finite.intervals.but.not.down.finite} (which apply under strictly weaker hypotheses on $\pos$). Moreover, we fix a stable stratification $\cZ_\bullet$ of $\cC$ over $\pos$.
\end{local}

\needspace{2\baselineskip}
\begin{definition}
\begin{enumerate}
\item[]

\item

For any $p \in \pos$, we write
\[ \begin{tikzcd}[column sep=1.5cm]
\cZ_{^< p}
\arrow[hook, bend left=45]{r}[description]{i_L}
\arrow[leftarrow]{r}[transform canvas={yshift=0.1cm}]{\bot}[swap,transform canvas={yshift=-0.1cm}]{\bot}[description]{\yo}
\arrow[bend right=45, hook]{r}[description]{i_R}
&
\cZ_p
\arrow[bend left=45]{r}[description]{p_L}
\arrow[hookleftarrow]{r}[transform canvas={yshift=0.1cm}]{\bot}[swap,transform canvas={yshift=-0.1cm}]{\bot}[description]{\nu}
\arrow[bend right=45]{r}[description]{p_R}
&
\cZ_p
&[-1.8cm]
/ \cZ_{^< p}
=:
\cC_p
\end{tikzcd}
\]
for the idempotent-complete stable quotient participating in the indicated recollement guaranteed by \Cref{cor.union.over.finite.down.closed.gives.small.closed}, which we refer to as the \bit{$p\th$ stratum} of the stable stratification.

\item

For any $p \in \pos$, we write
\[
\begin{tikzcd}[column sep=1.5cm]
\Phi_p
:
\cC
\arrow[transform canvas={yshift=0.9ex}]{r}{y}
\arrow[hookleftarrow, transform canvas={yshift=-0.9ex}]{r}[yshift=-0.2ex]{\bot}[swap]{i_R}
&
\cZ_p
\arrow[transform canvas={yshift=0.9ex}]{r}{p_L}
\arrow[hookleftarrow, transform canvas={yshift=-0.9ex}]{r}[yshift=-0.2ex]{\bot}[swap]{\nu}
&
\cC_p
:
\rho^p
\end{tikzcd}
\qquad
\text{and}
\qquad
\begin{tikzcd}[column sep=1.5cm]
\lambda^p
:
\cC_p
\arrow[hook, transform canvas={yshift=0.9ex}]{r}{\nu}
\arrow[leftarrow, transform canvas={yshift=-0.9ex}]{r}[yshift=-0.2ex]{\bot}[swap]{p_R}
&
\cZ_p
\arrow[hook, transform canvas={yshift=0.9ex}]{r}{i_L}
\arrow[leftarrow, transform canvas={yshift=-0.9ex}]{r}[yshift=-0.2ex]{\bot}[swap]{y}
&
\cC
:
\Psi_p
\end{tikzcd}
\]
for the indicated composite adjoint functors. We respectively refer to the functors $\Phi_p$ and $\Psi_p$ as the corresponding \bit{geometric localization functor} and \bit{reflected geometric localization functor}.

\item

For any morphism $p \ra q$ in $\pos$, we write
\[
\Gamma^p_q
:
\cC_p
\xlonghookra{\rho^p}
\cC
\xra{\Phi_q}
\cC_q
\qquad
\text{and}
\qquad
\widecheck{\Gamma}^p_q
:
\cC_p
\xlonghookra{\lambda^p}
\cC
\xra{\Psi_q}
\cC_q
\]
for the indicated composite functors, which we respectively refer to as the corresponding \bit{gluing functor} and \bit{reflected gluing functor}.

\end{enumerate}
\end{definition}

\begin{definition}
\label{defn.tcofib.and.tfib}
Fix a stable $\infty$-category $\cD \in \St$ and an $\infty$-category $\cJ \in \Cat$.
\begin{enumerate}

\item\label{item.defn.tcofib}

Suppose that $\cJ$ admits a terminal object $t \in \cJ$, and write $\cJ_0 := \cJ \backslash \{ t \}$. Then, for any functor $\cJ \xra{F} \cD$, we define its \bit{total cofiber} to be
\[
\tcofib(F)
:=
\cofib
\left(
\colim_{\cJ_0}(F)
\longra
F(t)
\right)
\in
\cD
~.
\]

\item\label{item.defn.tfib}

Suppose that $\cJ$ admits an initial object $i \in \cJ$, and write $\cJ_0 := \cJ \backslash \{ i \}$. Then, for any functor $\cJ \xra{F} \cD$, we define its \bit{total fiber} to be
\[
\tfib(F)
:=
\fib
\left(
F(i)
\longra
\lim_{\cJ_0}(F)
\right)
\in
\cD
~.
\]

\end{enumerate}
\end{definition}

\begin{remark}
Here are two alternative descriptions of the total cofiber functor (using the notation of \Cref{defn.tcofib.and.tfib}\Cref{item.defn.tcofib}).
\begin{enumerate}

\item It is the left adjoint
\[ \begin{tikzcd}[column sep=1.5cm]
\Fun(\cJ,\cD)
\arrow[dashed, transform canvas={yshift=0.9ex}]{r}{\tcofib}
\arrow[leftarrow, transform canvas={yshift=-0.9ex}]{r}[yshift=-0.2ex]{\bot}[swap]{\delta_t}
&
\cD
\end{tikzcd} \]
to the ``Dirac delta at $t$'' functor (i.e.\! extension by zero over $\cJ_0$).

\item It is the composite
\[
\Fun(\cJ,\cD)
\longra
\Fun(\cJ_+,\cD)
\xra{\colim_{\cJ_+}}
\cD
~,
\]
where we write $\cJ_+ := \cJ \coprod_{\cJ_0} \cJ_0^\rcone$ and the first functor is extension by zero over the cone point of $\cJ_0^\rcone$.

\end{enumerate}
Of course, the total fiber functor admits dual descriptions.
\end{remark}

\begin{prop}
\label{prop.Gamma.check.as.tfib.of.Gammas.and.Gamma.as.tcofib.of.Gamma.checks}

Fix a nonidentity morphism $p < q$ in $\pos$.

\begin{enumerate}

\item\label{part.Gamma.check.as.tfib.of.Gammas}

There is a canonical equivalence
\[
\widecheck{\Gamma}^p_q
\simeq
\tfib
\left(
\sd(\pos)^{|p}_{|q}
\xra{\Sigma^{-1} \Gamma_\bullet}
\Fun^\ex(\cC_p,\cC_q)
\right)
\]
in $\Fun^\ex(\cC_p,\cC_q)$, where the functor $\Gamma_\bullet$ is given by \Cref{obs.assemble.Gamma.phi.functorially.in.phi}.

\item\label{part.Gamma.as.tcofib.of.Gamma.checks}

There is a canonical equivalence
\[
\Gamma^p_q
\simeq
\tcofib
\left(
\left( \sd(\pos)^{|p}_{|q} \right)^\op
\xra{\Sigma \widecheck{\Gamma}_\bullet}
\Fun^\ex(\cC_p,\cC_q)
\right)
\]
in $\Fun^\ex(\cC_p,\cC_q)$, where the functor $\widecheck{\Gamma}_\bullet$ is given by \Cref{obs.assemble.Gamma.phi.functorially.in.phi} and \Cref{prop.stratn.of.opposite}.\footnote{\Cref{prop.stratn.of.opposite} does not rely on the present result in any way.}

\end{enumerate}
\end{prop}

\begin{lemma}
\label{lemma.reflected.gluing.diagram.in.brax.one.case}
Given a stable recollement \Cref{stable.recollement.from.closed.subcat}, there is a canonical equivalence
\[
p_R i_L
\simeq
\Sigma^{-1} p_L i_R
~;
\]
that is, \Cref{prop.Gamma.check.as.tfib.of.Gammas.and.Gamma.as.tcofib.of.Gamma.checks} holds when $\pos = [1]$.
\end{lemma}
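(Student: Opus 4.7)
The plan is to obtain the equivalence by propagating the defining cofiber sequence of a recollement through a single adjoint and then pre-composing with $i_L$. To set this up, I first note that in any stable recollement one has the cofiber sequence
\[
\nu p_R \longra \id_\cC \longra i_R y
\]
in $\Fun^\ex(\cC,\cC)$. This is dual to the identification $\nu p_L \simeq \cofib(i_L y \to \id_\cC)$ recorded in \Cref{obs.clsd.subcat.gives.recollement}: the unit $\id_\cC \xra{\eta} i_R y$ has fiber lying in $\ker(y) = \im(\nu)$, and the resulting factorization through $\nu$ is $p_R$ by universality. The argument uses only $\im(\nu)=\ker(y)$ and $p_R \nu \simeq \id_\cU$, both of which are part of the recollement data, so it carries over from the presentable setting to the stable setting unchanged.

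From here, applying $p_L$ produces a cofiber sequence
\[
p_L \nu p_R \longra p_L \longra p_L i_R y
\]
in $\Fun^\ex(\cC,\cU)$, and the counit equivalence $p_L \nu \simeq \id_\cU$ (from $\nu$ fully faithful with left adjoint $p_L$) identifies the first term with $p_R$. I would then pre-compose the resulting sequence with $i_L$ to obtain a cofiber sequence
\[
p_R i_L \longra p_L i_L \longra p_L i_R y i_L
\]
in $\Fun^\ex(\cZ,\cU)$. Two simplifications finish the argument: the middle term vanishes by the defining equality $\im(i_L) = \ker(p_L)$, and $y i_L \simeq \id_\cZ$ since $i_L$ is fully faithful. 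The sequence thus collapses to $p_R i_L \to 0 \to p_L i_R$, yielding $p_L i_R \simeq \Sigma p_R i_L$ and hence the desired equivalence $p_R i_L \simeq \Sigma^{-1} p_L i_R$.

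There is no real obstacle here; the argument is a short diagram chase using only the structural facts of a stable recollement. The only point that merits a sentence of justification is the transfer of the cofiber sequence $\nu p_R \to \id_\cC \to i_R y$ to the general stable setting, which is formal. To confirm that this statement indeed verifies \Cref{prop.Gamma.check.as.tfib.of.Gammas.and.Gamma.as.tcofib.of.Gamma.checks} when $\pos = [1]$, one observes that $\sd([1])^{|0}_{|1}$ has a single object $\{0<1\}$, so the total fiber in \Cref{part.Gamma.check.as.tfib.of.Gammas} reduces to a single copy of $\Sigma^{-1} \Gamma^0_1$, and the identifications $\Gamma^0_1 \simeq p_L i_R$ and $\widecheck{\Gamma}^0_1 \simeq p_R i_L$ translate the lemma directly into that special case.
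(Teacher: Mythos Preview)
Your proof is correct. Both arguments rest on the same recollement fiber sequence $\nu p_R \to \id_\cC \to i_R y$, but the paper takes a more symmetric route: it assembles a $3 \times 3$ grid in $\Fun^\ex(\cC,\cC)$ whose rows and columns are built from the two dual fiber sequences, argues that all four inner squares are bicartesian (invoking \Cref{lem.reconstrn.for.recollement} for the lower-right and a dual argument or \Cref{prop.stratn.of.opposite} for the upper-left), and deduces that the outer square is a pullback before projecting with $p_L/p_R$ and $i_L/i_R$. Your approach bypasses the grid entirely by immediately applying $p_L$ and precomposing with $i_L$, which collapses the sequence in two steps using only $p_L \nu \simeq \id$, $p_L i_L \simeq 0$, and $y i_L \simeq \id$. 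This is more direct and avoids the appeal to the dual pushout square; the paper's grid, on the other hand, makes the symmetry between the two halves of the recollement visible and yields the statement for all four combinations of pre/postcomposition at once.
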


\begin{proof}
Consider the commutative diagram
\[ \begin{tikzcd}
\nu p_R i_L y
\arrow{r}
\arrow{d}
&
i_L y
\arrow{r}
\arrow{d}
&
0
\arrow{d}
\\
\nu p_R
\arrow{r}
\arrow{d}
&
\id_\cC
\arrow{r}
\arrow{d}
&
\nu p_L
\arrow{d}
\\
0
\arrow{r}
&
i_R y
\arrow{r}
&
\nu p_L i_R y
\end{tikzcd} \]
in $\Fun^\ex(\cC,\cC)$ (in which all morphisms are co/units or zero). Note that the lower-right square is a pullback by (the proof of) \Cref{lem.reconstrn.for.recollement}, while the upper-left square is a pushout by an identical argument (or by appealing to \Cref{prop.stratn.of.opposite}). On the other hand, the lower-left and upper-right squares are clearly both co/fiber sequences. So, the outer square is a pullback, which proves the claim (by precomposing with either $i_L$ or $i_R$ and postcomposing with either $p_L$ or $p_R$).
\end{proof}

\begin{proof}[Proof of \Cref{prop.Gamma.check.as.tfib.of.Gammas.and.Gamma.as.tcofib.of.Gamma.checks}]
Part \Cref{part.Gamma.as.tcofib.of.Gamma.checks} follows from part \Cref{part.Gamma.check.as.tfib.of.Gammas} by applying \Cref{prop.stratn.of.opposite}, so it suffices to verify the latter.

We reduce to the case that $p \in \pos$ is initial and $q \in \pos$ is terminal, using \Cref{obs.image.of.strat.in.Strat} in order to apply our previous results regarding stratifications of presentable stable $\infty$-categories, as follows. First of all, by passing to the restricted stratification of $\cZ_q$ over $(^\leq q) \in \Down_\pos$ (\Cref{obs.restricted.stratn.over.D}), we may clearly assume that $q \in \pos$ is terminal. From here, we claim that passing to the quotient stratification of $\cC / \cZ_{^{\not \geq} p}$ over $(^\geq p) = \pos \backslash (^{\not\geq} p)$ (\Cref{obs.quotient.stratn.from.down.closed.over.smaller.poset}) allows us to assume that $p \in \pos$ is initial. On the one hand, the fact that the gluing diagram of $\cC$ over $\pos$ restricts to that of $\cC/\cZ_{^{\not\geq} p}$ over $\pos \backslash (^{\not\geq} p)$ (as explained in \Cref{obs.quotient.stratn.from.down.closed.over.smaller.poset}) implies that the diagram
\[ \begin{tikzcd}
\sd(\pos)^{|p}_{|q}
\arrow{r}{\Gamma_\bullet}
&
\Fun^\ex(\cC_p,\cC_q)
\\
\sd(\pos \backslash (^{\not\geq} p))^{|p}_{|q}
\arrow{u}[sloped, anchor=south]{\sim}
\arrow{r}[swap]{\Gamma_\bullet}
&
\Fun^\ex( ( \cC / \cZ_{^{\not \geq} p} )_p , ( \cC / \cZ_{^{\not \geq} p} )_q )
\arrow[leftrightarrow]{u}[sloped, anchor=north]{\sim}
\end{tikzcd} \]
commutes. On the other hand, the diagram
\[ \begin{tikzcd}
\cZ_p
\arrow[hook]{r}{i_L}
&
\cC
\arrow{r}{p_R}
&
\cC_q
\\
\cC_p
\arrow[hook]{r}[swap]{i_L}
\arrow[hook]{u}{\nu}
&
\cC/\cZ_{^{\not\geq} p}
\arrow{r}[swap]{p_R}
\arrow[hook]{u}{\nu}
&
(\cC/\cZ_{^{\not\geq} p})_q
\arrow{u}[sloped, anchor=north]{\sim}
\end{tikzcd} \]
commutes: the left square commutes by \Cref{lemma.all.about.aligned.subcats}\Cref{part.alignment.lemma.induced.map.on.quotients}\Cref{subpart.alignment.lemma.nu.commutativity} (which applies by \Cref{closed.subcats.are.mutually.aligned}), while the right square commutes by passing to left adjoints (recall \Cref{prop.quotient.stratn}). It follows that we obtain a commutative diagram
\[ \begin{tikzcd}[row sep=0cm]
\cC_p
\arrow{r}{\widecheck{\Gamma}^p_q}
&
\cC_q
\\
\rotatebox{90}{$\simeq$}
&
\rotatebox{90}{$\simeq$}
\\
(\cC/\cZ_{^{\not\geq} p})_p
\arrow{r}[swap]{\widecheck{\Gamma}^p_q}
&
(\cC/\cZ_{^{\not\geq} p})_q
\end{tikzcd}
~.
\]
So we may indeed assume that $p \in \pos$ is initial.

To simplify our notation, we write $\pos' := \pos \backslash \{ q \} = (^< q) \in \Down_\pos$. Observe that we obtain a stable recollement
\begin{equation}
\label{stable.recollement.in.proof.of.Gamma.and.Gamma.check.in.terms.of.each.other}
\begin{tikzcd}[column sep=1.5cm]
\cC_{\pos'}
\arrow[hook, bend left=45]{r}[description]{i_L}
\arrow[leftarrow]{r}[transform canvas={yshift=0.1cm}]{\bot}[swap,transform canvas={yshift=-0.1cm}]{\bot}[description]{\yo}
\arrow[bend right=45, hook]{r}[description]{i_R}
&
\cC
\arrow[bend left=45]{r}[description]{p_L}
\arrow[hookleftarrow]{r}[transform canvas={yshift=0.1cm}]{\bot}[swap,transform canvas={yshift=-0.1cm}]{\bot}[description]{\nu}
\arrow[bend right=45]{r}[description]{p_R}
&
\cC_q
\end{tikzcd}
~.
\end{equation}
To finish the proof, we work within the context of the commutative diagram
\begin{equation}
\label{big.comm.diagram.for.identifying.reflected.gluing.functor.in.terms.of.tfib}
\begin{tikzcd}[row sep=1.5cm]
\cC_p
\arrow[hook]{d}[swap]{i_L}
\arrow[bend left]{rrd}[sloped]{\Sigma \widecheck{\Gamma}^p_q}
\\
\cC_{\pos'}
\arrow[hook]{r}{i_R}
\arrow{d}[swap]{L_\bullet}
\arrow[bend left=25]{rr}{\Sigma p_R i_L}
&
\cC
\arrow{r}{p_L}
&
\cC_q
\\
\Fun(\sd(\pos') , \cC_{\pos'})
\arrow[hook]{r}[swap]{i_R}
&
\Fun(\sd(\pos') , \cC )
\arrow{r}[swap]{p_L}
&
\Fun(\sd(\pos') , \cC_q)
\arrow{r}[swap]{\pi_*}
\arrow{u}{\lim_{\sd(\pos')}}
&
\Fun ( \sd([1]) , \cC_q )
\arrow{lu}[sloped]{\lim_{\sd([1])}}
\end{tikzcd}
\end{equation}
in $\St$, in which
\begin{itemize}

\item the upper left (curved) triangle commutes by definition of $\widecheck{\Gamma}^p_q$,

\item the middle (flattened) triangle commutes by applying \Cref{lemma.reflected.gluing.diagram.in.brax.one.case} to the stable recollement \Cref{stable.recollement.in.proof.of.Gamma.and.Gamma.check.in.terms.of.each.other},

\item the lower left vertical functor $L_\bullet$ is the functor \Cref{L.bullet.as.a.functor.to.coaugmented.endofunctors.of.X} of \Cref{obs.functoriality.of.L.phi.in.the.variable.phi},

\item the lower left rectangle commutes due to the equivalence $\lim_{\sd(\pos')} \circ L_\bullet \simeq \id_{\cC_{\pos'}}$ that follows from \Cref{macrocosm.thm} (using that $\sd(\pos')$ is finite),

\item the functor $\sd(\pos') \xra{\pi} \sd([1])$ to the walking cospan is given by the prescriptions
\[
\pi^{-1}(0)
=
\{([0] \xra{p} \pos') \}
~,
\qquad
\pi^{-1}(01)
=
\sd(\pos')^{|p} \backslash ( [ 0 ] \xra{p} \pos')
~,
\qquad
\text{and}
\qquad
\pi^{-1}(1)
=
\sd(\pos') \backslash \sd(\pos')^{|p}
~,
\]
and

\item the lower right triangle commutes because right Kan extensions compose.

\end{itemize}
We claim that the composite exact functor $\cC_p \ra \Fun( \sd([1]) , \cC_q)$ in diagram \Cref{big.comm.diagram.for.identifying.reflected.gluing.functor.in.terms.of.tfib} selects the evident cospan
\begin{equation}
\label{cospan.whose.limit.is.total.fiber.in.formula.for.Gamma.and.Gamma.check.in.terms.of.each.oter}
\begin{tikzcd}
&
0
\arrow{d}
\\
\Gamma^p_q
\arrow{r}
&
\lim_{\sd(\pos)^{|p}_{|q} \backslash ( [1] \xra{p < q} \pos)} ( \Gamma_\bullet )
\end{tikzcd}
\end{equation}
in $\Fun^\ex(\cC_p,\cC_q)$. To see this, observe first that the functor $\sd(\pos') \xra{\pi} \sd([1])$ is a cartesian fibration: over the morphism $0 \ra 01$ this follows from the fact that the object $([0] \xra{p} \pos') \in \sd(\pos')^{|p}$ is initial, while its cartesian monodromy over the morphism $1 \ra 01$ is given by removing the element $p \in \pos'$ from each object. Therefore, the right Kan extension $\pi_*$ is computed by fiberwise limit. From here, it suffices to make the following two observations.
\begin{itemize}

\item The composite functor $\cC_p \xhookra{i_L} \cC_{\pos'} \xra{L_\bullet} \Fun ( \sd(\pos') , \cC_{\pos'})$ takes values in the full subcategory of functors that restrict to zero on $\sd(\pos') \backslash \sd(\pos')^{|p}$. This implies that the composite functor $\cC_p \ra \Fun(\sd([1]) , \cC_q) \xra{\ev_1} \cC_q$ is indeed zero.

\item For each object $([n] \xra{\varphi} \pos') \in \sd(\pos')^{|p}$, the composite functor
\[
\cC_p
\xlonghookra{i_L}
\cC_{\pos'}
\xra{L_\bullet}
\Fun ( \sd(\pos') , \cC_{\pos'})
\xra{\ev_\varphi}
\cC_{\pos'}
\]
is canonically equivalent to the composite
\[
\cC_p
=
\cC_{\min(\varphi)}
\xra{\Gamma_\varphi}
\cC_{\max(\varphi)}
\xhookra{\rho^{\max(\varphi)}}
\cC_{\pos'}
~.
\]
This explains the identification of the composite functor $\cC_p \ra \Fun(\sd([1]),\cC_q)$ at the objects $0,01 \in \sd([1])$.

\end{itemize}
So, the claim follows from the fact that the limit of the cospan \Cref{cospan.whose.limit.is.total.fiber.in.formula.for.Gamma.and.Gamma.check.in.terms.of.each.oter} is by definition the total fiber of the functor $\sd(\pos)^{|p}_{|q} \xra{\Gamma_\bullet} \Fun^\ex(\cC_p,\cC_q)$.
\end{proof}

\begin{remark}
\label{rmk.introduce.gluing.and.reflected.gluing.diagrams}
We now simultaneously introduce the gluing diagram and reflected gluing diagram of our stable stratification $\cZ_\bullet$ of $\cC$ over $\pos$. The former was already implicitly defined in \Cref{thm.stable.metacosm}, but we nevertheless spell it out here for clarity and in order to highlight the comparison.\footnote{Inspecting \Cref{defn.gluing.diagram} and \Cref{thm.stable.metacosm}, it is clear that \Cref{defn.stable.gluing.diagram.and.stable.reflected.gluing.diagram}\Cref{defn.part.stable.gluing.diagram} coincides with the gluing diagram as implicitly defined in \Cref{thm.stable.metacosm}.}
\end{remark}

\needspace{2\baselineskip}
\begin{notation}
\begin{enumerate}
\item[]

\item

We define the full subcategory
\[
\GD(\cC)
:=
\{ (X,p) \in \cC \times \pos
:
X \in \rho^p(\cC_p)
\}
\subseteq
\cC \times \pos
~,
\]
which we consider as an object of $\Cat_{/\pos}$.

\item

We define the full subcategory
\[
\widecheck{\GD}(\cC)
:=
\{ (X,p^\circ) \in \cC \times \pos^\op
:
X \in \lambda^p(\cC_p)
\}
\subseteq
\cC \times \pos^\op
~,
\]
which we consider as an object of $\Cat_{/\pos^\op}$.
\end{enumerate}
\end{notation}

\needspace{2\baselineskip}
\begin{observation}
\label{obs.stable.gluing.diagram.and.stable.reflected.gluing.diagram}
\begin{enumerate}
\item[]

\item\label{item.obs.stable.gluing.diagram}

The functor
\[
\GD(\cC)
\longra
\pos
\]
is a locally cocartesian fibration, whose monodromy functor over each morphism $p \ra q$ in $\pos$ is the functor
\[
\cC_p
\xra{\Gamma^p_q}
\cC_q
~.
\]
Moreover, its fibers are idempotent-complete and stable and its monodromy functors are exact. We therefore consider it as defining an object
\[
\GD(\cC)
\in
\LMod_{\llax.\pos}(\St^\idem)
\subseteq
\LMod_{\llax.\pos}
:=
\loc.\coCart_\pos
~.
\]

\item\label{item.obs.stable.reflected.gluing.diagram}

The functor
\[
\widecheck{\GD}(\cC)
\longra
\pos^\op
\]
is a locally cartesian fibration, whose monodromy functor over each morphism $p^\circ \la q^\circ$ in $\pos^\op$ is the functor
\[
\cC_p
\xra{\widecheck{\Gamma}^p_q}
\cC_q
~.
\]
Moreover, its fibers are idempotent-complete and stable and its monodromy functors are exact. We therefore consider it as defining an object
\[
\widecheck{\GD}(\cC)
\in
\LMod_{\rlax.\pos}(\St^\idem)
\subseteq
\LMod_{\rlax.\pos}
:=
\RMod_{\rlax.\pos^\op}
:=
\loc.\Cart_{\pos^\op}
~.
\]

\end{enumerate}
\end{observation}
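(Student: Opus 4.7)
The plan is to handle the two parts symmetrically, reducing both to the defining adjunctions $\Phi_p \dashv \rho^p$ and $\lambda^p \dashv \Psi_p$. In each case the fiberwise identification is essentially definitional, and the locally (co)cartesian structure is produced by the unit (resp.\! counit) morphisms of the relevant adjunctions.

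For part \Cref{item.obs.stable.gluing.diagram}, I would first observe that the fiber of $\GD(\cC) \to \pos$ over $p$ is $\rho^p(\cC_p) \subseteq \cC$; since $\rho^p$ is fully faithful and $\cC_p$ is idempotent-complete and stable, the same holds for this fiber. Next, for a morphism $p \to q$ in $\pos$ and any $A \in \cC_p$, the unit of $\Phi_q \dashv \rho^q$ provides a morphism $\rho^p A \to \rho^q \Phi_q \rho^p A = \rho^q \Gamma^p_q A$ in $\cC$, which lifts $p \to q$ to a morphism in $\GD(\cC)$. Its universality as a locally cocartesian lift is exactly the content of the adjunction equivalence
\[
\hom_{\cC_q}(\Gamma^p_q A, B) \xlongra{\sim} \hom_\cC(\rho^p A, \rho^q B)
\qquad (B \in \cC_q)~,
\]
and this simultaneously shows that $\GD(\cC) \to \pos$ is locally cocartesian and identifies its monodromy as $\Gamma^p_q$. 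Exactness of $\Gamma^p_q := \Phi_q \circ \rho^p$ is immediate from the fact that $\Phi_q$ and $\rho^p$ are exact functors between stable $\infty$-categories.

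For part \Cref{item.obs.stable.reflected.gluing.diagram} the argument is formally dual. The fiber of $\widecheck{\GD}(\cC) \to \pos^\op$ over $p^\circ$ is $\lambda^p(\cC_p) \simeq \cC_p$ since $\lambda^p$ is fully faithful, so it is idempotent-complete and stable. For $p \to q$ in $\pos$ (equivalently $q^\circ \to p^\circ$ in $\pos^\op$) and any $A \in \cC_p$, the counit of $\lambda^q \dashv \Psi_q$ provides a morphism $\lambda^q \widecheck{\Gamma}^p_q A = \lambda^q \Psi_q \lambda^p A \to \lambda^p A$ in $\cC$, which lifts $q^\circ \to p^\circ$ to a morphism in $\widecheck{\GD}(\cC)$. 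Its universality as a locally cartesian lift follows from the dual adjunction equivalence
\[
\hom_{\cC_q}(B, \widecheck{\Gamma}^p_q A) \xlongra{\sim} \hom_\cC(\lambda^q B, \lambda^p A)
\qquad (B \in \cC_q)~,
\]
proving that $\widecheck{\GD}(\cC) \to \pos^\op$ is locally cartesian with monodromy $\widecheck{\Gamma}^p_q$. Exactness of $\widecheck{\Gamma}^p_q := \Psi_q \circ \lambda^p$ is automatic.

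The main subtlety to keep in mind (rather than a serious obstacle) is that the fibrations are only \emph{locally} co/cartesian, reflecting the fact that the gluing and reflected gluing functors do not strictly compose; the construction therefore produces a left-lax left $\pos$-module in one case and a right-lax right $\pos$-module in the other, as required for the applications in \Cref{subsection.reflection}.
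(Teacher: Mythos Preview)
Your argument is correct and is exactly the natural verification one would write. The paper records this as an \emph{Observation} without proof (mirroring the earlier Observation~\ref{obs.GD.is.a.llax.P.mod}, which likewise omits the verification), so there is no alternative approach to compare against; your use of the unit of $\Phi_q \dashv \rho^q$ and the counit of $\lambda^q \dashv \Psi_q$ to produce the locally co/cartesian lifts, together with full faithfulness of $\rho^q$ and $\lambda^q$ to identify the fibers, is precisely what the authors have in mind.
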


\needspace{2\baselineskip}
\begin{definition}
\label{defn.stable.gluing.diagram.and.stable.reflected.gluing.diagram}
\begin{enumerate}
\item[]

\item\label{defn.part.stable.gluing.diagram}

We refer to the object
\[
\GD(\cC)
\in
\LMod_{\llax.\pos}(\St^\idem)
\]
as the \bit{gluing diagram} of the stratification.

\item\label{defn.part.stable.reflected.gluing.diagram}

We refer to the object
\[
\widecheck{\GD}(\cC)
\in
\LMod_{\rlax.\pos}(\St^\idem)
\]
as the \bit{reflected gluing diagram} of the stratification.

\end{enumerate}
\end{definition}

\begin{theorem}
\label{thm.reflection.for.stable.stratns}
There is a canonical commutative diagram
\[ \begin{tikzcd}[column sep=1.5cm, row sep=1.5cm]
&
{\displaystyle \prod_{p \in \pos} \St^\idem}
\\
\LMod_{\rlax.\pos}(\St^\idem)
\arrow[yshift=0.9ex]{r}{\limllaxfam}
\arrow[leftarrow, yshift=-0.9ex]{r}{\sim}[swap]{\widecheck{\GD}}
\arrow{ru}[sloped]{(\ev_p)_{p \in \pos}}
\arrow{rd}[sloped, swap]{\lim^\llax_{\rlax.\pos}}
&
\strat_\pos^\strict
\arrow[yshift=0.9ex]{r}[sloped]{\GD}
\arrow[leftarrow, yshift=-0.9ex]{r}{\sim}[sloped, swap]{\limrlaxfam}
\arrow{d}{\fgt}
\arrow{u}[swap]{((-)_p)_{p \in \pos}}
&
\LMod_{\llax.\pos}(\St^\idem)
\arrow{lu}[sloped]{(\ev_p)_{p \in \pos}}
\arrow{ld}[sloped, swap]{\lim^\rlax_{\llax.\pos}}
\\
&
\St^\idem
\end{tikzcd}
~.
\]
\end{theorem}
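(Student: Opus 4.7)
The right-hand equivalence $\strat_\pos^\strict \simeq \LMod_{\llax.\pos}(\St^\idem)$ together with its compatibility with the forgetful functor down to $\St^\idem$ and the fiberwise evaluation functors to $\prod_{p \in \pos} \St^\idem$ is already in hand: restrict the metacosm equivalence of Theorem \ref{thm.stable.metacosm} to strict morphisms using Theorem \ref{thm.strict.metacosm}\ref{strict.small.metacosm}, and note that the gluing diagram $\GD(\cC)$ of Definition \ref{defn.stable.gluing.diagram.and.stable.reflected.gluing.diagram}\ref{defn.part.stable.gluing.diagram} has fiber $\cC_p$ over $p \in \pos$ and underlying stable $\infty$-category obtained from the right-lax limit, which tautologically agrees with $\cC$ by macrocosm reconstruction. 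So all that remains is to establish the left-hand equivalence and the compatibility of the outer commutative triangles.

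The strategy for the left-hand side is to reduce it to the right-hand side via passage to opposites. Concretely, the reflected stable stratification of Proposition \ref{prop.stratn.of.opposite} provides an involution
\[
(-)^\refl
:
\strat_\pos^\strict
\xlongra{\sim}
\strat_\pos^\strict
\qquad
\cC
\longmapsto
\cC^\op
\]
(where the stratification on $\cC^\op$ is the reflected one, obtained by taking $\cZ_p^\op \xhookra{i_R^\op} \cC^\op$ as the $p\th$ closed subcategory); under this involution, the adjunctions $(\Phi_p \adj \rho^p)$ and $(\lambda^p \adj \Psi_p)$ for $\cC$ correspond respectively to $(\rho^p{}^\op \dashv \Phi_p^\op)$ and $(\Psi_p^\op \dashv \lambda^p{}^\op)$ for $\cC^\op$ (swapping the two composites). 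In particular, the reflected gluing functor $\widecheck{\Gamma}^p_q$ for $\cC$ is identified with the opposite of the gluing functor $\Gamma^p_q$ for $\cC^\op$. In the same vein, passage to opposites defines an equivalence
\[
\LMod_{\rlax.\pos}(\St^\idem)
\xlongra{\sim}
\LMod_{\llax.\pos}(\St^\idem)
\]
(which one may see, for instance, by applying the $(-)^\op$ involution on $\St^\idem$ fiberwise to the locally cartesian fibrations classified by objects of the source and converting them to locally cocartesian fibrations), and under this equivalence the reflected gluing diagram $\widecheck{\GD}(\cC)$ of Definition \ref{defn.stable.gluing.diagram.and.stable.reflected.gluing.diagram}\ref{defn.part.stable.reflected.gluing.diagram} corresponds to $\GD(\cC^\refl)$ by the identification of reflected gluing functors noted just above.

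Combining these, the left-hand equivalence is obtained as the composite
\[
\strat_\pos^\strict
\xlongra[\sim]{(-)^\refl}
\strat_\pos^\strict
\xlongra[\sim]{\GD}
\LMod_{\llax.\pos}(\St^\idem)
\xlongra[\sim]{(-)^\op}
\LMod_{\rlax.\pos}(\St^\idem)
~,
\]
and its inverse $\limllaxfam$ is the analogous composite in the reverse direction. The commutativity of the triangles with $(\ev_p)_{p \in \pos}$ is immediate, since all three horizontal equivalences manifestly have fiber $\cC_p$ over each $p \in \pos$ (reflection does not change strata, it merely swaps their $i_L$ and $i_R$ inclusions). Commutativity of the triangle with the forgetful functor $\fgt$ on the right is inherited from Theorem \ref{thm.stable.metacosm}; for the triangle on the left, the underlying $\infty$-category of the left-lax limit $\lim^\llax_{\rlax.\pos}(\widecheck{\GD}(\cC))$ is canonically the opposite of the right-lax limit $\lim^\rlax_{\llax.\pos}(\GD(\cC^\refl))$, which by macrocosm reconstruction applied to $\cC^\refl$ is canonically identified with $(\cC^\op)^\op \simeq \cC$.

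The main technical obstacle is Proposition \ref{prop.stratn.of.opposite} — namely, verifying that $\cC \mapsto \cC^\op$ equipped with the reflected closed subcategories genuinely defines a stable stratification, and that on morphisms the involution preserves strictness. This amounts to checking: (i) that each $\cZ_p^\op \xhookra{i_R^\op} \cC^\op$ is the inclusion of a closed subcategory (immediate from Definition \ref{defn.thick.split.closed}\ref{defn.small.closed}, which is self-dual), (ii) that the prestratification condition and condition $(\star)$ are preserved under $(-)^\op$ (for which one applies Proposition \ref{prop.characterize.stable.stratns} to trade condition $(\star)$ for its concrete reformulation in terms of alignment of closed subcategories, and then uses that alignment is preserved under opposites after swapping its two sides), and (iii) that strict morphisms in $\strat_\pos^\strict$ — which by definition commute with both $i_L$ and $i_R$ — go to strict morphisms after passing to opposites (which is forced by the symmetry of the definition). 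Modulo this proposition, the argument is otherwise an unwinding of definitions.
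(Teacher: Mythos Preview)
Your proposal is correct and follows essentially the same approach as the paper's proof: both establish the right-hand equivalence via \Cref{thm.strict.metacosm}\ref{strict.small.metacosm}, then obtain the left-hand equivalence by factoring $\widecheck{\GD}$ as the composite $(-)^\op \circ \GD \circ (-)^\refl$ using the reflected stable stratification of \Cref{prop.stratn.of.opposite}, the involution $(-)^\refl$ on $\strat_\pos^\strict$, and the fiberwise-opposite equivalence $\LMod_{\llax.\pos}(\St^\idem) \simeq \LMod_{\rlax.\pos}(\St^\idem)$. The paper packages the last two ingredients as \Cref{obs.involution.of.strat.strict} and \Cref{obs.lax.modules.and.lax.limits}, and deduces the lower-left triangle from the compatibility of $(-)^\op$ with lax limits recorded there, which is exactly your argument that $\lim^\llax_{\rlax.\pos}(\widecheck{\GD}(\cC)) \simeq \big(\lim^\rlax_{\llax.\pos}(\GD(\cC^\refl))\big)^\op \simeq (\cC^\op)^\op \simeq \cC$.
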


\begin{remark}
In what follows, we use the notation $(-)^\refl$ to denote opposite $\infty$-categories that are considered in some nonstandard (``reflected'') way. (We explain both usages of this notation as they arise; see \Cref{notn.reflected.closed.subcat} \and \Cref{defn.reflected.stable.stratn}.) We continue to use the notation $(-)^\op$ to denote the opposite $\infty$-category considered in its own right.
\end{remark}

\begin{definition}
\label{defn.reflected.closed.subcat}
The \bit{reflected closed subcategory} (or simply \bit{reflection}) of a closed subcategory $\cZ \in \clssub_\cC$ of $\cC$ is the closed subcategory
\[
\cZ^\op
\xlonghookra{i_R^\op}
\cC^\op
\]
of $\cC^\op$.\footnote{The right adjoint of $i_R^\op$ is $y^\op$, and the right adjoint of $y^\op$ is $i_L^\op$; see \Cref{notn.reflected.closed.subcat}.}
\end{definition}

\begin{observation}
\label{obs.reflection.is.an.equivalence.and.squares.to.identity}
Passage to reflected closed subcategories determines an equivalence
\[ \begin{tikzcd}[row sep=0cm]
\clssub_\cC
\arrow{r}{(-)^\refl}[swap]{\sim}
&
\clssub_{\cC^\op}
\\
\rotatebox{90}{$\in$}
&
\rotatebox{90}{$\in$}
\\
\left( \cZ \xlonghookra{i_L} \cC \right)
\arrow[maps to]{r}
&
\left( \cZ^\op \xlonghookra{i_R^\op} \cC^\op \right)
\end{tikzcd}~, \]
which when applied twice yields the identity functor
\[
\id_{\clssub_\cC}
:
\clssub_\cC
\xra[\sim]{(-)^\refl}
\clssub_{\cC^\op}
\xra[\sim]{(-)^\refl}
\clssub_{(\cC^\op)^\op}
\simeq
\clssub_\cC
~.
\]
We use these facts without further comment.
\end{observation}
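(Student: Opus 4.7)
The observation amounts to three routine checks, all flowing from the symmetry of the adjoint string defining a closed subcategory. I would begin by unpacking the data of a closed subcategory $\cZ \in \clssub_\cC$ as a sequence of adjunctions $i_L \dashv y \dashv i_R$ in which both $i_L$ and $i_R$ are fully faithful (the latter being automatic, as noted in Definition~7.1.3). Taking opposites of all three functors reverses the handedness of the adjunctions, yielding $i_R^\op \dashv y^\op \dashv i_L^\op$. In this opposite picture, $i_R^\op \colon \cZ^\op \hookra \cC^\op$ is fully faithful with right adjoint $y^\op$ whose own right adjoint $i_L^\op$ is fully faithful; this is precisely the data exhibiting $\cZ^\op$ as a closed subcategory of $\cC^\op$, with the ``new $i_L$'' being $i_R^\op$. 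This establishes that $(-)^\refl$ is well-defined on objects.

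To see that $(-)^\refl$ is a morphism of posets, I would invoke the second fact in Observation~7.1.7: a containment $\cZ \subseteq \cZ'$ in $\clssub_\cC$ means that the inclusion $\cZ \hookra \cC$ factors through $\cZ' \hookra \cC$ as a composite of closed-subcategory inclusions, so that $\cZ \in \clssub_{\cZ'}$. Applying the objectwise construction from the previous paragraph inside $\cZ'$ and then composing with the reflected inclusion $(\cZ')^\op \hookra \cC^\op$ produces the factorization $\cZ^\op \hookra (\cZ')^\op \hookra \cC^\op$, witnessing $\cZ^\refl \subseteq (\cZ')^\refl$ in $\clssub_{\cC^\op}$. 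Compatibility with composition of containments is then immediate from the uniqueness of right adjoints.

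Finally, for the involutivity claim, I would simply trace the adjoint string through two reflections. Starting with $i_L \dashv y \dashv i_R$, the first reflection produces the closed-subcategory data $i_R^\op \dashv y^\op \dashv i_L^\op$ in $\cC^\op$. Applying reflection once more takes opposites of this latter chain: the ``new $i_L$'' of $\cZ \subseteq (\cC^\op)^\op \simeq \cC$ is the opposite of the rightmost functor, namely $(i_L^\op)^\op \simeq i_L$. Thus the twice-reflected closed subcategory coincides with the original $\cZ \hookra{i_L} \cC$, so $(-)^\refl \circ (-)^\refl \simeq \id_{\clssub_\cC}$, which combined with the analogous statement for $\cC^\op$ upgrades $(-)^\refl$ to an equivalence of posets. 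There is no real obstacle here; the only point requiring a sliver of care is checking that passage to opposites genuinely sends an adjunction $F \dashv G$ to $G^\op \dashv F^\op$, which is standard and is already used implicitly throughout the paper.
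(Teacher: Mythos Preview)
Your proposal is correct and is exactly the verification one would carry out; the paper itself provides no proof for this observation, treating the facts as immediate from the symmetry of the adjoint string under taking opposites. Your unpacking of the three points (well-definedness via reversal of the adjunction chain, order-preservation via composing closed inclusions, and involutivity via $(i_L^\op)^\op \simeq i_L$) is the standard justification and matches the spirit in which the paper invokes it ``without further comment.''
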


\begin{notation}
\label{notn.reflected.closed.subcat}
As indicated in \Cref{obs.reflection.is.an.equivalence.and.squares.to.identity}, given a closed subcategory $\cZ \in \clssub_\cC$ we write $\cZ^\refl \in \clssub_{\cC^\op}$ for its reflection. Moreover, we write
\[
\begin{tikzcd}[column sep=1.5cm]
\cZ^\refl
\arrow[hook, bend left=50]{r}[description]{i_L^\refl}
\arrow[leftarrow]{r}[transform canvas={yshift=0.15cm}]{\bot}[swap,transform canvas={yshift=-0.15cm}]{\bot}[description]{\yo^\refl}
\arrow[bend right=50, hook]{r}[description]{i_R^\refl}
&
\cC^\op
\arrow[bend left=50]{r}[description]{p_L^\refl}
\arrow[hookleftarrow]{r}[transform canvas={yshift=0.15cm}]{\bot}[swap,transform canvas={yshift=-0.15cm}]{\bot}[description]{\nu^\refl}
\arrow[bend right=50]{r}[description]{p_R^\refl}
&
\cC^\op
&[-1.8cm]
/ \cZ^\refl
\end{tikzcd}
\qquad
:=
\qquad
\begin{tikzcd}[column sep=1.5cm]
\cZ^\op
\arrow[hook, bend left=50]{r}[description]{i_R^\op}
\arrow[leftarrow]{r}[transform canvas={yshift=0.15cm}]{\bot}[swap,transform canvas={yshift=-0.15cm}]{\bot}[description]{\yo^\op}
\arrow[bend right=50, hook]{r}[description]{i_L^\op}
&
\cC^\op
\arrow[bend left=50]{r}[description]{p_R^\op}
\arrow[hookleftarrow]{r}[transform canvas={yshift=0.15cm}]{\bot}[swap,transform canvas={yshift=-0.15cm}]{\bot}[description]{\nu^\op}
\arrow[bend right=50]{r}[description]{p_L^\op}
&
(\cC
&[-1.8cm]
/ \cZ)^\op
\end{tikzcd}
\]
for the functors in the stable recollement that is opposite to the stable recollement \Cref{stable.recollement.from.closed.subcat}.\footnote{Here, we implicitly use the evident fact that the involution $\St^\idem \xra[\sim]{(-)^\op} \St^\idem$ preserves stable recollements.}
\end{notation}

\begin{prop}
\label{prop.stratn.of.opposite}
The composite
\[
\cZ_\bullet^\refl
:
\pos
\xra{\cZ_\bullet}
\clssub_\cC
\xra[\sim]{(-)^\refl}
\clssub_{\cC^\op}
\]
is a stable stratification of $\cC^\op$ over $\pos$.
\end{prop}

\begin{observation}
Passage to opposites defines an equivalence
\[ \begin{tikzcd}[row sep=0cm]
\thicksub_\cC
\arrow[leftrightarrow]{r}{(-)^\op}[swap]{\sim}
&
\thicksub_{\cC^\op}
\\
\rotatebox{90}{$\in$}
&
\rotatebox{90}{$\in$}
\\
(\cZ \subseteq \cC)
\arrow[maps to]{r}
&
(\cZ^\op \subseteq \cC^\op)
\end{tikzcd}
~.
\]
In particular, it preserves colimits, so that given a set $\{ \cY_s \in \thicksub_\cC \}_{s \in S}$ of thick subcategories of $\cC$, we have an identification
\[
\brax{ \cY_s^\op }^\thick_{s \in S}
=
\left( \brax{ \cY_s }^\thick_{s \in S} \right)^\op
\in
\thicksub_{\cC^\op}
~.
\]
We use this fact without further comment.
\end{observation}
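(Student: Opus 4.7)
The plan is to unpack the statement into three assertions and verify them in order: (i) the assignment $\cZ \mapsto \cZ^\op$ restricts from full stable subcategories of $\cC$ to a map of posets $\thicksub_\cC \to \thicksub_{\cC^\op}$; (ii) this map is an equivalence of posets; (iii) it preserves colimits, yielding the displayed formula. None of these steps should be difficult; the main content is purely formal once one observes that thickness is self-dual.

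For (i), I would note that the functor $(-)^\op\colon\Cat\to\Cat$ restricts to an involutive autoequivalence of $\St$ (it is exact and preserves zero objects, fiber sequences, and cofiber sequences by interchanging limits and colimits). Consequently, a full stable subcategory $\cZ\subseteq\cC$ determines a full stable subcategory $\cZ^\op\subseteq\cC^\op$. The key point is that $\cZ$ is idempotent-complete if and only if $\cZ^\op$ is: an idempotent in $\cZ$ is the same datum as an idempotent in $\cZ^\op$, and a splitting (as either a limit or a colimit of the associated diagram) in one persists as a splitting (of the opposite type) in the other. Hence $(-)^\op$ sends $\thicksub_\cC$ to $\thicksub_{\cC^\op}$, and it is manifestly order-preserving since $\cZ\subseteq\cZ'$ yields $\cZ^\op\subseteq(\cZ')^\op$.

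For (ii), the canonical equivalence $(\cC^\op)^\op\simeq\cC$ shows that applying $(-)^\op$ twice gives the identity functor on $\thicksub_\cC$, so $(-)^\op$ is an involutive equivalence of posets $\thicksub_\cC\simeq\thicksub_{\cC^\op}$. For (iii), any equivalence of posets is both a left and a right adjoint (to its inverse equivalence), and therefore preserves all colimits and all limits that exist. Applied to the colimit $\brax{\cY_s}^\thick_{s\in S}\in\thicksub_\cC$ along the functor $S\xra{\cY_\bullet}\thicksub_\cC$, this yields the colimit of the composite $S\xra{\cY_\bullet}\thicksub_\cC\xra[\sim]{(-)^\op}\thicksub_{\cC^\op}$, which by definition is $\brax{\cY_s^\op}^\thick_{s\in S}$; the claimed identification follows.

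If one prefers a direct verification of (iii) bypassing adjoint functor considerations, one can instead invoke the universal property of the thick closure: $\brax{\cY_s^\op}^\thick_{s\in S}$ is the smallest thick subcategory of $\cC^\op$ containing each $\cY_s^\op$. Since $(-)^\op$ is an inclusion-preserving bijection on thick subcategories by (i) and (ii), this smallest one is precisely $\bigl(\brax{\cY_s}^\thick_{s\in S}\bigr)^\op$. There is no real obstacle here; the only subtlety worth flagging is to avoid conflating $\thicksub_\cC$ (a poset of subcategories of a fixed $\cC$) with the functor $(-)^\op$ acting on $\St^\idem$ itself, and to be careful that the thick closure is indeed a colimit in $\thicksub_\cC$ rather than merely in $\St^\idem$, but both points follow immediately from the definition of $\thicksub_\cC$ as a full subposet of the poset of full stable subcategories ordered by inclusion.
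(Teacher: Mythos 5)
Your proposal is correct, and since the paper states this as an observation with no written proof, your argument supplies exactly the intended (and essentially only) justification: thickness is self-dual, $(-)^\op$ is an involutive order-isomorphism of posets of thick subcategories, and order-isomorphisms preserve the least upper bounds that define thick closures. The cautionary remarks at the end (distinguishing the poset $\thicksub_\cC$ from $(-)^\op$ on $\St^\idem$, and noting that the thick closure is the colimit in $\thicksub_\cC$ per the paper's Notation \ref{notation.for.thick.closure}) are apt but raise no real issues.
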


\begin{proof}[Proof of \Cref{prop.stratn.of.opposite}]
We apply the criteria of \Cref{prop.characterize.stable.stratns}.

We begin with condition \Cref{item.stable.prestratns} of \Cref{prop.characterize.stable.stratns}. Observe first that
\[
\brax{i_R(\cZ_p)}^\thick_{p \in \pos}
\supseteq
\brax{ \rho^p(\cC_p) }^\thick_{p \in \pos}
=
\cC
~,
\]
where the equality is guaranteed by \Cref{thm.stable.metacosm} (and the fact that $\pos$ is finite). Since $\cC \in \thicksub_\cC$ is terminal, this implies the equality
\begin{equation}
\label{iR.inclusions.thickly.generate.small.stable.C}
\brax{ i_R ( \cZ_p ) }^\thick_{p \in \pos}
=
\cC
\in
\thicksub_\cC
~,
\end{equation}
which implies the equality
\begin{align*}
\brax{\cZ_p^\refl}^\thick_{p \in \pos}
& :=
\brax{ i_L^\refl(\cZ_p^\refl) }^\thick_{p \in \pos}
:=
\brax{ i_R^\op(\cZ_p^\op) }^\thick_{p \in \pos}
=
\brax{ i_R(\cZ_p)^\op }^\thick_{p \in \pos}
\\
& =
\left( \brax{ i_R(\cZ_p) }^\thick_{p \in \pos} \right)^\op
=
\cC^\op
\in
\thicksub_{\cC^\op}
~.
\end{align*}

Before turning to condition \Cref{stable.stratn.condition} of \Cref{prop.characterize.stable.stratns}, we make some preliminary deductions.

Fix any $p,q \in \pos$. First of all, applying \Cref{prop.characterize.stable.stratns}\Cref{stable.stratn.condition}\Cref{stable.stratn.condition.thick.is.closed} to the stable stratification $\pos \xra{\cZ_\bullet} \clssub_\cC$, we find that the thick subcategory
\[
\cZ_{(^\leq p) \cap (^\leq q)}
:=
\brax{ \cZ_r }^\thick_{r \in (^\leq p) \cap (^\leq q)}
\in
\thicksub_\cC
\]
is a closed subcategory. Thereafter, by \Cref{obs.restricted.stratn.over.D} it is clear that the evident factorization
\begin{equation}
\label{factorizn.for.stable.stratn.of.Z.leq.p.and.leq.q.for.stable.stratn.of.opposite}
\begin{tikzcd}
\pos
\arrow{r}{\cZ_\bullet}
&
\clssub_\cC
\\
(^\leq p) \cap (^\leq q)
\arrow[dashed]{r}[swap]{\cZ_\bullet}
\arrow[hook]{u}
&
\clssub_{\cZ_{(^\leq p) \cap (^\leq q)}}
\arrow[hook]{u}
\end{tikzcd}
\end{equation}
defines a stable stratification of $\cZ_{(^\leq p) \cap (^\leq q)}$ over $(^\leq p) \cap (^\leq q)$. For any $r \in (^\leq p) \cap (^\leq q)$, let us denote by
\[
\cZ_r
\xlonghookra{\w{i_R}}
\cZ_{(^\leq p) \cap (^\leq r)}
\]
the corresponding $i_R$ inclusion, so that we have a commutative triangle
\begin{equation}
\label{commutative.triangle.of.iR.inclusions.for.showing.opposite.is.stratified}
\begin{tikzcd}
\cZ_r
\arrow[hook]{r}{\w{i_R}}
\arrow[hook]{rd}[sloped, swap]{i_R}
&
\cZ_{(^\leq p) \cap (^\leq q)}
\arrow[hook]{d}{i_R}
\\
&
\cC
\end{tikzcd}
~.
\end{equation}
Then, the equality \Cref{iR.inclusions.thickly.generate.small.stable.C} applied to the factorization \Cref{factorizn.for.stable.stratn.of.Z.leq.p.and.leq.q.for.stable.stratn.of.opposite} 
becomes an equality
\[
\brax{ \w{i_R}(\cZ_r) }^\thick_{r \in (^\leq p) \cap (^\leq q)}
=
\cZ_{(^\leq p) \cap (^\leq q)}
\in
\thicksub_{\cZ_{(^\leq p) \cap (^\leq q)}}
~,
\]
which by the commutativity of the triangle \Cref{commutative.triangle.of.iR.inclusions.for.showing.opposite.is.stratified} yields an equality
\[
\brax{ i_R ( \cZ_r )}^\thick_{r \in (^\leq p) \cap (^\leq q)}
=
i_R(\cZ_{(^\leq p) \cap (^\leq q)})
\in
\thicksub_\cC
~.
\]
This implies the composite equality
\begin{align*}
\brax{ \cZ_r^\refl }^\thick_{r \in (^\leq p) \cap (^\leq q)}
& :=
\brax{ i_L^\refl(\cZ_r^\refl) }^\thick_{r \in (^\leq p) \cap (^\leq q)}
:=
\brax{ i_R^\op ( \cZ_r^\op ) }^\thick_{r \in (^\leq p) \cap (^\leq q)}
\\
&
=
\brax{ i_R ( \cZ_r)^\op }^\thick_{r \in (^\leq p) \cap (^\leq q)}
=
\left(
\brax{ i_R ( \cZ_r ) }^\thick_{r \in (^\leq p) \cap (^\leq q)}
\right)^\op
\\
& =
i_R(\cZ_{(^\leq p) \cap (^\leq q)})^\op
\in
\thicksub_{\cC^\op}
~,
\end{align*}
which we record for readability as the equality
\begin{equation}
\label{identify.subcat.thickly.gend.by.dual.iLs.as.iR.of.intersection.op}
\brax{ \cZ_r^\refl }^\thick_{r \in (^\leq p) \cap (^\leq q)}
=
i_R(\cZ_{(^\leq p) \cap (^\leq q)})^\op
\in
\thicksub_{\cC^\op}
~.
\end{equation}

We now turn to condition \Cref{stable.stratn.condition} of \Cref{prop.characterize.stable.stratns}. Applying \Cref{prop.characterize.stable.stratns}\Cref{stable.stratn.condition}\Cref{stable.stratn.condition.thick.is.closed} to the stable stratification $\pos \xra{\cZ_\bullet} \clssub_\cC$, we immediately find that equality \Cref{identify.subcat.thickly.gend.by.dual.iLs.as.iR.of.intersection.op} implies part \Cref{stable.stratn.condition.thick.is.closed} of condition \Cref{stable.stratn.condition} of \Cref{prop.characterize.stable.stratns}. Then, applying \Cref{prop.characterize.stable.stratns}\Cref{stable.stratn.condition}\Cref{stable.stratn.condition.usual.factorizn} to the stable stratification $\pos \xra{\cZ_\bullet} \clssub_\cC$ with the roles of $p$ and $q$ reversed and invoking \Cref{lem.equivalent.characterizations.of.alignment}, we obtain a commutative square
\[ \begin{tikzcd}
\cZ_{(^\leq p) \cap (^\leq q)}
\arrow[hook]{d}[swap]{i_L}
&
\cZ_p
\arrow[hook]{d}{i_L}
\arrow{l}[swap]{y}
\\
\cZ_q
&
\cC
\arrow{l}{y}
\end{tikzcd}~, \]
which upon passing to right adjoints yields a commutative square
\[
\begin{tikzcd}
\cZ_{(^\leq p) \cap (^\leq q)}
\arrow[hook]{r}{i_R}
&
\cZ_p
\\
\cZ_q
\arrow{u}{y}
\arrow[hook]{r}[swap]{i_R}
&
\cC
\arrow{u}[swap]{y}
\end{tikzcd}~,
\]
which upon passing to opposites and applying equality \Cref{identify.subcat.thickly.gend.by.dual.iLs.as.iR.of.intersection.op} yields a commutative square
\[
\begin{tikzcd}
\brax{ \cZ_r^\refl }_{r \in (^\leq p) \cap (^\leq q)}
\arrow[hook]{r}{i_L^\refl}
&
\cZ_p^\refl
\\
\cZ_q^\refl
\arrow{u}{y^\refl}
\arrow[hook]{r}[swap]{i_L^\refl}
&
\cC^\op
\arrow{u}[swap]{y^\refl}
\end{tikzcd}
~,
\]
which verifies part \Cref{stable.stratn.condition.usual.factorizn} of condition \Cref{stable.stratn.condition} of \Cref{prop.characterize.stable.stratns}.
\end{proof}

\begin{definition}
\label{defn.reflected.stable.stratn}
We refer to the stable stratification $\cZ_\bullet^\refl$ of $\cC^\op$ over $\pos$ of \Cref{prop.stratn.of.opposite} as the \bit{reflected stable stratification} (or simply the \bit{reflection}) of the stable stratification $\cZ_\bullet$ of $\cC$ over $\pos$, and we denote it by
\[
\cC^\refl
:=
\left( \pos \xra{\cZ_\bullet^\refl} \clssub_{\cC^\op} \right)
\in
\strat_\pos^\strict
~.
\]
\end{definition}

\begin{observation}
\label{obs.involution.of.strat.strict}
Passage to reflected stable stratifications determines an involution
\[ \begin{tikzcd}[row sep=0cm]
\strat^\strict_\pos
\arrow[leftrightarrow]{r}{(-)^\refl}[swap]{\sim}
&
\strat^\strict_\pos
\\
\rotatebox{90}{$\in$}
&
\rotatebox{90}{$\in$}
\\
\cC
\arrow[maps to]{r}
&
\cC^\refl
\end{tikzcd}
~.
\]
\end{observation}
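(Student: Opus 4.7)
The plan is to produce the involution on $\strat^\strict_\pos$ by combining \Cref{prop.stratn.of.opposite} (which handles the action on objects) with the evident action $F \mapsto F^\op$ on morphisms, and then to verify compatibilities by tracking how the labels $\{i_L, y, i_R\}$ permute under reflection.

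First I would construct the functor on morphisms. Given a strict morphism $\cC \xra{F} \cC'$ in $\strat^\strict_\pos$, the opposite exact functor $F^\op : \cC^\op \ra (\cC')^\op$ is a candidate for $F^\refl$. By definition of a strict morphism, for each $p \in \pos$ the functor $F$ commutes with all three of $i_L$, $y$, and $i_R$; passing to opposites, $F^\op$ therefore commutes with $i_L^\op$, $y^\op$, and $i_R^\op$. Inspecting \Cref{notn.reflected.closed.subcat}, in the reflected stable stratifications $\cC^\refl$ and $(\cC')^\refl$ the triple $(i_L, y, i_R)$ is given by $(i_R^\op, y^\op, i_L^\op)$; thus $F^\op$ automatically commutes with the triple of recollement functors for the reflected stable stratifications, which means $F^\op$ defines a strict morphism $\cC^\refl \ra (\cC')^\refl$ in $\strat^\strict_\pos$. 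Standard considerations about opposites promote this assignment to a functor $(-)^\refl : \strat^\strict_\pos \ra \strat^\strict_\pos$.

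Next I would check that this functor squares to the identity. At the level of underlying $\infty$-categories this is immediate from $(\cC^\op)^\op \simeq \cC$, and on morphisms from $(F^\op)^\op \simeq F$. The only substantive point is that the doubly reflected stable stratification $(\cZ_\bullet^\refl)^\refl : \pos \ra \clssub_\cC$ is canonically identified with the original $\cZ_\bullet$. Fixing $p \in \pos$, reflecting the closed subcategory $\cZ_p \xhookra{i_L} \cC$ once produces $\cZ_p^\op \xhookra{i_R^\op} \cC^\op$, in which (as noted above) the role of $i_R$ is played by $i_L^\op$. Reflecting again then produces the closed subcategory $(\cZ_p^\op)^\op \xhookra{(i_L^\op)^\op} (\cC^\op)^\op$, which is canonically $\cZ_p \xhookra{i_L} \cC$; so $(\cZ^\refl_p)^\refl = \cZ_p$ as closed subcategories of $\cC$. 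Naturality in $p$ gives the desired identification $(\cZ^\refl_\bullet)^\refl = \cZ_\bullet$ as objects of $\Fun(\pos , \clssub_\cC)$.

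The only potential obstacle is purely bookkeeping: making sure that the canonical equivalences $(\cC^\op)^\op \simeq \cC$, $(F^\op)^\op \simeq F$, and $(\cZ^\refl_p)^\refl = \cZ_p$ assemble coherently into an equivalence $(-)^\refl \circ (-)^\refl \simeq \id_{\strat^\strict_\pos}$ of functors (rather than merely an equivalence pointwise on objects and morphisms). This can be organized cleanly by packaging the entire construction as an involution $(-)^\op$ of a suitable total $\infty$-category of pairs $(\cC, \cZ_\bullet)$ and verifying that the full subcategory $\strat^\strict_\pos$ is stable under it, using \Cref{prop.stratn.of.opposite} to see that the subcategory is preserved on objects and the analysis above to see that it is preserved on morphisms. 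Combining the involution property with the existence of the functor then yields that $(-)^\refl$ is an equivalence, as asserted.
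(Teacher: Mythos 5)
Your proposal is correct and is exactly the argument the paper intends: the statement is recorded as an Observation with no written proof, resting on \Cref{prop.stratn.of.opposite} for objects, on the swap of $i_L$ and $i_R$ under \Cref{notn.reflected.closed.subcat} to see that strict morphisms are preserved (this swap is also precisely why one must restrict to $\strat^\strict_\pos$), and on the double-opposite identification for the involution property. Your closing remark about packaging everything as the restriction of the global involution $(-)^\op$ is the right way to handle the coherence of the identifications, so there is nothing to add.
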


\needspace{2\baselineskip}
\begin{observation}
\label{obs.lax.modules.and.lax.limits}
\begin{enumerate}
\item[]

\item\label{nonstable.equivce.betw.llax.and.rlax.LModP}

There is a canonical equivalence
\begin{equation}
\label{equivce.op.between.llax.P.and.rlax.P}
\begin{tikzcd}[row sep=0cm]
\LMod_{\llax.\pos}
:=
&[-1cm]
\loc.\coCart_\pos
\arrow[leftrightarrow]{r}{(-)^\op}[swap]{\sim}
&
\loc.\Cart_{\pos^\op}
&[-1.1cm]
&[-3.2cm]
=:
\RMod_{\rlax.\pos^\op}
=:
\LMod_{\rlax.\pos}
\\
&
\rotatebox{90}{$\in$}
&
\rotatebox{90}{$\in$}
\\
&
(\cE \da \pos)
\arrow[maps to]{r}
&
(\cE \da \pos)^\op
&
:=
(\cE^\op \da \pos^\op)
\end{tikzcd}~.
\end{equation}

\item\label{item.restricted.equivalence.op.between.stable.idem.llax.P.and.rlax.P}

The equivalence \Cref{equivce.op.between.llax.P.and.rlax.P} of part \Cref{nonstable.equivce.betw.llax.and.rlax.LModP} restricts to an equivalence
\[
\begin{tikzcd}
\LMod_{\llax.\pos}(\St^\idem)
\arrow[dashed, leftrightarrow]{r}{(-)^\op}[swap]{\sim}
\arrow[hook]{d}
&
\LMod_{\rlax.\pos}(\St^\idem)
\arrow[hook]{d}
\\
\LMod_{\llax.\pos}
\arrow[leftrightarrow]{r}{\sim}[swap]{(-)^\op}
&
\LMod_{\rlax.\pos}
\end{tikzcd}
~.
\]

\item\label{item.fiberwise.opposite.aligns.lax.limits}

In view of the identification
\[
\left( \sd(\pos) \xra{\max} \pos \right)^\op
\simeq
\left( \sd(\pos^\op)^\op \xra{\min} \pos^\op \right)
~,
\]
the equivalence \Cref{equivce.op.between.llax.P.and.rlax.P} of part \Cref{nonstable.equivce.betw.llax.and.rlax.LModP} participates in a commutative square
\[
\begin{tikzcd}
\LMod_{\llax.\pos}
\arrow[leftrightarrow]{r}{(-)^\op}[swap]{\sim}
\arrow{d}[swap]{\lim^\rlax_{\llax.\pos}}
&
\LMod_{\rlax.\pos}
\arrow{d}{\lim^\llax_{\rlax.\pos}}
\\
\Cat
\arrow[leftrightarrow]{r}[swap]{(-)^\op}{\sim}
&
\Cat
\end{tikzcd}
~.\footnote{This may be seen as resulting from the fact that the equivalence \Cref{equivce.op.between.llax.P.and.rlax.P} between $\infty$-categories enhances to an equivalence $\LMod_{\llax.\pos} \simeq ( \LMod_{\rlax.\pos})^{2\op}$ between $(\infty,2)$-categories.}
\]

\end{enumerate}
\end{observation}

\begin{proof}[Proof of \Cref{thm.reflection.for.stable.stratns}]
First of all, it follows immediately from the definitions that the upper two triangles commute. Next, the inverse equivalences on the right are precisely the content of \Cref{thm.strict.metacosm}\Cref{strict.small.metacosm}, which also implies the commutativity of the lower right triangle. Thereafter, unwinding its definition, we see that the construction $\widecheck{\GD}$ is precisely the composite functor
\[
\widecheck{\GD}
:
\strat^\strict_\pos
\xra[\sim]{(-)^\refl}
\strat^\strict_\pos
\xra[\sim]{\GD}
\LMod_{\llax.\pos}(\St^\idem)
\xra[\sim]{(-)^\op}
\LMod_{\rlax.\pos}(\St^\idem)
\]
(as asserted by \Cref{thm.reflection.for.stable.stratns}), in which the three functors are respectively equivalences by \Cref{obs.involution.of.strat.strict}, \Cref{thm.strict.metacosm}\Cref{strict.small.metacosm}, and \Cref{obs.lax.modules.and.lax.limits}\Cref{item.restricted.equivalence.op.between.stable.idem.llax.P.and.rlax.P}. This implies that the functor $\widecheck{\GD}$ is indeed an equivalence. Combining these three results with \Cref{obs.lax.modules.and.lax.limits}\Cref{item.fiberwise.opposite.aligns.lax.limits} justifies the notation $\limllaxfam$ for its inverse (referring to the evident analog of \Cref{notn.stratn.of.one.rlaxlim.at.a.time}), and in particular implies the commutativity of the lower left triangle.
\end{proof}

\begin{prop}
\label{prop.stratn.gives.stable.stratn}
Fix a presentable stable $\infty$-category $\cX$. For any stratification
\[
\pos \longra \Cls_\cX
\]
of $\cX$ over $\pos$, its postcomposition
\[
\pos
\longra
\Cls_\cX
\xlongra{\sim}
\clssub_\cX
\]
with the equivalence of \Cref{obs.cls.and.Cls} is a stable stratification.
\end{prop}

\begin{proof}
Choose any closed subcategories $\cY,\cZ \in \Cls_\cX$ such that $\cZ$ is aligned with $\cY$. By \Cref{lem.excision}\Cref{part.excision.coloc}, the colocalization $i_L y$ into the closed subcategory $\brax{\cY,\cZ} \in \Cls_\cX$ is the cofiber of a morphism from an object of $\cZ \subseteq \cX$ to an object of $\cY \subseteq \cX$. This implies that the inclusion
\[
\brax{\cY,\cZ}^\thick
\subseteq
\brax{\cY,\cZ}
\]
in $\thicksub_\cX$ is an equality. Hence, the claim follows from \Cref{closed.subcats.are.mutually.aligned} \and \Cref{prop.characterize.stable.stratns} (and the fact that $\pos$ is finite).
\end{proof}

\begin{observation}
\label{obs.presentable.gluing.diagram.is.stable.gluing.diagram}
Considering a stratification of a presentable stable $\infty$-category as a stable stratification via \Cref{prop.stratn.gives.stable.stratn} does not change its gluing diagram: Definitions \ref{defn.gluing.diagram} \and \ref{defn.stable.gluing.diagram.and.stable.reflected.gluing.diagram}\ref{defn.part.stable.gluing.diagram} are compatible.
\end{observation}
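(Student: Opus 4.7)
The plan is to verify that the two constructions $\GD$ produce the same object of $\LMod_{\llax.\pos}(\St^\idem)$ by comparing them pointwise and checking that the locally cocartesian fibration structure agrees. Fix a presentable stable $\infty$-category $\cX$ together with a stratification $\pos \xra{\cZ_\bullet} \Cls_\cX$. Applying \Cref{prop.stratn.gives.stable.stratn} produces a stable stratification of $\cX$, considered now as an object of $\St^\idem$ via its underlying idempotent-complete stable $\infty$-category, whose values are the same closed subcategories $\cZ_p \in \clssub_\cX \simeq \Cls_\cX$ (under the equivalence of \Cref{obs.cls.and.Cls}). In both \Cref{defn.gluing.diagram} and \Cref{defn.stable.gluing.diagram.and.stable.reflected.gluing.diagram}\Cref{defn.part.stable.gluing.diagram}, the construction $\GD$ is a full subcategory of $\cX \times \pos$; to prove compatibility it therefore suffices to show that these full subcategories coincide and that the two locally cocartesian fibration structures over $\pos$ agree.

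First I would identify the strata. The presentable stratum at $p$ is the presentable quotient $\cX_p := \cZ_p/\cZ_{^<p}$, while the stable stratum at $p$ is the idempotent-complete stable quotient $\cC_p := \cZ_p/^{\St^\idem}\cZ_{^<p}$. Because $\cZ_{^<p} \subseteq \cZ_p$ is a full presentable stable subcategory of a presentable stable $\infty$-category (using also \Cref{closed.subcats.are.mutually.aligned} and \Cref{lemma.all.about.aligned.subcats}\Cref{part.alignment.lemma.induced.map.on.quotients}\Cref{subpart.alignment.lemma.image.is.closed} to see that $\cZ_{^<p}$ is a closed subcategory of $\cZ_p$), the canonical comparison $\cC_p \to \cX_p$ is an equivalence by the remark in \Cref{subsection.closed.split.thick.subcats} that presentable quotients satisfy the universal property of idempotent-complete stable quotients. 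Under this equivalence, the two fully faithful inclusions $\rho^p$ into $\cX$ coincide, since each is constructed from the same recollement $\cZ_{^<p} \hookra \cZ_p \hookra \cX$ by composing $\nu$ with $i_R$.

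It follows that the defining conditions ``$X \in \cX_p$'' and ``$X \in \rho^p(\cC_p)$'' pick out the same full subcategory of $\cX$, and hence the underlying objects $\GD(\cX) \subseteq \cX \times \pos$ of $\Cat_{/\pos}$ agree. Next I would check that the locally cocartesian fibration structure over $\pos$ coincides: in both cases this is inherited from the projection $\cX \times \pos \to \pos$ restricted to the common full subcategory, and \Cref{prop.metacosm.input.first.get.stratn}\Cref{metacosm.input.get.stratn}\Cref{gluing.in.X.is.mdrmy.in.E} (together with its stable analogue used implicitly in \Cref{obs.stable.gluing.diagram.and.stable.reflected.gluing.diagram}\Cref{item.obs.stable.gluing.diagram}) identifies the cocartesian monodromy functor associated to a morphism $p \to q$ as the composite $\Gamma^p_q = \Phi_q \circ \rho^p$, which is manifestly the same functor whether computed in the presentable or the stable setting.

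The only step requiring any care is the identification of strata; everything else is formal once the strata and their embeddings $\rho^p$ are pinned down, since both $\GD$ are literally defined as the same full subposet of $\cX \times \pos$ and inherit their locally cocartesian structure from the projection. In particular, the conclusion that the resulting object of $\LMod_{\llax.\pos}$ lies in the subcategory $\LMod_{\llax.\pos}(\St^\idem)$ and is the common gluing diagram is automatic once these identifications are established.
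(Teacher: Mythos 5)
Your proposal is correct and matches the paper's (implicit) reasoning: the paper states this as an unproved observation, and the intended justification is exactly your verification that the presentable and idempotent-complete stable quotients defining the strata coincide (via the universal-property remark in \Cref{subsection.closed.split.thick.subcats}), so that both versions of $\GD$ are literally the same full subcategory of $\cX \times \pos$ with the same locally cocartesian structure. No gaps.
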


\begin{notation}
We use a hat in order to emphasize that we are referring to a huge $\infty$-category whose objects are possibly large.
\end{notation}

\begin{observation}
\label{obs.Strat.into.what.strat}
By \Cref{prop.stratn.gives.stable.stratn}, we have inclusions
\[ \begin{tikzcd}
\Strat_\pos
\arrow[hook, dashed]{r}
&
\what{\strat}_\pos
\\
\Strat_\pos^\strict
\arrow[hook, dashed]{r}
\arrow[hook]{u}
&
\what{\strat}_\pos^\strict
\arrow[hook]{u}
\end{tikzcd}
~.
\]
\end{observation}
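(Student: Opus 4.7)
The plan is to establish the observation by checking, in turn, that the construction produces objects, morphisms, and morphisms of the strict variant, and then to verify faithfulness. Since the hard work has already been done in \Cref{prop.stratn.gives.stable.stratn}, the remaining content is essentially bookkeeping, so I would proceed as follows.

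First, on objects, I would invoke \Cref{prop.stratn.gives.stable.stratn} directly: given a $\pos$-stratified presentable stable $\infty$-category $\cX \in \Strat_\pos$ with stratification $\pos \xra{\cZ_\bullet} \Cls_\cX$, postcomposing with the equivalence $\Cls_\cX \simeq \clssub_\cX$ of \Cref{obs.cls.and.Cls} produces a functor $\pos \to \clssub_\cX$, and the proposition asserts that this is a stable stratification. The only subtlety is size: $\cX$ itself is large when viewed as an idempotent-complete stable $\infty$-category (forgetting the presentability), which is precisely why we must target the huge $\infty$-category $\what{\strat}_\pos$ rather than $\strat_\pos$.

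Next, on morphisms, I would unpack the two definitions and observe that they coincide after forgetting presentability. A morphism in $\Strat_\pos$ is (per \Cref{defn.Strat.P}) a left adjoint functor $\cX \to \cX'$ for which, for every $p \in \pos$, the defining commutations with $i_L$ and $y$ hold; since left adjoints in $\PrLSt$ are exact, this is exactly the data of a morphism in $\what{\strat}_\pos$ between the two stable stratifications constructed above. The same is true for strict morphisms, by the definition of $\Strat_\pos^\strict$ (which imposes the extra commutativity with $i_R$ that matches the definition of $\strat_\pos^\strict$), so the lower horizontal inclusion is the restriction of the upper. Compatibility with the vertical maps of the square is then immediate from the definitions.

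Finally, I would verify faithfulness. Morphisms in both $\Strat_\pos$ and $\what{\strat}_\pos$ are specified by the underlying exact functor on the ambient $\infty$-category together with (property-level) compatibility data with the closed-subcategory inclusions and their right adjoints; since the underlying functors themselves are unchanged by the construction, faithfulness reduces to the faithfulness of the forgetful functors $\Strat_\pos \to \PrLSt$ (noted in \Cref{obs.forget.from.Strat.to.PrL.conservative}) and its evident analog $\what{\strat}_\pos \to \what{\St}^\idem$. There is no real obstacle here: the entire content of the observation is bundled into \Cref{prop.stratn.gives.stable.stratn}, whose own proof already supplies the nontrivial input (that condition $(\star)$ of a stratification, combined with \Cref{lem.excision}\Cref{part.excision.coloc}, forces the thick closure of two closed subcategories to coincide with their colimit in $\Cls_\cX$, thereby verifying the stable stratification criteria of \Cref{prop.characterize.stable.stratns}).
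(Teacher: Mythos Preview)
Your proposal is correct and aligns with the paper's treatment: the paper presents this as an immediate observation following from \Cref{prop.stratn.gives.stable.stratn}, with no further proof given, and your unpacking of the object-level, morphism-level, and faithfulness checks is exactly the content one would supply if asked to elaborate. One minor imprecision: \Cref{obs.forget.from.Strat.to.PrL.conservative} asserts conservativity rather than faithfulness, but your actual argument---that morphisms in both $\Strat_\pos$ and $\what{\strat}_\pos$ are underlying exact functors satisfying property-level side conditions, so the comparison reduces to the faithful inclusion $\PrLSt \hookrightarrow \what{\St^\idem}$---is the correct one and does not rely on that citation.
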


\begin{cor}
\label{cor.reflection.for.presentable.stratns}
Assume that the poset $\pos$ is down-finite. Then, there is a canonical commutative diagram
\[
\begin{tikzcd}[column sep=1.5cm, row sep=1.5cm]
&
{\displaystyle \prod_{p \in \pos} \PrSt}
\\
\LMod_{\rlax.\pos}^L(\PrSt)
\arrow[yshift=0.9ex]{r}{\limllaxfam}
\arrow[leftarrow, yshift=-0.9ex]{r}{\sim}[swap]{\widecheck{\GD}}
\arrow{ru}[sloped]{(\ev_p)_{p \in \pos}}
\arrow{rd}[sloped, swap]{\lim^\llax_{\rlax.\pos}}
&
\Strat_\pos^\strict
\arrow[yshift=0.9ex]{r}[sloped]{\GD}
\arrow[leftarrow, yshift=-0.9ex]{r}{\sim}[sloped, swap]{\limrlaxfam}
\arrow{d}{\fgt}
\arrow{u}[swap]{((-)_p)_{p \in \pos}}
&
\LMod_{\llax.\pos}^L(\PrSt)
\arrow{lu}[sloped]{(\ev_p)_{p \in \pos}}
\arrow{ld}[sloped, swap]{\lim^\rlax_{\llax.\pos}}
\\
&
\PrSt
\end{tikzcd}
~.\footnote{By $\widecheck{\GD}$ here we refer to the evident analog of \Cref{defn.stable.gluing.diagram.and.stable.reflected.gluing.diagram}\Cref{defn.part.stable.reflected.gluing.diagram}.}
\]
\end{cor}

\begin{proof}
We first address the case that $\pos$ is finite. By interpreting \Cref{thm.reflection.for.stable.stratns} in a larger universe and appealing to \Cref{obs.presentable.gluing.diagram.is.stable.gluing.diagram}, it suffices to verify the image factorizations
\begin{equation}
\label{factor.large.GD.and.GD.check.through.stable.ones}
\begin{tikzcd}
\LMod_{\rlax.\pos}(\what{\St^\idem})
&
\what{\strat}_\pos^\strict
\arrow{l}[swap]{\widecheck{\GD}}{\sim}
\arrow{r}{\GD}[swap]{\sim}
&
\LMod_{\llax.\pos}(\what{\St^\idem})
\\
\LMod^L_{\rlax.\pos}(\PrSt)
\arrow[hook]{u}
\arrow[dashed, leftarrow]{r}[swap]{\sim}
&
\Strat_\pos^\strict
\arrow[dashed]{r}[swap]{\sim}
\arrow[hook]{u}
&
\LMod^L_{\llax.\pos}(\PrSt)
\arrow[hook]{u}
\end{tikzcd}
\end{equation}
of the indicated composites, where the middle vertical inclusion is that of \Cref{obs.Strat.into.what.strat}. The lower right equivalence in diagram \Cref{factor.large.GD.and.GD.check.through.stable.ones} follows from \Cref{thm.strict.metacosm}\Cref{strict.presentable.metacosm}. To conclude, we observe the outer commutative rectangle in diagram \Cref{factor.large.GD.and.GD.check.through.stable.ones}: the upper composite equivalence is the identity on fibers, and the conditions of accessibility of monodromy functors coincide by \Cref{prop.Gamma.check.as.tfib.of.Gammas.and.Gamma.as.tcofib.of.Gamma.checks}.

We now turn to the case that $\pos$ is merely down-finite. Writing $\Down_\pos^\fin \subseteq \Down_\pos$ for the full subposet on the finite down-closed subposets of $\pos$, observe that $\pos \simeq \colim_{\sD \in \Down_\pos^\fin}(\sD)$. Now, for an arbitrary finite poset $\posQ$, we have just argued that we have a commutative diagram
\[
\begin{tikzcd}[column sep=1.5cm, row sep=1.5cm]
&
{\displaystyle \prod_{q \in \posQ} \PrSt}
\\
\LMod_{\rlax.\posQ}^L(\PrSt)
\arrow[yshift=0.9ex]{r}{\limllaxfam}
\arrow[leftarrow, yshift=-0.9ex]{r}{\sim}[swap]{\widecheck{\GD}}
\arrow{ru}[sloped]{(\ev_q)_{q \in \posQ}}
&
\Strat_\posQ^\strict
\arrow[yshift=0.9ex]{r}[sloped]{\GD}
\arrow[leftarrow, yshift=-0.9ex]{r}{\sim}[sloped, swap]{\limrlaxfam}
\arrow{u}[swap]{((-)_q)_{q \in \posQ}}
&
\LMod_{\llax.\posQ}^L(\PrSt)
\arrow{lu}[sloped]{(\ev_q)_{q \in \posQ}}
\end{tikzcd}
~.
\]
Moreover, this diagram is clearly contravariantly functorial as we vary $\posQ$ over the category of finite posets and inclusions of down-closed subposets. From here, it is not hard to see that we obtain the desired diagram for $\pos$ by passing to cofiltered limits over $(\Down_\pos^\fin)^\op$.
\end{proof}

\begin{cor}
\label{cor.reflection.for.finite.intervals}
Let $\pos$ be an arbitrary poset whose intervals are finite. Then, there is a canonical commutative diagram
\[
\begin{tikzcd}[column sep=1.5cm, row sep=1.5cm]
&
{\displaystyle \prod_{p \in \pos} \PrSt}
\\
\LMod_{\rlax.\pos}^L(\PrSt)
\arrow{ru}[sloped]{(\ev_p)_{p \in \pos}}
\arrow[leftrightarrow]{rr}[swap]{\widecheck{(-)}}{\sim}
&
&
\LMod_{\llax.\pos}^L(\PrSt)
\arrow{lu}[sloped]{(\ev_p)_{p \in \pos}}
\end{tikzcd}
~.
\]
Moreover, on monodromy functors, the equivalence $\widecheck{(-)}$ acts as described in \Cref{prop.Gamma.check.as.tfib.of.Gammas.and.Gamma.as.tcofib.of.Gamma.checks}.
\end{cor}

\begin{proof}
This diagram is clearly contravariantly functorial as we vary $\pos$ over the category of finite posets and inclusions of finite convex subposets. So, the claim follows from \Cref{cor.reflection.for.presentable.stratns} by passing to cofiltered limits over the poset $(\Conv_\pos^\fin)^\op$, the opposite of the poset of finite convex subposets of $\pos$.
\end{proof}

\begin{remark}
\label{rmk.finite.intervals.but.not.down.finite}
In the situation of \Cref{cor.reflection.for.finite.intervals}, if the poset $\pos$ is not down-finite then the equivalence does not necessarily commute with the lax limit functors: rather, we have a commutative diagram
\[ \begin{tikzcd}[row sep=1.5cm]
\LMod_{\rlax.\pos}^L(\PrSt)
\arrow{rd}[sloped, swap]{\lim^\llax_{\rlax.\pos}}
\arrow[leftrightarrow]{rr}{\widecheck{(-)}}[swap]{\sim}[yshift=-1cm]{\Rightarrow}
&
&
\LMod_{\llax.\pos}^L(\PrSt)
\arrow{ld}[sloped, swap]{\lim^\rlax_{\llax.\pos}}
\\
&
\PrSt
\end{tikzcd}
~,
\]
in which the components of the natural transformation are left adjoints. For example, let us take $\pos = \ZZ$ and fix a presentable stable $\infty$-category $\cV \in \PrSt$. Then, taking the constant diagram
\[
\ul{\cV}
\in
\LMod^L_{\rlax.\ZZ}(\PrSt)
\]
we obtain an adjunction
\[ \begin{tikzcd}[column sep=1.5cm]
\lim^\llax_{\rlax.\ZZ}(\ul{\cV})
\simeq
\Fun(\ZZ,\cV)
\arrow[transform canvas={yshift=0.9ex}]{r}
\arrow[hookleftarrow, transform canvas={yshift=-0.9ex}]{r}[yshift=-0.2ex]{\bot}
&
\Ch(\cV)
\simeq
\lim^\rlax_{\llax.\ZZ}(\widecheck{\ul{\cV}})
\end{tikzcd}
~,
\]
in which the right adjoint is fully faithful with image the subcategory of \textit{complete} filtered objects, i.e.\! those whose limit is zero (compare with \Cref{ex.reflection.and.dold.kan}).
\end{remark}

\appendix

\section{Actions and limits, strict and lax}
\label{section.lax.actions.and.limits}

In this section, we provide definitions of strict, left-lax, and right-lax modules over $\infty$-categories: in effect, functors of the corresponding sort into the $(\infty,2)$-category $\Cat$.\footnote{The terminology ``module'' is inspired by ordinary group actions: for instance, a left (resp.\! right) $G$-module in an $\infty$-category $\cC$ is the data of a functor $\BG \ra \cC$ (resp.\! $\BG^\op \ra \cC$).} We also provide definitions of strict, left-lax, and right-lax functors among them (and in particular, limits thereof); perhaps surprisingly, these various notions are actually well-defined in all nine cases.  Moreover, we record a number of fundamental results regarding these notions.

\begin{local}
Throughout this section, we fix a base $\infty$-category $\cB$.
\end{local}

This section is organized as follows.
\begin{itemize}

\item[\Cref{subsection.lax.actions}:] We introduce all of the notions of $\cB$-modules and most of the notions of equivariant functors.

\item[\Cref{subsection.lax.limits}:] We introduce the more straightforward sorts of limits.

\item[\Cref{subsection.lax.subsection.with.mixed.handedness}:] We introduce the remaining sorts of equivariant functors (and in particular the remaining sorts of limits), using the theory of $(\infty,2)$-categories developed in \Cref{section.inftytwocats}: namely, those in which the handedness of the laxness of the $\cB$-modules disagrees with that of the equivariant functors among them.

\item[\Cref{subsection.sd.of.posets}:] We study the subdivision $\sd(\cB) \in \Cat$.

\item[\Cref{subsection.lax.limits.via.subdivisions.in.hard.case}:] We give an alternative and more explicit description of the right-lax limit of a left-lax left $\cB$-module using $\sd(\cB)$.

\item[\Cref{subsection.strictification}:] In the case that the only retracts in $\cB$ are equivalences, we provide a useful alternative description of the right-lax limit of a left-lax left $\cB$-module as the strict limit of a strict left $\sd(\cB)$-module.

\end{itemize}

\begin{remark}
In \S\S\ref{subsection.lax.actions} \and \ref{subsection.lax.limits} we give a comprehensive account of the theory, explaining all possible handednesses and how they relate.  However, thereafter we specialize in order to streamline our discussion.
\end{remark}

\begin{remark}
We omit essentially all mention of lax \textit{colimits}, as we will have no explicit need for them.  On the other hand, they will certainly be present: for example, the left-lax colimit of a functor $\cB \ra \Cat$ is nothing other than the total $\infty$-category $\cE$ of the cocartesian fibration $\cE \da \cB$ that it classifies. (See, e.g.\! \cite{GHN} for a discussion of lax colimits of strict functors along these lines.)
\end{remark}

\begin{remark}
The lax $\cB$-modules and lax equivariant functors that we study are all strictly unital (in the sense that the corresponding functors to $\Cat$ strictly respect identity morphisms).\footnote{Of course, more general definitions exist (see \Cref{subsection.basics.of.inftytwocats}).}  This stands in contrast with the laxly $\cO$-monoidal functors between $\cO$-monoidal $\infty$-categories that arise in \Cref{section.O.mon.reconstrn.thm}: as described in \Cref{rmk.unpack.laxly.O.monoidal.functors}, we do not require those to be strictly unital (in the sense that we do not require them to strictly respect the unit objects of $\cO$-monoidal structures).
\end{remark}


\subsection{Strict and lax actions}
\label{subsection.lax.actions}

In this subsection, we introduce all of the notions of $\cB$-modules and most of the notions of equivariant functors.  We begin with an omnibus definition, which the remainder of the subsection is dedicated to discussing.

\begin{definition}
\label{define.almost.all.modules}
In \Cref{figure.define.almost.all.modules}, various $\infty$-categories of \bit{$\cB$-modules} depicted on the left side are defined as indicated on the right side.  The objects in the $\infty$-categories in the upper left diagram are (various sorts of) \bit{left} $\cB$-modules, while the objects in the $\infty$-categories in the lower left diagram are (various sorts of) \bit{right} $\cB$-modules.  In both diagrams on the left side, we refer
\begin{itemize}
\item to the objects
\begin{itemize}
\item in the middle rows as (\bit{strict}) $\cB$-modules,
\item in the top rows as \bit{left-lax} $\cB$-modules, and
\item in the bottom rows as \bit{right-lax} $\cB$-modules,
\end{itemize}
and
\item to the morphisms
\begin{itemize}
\item in the middle columns as (\bit{strictly}) \bit{equivariant},
\item in the left columns as \bit{left-lax equivariant}, and
\item in the right columns as \bit{right-lax equivariant}.
\end{itemize}
\end{itemize}
So in our notation, laxness of the actions is indicated by a subscript (placed before ``.$\cB$''), while laxness of the morphisms is indicated by a superscript.
\begin{sidewaysfigure}
\vspace{450pt}
\[ \begin{tikzcd}[row sep=1.5cm]
\LMod^\llax_{\llax.\cB}
\arrow[\surjmonoleft]{r}
&
\LMod_{\llax.\cB}
\\
\LMod^\llax_\cB
\arrow[\surjmonoleft]{r}
\arrow[hook]{u}{\ff}
&
\LMod_\cB
\arrow[hook, two heads]{r}
\arrow[hook]{u}[swap]{\ff}
\arrow[hook]{d}[swap]{\ff}
&
\LMod^\rlax_\cB
\arrow[hook]{d}{\ff}
\\
&
\LMod_{\rlax.\cB}
\arrow[hook, two heads]{r}
&
\LMod^\rlax_{\rlax.\cB}
\end{tikzcd}
\qquad
:=
\qquad
\begin{tikzcd}[row sep=1.5cm]
\Cat_{\loc.\cocart/\cB}
\arrow[\surjmonoleft]{r}
&
\loc.\coCart_\cB
\\
\Cat_{\cocart/\cB}
\arrow[\surjmonoleft]{r}
\arrow[hook]{u}{\ff}
&
\coCart_\cB
\arrow[hook]{u}[swap]{\ff}
\\[-1.7cm]
&
&[-1.3cm]
\rotatebox{-30}{$\simeq$}
&[-1.2cm]
\\[-1.7cm]
&
&
&
\Cart_{\cB^\op}
\arrow[hook, two heads]{r}
\arrow[hook]{d}[swap]{\ff}
&
\Cat_{\cart/\cB^\op}
\arrow[hook]{d}{\ff}
\\
&
&
&
\loc.\Cart_{\cB^\op}
\arrow[hook, two heads]{r}
&
\Cat_{\loc.\cart/\cB^\op}
\end{tikzcd} \]

\vspace{50pt}

\[ \begin{tikzcd}[row sep=1.5cm]
\RMod^\llax_{\llax.\cB}
\arrow[\surjmonoleft]{r}
&
\RMod_{\llax.\cB}
\\
\RMod^\llax_\cB
\arrow[\surjmonoleft]{r}
\arrow[hook]{u}{\ff}
&
\RMod_\cB
\arrow[hook, two heads]{r}
\arrow[hook]{u}[swap]{\ff}
\arrow[hook]{d}[swap]{\ff}
&
\RMod^\rlax_\cB
\arrow[hook]{d}{\ff}
\\
&
\RMod_{\rlax.\cB}
\arrow[hook, two heads]{r}
&
\RMod^\rlax_{\rlax.\cB}
\end{tikzcd}
\qquad
:=
\qquad
\begin{tikzcd}[row sep=1.5cm]
\Cat_{\loc.\cocart/\cB^\op}
\arrow[\surjmonoleft]{r}
&
\loc.\coCart_{\cB^\op}
\\
\Cat_{\cocart/\cB^\op}
\arrow[\surjmonoleft]{r}
\arrow[hook]{u}{\ff}
&
\coCart_{\cB^\op}
\arrow[hook]{u}[swap]{\ff}
\\[-1.5cm]
&
&[-1.3cm]
\rotatebox{-30}{$\simeq$}
&[-1.2cm]
\\[-1.5cm]
&
&
&
\Cart_\cB
\arrow[hook, two heads]{r}
\arrow[hook]{d}[swap]{\ff}
&
\Cat_{\cart/\cB}
\arrow[hook]{d}{\ff}
\\
&
&
&
\loc.\Cart_\cB
\arrow[hook, two heads]{r}
&
\Cat_{\loc.\cart/\cB}
\end{tikzcd} \]

\vspace{50pt}

\caption{The commutative diagrams of monomorphisms among $\infty$-categories on the left are defined to be those on the right.}
\label{figure.define.almost.all.modules}
\end{sidewaysfigure}
\end{definition}

\begin{remark}
We give definitions in \Cref{subsection.lax.subsection.with.mixed.handedness} that extend the diagrams of \Cref{figure.define.almost.all.modules} to full $3 \times 3$ grids.
\end{remark}

\begin{example}
\label{lax.equivariance.of.strict.modules.over.walking.arrow}
Let us unwind the definitions of the $\infty$-categories
\[
\LMod_\cB
~,
\qquad
\LMod_\cB^\llax
~,
\qquad
\RMod_\cB
~,
\qquad
\text{and}
\qquad
\RMod_\cB^\rlax
\]
in the simplest nontrivial case, namely when $\cB = [1]$.
\begin{enumerate}
\item\label{left.lax.equivariance.of.strict.modules}
Let $\cE \da [1]$ and $\cF \da [1]$ be cocartesian fibrations, the unstraightenings of functors
\[
[1]
\xra{\brax{\cE_{0} \xra{E} \cE_{1}}}
\Cat
\]
and
\[
[1]
\xra{\brax{\cF_{0} \xra{F} \cF_{1}}}
\Cat
~,
\]
respectively.  Then, let us consider a left-lax equivariant functor
\[ \begin{tikzcd}
\cE
\arrow{rr}{\alpha}
\arrow{rd}
&
&
\cF
\arrow{ld}
\\
&
{[1]}
\end{tikzcd}~. \]
Given a cocartesian morphism $e \ra E(e)$ in $\cE$ with $e \in \cE_{0}$ and $E(e) \in \cE_{1}$, the functor $\alpha$ takes it to some not-necessarily-cocartesian morphism $\alpha(e) \ra \alpha(E(e))$ in $\cF$ with $\alpha(e) \in \cF_{0}$ and $\alpha(E(e)) \in \cF_{1}$.  This admits a unique factorization
\[ \begin{tikzcd}
\alpha(e)
\arrow[dashed]{r}
\arrow{rd}
&
F(\alpha(e))
\arrow{d}
\\
&
\alpha(E(e))
\end{tikzcd} \]
as a cocartesian morphism followed by a fiber morphism.  This operation is functorial in $e \in \cE_{0}$, which implies that our left-lax equivariant functor amounts to the data of a lax-commutative square
\[ \begin{tikzcd}
\cE_{0}
\arrow{r}{E}[swap, transform canvas={yshift=-0.4cm}]{\rotatebox{45}{$\Rightarrow$}}
\arrow{d}[swap]{\alpha_{0}}
&
\cE_{1}
\arrow{d}{\alpha_{1}}
\\
\cF_{0}
\arrow{r}[swap]{F}
&
\cF_{1}
\end{tikzcd}~. \]
To say that the left-lax equivariant functor is actually strictly equivariant is equivalently to say that this square actually commutes, i.e.\! that the natural transformation is a natural equivalence.

\item\label{right.lax.equivariance.of.strict.modules}
Dually, let $\cE \da [1]$ and $\cF \da [1]$ be cartesian fibrations, the unstraightenings of functors
\[
[1]^\op
\xra{\brax{\cE_{0^\circ} \xla{E} \cE_{1^\circ}}}
\Cat
\]
and
\[
[1]^\op
\xra{\brax{\cF_{0^\circ} \xla{F} \cF_{1^\circ}}}
\Cat
~,
\]
respectively.  Then, a right-lax equivariant functor
\[ \begin{tikzcd}
\cE
\arrow{rr}{\alpha}
\arrow{rd}
&
&
\cF
\arrow{ld}
\\
&
{[1]}
\end{tikzcd} \]
likewise amounts to the data of a lax-commutative square
\[ \begin{tikzcd}
\cE_{0^\circ}
\arrow[leftarrow]{r}{E}[swap, transform canvas={yshift=-0.4cm}]{\rotatebox{-45}{$\Rightarrow$}}
\arrow{d}[swap]{\alpha_{0^\circ}}
&
\cE_{1^\circ}
\arrow{d}{\alpha_{1^\circ}}
\\
\cF_{0^\circ}
\arrow[leftarrow]{r}[swap]{F}
&
\cF_{1^\circ}
\end{tikzcd}~. \]
To say that the right-lax equivariant functor is actually strictly equivariant is equivalently to say that this square actually commutes, i.e.\! that the natural transformation is a natural equivalence.
\end{enumerate}
\end{example}

\begin{example}
\label{ex.lax.mors.betw.strict.G.modules}
Let us unwind the definitions of the $\infty$-categories
\[
\LMod^\llax_\cB
~,
\qquad
\LMod^\rlax_\cB
~,
\qquad
\RMod^\rlax_\cB
~,
\qquad
\text{and}
\qquad
\RMod^\llax_\cB
\]
in the simple but illustrative case that $\cB = \BG$ for a group or monoid $G$.  Choose any two objects
\[
\cE
,
\cF
\in
\Cat_{({\sf co})\cart/\BG^{(\op)}}
~,
\]
with the two choices of whether or not to include the parenthesized bits made independently.  These are classified by left or right $G$-actions on the fibers $\cE_0$ and $\cF_0$ over the basepoint of $\BG^{(\op)}$ -- right if the choices coincide, left if they do not -- and morphisms between them are left-lax equivariant in the case of ``$\cocart$'' and right-lax equivariant in the case of ``$\cart$''.  In all four cases, a morphism
\[ \begin{tikzcd}
\cE
\arrow{rr}{\alpha}
\arrow{rd}
&
&
\cF
\arrow{ld}
\\
&
\BG^{(\op)}
\end{tikzcd} \]
is the data of a functor
\[ \cE_0 \xra{\alpha_0} \cF_0 \]
on underlying $\infty$-categories equipped with certain natural transformations indexed over all $g \in G$, as recorded in \Cref{table.laxness.G.action}.
\begin{figure}[h]
\begin{tabular}{ | c | c | }
\hline
$\LMod^\llax_\BG$
&
$g \cdot \alpha_0(-) \longra \alpha_0(g \cdot -)$
\\
\hline
$\RMod^\rlax_\BG$
&
$\alpha_0(- \cdot g) \longra \alpha_0(-) \cdot g$
\\
\hline
$\LMod^\rlax_\BG$
&
$\alpha_0(g \cdot -) \longra g \cdot \alpha_0( - )$
\\
\hline
$\RMod^\llax_\BG$
&
$\alpha_0(- \cdot g) \longra \alpha_0(-) \cdot g$
\\
\hline
\end{tabular}
\caption{Given two $\infty$-categories equipped with (strict) left or right $G$-actions, defining a left- or right-lax equivariant functor between them amounts to defining a functor on underlying $\infty$-categories along with compatible lax structure maps indexed by $g \in G$, as indicated.
\label{table.laxness.G.action}}
\end{figure}
Moreover, these must be equipped with compatibility data with respect to the multiplication in $G$: for example, in the case of $\LMod^\rlax_\BG$, for all $g,h \in G$ the diagram
\[ \begin{tikzcd}
\alpha_0(ghe)
\arrow{rr}
\arrow{rd}
&
&
gh\alpha_0(e)
\\
&
g\alpha_0(he)
\arrow{ru}
\end{tikzcd} \]
must commute, naturally in $e \in \cE_0$.
\end{example}

\begin{example}
\label{example.lax.actions}
Let us unwind the definitions of the $\infty$-categories
\[
\LMod^\llax_{\llax.\cB}
\qquad
\text{and}
\qquad
\RMod^\rlax_{\rlax.\cB}
\]
in the simplest nontrivial case, namely when $\cB = [2]$.
\begin{enumerate}
\item
\label{describe.and.map.locally.cocart.fibns}
\begin{enumerate}[label=(\alph*)]
\item\label{describe.locally.cocart.fibn} Let $\cE \da [2]$ be a locally cocartesian fibration; let us write $\cE_{i}$ for its fibers (for $i \in [2]$) and $E_{ij}$ for its cocartesian monodromy functors (for $0 \leq i < j \leq 2$).  An object $e \in \cE_{0}$ determines a pair of composable locally cocartesian morphisms $e \ra E_{01}(e) \ra E_{12}(E_{01}(e))$ with $E_{01}(e) \in \cE_{1}$ and $E_{12}(E_{01}(e)) \in \cE_{2}$.  Their composite is a not-necessarily-locally-cocartesian morphism, which admits a unique factorization
\[ \begin{tikzcd}
e
\arrow[dashed]{r}
\arrow{rd}
&
E_{02}(e)
\arrow{d}
\\
&
E_{12}(E_{01}(e))
\end{tikzcd} \]
as a locally cocartesian morphism followed by a fiber morphism.  This operation is functorial in $e \in \cE_{0}$, which implies that our left-lax left $[2]$-module amounts to the data of a lax-commutative triangle
\[ \begin{tikzcd}
&
\cE_{1}
\arrow{rd}[sloped]{E_{12}}
\\
\cE_{0}
\arrow{ru}[sloped]{E_{01}}
\arrow{rr}[transform canvas={yshift=0.3cm}]{\rotatebox{90}{$\Rightarrow$}}[swap]{E_{02}}
&
&
\cE_{2}
\end{tikzcd}~. \]
This should be thought as the unstraightening of a \textit{left-lax} functor
\[ \begin{tikzcd}[column sep=1.5cm]
{[2]}
\arrow{r}[description]{\llax}
&
\Cat
\end{tikzcd} \]
of $(\infty,2)$-categories.
\item Let $\cE \da [2]$ and $\cF \da [2]$ be locally cocartesian fibrations, and let us continue to use notation as in part \Cref{describe.locally.cocart.fibn} for both $\cE$ and $\cF$.  Then, a left-lax equivariant functor
\[ \begin{tikzcd}
\cE
\arrow{rr}{\alpha}
\arrow{rd}
&
&
\cF
\arrow{ld}
\\
&
{[2]}
\end{tikzcd} \]
amounts to the data of left-lax equivariant functors over the three nonidentity morphisms in $[2]$ (as described in \Cref{lax.equivariance.of.strict.modules.over.walking.arrow}\Cref{left.lax.equivariance.of.strict.modules}), along with an equivalence between the composite 2-morphisms
\[ \begin{tikzcd}[column sep=1.5cm]
&
\cE_{1}
\arrow{rd}[sloped]{E_{12}}[swap, transform canvas={xshift=0.25cm, yshift=-1.1cm}]{\rotatebox{30}{$\Rightarrow$}}
\arrow{dd}{\alpha_{1}}
\\
\cE_{0}
\arrow{ru}[sloped]{E_{01}}[swap, transform canvas={xshift=-0.2cm, yshift=-1cm}]{\rotatebox{55}{$\Rightarrow$}}
\arrow{dd}[swap]{\alpha_{0}}
&
&
\cE_{2}
\arrow{dd}{\alpha_{2}}
\\
&
\cF_{1}
\arrow{rd}[sloped]{F_{12}}
\\
\cF_{0}
\arrow{ru}[sloped]{F_{01}}
\arrow{rr}[transform canvas={yshift=0.3cm}]{\rotatebox{90}{$\Rightarrow$}}[swap]{F_{02}}
&
&
\cF_{2}
\end{tikzcd} \]
and
\[ \begin{tikzcd}[column sep=1.5cm]
&
\cE_{1}
\arrow{rd}[sloped]{E_{12}}
\\
\cE_{0}
\arrow{ru}[sloped]{E_{01}}
\arrow{rr}[transform canvas={yshift=0.3cm}]{\rotatebox{90}{$\Rightarrow$}}[swap]{E_{02}}[swap, transform canvas={yshift=-1.1cm}]{\rotatebox{30}{$\Rightarrow$}}
\arrow{d}[swap]{\alpha_{0}}
&
&
\cE_{2}
\arrow{d}{\alpha_{2}}
\\[1.25cm]
\cF_{0}
\arrow{rr}[swap]{F_{02}}
&
&
\cF_{2}
\end{tikzcd} \]
(i.e.\! a 3-morphism filling in the triangular prism).
\end{enumerate}
\item
\label{describe.and.map.locally.cart.fibns}
\begin{enumerate}[label=(\alph*)]
\item\label{describe.locally.cart.fibn}
Dually, let $\cE \da [2]$ be a locally cartesian fibration; let us write $\cE_{i^\circ}$ for its fibers (for $i \in [2]$) and $E_{j^\circ i^\circ}$ for its cartesian monodromy functors (for $0 \leq i < j \leq 2$).  Then, this right-lax right $[2]$-module amounts to the data of a lax-commutative triangle
\[ \begin{tikzcd}
&
\cE_{1^\circ}
\arrow[leftarrow]{rd}[sloped]{E_{2^\circ1^\circ}}
\\
\cE_{0^\circ}
\arrow[leftarrow]{ru}[sloped]{E_{1^\circ0^\circ}}
\arrow[leftarrow]{rr}[transform canvas={yshift=0.3cm}]{\rotatebox{-90}{$\Rightarrow$}}[swap]{E_{2^\circ0^\circ}}
&
&
\cE_{2^\circ}
\end{tikzcd}~. \]
This should be thought as the unstraightening of a \textit{right-lax} functor
\[ \begin{tikzcd}[column sep=1.5cm]
{[2]^\op}
\arrow{r}[description]{\rlax}
&
\Cat
\end{tikzcd} \]
of $(\infty,2)$-categories.
\item Let $\cE \da [2]$ and $\cF \da [2]$ be locally cartesian fibrations, and let us continue to use notation as in part \Cref{describe.locally.cart.fibn} for both $\cE$ and $\cF$.  Then, a right-lax equivariant functor
\[ \begin{tikzcd}
\cE
\arrow{rr}{\alpha}
\arrow{rd}
&
&
\cF
\arrow{ld}
\\
&
{[2]}
\end{tikzcd} \]
amounts to the data of right-lax equivariant functors over the three nonidentity morphisms in $[2]$ (as described in \Cref{lax.equivariance.of.strict.modules.over.walking.arrow}\Cref{right.lax.equivariance.of.strict.modules}), along with an equivalence between the composite 2-morphisms
\[ \begin{tikzcd}[column sep=1.5cm]
&
\cE_{1^\circ}
\arrow[leftarrow]{rd}[sloped]{E_{2^\circ1^\circ}}[swap, transform canvas={xshift=0.2cm, yshift=-1.2cm}]{\rotatebox{-55}{$\Rightarrow$}}
\arrow{dd}{\alpha_{1^\circ}}
\\
\cE_{0^\circ}
\arrow[leftarrow]{ru}[sloped]{E_{1^\circ0^\circ}}[swap, transform canvas={xshift=-0.3cm, yshift=-1.1cm}]{\rotatebox{-30}{$\Rightarrow$}}
\arrow{dd}[swap]{\alpha_{0^\circ}}
&
&
\cE_{2^\circ}
\arrow{dd}{\alpha_{2^\circ}}
\\
&
\cF_{1^\circ}
\arrow[leftarrow]{rd}[sloped]{F_{2^\circ1^\circ}}
\\
\cF_{0^\circ}
\arrow[leftarrow]{ru}[sloped]{F_{1^\circ0^\circ}}
\arrow[leftarrow]{rr}[transform canvas={yshift=0.3cm}]{\rotatebox{-90}{$\Rightarrow$}}[swap]{F_{2^\circ0^\circ}}
&
&
\cF_{2^\circ}
\end{tikzcd} \]
and
\[ \begin{tikzcd}[column sep=1.5cm]
&
\cE_{1^\circ}
\arrow[leftarrow]{rd}[sloped]{E_{2^\circ1^\circ}}
\\
\cE_{0^\circ}
\arrow[leftarrow]{ru}[sloped]{E_{1^\circ0^\circ}}
\arrow[leftarrow]{rr}[transform canvas={yshift=0.3cm}]{\rotatebox{-90}{$\Rightarrow$}}[swap]{E_{2^\circ0^\circ}}[swap, transform canvas={yshift=-1.1cm}]{\rotatebox{-30}{$\Rightarrow$}}
\arrow{d}[swap]{\alpha_{0^\circ}}
&
&
\cE_{2^\circ}
\arrow{d}{\alpha_{2^\circ}}
\\[1.25cm]
\cF_{0^\circ}
\arrow[leftarrow]{rr}[swap]{F_{2^\circ0^\circ}}
&
&
\cF_{2^\circ}
\end{tikzcd} \]
(i.e.\! a 3-morphism filling in the triangular prism).
\end{enumerate}
\end{enumerate}
\end{example}

\begin{example}
Let us unwind the definitions of the $\infty$-categories
\[
\LMod_{\llax.\cB}
~,
\qquad
\RMod_{\rlax.\cB}
~,
\qquad
\LMod_{\rlax.\cB}
~,
\qquad
\text{and}
\qquad
\RMod_{\llax.\cB}
\]
in the simple but illustrative case that $\cB = \BG$ for a group or monoid $G$.  Choose an object
\[
\cE
\in \Cat_{\loc.({\sf co})\cart/\BG^{(\op)}}
~,
\]
with the two choices of whether or not to include the parenthesized bits made independently.  Write $\cE_0$ for the fiber over the basepoint of $\BG^{(\op)}$, the underlying $\infty$-category.  Then, this is the data of an endofunctor $(g \cdot -)$ or $(- \cdot g)$ of $\cE_0$ for each $g \in G$, along with compatible natural transformations, as recorded in \Cref{lax.group.action}.
\begin{figure}[h]
\begin{tabular}{|c|c|}
\hline
$\LMod_{\llax.\BG}$
&
$(gh \cdot -) \longra g \cdot (h \cdot -)$
\\
\hline
$\RMod_{\rlax.\BG}$
&
$(- \cdot g) \cdot h \longra (- \cdot gh)$
\\
\hline
$\LMod_{\rlax.\BG}$
&
$g \cdot (h \cdot -) \longra (gh \cdot -)$
\\
\hline
$\RMod_{\llax.\BG}$
&
$(- \cdot gh) \longra (- \cdot g) \cdot h$
\\
\hline
\end{tabular}
\caption{Equipping an $\infty$-category with a left- or right-lax left or right $G$-action amounts to defining endofunctors indexed by $g \in G$, equipped with lax structure maps corresponding to multiplication in $G$, as indicated.
\label{lax.group.action}}
\end{figure}
Of course, these must also be compatible with iterated multiplication in $G$.
\end{example}

\begin{observation}
\label{obs.loc.coCart.B.as.a.limit}
Consider a colimit
\begin{equation}
\label{colim.in.Cat.for.considering.limit.presentation.of.categories.of.fibrations}
\cB \simeq \colim_{i \in \cI}(\cB_i)
\end{equation}
in $\Cat$. By un/straightening, it is clear that the canonical functor
\[
\coCart_\cB
\longra
\lim_{i^\circ \in \cI^\op} ( \coCart_{\cB_i} )
\]
is an equivalence (of $(\infty,2)$-categories). On the other hand, the canonical functor
\begin{equation}
\label{fctr.to.limit.of.categories.of.loc.cocart.fibns}
\loc.\coCart_\cB
\longra
\lim_{i^\circ \in \cI^\op} ( \loc.\coCart_{\cB_i})
\end{equation}
is not generally an equivalence. However, the functor \Cref{fctr.to.limit.of.categories.of.loc.cocart.fibns} is an equivalence under the condition that the colimit \Cref{colim.in.Cat.for.considering.limit.presentation.of.categories.of.fibrations}, considered in complete Segal spaces, is in fact a colimit in simplicial spaces. This follows from \Cref{t11} using \Cref{obs.laxification.of.one.cats}.\footnote{It is also easy to see directly without appealing to un/straightening using the fact that simplicial spaces is an $\infty$-topos.}
\end{observation}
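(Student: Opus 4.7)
Part (1) is essentially immediate from the straightening--unstraightening equivalence $\coCart_\cB \simeq \Fun(\cB,\Cat)$, valid as an equivalence of $(\infty,2)$-categories. Since $\Fun(-,\Cat)$ sends colimits of $\infty$-categories to limits of $(\infty,2)$-categories, the asserted equivalence
\[
\coCart_\cB \simeq \Fun(\colim_i \cB_i,\Cat) \simeq \lim_{i^\circ} \Fun(\cB_i,\Cat) \simeq \lim_{i^\circ} \coCart_{\cB_i}
\]
follows at once.

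To see that the locally cocartesian analog can genuinely fail, I plan to exhibit the following counterexample. Take $\cB \simeq [2]$, presented as the pushout $[2] \simeq [1] \coprod_{[0]} [1]$ in $\Cat$, with the two maps $[0] \to [1]$ picking out the vertices $1$ and $0$ respectively. In simplicial spaces this same pushout yields only the spine of $\Delta^2$, missing both the $2$-simplex and its composite $1$-simplex; Segal completion in $\Cat$ must be taken to recover $[2]$. On the left-hand side, $\loc.\coCart_{[2]}$ consists of lax-commutative triangles $\cE_0 \xra{E_{01}} \cE_1 \xra{E_{12}} \cE_2$ together with an additional functor $E_{02}$ and a natural transformation $\alpha: E_{02} \Rightarrow E_{12} E_{01}$, as described in \Cref{example.lax.actions}\Cref{describe.and.map.locally.cocart.fibns}. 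On the right-hand side, the pullback along the two cospan legs records only a composable pair $(E_{01},E_{12})$. The canonical comparison forgets the extra data $(E_{02},\alpha)$, which is not uniquely recoverable from the pair; in particular it fails to be fully faithful.

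For part~(3), the plan is to reduce to part~(1) via simplicial descent for locally cocartesian fibrations. The key input I would establish is that, for any $\infty$-category $\cB$, the canonical functor
\[
\loc.\coCart_\cB \longra \lim_{(\sigma:[n]\to\cB)^\circ \in (\bDelta_{/\cB})^\op} \loc.\coCart_{[n]}
\]
is an equivalence. This is essentially the content of the Grothendieck construction for $(\infty,2)$-categories from~\cite{GR}: locally cocartesian fibrations over $\cB$ identify with left-lax functors $\cB \to \Cat$, and such a functor is determined simplex-by-simplex by the lax-commutative $n$-simplices of \Cref{example.lax.actions}. Granting this descent, if $\cB \simeq \colim_{i \in \cI} \cB_i$ is a colimit in \emph{simplicial spaces}, then the nerve $\cB_\bullet$ is the levelwise colimit of the $(\cB_i)_\bullet$, so the category of simplices $\bDelta_{/\cB}$ decomposes as an appropriate colimit of the $\bDelta_{/\cB_i}$. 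Interchanging the resulting double limit then yields
\[
\loc.\coCart_\cB \simeq \lim_{i^\circ} \lim_{(\bDelta_{/\cB_i})^\op} \loc.\coCart_{[n]} \simeq \lim_{i^\circ} \loc.\coCart_{\cB_i}.
\]
The principal obstacle will be justifying the simplicial-descent equivalence above; once it is in hand, the remainder is bookkeeping, and the hypothesis that the colimit be in simplicial spaces (rather than just in $\Cat$) is precisely what is needed to identify $\bDelta_{/\cB}$ with the colimit of the $\bDelta_{/\cB_i}$.
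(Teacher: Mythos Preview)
The paper treats this statement as an \emph{Observation} and gives no proof; your argument is a sound way to justify it and is consonant with how the paper reasons elsewhere. In particular, the simplicial-descent equivalence you identify as the principal input,
\[
\loc.\coCart_\cB \xlongra{\sim} \lim_{([n] \da \cB)^\circ \in (\bDelta_{/\cB})^\op} \loc.\coCart_{[n]},
\]
is asserted (also without proof) in a footnote within the proof of \Cref{lem.strictification}; it follows at once from the identification $\loc.\coCart_\cB \simeq \Fun(\llax(\cB),\Cat)$ together with \Cref{left.laxification.is.left.kan.extended}. From there, the hypothesis that the colimit holds already in simplicial spaces gives $\bDelta_{/\cB} \simeq \colim_i \bDelta_{/\cB_i}$ in $\Cat$ (the category-of-simplices functor from simplicial spaces to $\Cat_{/\bDelta}$ preserves colimits), and your interchange of limits is then routine. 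Your counterexample for the failure in general is also correct: since $\loc.\coCart_{[1]} = \coCart_{[1]}$, the comparison functor becomes $\loc.\coCart_{[2]} \to \coCart_{[1]} \times_{\Cat} \coCart_{[1]} \simeq \coCart_{[2]}$, which forgets precisely the data $(E_{02},\alpha)$ of \Cref{example.lax.actions}. One small remark: your phrase ``reduce to part~(1)'' is slightly off, since you are not literally invoking the $\coCart$ statement but rather running the same argument with $\loc.\coCart_{[n]}$ in place of $\coCart_{\cB_i}$; the substance is unaffected.
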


\subsection{Strict and lax limits}
\label{subsection.lax.limits}

In this subsection, we introduce the more straightforward sorts of limits.  We begin with an omnibus definition, which the remainder of the subsection is dedicated to discussing.

\begin{notation}
\label{notn.Fun.super.cocart.or.cart.over.B}
Given two objects
\[
(\cE \da \cB)
,
(\cF \da \cB)
\in
\Cat_{/\cB}~,
\]
we write
\[
\Fun^{({\sf co})\cart}_{/\cB}(\cE,\cF)
\subseteq
\Fun_{/\cB} ( \cE, \cF)
\]
for the full subcategory on those functors which take all locally (co)cartesian morphisms over $\cB$ in $\cE$ to locally (co)cartesian morphisms over $\cB$ in $\cF$.
As a special case, we write
\[
\Gamma^{({\sf co})\cart}(\cF)
:=
\Gamma_\cB^{({\sf co})\cart}(\cF)
:=
\Fun^{({\sf co})\cart}_{/\cB}(\cB,\cF)
\]
(using the subscript in the case that there is any potential ambiguity).
\end{notation}

\begin{definition}
\label{define.almost.all.limits}
In \Cref{figure.define.almost.all.limits}, we define various \bit{limit} functors on various $\infty$-categories of $\cB$-modules.
\begin{figure}
\[
\begin{tikzcd}[row sep=1.25cm, column sep=1.5cm]
\LMod^\llax_{\llax.\cB}
\arrow[bend left]{rdd}[description]{\lim^\llax_{\llax.\cB}}
\\
\LMod_{\llax.\cB}
\arrow{rd}[description]{\lim_{\llax.\cB}}[sloped, transform canvas={xshift=-0.2cm, yshift=0.8cm}]{\rotatebox{90}{$\Rightarrow$}}
\arrow[hook, two heads]{u}
\\
\LMod_\cB
\arrow[hook]{u}{\ff}
\arrow[dashed]{r}[description]{\lim_\cB}
&
\Cat
\\
\LMod_{\rlax.\cB}
\arrow[hookleftarrow]{u}{\ff}
\arrow{ru}[description]{\lim_{\rlax.\cB}}[swap, sloped, transform canvas={xshift=-0.2cm, yshift=-0.8cm}]{\rotatebox{-90}{$\Rightarrow$}}
\\
\LMod^\rlax_{\rlax.\cB}
\arrow[\surjmonoleft]{u}
\arrow[bend right]{ruu}[description]{\lim^\rlax_{\rlax.\cB}}
\end{tikzcd}
\qquad
:=
\qquad
\begin{tikzcd}[row sep=1.25cm, column sep=1.5cm]
\Cat_{\loc.\cocart/\cB}
\arrow[bend left]{rddd}[description]{\Gamma}
\\
\loc.\coCart_\cB
\arrow{rdd}[description]{\Gamma^\cocart}[sloped, transform canvas={xshift=-0.2cm, yshift=1cm}]{\rotatebox{90}{$\Rightarrow$}}
\arrow[hook, two heads]{u}
\\
\coCart_\cB
\arrow[hook]{u}{\ff}
\\[-1.25cm]
\rotatebox{90}{$\simeq$}
&
\Cat
\\[-1.25cm]
\Cart_{\cB^\op}
\\
\loc.\Cart_{\cB^\op}
\arrow[hookleftarrow]{u}{\ff}
\arrow{ruu}[description]{\Gamma^\cart}[swap, sloped, transform canvas={xshift=-0.2cm, yshift=-1cm}]{\rotatebox{-90}{$\Rightarrow$}}
\\
\Cat_{\loc.\cart/\cB^\op}
\arrow[\surjmonoleft]{u}
\arrow[bend right]{ruuu}[description]{\Gamma}
\end{tikzcd}
\]

\vspace{20pt}

\[
\begin{tikzcd}[row sep=1.25cm, column sep=1.5cm]
\RMod^\llax_{\llax.\cB}
\arrow[bend left]{rdd}[description]{\lim^\llax_{\llax.\cB^\op}}
\\
\RMod_{\llax.\cB}
\arrow{rd}[description]{\lim_{\llax.\cB^\op}}[sloped, transform canvas={xshift=-0.2cm, yshift=0.8cm}]{\rotatebox{90}{$\Rightarrow$}}
\arrow[hook, two heads]{u}
\\
\RMod_\cB
\arrow[hook]{u}{\ff}
\arrow[dashed]{r}[description]{\lim_{\cB^\op}}
&
\Cat
\\
\RMod_{\rlax.\cB}
\arrow[hookleftarrow]{u}{\ff}
\arrow{ru}[description]{\lim_{\rlax.\cB^\op}}[swap, sloped, transform canvas={xshift=-0.2cm, yshift=-0.8cm}]{\rotatebox{-90}{$\Rightarrow$}}
\\
\RMod^\rlax_{\rlax.\cB}
\arrow[\surjmonoleft]{u}
\arrow[bend right]{ruu}[description]{\lim^\rlax_{\rlax.\cB^\op}}
\end{tikzcd}
\qquad
:=
\qquad
\begin{tikzcd}[row sep=1.25cm, column sep=1.5cm]
\Cat_{\loc.\cocart/\cB^\op}
\arrow[bend left]{rddd}[description]{\Gamma}
\\
\loc.\coCart_{\cB^\op}
\arrow{rdd}[description]{\Gamma^\cocart}[sloped, transform canvas={xshift=-0.2cm, yshift=1cm}]{\rotatebox{90}{$\Rightarrow$}}
\arrow[hook, two heads]{u}
\\
\coCart_{\cB^\op}
\arrow[hook]{u}{\ff}
\\[-1.25cm]
\rotatebox{90}{$\simeq$}
&
\Cat
\\[-1.25cm]
\Cart_\cB
\\
\loc.\Cart_\cB
\arrow[hookleftarrow]{u}{\ff}
\arrow{ruu}[description]{\Gamma^\cart}[swap, sloped, transform canvas={xshift=-0.2cm, yshift=-1cm}]{\rotatebox{-90}{$\Rightarrow$}}
\\
\Cat_{\loc.\cart/\cB}
\arrow[\surjmonoleft]{u}
\arrow[bend right]{ruuu}[description]{\Gamma}
\end{tikzcd}
~.
\]

\vspace{20pt}

\caption{The rightwards functors to $\Cat$ on the left are defined to be those on the right (except that each dashed functor may be defined as either adjacent composite: the inner triangles all commute).}
\label{figure.define.almost.all.limits}
\end{figure}
Our notation is largely concordant with that of \Cref{define.almost.all.modules}; we indicate the handedness of the original module in the subscript by writing $\cB$ for left modules and $\cB^\op$ for right modules.\footnote{This coincides with the corresponding notation for $G$-actions: the limit of a left (resp.\! right) $G$-action is a limit over $\BG$ (resp.\! over $\BG^\op$).}  We refer to a limit functor according to its superscript (which is more relevant anyways), e.g.\! we refer to $\lim^\llax_{\llax.\cB^\op}$ as the \bit{left-lax limit} functor. We also write e.g.\!
\[
\lim^\llax_\cB
:
\LMod_\cB
\longhookra
\LMod^\llax_{\llax.\cB}
\xra{\lim^\llax_{\llax.\cB}}
\Cat
\]
for the composite functor, which carries each strict left $\cB$-module to its left-lax limit.
\end{definition}

\begin{example}
Let us unwind the definitions of the functors in the diagrams
\[
\begin{tikzcd}[column sep=1.5cm]
\LMod_\cB
\arrow[bend left]{r}{\lim_\cB}[swap, transform canvas={yshift=-0.3cm}]{\rotatebox{-90}{$\Rightarrow$}}
\arrow[bend right]{r}[swap]{\lim^\llax_\cB}
&
\Cat
\end{tikzcd}
\qquad
\text{and}
\qquad
\begin{tikzcd}[column sep=1.5cm]
\RMod_\cB
\arrow[bend left]{r}{\lim_{\cB^\op}}[swap, transform canvas={yshift=-0.3cm}]{\rotatebox{-90}{$\Rightarrow$}}
\arrow[bend right]{r}[swap]{\lim^\rlax_{\cB^\op}}
&
\Cat
\end{tikzcd}
\]
in the simple but illustrative case that $\cB = \BG$ for a group or monoid $G$.
\begin{enumerate}
\item Suppose that
\[
(\cE \da \BG)
\in
\coCart_\BG
=:
\LMod_\BG
\]
is classified by a left $G$-action on $\cE_0$.
\begin{enumerate}[label=(\alph*)]
\item An object of the strict limit is given by an object $e \in \cE_0$ equipped with equivalences
\[ g \cdot e \xlongra{\sim} e \]
for all $g \in G$ that are compatible with the multiplication in $G$.
\item An object of the left-lax limit is given by an object $e \in \cE_0$ equipped with morphisms
\[ g \cdot e \longra e \]
for all $g \in G$ that are compatible with the multiplication in $G$.
\end{enumerate}
\item Suppose that
\[
(\cE \da \BG)
\in
\Cart_\BG
=:
\RMod_\BG
\]
is classified by a right $G$-action on $\cE_0$.
\begin{enumerate}[label=(\alph*)]
\item An object of the strict limit is given by an object $e \in \cE_0$ equipped with equivalences
\[ e \xlongra{\sim} e \cdot g \]
for all $g \in G$ that are compatible with the multiplication in $G$.
\item An object of the left-lax limit is given by an object $e \in \cE_0$ equipped with morphisms
\[ e \longra e \cdot g \]
for all $g \in G$ that are compatible with the multiplication in $G$.
\end{enumerate}
\end{enumerate}
\end{example}

\begin{example}
\label{example.lax.limits.with.agreeing.lax.action}
Let us unwind the definitions of the functors
\[
\LMod^\llax_{\llax.\cB}
\xra{\lim^\llax_{\llax.\cB}}
\Cat
\qquad
\text{and}
\qquad
\RMod^\rlax_{\rlax.\cB}
\xra{\lim^\rlax_{\rlax.\cB}}
\Cat
\]
in the simplest nontrivial case, namely when $\cB = [2]$.
\begin{enumerate}

\item\label{item.example.lax.limits.with.agreeing.lax.action.loc.cocart}

Let $\cE \da [2]$ be a locally cocartesian fibration, and let us employ the notation of \Cref{example.lax.actions}\Cref{describe.and.map.locally.cocart.fibns}\Cref{describe.locally.cocart.fibn}.  Then, an object of the left-lax limit of this left-lax left $[2]$-module is given by the data of
\begin{itemize}
\item objects $e_i \in \cE_{i}$ (for $0 \leq i \leq 2$),
\item morphisms
\[
E_{ij}(e_i)
\xra{\vareps_{ij}}
e_j
\]
(for $0 \leq i < j \leq 2$), and
\item a commutative square
\[ \begin{tikzcd}[column sep=1.5cm, row sep=1.5cm]
E_{02}(e_0)
\arrow{r}{\vareps_{02}}
\arrow{d}
&
e_2
\\
E_{12}(E_{01}(e_0))
\arrow{r}[swap]{E_{12}(\vareps_{01})}
&
E_{12}(e_1)
\arrow{u}[swap]{\vareps_{12}}
\end{tikzcd} \]
in $\cE_{2}$, where the morphism on the left is the canonical one (recall \Cref{example.lax.actions}\ref{describe.and.map.locally.cocart.fibns}\ref{describe.locally.cocart.fibn}).
\end{itemize}
Note that the structure map $\vareps_{02}$ is \textit{canonically} determined by the structure maps $\vareps_{01}$ and $\vareps_{12}$.

\item\label{item.example.lax.limits.with.agreeing.lax.action.loc.cart}

Let $\cE \da [2]$ be a locally cartesian fibration, and let us employ the notation of \Cref{example.lax.actions}\Cref{describe.and.map.locally.cart.fibns}\Cref{describe.locally.cart.fibn}.  Then, an object of the right-lax limit of this right-lax right $[2]$-module is given by the data of
\begin{itemize}
\item objects $e_{i^\circ} \in \cE_{i^\circ}$ (for $0 \leq i \leq 2$),
\item morphisms 
\[
e_{i^\circ}
\xra{\vareps_{j^\circ i^\circ}}
E_{j^\circ i^\circ}(e_{j^\circ})
\]
(for $0 \leq i < j \leq 2$), and
\item a commutative square
\[ \begin{tikzcd}[column sep=2cm, row sep=2cm]
e_{0^\circ}
\arrow{r}{\vareps_{2^\circ0^\circ}}
\arrow{d}[swap]{\vareps_{1^\circ0^\circ}}
&
E_{2^\circ0^\circ}(e_{2^\circ})
\\
E_{1^\circ0^\circ}(e_{1^\circ})
\arrow{r}[swap]{E_{1^\circ0^\circ}(\vareps_{2^\circ1^\circ})}
&
E_{1^\circ0^\circ}(E_{2^\circ1^\circ}(e_{2^\circ}))
\arrow{u}
\end{tikzcd} \]
in $\cE_{0^\circ}$, where the morphism on the right is the canonical one (recall \Cref{example.lax.actions}\ref{describe.and.map.locally.cart.fibns}\ref{describe.locally.cart.fibn}).
\end{itemize}
Note that the structure map $\vareps_{2^\circ0^\circ}$ is likewise \textit{canonically} determined by the structure maps $\vareps_{2^\circ1^\circ}$ and $\vareps_{1^\circ0^\circ}$.

\end{enumerate}
\end{example}

\subsection{Lax actions and lax limits with mixed handedness}
\label{subsection.lax.subsection.with.mixed.handedness}

In this subsection, we introduce lax morphisms among lax modules (and in particular lax limits) of mixed handedness, using the theory of $(\infty,2)$-categories developed in \Cref{section.inftytwocats}.\footnote{Here we freely refer to some basic $(\infty,2)$-categorical notions (such as left- and right-laxification and 1-co/cartesian fibrations) described there.} We also record some key results here: an $(\infty,1)$-categorical description of such morphisms (\Cref{lemma.functors.into.LMod.rlax.llax.B}), as well as two results that relate such morphisms with those in the $\infty$-categories appearing in \Cref{subsection.lax.actions} via passage to adjoints (Lemmas \ref{lemma.ptwise.radjt.has.ptwise.ladjt} \and \ref{lem.get.r.lax.left.adjt}). To streamline our discussion, we restrict our attention to \textit{left} $\cB$-modules; the case of right $\cB$-modules is obtained by replacing $\cB$ with $\cB^\op$.

\begin{observation}
By \Cref{t11}, we can describe the $\infty$-categories of lax modules as well as their lax limit functors described in \S\S\ref{subsection.lax.actions} \and \ref{subsection.lax.limits} in terms of $(\infty,2)$-categories according to the identifications
\[
\begin{tikzcd}[column sep=0.5cm]
\LMod_{\llax.\cB}
\arrow[hook, two heads]{rr}
\arrow{rd}[sloped, swap]{\lim_{\llax.\cB}}
&
&
\LMod^\llax_{\llax.\cB}
\arrow{ld}[sloped, swap]{\lim^\llax_{\llax.\cB}}
\\
&
\Cat
\end{tikzcd}
\qquad
\simeq
\qquad
\begin{tikzcd}[column sep=0.5cm]
1\coCart_{\llax(\cB)}
\arrow[hook, two heads]{rr}
\arrow{rd}[sloped, swap]{\lim_{\llax(\cB)}}
&
&
2\Cat_{1\cocart/\llax(\cB)}
\arrow{ld}[sloped, swap]{\lim^\llax_{\llax(\cB)}}
\\
&
\Cat
\end{tikzcd}
\]
and 
\[
\begin{tikzcd}[column sep=0.5cm]
\LMod_{\rlax.\cB}
\arrow[hook, two heads]{rr}
\arrow{rd}[sloped, swap]{\lim_{\rlax.\cB}}
&
&
\LMod^\rlax_{\rlax.\cB}
\arrow{ld}[sloped, swap]{\lim^\rlax_{\rlax.\cB}}
\\
&
\Cat
\end{tikzcd}
\qquad
\simeq
\qquad
\begin{tikzcd}[column sep=0.5cm]
1\Cart_{\rlax(\cB)^{1\op}}
\arrow[hook, two heads]{rr}
\arrow{rd}[sloped, swap]{\lim_{\rlax(\cB)}}
&
&
2\Cat_{1\cart/\rlax(\cB)^{1\op}}
\arrow{ld}[sloped, swap]{\lim^\rlax_{\rlax(\cB)}}
\\
&
\Cat
\end{tikzcd}
~.
\]
\end{observation}

\begin{definition}
We define \bit{right-lax equivariant} morphisms between left-lax left $\cB$-modules and right-lax limits thereof as well as \bit{left-lax equivariant} morphisms between right-lax left $\cB$-modules and left-lax limits thereof according to the diagrams
\[
\begin{tikzcd}[row sep=1.2cm]
\LMod^\rlax_{\llax.\cB}
\arrow{rd}[sloped, swap]{\lim^\rlax_{\llax.\cB}}
&[-1cm]
:=
&[-1cm]
2\Cat_{1\cart/\llax(\cB)^{1\op}}
\arrow{ld}[sloped, swap]{\lim^\rlax_{\llax(\cB)}}
\\
&
\Cat
\end{tikzcd}
\qquad
\text{and}
\qquad
\begin{tikzcd}[row sep=1.2cm]
\LMod^\llax_{\rlax.\cB}
\arrow{rd}[sloped, swap]{\lim^\llax_{\rlax.\cB}}
&[-1cm]
:=
&[-1cm]
2\Cat_{1\cocart/\rlax(\cB)}
\arrow{ld}[sloped, swap]{\lim^\llax_{\rlax(\cB)}}
\\
&
\Cat
\end{tikzcd}
~.
\]
\end{definition}

\begin{observation}
\label{obs.that.defn.of.rlax.lim.extends}
As the notation suggests, we have canonical equivalences
\[
\iota_0 \LMod^\rlax_{\llax.\cB}
\simeq
\iota_0 \LMod^\llax_{\llax.\cB}
\]
on spaces of objects. Indeed, by \Cref{thm.un.straightening.for.two.cats} and ($1\&2\op$ applied to) \Cref{t11}, we have a commutative square
\[ \begin{tikzcd}
\LMod_{\llax.\cB}
\arrow[hook, two heads]{r}
&
\LMod^\rlax_{\llax.\cB}
\\
\LMod_\cB
\arrow[hook]{u}{\ff}
\arrow[hook, two heads]{r}
&
\LMod^\rlax_\cB
\arrow[hook]{u}[swap]{\ff}
\end{tikzcd} \]
extending the evident span in the diagram of \Cref{figure.define.almost.all.modules}, and moreover the two diagrams there extend to $3 \times 3$ commutative squares in the evident way. We use these facts without further comment.
\end{observation}

\begin{remark}
We give a purely $(\infty,1)$-categorical description of the right-lax limit of a left-lax left $\cB$-module in \Cref{subsection.lax.limits.via.subdivisions.in.hard.case} (see \Cref{prop.rlax.llaxB.via.sd}). In the main body of the work, we take this as an alternate definition. In the case that the only retracts in $\cB$ are equivalences, we also give another description in \Cref{subsection.strictification} as a strict limit over its subdivision category.
\end{remark}

\begin{lemma}\label{lemma.functors.into.LMod.rlax.llax.B}
For any $\infty$-category $\cC$, the datum of a functor
\[ \cC \longra \LMod^\rlax_{\llax.\cB} \]
is equivalent to the datum of a locally cocartesian fibration $\cE \da (\cC \times \cB)$ satisfying the following two conditions.
\begin{enumerate}
\item\label{condition.pb.to.C}
For every object $b \in \cB$, the base change $\cE_b \ra \cC$ is a (strict) cocartesian fibration.
\item
\label{condition.pb.to.brax.two}
For any pair of morphisms $c \ra c'$ in $\cC$ and $b \ra b'$ in $\cB$, the pullback along the functor $[2] \ra \cC \times \cB$ selecting the commutative triangle
\[ \begin{tikzcd}
(c,b)
\arrow{rd}
\arrow{d}
\\
(c,b')
\arrow{r}
&
(c',b')
\end{tikzcd} \]
is a (strict) cocartesian fibration.

\end{enumerate}
\end{lemma}

\begin{proof}
Using \Cref{thm.switching.yoga}, we have the identification
\[
\hom_{\iota_1 2\Cat}(\cC , \LMod^\rlax_{\llax.\cB})
:=
\hom_{\iota_1 2\Cat}(\cC , 2\Cat_{1\cart/\llax(\cB)^{1\op}} )
\simeq
\hom_{\iota_1 2\Cat} ( \llax(\cB) , 2\Cat_{1\cocart/\cC} )
~.
\]
The result now follows from \Cref{thm.unstraightening.yoga}.
\end{proof}

\begin{lemma}
\label{lemma.ptwise.radjt.has.ptwise.ladjt}
The datum of a morphism $\cE_0 \la \cE_1$ in $\LMod^\llax_{\llax.\cB}$ whose restriction to each $b \in \cB$ is a right adjoint is equivalent to the datum of a morphism $\cE_0 \ra \cE_1$ in $\LMod^\rlax_{\llax.\cB}$ whose restriction to each $b \in \cB$ is a left adjoint, with the equivalence given fiberwise by passing to adjoints.
\end{lemma}

\begin{proof}
This follows from \Cref{lemma.twocategorical.passing.to.adjoints.and.swapping.laxness} by taking $\cC = \llax(\cB)$.
\end{proof}

\begin{lemma}
\label{lem.get.r.lax.left.adjt}
A morphism $\cE_0 \la \cE_1$ in $\LMod_{\llax.\cB}$ whose restriction to each $b \in \cB$ is a right adjoint becomes a right adjoint in $\LMod^\rlax_{\llax.\cB}$, i.e.\! there exists a (necessarily unique) extension
\[ \begin{tikzcd}
{[1]^\op}
\arrow{r}
\arrow[hook]{d}
&
\LMod_{\llax.\cB}
\arrow[hook, two heads]{d}
\\
\Adj
\arrow[dashed]{r}
&
\LMod^\rlax_{\llax.\cB}
\end{tikzcd}
~.
\]
\end{lemma}

\begin{proof}
This follows from \Cref{lemma.twocategorical.extn.to.an.adjn.in.twoCatonecartoverC} by taking $\cC = \llax(\cB)$.
\end{proof}

\subsection{Subdivisions}
\label{subsection.sd.of.posets}

In this subsection, we study subdivisions of $\infty$-categories.

\begin{local}
In this subsection, we fix a poset $\pos \in \Poset$.
\end{local}

\begin{definition}
\label{defn.subdivision.of.a.poset}
The \bit{subdivision} of $\pos$ is the full subcategory
\[
\sd(\pos)
\subseteq
\bDelta_{/\pos}
:=
\bDelta
\times_\Cat
\Cat_{/\pos}
\]
on the conservative (or equivalently injective) functors $[n] \ra \pos$.
\end{definition}

\begin{definition}
\label{defn.iso.min.and.or.max}
A morphism $[m] \xra{\alpha} [n]$ in $\bDelta$ is called \bit{isomin} if $\alpha(0)=0$, \bit{isomax} if $\alpha(m) = n$.
We use the same terminology for morphisms in $\sd(\pos)$ according to their images under the forgetful functor $\sd(\pos) \ra \bDelta$.
\end{definition}

\begin{remark}
The $\infty$-category $\sd(\pos)$ is in fact a poset, namely the full subposet of the power set $\power(\pos)$ (ordered by inclusion) on those subsets of $\pos$ which are nonempty and totally ordered.
\end{remark}

\begin{example}
For each $[n] \in \bDelta \subseteq \Poset$, we have an identification
\[
\sd([n])
\simeq
\power_{\not=\es}([n])
\]
of its subdivision with its power set with its initial element removed, which is a punctured $(n+1)$-cube.
\end{example}

\begin{observation}
\label{obs.adjn.betw.Delta.over.pos.and.sd.pos}
The defining fully faithful inclusion $\sd(\pos) \subseteq \bDelta_{/\pos}$ admits a left adjoint
\[ \begin{tikzcd}[column sep=1.5cm]
\bDelta_{/\pos}
\arrow[dashed, transform canvas={yshift=0.9ex}]{r}{\im}
\arrow[hookleftarrow, transform canvas={yshift=-0.9ex}]{r}[yshift=-0.2ex]{\bot}
&
\sd(\pos)
\end{tikzcd}
~, \]
which takes each functor $[n] \xra{\varphi} \pos$ to the second factor in its epi-mono factorization
\[ \begin{tikzcd}
{[n]}
\arrow{rr}{\varphi}
\arrow[two heads, dashed]{rd}
&
&
\pos
\\
&
\im(\varphi)
\arrow[dashed, hook]{ru}[sloped, swap]{\ff}
\end{tikzcd}~. \]
\end{observation}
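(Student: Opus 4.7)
The plan is to exhibit, for each $\varphi : [n] \to \pos$ in $\bDelta_{/\pos}$, a universal arrow from $\varphi$ to the fully faithful inclusion $i : \sd(\pos) \hookrightarrow \bDelta_{/\pos}$; this automatically produces a left adjoint together with its unit.

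First, I would invoke the fact that $\bDelta$ admits unique epi-mono factorizations, with epimorphisms being surjective maps and monomorphisms injective maps of finite totally ordered sets. Applied to $\varphi$, this yields a factorization $[n] \twoheadrightarrow [k] \hookrightarrow \pos$, where $[k]$ enumerates the distinct values of $\varphi$ in their induced $\pos$-order. The map $[k] \hookrightarrow \pos$ is conservative, hence defines an object of $\sd(\pos)$ that I denote $\im(\varphi)$; the epimorphism $[n] \twoheadrightarrow [k]$ then defines a morphism $\eta_\varphi : \varphi \to i(\im(\varphi))$ in $\bDelta_{/\pos}$. Uniqueness of the factorization makes $\varphi \mapsto \im(\varphi)$ functorial and makes $\eta$ natural.

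To verify the universal property, I would fix $\psi : [m] \hookrightarrow \pos$ in $\sd(\pos)$ and check that precomposition with $\eta_\varphi$ induces a bijection
\[
\hom_{\sd(\pos)}(\im(\varphi), \psi) \xlongra{\sim} \hom_{\bDelta_{/\pos}}(\varphi, i(\psi))~.
\]
Both sides are either empty or singletons under the same condition $\im(\varphi) \subseteq \im(\psi)$. On the right, any map $f : [n] \to [m]$ with $\psi \circ f = \varphi$ is forced pointwise by the injectivity of $\psi$, namely $f(j) = \psi^{-1}(\varphi(j))$, so $f$ exists precisely when each $\varphi(j)$ lies in $\im(\psi)$. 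On the left, a map between the conservative functors $\im(\varphi), \psi : [\bullet] \hookrightarrow \pos$ in $\bDelta_{/\pos}$ is likewise unique when it exists, and exists under the very same condition. A direct check confirms that precomposition with $\eta_\varphi$ matches these singletons.

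I do not anticipate any serious obstacle: the essential content is the orthogonality between epimorphisms and monomorphisms in $\bDelta$, which reduces everything to a statement about subsets of $\pos$; the mild thing to be careful about is that a functor $[n] \to \pos$ is conservative if and only if it is injective on objects, but this is immediate since $[n]$ is itself a poset.
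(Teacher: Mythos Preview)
Your argument is correct. The paper states this as an Observation without proof, treating the existence of the left adjoint as evident from the epi-mono factorization; your proposal supplies exactly the standard verification of the universal property that the paper leaves implicit.
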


\begin{observation}
Subdivisions of posets assemble into a functor
\[
\Poset
\xlongra{\sd}
\Cat
\]
(whose unstraightening is a full subcategory of that of the functor $\Poset \xra{\bDelta_{/(-)}} \Cat$).
\end{observation}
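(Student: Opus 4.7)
My plan is to produce the functor $\sd$ together with the asserted embedding simultaneously, by constructing the (cocartesian) unstraightening of $\sd$ as a full subcategory of the unstraightening of $\bDelta_{/(-)}$.

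Let $\cE \to \Poset$ denote the cocartesian unstraightening of $\bDelta_{/(-)} : \Poset \to \Cat$: concretely, an object of $\cE$ is a pair $(\pos, [n] \xra{\varphi} \pos)$, and a morphism $(\pos,\varphi) \to (\sQ,\psi)$ consists of a morphism $f : \pos \to \sQ$ in $\Poset$ together with a morphism $g : [n] \to [m]$ in $\bDelta$ such that $\psi \circ g = f \circ \varphi$. Let $\cE' \subseteq \cE$ be the full subcategory on those objects $(\pos,\varphi)$ for which $\varphi$ is conservative; by construction, the fiber of the projection $\cE' \to \Poset$ over each $\pos$ is precisely $\sd(\pos)$, so it suffices to verify that $\cE' \to \Poset$ is a cocartesian fibration. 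Its straightening will then be the desired functor $\sd$, and the inclusion $\cE' \subseteq \cE$ exhibits the asserted full subcategory.

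For the cocartesianness, fix $(\pos, \varphi : [n] \to \pos) \in \cE'$ and a morphism $f : \pos \to \sQ$ in $\Poset$. Using the epi-mono factorization from \Cref{obs.adjn.betw.Delta.over.pos.and.sd.pos} applied to $f \circ \varphi \in \bDelta_{/\sQ}$, I obtain the candidate lift $(\pos,\varphi) \to (\sQ, \im(f \circ \varphi))$ given by the canonical epimorphism $[n] \twoheadrightarrow \im(f \circ \varphi)$ in $\bDelta$; since $\im(f \circ \varphi) \hookrightarrow \sQ$ is conservative, the target lies in $\cE'$. To verify that this morphism is cocartesian in $\cE'$, I fix any $(\sR,\rho : [k] \to \sR) \in \cE'$ with $\rho$ conservative and any $g : \sQ \to \sR$ in $\Poset$: morphisms $(\sQ,\im(f\circ\varphi)) \to (\sR,\rho)$ over $g$ are morphisms $\im(f \circ \varphi) \to [k]$ in $\bDelta$ compatible with $g$, and the universal property of the epi-mono factorization combined with the fact that $\rho$ is a monomorphism identifies these bijectively with morphisms $[n] \to [k]$ over $g \circ f$, i.e.\ with morphisms $(\pos,\varphi) \to (\sR,\rho)$ in $\cE'$ over $g \circ f$.

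The resulting functor $\sd : \Poset \to \Cat$ sends $f : \pos \to \sQ$ to the functor carrying $([n] \xra{\varphi} \pos) \in \sd(\pos)$ to $\im(f \circ \varphi) \in \sd(\sQ)$; equivalently, $\sd(f)$ is the composite $\sd(\pos) \hookrightarrow \bDelta_{/\pos} \xra{f_*} \bDelta_{/\sQ} \xra{\im} \sd(\sQ)$. The only nontrivial step above is the cocartesian lifting property, and that reduces entirely to the universal property of the reflective localization $\bDelta_{/\pos} \rightleftarrows \sd(\pos)$ recorded in \Cref{obs.adjn.betw.Delta.over.pos.and.sd.pos}; no genuinely harder obstacle arises.
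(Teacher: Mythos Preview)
Your proof is correct and follows precisely the approach indicated by the paper's parenthetical: the paper states this as an observation without proof, the only hint being that the unstraightening of $\sd$ is a full subcategory of that of $\bDelta_{/(-)}$. You have simply filled in the details implied by that remark, constructing $\cE'$ as the full subcategory on conservative functors and verifying that the epi--mono factorization supplies cocartesian lifts.
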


\begin{lemma}
\label{lemma.new.and.old.subdivisions.agree}
The commutative triangle
\[ \begin{tikzcd}
\bDelta
\arrow[hook]{r}{\ff}
\arrow[hook]{d}[swap]{\ff}
&
\Poset
\arrow{r}{\sd}
&
\Cat
\\
\Poset
\arrow[bend right=10]{rru}[sloped, swap]{\sd}
\end{tikzcd} \]
is a left Kan extension diagram.
\end{lemma}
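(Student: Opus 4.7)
The plan is to verify the pointwise formula for the left Kan extension. Concretely, for each poset $\pos$, there is a canonical comparison functor
\[ \colim_{([n] \xra{\varphi} \pos) \in \bDelta_{/\pos}} \sd([n]) \longra \sd(\pos) \]
(induced, on each summand, by the functor $\sd([n]) \ra \sd(\pos)$, $\sigma \mapsto \im(\varphi\sigma)$), and I must show that this is an equivalence in $\Cat$. My proof will invoke \Cref{obs.adjn.betw.Delta.over.pos.and.sd.pos} together with a standard identification of $\sd(\pos)$ as a colimit of its own overcategories.

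The first step is to reduce the indexing category. Since the inclusion $\iota : \sd(\pos) \hookrightarrow \bDelta_{/\pos}$ is a right adjoint to $\im$, it is cofinal for colimits: for any $\varphi \in \bDelta_{/\pos}$, the adjunction identifies the slice $\sd(\pos) \times_{\bDelta_{/\pos}} (\bDelta_{/\pos})_{\varphi/} \simeq \sd(\pos)_{\im(\varphi)/}$, which admits an initial object and is therefore weakly contractible. So the colimit reduces to
\[ \colim_{([n] \xra{\varphi} \pos) \in \bDelta_{/\pos}} \sd([n]) \simeq \colim_{(\sigma : [n_\sigma] \hookrightarrow \pos) \in \sd(\pos)} \sd([n_\sigma]). \]

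The second step identifies the restricted functor. Because each $\sigma$ is injective, composition with $\sigma$ defines an equivalence of posets $\sd([n_\sigma]) \xra{\sim} \sd(\pos)_{/\sigma}$, sending $\tau : [k] \hookrightarrow [n_\sigma]$ to $\sigma\tau : [k] \hookrightarrow \pos$ (viewed as an object of $\sd(\pos)$ over $\sigma$). This identification is natural in $\sigma$, since a morphism $\sigma \ra \sigma'$ in $\sd(\pos)$ is an injective order-preserving map $[n_\sigma] \hookrightarrow [n_{\sigma'}]$ over $\pos$.

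Finally, I apply the standard fact that for any $\infty$-category $\cC$ the canonical functor $\colim_{c \in \cC} \cC_{/c} \ra \cC$ is an equivalence in $\Cat$: this follows by observing that the target-projection cocartesian fibration $\Ar(\cC) \ra \cC$ is the unstraightening of $c \mapsto \cC_{/c}$, so the colimit in $\Cat$ is computed as the localization of $\Ar(\cC)$ at cocartesian edges; these preserve the source of an arrow, so the localization is identified with $\cC$ via the source projection. A direct unwinding confirms that the resulting composite equivalence $\colim_{\bDelta_{/\pos}} \sd \simeq \sd(\pos)$ agrees with the canonical comparison map, completing the proof. The main subtlety is this last (``co-Yoneda'') step; as an alternative one can work directly with the Grothendieck construction $\cE = \int \sd$ over $\bDelta_{/\pos}$, producing a natural zigzag of cocartesian morphisms from each $([n] \xra{\varphi} \pos, \sigma : [k] \hookrightarrow [n]) \in \cE$ to $(\im(\varphi\sigma) \hookrightarrow \pos, \id) \in \cE$, thereby identifying $\sd(\pos)$ with the localization of $\cE$ at its cocartesian edges.
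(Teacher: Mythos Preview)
Your proof is correct and follows essentially the same route as the paper's: reduce to the colimit over $\sd(\pos)$ using the adjunction of \Cref{obs.adjn.betw.Delta.over.pos.and.sd.pos}, identify the resulting diagram with $\sigma \mapsto \sd(\pos)_{/\sigma}$, recognize its unstraightening as $\Ar(\sd(\pos)) \xra{t} \sd(\pos)$, and conclude by observing that the source functor $s$ exhibits the localization at $t$-cocartesian morphisms. The paper phrases the final step as ``$s$ is itself a localization (because it admits a fully faithful left adjoint)'', which is the same content as your co-Yoneda argument.
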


\begin{proof}
We must show that the canonical functor
\[
\colim
\left(
\bDelta_{/\pos}
\xra{\fgt}
\bDelta
\longhookra
\Poset
\xlongra{\sd}
\Cat
\right)
\longra
\sd(\pos)
\]
is an equivalence.  By \Cref{obs.adjn.betw.Delta.over.pos.and.sd.pos}, the functor $\sd(\pos) \hookra \bDelta_{/\pos}$ is final.  So, it is equivalent to show that the functor
\begin{equation}
\label{canonical.functor.from.colim.over.sdP.to.sd.P}
\colim
\left(
\sd(\pos)
\longhookra 
\bDelta_{/\pos}
\xra{\fgt}
\bDelta
\longhookra
\Poset
\xlongra{\sd}
\Cat
\right)
\longra
\sd(\pos)
\end{equation}
is an equivalence.  Consider the composite
\begin{equation}
\label{composite.from.sdP.to.Cat}
\sd(\pos)
\longhookra 
\bDelta_{/\pos}
\xra{\fgt}
\bDelta
\longhookra
\Poset
\xlongra{\sd}
\Cat
\end{equation}
(whose colimit is the source of the functor \Cref{canonical.functor.from.colim.over.sdP.to.sd.P}).  It is not hard to see that the unstraightening of the composite \Cref{composite.from.sdP.to.Cat} is the cocartesian fibration
\begin{equation}
\label{cocart.fibn.from.ArsdP.to.sdP}
\Ar(\sd(\pos))
\xlongra{t}
\sd(\pos)
~,
\end{equation}
and that the composite
\begin{equation}
\label{composite.from.ArsdP.to.sdP}
\Ar(\sd(\pos))
\longra
\colim \Cref{composite.from.sdP.to.Cat}
\xra{\Cref{canonical.functor.from.colim.over.sdP.to.sd.P}}
\sd(\pos)
\end{equation}
is precisely the functor $\Ar(\sd(\pos)) \xra{s} \sd(\pos)$, where the first functor in the composite \Cref{composite.from.ArsdP.to.sdP} is the localization of $\Ar(\sd(\pos))$ with respect to the cocartesian morphisms in the cocartesian fibration \Cref{cocart.fibn.from.ArsdP.to.sdP}.  These cocartesian morphisms are precisely the morphisms that are sent to equivalences by the functor $\Ar(\sd(\pos)) \xra{s} \sd(\pos)$.  But this functor is itself a localization (because it admits a fully faithful left adjoint), which shows that the functor \Cref{canonical.functor.from.colim.over.sdP.to.sd.P} is an equivalence.
\end{proof}

\begin{definition}
\label{define.sd}
Justified by \Cref{lemma.new.and.old.subdivisions.agree}, we define the \bit{subdivision} endofunctor on $\Cat$ as the left Kan extension
\[ \begin{tikzcd}
\bDelta
\arrow{r}{\sd}
\arrow[hook]{d}[swap]{\ff}
&
\Cat
\\
\Cat
\arrow[dashed]{ru}[swap, sloped]{\sd}
\end{tikzcd}~. \]
\end{definition}

\begin{lemma}
\label{t1}
There is a canonical functor
\[
\bDelta_{/\cB}
\longrightarrow
\sd(\cB)
\]
witnessing a localization on those morphisms $( [n] \da \cB ) \to ( [m] \da \cB)$ in $\bDelta_{/\cB}$ for which the underlying morphism $[n]\to [m]$ in $\bDelta$ is surjective.

\end{lemma}

\begin{proof}
Let $\what{\sd}(\cB) \to \bDelta_{/\cB}$ be the unstraightening of the composite functor
\begin{equation}
\label{e1}
\bDelta_{/\cB}
\xra{\fgt}
\bDelta
\xlongra{\sd}
\Cat
~.
\end{equation}
Note the natural transformation by fully faithful functors $\sd([\bullet]) \to \bDelta_{/[\bullet]}$, a morphism in $\Fun(\bDelta,\Cat)$.
Note also that the unstraightening of $\bDelta_{/\cB} \xra{\sf fgt} \bDelta \xra{\bDelta_{/[\bullet]}} \Cat$ is the cocartesian fibration $\Ar(\bDelta_{/\cB}) \xra{t} \bDelta_{/\cB}$.
Therefore, we identify
\[
\what{\sd}(\cB)
\subseteq
\Ar(\bDelta_{/\cB})
\]
as the full subcategory consisting of those objects
\[
([k]\to [n] \to \cB)
\]
in which $[k]\to [n]$ is injective.  
Observe that the fully faithful left adjoint
\[ \begin{tikzcd}[column sep=1.5cm, row sep=0cm]
\bDelta_{/\cB}
\arrow[dashed, hook, transform canvas={yshift=0.9ex}]{r}{\brax{\id}}
\arrow[leftarrow, transform canvas={yshift=-0.9ex}]{r}[yshift=-0.2ex]{\bot}[swap]{s}
&
\Ar(\bDelta_{/\cB})
\end{tikzcd} \]
 factors through $\what{\sd}(\cB)$.
Therefore, the composite functor
\begin{equation}
\label{e4}
\what{\sd}(\cB)
\longhookra
\Ar(\bDelta_{/\cB})
\xra{s}
\bDelta_{/\cB}
\end{equation}
is a right adjoint localization.

Next, by definition we have an equivalence $\colim \Cref{e1} \xra{\sim} \sd(\cB)$.
Therefore, there is a canonical functor
\begin{equation}
\label{e3}
\what{\sd}(\cB)
\longrightarrow
\sd(\cB)
\end{equation}
witnessing a localization on those morphisms in $\what{\sd}(\cB)$ that are cocartesian over $\bDelta_{/\cB}$.
This collection of morphisms is precisely those that are defined by a diagram
\[ \begin{tikzcd}
{[k]}
\arrow{r}
\arrow{d}
&
{[n]}
\arrow{r}
\arrow{d}
&
\cB
\arrow{d}[sloped, anchor=south]{=}
\\
{[k']}
\arrow{r}
&
{[n']}
\arrow{r}
&
\cB
\end{tikzcd} \]
in which the morphism $[k] \ra [k']$ is surjective.
Clearly, this localization \Cref{e3} factors through the localization \Cref{e4}.
Because localizations satisfy a two-out-of-three property, we have a resulting localization functor
\[
\bDelta_{/\cB}
\longrightarrow
\sd(\cB)
~.
\]
By direct inspection, this localization is as asserted.
\end{proof}

\begin{prop}
\label{t19}
Let $\cC$ be an $\infty$-category.
Let $\cB^{\op}\xra{E} \Cat$ be a functor from the opposite of an $\infty$-category.
Consider its cartesian unstraightening ${\sf Un}^R(E) \da \cB$.
There is a canonical identification
\[
{\sf Un}^{L} 
\left(
\Fun(E(\bullet),\cC)
\right)
\simeq
\Fun^{\sf rel}_{/\cB} \left(
{\sf Un}^{R}(E)
, 
\ul{\cC}
\right)
\]
in $\Cat_{/\cB}$, in which on the left is the cocartesian unstraightening of the composite functor
\[
\Fun(E(\bullet),\cC)
:
\cB
\xra{E^{\op}}
\Cat^{\op}
\xra{\Fun(-,\cC)}
\Cat
~.
\]
In particular, the functor $\Fun^{\sf rel}_{/\cB} \left(
{\sf Un}^{R}(E)
, 
\ul{\cC}
\right) \da \cB$ is a cocartesian fibration.
\end{prop}

\begin{proof}
Fix an object $(\cK \da \cB) \in \Cat_{/\cB}$. We have the sequence of identifications
\begin{align}
\label{first.equivce.for.t19}
\hom_{\Cat_{/\cB}} \left(
\cK , 
{\sf Un}^L \left( \Fun ( E(\bullet) , \cC) \right)
\right)
& \simeq
\hom_{\Fun(\cB,\Cat)} \left( \cK_{/\bullet} , \Fun(E(\bullet) , \cC ) \right)
\\
\label{second.equivce.for.t19}
& \simeq
\lim_{(b_s \ra b_t)^\circ \in \TwAr(\cB)^\op} \hom_\Cat
\left(
\cK_{/b_t} , \Fun(E(b_s) , \cC )
\right)
\\
\nonumber
& \simeq
\lim_{(b_s \ra b_t)^\circ \in \TwAr(\cB)^\op} \hom_\Cat \left(
E(b_s) \times \cK_{/b_t}
,
\cC
\right)
\\
\nonumber
& \simeq
\hom_\Cat \left(
\colim_{(b_s \ra b_t) \in \TwAr(\cB)} E(b_s) \times \cK_{/b_t}
,
\cC
\right)
\\
\label{fifth.equivce.for.t19}
& \simeq
\hom_\Cat \left( {\sf Un}^R(E)_{|\cK} , \cC \right)
\\
\nonumber
& =:
\hom_{\Cat_{/\cB}} \left( \cK , \Fun^\rel_{/\cB} ( {\sf Un}^R(E) , \ul{\cC} ) \right)
\end{align}
in $\cS$, in which equivalences \Cref{first.equivce.for.t19}, \Cref{second.equivce.for.t19}, and \Cref{fifth.equivce.for.t19} respectively follow from Theorem 4.5, Proposition 5.1, and Corollary 7.6 of \cite{GHN}.
\end{proof}

\begin{notation}
We write $\Ar^{\sf inrt}(\bDelta^\op) \subset \Ar(\bDelta^\op)$ for the full subcategory whose objects are the inert morphisms (i.e.\! the opposites of morphisms $\bDelta$ that are injective and convex).
Via evaluation at source, we regard $\Ar^{\sf inrt}(\bDelta^\op)$ as a category over $\bDelta^{\op}$.
\end{notation}

\begin{observation}
\label{t200}
The fiber of $\Ar^{\sf inrt}(\bDelta^\op) \xra{s} \bDelta^{\op}$ over $[n]^\circ \in \bDelta^{\op}$ is the full subcategory $(\bDelta_{/^{\sf inrt}[n]})^{\op} \subset (\bDelta_{/[n]})^{\op}$ whose objects are the inert morphisms.
It is easy to see (e.g.\! using the inert-active factorization system on $\bDelta^{\op}$) that the functor $\Ar^{\sf inrt}(\bDelta^\op) \xra{s} \bDelta^{\op}$ is a cartesian fibration.
\end{observation}

\begin{notation}
\label{notn.coSpan}
Given an $\infty$-category $\cD$, we write
\[
\cSpan(\cD)
\subseteq
\Fun^\rel_{/\bDelta^\op}
\left(
\Ar^{\sf inrt}(\bDelta^\op)
,
\ul{\cD}
\right)
\]
for the full subcategory on those objects $( [n] , ((\bDelta_{/^{\sf inrt} [n]})^{\op} \xra{F} \cD ) )$ such that for every $0 \leq i < j < k < l \leq n$, the functor $F$ carries (the opposite of) the diagram
\[ \begin{tikzcd}
{[j,k]}
\arrow[hook]{r}
\arrow[hook]{d}
&
{[j,l]}
\arrow[hook]{d}
\\
{[i,k]}
\arrow[hook]{r}
&
{[i,l]}
\end{tikzcd} \]
in $\bDelta_{/^{\sf inrt} [n]}$ to a pullback diagram in $\cD$.
\end{notation}

\begin{remark}
In \Cref{notn.coSpan}, if $\cD$ admits pullbacks, then by \Cref{t19} the functor $\cSpan(\cD) \ra \bDelta^\op$ is a cocartesian fibration whose maximal sub-left fibration is the unstraightening of the complete Segal space $\bDelta^\op \xra{\Span(\cD)} \Spaces$ corresponding to the $\infty$-category of spans in $\cD$ (as studied e.g.\! in \cite{rune-spans}).\footnote{This follows from the fact that the functor $\Ar^{\sf inrt}(\bDelta^\op) \xra{s} \bDelta^\op$ is a cartesian fibration (via the inert-active factorization system of $\bDelta^\op$) that corresponds to the functor $\bDelta \xra{\Sigma^\bullet} \Cat$ of \cite[Definition 5.1]{rune-spans}.}
\end{remark}

\begin{local}
\label{notn.sdprime}
We write $\sd'(\cB)$ for the following $\infty$-category of spans in $\bDelta_{/\cB}$:
\begin{itemize}

\item its objects are those objects $([n] \da \cB) \in \bDelta_{/\cB}$ such that for each $0 < i \leq n$ the composite functor $[i-1,i] \hookra [n] \ra \cB$ is conservative, and

\item its morphisms are those spans
\[
([n] \da \cB)
\longla
([k] \da \cB)
\longra
([m] \da \cB)
\]
for which the functor $[n] \la [k]$ is surjective and the functor $[k] \ra [m]$ is injective.
\end{itemize}
More specifically, we may define $\sd'(\cB)$ via the unstraightening of its corresponding complete Segal space, which is the evident subcategory of $\cSpan(\bDelta_{/\cB})$.\footnote{It is straightforward to verify that this $\infty$-category is indeed well-defined.}
\end{local}

\begin{lemma}
\label{lemma.equivce.between.sdprime.and.sd}
There is a canonical equivalence $\sd'(\cB) \simeq \sd(\cB)$.
\end{lemma}

\begin{proof}
\Cref{t200} gives that $\Ar^{\sf inrt}(\bDelta^\op) \xra{s} \bDelta^\op$ is a cartesian fibration.
Hence, the projection
\[
\Fun^\rel_{/\bDelta^\op} \left( \Ar^{\sf inrt} ( \bDelta^\op ) , \ul{\bDelta_{/\cB}} \right)
\longra
\bDelta^\op
\]
is a cocartesian fibration. Note that we have a subcategory inclusion
\begin{equation}
\label{e10}
\left( \bDelta_{/\sd'(\cB)} \right)^\op
\longhookra
\Fun^\rel_{/\bDelta^\op} \left( \Ar^{\sf inrt} ( \bDelta^\op ) , \ul{\bDelta_{/\cB}} \right)
~,
\end{equation}
by definition of $\sd'(\cB)$.

Now, consider the cocartesian fibration $\cU \ra \bDelta$ that straightens to the standard inclusion $\bDelta \hookra \Cat$. This cocartesian fibration has the feature that for any $\infty$-category $\cC$, we have a canonical inclusion
\[
\left( \bDelta_{/\cC} \right)^\op
\longhookra
\Fun^\rel_{/\bDelta^\op} ( \cU^\op , \ul{\cC} )
\]
which is that of the maximal sub-left fibration of the indicated $\infty$-category of relative functors.

Now, observe the morphism
\begin{equation}
\label{e8}
\begin{tikzcd}[row sep=0cm]
\cU
\arrow{r}
&
\Ar^{\sf inrt}(\bDelta)
&[-1.4cm]
\simeq
\Ar^{\sf inrt}(\bDelta^\op)^\op
\\
\rotatebox{90}{$\in$}
&
\rotatebox{90}{$\in$}
\\
(i \in [n])
\arrow[maps to]{r}
&
([n]_{\leq i} \hookra [n])
\end{tikzcd}
\end{equation}
in $\Cat_{/\bDelta}$. Hence, we obtain the composite morphism
\begin{equation}
\label{composite.functor.from.Delta.over.sdprimeB}
\left( \bDelta_{/\sd'(\cB)} \right)^\op
\xhookra{\Cref{e10}}
\Fun^\rel_{/\bDelta^\op} \left( \Ar^{\sf inrt} ( \bDelta^\op ) , \ul{\bDelta_{/\cB}} \right)
\xra{\Cref{e8}^*}
\Fun^\rel_{/\bDelta^\op} \left( \cU^\op , \ul{\bDelta_{/\cB}} \right)
\end{equation}
in $\Cat_{/\bDelta^\op}$. By inspection, the composite functor \Cref{composite.functor.from.Delta.over.sdprimeB} factors through the subcategory
\[
\Fun^\rel_{/\bDelta^\op} \left( \cU^\op , \ul{\bDelta_{/\cB}} \right)^{\sf surj}
\subseteq
\Fun^\rel_{/\bDelta^\op} \left( \cU^\op , \ul{\bDelta_{/\cB}} \right)
\]
with the same objects but only those morphisms that project via the forgetful functor $\bDelta_{/\cB} \xra{\fgt} \bDelta$ to surjections. Using the surjective-injective factorization system on $\bDelta$ as well as the localization-conservative factorization system on $\Cat$, we find that the resulting functor
\[
\left( \bDelta_{/\sd'(\cB)} \right)^\op
\longra
\Fun^\rel_{/\bDelta^\op} \left( \cU^\op , \ul{\bDelta_{/\cB}} \right)^{\sf surj}
\]
over $\bDelta^\op$ is fully faithful on fibers, with the image on fibers over $[n]^\circ \in \bDelta^\op$ consisting of those objects $([k_0] \ra \cdots \ra [k_n] \ra \cB)$ such that
\begin{itemize}
\item for each $0 < i \leq n$ the functor $[k_{i-1}] \ra [k_i]$ is injective, and
\item for each $0 < j \leq k_n$ the composite functor $[j-1,j] \hookra [k_n] \ra \cB$ is conservative.
\end{itemize}
Moreover, using the same two factorization systems, this fully faithful inclusion on fibers over $[n]^\circ \in \bDelta^\op$ admits a left adjoint. Therefore, the composite functor
\begin{equation}
\label{e12}
\left( \bDelta_{/\sd'(\cB)} \right)^\op
\longra
\Fun^\rel_{/\bDelta^\op} \left( \cU , \ul{\bDelta_{/\cB}} \right)^{\sf surj}
\longra
\left(
\Fun^\rel_{/\bDelta^\op} \left( \cU , \ul{\bDelta_{/\cB}} \right)^{\sf surj}
\right)^{\what{\ \ \ }}_{{\sf l.fib}}
\end{equation}
to the fiberwise $\infty$-groupoid completion is an equivalence. In particular, the target of the functor, considered as a left fibration over $\bDelta^\op$, unstraightens to a complete Segal space. By \cite[Theorem 3.8]{rnerves}, this complete Segal space corresponds to the localization of $\bDelta_{/\cB}$ at those morphisms that project to surjections under the forgetful functor $\bDelta_{/\cB} \xra{\fgt} \bDelta$. By \Cref{t1}, we obtain the desired equivalence $\sd'(\cB) \xra{\sim} \sd(\cB)$.
\end{proof}

\begin{cor}
\label{t3}
Suppose that every retraction in $\cB$ is trivial (i.e.\! is a pair of inverse equivalences). Then, we have an identification $\sd(\cB) \subseteq \bDelta_{/\cB}$ as the full subcategory on those objects $([n] \da \cB) \in \bDelta_{/\cB}$ defined by conservative functors.
\end{cor}

\begin{proof}
This follows directly from \Cref{lemma.equivce.between.sdprime.and.sd} (and \Cref{notn.sdprime}).
\end{proof}

\begin{notation}
For any $[n] \in \bDelta$, there are evident functors
\begin{equation}
\label{max.and.min.from.sd.braxn.and.sd.braxnop}
\sd([n])
\xra{\max}
[n]
\qquad
\text{and}
\qquad
\sd([n])^\op
\xra{\min}
[n]
~,
\end{equation}
which respectively take a nonempty subset of $[n]$ to its maximal or minimal element. By functoriality of left Kan extension, these induce augmentations in $\Fun(\Cat,\Cat)$ that we likewise denote by
\[
\sd
\xra{\max}
\id
\qquad
\text{and}
\qquad
\sd^\op
\xra{\min}
\id
~.
\]
\end{notation}

\needspace{2\baselineskip}
\begin{lemma}
\label{t4}
\begin{enumerate}

\item[]

\item\label{t4.part.one}
The functor $\sd(\cB) \xra{\max} \cB$ is a locally cocartesian fibration.
Furthermore, for each locally cocartesian fibration $(\cE \da \cB) \in \loc.\coCart_\cB$, the canonical functor
\begin{equation}
\label{e20}
\ulhom_{\loc.\coCart_{\cB}} \left( \sd(\cB) , \cE \right)
\longrightarrow
\lim_{( [n] \da \cB) \in \bDelta_{/\cB}} 
\ulhom_{\loc.\coCart_{[n]}} \left( \sd([n]) , \cE_{|[n]} \right)
\end{equation}
is an equivalence, which is functorial in the object $(\cE \da \cB) \in \loc.\coCart_\cB$.

\item\label{t4.part.two}
The functor $\sd(\cB) \xra{\min} \cB^{\op}$ is a locally cartesian fibration.
Furthermore, for each locally Cartesian fibration $(\cE \da \cB^{\op}) \in \loc.\Cart_{\cB^\op}$, the canonical functor
\[
\ulhom_{\loc.\Cart_{\cB^{\op}}} \left( \sd(\cB) , \cE \right)
\longrightarrow
\lim_{( [n] \da \cB) \in \bDelta_{/\cB}} 
\ulhom_{\loc.\Cart_{[n]^{\op}}} \left( \sd([n]) , \cE_{|[n]^{\op}} \right)
\]
is an equivalence, which is functorial in the object $(\cE \da \cB^{\op}) \in \loc.\Cart_{\cB^{\op}}$.

\end{enumerate}
\end{lemma}

\begin{example}
\label{ex.max.from.sdbraxtwo.to.braxtwo.is.a.loc.cocart.fibn}
In the case that $[n] \in \bDelta$, the functors \Cref{max.and.min.from.sd.braxn.and.sd.braxnop} are respectively a locally cocartesian fibration and a locally cartesian fibration (as asserted by \Cref{t4}): in both cases the monodromy functors are given by union, as illustrated in \Cref{sd.of.brackets.2}.
\begin{figure}[h]
\begin{equation}
\label{the.locally.cocart.fibn.max.from.sd.two.to.two}
\begin{tikzcd}
&
2
\arrow{rr}
\arrow{dd}
&
&
12
\arrow{dd}
\\
&
&
1
\arrow{ru}
&
&
&
&
&
\sd([2])
\arrow{dddd}{\max}
\\
&
02
\arrow{rr}
&
&
012
\\
0
\arrow{ru}
\arrow{rr}
&
&
01
\arrow[crossing over, leftarrow]{uu}
\arrow{ru}
\\
&
&
&
2
\\
0
\arrow{rr}
\arrow{rrru}
&
&
1
\arrow{ru}
&
&
&
&
&
{[2]}
\end{tikzcd}
\end{equation}
\begin{equation}
\label{unstraightening.of.max.from.sd.two.to.two}
\begin{tikzcd}[row sep=1.5cm]
&
{[1]}
\arrow{rd}[sloped]{(\id,\const_1)}
\\
{[0]}
\arrow{rr}[swap]{(1,0)}[yshift=0.75cm]{\Uparrow}
\arrow{ru}[sloped]{1}
&
&
{[1] \times [1]}
\end{tikzcd}
\end{equation}
\caption{The functor $\sd([2]) \xra{\max} [2]$ is a locally cocartesian fibration, as illustrated in diagram \Cref{the.locally.cocart.fibn.max.from.sd.two.to.two}; its unstraightening is illustrated in diagram \Cref{unstraightening.of.max.from.sd.two.to.two}.  Note that the functor $\sd([1]) \xra{\max} [1]$ can also be seen in diagram \Cref{the.locally.cocart.fibn.max.from.sd.two.to.two} in three different ways, corresponding to the three nonidentity morphisms in $[2]$.}
\label{sd.of.brackets.2}
\end{figure}
\end{example}

\begin{proof}[Proof of \Cref{t4}]
We prove part \Cref{t4.part.one}; the proof of part \Cref{t4.part.two} is essentially identical.

To show that $\sd(\cB) \xra{\max} \cB$ is a locally cocartesian fibration, by \Cref{lemma.equivce.between.sdprime.and.sd} it is equivalent to show that the functor
\[
\sd'(\cB)
\xra{([n] \xra{\varphi} \cB) \longmapsto \varphi(n)}
\cB
\]
is a locally cocartesian fibration. For this, given an object $([n] \xra{\varphi} \cB) \in \sd'(\cB)$ we must show that there exists a locally cocartesian lift of any morphism $\varphi(n) \xra{f} b$ in $\cB$. It suffices to assume that $f$ is not an equivalence. In this case, it is clear from the definition of $\sd'(\cB)$ that the morphism $\varphi \ra \psi$ therein defined by the span $\varphi \xla{\sim} \varphi \xra{\iota} \psi$ is a locally cocartesian lift of $f$, where the morphism $\iota$ in $\bDelta_{/\cB}$ is defined by the commutative triangle
\begin{equation}
\label{triangle.in.Delta.over.B.inclusion.from.brax.n.to.brax.n.plus.one}
\begin{tikzcd}
{[n]}
\arrow[hook]{rr}{i \longmapsto i}
\arrow{rd}[sloped, swap]{\varphi}
&
&
{[n+1]}
\arrow{ld}[sloped, swap]{\psi}
\\
&
\cB
\end{tikzcd}
\end{equation}
in $\Cat$ in which $\psi(n \ra (n+1)) = f$.
In other words, a morphism in $\sd(\cB)$ is locally cocartesian over $\cB$ if it is the image of a morphism \Cref{triangle.in.Delta.over.B.inclusion.from.brax.n.to.brax.n.plus.one} in $\bDelta_{/\cB}$ under the localization $\bDelta_{/\cB} \ra \sd(\cB)$ of \Cref{t1}.

We now establish the equivalence \Cref{e20}. 
By the definition of $\sd(\cB)$ as a colimit, we have that the functor
\begin{equation}
\label{e21}
\ulhom_{\Cat_{/\cB}} \left( \sd(\cB) , \cE \right)
\longrightarrow
\lim_{( [n] \da \cB) \in \bDelta_{/\cB}} 
\ulhom_{\Cat_{/[n]}} \left( \sd([n]) , \cE_{|[n]} \right)
\end{equation}
is an equivalence. 
Using the above description of the locally cocartesian morphisms of the functor $\sd(\cB) \xra{\max} \cB$, we see that for each object $([n]\da \cB) \in \bDelta_{/\cB}$ the corresponding functor $\sd([n]) \to \sd(\cB)$ carries locally cocartesian morphisms over $[n]$ to locally cocartesian morphisms over $\cB$, and moreover that each locally cocartesian morphism in $\sd(\cB)$ is the image of such a morphism.
It follows that the equivalence \Cref{e21} restricts as the equivalence \Cref{e20}.
\end{proof}

\subsection{Lax limits with mixed handedness via subdivisions}
\label{subsection.lax.limits.via.subdivisions.in.hard.case}

In this subsection, we give an alternative description of left-lax limits of right-lax left $\cB$-modules in terms of the subdivision of $\cB$ (\Cref{prop.rlax.llaxB.via.sd}). In the main body of the work, we use this description as an alternate definition.

\begin{prop}
\label{prop.rlax.llaxB.via.sd}
Right-lax limits of left-lax left $\cB$-modules are corepresented by the object $(\sd(\cB) \da \cB) \in \loc.\coCart_\cB =: \LMod_{\llax.\cB}$. In other words, there is a canonical commutative diagram
\[
\begin{tikzcd}[column sep=1.5cm]
\LMod_{\llax.\cB}
\arrow[hook, two heads]{r}
&
\LMod_{\llax.\cB}^\rlax
\arrow{rdd}[sloped]{\lim^\rlax_{\llax.\cB}}
\\[-0.7cm]
\rotatebox{90}{$=:$}
\\[-0.7cm]
\loc.\coCart_\cB
\arrow{rr}[sloped, swap]{\Fun^\cocart_{/\cB}(\sd(\cB),-)}
&
&
\Cat
\end{tikzcd}
~.
\]
\end{prop}

\begin{proof}
This is \Cref{thm.defns.of.rlax.limit.over.onecats.agree}.
\end{proof}



\begin{example}
\label{example.limits.of.lax.actions.when.laxness.disagrees}
Using \Cref{prop.rlax.llaxB.via.sd}, let us unwind the definitions of the functors
\[
\LMod_{\llax.\cB}
\xra{\lim^\rlax_{\llax.\cB}}
\Cat
\qquad
\text{and}
\qquad
\RMod_{\rlax.\cB}
\xra{\lim^\llax_{\rlax.\cB^\op}}
\Cat
\]
in the simplest nontrivial case, namely when $\cB = [2]$.\footnote{Recall the description of $\sd([2])$ given in \Cref{sd.of.brackets.2}.}
\begin{enumerate}

\item\label{example.limits.of.lax.actions.when.laxness.disagrees.rlax.lim.of.llax.action}
Let $\cE \da [2]$ be a locally cocartesian fibration, and let us employ the notation of \Cref{example.lax.actions}\Cref{describe.and.map.locally.cocart.fibns}\Cref{describe.locally.cocart.fibn}.  Then, an object of the right-lax limit of this left-lax left $[2]$-module is given by the data of
\begin{itemize}
\item objects $e_i \in \cE_{i}$ (for $0 \leq i \leq 2$),
\item morphisms
\[
e_j
\xra{\vareps_{ij}}
E_{ij}(e_i)
\]
(for $0 \leq i < j \leq 2$), and
\item a commutative square
\[ \begin{tikzcd}[column sep=1.5cm, row sep=1.5cm]
e_2
\arrow{r}{\varepsilon_{01}}
\arrow{d}[swap]{\varepsilon_{02}}
&
E_{12}(e_1)
\arrow{d}{E_{12}(\varepsilon_{01})}
\\
E_{02}(e_0)
\arrow{r}
&
E_{12}(E_{01}(e_0))
\end{tikzcd} \]
in $\cE_{2}$, where the lower morphism is the canonical one (recall \Cref{example.lax.actions}\ref{describe.and.map.locally.cocart.fibns}\ref{describe.locally.cocart.fibn}).
\end{itemize}
Note that the structure map $\vareps_{02}$ is \textit{not} generally determined by the structure maps $\vareps_{01}$ and $\vareps_{12}$ (in contrast with \Cref{example.lax.limits.with.agreeing.lax.action}\Cref{item.example.lax.limits.with.agreeing.lax.action.loc.cocart}).

\item Let $\cE \da [2]$ be a locally cartesian fibration, and let us employ the notation of \Cref{example.lax.actions}\Cref{describe.and.map.locally.cart.fibns}\Cref{describe.locally.cart.fibn}.  Then, an object of the left-lax limit of this right-lax right $[2]$-module is given by the data of
\begin{itemize}
\item objects $e_{i^\circ} \in \cE_{i^\circ}$ (for $0 \leq i \leq 2$),
\item morphisms 
\[
E_{j^\circ i^\circ}(e_{j^\circ})
\xra{\vareps_{j^\circ i^\circ}}
e_{i^\circ}
\]
(for $0 \leq i < j \leq 2$), and
\item a commutative square
\[ \begin{tikzcd}[column sep=2cm, row sep=2cm]
E_{1^\circ0^\circ}(E_{2^\circ1^\circ}(e_{2^\circ}))
\arrow{r}
\arrow{d}[swap]{E_{1^\circ0^\circ}(\vareps_{2^\circ1^\circ})}
&
E_{2^\circ0^\circ}(e_{2^\circ})
\arrow{d}{\vareps_{2^\circ0^\circ}}
\\
E_{1^\circ0^\circ}(e_{1^\circ})
\arrow{r}[swap]{\vareps_{1^\circ0^\circ}}
&
e_{0^\circ}
\end{tikzcd} \]
in $\cE_{0^\circ}$, , where the upper morphism is the canonical one (recall \Cref{example.lax.actions}\ref{describe.and.map.locally.cart.fibns}\ref{describe.locally.cart.fibn}).
\end{itemize}
Note that the structure map $\vareps_{2^\circ0^\circ}$ is likewise \textit{not} generally determined by the structure maps $\vareps_{2^\circ1^\circ}$ and $\vareps_{1^\circ0^\circ}$ (again in contrast with \Cref{example.lax.limits.with.agreeing.lax.action}\Cref{item.example.lax.limits.with.agreeing.lax.action.loc.cart}).

\end{enumerate}
\end{example}

\begin{remark}
\label{rmk.why.compatibility.cubes}
It is because we are taking e.g.\! the \textit{right}-lax limit of a \textit{left}-lax module that we end up with the perhaps unfamiliar compatibility conditions of the commutative squares in \Cref{example.limits.of.lax.actions.when.laxness.disagrees}.  Comparing with \Cref{example.lax.limits.with.agreeing.lax.action}, we see that the analogous compatibility condition for e.g.\! the left-lax limit of a left-lax module as a section of a locally cocartesian fibration is simply that the section preserves composition of morphisms -- which is of course built into the very definition of a functor.
\end{remark}

\begin{example}
\label{obs.rlax.lim.of.trivial.action}
Consider the projection from the product
\begin{equation}
\label{projection.from.product.B.x.G.to.B}
\ul{\cG}
:=
\cG \times \cB
\xlongra{\pr}
\cB
\end{equation}
as an object of $\LMod_\cB$. In the diagram
\[ \begin{tikzcd}
\sd(\cB)
\arrow{r}{\min}
\arrow{d}[swap]{\max}
&
{\cB^\op}
\\
\cB
\end{tikzcd}~, \]
the horizontal functor is the localization at the locally cocartesian morphisms with respect to the vertical functor.\footnote{This follows from the case in which $\cB = [n]$, which follows from \Cref{ex.max.from.sdbraxtwo.to.braxtwo.is.a.loc.cocart.fibn}.} Hence, we find that
\[
\lim^\rlax_\cB ( \ul{\cG} )
:=
\Fun^\cocart_{/\cB} \left( \sd(\cB) , \ul{\cG} \right)
:=
\Fun^\cocart_{/\cB} \left( \sd(\cB) , \cG \times \cB \right)
\simeq
\Fun \left( \cB^\op , \cG \right)
~.
\]
Dually, considering $\Cref{projection.from.product.B.x.G.to.B} \in \RMod_\cB$, we have that
\[
\lim^\llax_{\cB^\op} ( \ul{\cG})
\simeq
\Fun(\cB,\cG)
~. \]
\end{example}

\begin{observation}
\label{obs.map.from.strict.limit.to.lax.limit}
Given a $\cB$-module of any sort, there are canonical fully faithful inclusions from its strict limit to its various lax limits. In terms of \Cref{prop.rlax.llaxB.via.sd}, the canonical morphism
\[
\lim_{\llax.\cB}(-)
\longhookra
\lim^\rlax_{\llax.\cB}(-)
\]
in $\Fun(\LMod_{\llax.\cB} , \Cat)$ is corepresented by the epimorphism (in fact localization) $\cB \xla{\max} \sd(\cB)$ in $\LMod_{\llax.\cB} := \loc.\coCart_\cB$.
\end{observation}

\subsection{An alternative description of right-lax limits of left-lax modules}
\label{subsection.strictification}

In this subsection, we provide a useful alternative description of right-lax limits of left-lax modules.

\begin{definition}
For any object $\varphi \in \sd(\cB)$, its \bit{isomax undercategory} is the fiber
\[ \begin{tikzcd}
\sd(\cB)_{\varphi/\isomax}
\arrow{r}
\arrow{dd}
&
\sd(\cB)_{\varphi/}
\arrow{d}{t}
\\
&
\sd(\cB)
\arrow{d}{\max}
\\
\{ \max(\varphi) \}
\arrow{r}
&
\cB
\end{tikzcd}~, \]
i.e.\! the $\infty$-category of isomax morphisms in $\sd(\cB)$ with source $\varphi$.
\end{definition}

\begin{observation}
\label{t18}
Suppose that the only retracts in $\cB$ are equivalences. Observe the factorization system on $\bDelta$, whose left factor consists of isomax morphisms and whose right factor consists of isomin morphisms that are moreover consecutive inclusions: it takes a morphism $[m] \xra{\alpha} [n]$ in $\bDelta$ to the factorization
\[ \begin{tikzcd}[column sep=0.5cm]
{[m]}
\arrow{rr}{\alpha}
\arrow[dashed]{rd}[swap, sloped]{\alpha_L}
&
&
{[n]}
\\
&
{[n]_{/\alpha(m)}}
\arrow[dashed]{ru}[swap, sloped]{\alpha_R}
\end{tikzcd}~. \]
This lifts to a factorization system on $\bDelta_{/\cB}$, which restricts to a factorization system on $\sd(\cB) \subseteq \bDelta_{/\cB}$ (recall \Cref{t3}).
\end{observation}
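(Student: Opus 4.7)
\medskip

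\noindent\textbf{Proof plan.} The plan is to verify the factorization system on $\bDelta$ first, and then observe that both the lift to $\bDelta_{/\pos}$ and the restriction to $\sd(\pos) \subseteq \bDelta_{/\pos}$ are essentially formal. I will denote the two classes by $\class_L$ (isomax morphisms) and $\class_R$ (isomin consecutive inclusions).

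First I would check the two easy hypotheses of a factorization system on $\bDelta$. Both $\class_L$ and $\class_R$ contain all isomorphisms and are stable under composition, which is clear directly from the definitions: a composite of isomax morphisms is isomax (preserving the top element), and a composite of isomin consecutive inclusions is again an isomin consecutive inclusion (the composite of injective maps whose images are initial segments is injective with image an initial segment). The existence of the proposed factorization is immediate: for $\alpha\colon [m] \to [n]$, the map $\alpha_L\colon [m] \to [n]_{/\alpha(m)}$ sends $i \mapsto \alpha(i)$ and is isomax since $\alpha_L(m) = \alpha(m) = \max([n]_{/\alpha(m)})$, while $\alpha_R\colon [n]_{/\alpha(m)} \hookra [n]$ is by definition the inclusion of the initial segment $\{0,\dots,\alpha(m)\}$.

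The main (and really only) step requiring work is orthogonality. Given a commutative square
\[
\begin{tikzcd}
{[m_1]} \arrow{r}{\beta_1} \arrow{d}[swap]{l} & {[n_1]} \arrow{d}{r} \\
{[m_2]} \arrow{r}[swap]{\beta_2} & {[n_2]}
\end{tikzcd}
\]
with $l \in \class_L$ and $r \in \class_R$, I would construct the (unique) diagonal lift $\gamma\colon [m_2] \to [n_1]$ as follows. Since $r$ is the inclusion of an initial segment $\{0,\dots,n_1\} \subseteq [n_2]$, the existence of $\gamma$ with $r\gamma = \beta_2$ amounts to showing $\beta_2(m_2) \leq n_1$; and since $r$ is injective, such a $\gamma$ is automatically unique. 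Using that $l$ is isomax, i.e.\ $l(m_1) = m_2$, the commutativity of the square gives $\beta_2(m_2) = \beta_2(l(m_1)) = r(\beta_1(m_1)) = \beta_1(m_1) \leq n_1$, as required. (The compatibility $\gamma \circ l = \beta_1$ then follows by applying the injective $r$ to both sides and using commutativity again.) This settles the factorization system on $\bDelta$.

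Finally, I would lift the system to $\bDelta_{/\pos}$ by declaring that a morphism is in $\class_L$ (resp.\ $\class_R$) if and only if its image in $\bDelta$ lies in $\class_L$ (resp.\ $\class_R$); the factorization of $\alpha\colon ([m] \to \pos) \to ([n] \to \pos)$ in $\bDelta_{/\pos}$ is the factorization $[m] \to [n]_{/\alpha(m)} \to [n]$ of $\bDelta$, with $[n]_{/\alpha(m)}$ equipped with the restriction of the structure map $[n] \to \pos$, so that commutativity over $\pos$ is automatic. Orthogonality in $\bDelta_{/\pos}$ reduces to orthogonality in $\bDelta$ because any lift produced there already commutes with the structure maps to $\pos$ (by the injectivity of $r$ used above, any two competing lifts would agree after composition with $r$). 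For the restriction to $\sd(\pos)$ one only needs to observe that if the structure maps $[m] \to \pos$ and $[n] \to \pos$ are conservative, then so is the composite $[n]_{/\alpha(m)} \hookra [n] \to \pos$, so the intermediate object remains in $\sd(\pos)$. I do not anticipate any serious obstacle beyond the orthogonality verification above, which is short but is the only step where the precise interplay between ``isomax'' on the left and ``isomin $+$ consecutive inclusion'' on the right is used.
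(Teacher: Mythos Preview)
Your argument is correct. The paper, however, treats this statement as an Observation and offers no proof at all: it simply states the factorization and asserts that it lifts and restricts. So there is nothing to compare against; you have supplied a complete verification where the paper leaves the details to the reader. Your orthogonality argument is exactly the right point to isolate, and your handling of the lift to $\bDelta_{/\pos}$ and the restriction to $\sd(\pos)$ is clean.
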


\begin{notation}
\label{notn.composite.loc.cocart.mdrmy.fctr}
Given a locally cocartesian fibration $\cE \da \cB$ and an object $([m] \xra{\varphi} \cB) \in \sd(\cB)$ (using \Cref{lemma.equivce.between.sdprime.and.sd}), we write
\[
\cE_\varphi
:
\cE_{\varphi(0)}
\xra{\cE_{\varphi(\{0<1\})}}
\cE_{\varphi(1)}
\xra{\cE_{\varphi(\{1<2\})}}
\cdots
\xra{\cE_{\varphi(\{(m-1)<m\})}}
\cE_{\varphi(m)}
\]
for the composite of locally cocartesian monodromy functors.
\end{notation}

\begin{observation}
\label{obs.unstraightening.preserves.colimits}
The forgetful functor
\[
\coCart_{\cB}
\longra
\Cat_{/\cB}
\]
preserves colimits.  Indeed, by \cite[Theorem 7.4]{GHN}, the composite functor
\[
\Fun(\cB, \Cat)
\simeq
\coCart_\cB
\longhookra
\Cat_{/\cB}
\xra{\fgt}
\Cat
\]
is given by a formula which evidently preserves colimits.  Thereafter, the claim follows from the fact the functor
$\Cat_{/\cB} \xra{\fgt} \Cat$ is conservative and preserves colimits.
\end{observation}

\begin{observation}
\label{t16}
Let $( \cE \xra{\pi} \cB ) \in \loc.\coCart_\cB$ be a locally cocartesian fibration.
The postcomposition functor
\begin{equation}
\label{e23}
\coCart_{\cE}
\xra{\pi \circ (-)}
\loc.\coCart_\cB
\end{equation}
admits a right adjoint, which we describe presently.

First of all, using \Cref{obs.unstraightening.preserves.colimits}, we see that the composite forgetful functor $\coCart_\cE \xra{\fgt} \Cat_{/\cE} \xra{\pi \circ (-)} \Cat_{/\cB}$ preserves colimits. Moreover, any cocone in $\loc.\coCart_\cB \subseteq \Cat_{/\cB}$ that is a colimit diagram in $\Cat_{/\cB}$ is also a colimit diagram in $\loc.\coCart_\cB$. Therefore, the functor \Cref{e23} preserves colimits.

Now, notice that the composite functor
\[
y_\pi
\colon
\cE^{\op}
\xlongra{\Yo}
\Fun(\cE , \Spaces)
\longhookra
\Fun(\cE , \Cat)
\simeq
\coCart_\cE
\xra{\pi\circ(-)}
\loc.\coCart_{\cB}
\]
evaluates as
\[
\cE \ni
e
\longmapsto
( \cE_{e/} \da \cB )
\in \loc.\coCart_{\cB}
~.\footnote{See e.g.\! \cite[Lemma 4.7]{GHN}.}
\]
It follows that the right adjoint to \Cref{e23} is given through un/straightening by the functor
\[ \begin{tikzcd}[row sep=0cm]
\loc.\coCart_\cB
\arrow{r}
&
\coCart_\cE
\simeq
&[-1.7cm]
\Fun(\cE,\Cat)
\\
\rotatebox{90}{$\in$}
&
&
\rotatebox{90}{$\in$}
\\
(\cE \da \cB)
\arrow[maps to]{rr}
&
&
\ulhom_{\loc.\coCart_\cB}(y_\pi , \cE)
\end{tikzcd}
~.
\]
\end{observation}

\begin{notation}
\label{d5}
Let $( \cE \xra{\pi} \cB ) \in \loc.\coCart_{\cB}$ be a locally cocartesian fibration.
We write
\[ \begin{tikzcd}[column sep=1.5cm]
\coCart_\cE
\arrow[transform canvas={yshift=0.9ex}]{r}{\pi \circ (-)}
\arrow[leftarrow, dashed, transform canvas={yshift=-0.9ex}]{r}[yshift=-0.2ex]{\bot}[swap]{\Strict_\pi}
&
\loc.\coCart_\cB
\end{tikzcd} \]
for the right adjoint given by \Cref{t16}. In the case that $(\cE \xra{\pi} \cB) = (\sd(\cB) \xra{\max} \cB)$ (recall \Cref{t4}\Cref{t4.part.one}), we simply write $\Strict := \Strict_\max$.
\end{notation}

\begin{observation}
\label{t17}
Let $( \cE \xra{\pi} \cB ) \in \loc.\coCart_{\cB}$ be a locally cocartesian fibration.
By \Cref{t16}, there are canonical equivalences
\[
\ulhom_{\loc.\coCart_{\cB}}( \cE , - ) 
\simeq
\Gamma^{\cocart}_{\cE}
\left(
\Strict_\pi(-) 
\right)
\simeq
\lim
\left(
\cE
\xra{\Strict_\pi (-)}
\Cat
\right)
\]
in $\Fun(\loc.\coCart_\cB , \Cat)$.
\end{observation}

\begin{lemma}
\label{lem.strictification}
Suppose that the only retracts in $\cB$ are equivalences.
Then, the diagram
\begin{equation}
\label{e24}
\begin{tikzcd}
\LMod_{\llax.\cB}
\arrow{rr}{\lim^\rlax_{\llax.\cB}}
\arrow{rd}[sloped, swap]{\Strict}
&
&
\Cat
\\
&
\LMod_{\sd(\cB)}
\arrow{ru}[sloped, swap]{\lim_{\sd(\cB)}}
\end{tikzcd}
\end{equation}
canonically commutes. Moreover, given a left-lax left $\cB$-module $(\cE \da \cB) \in \LMod_{\llax.\cB}$, the left $\sd(\cB)$-module $(\Strict(\cE) \da \sd(\cB)) \in \LMod_{\sd(\cB)}$ has the following properties.
\begin{itemize}
\item Its fiber over an object $([m] \xra{\varphi} \cB) \in \sd(\cB)$ is the $\infty$-category
\[
\Strict(\cE)_\varphi
:=
\Fun
\left(
\sd(\cB)_{\varphi/\isomax}
,
\cE_{\max(\varphi)}
\right)
~.
\]
\item Over a morphism 
\begin{equation}
\label{morphism.in.sd.P}
\begin{tikzcd}
{[m]}
\arrow[hook]{rr}{\alpha}
\arrow[hook]{rd}[sloped, swap]{\varphi}
&
&
{[n]}
\\
&
\cB
\arrow[hookleftarrow]{ru}[sloped, swap]{\psi}
\end{tikzcd}
\end{equation}
in $\sd(\cB)$, its cocartesian monodromy functor
\begin{equation}
\label{cocart.mdrmy.functor.in.strictification}
\Fun \left( \sd(\cB)_{\varphi/\isomax} , \cE_{\max(\varphi)} \right)
=:
\Strict(\cE)_\varphi
\xra{\Strict(\cE)_\alpha}
\Strict(\cE)_\psi
:=
\Fun \left( \sd(\cB)_{\psi/\isomax} , \cE_{\max(\psi)} \right)
\end{equation}
evaluates on a functor
\begin{equation}
\label{arbitrary.functor.from.isomax.undercat.to.corresponding.fiber}
\sd(\cB)_{\varphi/\isomax}
\xlongra{F}
\cE_{\max(\varphi)}
\end{equation}
as a functor that evaluates as
\[ \begin{tikzcd}[column sep=1.5cm, row sep=0cm]
\sd(\cB)_{\psi/\isomax}
\arrow{r}{\Strict(\cE)_\alpha(F)}
&
\cE_{\max(\psi)}
\\
\rotatebox{90}{$\in$}
&
\rotatebox{90}{$\in$}
\\
(\psi \xlongra{\beta} \omega)
\arrow[maps to]{r}
&
\cE_{\omega'}(F((\beta\alpha)_L))
\end{tikzcd}~, \]
where writing $([k] \xra{\omega} \cB) \in \sd(\cB)$ we denote by
\[
\omega'
:=
\left(
[k]_{\max((\beta\alpha)_R)/}
\longhookra
[k]
\xlonghookra{\omega}
\cB
\right)
\in
\sd(\cB)
\]
the restriction of $\omega$ to the undercategory of $\max((\beta\alpha)_R) \in [k]$.

\end{itemize}
Furthermore, in the case that $\cB$ is an ordinary category, 
the above properties characterize the functor $\Strict$.
\end{lemma}

\begin{example}
Suppose that $\cB = [1]$, so that
\[
\sd(\cB)
=
\sd([1])
= \left(
\begin{tikzcd}
&
1
\arrow{d}
\\
0
\arrow{r}
&
01
\end{tikzcd} \right)
~. 
\]
Given a (locally) cocartesian fibration $\cE \da [1]$ classified by a diagram $\cE_0 \xra{F} \cE_1$, the functor $\sd([1]) \xra{\Strict(\cE)} \Cat$ selects the diagram
\[ \begin{tikzcd}
&
\Fun([1],\cE_1)
\arrow{d}{t}
\\
\cE_0
\arrow{r}[swap]{F}
&
\cE_1
\end{tikzcd}~, \]
whose limit is indeed
\[
\lim^\rlax_{\llax.[1]}(\cE)
\simeq
\lim^\rlax_{[1]} ( \cE )
:=
\Gamma \left( \left( \begin{tikzcd}
\cE
\arrow{d}
\\
{[1]}
\end{tikzcd} \right)^\cocartdual
\right)
~.
\]
\end{example}

\begin{example}
Suppose that $\cB = [2]$, so that $\sd(\cB) = \sd([2])$ is as depicted in diagram \Cref{the.locally.cocart.fibn.max.from.sd.two.to.two} of \Cref{sd.of.brackets.2}.  Given a locally cocartesian fibration $\cE \da [2]$ selecting a lax-commutative triangle
\[ \begin{tikzcd}
&
\cE_{1}
\arrow{rd}[sloped]{G}
\\
\cE_{0}
\arrow{ru}[sloped]{F}
\arrow{rr}[transform canvas={xshift=-0.1cm, yshift=0.3cm}]{\eta \Uparrow}[swap]{H}
&
&
\cE_{2}
\end{tikzcd}~, \]
the functor $\sd([2]) \xra{\Strict(\cE)} \Cat$ selects the diagram
\[ \begin{tikzcd}[row sep=1.5cm]
&
\Fun([1] \times [1],\cE_2)
\arrow{rr}{(\id,\const_1)^*}
\arrow{dd}[swap]{(\const_1,\id)^*}
&
&
\Fun([1],\cE_2)
\arrow{dd}{t}
\\
&
&
\Fun([1],\cE_1)
\arrow{ru}[sloped]{G}
\\
&
\Fun([1],\cE_2)
\arrow{rr}[swap, pos=0.3]{t}
&
&
\cE_2
\\
\cE_0
\arrow{rr}[swap]{F}
\arrow{ru}[sloped]{\eta}
&
&
\cE_1
\arrow{ru}[sloped, swap]{G}
\arrow[crossing over, leftarrow]{uu}[pos=0.7]{t}
\end{tikzcd}~, \]
whose limit is indeed $\lim^\rlax_{\llax.[2]}(\cE)$ (as described in \Cref{example.limits.of.lax.actions.when.laxness.disagrees}\Cref{example.limits.of.lax.actions.when.laxness.disagrees.rlax.lim.of.llax.action}).
\end{example}

\begin{remark}
Let $\pos$ be a poset. By construction, for any inclusion $\sD \hookra \pos$ of a down-closed subset we have a commutative square
\[ \begin{tikzcd}
\LMod_{\llax.\pos}
\arrow{r}{\Strict}
\arrow{d}
&
\LMod_{\sd(\pos)}
\arrow{d}
\\
\LMod_{\llax.\sD}
\arrow{r}[swap]{\Strict}
&
\LMod_{\sd(\sD)}
\end{tikzcd} \]
(in which the horizontal functors are those of \Cref{lem.strictification} and the vertical functors are restriction).  
\end{remark}


\begin{notation}
\label{notation.source.and.or.target.restricted.sd}
For any $b,c \in \cB$, we write
\[
\begin{tikzcd}
\sd(\cB)^{|b}
\arrow{r}
\arrow{d}
&
\sd(\cB)
\arrow{d}{\min}
\\
\pt
\arrow{r}[swap]{b^\circ}
&
\cB^\op
\end{tikzcd}
~,
\qquad
\begin{tikzcd}
\sd(\cB)_{|c}
\arrow{r}
\arrow{d}
&
\sd(\cB)
\arrow{d}{\max}
\\
\pt
\arrow{r}[swap]{c}
&
\cB
\end{tikzcd}
~,
\qquad
\text{and}
\qquad
\begin{tikzcd}
\sd(\cB)^{|b}_{|c}
\arrow{r}
\arrow{d}
&
\sd(\cB)
\arrow{d}{(\min,\max)}
\\
\pt
\arrow{r}[swap]{(b^\circ,c)}
&
\cB^\op \times \cB
\end{tikzcd}
 \]
for the indicated pullbacks.
\end{notation}

\begin{remark}
\Cref{notation.source.and.or.target.restricted.sd} is chosen so as to be suggestive e.g.\! of the pullback
\[ \begin{tikzcd}[ampersand replacement=\&]
\TwAr(\cB)^{|b}_{|c}
\arrow{r}
\arrow{d}
\&
\TwAr(\cB)
\arrow{d}{(s,t)}
\\
\pt
\arrow{r}[swap]{(b^\circ,c)}
\&
\cB^\op \times \cB
\end{tikzcd}
~.
\]
Indeed, the locally cocartesian fibration $\sd(\cB) \xra{(\min,\max)} \cB^\op \times \cB$ may be thought of as the unstraightening of the composite
\[ \begin{tikzcd}[column sep=1.5cm]
\cB^\op \times \cB
\arrow{r}[description]{\llax}
&
\llax(\cB^\op) \times \llax(\cB)
\simeq
\llax(\cB)^{1\op} \times \llax(\cB)
\arrow{r}{\ulhom_{\llax(\cB)}}
&
\Cat
\end{tikzcd}
~.
\]
(For a precise relationship between $\TwAr$ and $\sd$, see \Cref{lem.sd.P.localizes.onto.TwAr.P}.)
\end{remark}

\begin{observation}
For any $b \in \cB$, the composite functor
\[
\max
:
\sd(\cB)^{|b}
\longra
\sd(\cB)
\xra{\max}
\cB
\]
is a locally cocartesian fibration, whose locally cocartesian morphisms are precisely those that map to locally cocartesian morphisms in the locally cocartesian fibration $\sd(\cB) \xra{\max} \cB$.
\end{observation}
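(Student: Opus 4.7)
The plan is to reduce the claim directly to the explicit description of locally cocartesian morphisms in $\sd(\pos) \xra{\max} \pos$ recalled in \Cref{obs.max.and.min.are.loc.cocart.and.loc.cart}. Since $\pos$ is a poset, so are $\sd(\pos)$ and its full subposet $\sd(\pos)^{|p}$; consequently a morphism $\varphi \to \varphi'$ in $\sd(\pos)^{|p}$ lying over $q \to q'$ in $\pos$ is locally cocartesian precisely when $\varphi'$ is initial in the slice $(\sd(\pos)^{|p})_{\varphi/}$ restricted to the fiber over $q'$. Thus the whole statement reduces to a direct combinatorial verification.

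First I will recall that the locally cocartesian monodromy of $\sd(\pos) \xra{\max} \pos$ along a nonidentity morphism $q < q'$ is given by $\varphi \longmapsto \varphi \cup \{q'\}$, adjoining $q'$ as a new maximum. Next I will observe that if $\varphi \in \sd(\pos)^{|p}$, i.e.\ $\min(\varphi) = p$, then $\min(\varphi \cup \{q'\}) = p$ as well, since $q' > q \geq p$; hence this lift automatically lies inside $\sd(\pos)^{|p}$. The universal property within $\sd(\pos)^{|p}$ is then immediate: any $\psi \in \sd(\pos)^{|p}$ with $\max(\psi) = q'$ that contains $\varphi$ must contain $\varphi \cup \{q'\}$, because $q' = \max(\psi) \in \psi$. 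This simultaneously supplies the required lift and shows that the morphism $\varphi \to \varphi \cup \{q'\}$, already locally cocartesian in $\sd(\pos)$, remains locally cocartesian in $\sd(\pos)^{|p}$. The case of identity morphisms $q \xra{\id} q$ is handled trivially by the identity lift.

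For the second assertion, any locally cocartesian morphism in $\sd(\pos)^{|p} \to \pos$ coincides with the lift just constructed (by uniqueness of initial objects in a poset), and thus maps to a locally cocartesian morphism in $\sd(\pos) \to \pos$; the converse inclusion is built into the preceding construction. The main ``obstacle'' is essentially nonexistent: the entire argument is a one-line check that adjoining a new maximum to a finite totally ordered subset of $\pos$ does not alter its minimum, combined with the poset-level characterization of locally cocartesian morphisms as initial lifts.
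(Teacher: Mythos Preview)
Your proof is correct. The paper records this as an Observation without proof, treating it as immediate from the explicit description of locally cocartesian lifts in $\sd(\pos)\xra{\max}\pos$ as adjoining new maxima; your argument spells out exactly this verification, namely that the full subposet $\sd(\pos)^{|p}$ is stable under the operation $\varphi \mapsto \varphi\cup\{q'\}$ because adjoining a new maximum cannot change the minimum.
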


\begin{lemma}
\label{lemma.free.loc.cocart.fibn.on.constant.functor.to.a.poset}
Fix any $\cC \in \Cat$ and $b \in \cB$.
\begin{enumerate}

\item\label{the.free.loc.cocart.fibn.is.indeed.a.loc.cocart.fibn}

The composite functor
\[
\cC \times \sd(\cB)^{|b}
\xlongra{\pr}
\sd(\cB)^{|b}
\xra{\max}
\cB
\]
is a locally cocartesian fibration, whose locally cocartesian morphisms are those that project to equivalences in $\cC$ and to locally cocartesian morphisms in $\sd(\cB)^{|b}$ with respect to the locally cocartesian fibration $\sd(\cB)^{|b} \xra{\max} \cB$.

\item\label{the.free.loc.cocart.fibn.is.indeed.free}

The morphism
\[ \begin{tikzcd}[column sep=1.5cm]
\cC
\arrow{rr}
{\left( \id_\cC,\const_{([0] \xra{b} \cB)} \right)}
\arrow{rd}[sloped, swap]{\const_b}
&
&
\cC \times \sd(\cB)^{|b}
\arrow{ld}[sloped, swap]{\max \circ \pr}
\\
&
\cB
\end{tikzcd} \]
in $\Cat_{/\cB}$ witnesses its target as the free locally cocartesian fibration on its source: for any object $(\cE \da \cB) \in \loc.\coCart_\cB$, restriction defines an equivalence
\[
\Fun( \cC , \cE_{b} )
\simeq
\ulhom_{\Cat_{/\cB}}(\cC,\cE)
\xlongla{\sim}
\ulhom_{\loc.\coCart_\cB} ( \cC \times \sd(\cB)^{|b} , \cE )
~.
\]

\end{enumerate}
\end{lemma}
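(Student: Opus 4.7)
This holds essentially by inspection. The functor $\sd(\pos)^{|p} \xra{\max} \pos$ is itself a locally cocartesian fibration: for a morphism $q_1 \to q_2$ in $\pos$ and an object $\varphi \in \sd(\pos)^{|p}$ with $\max(\varphi) = q_1$, the locally cocartesian lift is the inclusion $\varphi \subseteq \varphi \cup \{q_2\}$ (which lies in $\sd(\pos)^{|p}$ since $\min(\varphi \cup \{q_2\}) = p$, as $q_2 > q_1 \geq p$). Pulling this back along the product projection $\cC \times \sd(\pos)^{|p} \to \sd(\pos)^{|p}$ (a cartesian and cocartesian fibration with fiber $\cC$), one checks directly that the composite is a locally cocartesian fibration to $\pos$, with locally cocartesian morphisms precisely those projecting to equivalences in $\cC$ and to locally cocartesian morphisms in $\sd(\pos)^{|p}$; the key point is that the locally cocartesian lift at $(c, \varphi)$ can be taken to be $(\id_c, \varphi \subseteq \varphi \cup \{q_2\})$.

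\textbf{Part 2:} First, reduce to the case $\cC = \pt$. Since the $\cC$-factor of $\cC \times \sd(\pos)^{|p}$ maps to $\pos$ via $\const_p$, currying gives a natural identification
\[
\Fun^\cocart_{/\pos}(\cC \times \sd(\pos)^{|p}, \cE) \simeq \Fun\bigl(\cC, \Fun^\cocart_{/\pos}(\sd(\pos)^{|p}, \cE)\bigr),
\]
using that the characterization of locally cocartesian morphisms from Part 1 is pointwise on $\cC$; analogously $\ulhom_{\Cat_{/\pos}}(\cC, \cE) \simeq \Fun(\cC, \cE_p)$. Thus the claim reduces to showing that restriction to the initial object $[p] \in \sd(\pos)^{|p}$ induces an equivalence
\[
\Fun^\cocart_{/\pos}(\sd(\pos)^{|p}, \cE) \xra{\sim} \cE_p.
\]

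For this, apply the Grothendieck construction for $(\infty,2)$-categories \cite[Chapter 11]{GR}: locally cocartesian fibrations over $\pos$ correspond to strict functors out of the left-laxification $\llax(\pos) \to \Cat$, with morphisms corresponding to natural transformations. Under this correspondence, $\sd(\pos)^{|p}$ unstraightens to the functor sending $q \in \pos$ to the fiber $\sd(\pos)^{|p}_{|q}$, which for a poset $\pos$ coincides with the hom $\infty$-category $\hom_{\llax(\pos)}(p, q)$ (i.e.\! the nerve of the poset of chains from $p$ to $q$). Thus this functor is corepresentable by $p \in \llax(\pos)$, and the Yoneda lemma applied to $\llax(\pos)$ yields the desired equivalence.

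\textbf{Main obstacle:} The technical heart of the argument is verifying the identification $\sd(\pos)^{|p}_{|q} \simeq \hom_{\llax(\pos)}(p, q)$ from the explicit construction of $\llax(\pos)$ in \cite[Chapter 11, \S A]{GR} and checking that the associated strict functor $\llax(\pos) \to \Cat$ straightens back to $\sd(\pos)^{|p} \to \pos$. A more concrete alternative, which avoids $(\infty,2)$-categorical machinery, is to construct the inverse of restriction directly: send $X \in \cE_p$ to the functor $F_X \colon \sd(\pos)^{|p} \to \cE$ defined on objects by $\varphi \mapsto \cE_\varphi(X)$ (iterated locally cocartesian pushforward) and extending to morphisms via the universal property of locally cocartesian lifts; the content then lies in verifying functoriality on morphisms $\varphi \subseteq \psi$ over identity morphisms in $\pos$, which are handled by the lax compatibility morphisms $\cE_{p \to q} \to \cE_{q' \to q} \circ \cE_{p \to q'}$ intrinsic to the locally cocartesian fibration $\cE$.
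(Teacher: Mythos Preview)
Your approach is correct but genuinely different from the paper's. The paper does not invoke the $(\infty,2)$-categorical Yoneda lemma or the corepresentability of $\sd(\pos)^{|p}$ in $\Fun(\llax(\pos),\Cat)$. Instead, it proceeds by a direct elementary argument: first it proves the case $\pos = [n]$, $p = 0$ by induction on $n$, using the product decomposition $\sd([n])^{|0} \simeq \sd([n-1])^{|0} \times [1]$ together with the explicit pullback description of $\loc.\coCart_{\cB^\rcone}$ in terms of $\loc.\coCart_\cB$ and $\coCart_{[1]}$ (\Cref{obs.loc.cocart.fibns.to.rcone}); then it reduces the general case to this one by writing $\pos$ as a colimit over $\sd(\pos)^{|p}$ of simplices and appealing to the compatibility of $\loc.\coCart$ with such colimits (\Cref{obs.loc.coCart.B.as.a.limit}). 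Notably, the paper keeps $\cC$ general throughout rather than first reducing to $\cC = \pt$.

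Your route is more conceptual: once the identification $\sd(\pos)^{|p} \simeq \ulhom_{\llax(\pos)}(p,-)$ is in hand, the freeness statement is literally Yoneda. The paper even gestures at this identification informally (see the remark after \Cref{notation.source.and.or.target.restricted.sd} and \Cref{obs.defns.of.rlax.map.of.llax.modules.agree}), so your argument is in the spirit of the ambient machinery. The tradeoff is exactly what you flag as the main obstacle: making that identification precise at the level of functors $\llax(\pos) \to \Cat$ (not just fiberwise) requires unpacking the construction of $\llax(\pos)$, whereas the paper's inductive argument sidesteps this entirely and is self-contained using only $(\infty,1)$-categorical fibration combinatorics, in keeping with the philosophy of \Cref{rmk.nothing.unproved}.
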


\begin{observation}
\label{obs.loc.cocart.fibns.to.rcone}
Let us write
\[
\cB
\xlonghookra{i}
\cB^\rcone
\xlongra{p}
\pt^\rcone
=
[1]
\]
for the evident functors. Then, we have a canonical pullback square
\begin{equation}
\label{pullback.for.loc.coCart.over.rcone}
\begin{tikzcd}
\loc.\coCart_{\cB^\rcone}
\arrow{r}{i^*}
\arrow{d}[swap]{p \circ (-)}
&
\loc.\coCart_\cB
\arrow{d}{\fgt}
\\
\coCart_{[1]}
\arrow{r}[swap]{0^*}
&
\Cat
\end{tikzcd}
\end{equation}
among $(\infty,2)$-categories. (In particular, a locally cocartesian fibration $\cE \ra \cB^\rcone$ is equivalent data to a locally cocartesian fibration $\cE_{|\cB} \ra \cB$ along with a functor $\cE_{|\cB} \ra \cE_\infty$.) To see this, observe first the right adjoint
\[ \begin{tikzcd}[column sep=1.5cm, row sep=0cm]
\Cat_{/[1]}
\arrow[transform canvas={yshift=0.9ex}]{r}{0^*}
\arrow[leftarrow, dashed, transform canvas={yshift=-0.9ex}]{r}[yshift=-0.2ex]{\bot}[swap]{(-)^\rcone}
&
\Cat
\end{tikzcd}~. \]
This implies that we have a pullback square
\[ \begin{tikzcd}
\Cat_{/\cB^\rcone}
\arrow{r}{i^*}
\arrow{d}[swap]{p \circ (-)}
&
\Cat_{/\cB}
\arrow{d}{\fgt}
\\
\Cat_{/[1]}
\arrow{r}[swap]{0^*}
&
\Cat
\end{tikzcd}
\]
among $(\infty,2)$-categories, which it is easy to check restricts to give the pullback square \Cref{pullback.for.loc.coCart.over.rcone}.
\end{observation}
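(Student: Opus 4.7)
The plan is to follow the two-step strategy already sketched in the excerpt: first establish the analogous pullback square at the level of plain overcategories $\Cat_{/-}$, and then cut down to the subcategories of (locally) cocartesian fibrations. The pullback of overcategories will flow from an adjunction $0^* \dashv (-)^\rcone$, whose unit and counit essentially express the universal property of the right cone.

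First I would construct the right adjoint to $\Cat \xra{0^*} \Cat_{/[1]}$ whose value on an $\infty$-category $\cC$ is the structure map $\cC^\rcone \to [1]$ that sends $\cC$ to $0$ and the cone point to $1 \in [1]$. The universal property is transparent: a functor $(\cD \to [1]) \to (\cC^\rcone \to [1])$ over $[1]$ must send $\cD_0$ into $\cC$ and $\cD_1$ into the cone point (which is terminal), so such data is the same as a functor $\cD_0 = 0^*(\cD \to [1]) \to \cC$. Using this adjunction, the commutative square
\[
\begin{tikzcd}
\Cat_{/\cB^\rcone} \arrow{r}{i^*} \arrow{d}[swap]{p \circ (-)} & \Cat_{/\cB} \arrow{d}{\fgt} \\ \Cat_{/[1]} \arrow{r}[swap]{0^*} & \Cat
\end{tikzcd}
\]
is a pullback: an object of the pullback is a functor $\cE \to \cB$ together with an extension of $\cE \to \pt \simeq 0^*(\cB \to [1])$ to an object $\cE \to (\cB \to [1])^{\rcone}$ in $\Cat_{/[1]}$, and by adjunction the latter is equivalent to a functor $\cE \to \pt^\rcone = [1]$, whence $\cE \to \cB \times_{\pt} [1]$. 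Since $\cB^\rcone$ is not the product $\cB \times [1]$, one must check carefully that the universal extension glues to a functor landing in $\cB^\rcone$; this is where one uses that the cone point is terminal, so that all objects of $\cE$ over $1 \in [1]$ are forced into the cone point.

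Next I would cut down to fibrations. The key observation is that a functor $\cE \to \cB^\rcone$ is a locally cocartesian fibration if and only if its restriction $\cE_{|\cB} \to \cB$ is a locally cocartesian fibration and the composite $\cE \to \cB^\rcone \xra{p} [1]$ is a cocartesian fibration. One direction (``only if'') follows since restriction along $i$ preserves locally cocartesian fibrations and composition with the locally cocartesian fibration $\cB^\rcone \to [1]$ (which is in fact a cocartesian fibration, because the cone point is terminal) produces a locally cocartesian fibration over $[1]$, which over a $0$-ary target is automatically cocartesian. For the converse, I would check directly that the locally cocartesian lifts exist: for a morphism in $\cB$, one uses the existing locally cocartesian lift in $\cE_{|\cB} \to \cB$; for a morphism from an object in $\cB$ to the cone point, one uses the cocartesian lift in $\cE \to [1]$ (which exists because $\cE \to [1]$ is cocartesian); and for morphisms starting at the cone point, there is nothing to check since the cone point has no non-identity outgoing arrows.

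The main obstacle will be the careful bookkeeping in this last step, namely verifying that the cocartesian lifts over $[1]$ are genuinely locally cocartesian when viewed over $\cB^\rcone$, and that this characterization of locally cocartesian morphisms is compatible with restriction along the projections. This amounts to checking that, for any morphism $x \to \infty$ with $x \in \cB$ and $\infty$ the cone point, the space of factorizations through any intermediate object of $\cE$ sitting over $\infty$ coincides with the space of fiberwise maps in $\cE_\infty$, which follows from the fact that $\cE \to [1]$ is cocartesian. Once this is verified, the $(\infty,2)$-categorical refinement is automatic: enrichments of all four corners by functor $\infty$-categories of fibration-preserving maps are inherited from the $\Cat_{/-}$-level square by the same universal-property argument, since the conditions on morphisms (preservation of locally cocartesian arrows) are detected on the two pulled-back factors.
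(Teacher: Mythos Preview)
Your approach matches the paper's two-step outline: establish the pullback for plain slice categories via the adjunction $0^* \dashv (-)^\rcone$, then restrict to fibrations. However, two steps in your execution are muddled.

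First, your argument for the slice-level pullback goes off the rails when you write ``$\cE \to \pt \simeq 0^*(\cB \to [1])$'' and then pass through $\cB \times_\pt [1]$: there is no canonical functor $\cB \to [1]$ for $0^*$ to act on, and the detour through the product is both unnecessary and incorrect (as you yourself note, $\cB^\rcone$ is not $\cB \times [1]$). The clean argument is the standard slice--adjunction fact: for any adjunction $L : \cA \rightleftarrows \cC : R$ and any $c \in \cC$, one has $\cA_{/R(c)} \simeq \cA \times_\cC \cC_{/c}$ (pullback along $L$). Take $\cA = \Cat_{/[1]}$, $\cC = \Cat$, $L = 0^*$, $R = (-)^\rcone$, and $c = \cB$; combined with the tautological identification $\Cat_{/\cB^\rcone} \simeq (\Cat_{/[1]})_{/(\cB^\rcone \to [1])}$, this gives the pullback square immediately with no extra gluing to check.

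Second, in the ``only if'' direction you assert that composing a locally cocartesian fibration $\cE \to \cB^\rcone$ with the cocartesian fibration $p : \cB^\rcone \to [1]$ yields a locally cocartesian fibration over $[1]$. As a general principle this is false: composites of locally cocartesian fibrations with cocartesian fibrations need not be locally cocartesian. It works here precisely because the fiber of $p$ over $1 \in [1]$ is the single cone point, so every morphism in $\cE$ from an object over $0$ to an object over $1$ lies over a unique morphism $b \to \infty$ in $\cB^\rcone$; hence the locally cocartesian lift over $b \to \infty$ automatically has the cocartesian universal property over $0 \to 1$. You should state this reason explicitly rather than appeal to composition.
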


\begin{proof}[Proof of \Cref{lemma.free.loc.cocart.fibn.on.constant.functor.to.a.poset}]
Part \Cref{the.free.loc.cocart.fibn.is.indeed.a.loc.cocart.fibn} is clear.

We first prove part \Cref{the.free.loc.cocart.fibn.is.indeed.free} in the case that $\cB = [n]$ and $b = 0$, by induction on $n$. The case that $n=0$ is a tautology, so we may assume that $n \geq 1$. For the inductive step, note the evident equivalence
\[
\sd([n-1])^{|0} \times [1]
\xlongra{\sim}
\sd([n])^{|0}
\]
in $\Cat_{/[1]}$ that selects the natural transformation carrying an object $([m] \hookra [n-1]) \in \sd([n-1])^{|0}$ to the morphism
\[ \begin{tikzcd}
{[m]}
\arrow[hook]{rr}
\arrow[hook]{rd}
&
&
{[m]^\rcone}
\\
&
{[n-1]^\rcone}
\arrow[hookleftarrow]{ru}
\end{tikzcd} \]
in $\sd([n])^{|0}$. 
Therefore, restriction along the inclusion $\{0\} \hookrightarrow [1]$ determines an equivalence
\[
\ulhom_{\coCart_{[1]}}( \cC \times \sd([n])^{|0} , \cE )
\xlongra{\sim}
\ulhom_{\Cat}( \cC \times \sd([n-1])^{|0} , \cE_{|[n-1]} )
~.
\]
It follows from \Cref{obs.loc.cocart.fibns.to.rcone} that restriction along the inclusion $[n-1] \hookrightarrow [n-1]^{\lcone} = [n]$ determines an equivalence
\[
\ulhom_{\loc.\coCart_{[n]}} ( \cC \times \sd([n])^{|0} , \cE )
\xlongra{\sim}
\ulhom_{\loc.\coCart_{[n-1]}} ( \cC \times \sd([n-1])^{|0} , \cE_{|[n-1]} )
~.
\]
The assertion now follows by induction on $n$.

We now prove part \Cref{the.free.loc.cocart.fibn.is.indeed.free} in the general case.
We may clearly assume that $b \in \cB$ is initial.
Using this assumption, the fully faithful functor 
\begin{equation}
\label{e15}
\sd(\cB)^{|b}
\xlonghookra{\ff}
\sd(\cB)
\end{equation}
is a right adjoint, with left adjoint given at the level of $\bDelta_{/\cB}$ by taking a functor $[n] \ra \cB$ to the functor $[n]^\lcone \cong [n+1] \ra \cB$ carrying the cone point to $b \in \cB$ (using \Cref{t1}). In particular, the fully faithful functor \Cref{e15} is final.
Therefore, we obtain a canonical equivalence
\[
\colim_{([n]\da \cB) \in \sd(\cB)^{|b}} \sd([n])
\xlongra{\sim}
\colim_{([n]\da \cB) \in \sd(\cB)} \sd([n])
=:
\sd(\cB)
\]
in $\Cat_{/\cB}$, in which the colimits can be computed in simplicial spaces.
Then, using that pullbacks in simplicial spaces commute with colimits, we obtain a canonical equivalence
\[
\colim_{([n]\da \cB) \in \sd(\cB)^{|b}}
\sd([n])^{|0}
\longra
\sd(\cB)^{|b}
\]
in $\Cat_{/\cB}$.
Using that $\Cat \xra{\cC \times - } \Cat$ preserves colimits, we then obtain a composite equivalence
\[
\colim_{([n]\da \cB) \in \sd(\cB)^{|b}}
\cC \times \sd([n])^{|0} 
\xlongra{\sim}
\cC \times \colim_{([n]\da \cB) \in \sd(\cB)^{|b}}
\sd([n])^{|0} 
\xlongra{\sim}
\cC \times \sd(\cB)^{|b}
\]
in $\Cat_{/\cB}$.
Therefore, we have an equivalence
\[
\ulhom_{\Cat_{/\cB}}(\cC \times \sd(\cB)^{|b}, \cE) 
\simeq
\lim_{(([n]\da \cB) \in \sd(\cB)^{|b})^{\op}} \ulhom_{\Cat_{/[n]}}(\cC \times \sd([n])^{|0}, \cE_{|[n]} )
\]
in $\Cat$.
Observe that this equivalence restricts to give an equivalence
\[
\ulhom_{\loc.\coCart_\cB}(\cC \times \sd(\cB)^{|b}, \cE) 
\simeq
\lim_{(([n]\da \cB) \in \sd(\cB)^{|b})^{\op}} 
\ulhom_{\loc.\coCart_{[n]}}(\cC \times \sd([n])^{|0}, \cE_{|[n]} )
\]
in $\Cat$.
The assertion now follows from the case that $\cB = [n]$ and $b=0$.
\end{proof}

\begin{proof}[Proof of \Cref{lem.strictification}]
The canonical commutativity of the triangle \Cref{e24} is an instance of \Cref{t17}.

Next, for any object $\varphi \in \sd(\cB)$, by \Cref{t18} we have the identification
\begin{equation}
\label{e25}
\begin{tikzcd}[column sep=1.5cm]
\sd(\cB)_{\varphi/}
\arrow{rr}[swap]{\sim}{( \varphi \xlongra{\alpha} \psi ) \longmapsto (\alpha_L , \psi')}
\arrow{rd}[sloped, swap]{t}
&
&
\sd(\cB)_{\varphi/\isomax} \times \sd(\cB)^{| \max(\varphi)}
\arrow{ld}
\\
&
\sd(\cB)
\end{tikzcd}
\end{equation}
in $\Cat_{/\sd(\cB)}$, where 

\begin{itemize}

\item as in the statement of the result, writing $([k] \xra{\psi} \cB) \in \sd(\cB)$ we denote by
\[
\psi'
:=
\left(
[k]_{\max(\alpha_R)/}
\longhookra
[k]
\xlonghookra{\psi}
\cB
\right)
\in
\sd(\cB)
\]
the restriction of $\psi$ to the undercategory of $\max(\alpha_R) \in [k]$, and

\item the lower right morphism is given by concatenation.

\end{itemize}
We now identify the fiber of $(\Strict(\cE) \da \sd(\cB)) \in \Cat_{/\sd(\cB)}$ over $\varphi \in \sd(\cB)$ through the composite equivalence
\begin{equation}
\label{identify.fiber.of.strictification.as.functor.cat.from.isomax.undercat}
\begin{aligned}
\Strict(\cE)_\varphi
&\simeq
\ulhom_{\loc.\coCart_\cB} \left( \sd(\cB)_{\varphi/} , \cE \right)
\\
&\simeq
\ulhom_{\loc.\coCart_\cB} \left( \sd(\cB)_{\varphi/\isomax} \times \sd(\cB)^{| \max(\varphi)} , \cE \right)
\\
&\simeq
\Fun \left( \sd(\cB)_{\varphi/\isomax} , \cE_{\max(\varphi)} \right)
~,
\end{aligned}
\end{equation}
in which the individual equivalences respectively follow from \Cref{t16}, diagram \Cref{e25}, and \Cref{lemma.free.loc.cocart.fibn.on.constant.functor.to.a.poset}. This establishes that the fibers are indeed as claimed.

Next, by \Cref{t16} the locally cocartesian monodromy functor over the morphism \Cref{morphism.in.sd.P} in $\sd(\cB)$ is the functor
\[
\ulhom_{\loc.\coCart_\cB} \left( \sd(\cB)_{\varphi/} , \cE \right)
\xra{(\alpha^*)^*}
\ulhom_{\loc.\coCart_\cB} \left( \sd(\cB)_{\psi/} , \cE \right)
~.
\]
Given a functor \Cref{arbitrary.functor.from.isomax.undercat.to.corresponding.fiber} and writing  $\w{F} \in \ulhom_{\loc.\coCart_\cB} \left( \sd(\cB)_{\varphi/} , \cE \right)$ for its corresponding object under the equivalence \Cref{identify.fiber.of.strictification.as.functor.cat.from.isomax.undercat}, 
we see that its image under the cocartesian monodromy functor \Cref{cocart.mdrmy.functor.in.strictification} is given by the factorization
\[ \begin{tikzcd}
&
\sd(\cB)_{\varphi/\isomax}
\arrow{r}{F}
\arrow[hook]{d}
&
\cE_{\max(\varphi)}
\arrow[hook]{d}
\\
\sd(\cB)_{\psi/}
\arrow{r}{\alpha^*}
&
\sd(\cB)_{\varphi/}
\arrow{r}[swap]{\w{F}}
&
\cE
\\
\sd(\cB)_{\psi/\isomax}
\arrow[dashed]{rr}
\arrow[hook]{u}
&
&
\cE_{\max(\psi)}
\arrow[hook]{u}
\end{tikzcd}~. \]
This establishes that the monodromy functors are indeed as claimed.

We now turn to the final statement of the claim.
Note that the functor $\Strict$ is adjunct to the composite functor
\begin{equation}
\label{e60}
\begin{tikzcd}[row sep=0cm]
\sd(\cB)
\arrow{r}
&
\LMod_{\llax.\cB}^{\op}
\arrow[hook]{r}{\ff}
&
\Fun \left( \LMod_{\llax.\cB} , \Cat \right)
\\
\rotatebox{90}{$\in$}
&
\rotatebox{90}{$\in$}
\\
\varphi
\arrow[maps to]{r}
&
\left( \sd(\cB)_{/\varphi} \longra \sd(\cB) \xra{\max} \cB \right)
\end{tikzcd}
~.
\end{equation}
In the case that $\cB$ is an ordinary category (with no nontrivial retracts), the $\infty$-category $\bDelta_{/\cB}$ is an ordinary category.
Using \Cref{t3}, we see that the subcategory $\sd(\cB) \subset \bDelta_{/\cB}$ is an ordinary category.
It follows that for any $\varphi , \psi \in \sd(\cB)$, the space of morphisms
\[
\hom_{\LMod_{\llax.\cB}}
\left(
\sd(\cB)_{/\psi} 
,
\sd(\cB)_{/\varphi}
\right)
\]
is discrete.  
It follows that the functor \Cref{e60} is uniquely determined by its values on objects and morphisms.
\end{proof}

\section{Some $(\infty,2)$-category theory}
\label{section.inftytwocats}

In this section, we establish some aspects of $(\infty,2)$-category theory. (In the main body of the work, we only refer to applications thereof that are recorded in \Cref{section.lax.actions.and.limits}.)

A large part of \S\S\ref{subsection.basics.of.inftytwocats}-\ref{subsection.Adj} is adapted from \cite[Appendix A]{GR}. As explained in \cite[Chapter 10, \S 0.4]{GR}, some of the results there rely on results whose proofs do not appear in the literature at the time of writing. Here, we give a logically complete account of the material that we use; in particular, this section does not logically depend on \cite{GR} in any way. Nevertheless, we provide references where appropriate.

This section is organized as follows.
\begin{itemize}

\item[\Cref{subsection.basics.of.inftytwocats}:] We introduce some basic notions in $(\infty,2)$-category theory.

\item[\Cref{subsection.fibns.for.inftytwocats}:] We define various notions of fibrations among $(\infty,2)$-categories.

\item[\Cref{subsection.un.straightening}:] We give (a lax version of) un/straightening for $(\infty,2)$-categories.

\item[\Cref{subsection.cartesian.yoga}:] We study parametrized versions of un/straightening for $(\infty,2)$-categories.

\item[\Cref{subsection.Adj}:] We study adjunctions in $(\infty,2)$-categories, and prove parametrized versions of the mate correspondence.

\item[\Cref{subsection.lax.limits.over.twocats}:] We define lax limits in $\Cat$ over $(\infty,2)$-categories, and give an alternative $(\infty,1)$-categorical description in the case that the base is the left-laxification of an $(\infty,1)$-category.

\end{itemize}

\subsection{Basic notions in $(\infty,2)$-category theory}
\label{subsection.basics.of.inftytwocats}

In this subsection, we discuss (strict and) lax versions of functors and natural transformations among $(\infty,2)$-categories. Relatedly, we discuss various laxifications of an $(\infty,2)$-category, and we give an explicit identification in one important case (\Cref{prop.llax.of.brax.n}). We also introduce the class of thin $(\infty,2)$-categories (\Cref{defn.thin.twocat}), for which homotopy-coherence data is vacuous (see \Cref{obs.thinness.is.great}) -- analogously to the class of posets among $(\infty,1)$-categories.

\begin{definition}[\cite{Barwick-thesis}]
An \bit{$(\infty,2)$-category} is a complete Segal $(\infty,1)$-category whose $0\th$ $\infty$-category is an $\infty$-groupoid.\footnote{Said differently, it is a two-fold complete Segal space.} These assemble into a full subcategory
\[
\iota_1 2\Cat
\subset
\Fun ( \bDelta^\op , \Cat )
~,
\]
whose morphisms we refer to as \bit{functors} (or occasionally \bit{strict functors} in order to contrast with the notions introduced in \Cref{defn.lax.fctrs}). We consider $\infty$-categories (which we may refer to as $(\infty,1)$-categories for emphasis) as forming a full subcategory of $\iota_1 2\Cat$ according to the pullback
\[ \begin{tikzcd}
\Cat
\arrow[hook]{r}{\ff}
\arrow[hook]{d}[swap]{\ff}
&
\iota_1 2\Cat
\arrow[hook]{d}{\ff}
\\
\Fun ( \bDelta^\op , \Spaces)
\arrow[hook]{r}[swap]{\ff}
&
\Fun ( \bDelta^\op , \Cat )
\end{tikzcd}
~.
\]
We refer to an $(\infty,2)$-category as a \bit{2-category} if its hom-$\infty$-categories are ordinary categories.\footnote{Note that these are most naturally modeled by the classical notion of a bicategory.}
\end{definition}

\begin{notation}
\label{d4}
Presenting $(\infty,1)$-categories as complete Segal spaces gives a fully faithful functor $\iota_1 \Cat \hookrightarrow \Fun(\bDelta^{\op},\Spaces)$.
We write
\[
\bDelta \times \bDelta
\xlongra{\theta}
\iota_1 2\Cat
\]
for the functor corepresenting the resulting fully faithful functor 
\[
\iota_1 2\Cat 
\longhookra
\Fun ( \bDelta^\op , \Cat )
\longhookra
\Fun \left( \bDelta^\op , \Fun(\bDelta^{\op} , \Spaces) \right)
\simeq
\Fun( \bDelta^\op \times  \bDelta^{\op} , \Spaces)
~.
\]
(An explicit description of $\theta$ is given in the discussion preceding \cite[Lemma 14.5]{BSP-unicity}.)
\end{notation}

\begin{notation}
Given an $(\infty,2)$-category $\cC \in \iota_1 2\Cat$ and objects $c,c' \in \cC$, we write $\hom_\cC(c,c') \in \Cat$ for the $\infty$-category of morphisms from $c$ to $c'$.
\end{notation}

\begin{notation}
The $\infty$-category $\iota_1 2\Cat$ is cartesian closed \cite{Rezk-theta,BSP-unicity}, and we denote its internal hom by $\Fun(-,-)$. By \cite{GH-enr,Haug-rect}, it follows that $\iota_1 2\Cat$ is the underlying $(\infty,1)$-category of an $(\infty,3)$-category. We write $2\Cat$ for its underlying $(\infty,2)$-category.\footnote{That is, we do not make any use of the full $(\infty,3)$-category of $(\infty,2)$-categories. In particular, we only ever consider $(\infty,2)$-categorical laxness in $2\Cat$, i.e.\! laxness in 1-morphisms but not 2-morphisms.}
\end{notation}

\begin{definition}
Given an $(\infty,2)$-category $\bDelta^\op \xra{\cC} \Cat$, its \bit{1-opposite} and \bit{2-opposite} are the $(\infty,2)$-categories
\[
\cC^{1\op}
:
\bDelta^\op
\xra[\sim]{\rev}
\bDelta^\op
\xlongra{\cC}
\Cat
\qquad
\text{and}
\qquad
\cC^{2\op}
:
\bDelta^\op
\xlongra{\cC}
\Cat
\xra[\sim]{(-)^\op}
\Cat
\]
given respectively by pre- and postcomposing with the indicated involutions.\footnote{Here, $\rev$ denotes the involution given by reversing linear orders.} These two operations define commuting involutions of $\iota_1 2\Cat$, and we write
\[
(-)^{1\&2\op}
:=
((-)^{1\op})^{2\op}
\simeq
((-)^{2\op})^{1\op}
\]
for their composite.
\end{definition}

\begin{notation}
\label{notn.oint.for.two.cats}
We denote by
\[
(-)^\oi
:
\iota_1 2\Cat
\xlonghookra{\ff}
\Fun ( \bDelta^\op , \Cat )
\xlongra{\sim}
\coCart_{\bDelta^\op}
\]
the composite functor carrying an $(\infty,2)$-category to its corresponding cocartesian fibration over $\bDelta^\op$.
\end{notation}

\begin{definition}
A morphism in $\bDelta$ is called \bit{convex} if its image is convex.\footnote{In \cite{GR}, such morphisms are referred to as ``idle''.} A morphism in $\bDelta$ is called \bit{inert} if it is convex and injective (or equivalently conservative). We use the same terms for corresponding morphisms in $\bDelta^\op$.
\end{definition}

\begin{definition}[{\cite[Chapter 10, \S 3.1.3]{GR}}]
\label{defn.lax.fctrs}
Given $(\infty,2)$-categories $\cC,\cD \in \iota_1 2\Cat$, a \bit{non-unital right-lax functor} from $\cC$ to $\cD$ is a morphism
\[ \begin{tikzcd}
\cC^\oi
\arrow{rr}
\arrow{rd}
&
&
\cD^\oi
\arrow{ld}
\\
&
\bDelta^\op
\end{tikzcd} \]
in $\Cat_{\cocart/\bDelta^\op}$ that preserves inert-cocartesian morphisms. It is called a (\bit{unital}) \bit{right-lax functor} if it preserves convex-cocartesian morphisms. These respectively define the morphisms in $\infty$-categories that we denote by
\[
\iota_1 2\Cat_{\nonu\rlax}
\qquad
\text{and}
\qquad
\iota_1 2\Cat_{\rlax}
~,
\]
so that we have monomorphisms
\[
\iota_1 2\Cat
\longhookra
\iota_1 2\Cat_\rlax
\longhookra
\iota_1 2\Cat_{\nonu \rlax}
~.
\]
We define a (\bit{non-unital} or \bit{unital}) \bit{left-lax functor} from $\cC$ to $\cD$ to be a (respectively non-unital or unital) right-lax functor from $\cC^{2\op}$ to $\cD^{2\op}$, and we use the evident corresponding notation. Given $(\infty,2)$-categories $\cC,\cD \in 2\Cat$, we write
\[ \begin{tikzcd}[column sep=0.6cm]
\cC
\arrow[squiggly]{r}
&
\cD
\end{tikzcd} \]
to denote a (possibly) lax functor (be it non-unital or unital, right- or left-lax).\footnote{Outside of \Cref{section.inftytwocats}, we explicitly label our arrows according to the handedness of the laxness.}
\end{definition}

\begin{remark}
One can similarly define left-lax functors in terms of cartesian fibrations over $\bDelta$. We systematically privilege right-lax functors in our treatment here (so that for instance we do not introduce a cartesian variant of \Cref{notn.oint.for.two.cats}).
\end{remark}

\begin{remark}
Informally, a lax functor is one that only laxly respects composition of 1-morphisms. More specifically, given a pair of composable 1-morphisms $c_0 \xra{\varphi} c_1 \xra{\psi} c_2$, a right-lax functor $F$ determines a 2-morphism $F(\psi) \circ F(\varphi) \ra F(\psi \circ \varphi)$ while a left-lax functor $G$ determines a 2-morphism $G(\psi \circ \varphi) \ra G(\psi) \circ G(\varphi)$. A lax functor is strict precisely when these 2-morphisms are all invertible.
\end{remark}

\begin{remark}
Our primary interest will be in \textit{unital} lax functors. While non-unital lax functors are of independent interest, for our purposes they serve as an auxiliary notion (see Definitions \ref{defn.nonu.rlaxification} \and \ref{defn.unital.rlaxification}).
\end{remark}

\begin{definition}[{\cite[Chapter 10, \S 3.2.7]{GR}}]
Fix $(\infty,2)$-categories $\cC,\cD \in 2\Cat$.
\begin{enumerate}

\item

A \bit{right-lax natural transformation} between right-lax functors from $\cC$ to $\cD$ is a right-lax functor
\begin{equation}
\label{the.underlying.rlax.fctr.of.a.rlax.or.llax.nat.trans}
\begin{tikzcd}[column sep=0.6cm]
\cC \times {[1]}
\arrow[squiggly]{r}
&
\cD
\end{tikzcd}
\end{equation}
that is strict on pairs of composable 1-morphisms of the form $(c_0,0) \xra{(\varphi,\id_0)} (c_1,0) \xra{(\id_{c_1},\iota)} (c_1,1)$.\footnote{That is, its precomposition with the corresponding functor $[2] \ra \cC \times [1]$ defines a strict functor $[2] \ra \cD$.}

\item

A \bit{left-lax natural transformation} between right-lax functors from $\cC$ to $\cD$ is a right-lax functor \Cref{the.underlying.rlax.fctr.of.a.rlax.or.llax.nat.trans} that is strict on pairs of composable 1-morphisms of the form $(c_0,0) \xra{(\id_{c_0},\iota)} (c_0,1) \xra{(\varphi,\id_1)} (c_1,1)$.

\end{enumerate}
Dually, a \bit{right-} (resp.\! \bit{left-})\bit{lax natural transformation} between left-lax functors from $\cC$ to $\cD$ is a right- (resp.\! left-)lax natural transformation between the corresponding right-lax functors from $\cC^{2\op}$ to $\cD^{2\op}$.\footnote{Evidently, these can also be expressed as left-lax functors $\cC \times {[1]} \laxra \cD$.} As in \Cref{defn.lax.fctrs}, given (right- or left-lax) functors $F$ and $G$, we simply write
\[
\begin{tikzcd}[column sep=0.6cm]
F
\arrow[squiggly]{r}
&
G
\end{tikzcd}
\]
to denote a lax natural transformation between them (regardless of its handedness).
\end{definition}

\begin{remark}
\label{rmk.rlax.nat.trans.has.lax.squares}
Given left- or right-lax functors $F$ and $G$ from $\cC$ to $\cD$, a right-lax natural transformation from $F$ to $G$ specifies the data, for each 1-morphism $c_0 \ra c_1$ in $\cC$, of a lax-commutative square
\[
\begin{tikzcd}
F(c_0)
\arrow{r}
\arrow{d}
\arrow{rd}
&
F(c_1)
\arrow{d}
\\
G(c_0)
\arrow{r}[description, xshift=0.4cm, yshift=0.8cm]{\rotatebox{45}{$\Rightarrow$}}
&
G(c_1)
\end{tikzcd}
\qquad
=:
\qquad
\begin{tikzcd}
F(c_0)
\arrow{r}
\arrow{d}
&
F(c_1)
\arrow{d}
\\
G(c_0)
\arrow{r}[yshift=0.4cm]{\rotatebox{45}{$\Rightarrow$}}
&
G(c_1)
\end{tikzcd}
\qquad
:=
\qquad
\begin{tikzcd}
F(c_0)
\arrow{r}
\arrow{d}
\arrow{rd}
&
F(c_1)
\arrow{d}
\\
G(c_0)
\arrow{r}[description, yshift=0.4cm, xshift=-0.4cm]{\rotatebox{45}{$\Rightarrow$}}
&
G(c_1)
\end{tikzcd}
\]
in $\cD$ -- the square on the left (resp.\! right) applying in the case that $F$ and $G$ are left-lax (resp.\! right-lax).
\end{remark}

\begin{remark}
The paper \cite{GHL-gray-lax-fctrs} studies lax functors and lax natural transformations in a more combinatorial model of $(\infty,2)$-categories. We expect that these notions agree.
\end{remark}

\begin{definition}
\label{defn.thin.twocat}
An $(\infty,2)$-category is called a \bit{thin 2-category} (or simply \bit{thin}) if its hom-$\infty$-categories lie in $\Poset \subset \Cat$ and its endomorphism $\infty$-categories are all equivalent to $\pt \in \Cat$.\footnote{This notion is strictly stronger than that of gauntness, which merely requires that all \textit{invertible} $k$-morphisms are identities. It is also strictly stronger than the requirement that every $k$-morphism has a contractible space of endomorphisms.}
\end{definition}

\begin{observation}
\label{obs.thinness.is.great}
We collect the following apparent facts about thin 2-categories, which we use without further reference.
\begin{enumerate}

\item

Thin 2-categories form a full subcategory of the $(\infty,2)$-category (in fact strict 2-category) of strict 2-categories. Under this identification, non-unital right-lax functors correspond to lax functors, while unital right-lax functors correspond to normal lax functors. Moreover, right-lax natural transformations correspond to lax natural transformations.

\item

Given a thin 2-category $\cD$ and an $(\infty,2)$-category $\cC$, a (possibly left- or right-lax) functor from $\cC$ to $\cD$ is uniquely determined by its values on 1-morphisms, i.e.\! by the morphism of sets
\[
\pi_0 \hom_{\iota_1 2\Cat} ( [1] , \cC )
\longra
\hom_{\iota_1 2\Cat} ( [1] , \cD)
~.
\]
Hence, given (possibly left- or right-lax) functors from $\cC$ to $\cD$, a (possibly left- or right-lax) natural transformation between them is uniquely determined by its values on objects, i.e.\! by the morphism of sets
\[
\pi_0 \iota_0 \cC
\longra
\hom_{\iota_1 2\Cat} ( [1] , \cD)
~.
\]

\item

Given a functor $\cC \ra \cD$ in $2\Cat$ such that $\cD$ is thin, for every object $d \in \cD$ the functor $\cC_d \ra \cC$ is fully faithful.

\item

Given a thin 2-category $\cD$, the forgetful functor 
\[
2\Cat_{/\cD}
\longra
2\Cat
\]
is 1-full (i.e.\! it is fully faithful on hom-$\infty$-categories).\footnote{Said differently, given $\cC_0,\cC_1 \in 2\Cat_{/\cD}$ it is merely a condition for a morphism $\cC_0 \ra \cC_1$ in $2\Cat$ to lie in $2\Cat_{/\cD}$.}

\end{enumerate}
\end{observation}

\begin{observation}
\label{t13}
Lax transformations can be composed in the following sense. 
Let $\cC , \cD \in 2\Cat$.
Consider the bisimplicial space 
\[
\bDelta^{\op}\times \bDelta^{\op}
\xra{\w{\Fun}_{\rlax}^{\rlax}(\cC,\cD)}
\Spaces
\]
that is the subfunctor of 
$
\hom_{\iota_1 2\Cat_{\rlax}}\left(
\cC \times \theta(-)
,
\cD
\right)
$
carrying each $([i]^\circ,[j]^\circ) \in \bDelta^\op \times \bDelta^\op$ (recall \Cref{d4}) to the subspace of those right-lax functors $\cC \times \theta([i],[j]) \laxra \cD$ that are strict on pairs of composable 1-morphisms of the form $(c_0,x) \ra (c_1,x) \ra (c_1,y)$, where $c_0 \ra c_1$ is a 1-morphism in $\cC$ and $x \ra y$ is a 1-morphism in $\theta([i]^\circ,[j]^\circ)$.

It is clear that if $\cC,\cD\in 2\Cat$ are thin, then the bisimplicial space $\w{\Fun}_{\rlax}^{\rlax}(\cC,\cD)$ presents a thin 2-category.\footnote{\label{footnote.Funrlaxrlax.always.a.two.cat}In fact, this bisimplicial space presents an $(\infty,2)$-category for arbitrary $\cC,\cD\in 2\Cat$; this follows from the Yoneda lemma for $(\infty,2)$-categories (see \cite{hinich-yoneda-enriched}) and Theorems \ref{t11} \and \ref{thm.unstraightening.yoga} below. (Since we do not need this fact, we do not give a detailed argument.)
}
\end{observation}

\begin{notation}
\label{d3}
Given $(\infty,2)$-categories $\cC,\cD \in 2\Cat$, we write
\[
\Fun_{\rlax}^{\rlax}(\cC,\cD)
\in
2\Cat
\]
for the $(\infty,2)$-category presented by the bisimplicial space $\w{\Fun}_{\rlax}^{\rlax}(\cC,\cD)$ of \Cref{t13}.\footnote{That is, $\Fun^\rlax_\rlax(\cC,\cD)$ denotes the $(\infty,2)$-category obtained by applying the left adjoint from bisimplicial spaces to $(\infty,2)$-categories, although this application is always vacuous by \Cref{footnote.Funrlaxrlax.always.a.two.cat} (and not just in case $\cC$ and $\cD$ are thin).}
\end{notation}

\begin{definition}[{\cite[Chapter 11, \S A.1]{GR}}]
\label{defn.nonu.rlaxification}
Given an object $(\cE \da \bDelta^\op) \in \coCart_{\bDelta^\op}$, we write
\begin{equation}
\label{diagram.defining.Freeact}
\begin{tikzcd}
\Freeact(\cE)
:=
&[-1cm]
\cE \underset{\bDelta^\op}{\times} \Ar^\act(\bDelta^\op)
\arrow{r}
\arrow{d}
&
\Ar^\act(\bDelta^\op)
\arrow{r}{t}
\arrow{d}{s}
&
\bDelta^\op
\\
&
\cE
\arrow{r}
&
\bDelta^\op
\end{tikzcd}
\end{equation}
for the indicated fiber product (where $\Ar^\act(\bDelta^\op) \subset \Ar(\bDelta^\op)$ denotes the full subcategory on the active morphisms). Noting that the functor $\Ar^\act(\bDelta^\op) \xra{t} \bDelta^\op$ is a cocartesian fibration, we find that the horizontal composite of diagram \Cref{diagram.defining.Freeact} defines a functor
\[
\coCart_{\bDelta^\op} \xra{\Freeact} \coCart_{\bDelta^\op}
~.
\]

In particular, given an $(\infty,2)$-category $\cC \in 2\Cat$, it is straightforward
to see that $\Freeact(\cC^\oi) \in \coCart_{\bDelta^\op}$ defines an $(\infty,2)$-category,\footnote{Namely, the straightening of $\Freeact(\cC^\oi)$ satisfies the Segal and completeness conditions, and its $\infty$-category of $0$-simplices is an $\infty$-groupoid.
The last claim follows from the fact that $\bDelta^{[0]/^{\act}} = \{[0]\}$, which gives an equivalence $\Freeact(\cC^\oi)_{[0]^\circ} \simeq \cC^\oi_{[0]^\circ}$.
The Segal condition follows from commutativity for each $[n] \in \bDelta$ of the square
\[
\begin{tikzcd}[ampersand replacement=\&]
\bDelta^{[n]/^{\act}}
\arrow{r}
\arrow{d}[swap]{\fgt}
\&
\Fun(\Span,\Cat)
\arrow{d}{\colim}
\\
\bDelta
\arrow[hook]{r}[swap]{\ff}
\&
\Cat
\end{tikzcd}
\]
in which the upper horizontal functor is given by pullback to the span $[n-1] \hookla \{n-1\} \hookra \{n-1 < n\}$ in $\bDelta_{/[n]}$.
The completeness condition is evident from that of $\cC^{\oi}$.
} 
which we denote by $\rlax^\nonu(\cC) \in 2\Cat$ (so that $\rlax^\nonu(\cC)^\oi \simeq \Freeact(\cC^\oi)$) and refer to as its \bit{non-unital right-laxification}. Altogether, this defines a functor
\[
\iota_1 2\Cat
\xra{\rlax^\nonu(-)}
\iota_1 2\Cat
~.
\]
\end{definition}

\begin{observation}
\label{obs.universal.property.of.Freeact}
The functor $\Freeact$ has the following universal property. First of all, the functors $s,t \in \hom_\Cat( \Ar^\act(\bDelta^\op) , \bDelta^\op )$ admit a common section $\bDelta^\op \xra{\id_{(-)}} \Ar^\act(\bDelta^\op)$, which induces for any $\cE \in \coCart_{\bDelta^\op}$ a natural morphism
\[
\cE
\longra
\Freeact(\cE)
\]
in $\Cat_{/\bDelta^\op}$. Then, by \cite[Proposition 2.18]{AMR-fact}, 
for any $\cF \in \coCart_{\bDelta^\op}$, restriction therealong defines a monomorphism
\[
\hom_{\coCart_{\bDelta^\op}} ( \Freeact(\cE) , \cF)
\longhookra
\hom_{\Cat_{/\bDelta^\op}} ( \cE , \cF )
\]
in $\Spaces$ whose image consists of those morphisms that preserve cocartesian lifts of inert morphisms in $\bDelta^\op$.
\end{observation}

\begin{observation}[{\cite[Chapter 11, Theorem A.1.5]{GR}}]
By \Cref{obs.universal.property.of.Freeact}, non-unital right-laxification defines a left adjoint
\[ \begin{tikzcd}[column sep=2cm]
\iota_1 2\Cat_{\nonu\rlax}
\arrow[dashed, transform canvas={yshift=0.9ex}]{r}{\rlax^\nonu(-)}
\arrow[hookleftarrow, transform canvas={yshift=-0.9ex}]{r}[yshift=-0.2ex]{\bot}
&
\iota_1 2\Cat
\end{tikzcd}
\]
to the inclusion; in particular, for any $(\infty,2)$-category $\cC \in 2\Cat$, we have a universal non-unital right-lax functor
\[
\begin{tikzcd}[column sep=0.6cm]
\cC
\arrow[squiggly]{r}
&
\rlax^\nonu(\cC)
\end{tikzcd}
~.
\]
We use this fact without further comment.
\end{observation}

\begin{observation}
\label{obs.get.quasi.unit.two.mor}
Noting the identification
\[
\rlax^\nonu(\pt)^\oi
\simeq
\Ar^\act(\bDelta^\op)
\xlongra{t}
\bDelta^\op
~,
\]
we see that $\rlax^\nonu(\pt)$ has a single object $\ast := ( [0]^\circ \xra{\sim} [0]^\circ ) \in \rlax^\nonu(\pt)$ as well as a distinguished 2-morphism
\[ \begin{tikzcd}[column sep=1.5cm]
\ast
\arrow[bend left=40]{r}{\id_\ast = ( [0]^\circ \ra [1]^\circ )}[yshift=-0.7cm]{\Downarrow}
\arrow[bend right=40]{r}[swap]{e_\ast := ( [1]^\circ \xra{\sim} [1]^\circ ) }
&
\ast
\end{tikzcd}
~.
\]
By the functoriality of $\rlax^\nonu(-)$, for any object $c \in \cC \in 2\Cat$ we obtain a canonical 2-morphism $\id_c \ra e_c$ in $\rlax^\nonu(\cC)$.
\end{observation}

\begin{definition}
\label{defn.unital.rlaxification}
We refer to the 2-morphism $\id_c \ra e_c$ in $\rlax^\nonu(\cC)$ of \Cref{obs.get.quasi.unit.two.mor} as the \bit{quasi-unit 2-morphism} corresponding to the object $c \in \cC$. Inverting these determines an $(\infty,2)$-category
\[
\rlax(\cC)
\in
2\Cat
~,
\]
which we refer to as the (\bit{unital}) \bit{right-laxification} of $\cC$. This construction defines an endofunctor
\[
\iota_1 2\Cat
\xra{\rlax(-)}
\iota_1 2\Cat
\]
equipped with a natural epimorphism from $\rlax^\nonu(-)$.
\end{definition}

\begin{observation}
\label{t10}
A non-unital right-lax functor $\cC \overset{F}{\laxra} \cD$ between $(\infty,2)$-categories
is unital if and only if it carries quasi-unit 2-morphisms to invertible 2-morphisms.
Indeed, let $\cC^{\oi} \xra{F^{\oi}} \cD^{\oi}$ be the corresponding morphism in $\Cat_{/\bDelta^\op}$. By definition, $F^\oi$ preserves inert-cocartesian morphisms.
The functor $F$ is unital if and only if $F^\oi$ additionally preserves surjective-cocartesian morphisms.
Because both $\cC^{\oi}$ and $\cD^{\oi}$ satisfy the Segal condition, $F^{\oi}$ preserves surjective-cocartesian morphisms if and only if it preserves cocartesian morphisms over the morphism $( [1] \to [0])^\circ$.
And by definition, $F$ carries quasi-unit 2-morphisms to invertible 2-morphisms if and only if $F^{\oi}$ preserves such cocartesian morphisms.
\end{observation}

\begin{observation}
By \Cref{t10}, the
composite
\[
\begin{tikzcd}[column sep=0.6cm]
\cC
\arrow[squiggly]{r}
&
\rlax^\nonu(\cC)
\arrow{r}
&
\rlax(\cC)
\end{tikzcd}
\]
is a (unital) right-lax functor. 
Moreover, it is the universal right-lax functor from $\cC$. 
We use this fact without further comment.
\end{observation}

\begin{observation}
\label{obs.rlax.commutes.with.oneop}
For any $\cC, \cD \in 2\Cat$, pullback along the involution $\bDelta^\op \xra[\sim]{\rev} \bDelta^\op$ defines equivalences
\[
\hom_{\iota_1 2\Cat_{\nonu\rlax}}(\cC , \cD)
\xlongra{\sim}
\hom_{\iota_1 2\Cat_{\nonu\rlax}}(\cC^{1\op},\cD^{1\op})
\]
and
\[
\hom_{\iota_1 2\Cat_{\rlax}}(\cC , \cD)
\xlongra{\sim}
\hom_{\iota_1 2\Cat_{\rlax}}(\cC^{1\op},\cD^{1\op})
\]
in $\Spaces$. Therefore, we have canonical equivalences
\[
\rlax^\nonu(\cC^{1\op})
\simeq
\rlax^\nonu(\cC)^{1\op}
\qquad
\text{and}
\qquad
\rlax(\cC^{1\op})
\simeq
\rlax(\cC)^{1\op}
~.
\]
\end{observation}

\begin{observation}
\label{obs.laxification.of.one.cats}
In the case that $\cC \in \Cat \subset 2\Cat$ is an $\infty$-category, we have a natural equivalence
\[
\colim_{([n] \da \cC) \in \bDelta_{/\cC}} \rlax([n])
\xlongra{\sim}
\rlax(\cC)
~;
\]
by the universal property of $\rlax(\cC) \in \iota_1 2\Cat$, this follows from the equivalence
\[
\colim_{([n] \da \cC) \in \bDelta_{/\cC}} [n]^\oi
\xlongra{\sim}
\cC^\oi
\]
in which the colimit can be computed either in $\coCart_{\bDelta^\op}$ or in $\Cat_{/\bDelta^\op}$ by \Cref{obs.unstraightening.preserves.colimits}.
\end{observation}

\begin{definition}
Given an $(\infty,2)$-category $\cC \in 2\Cat$, we define its \bit{non-unital left-laxification} and its (\bit{unital}) \bit{left-laxification} respectively as the $(\infty,2)$-categories
\[
\llax^\nonu(\cC)
:=
\rlax^\nonu(\cC^{2\op})^{2\op}
\qquad
\text{and}
\qquad
\llax(\cC)
:=
\rlax(\cC^{2\op})^{2\op}
~.
\]
\end{definition}

\begin{prop}
\label{prop.llax.of.brax.n}
The left-laxification $\llax([n]) \in 2\Cat$ is the thin 2-category that is characterized as follows: its objects are those of $[n]$, and for any $i,j \in [n]$ the poset $\hom_{\llax([n])}(i,j)$ is that of strictly increasing sequences
\[
i
<
k_1
<
\cdots
<
k_l
<
j
\]
in $[n]$ (for some $l \geq 0$) from $i$ to $j$ (ordered by inclusion), with composition given by concatenation.\footnote{This $(\infty,2)$-category can be presented as the simplicially-enriched category $\fC (\Delta^n)$ (where $\fC$ denotes the left adjoint of the homotopy-coherent nerve functor to simplicial sets), but thought of as enriched in $\infty$-categories (via the Joyal model structure) rather than in spaces (via the Kan--Quillen model structure).}
\end{prop}

\begin{proof}
We establish the corresponding description of $\rlax([n]) \simeq \llax([n]^{2\op})^{2\op} \simeq \llax([n])^{2\op}$.

We begin by noting the identification
\[
[n]^\oi
\simeq
\left(
(\bDelta_{/[n]})^\op
\xra{\fgt}
\bDelta^\op
\right)
\]
in $\Cat_{/\bDelta^\op}$. For $i,j \in [n]$ with $i \leq j$, let us write $[i,j] := [n]_{i//j} \in \bDelta$ for the corresponding closed interval. Using this notation, $\rlax^\nonu([n]) \in 2\Cat$ can be characterized as follows: its objects are those of $[n]$, and for $i \leq j$ we have
\[
\hom_{\rlax^\nonu([n])} ( i , j )
\simeq
\Ar^\act(\bDelta^\op)_{|[i,j]^\circ}
~,
\]
with composition given by concatenation (and for $i > j$ we have $\hom_{\rlax^\nonu([n])}(i,j) = \es$).\footnote{In other words, $\hom_{\rlax^\nonu([n])}(i,j)$ has objects the nondecreasing sequences in $[n]$ from $i$ to $j$.}

Now, let us define the further pullback
\begin{equation}
\label{pullback.defining.prime.rlax.brax.n.oint}
\begin{tikzcd}
'\rlax([n])^\oi
\arrow[hook]{r}
\arrow{d}
&
\rlax^\nonu([n])^\oi
\arrow{d}
\arrow{r}
&
\Ar^\act(\bDelta^\op)
\arrow{d}{s}
\arrow{r}{t}
&
\bDelta^\op
\\
(\bDelta^\inj_{/[n]})^\op
\arrow[hook]{r}
&
(\bDelta_{/[n]})^\op
\arrow{r}
&
\bDelta^\op
\\[-0.8cm]
&
\rotatebox{90}{$\simeq$}
\\[-0.8cm]
&
{[n]}^\oi
\end{tikzcd}
\end{equation}
in $\Cat$, where we write $\bDelta^\inj_{/[n]} := (\bDelta^\inj)_{/[n]}$ and we consider $'\rlax([n])^\oi \in \Cat_{/\bDelta^\op}$ via the upper horizontal composite. We claim that $'\rlax([n])^\oi \simeq \rlax([n])^\oi$, which will prove the desired result.

We first note that $'\rlax([n])^\oi \in \Cat_{/\bDelta^\op}$ lies in the image of the monomorphism $\iota_1 2\Cat \xhookra{(-)^\oi} \Cat_{/\bDelta^\op}$; we write $'\rlax([n]) \in 2\Cat$ for the corresponding $(\infty,2)$-category. Moreover, in diagram \Cref{pullback.defining.prime.rlax.brax.n.oint}, the upper left horizontal functor is fully faithful (because the lower left horizontal functor is) and admits a right adjoint
\[ \begin{tikzcd}[column sep=1.5cm]
'\rlax([n])^\oi
\arrow[hook, transform canvas={yshift=0.9ex}]{r}
\arrow[dashed, leftarrow, transform canvas={yshift=-0.9ex}]{r}[yshift=-0.2ex]{\bot}[swap]{q}
&
\rlax^\nonu([n])^\oi
\end{tikzcd} \]
in $\Cat_{/\bDelta^\op}$.\footnote{On objects, this right adjoint is given by taking images of morphisms to $[n] \in \bDelta$.} In particular, $q$ is a localization (considered in $\Cat_{/\bDelta^\op}$ or in $\Cat$). Moreover, it is clear that $q$ defines a morphism in $\coCart_{/\bDelta^\op}$ and therefore a functor $'\rlax([n]) \xla{\tilde{q}} \rlax^\nonu([n])$ in $2\Cat$. Hence, by the Segal condition, it follows that $\tilde{q}$ is a localization at certain 2-morphisms; and unwinding the definitions, we see that these are generated under (horizontal) composition by the quasi-unit 2-morphisms of the objects of $[n]$.\footnote{Given an object $i \in [n]$, its corresponding quasi-unit 2-morphism corresponds to the diagram
\[
\begin{tikzcd}[ampersand replacement=\&]
{[1]}
\arrow{rr}
\arrow{rd}
\&
\&
{[0]}
\arrow{ld}[sloped, swap]{i}
\\
\&
{[n]}
\end{tikzcd}
~,
\]
considered as a morphism in $\bDelta_{/[n]}$.}
\end{proof}

\subsection{Fibrations}
\label{subsection.fibns.for.inftytwocats}

In this subsection, we introduce several notions of fibrations among $(\infty,2)$-categories. These will feature in our study of un/straightening in \Cref{subsection.un.straightening}.

\begin{local}
Throughout this subsection, we fix a functor $\cE \xra{\pi} \cC$ between $(\infty,2)$-categories.
\end{local}

\begin{definition}[{\cite[Chapter 11, Definition 1.1.2]{GR}}]
\label{defn.cart.fibn}
We say that a 1-morphism $e_0 \ra e_1$ in $\cE$ is \bit{cartesian} (with respect to $\pi$), or \bit{$\pi$-cartesian}, if for all $e \in \cE$ the commutative square
\begin{equation}
\label{comm.square.for.defining.a.cartesian.onemorphism}
\begin{tikzcd}
\hom_\cE(e,e_0)
\arrow{r}
\arrow{d}
&
\hom_\cE(e,e_1)
\arrow{d}
\\
\hom_\cC(\pi(e),\pi(e_0))
\arrow{r}
&
\hom_\cC(\pi(e),\pi(e_1))
\end{tikzcd}
\end{equation}
in $\Cat$ is a pullback. We then say that $\cE \xra{\pi} \cC$ is a \bit{2-cartesian fibration} if the following conditions are satisfied.
\begin{enumerate}

\item\label{defn.cart.fibn.part.cartesian.lifts}

For every object $e \in \cE$ and 1-morphism $c \ra \pi(e)$ in $\cC$ there exists a cartesian 1-morphism in $\cE$ covering it with target $e$.

\item\label{defn.cart.fibn.part.require.cocart.fibn}

For all $e_0,e_1 \in \cE$, the morphism
\[
\hom_\cE(e_0,e_1)
\longra
\hom_\cC(\pi(e_0),\pi(e_1))
\]
in $\Cat$ is a cocartesian fibration.

\item\label{defn.cart.fibn.part.composition}

For all $e_0,e_1,e_2 \in \cE$, in the commutative square
\[ \begin{tikzcd}
\hom_\cE(e_0,e_1)
\times
\hom_\cE(e_1,e_2)
\arrow{r}
\arrow{d}
&
\hom_\cE(e_0,e_2)
\arrow{d}
\\
\hom_\cC(\pi (e_0),\pi(e_1))
\times
\hom_\cC(\pi(e_1),\pi(e_2))
\arrow{r}
&
\hom_\cC(\pi(e_0),\pi(e_2))
\end{tikzcd} \]
in $\Cat$, the upper horizontal functor preserves cocartesian morphisms with respect to the vertical functors (which are cocartesian fibrations by condition \Cref{defn.cart.fibn.part.require.cocart.fibn}).

\end{enumerate}
We say that $\cE \xra{\pi} \cC$ is a \bit{homwise cocartesian fibration} if condition \Cref{defn.cart.fibn.part.require.cocart.fibn} is satisfied, and a \bit{strict homwise cocartesian fibration} if additionally condition \Cref{defn.cart.fibn.part.composition} is satisfied. If $\cE \xra{\pi} \cC$ is a homwise cocartesian fibration, we refer to the 2-morphisms in $\cE$ that define cocartesian 1-morphisms in a hom-$\infty$-category $\hom_\cE(e_0,e_1)$ as \bit{cocartesian 2-morphisms} (with respect to $\pi$).

We write
\[
2\Cat_{2\cart/\cC}
\subseteq
2\Cat_{/\cC}
\]
for the 1-full subcategory on the 2-cartesian fibrations, whose 1-morphisms are those that preserve cocartesian 2-morphisms. Moreover, we write
\[
2\Cart_\cC
\subseteq
2\Cat_{2\cart/\cC}
\]
for the 1-full subcategory on the same objects, whose 1-morphisms are those that additionally preserve cartesian 1-morphisms.

A \bit{1-cartesian fibration} is a 2-cartesian fibration whose fibers are $(\infty,1)$-categories. We write
\[
1\Cart_\cC
\subseteq
2\Cart_\cC
\qquad
\text{and}
\qquad
2\Cart_{1\cart/\cC}
\subseteq
2\Cat_{2\cart/\cC}
\]
for the full subcategories on the 1-cartesian fibrations.

Dually, we say that $\cE \xra{\pi} \cC$ is a \bit{2-cocartesian fibration} if $\cE^{1\&2\op} \xra{\pi^{1\&2\op}} \cC^{1\&2\op}$ is a 2-cartesian fibration. We use the evident notation and terminology for the corresponding related notions.
\end{definition}

\begin{observation}
\label{obs.check.strict.homwise.cocart.fibn.on.oi}
The functor $\cE \xra{\pi} \cC$ in $2\Cat$ is a strict homwise cocartesian fibration if and only if the corresponding functor $\cE^\oi \xra{\pi^\oi} \cC^\oi$ in $\Cat$ is a cocartesian fibration.\footnote{For the forwards implication, $\pi$ being a homwise cocartesian fibration implies that $\pi^\oi$ is a locally cocartesian fibration, and thereafter its strictness guarantees composability of the locally cocartesian morphisms.}
\end{observation}

\begin{definition}[{\cite[Chapter 11, Definition 3.1.2]{GR}}]
\label{defn.loc.cart.fibn}
We say that a 1-morphism $e_0 \ra e_1$ in $\cE$ classified by a functor $[1] \xra{\varphi} \cE$ is \bit{locally cartesian} (with respect to $\pi$), or \bit{locally $\pi$-cartesian}, if it defines a cartesian 1-morphism with respect to the pullback $(\pi\varphi)^*\cE \ra [1]$. We then say that $\cE \xra{\pi} \cC$ is a \bit{locally 2-cartesian fibration} if for every object $e \in \cE$ and 1-morphism $c \ra \pi(e)$ in $\cC$ there exists a locally cartesian 1-morphism in $\cE$ covering it with target $e$ and moreover $\pi$ is a strict homwise cocartesian fibration.

We employ the evident variants of the remaining notation and terminology of \Cref{defn.cart.fibn}, e.g.\! the 1-full subcategories
\[
\loc.2\Cart_\cC
\subseteq
2\Cat_{\loc.2\cart/\cC}
\subseteq
2\Cat_{/\cC}
\]
and the notion of a \bit{locally 2-cocartesian fibration} are defined similarly.
\end{definition}

\begin{lemma}
\label{lemma.locallytwocart.is.twocart.if.compose}
Suppose that $\cE \xra{\pi} \cC$ is a locally 2-cartesian fibration. Then, it is a 2-cartesian fibration if and only if its locally cartesian 1-morphisms are closed under composition.
\end{lemma}

\begin{proof}
Clearly, in a 2-cartesian fibration the cartesian 1-morphisms are closed under composition. Conversely, suppose that the locally cartesian 1-morphisms in $\cE$ are closed under composition. Then, each locally cartesian 1-morphism $e_0 \xra{\varphi} e_1$ in $\cE$ is in fact a cartesian 1-morphism. Indeed, since $\cE \xra{\pi} \cC$ is a strict homwise cocartesian fibration, the commutative square \Cref{comm.square.for.defining.a.cartesian.onemorphism} is a pullback if and only if it induces an equivalence on fibers.
So, choose an object $e \in \cE$ and a morphism $\pi(e) \xra{f} \pi(e_0)$ in $\cC$, let $e_0' \xra{\tilde{f}} e_0$ be a locally cartesian $f$, and consider the commutative triangle
\begin{equation}
\label{span.showing.loctwocart.is.twocart}
\begin{tikzcd}[column sep=0.5cm]
&
\hom_{\cE_{\pi(e)}}(e, e_0')
\arrow{ld}[sloped]{\tilde{f} \circ -}
\arrow{rd}[sloped]{\varphi \circ \tilde{f} \circ -}
\\
\hom^f_\cE(e,e_0)
\arrow{rr}[swap]{\varphi \circ -}
&
&
\hom^{\pi(\varphi) \circ f}_\cE(e,e_1)
\end{tikzcd}
\end{equation}
in $\Cat$ (in which the two lower terms denote the evident fibers over $f$). By assumption, the composite 1-morphism $e_0' \xra{\tilde{f}} e_0 \xra{\varphi} e_1$ is locally cartesian. It now follows from the definition of a locally cartesian fibration that both downwards functors in diagram \Cref{span.showing.loctwocart.is.twocart} are equivalences, which implies that its lower horizontal functor is an equivalence as well. Hence, $\varphi$ is indeed a cartesian 1-morphism.
\end{proof}

\begin{observation}
\label{obs.pull.back.loc.two.cart.fibns.along.rlax.fctr}
It follows from \Cref{obs.check.strict.homwise.cocart.fibn.on.oi} that we can pull back locally 2-cartesian fibrations along right-lax functors: given a diagram
\[ \begin{tikzcd}
&
\cE
\arrow{d}{\loc.2\cart}
\\
\cD
\arrow[squiggly]{r}[swap]{F}
&
\cC
\end{tikzcd} \]
(in which $F$ is a right-lax functor), we obtain a locally 2-cartesian fibration $F^* \cE \ra \cD$ via the pullback square
\[ \begin{tikzcd}
(F^* \cE)^\oi
\arrow{r}
\arrow{d}
&
\cE^\oi
\arrow{d}
\\
\cD^\oi
\arrow{r}
&
\cC^\oi
\end{tikzcd} \]
in $\Cat_{/\bDelta^\op}$ (in fact in $\Cat_{\cocart/\bDelta^\op}$).\footnote{The Segal condition for $(F^* \cE)^\oi$ follows from the fact that it can be checked over the subcategory of inert morphisms in $\bDelta^\op$, and the completeness condition therefor follows from the fact that (unital) right-lax functors preserve equivalences. Thereafter, the conditions of \Cref{defn.loc.cart.fibn} for the functor $F^*\cE \ra \cD$ follow from the facts that the functor $(F^* \cE)^\oi \ra \cD^\oi$ is a cocartesian fibration and that for every functor $[1] \ra \cD$ the composite $[1]^\oi \ra \cD^\oi \ra \cC^\oi$ defines a strict (as opposed to right-lax) functor $[1] \ra \cC$.} Evidently, if $F$ is a strict functor then this construction coincides with ordinary pullback therealong.
\end{observation}

\begin{observation}
\label{obs.two.cart.fibns}
We collect the following apparent facts about (locally) 2-cartesian fibrations.
\begin{enumerate}

\item\label{obs.two.cart.fibns.if.fibers.are.one.cats}

Suppose that the functor $\cE \xra{\pi} \cC$ is a homwise cocartesian fibration and moreover all its fibers are $(\infty,1)$-categories. Then it is automatically a \textit{strict} homwise cocartesian fibration.

\item\label{obs.two.cart.fibns.one.cart.fibn.iff}

The functor $\cE \xra{\pi} \cC$ in $2\Cat$ is a (locally) 1-cartesian fibration if and only if the functor $\iota_1 \cE \xra{\iota_1(\pi)} \iota_1 \cC$ in $\Cat$ is a (resp.\! locally) cartesian fibration and moreover for all $e_0,e_1 \in \cE$ the functor $\hom_\cE(e_0,e_1) \ra \hom_\cC(\pi(e_0),\pi(e_1))$ is a left fibration.

\item\label{obs.two.cart.fibns.over.a.one.cat}

Suppose that $\cC \in 2\Cat$ is an $(\infty,1)$-category. Then, the functor $\cE \xra{\pi} \cC$ is automatically a strict homwise cocartesian fibration. Hence, the inclusions $2\Cat_{2\cart/\cC} \subseteq 2\Cat_{\loc.2\cart/\cC} \subseteq 2\Cat_{/\cC}$ are fully faithful. In particular, $\cE \xra{\pi} \cC$ is a locally 2-cartesian fibration if and only if its pullback along every functor $[1] \ra \cC$ defines a 2-cartesian fibration over $[1]$.

\item
\label{obs.strict.maps.among.locallyfibns.are.equivalences.if.fiberwise.so}

A morphism in $\loc.2\Cart_\cC$ is an equivalence if and only if it's an equivalence on fibers (because the latter condition implies that it is both surjective and fully faithful).

\end{enumerate}
\end{observation}

\subsection{Un/straightening}
\label{subsection.un.straightening}

In this subsection, we consider variants of the Grothendieck construction for $(\infty,2)$-categories. Its main result is \Cref{thm.iota.one.version.of.loctwoCart.equivalent.to.twoCart.over.rlaxification}, which establishes the Grothendieck construction for lax functors as an equivalence of $\infty$-categories. We later enhance it to an equivalence of $(\infty,2)$-categories (see \Cref{t11}).

\begin{definition}
We refer to the equivalences of \Cref{thm.un.straightening.for.two.cats} as \bit{un/straightening}, and to the equivalence \Cref{lax.un.straightening} of \Cref{thm.iota.one.version.of.loctwoCart.equivalent.to.twoCart.over.rlaxification} as \bit{lax un/straightening}.
\end{definition}

\begin{remark}
\Cref{thm.un.straightening.for.two.cats} appears as \cite[Chapter 11, Theorem-Construction 1.1.8(b)]{GR}, while \Cref{thm.iota.one.version.of.loctwoCart.equivalent.to.twoCart.over.rlaxification} is a slight variant of \cite[Chapter 11, Theorem-Construction 3.2.2]{GR} (with essentially the same proof).
\end{remark}

\begin{local}
In this subsection, we fix an $(\infty,2)$-category $\cC \in 2\Cat$.
\end{local}

\begin{theorem}
\label{thm.un.straightening.for.two.cats}
There are canonical equivalences
\[
\Fun ( \cC , 2\Cat)
\simeq
2\coCart_\cC
\qquad
\text{and}
\qquad
\Fun( \cC^{1\op} , 2\Cat)
\simeq
2\Cart_\cC
\]
in $2\Cat$, which are functorial in $\cC \in 2\Cat^{1\op}$.
\end{theorem}

\begin{proof}
This is a special case of \cite[Theorem 6.21]{Nuiten-straightening}.\footnote{Note that our definition of $2\Cat$ agrees with that of \cite{Nuiten-straightening} by \cite[Remark 4.21]{Nuiten-straightening}.}
\end{proof}

\begin{observation}
\label{t14}
Let $\cE \da [1]$ be a 2-cartesian fibration.
Then, the corresponding monodromy functor $\cE_1 \ra \cE_0$ is the composite
\[
\cE_1
\underset{\sim}{\xla{\ev_1}}
\Gamma^{\cart}_{[1]}(\cE)
\xra{\ev_0}
\cE_0
~.
\]
Indeed, we have an equivalence
\[
\Gamma^{\cart}_{[1]}(\cE)
\simeq
\lim
\left(
[1]^{\op}
\to
2\Cat
\right)
\]
in $2\Cat$, where the functor $[1]^{\op} \to 2\Cat$ corresponds to $\cE\da [1]$ through \Cref{thm.un.straightening.for.two.cats}.
The assertion then follows from the fact that
given a functor $[1]^{\op} \xra{F} \cC$ to an $(\infty,1)$-category, 
the morphism $F(1^\circ) \xra{ F( (0 \to 1)^\circ )} F(0^\circ)$ in $\cC$
is the composite
\[
F(1^\circ)
\simeq
\lim\left(
\{1\}^{\op}
\longhookra
[1]^{\op}
\xlongra{F}
\cC
\right)
\xlongla{\sim}
\lim\left(
[1]^{\op}
\xlongra{F}
\cC
\right)
\longra
\lim\left(
\{0\}^{\op}
\longhookra
[1]^{\op}
\xlongra{F}
\cC
\right)
\simeq
F(0^\circ)
~.
\]
\end{observation}

\begin{observation}
\label{t15}
Let $\cE \da [1]$ be a 2-cartesian fibration.
By the definition of a 2-cartesian fibration and \Cref{cor.existence.of.radjt.detectable.ptwise}, the functor $\Gamma_{[1]}(\cE) \xra{\ev_1} \cE_1$ is admits a right adjoint $\ev_1^R$, which carries each object to the cartesian section of which it is the target. Hence, the unit is an equivalence $\id_{\cE_1} \xra{\sim} \ev_1^R \circ \ev_1$, and the image of $\ev_1^R$ lies in the full subcategory $\Gamma^{\cart}_{[1]}(\cE) \subset \Gamma_{[1]}(\cE)$ of cartesian sections.
Hence, $\ev_1^R$ is the composite functor
\[
\cE_1
\xlongra{\sim}
\Gamma^{\cart}_{[1]}(\cE)
\longhookra
\Gamma_{[1]}(\cE)
~.
\]
Therefore, using \Cref{t14} we see that the monodromy functor can also be identified as the composite
\[
\cE_1
\xra{\ev_1^R}
\Gamma_{[1]}(\cE) 
\xra{\ev_0}
\cE_0
~.
\]
\end{observation}

\begin{theorem}
\label{thm.iota.one.version.of.loctwoCart.equivalent.to.twoCart.over.rlaxification}
Pullback (in the sense of \Cref{obs.pull.back.loc.two.cart.fibns.along.rlax.fctr}) along the universal right-lax functor
\[ \begin{tikzcd}[column sep=0.6cm]
\cC
\arrow[squiggly]{r}{\theta}
&
\rlax(\cC)
\end{tikzcd} \]
determines an equivalence
\[
\theta^\ast
\colon
\iota_1 2\Cart_{\rlax(\cC)}
\xlongra{\sim}
\iota_1 \loc.2\Cart_\cC
~.
\]
In particular, there is an equivalence
\begin{equation}
\label{lax.un.straightening}
\iota_1 \loc.2\Cart_\cC
\simeq
\iota_1 \Fun ( \rlax(\cC^{1\op}) , 2\Cat )
~.
\end{equation}
\end{theorem}

The remainder of this subsection is devoted to proving \Cref{thm.iota.one.version.of.loctwoCart.equivalent.to.twoCart.over.rlaxification}.
We first construct an inverse to the pullback functor $\theta^\ast$ as the leftmost factorization in a diagram
\[ \begin{tikzcd}
\iota_1 \loc.2\Cart_\cC
\arrow[dashed]{d}[swap]{\Phi}
\arrow[dashed]{rd}[sloped, swap]{\Phi^\nonu}
\arrow[dashed]{rrd}[sloped]{\Phi^\nonu}
\arrow[dashed, bend left=10]{rrrd}[sloped]{\Phi^\nonu(-)^\oi}
\\
\iota_1 2\Cart_{\rlax(\cC)}
\arrow[hook]{r}
&
\iota_1 2\Cart_{\rlax^\nonu(\cC)}
\arrow[hook]{r}
&
\iota_1 2\Cat_{/\rlax^\nonu(\cC)}
\arrow[hook]{r}[swap]{(-)^\oi}
&
\Cat_{/\rlax^\nonu(\cC)^\oi}
\end{tikzcd}
~.
\]

\begin{local}
Let $S_{m,n}$ denote the thin 2-category
\[ \begin{tikzcd}[row sep=0.4cm, column sep=0.5cm]
\bullet
\arrow{r}[description, yshift=-0.405cm]{\rotatebox{45}{$\Leftarrow$}}
\arrow{d}
&
\bullet
\arrow{r}[description, yshift=-0.405cm]{\rotatebox{45}{$\Leftarrow$}}
\arrow{d}
&
\cdots
\arrow{r}[description, yshift=-0.405cm]{\rotatebox{45}{$\Leftarrow$}}
&
\bullet
\arrow{d}
\\
\bullet
\arrow{r}[description, yshift=-0.405cm]{\rotatebox{45}{$\Leftarrow$}}
\arrow{d}
&
\bullet
\arrow{r}[description, yshift=-0.405cm]{\rotatebox{45}{$\Leftarrow$}}
\arrow{d}
&
\cdots
\arrow{r}[description, yshift=-0.405cm]{\rotatebox{45}{$\Leftarrow$}}
&
\bullet
\arrow{d}
\\[-0.2cm]
\myvdots
\arrow{d}
&
\myvdots
\arrow{d}
&
\ddots
&
\myvdots
\arrow{d}
\\
\bullet
\arrow{r}[description, yshift=0.405cm]{\rotatebox{45}{$\Leftarrow$}}
&
\bullet
\arrow{r}[description, yshift=0.405cm]{\rotatebox{45}{$\Leftarrow$}}
&
\cdots
\arrow{r}[description, yshift=0.405cm]{\rotatebox{45}{$\Leftarrow$}}
&
\bullet
\end{tikzcd} \]
with $m$ vertical 1-morphisms in each column and $n$ horizontal 1-morphisms in each row. These assemble into a bicosimplicial thin 2-category $\bDelta \times \bDelta \xra{S_{\bullet,\bullet}} \iota_1 2\Cat$, and we write
\[
\Sq := \hom_{\iota_1 2\Cat} ( S_{\bullet,\bullet} , - )
:
\iota_1 2\Cat
\longra
\Fun(\bDelta^\op \times \bDelta^\op , \Spaces)
\]
for the functor that it corepresents. We use these notations in the present subsection.
\end{local}

\begin{observation}
\label{t28}
By \cite[Corollary 4.4.2]{HORR-pasting} (see also \cite{Columbus-thesis}), the functor $\Sq$ lands in the full subcategory of double $\infty$-categories: simplicial objects in $\Cat \subseteq \Fun(\bDelta^\op , \Spaces)$ satisfying the Segal and completeness conditions.\footnote{So, a double $\infty$-category is an $(\infty,2)$-category if and only if its $0\th$ $\infty$-category is an $\infty$-groupoid.} 
We write
\[
\iota_1 2\Cat
\xra{\Sq(-)^\oi}
\coCart_{\bDelta^\op}
\]
for the cocartesian unstraightening of the functor carrying each object $[n]^\circ \in \bDelta^\op$ to the functor $\iota_1 2\Cat \xra{\Sq(-)_{\bullet,n}} \Cat \subset \Fun(\bDelta^\op , \Spaces)$. 
Note that 
\[
(-)^\oi 
~\subseteq~
\Sq(-)^\oi
\]
is a 1-full subcategory: it contains the same objects, and in the fiber over $[n]^\circ \in \bDelta^\op$ its 1-morphisms are those functors $S_{1,n} \ra (-)$ that carry the vertical 1-morphisms to equivalences.
\end{observation}

\begin{observation}
\label{t21}
Any morphism $I \xra{\varphi} J$ in $\bDelta^\act$ admits a canonical factorization
\begin{equation}
\label{factorizn.of.active.arrow.in.Delta}
\begin{tikzcd}
I
\arrow{rr}{\varphi}
\arrow[dashed]{rd}[sloped, swap]{i \longmapsto i}
&
&
J
\\
&
K
\arrow[dashed]{ru}[sloped]{j \longmapsto j}[sloped, swap]{i \longmapsto \varphi(i)}
\end{tikzcd}
\end{equation}
in $\bDelta^\act$, where $K := (I \sqcup J) / \{ \max(I) \sim \max(J) \}$ with ordering characterized by the requirement that for all $i \in I$ and $j \in J$ we have $i < j$ if and only if $\varphi(i) \leq j$. This determines a monomorphism $\Ar^\act(\bDelta^\op) \hookra \Fun([2],\bDelta^\op)$, which is adjoint to natural transformations
\begin{equation}
\label{e50}
t
\longla
\mu
\longla
s
\end{equation}
in $\Fun(\Ar^\act(\bDelta^\op) , \bDelta^\op)$ (whose components at $(I \xra{\varphi} J)^\circ \in \Ar^\act(\bDelta^\op)$ correspond to the diagram \Cref{factorizn.of.active.arrow.in.Delta} in $\bDelta^\act$).
\end{observation}

\begin{local}
\label{t20}
Any functor $\Ar^\act(\bDelta^\op) \xra{\chi} \bDelta^\op$ determines an endofunctor
\[
\coCart_{\bDelta^\op}
\xra{\Freeact_\chi}
\coCart_{\bDelta^\op}
\]
given by taking $\cE \da \bDelta^\op$ to the fiber product
\[
\begin{tikzcd}
\Freeact_\chi(\cE)
\arrow{r}
\arrow{d}
&
\Ar^\act(\bDelta^\op)
\arrow{r}{t}
\arrow{d}{\chi}
&
\bDelta^\op
\\
\cE
\arrow{r}
&
\bDelta^\op
\end{tikzcd}
\]
considered in $\coCart_{\bDelta^\op}$ by the horizontal composite (compare with \Cref{defn.nonu.rlaxification}); in particular, $\Freeact := \Freeact_s$ where $s$ is as in \Cref{e50}. This defines a functor $\Fun(\Ar^\act(\bDelta^\op) , \bDelta^\op) \ra \Fun ( \coCart_{\bDelta^\op},\coCart_{\bDelta^\op})$.\footnote{Indeed, the functoriality is as follows: given any $\cC,\cD \in \Cat$, pullback assembles as a functor
\[
\Fun(\cC,\cD)
\times
\coCart_\cD
\longrightarrow
\coCart_\cC
\]
(which corresponds through straightening to composition of functors).}
\end{local}

\begin{local}
\label{t22}
Given an object $(\cE \da \cC) \in \loc.2\Cart_\cC$, we define the 1-full subcategory
\[
\Phi^\nonu(\cE)^\oi
\subseteq
\Freeact_\mu(\Sq(\cE)^\oi)
\]
as follows, using the notation of diagram \Cref{factorizn.of.active.arrow.in.Delta} in \Cref{t21} throughout. 
First of all, an object of $\Phi^\nonu(\cE)^\oi$ is given by functors $I \ra K \xra{e_\bullet} \cE$ such that for every $k \in K \backslash \{ \max(K) \}$, the morphism $e_k \ra e_{k+1}$ is sent to an equivalence in $\cC$ if $k \in I$ and is locally cartesian over $\cC$ if $k \in J$. Then, a morphism in $\Phi^\nonu(\cE)^\oi$ from $I' \ra K' \xra{e'_\bullet} \cE$ to $I \ra K \xra{e_\bullet} \cE$ is given by (the opposite of) a morphism
\begin{equation}
\label{e26}
\begin{tikzcd}
I'
\arrow{r}
&
K'
\arrow{r}
&
J'
\\
I
\arrow{r}
\arrow{u}
&
K
\arrow{r}
\arrow{u}{\beta}
&
J
\arrow{u}
\end{tikzcd}
\end{equation}
in $\Ar^\act(\bDelta) \subset \Fun([2]^{\op},\bDelta)$
along with a diagram
\begin{equation}
\label{diagram.of.twomors.for.defining.Phi.E.oi}
\begin{tikzcd}[row sep=1.5cm]
e'_{\beta(\min(K))}
\arrow{r}[swap, yshift=-0.4cm]{\rotatebox{45}{$\xLeftarrow[\eta_{\min(K)}]{}$}}
\arrow{d}[swap]{\gamma_{\min(K)}}
&
e'_{\beta(\min(K) + 1)}
\arrow{r}[swap, xshift=-0.1cm, yshift=-0.4cm]{\rotatebox{45}{$\xLeftarrow[\eta_{\min(K)+1}]{}$}}
\arrow{d}[swap]{\gamma_{\min(K)+1}}
&
\cdots
\arrow{r}
\arrow{r}[swap, xshift=0.2cm, yshift=-0.4cm]{\rotatebox{45}{$\xLeftarrow[\eta_{\max(K)-1}]{}$}}
&
e'_{\beta(\max(K))}
\arrow{d}{\gamma_{\max(K)}}
\\
e_{\min(K)}
\arrow{r}
&
e_{\min(K)+1}
\arrow{r}
&
\cdots
\arrow{r}
&
e_{\max(K)}
\end{tikzcd}
\end{equation}
in $\cE$,\footnote{Beware that the upper row in diagram \Cref{diagram.of.twomors.for.defining.Phi.E.oi} may not define an object of $\Phi^\nonu(\cE)^\oi$, although for every $k \in (I \backslash \{ \max(I) \}) \subseteq (K \backslash \{ \max(K) \})$ the 1-morphism $e'_{\beta(k)} \ra e'_{\beta(k+1)}$ is sent to an equivalence in $\cC$.} such that
\begin{itemize}

\item for every $k \in K$, the 1-morphism $\gamma_k$ in $\cE$ is sent to an equivalence in $\cC$;


\item for every $i \in I \subseteq K$, the 1-morphism $\gamma_i$ in $\cE$ is an equivalence; and


\item for every $i \in (I \backslash \{ \max(I) \}) \subseteq (K \backslash \{\max(K)\})$, the 2-morphism $\eta_i$ in $\cE$ is sent to an equivalence in $\cC$;

\item for every $j \in (J \backslash \{ \max(J) \}) \subseteq (K \backslash \{ \max(K) \})$, the 2-morphism $\eta_j$ in $\cE$ is a cocartesian 2-morphism over $\cC$.



\end{itemize}

\end{local}

\begin{observation}
\label{t24}
By the definition of $\Phi^\nonu(\cE)^\oi$, we have a factorization
\begin{equation}
\label{factorizn.of.PhinonuEoi.to.rlaxnonuCoi}
\begin{tikzcd}[column sep=1.5cm]
\Phi^\nonu(\cE)^\oi
\arrow[hook]{rrr}
\arrow[dashed]{d}
&[-1.7cm]
&
&
\Freeact_\mu(\Sq(\cE)^\oi)
\arrow{d}
\\
\rlax^\nonu(\cC)^\oi
&
\simeq
\Freeact(\cC^\oi)
\arrow[hook]{r}
&
\Freeact(\Sq(\cC)^\oi)
\arrow[hook]{r}[swap]{s \longra \mu}
&
\Freeact_\mu(\Sq(\cC)^\oi)
\end{tikzcd}~,
\end{equation}
where the lower right horizontal functor (induced by the natural transformation $s \ra \mu$) is 1-full since $\Sq(\cC)$ is a double $\infty$-category (in particular it is complete). 
\end{observation}

\begin{lemma}
\label{t23}
The functor $\Phi^\nonu(\cE)^\oi \to  \rlax^\nonu(\cC)^\oi$ of \Cref{t24} is a cocartesian fibration. 
The cocartesian morphisms are those in which, for each $i \in I \backslash \{ \max(I) \}$, the corresponding 2-morphism $\eta_i$ (as in diagram \Cref{diagram.of.twomors.for.defining.Phi.E.oi}) is invertible (or, equivalently, for each $k\in K \setminus  \{\max(K) \}$, the 2-morphism $\eta_k$ is cocartesian).

\end{lemma}

\begin{proof}
Observe that the functor $\Freeact_\mu(\Sq(\cE)^\oi)_{|\rlax^\nonu(\cC)^\oi} \to \rlax^\nonu(\cC)^\oi$ from the indicated base change is a cocartesian fibration: a morphism in its source is cocartesian if and only if in the corresponding diagram \Cref{diagram.of.twomors.for.defining.Phi.E.oi} each $\gamma_i$ is invertible, for each $i \in I \setminus \{ \max(I)\}$, $\eta_i$ is invertible, and for each $j \in J  \setminus \{\max(J)\}$, the 2-morphism $\eta_j$ is cocartesian.
Consider the subcategory $\cU \subseteq \Freeact_\mu(\Sq(\cE)^\oi)_{|\rlax^\nonu(\cC)^\oi}$ consisting of those morphisms such that in its corresponding diagram \Cref{diagram.of.twomors.for.defining.Phi.E.oi}, for each $i\in I$, the 1-morphisms $\gamma_i$ are equivalences and,
for each $j \in (J \backslash \{ \max(J) \})$, the 2-morphism $\eta_j$ is a cocartesian 2-morphism over $\cC$.
Notice that $\cU$ contains all cocartesian morphisms over $\rlax^\nonu(\cC)^\oi$; in particular, $\cU \da \rlax^\nonu(\cC)^\oi$ is a cocartesian fibration and the inclusion $\cU \hookra \Freeact_\mu(\Sq(\cE)^\oi)_{|\rlax^\nonu(\cC)^\oi}$ preserves cocartesian morphisms.

Now, observe the monomorphism $\Phi^\nonu(\cE)^\oi \hookra \Freeact_\mu(\Sq(\cE)^\oi)_{|\rlax^\nonu(\cC)^\oi}$. 
Inspecting the definition of $\rlax^\nonu(\cC)^\oi$, and of $\cU$, observe that this monomorphism factors as a fully faithful functor $\Phi^\nonu(\cE)^\oi \hookra \cU$.
We claim that for each object $x \in \rlax^\nonu(\cC)^\oi$ the inclusion $(\Phi^\nonu(\cE)^\oi)_x \hookra \cU_x$ admits a left adjoint. 
For this, fix an object $\tilde{x} := (I \hookra K \xra{e_\bullet} \cE) \in \cU_x$. We must construct an initial object in the undercategory $((\Phi^\nonu(\cE)^\oi)_x)_{\tilde{x}/}$.
By definition of $\cU$, for each $k\in I$ the 1-morphism $e_k \to e_{k+1}$ is sent to an equivalence in $\cC$.
To construct the desired initial object, we enforce that each $e_k \to e_{k+1}$ is locally cartesian over $\cC$ for each $k\in J$.
We do this by working backwards in the finite linearly ordered set $J$: inductively, take the morphism $e'_k \to e'_{k+1} = e_{k+1}$ to be the locally cartesian lift of the image in $\cC$ of the 1-morphism $e_k \to e_{k+1}$.  
This resulting object $\tilde{y}\in (\Phi^\nonu(\cE)^\oi)_{x} $ receives a canonical morphism from $\tilde{x}$.
As so, this object is initial.
Moreover, in the canonical morphism $\tilde{x} \to \tilde{y}$, for each $k\in K \setminus \{\max(K)\}$, the corresponding 2-morphism $\eta_k$ (as in \Cref{diagram.of.twomors.for.defining.Phi.E.oi}) is invertible.
It follows that $\Phi^\nonu(\cE)^\oi  \da \rlax^\nonu(\cC)^\oi$ is a locally cocartesian fibration, 
with cocartesian morphisms those in which, for each $i \in I \backslash \{ \max(I) \}$, the corresponding 2-morphism $\eta_i$ (as in diagram \Cref{diagram.of.twomors.for.defining.Phi.E.oi}) is invertible.
It is clear that the locally cocartesian morphisms are closed under composition.
Therefore, $\Phi^\nonu(\cE)^\oi  \da \rlax^\nonu(\cC)^\oi$ is indeed a cocartesian fibration, as claimed.  
\end{proof}


\begin{observation}
\label{t25}
By \Cref{t23}, we obtain a functor
\[
\iota_1 \loc.2\Cart_\cC
\xra{\Phi^\nonu(-)^\oi}
\coCart_{\rlax^\nonu(\cC)^\oi}
~.
\]
For the composite cocartesian fibration,
\begin{equation}
\label{e51}
\Phi^\nonu(\cE)^\oi
\longra
\rlax^\nonu(\cC)^\oi
\longra
\bDelta^\op
~,
\end{equation}
using \Cref{t22}, the cocartesian morphisms are given as those in which
\begin{itemize}
\item the morphism $J \to J'$ (in diagram \Cref{e26}) is inert,
\item each morphism $\gamma$ (in diagram \Cref{diagram.of.twomors.for.defining.Phi.E.oi}) is carried to an equivalence in $\cC$, and
\item each 2-morphism $\eta$ (in diagram \Cref{diagram.of.twomors.for.defining.Phi.E.oi}) is invertible.
\end{itemize}
Moreover, the cocartesian fibration \Cref{e51} defines an $(\infty,2)$-category.\footnote{From the description of cocartesian morphisms in $\Phi^\nonu(\cE)^\oi \da \rlax^\nonu(\cC)^\oi$ of \Cref{t23}, 
the inclusion $\Phi^\nonu(\cE)^\oi \hookrightarrow \Freeact_\mu(\Sq(\cE^\oi))_{|\rlax^\nonu(\cC)^\oi}$ preserves cocartesian morphisms over inert morphisms in $\bDelta^{\op}$.
Therefore, the Segal condition for $\Phi^\nonu(\cE)^\oi \da \bDelta^{\op}$ follows from that for $\Sq(\cE)^\oi  \da \bDelta^{\op}$. 
Similarly, the completeness condition for $\Phi^\nonu(\cE)^\oi \da \bDelta^{\op}$ follows from
that for $\Sq(\cE)^\oi  \da \bDelta^{\op}$.
Furthermore, we clearly have $\Phi^\nonu(\cE)^\oi_0 \simeq \iota_0 (\cE)$.
}
Explicitly, its objects are those of $\cE$, its 1-morphisms are strings of 1-morphisms $e_0 \ra \cdots \ra e_m$ in $\cE$ with $m \geq 1$ such that the 1-morphism $e_0 \ra e_1$ in $\cE$ is sent to an equivalence in $\cC$ and the 1-morphisms $e_i \ra e_{i+1}$ in $\cE$ are locally cartesian over $\cC$ for all $1 \leq i < m$, and a typical 2-morphism is given by a diagram
\begin{equation}
\label{e100}
\begin{tikzcd}[ampersand replacement=\&]
e'_0
\arrow{r}[swap, yshift=-0.25cm]{\rotatebox{45}{$\xLeftarrow[\eta_0]{}$}}
\arrow{d}[swap]{\gamma_0}[sloped, anchor=south]{\sim}
\&
e'_1
\arrow{r}
\arrow{d}[swap]{\gamma_1}
\arrow{rd}[sloped]{\gamma_2}[swap, xshift=0.1cm, yshift=0.1cm]{\rotatebox{45}{$\xLeftarrow[\eta_1]{}$}}
\&
e'_2
\arrow{r}[swap, yshift=-0.3cm, xshift=0.6cm]{\rotatebox{30}{$\xLeftarrow[\eta_2]{}$}}
\&
e'_3
\arrow{r}
\&
e'_4
\arrow{r}[swap, xshift=0.8cm, yshift=-0.3cm]{\rotatebox{30}{$\xLeftarrow[\eta_3]{}$}}
\arrow{d}[swap]{\gamma_3}
\&
e'_5
\arrow{r}
\&
e'_6
\arrow{rr}[swap, yshift=-0.3cm, xshift=0.4cm]{\rotatebox{30}{$\xLeftarrow[\eta_5]{}$}}
\arrow{d}[swap]{\gamma_4}
\arrow{rd}[sloped]{\gamma_5}[swap, xshift=0.1cm, yshift=0.1cm]{\rotatebox{45}{$\xLeftarrow[\eta_4]{}$}}
\&
\&
e'_7
\arrow{d}{\gamma_6}[sloped, anchor=north]{\sim}
\\
e_0
\arrow{r}
\&
e_1
\arrow{r}
\&
e_2
\arrow{rr}
\&
\&
e_3
\arrow{rr}
\&
\&
e_4
\arrow{r}
\&
e_5
\arrow{r}
\&
e_6
\end{tikzcd}
\end{equation}
in $\cE$ in which the 2-morphism $\eta_0$ is sent to an equivalence in $\cC$ and all remaining 2-morphisms in the diagram are required to be cocartesian over $\cC$.

Altogether, we obtain a functor
\[
\iota_1 \loc.2\Cart_\cC
\xra{\Phi^\nonu}
2\Cat_{/\rlax^\nonu(\cC)}
~.
\]

\end{observation}

\begin{observation}
\label{t26}
The functor $\Phi^\nonu$ factors through $2\Cart_{\rlax^\nonu(\cC)} \subseteq 2\Cat_{/\rlax^\nonu(\cC)}$. 
Indeed, by \Cref{obs.check.strict.homwise.cocart.fibn.on.oi}, the functor $\Phi^\nonu(\cE) \ra \rlax^\nonu(\cC)$ is a strict homwise cocartesian fibration;
inspecting the definition of $\Phi^\nonu(\cE)^\oi$ (see also \Cref{t25}),
its cartesian 1-morphisms are those 1-morphisms in $\Phi^\nonu(\cE)$ in which all constituent 1-morphisms in $\cE$ are locally cartesian (i.e.\! the first 1-morphism is an equivalence), and by \Cref{t23}, its 2-cocartesian morphisms are those in which each constituent $\eta$ (as in \Cref{e100}) is cocartesian (i.e.\! the first 2-morphism $\eta_0$ is an equivalence).
\end{observation}

\begin{observation}
\label{t100}
There exists a factorization
\[ \begin{tikzcd}[row sep=0.3cm]
\Phi^\nonu(\cE)^\oi
\arrow{dd}
\arrow[dashed]{rrrr}
\arrow[hook]{rd}
&[-0.5cm]
&
&
&[-0.5cm]
\cE^\oi
\arrow{dd}
\\
&
\Freeact_\mu(\Sq(\cE)^\oi)
\arrow{r}{\mu \longra t}
\arrow{dd}
&
\Freeact_t(\Sq(\cE)^\oi)
\simeq
\Ar^\act(\bDelta^\op)
\underset{\bDelta^\op}{\times}
\Sq(\cE)^\oi
\arrow{r}{\pr}
\arrow[xshift=-1.4cm]{dd}
&
\Sq(\cE)^\oi
\arrow[hookleftarrow]{ru}
\arrow{dd}
\\
\rlax^\nonu(\cC)^\oi
\arrow[hook]{rd}
&
&
&
&
\cC^\oi
\\
&
\Freeact_\mu(\Sq(\cC)^\oi)
\arrow{r}[swap]{\mu \longra t}
&
\Freeact_t(\Sq(\cC)^\oi)
\simeq
\Ar^\act(\bDelta^\op)
\underset{\bDelta^\op}{\times}
\Sq(\cC)^\oi
\arrow{r}[swap]{\pr}
&
\Sq(\cC)^\oi
\arrow[hookleftarrow]{ru}
\end{tikzcd} \]
in which the leftmost vertical functor comes from \Cref{t24} and the two diagonal inclusions on the right arise from \Cref{t28}. In other words, we have a commutative square
\begin{equation}
\label{e98}
\begin{tikzcd}
\Phi^\nonu(\cE)^\oi
\arrow{r}
\arrow{d}
&
\cE^\oi
\arrow{d}
\\
\rlax^\nonu(\cC)^\oi
\arrow{r}
&
\cC^\oi
\end{tikzcd}
~.
\end{equation}
Moreover, the commutative square \Cref{e98} in $\Cat$ lifts to $\Cat_{/\bDelta^{\op}}$, and in fact lies in the subcategory $2\Cat \subset \coCart_{\bDelta^\op} \subset \Cat_{/\bDelta^{\op}}$.
\end{observation}

\begin{observation}
\label{t27}
The functor $\Phi^\nonu$ factors further through $2\Cart_{\rlax(\cC)} \subseteq 2\Cart_{\rlax^\nonu(\cC)}$, a subcategory via un/straightening.
In particular, we obtain a functor
\[
\iota_1 \loc.2\Cart_\cC
\xlongra{\Phi}
2\Cart_{\rlax(\cC)}
~.
\]
Indeed, because the construction of $\Phi^\nonu$ commutes with pullbacks in the variable $\cC \in \iota_1 2\Cat^\op$, it suffices to check the case that $\cC = \pt$. 
And in this case we have $\Phi^\nonu(\cE) \simeq \cE \times \rlax^\nonu(\pt)$, where the projection $\Phi^\nonu(\cE) \ra \cE$ is given by \Cref{e98}.

\end{observation}

\begin{proof}[Proof of \Cref{thm.iota.one.version.of.loctwoCart.equivalent.to.twoCart.over.rlaxification}]
The second statement follows from the first using \Cref{obs.rlax.commutes.with.oneop} and un/straightening.

We will show that $\Phi$ is inverse to the functor $\theta^\ast$.
We first verify the equivalence $\theta^* \Phi \simeq \id_{\iota_1 \loc.2\Cart_\cC}$. 
For this, given any $\cE \in \iota_1 \loc.2\Cart_\cC$, 
we have the commutative diagram \Cref{e98}.
Now, $\theta^{\oi}$ is the composite $\cC^{\oi} \xra{( \theta^{\nonu})^{\oi}} \rlax^\nonu(\cC)^\oi \to \rlax(\cC)^\oi$, and by definition of $\Phi(\cE)^{\oi} \da \rlax(\cC)^{\oi}$, its base change along $\rlax^{\nonu}(\cC)^{\oi} \to \rlax(\cC)$ is $\Phi^{\nonu}(\cE)^{\oi} \da \rlax^{\nonu}(\cC)^{\oi}$. 
Thus, we have a functor 
\[
(\theta^{\oi})^\ast \Phi(\cE)^{\oi} 
\simeq
((\theta^{\nonu})^{\oi})^\ast \Phi^{\nonu}(\cE)^{\oi}  
\longrightarrow
\cE^{\oi}
\]
over $\cC^{\oi}$.
By the description of cocartesian morphisms in $(\theta^{\oi})^\ast \Phi(\cE)^{\oi} $ over $\bDelta^{\op}$ as in \Cref{t25}, this functor preserves cocartesian morphisms over $\bDelta^{\op}$.
In other words, it defines a morphism
\begin{equation}
\label{e28}
\theta^\ast \Phi(\cE)
\longrightarrow 
\cE
\end{equation}
in $2\Cat$.
By construction, this functor is an equivalence between spaces of objects.
Because $\cE \to \cC$ is a locally 2-cartesian fibration, each 1-morphism in $\cE$ uniquely factors as a 1-morphism in a fiber over $\cC$ followed by a locally cartesian 1-morphism over $\cC$, and similarly for 2-morphisms in $\cE$.\footnote{The $(\infty,2)$-category $\theta^* \Phi(\cE)$ can be described as follows: its objects are those of $\cE$, a 1-morphism from $e_0$ to $e_1$ is a pair of 1-morphisms $e_0 \ra e \ra e_1$ in $\cE$ such that $e_0 \ra e$ is sent to an equivalence in $\cC$ and $e \ra e_1$ is locally cartesian over $\cC$, and a 2-morphism from $e_0 \ra e' \ra e_1$ to $e_0 \ra e \ra e_1$ is given by a diagram
\[ \begin{tikzcd}[ampersand replacement=\&]
e_0
\arrow{r}[swap, yshift=-0.25cm]{\rotatebox{45}{$\xLeftarrow[\eta_0]{}$}}
\arrow{d}[sloped, anchor=north]{=}
\&
e'
\arrow{r}[swap, yshift=-0.25cm]{\rotatebox{45}{$\xLeftarrow[\eta_1]{}$}}
\arrow{d}
\&
e_1
\arrow{d}[sloped, anchor=south]{=}
\\
e_0
\arrow{r}
\&
e
\arrow{r}
\&
e_1
\end{tikzcd} \]
in $\cE$ in which the 2-morphism $\eta_0$ is sent to an equivalence in $\cC$ and the 2-morphism $\eta_1$ is cocartesian over $\cC$. In these terms, the corresponding functor $\theta^* \Phi(\cE) \ra \cE$ is given by composition of 1- and 2-morphisms (and this is clearly both surjective and fully faithful).}
Therefore, the functor \Cref{e28} also induces an equivalence between spaces of 1- and 2-morphisms.
Hence, it is an equivalence.
With the evident functoriality of \Cref{e28}, this supplies the natural equivalence $\theta^\ast \Phi \simeq \id_{\iota_1 \loc.2\Cart_\cC}$.

We now verify the equivalence $\Phi \theta^* \simeq \id_{\iota_1 2\Cart_{\rlax(\cC)}}$. For this, fix an object $\cE \in \iota_1 2\Cart_{\rlax(\cC)}$, and let us write $\cE^\nonu \in \iota_1 2\Cart_{\rlax^\nonu(\cC)}$ for its pullback. Observe the diagram
\[ \begin{tikzcd}
&
&
(\cE^\nonu)^\oi
\arrow{d}
\\
\rlax^\nonu(\cC)^\oi
\arrow{r}
\arrow[bend right]{rr}[yshift=0.2cm]{\Uparrow}[swap]{\id_{\rlax^\nonu(\cC)^\oi}}
&
\cC^\oi
\arrow{r}
&
\rlax^\nonu(\cC)^\oi
\end{tikzcd} \]
in $\Cat$, in which the left horizontal functor lies in $\coCart_{\bDelta^\op}$ and corresponds to $\id_\cC$ and the 2-morphism is the unit of an adjunction in $\Cat_{\cocart/\bDelta^\op}$.\footnote{The counit of this adjunction is the equivalence between the composite $\cC^\oi \ra \rlax^\nonu(\cC)^\oi \ra \cC^\oi$ and $\id_{\cC^\oi}$. To see that this indeed gives an adjunction, it suffices to verify that it gives an adjunction fiberwise over $\bDelta^\op$. Thereafter, the Segal condition reduces the verification to the fibers over the objects $[0]^\circ,[1]^\circ \in \bDelta^\op$, in which cases the assertion is evident.} Since $(\cE^\nonu)^\oi \ra \rlax^\nonu(\cC)^\oi$ is a cocartesian fibration (recall \Cref{obs.check.strict.homwise.cocart.fibn.on.oi}), we obtain a commutative square
\[ \begin{tikzcd}
\cE^\nonu
\arrow{r}
\arrow{d}
&
\theta^* \cE
\arrow{d}
\\
\rlax^\nonu(\cC)
\arrow{r}
&
\cC
\end{tikzcd} \]
in $2\Cat$. Applying $\Sq(-)^\oi$, the composite $\cC^\oi \ra \rlax^\nonu(\cC)^\oi \ra \Sq(\rlax^\nonu(\cC))^\oi$ gives rise to a morphism
\begin{equation}
\label{equivce.between.SqEnonuoi.timesoverSqrlaxnonuCoi.Coi.and.SqiotastarEoi.timesoverSqCoi.Coi}
\Sq(\cE^\nonu)^\oi
\underset{\Sq(\rlax^\nonu(\cC))^\oi}{\times}
\cC^\oi
\longra
\Sq(\theta^* \cE)^\oi
\underset{\Sq(\cC)^\oi}{\times}
\cC^\oi
\end{equation}
in $\coCart_{\bDelta^\op}$,\footnote{This follows from the fact that for any locally 2-cartesian fibration $\cE' \xra{\pi'} \cC'$ the corresponding functor
\[
\Sq(\cE')^\oi \underset{\Sq(\cC')^\oi}{\times} \cC'^\oi
\longra
\cC'^\oi
\]
is a cocartesian fibration (as in \Cref{obs.check.strict.homwise.cocart.fibn.on.oi}).} which is an equivalence because it is so on fibers over each $[n]^\circ \in \bDelta^\op$. Next, we observe the factorization
\[
\begin{tikzcd}
\Phi^\nonu(\theta^* \cE)^\oi
\arrow[hook]{r}
\arrow[dashed]{dd}[swap]{(\alpha^\nonu)^{\oi}}
&
\Freeact_\mu \left( \Sq(\theta^* \cE)^\oi \underset{\Sq(\cC)^\oi}{\times} \cC^\oi \right)
\arrow[leftarrow]{r}[swap]{\sim}{\Cref{equivce.between.SqEnonuoi.timesoverSqrlaxnonuCoi.Coi.and.SqiotastarEoi.timesoverSqCoi.Coi}}
&
\Freeact_\mu \left( \Sq(\cE^\nonu)^\oi \underset{\Sq(\rlax^\nonu(\cC))^\oi}{\times} \cC^\oi \right)
\arrow{d}
\\
&
&
\Freeact_\mu ( \Sq(\cE^\nonu)^\oi )
\arrow{d}
\\
(\cE^\nonu)^\oi
\arrow[hook]{rr}
&
&
\Sq(\cE^\nonu)^\oi
\end{tikzcd} \]in $\Cat_{\cocart/\bDelta^\op}$, in which the lower right vertical functor is the composite $\pr \circ (\mu\to t)$ as in \Cref{t100}.
Explicitly, the factorization is given by the functor
\begin{equation}
\label{e32}
\begin{tikzcd}[row sep=0cm]
\Phi^\nonu(\theta^* \cE)^\oi 
\arrow{r}{(\alpha^{\nonu})^{\oi}}
&
\Sq(\cE^\nonu)^\oi
\\
\rotatebox{90}{$\in$}
&
\rotatebox{90}{$\in$}
\\
(I \xlongra{\sigma} K \xlongra{e_\bullet} \theta^\ast \cE)
\arrow[maps to]{r}
&
(I  \xra{ \pr^{\oi}( e_\bullet) \circ \sigma } \cE )
\end{tikzcd}
~,
\end{equation}
where $(\theta^\ast \cE)^{\oi} \xra{\pr^{\oi}} \cE^{\oi}$ is the canonical projection over $\bDelta^{\op}$.\footnote{Note that this factorization indeed exists because by definition of morphisms in $\Phi^\nonu(\theta^* \cE)^\oi$, the composite functor $\Phi^\nonu(\theta^* \cE)^\oi \to \Sq(\cE^\nonu)^\oi$ carries 1-morphisms to 1-morphisms given by functors $S_{1,n} \to \cE^\nonu$ that carry vertical 1-morphisms to equivalences (see \Cref{t28}).}

Using the description of cocartesian morphisms in $\Phi^{\nonu}(\theta^\ast \cE)^{\oi}$ over $\bDelta^{\op}$ from \Cref{t25}, the morphism $(\alpha^{\nonu})^{\oi}$ lies in the subcategory 
$\coCart_{\bDelta^\op} \subset \Cat_{\cocart/\bDelta^\op}$.
Hence, we obtain a functor 
\[
\Phi^\nonu(\theta^* \cE)
\xra{\alpha^\nonu}
\cE^\nonu
\]
in $\iota_1 2\Cat_{/\rlax^\nonu(\cC)}$.\footnote{Informally, the corresponding functor $\Phi^\nonu (\theta^* \cE) \ra \cE^\nonu$ is given by composing 1- and 2-morphisms in $\cE^\nonu$.}
Using the description of cartesian 1-morphisms and cocartesian 2-morphisms in $\Phi^{\nonu}(\theta^\ast \cE)$ over $\rlax^{\nonu}(\cC)$ as in \Cref{t26}, we see that the functor $\alpha^{\nonu}$ preserves cartesian 1-morphisms and cocartesian 2-morphisms.
Hence, it also defines a natural morphism 
\[
\Phi(\theta^* \cE)
\xlongra{\alpha}
\cE
\]
in the full subcategory $\iota_1 2\Cart_{\rlax(\cC)} \subseteq \iota_1 2\Cart_{\rlax^\nonu(\cC)}$.
This functor $\alpha$ is evidently functorial in $\cE \in \iota_1 2\Cart_{\rlax(\cC)}$; that is, $\alpha$ defines a natural transformation $\Phi \theta^* \xra{\alpha} \id_{\iota_1 2\Cart_{\rlax(\cC)}}$.

It remains to show that $\alpha$ is an equivalence.  
By \Cref{obs.two.cart.fibns}\Cref{obs.strict.maps.among.locallyfibns.are.equivalences.if.fiberwise.so}, $\alpha$ is an equivalence provided it restricts as an equivalence between fibers over $\rlax(\cC)$.  
Because the right-lax functor $\cC \overset{\theta}\laxra \rlax(\cC)$ induces an equivalence between spaces of objects, it suffices to show that $\theta^\ast \alpha$ is an equivalence.
This follows from the above verification that $\theta^* \Phi \simeq \id_{\iota_1 \loc.2\Cart_\cC}$.
\end{proof}

\subsection{Cartesian yoga}
\label{subsection.cartesian.yoga}

In this subsection, we establish two enhancements of un/straightening (Theorems \ref{thm.switching.yoga} \and \ref{thm.unstraightening.yoga}), as well as an enhancement of lax un/straightening to an equivalence of $(\infty,2)$-categories (\Cref{t11}). We begin by stating the main results, and then prove them in turn at the end of this subsection based on supporting lemmas.

\begin{theorem}\label{thm.switching.yoga}
Let $\cC,\cD \in \iota_1 2\Cat$ be $(\infty,2)$-categories. Then, there is a natural equivalence
\[
\hom_{\iota_1 2\Cat_\rlax} ( \cC , 2\Cat_{\loc.2\cocart/\cD} )
\simeq
\hom_{\iota_1 2\Cat_\llax} ( \cD , 2\Cat_{\loc.2\cart/\cC^{1\op}} )
\]
in $\Spaces$.\footnote{Note that this equivalence does not generally extend to one between hom-$\infty$-categories.}
This equivalence is contravariantly functorial in both variables,\footnote{In particular, it restricts to an equivalence $\hom_{\iota_1 2\Cat_\rlax} ( \cC , 2\Cat_{\loc.1\cocart/\cD} ) \simeq \hom_{\iota_1 2\Cat_\llax} ( \cD , 2\Cat_{\loc.1\cart/\cC^{1\op}} )$.}
and which is the identity endofunctor of $2\Cat$ in the case that $\cC = \cD = \pt$.
Moreover, it restricts to equivalences
\begin{align*}
\hom_{\iota_1 2\Cat} ( \cC , 2\Cat_{\loc.2\cocart/\cD} )
& \simeq
\hom_{\iota_1 2\Cat_\llax} ( \cD , 2\Cat_{2\cart/\cC^{1\op}} )
~,
\\
\hom_{\iota_1 2\Cat_\rlax} ( \cC , 2\Cat_{2\cocart/\cD} )
& \simeq
\hom_{\iota_1 2\Cat} ( \cD , 2\Cat_{\loc.2\cart/\cC^{1\op}} )
~,
\qquad
\text{and}
\\
\hom_{\iota_1 2\Cat_\rlax} ( \cC , \loc.2\coCart_\cD )
& \simeq
\hom_{\iota_1 2\Cat_\llax} ( \cD , \loc.2\Cart_{\cC^{1\op}} )
~.
\end{align*}
\end{theorem}

\begin{observation}
\label{obs.pullback.along.rlax.fctrs.as.a.twofunctor}
Given a right-lax functor $\cC \overset{F}{\laxra} \cD$ between $(\infty,2)$-categories, \Cref{thm.switching.yoga} gives a functor
\[
2\Cat_{\loc.2\cart/\cD}
\xlongra{F^*}
2\Cat_{\loc.2\cart/\cC}
~,
\]
which restricts to a functor
\[
\loc.2\Cart_\cD
\xlongra{F^*}
\loc.2\Cart_\cC
~:
\]
namely, for any $(\infty,2)$-category $\cX \in \iota_1 2\Cat$, we obtain a natural morphism
\begin{align*}
\hom_{\iota_1 2\Cat} (\cX , 2\Cat_{\loc.2\cart/\cD} )
& \simeq
\hom_{\iota_1 2\Cat_\rlax} ( \cD^{1\op} , 2\Cat_{2\cocart / \cX} )
\\
& \longra
\hom_{\iota_1 2\Cat_\rlax} ( \cC^{1\op} , 2\Cat_{2\cocart / \cX} )
\\
& \simeq
\hom_{\iota_1 2\Cat} (\cX , 2\Cat_{\loc.2\cart/\cC} )
\end{align*}
in $\Spaces$. 
Evidently, this is contravariantly functorial in $\cC \in \iota_1 2\Cat_{\rlax}$.
\end{observation}

\begin{theorem}
\label{t11}
For any $(\infty,2)$-category $\cC \in \iota_1 2\Cat$, pullback along the universal right-lax functor $\cC \laxra \rlax(\cC)$ (using \Cref{obs.pullback.along.rlax.fctrs.as.a.twofunctor}) defines an equivalence
\[
2\Cat_{2\cart/\rlax(\cC)}
\xlongra{\sim}
2\Cat_{\loc.2\cart/\cC}
\]
in $2\Cat$. This equivalence is contravariantly functorial in $\cC \in \iota_1 2\Cat$ via pullback. 
In particular, it restricts to an equivalence
\[
2\Cart_{\rlax(\cC)}
\xlongra{\sim}
\loc.2\Cart_\cC
~.\footnote{Inspecting the proof, it is obvious that on $\iota_0$ this equivalence coincides with that of \Cref{thm.iota.one.version.of.loctwoCart.equivalent.to.twoCart.over.rlaxification}. In fact this is also true on $\iota_1$, but we do not use that here.}
\]
\end{theorem}

\begin{proof}
For any $(\infty,2)$-category $\cX \in \iota_1 2\Cat$, we have the sequence of equivalences
\begin{align}
\label{string.of.equivalences.proving.pullback.along.univ.rlax.fctr.is.equivce.first}
\hom_{\iota_1 2\Cat} ( \cX , 2\Cat_{2\cart/\rlax(\cC)} )
& \simeq
\hom_{\iota_1 2\Cat} ( \rlax(\cC)^{1\op} , 2\Cat_{2\cocart/\cX} )
\\
\label{string.of.equivalences.proving.pullback.along.univ.rlax.fctr.is.equivce.second}
& \simeq
\hom_{\iota_1 2\Cat} ( \rlax(\cC^{1\op}) , 2\Cat_{2\cocart/\cX} )
\\
\nonumber
& \simeq
\hom_{\iota_1 2\Cat_\rlax} ( \cC^{1\op} , 2\Cat_{2\cocart/\cX} )
\\
\label{string.of.equivalences.proving.pullback.along.univ.rlax.fctr.is.equivce.third}
& \simeq
\hom_{\iota_1 2\Cat}\left(
\cX 
,
2\Cat_{\loc.2\cart/\cC}
\right)
~,
\end{align}
in which equivalences \Cref{string.of.equivalences.proving.pullback.along.univ.rlax.fctr.is.equivce.first} \and \Cref{string.of.equivalences.proving.pullback.along.univ.rlax.fctr.is.equivce.third} follow from \Cref{thm.switching.yoga} and equivalence \Cref{string.of.equivalences.proving.pullback.along.univ.rlax.fctr.is.equivce.second} follows from \Cref{obs.rlax.commutes.with.oneop}.
\end{proof}

\begin{theorem}
\label{thm.unstraightening.yoga}
Let $\cC , \cD \in \iota_1 2\Cat$ be $(\infty,2)$-categories. Then, unstraightening gives a monomorphism
\[
\hom_{\iota_1 2\Cat_\llax} (\cC , 2\Cat_{\loc.2\cocart/\cD})
\longhookra
\iota_0 \loc.2\coCart_{\cC \times \cD}
\]
in $\Spaces$, whose image consists of those locally 2-cocartesian fibrations $(\cE \da (\cC \times \cD)) \in \iota_0 \loc.2\coCart_{\cC \times \cD}$ that satisfy the following condition.\footnote{The further subspace $\hom_{\iota_1 2\Cat_\llax}(\cC,2\Cat_{2\cocart/\cD}) \subseteq \hom_{\iota_1 2\Cat_\llax}(\cC , 2\Cat_{\loc.2\cocart/\cD})$ corresponds to the additional condition that for every object $c \in \cC$ the functor $\cE_c \ra \cD$ is a 2-cocartesian fibration, by the naturality of un/straightening.}
\begin{itemize}

\item[$(\ast$)]

For any pair of 1-morphisms $c \ra c'$ in $\cC$ and $d \ra d'$ in $\cD$, the pullback of $\cE \da (\cC \times \cD)$ along the functor $[2] \ra \cC \times \cD$ selecting the commutative triangle
\[ \begin{tikzcd}
(c,d)
\arrow{r}
\arrow{rd}
&
(c',d)
\arrow{d}
\\
&
(c',d')
\end{tikzcd} \]
is a 2-cocartesian fibration.\footnote{Recall \Cref{rmk.rlax.nat.trans.has.lax.squares}.}

\end{itemize}
Moreover, the further subspace
\[
\hom_{\iota_1 2\Cat} (\cC , 2\Cat_{\loc.2\cocart/\cD})
\subseteq
\hom_{\iota_1 2\Cat_\llax} (\cC , 2\Cat_{\loc.2\cocart/\cD})
\]
corresponds to the additional condition that for every object $d \in \cD$ the functor $\cE_d \ra \cC$ is a 2-cocartesian fibration.
\end{theorem}

\begin{observation}
\label{t12}
Let $\cC , \cD \in 2\Cat$, and recall the $(\infty,2)$-category $\Fun_{\rlax}^{\rlax}(\cC,\cD) \in 2\Cat$ from \Cref{d3}.
Then, for any fixed 2-cartesian fibration $\cE \da \cD$, pullback defines a functor
\begin{equation}
\label{e16}
\begin{tikzcd}[row sep=0cm, column sep=1.5cm]
\Fun^{\rlax}_{\rlax}(\cC,\cD)^{1\op}
\arrow{r}{(-)^* \cE}
&
2\Cat_{2\loc.\cart/\cC}
\\
\rotatebox{90}{$\in$}
&
\rotatebox{90}{$\in$}
\\
F
\arrow[maps to]{r}
&
(F^* \cE \da \cC)
\end{tikzcd}
~.
\end{equation}
Indeed, for each $([i],[j]) \in \bDelta \times \bDelta$, pullback of $(\cE \da \cD)$ defines a map
\[
\hom_{\iota_1 2\Cat_{\rlax} }
\left(
\cC \times \theta([i],[j]) 
,
\cD
\right)
\longrightarrow
\iota_0
2\Cat_{
2 \loc.\cart / \cC \times \theta([i],[j])
}
\]
in $\Spaces$; applying ($1\& 2 \op$ of) \Cref{thm.unstraightening.yoga}, this restricts to a presentation of the desired map.

The functor \Cref{e16} evidently has the following functorialities.
\begin{enumerate}

\item
\label{t12.part.1}
For any (strict) functor $\cD' \xra{G} \cD$, there is a canonical commutative triangle
\[ \begin{tikzcd}
\Fun^\rlax_\rlax(\cC,\cD')^{1\op}
\arrow{rr}{(-)^*(G^* \cE)}
\arrow{rd}[sloped, swap]{G \circ -}
&
&
2\Cat_{2\loc.\cart/\cC}
\\
&
\Fun^\rlax_\rlax(\cC,\cD)^{1\op}
\arrow{ru}[sloped, swap]{(-)^* \cE}
\end{tikzcd} \]
in $2\Cat$.

\item
\label{t12.part.2}
For any right-lax functor $\cC' \overset{H}{\laxra} \cC$, there is a canonical commutative triangle
\[ \begin{tikzcd}
\Fun^\rlax_\rlax(\cC,\cD)^{1\op}
\arrow{rr}{(-)^* \cE}
\arrow{rd}[sloped, swap]{- \circ H}
&
&
2\Cat_{2\loc.\cart/\cC'}
\\
&
\Fun^\rlax_\rlax(\cC',\cD)^{1\op}
\arrow{ru}[sloped, swap]{(-)^* \cE}
\end{tikzcd} \]
in $2\Cat$.

\end{enumerate}
\end{observation}

The remainder of this subsection is dedicated to the proofs of Theorems \ref{thm.switching.yoga} \and \ref{thm.unstraightening.yoga}, which rely on some preliminary results.

\begin{definition}
\label{defn.smushing}
Given a locally 2-co/cartesian fibration $\cE \xra{\pi} \cC$, we say that a functor $\cE \ra \cD$ \bit{smushes} $\pi$ if it carries all $\pi$-co/cartesian 1- and 2-morphisms to equivalences.
\end{definition}

\begin{lemma}\label{lemma.for.switching.yoga}
Suppose we are given a span $\cC \xla{F} \cE \xra{G} \cD$ in $2\Cat$ such that $F$ is a locally 2-cartesian fibration that $G$ smushes. Then, the following conditions are equivalent.
\begin{enumerate}

\item\label{condn.G.is.loc.two.cocart.and.F.smushes.it}

The functor $G$ is a (locally) 2-cocartesian fibration that $F$ smushes.

\item\label{condns.fibers.are.two.cocart.and.cart.mdrmy.preserves.cart.twomors}

The following two conditions are satisfied.
\begin{enumerate}[label=(\roman*)]

\item\label{condn.fibers.are.two.cocart}

For every object $c \in \cC$, the functor $\cE_c \ra \cD$ is a (resp.\! locally) 2-cocartesian fibration.

\item\label{condn.cart.mdrmy.preserves.cart.twomors}

For every 1-morphism $c_1 \ra c_2$ in $\cC$, the corresponding cartesian monodromy functor $\cE_{c_1} \la \cE_{c_2}$ (which lies over $\cD$ by the assumption that $G$ smushes $F$) lies in $2\Cat_{\loc.2\cocart/\cD} \subseteq 2\Cat_{/\cD}$ (i.e.\! it preserves cartesian 2-morphisms over $\cD$).

\end{enumerate}

\end{enumerate}
\end{lemma}

\begin{proof}
We will prove the version involving the two instances of the word ``locally''; in particular, we will explicitly identify the locally cocartesian 1-morphisms over $\cD$. Using this, the version without the word ``locally'' follows from \Cref{lemma.locallytwocart.is.twocart.if.compose}.

We begin by establishing the result in the case that $\cD \in \Cat \subset 2\Cat$ is an $(\infty,1)$-category. Since both conditions are compatible with pullback in the variable $\cD$, it suffices to consider the case that $\cD = [1]$.

Now, suppose first that the functor $\cE \da (\cC \times [1])$ satisfies condition \Cref{condn.G.is.loc.two.cocart.and.F.smushes.it}. Given an object $e_0 \in \cE_{(c,0)}$, observe that the cocartesian 1-morphism $e_0 \ra e_1$ in $\cE \da [1]$ lifting $0 \ra 1$ canonically lifts to a cocartesian 1-morphism in $\cE_c \da [1]$. Thus, condition \Cref{condn.G.is.loc.two.cocart.and.F.smushes.it} implies condition \Cref{condns.fibers.are.two.cocart.and.cart.mdrmy.preserves.cart.twomors} (assuming that $\cD \in \Cat$).

In the other direction, suppose that the functor $\cE \da (\cC \times [1])$ satisfies condition \Cref{condns.fibers.are.two.cocart.and.cart.mdrmy.preserves.cart.twomors}. By \Cref{obs.two.cart.fibns}\Cref{obs.two.cart.fibns.over.a.one.cat} it suffices to show that a cocartesian 1-morphism $e \ra e'$ in $\cE_c \da [1]$ lifting $0 \ra 1$ is also a cocartesian 1-morphism in $\cE \da [1]$. 
To see this, recall that $\cC \xla{F} \cE$ is a locally 2-cartesian fibration, and observe that since $G$ smushes $F$, the functor $\cE \la \cE_1$ is a morphism in $\loc.2\Cart_\cC$. Therefore, for any $e_1 \in \cE_1$ we obtain a commutative triangle
\begin{equation}
\label{comm.triangle.defining.morphism.of.cocart.fibns.over.homCFeFeone}
\begin{tikzcd}
\hom_\cE(e,e_1)
\arrow[leftarrow]{rr}
\arrow{rd}
&
&
\hom_{\cE_1} ( e' , e_1)
\arrow{ld}
\\
&
\hom_\cC(F(e),F(e_1))
\end{tikzcd}
\end{equation}
(recall that $F(e) = c$) that defines a morphism in $\coCart_{\hom_\cC(F(e),F(e_1))}$. Hence, to show that the horizontal functor in diagram \Cref{comm.triangle.defining.morphism.of.cocart.fibns.over.homCFeFeone} is an equivalence, it suffices to verify that it is an equivalence on fibers over an arbitrary object $\varphi \in \hom_\cC(F(e),F(e_1))$. Letting $e_1' \ra e_1$ be a cartesian 1-morphism in $\cE$ lifting the 1-morphism $F(e) \xra{\varphi} F(e_1)$ in $\cC$, we see that on fibers the horizontal functor in diagram \Cref{comm.triangle.defining.morphism.of.cocart.fibns.over.homCFeFeone} is the composite equivalence
\[
\hom_\cE^\varphi ( e , e_1 )
\simeq
\hom_{\cE_{F(e)}}^\varphi ( e , e_1' )
\simeq
\hom_{\cE_{(F(e),1)}} ( e' , e_1' )
\simeq
\hom_{\cE_1} ( e' , e_1 )
~.
\]
So indeed, condition \Cref{condns.fibers.are.two.cocart.and.cart.mdrmy.preserves.cart.twomors} implies 
condition \Cref{condn.G.is.loc.two.cocart.and.F.smushes.it} (assuming that $\cD \in \Cat$).

We now consider the case that $\cD \in 2\Cat$ is an arbitrary $(\infty,2)$-category. Note that conditions \Cref{condn.G.is.loc.two.cocart.and.F.smushes.it} and \Cref{condns.fibers.are.two.cocart.and.cart.mdrmy.preserves.cart.twomors} refer to both 1- and 2-morphisms, and the above special case establishes the equivalence of the conditions on 1-morphisms. Thus,
the relevant functors to $\cD$ are locally 2-cocartesian fibrations if and only if they are strict homwise cartesian fibrations.

Now, fix any objects $e,e' \in \cE$ and let us respectively write $c,c' \in \cC$ and $d,d' \in \cD$ for their images under $F$ and $G$ respectively. Then, we obtain a span
\[
\hom_\cC(c,c')
\xla{F_{e,e'}}
\hom_\cE(e,e')
\xra{G_{e,e'}}
\hom_\cD(d,d')
\]
in $\Cat$ in which $F_{e,e'}$ is a cocartesian fibration that $G_{e,e'}$ smushes. By ($(-)^{1 \& 2\op}$ applied to) the above special case, the functor $G_{e,e'}$ is a cartesian fibration that $F_{e,e'}$ smushes if and only if for every object $\varphi \in \hom_\cC(c,c')$ the functor $\hom^\varphi_\cE(e,e') \ra \hom_\cD(d,d')$ is a cartesian fibration. This latter condition holds if and only if for every object $e'' \in \cE_c$ the functor $\hom_{\cE_c} (e,e'') \ra \hom_\cD(G(e),G(e''))$ is a cartesian fibration: indeed, given an object $\varphi \in \hom_\cC(c,c')$, letting $e'' \ra e'$ be a locally cartesian 1-morphism lift in $\cE$ we obtain a commutative triangle
\[ \begin{tikzcd}
\hom^\varphi_\cE(e,e')
\arrow{rd}
&
&
\hom_{\cE_c}(e,e'')
\arrow{ll}[swap]{\sim}
\arrow{ld}
\\
&
\hom_\cD(d,d')
\end{tikzcd} \]
in $\Cat$ using the 2-cartesianness of $\cE$ over $\cC$, so that in particular one functor is a cartesian fibration if and only if the other is. Unwinding the definitions, we see that $\cE \xra{G} \cD$ is a strict homwise locally cartesian fibration if and only if the functors $\cE_c \ra \cD$ are such for all objects $c \in \cC$ and moreover condition \Cref{condns.fibers.are.two.cocart.and.cart.mdrmy.preserves.cart.twomors}\Cref{condn.cart.mdrmy.preserves.cart.twomors} is satisfied, which proves the claim.
\end{proof}

\begin{proof}[Proof of \Cref{thm.switching.yoga}]
Fix a right-lax functor $\cC \laxra 2\Cat_{\loc.2\cocart/\cD}$. By lax (cartesian) unstraightening, this is equivalent data to a morphism
\[ \begin{tikzcd}
\cE
\arrow{rr}{(F,G)}
\arrow{rd}[sloped, swap]{F}
&
&
\cC^{1\op} \times \cD
\arrow{ld}[sloped, swap]{\pr}
\\
&
\cC^{1\op}
\end{tikzcd} \]
in $\loc.2\Cart_{\cC^{1\op}}$ such that the upper horizontal functor satisfies condition\Cref{condns.fibers.are.two.cocart.and.cart.mdrmy.preserves.cart.twomors} of \Cref{lemma.for.switching.yoga}. Therefore, by \Cref{lemma.for.switching.yoga}, $G$ is a locally 2-cocartesian fibration that $F$ smushes. By lax (cocartesian) straightening and ($(-)^{1\&2\op}$ applied to) \Cref{lemma.for.switching.yoga},\footnote{That is, we interchange the words ``cartesian'' and ``cocartesian'' in the statement of \Cref{lemma.for.switching.yoga}.} this is equivalent data to a left-lax functor $\cD \laxra 2\Cat_{\loc.2\cart/\cC^{1\op}}$. The functoriality in $\cC,\cD \in \iota_1 2\Cat^\op$ is clear. Moreover, the first two specializations are evident from the construction. The third specialization is implemented by imposing the following condition: for any 1-morphisms $c \ra c'$ in $\cC^{1\op}$ and $d \ra d'$ in $\cD$ as well as a lifted commutative square
\[
\begin{tikzcd}
e
\arrow{r}{\alpha}
\arrow{d}[swap]{\beta}
&
e'
\arrow{d}{\gamma}
\\
e''
\arrow{r}[swap]{\delta}
&
e'''
\end{tikzcd}
\longmapsto
\begin{tikzcd}
(c,d)
\arrow{r}
\arrow{d}
&
(c',d)
\arrow{d}
\\
(c,d')
\arrow{r}
&
(c',d')
\end{tikzcd} \]
in $\cE$, such that $\alpha$ is locally cartesian and $\gamma$ is locally cocartesian, then $\beta$ is locally cocartesian if and only if $\delta$ is locally cartesian.\footnote{Here, we mean e.g.\! that $\alpha$ is locally cartesian with respect to the composite $\cE \ra \cC^{1\op} \times \cD \ra \cC^{1\op}$, or equivalently with respect to the functor $\cE_d \ra \cC^{1\op}$.}
\end{proof}

\begin{lemma}\label{lemma.for.unstraightening.yoga}
Suppose we are given a span $\cC \xla{F} \cE \xra{G} \cD$ in $2\Cat$. Then, the following conditions are equivalent.
\begin{enumerate}

\item\label{condn.functor.to.product.is.loc.two.cocart}

The the functor $\cE \xra{(F,G)} \cC \times \cD$ is a locally 2-cocartesian fibration that satisfies condition $(\ast)$ of \Cref{thm.unstraightening.yoga}.

\item\label{condns.in.cocart.cocart}

The following three conditions are satisfied.
\begin{enumerate}[label=(\roman*)]

\item\label{condn.F.locally.twocart.and.G.smushes.F}

The functor $F$ is a locally 2-cocartesian fibration that $G$ smushes.

\item\label{condn.fibers.are.two.cocart.in.symmetric.case}

For every object $c \in \cC$, the functor $\cE_c \ra \cD$ is a locally 2-cocartesian fibration.

\item\label{condn.cocart.mdrmy.preserves.cart.twomors}

For every 1-morphism $c_1 \ra c_2$ in $\cC$, the corresponding cocartesian monodromy functor $\cE_{c_1} \ra \cE_{c_2}$ (which lies over $\cD$ by the assumption that $G$ smushes $F$) lies in $2\Cat_{\loc.2\cocart/\cD}$ (i.e.\! it preserves cartesian 2-morphisms over $\cD$).

\end{enumerate}

\end{enumerate}
\end{lemma}

\begin{proof}
Suppose that condition \Cref{condn.functor.to.product.is.loc.two.cocart} is satisfied.

First of all, clearly condition \Cref{condns.in.cocart.cocart}\Cref{condn.fibers.are.two.cocart.in.symmetric.case} is satisfied.

We now establish condition \Cref{condns.in.cocart.cocart}\Cref{condn.F.locally.twocart.and.G.smushes.F}. For this, fix any object $e \in \cE$, write $(c,d) := (F(e),G(e)) \in \cC \times \cD$ for its image, and fix a 1-morphism $c \xra{\varphi} c'$ in $\cC$. Let $e \ra e'$ be the locally cocartesian 1-morphism lift in $\cE$ of the 1-morphism $(c,d) \xra{(\varphi,\id_d)} (c',d)$ in $\cC \times \cD$. We claim that this is also a locally cocartesian 1-morphism lift of the 1-morphism $c \xra{\varphi} c'$ in $\cC$. To see this, fix an object $f \in \cE_{c'}$, and write $d' := G(f) \in \cD$ for its image. Then, we have a commutative triangle
\begin{equation}
\label{comm.triangle.in.Cat.defining.a.morphism.in.CarthomDdddoubleprime}
\begin{tikzcd}
\hom_{\cE_{c'}}(e',f)
\arrow{rr}
\arrow{rd}
&
&
\hom^\varphi_\cE(e,f)
\arrow{ld}
\\
&
\hom_\cD(d,d')
\end{tikzcd}
\end{equation}
in $\Cat$ defining a morphism in $\Cart_{\hom_\cD(d,d')}$ (using that $(F,G)$ is a locally 2-cocartesian fibration, so that its pullback along $[1] \times \cD \xra{\varphi \times \id_\cD} \cC \times \cD$ is as well). So to verify that the upper horizontal functor in diagram \Cref{comm.triangle.in.Cat.defining.a.morphism.in.CarthomDdddoubleprime} is an equivalence, it suffices to check that it is an equivalence on fibers over an arbitrary object $\psi \in \hom_\cD(d,d')$. For this, let $e' \ra e''$ be a locally cocartesian 1-morphism lift in $\cE$ of the 1-morphism $(c',d) \xra{(\id_{c'},\psi)} (c',d')$ in $\cC \times \cD$. Then, we have equivalences
\[
\hom^\psi_{\cE_{c'}}(e',f)
\simeq
\hom_{\cE_{(c',d')}}(e'',f)
\simeq
\hom^{(\varphi,\psi)}_\cE(e,f)
~,
\]
the latter by condition $(\ast)$ of \Cref{thm.unstraightening.yoga}. It follows that the functor $\cE \xra{F} \cC$ admits locally cocartesian 1-morphism lifts. To see that it is a homwise cartesian fibration, observe that for any pair of objects $e,e' \in \cE$, writing $(c,d),(c',d') \in \cC \times \cD$ for their images under $(F,G)$, in the composite
\[
\hom_\cE(e,e')
\longra
\hom_{\cC \times \cD}( (c,d) , (c',d') )
\simeq
\hom_\cC(c,c') \times \hom_\cD(d,d')
\longra
\hom_\cC(c,c')
\]
the first functor is a cartesian fibration by assumption and hence the composite is as well. From here, the fact that it is a strict homwise cartesian fibration follows from the fact that $(F,G)$ is. Moreover, it follows immediately from these considerations that $G$ smushes $F$.

We now establish condition \Cref{condns.in.cocart.cocart}\Cref{condn.cocart.mdrmy.preserves.cart.twomors}. Choose any 1-morphism $c \xra{\varphi} c'$ in $\cC$ and objects $e,f \in \cE_c$, and let $e \ra e'$ and $f \ra f'$ be locally cocartesian 1-morphism lifts in $\cE$ of $\varphi$. Then, the locally cocartesian monodromy functor $\cE_c \ra \cE_{e'}$ acts on hom-$\infty$-categories via the diagram
\begin{equation}
\label{zigzag.for.showing.preservation.of.cartesian.twomors}
\hom_{\cE_c}(e,f)
\longra
\hom_\cE^\varphi(e,f')
\xlongla{\sim}
\hom_{\cE_{c'}}(e',f')
\end{equation}
in $\Cat$, which lies over $\hom_\cD(G(e'),G(f'))$ because $G$ smushes $F$. Hence, condition \Cref{condns.in.cocart.cocart}\Cref{condn.cocart.mdrmy.preserves.cart.twomors} follows from the fact that the left functor in diagram \Cref{zigzag.for.showing.preservation.of.cartesian.twomors} preserves cartesian morphisms because $(F,G)$ is a strict homwise cartesian fibration.

Now, suppose that condition \Cref{condns.in.cocart.cocart} is satisfied.

Fix a 1-morphism $(c,d) \ra (c',d')$ in $\cC \times \cD$ and an object $e \in \cE_{(c,d)}$. Let $e \ra e'$ be a locally $F$-cocartesian 1-morphism lift of the 1-morphism $c \ra c'$ in $\cC$; this lies over the object $d \in \cD$ because $G$ smushes $F$ (by condition \Cref{condns.in.cocart.cocart}\Cref{condn.F.locally.twocart.and.G.smushes.F}). Then, let $e' \ra e''$ be a locally cocartesian 1-morphism lift of $d \ra d'$ with respect to the locally 2-cocartesian fibration $\cE_{c'} \ra \cD$ (using condition \Cref{condns.in.cocart.cocart}\Cref{condn.fibers.are.two.cocart.in.symmetric.case}). By conditions \Cref{condns.in.cocart.cocart}\Cref{condn.F.locally.twocart.and.G.smushes.F} and \Cref{condns.in.cocart.cocart}\Cref{condn.fibers.are.two.cocart.in.symmetric.case}, the composite $e \ra e' \ra e''$ is a locally $(F,G)$-cocartesian 1-morphism. 
Thus, locally $(F,G)$-cocartesian 1-morphism lifts exist, and moreover condition $(\ast)$ of \Cref{thm.unstraightening.yoga} is satisfied.

Now, choose any pair of objects $e,e' \in \cE$, and respectively write $(c,d),(c',d') \in \cC \times \cD$ for their images. Then, we have a span
\[
\hom_\cC(c,c')
\xla{F_{e,e'}}
\hom_\cE(e,e')
\xra{G_{e,e'}}
\hom_\cD(d,d')
\]
in $\Cat$ in which $F$ is a cartesian fibration that $G$ smushes. Now, choose any 1-morphism $c \xra{\varphi} c'$ in $\cC$, and write $e \ra e_1$ for its locally $F$-cocartesian 1-morphism lift in $\cE$. Then, we have an equivalence
\[
\hom^\varphi_\cE(e,e')
\simeq
\hom_{\cE_{c_1}}(e_1,e')
\]
in $\Cat$, which lies over $\hom_\cD(d,d')$ because $G$ smushes $F$. Thus, the functor $\hom^\varphi_\cE(e,e') \ra \hom_\cD(d,d')$ is a cartesian fibration. Moreover, for every morphism $\varphi_1 \ra \varphi_2$ in $\hom_\cC(c,c')$, the corresponding cartesian monodromy functor $\hom^{\varphi_1}_\cE(e,e') \la \hom^{\varphi_2}_\cE(e,e')$ defines a morphism in $\Cart_{\hom_\cD(d,d')}$: it lies over $\hom_\cD(d,d')$ since $G$ smushes $F$, and moreover preserves cartesian morphisms since $F$ is a strict homwise cartesian fibration. It follows that the functor
\[
\hom_\cE(e,e')
\longra
\hom_\cC(c,c')
\times
\hom_\cD(d,d')
\simeq
\hom_{\cC \times \cD} ( (c,d) , (c',d') )
\]
is a cartesian fibration, i.e.\! that $\cE \xra{(F,G)} \cC \times \cD$ is a homwise cartesian fibration. In particular, $\cE \da \cD$ is a homwise cartesian fibration.  
Explicitly, a 2-morphism $\alpha$ in $\cE$ is cartesian over $\cD$ if and only if it can be expressed as a composite
\begin{equation}
\label{e61}
\begin{tikzcd}
e
\arrow{r}{\varphi}
&
e_0
\arrow[bend left]{r}[pos=0.47]{}[swap, yshift=-0.5ex, xshift=0.4ex]{\Downarrow\alpha'}
\arrow[bend right]{r}[swap, pos=0.47]{}
&
e'
\end{tikzcd} 
\end{equation}
in which the 1-morphism $\varphi$ is locally cocartesian over $\cC$ and the 2-morphism $\alpha'$ is the image of a 2-morphism in $\cE_{F(e')}$ that is cartesian over $\cD$.

To see that $\cE \xra{(F,G)} \cC \times \cD$ is a strict homwise cartesian fibration, we observe that $\cE \xra{F} \cC$ is such, so that it suffices to show that $\cE \xra{G} \cD$ is such as well.
That is, we must show that the cartesian 2-morphisms in $\cE$ over $\cD$ are closed under precomposition and postcomposition by (arbitrary) 1-morphisms in $\cE$.
Closure under precomposition follows from the fact that each 1-morphism in $\cE$ factors as a 1-morphism in $\cE$ that is locally cocartesian over $\cC$ followed by a 1-morphism in $\cE$ that lies in a fiber over $\cC$, together with condition \Cref{condns.in.cocart.cocart}\Cref{condn.fibers.are.two.cocart.in.symmetric.case} (namely that for each $c\in \cC$, the functor $\cE_c \to \cD$ is a strict homwise cartesian fibration).
Using closure under precomposition, closure under postcomposition reduces to the case in which the 1-morphism $\varphi$ in diagram \Cref{e61} is an equivalence.
Let $\alpha$ be a 2-morphism in $\cE$ that is cartesian over $\cD$, and let $e' \xra{\psi} e''$ be a 1-morphism in $\cE$.
We must show that the 2-morphism $\psi \circ \alpha$ in $\cE$ is cartesian over $\cD$.
By condition \Cref{condns.in.cocart.cocart}\Cref{condn.fibers.are.two.cocart.in.symmetric.case}, the functor $\cE_{F(e')} \da \cD$ is a homwise cartesian fibration.
Therefore, the 2-morphism $\psi \circ \alpha$ is cartesian over $\cD$ in the case that $\psi$ is in the image of $\cE_{F(e')}$.
Since $\psi$ is a composite of a 1-morphism in $\cE$ that is locally cocartesian over $\cC$ followed by a 1-morphism in $\cE_{F(e'')}$, 
we can reduce to the case in which $\psi$ is locally cocartesian over $\cC$.
In this case, $\psi \circ \alpha$ is cartesian over $\cD$ by condition \Cref{condns.in.cocart.cocart}\Cref{condn.cocart.mdrmy.preserves.cart.twomors}.
\end{proof}

\begin{proof}[Proof of \Cref{thm.unstraightening.yoga}]
Fix a left-lax functor $\cC \laxra 2\Cat_{\loc.2\cocart/\cD}$. By lax unstraightening, this is equivalent data to a morphism
\[ \begin{tikzcd}
\cE
\arrow{rr}
\arrow{rd}
&
&
\cC \times \cD
\arrow{ld}
\\
&
\cC
\end{tikzcd} \]
in $\loc.2\coCart_\cC$ such that the upper horizontal functor satisfies condition \Cref{condns.in.cocart.cocart} of \Cref{lemma.for.unstraightening.yoga}. The first assertion now follows from \Cref{lemma.for.unstraightening.yoga}. For the second assertion, it suffices to observe that the functor $\cE \ra \cC$ is a 2-cocartesian fibration if and only if the functors $\cE_d \ra \cC$ are 2-cocartesian fibrations for all $d \in \cD$, which follows from \Cref{lemma.locallytwocart.is.twocart.if.compose} and the fact that $\cE \ra \cD$ smushes $\cE \ra \cC$.
\end{proof}

\subsection{Adjunctions}
\label{subsection.Adj}

In this subsection, we discuss adjunctions in $(\infty,2)$-categories (including in $2\Cat$). Our main results are Lemmas \ref{lemma.twocategorical.passing.to.adjoints.and.swapping.laxness} \and \ref{lemma.twocategorical.extn.to.an.adjn.in.twoCatonecartoverC}, which give parametrized versions of the mate correspondence; their proofs are adapted from \cite[Chapter 12, \S\S 3-4]{GR}.

\begin{local}
Throughout this subsection, we fix $(\infty,2)$-categories $\cC,\cD \in 2\Cat$.
\end{local}

\begin{definition}
\label{defn.ladjt}
A 1-morphism $c \xra{L} d$ in $\cC$ is a \bit{left adjoint} if there exist a 1-morphism $c \xla{R} d$ (a \bit{right adjoint}) and 2-morphisms $\id_c \xra{\eta} RL$ and $LR \xra{\varepsilon} \id_d$ (a \bit{unit} and \bit{counit}, respectively) such that the composite 2-morphisms
\begin{equation}
\label{two.morphisms.for.Zorro}
L
\simeq
L \id_c
\xra{\id_L \eta}
LRL
\xra{\varepsilon \id_L}
\id_d L
\simeq
L
\qquad
\text{and}
\qquad
R
\simeq
\id_c R
\xra{\eta \id_R}
RLR
\xra{\id_R \varepsilon}
R \id_d
\simeq
R
\end{equation}
are homotopic to identity 2-morphisms. We write $\Adj \in \iota_1 2\Cat$ for the object corepresenting (the space of) left adjoints.\footnote{Indeed, assigning to an $(\infty,2)$-category the space of left adjoints in it assembles as a functor $\iota_1 2\Cat \to \Spaces$ (a subfunctor of that corepresented by $[1]$);
this functor evidently preserves limits and filtered colimits.
Hence, the presentability of $\iota_1 2\Cat$ implies this functor is indeed corepresented by an $(\infty,2)$-category.
(For an explicit description thereof, see \cite{RV-adjns}.)} We generally consider $\Adj \in 2\Cat_{[1]/}$ via the epimorphism $[1] \ra \Adj$ corepresenting the universal left adjoint. Dually, we say that a 1-morphism $c \xla{R} d$ in $\cC$ is a \bit{right adjoint} if there exist $(L,\eta,\varepsilon)$ as above.
\end{definition}

\begin{observation}
It is immediate from \Cref{defn.ladjt} that we have a canonical equivalence $\Adj \simeq \Adj^{1\&2\op}$, and moreover that $\Adj^{1\op} \simeq \Adj^{2\op}$ corepresents the space of right adjoints. Furthermore, it follows e.g.\! from the description of $\Adj \in 2\Cat$ given in \cite{RV-adjns} that there is also a canonical equivalence $\Adj \simeq \Adj^{1\op}$; in particular, adjoints are unique when they exist. We use these facts without further comment.
\end{observation}

\begin{definition}
We write
\[
2\biCart_\cC := 2\coCart_\cC \cap 2\Cart_\cC
\qquad
\text{and}
\qquad
2\Cat_{2\bicart/\cC} := 2\Cat_{2\cocart/\cC} \cap 2\Cat_{2\cart/\cC}
\]
and refer to objects of these $(\infty,2)$-categories as \bit{2-bicartesian fibrations} over $\cC$.
\end{definition}

\begin{lemma}
\label{basic.adjns.lemma}
Pullback along the functor $[1] \ra \Adj$ defines an equivalence
\[
\hom_{\iota_1 2\Cat}(\Adj,2\Cat)
\xlongra{\sim}
\iota_0 2\biCart_{[1]}
~.
\]
\end{lemma}

\begin{proof}
By straightening, the pullback functor $\iota_0 2\coCart_{\Adj} \ra \iota_0 2\coCart_{[1]}$ is a monomorphism. We show that it is an equivalence onto the subspace $\iota_0 2\biCart_{[1]} \subset \iota_0 2\coCart_{[1]}$.

Suppose first that we are given a functor $[1] \ra 2\Cat$ that selects a left adjoint $\cC \xra{L} \cD$ in $2\Cat$. Consider the cocartesian unstraightening $\cE \da [1]$ of $L$. Using any adjunction data $(R,\eta,\varepsilon)$ extending $L$, we show that $\cE \da [1]$ is a 2-cartesian fibration. Given any object $d \in \cD$, the morphism $LRd \xra{\varepsilon} d$ in $\cD$ determines a morphism $Rd \ra d$ in $\cE$, which it is easy to see is cartesian. Hence, by \Cref{obs.two.cart.fibns}\Cref{obs.two.cart.fibns.over.a.one.cat}, $\cE \da [1]$ is a 2-cartesian fibration.

In the other direction, suppose that $\cE \da [1]$ is a 2-bicartesian fibration. We must show that its cocartesian unstraightening $[1] \xra{F} 2\Cat$ selects a left adjoint. Let us write $\cC \xra{L} \cD$ for the 1-morphism selected by its cocartesian straightening and $\cC \xla{R} \cD$ for the 1-morphism selected by its cartesian straightening. Now, an evident composite $[2] \times [2] \xra{G} [1] \xra{F} 2\Cat$ classifies a diagram
\[
\begin{tikzcd}
\cC
\arrow{r}{=}
\arrow{d}[sloped, anchor=north]{=}
&
\cC
\arrow{r}{=}
\arrow{d}[sloped, anchor=north]{=}
&
\cC
\arrow{d}{L}
\\
\cC
\arrow{r}{=}
\arrow{d}[sloped, anchor=north]{=}
&
\cC
\arrow{r}{L}
\arrow{d}[swap]{L}
&
\cD
\arrow{d}[sloped, anchor=south]{=}
\\
\cC
\arrow{r}[swap]{L}
&
\cD
\arrow{r}[swap]{=}
&
\cD
\end{tikzcd}
\]
in $2\Cat$.\footnote{Here and in what follows, we orient our diagrams according to the same conventions as matrices (rows (top to bottom) before columns (left to right)).} Write $\w{\cE} := G^* \cE$. Clearly, the canonical functor $\w{\cE} \da [2] \times [2]$ is a 2-bicartesian fibration. Applying cocartesian straightening in the second coordinate, we obtain the first functor in the composite
\[
[2]
\longra
2\Cat_{2\bicart/[2]}
\xlongra{\fgt}
2\Cat_{2\cart/[2]}
~,
\]
which by \Cref{thm.unstraightening.yoga} gives an object of $2\Cat_{\loc.2\cart/([2]^\op \times [2])}$ that is classified by a diagram
\begin{equation}
\label{two.by.two.grid.in.which.Zorro.identities}
\begin{tikzcd}
\cC
\arrow[leftarrow]{r}{=}
\arrow{d}[sloped, anchor=north]{=}
&
\cC
\arrow[leftarrow]{r}{=}[description, xshift=0.1cm, yshift=-0.5cm]{\rotatebox{-45}{$\xRightarrow{\eta}$}}
\arrow{d}[sloped, anchor=north]{=}
&
\cC
\arrow{d}{L}
\\
\cC
\arrow[leftarrow]{r}{=}[description, xshift=0.1cm, yshift=-0.5cm]{\rotatebox{-45}{$\xRightarrow{\eta}$}}
\arrow{d}[sloped, anchor=north]{=}
&
\cC
\arrow[leftarrow]{r}{R}[description, xshift=0.1cm, yshift=-0.5cm]{\rotatebox{-45}{$\xRightarrow{\varepsilon}$}}
\arrow{d}[swap]{L}
&
\cD
\arrow{d}[sloped, anchor=south]{=}
\\
\cC
\arrow[leftarrow]{r}[swap]{R}
&
\cD
\arrow[leftarrow]{r}[swap]{=}
&
\cD
\end{tikzcd}
\qquad
:=
\qquad
\begin{tikzcd}
\cC
\arrow[leftarrow]{r}{=}
\arrow{d}[sloped, anchor=north]{=}
&
\cC
\arrow[leftarrow]{r}{=}[description, xshift=0.3cm, yshift=-0.8cm]{\rotatebox{-45}{$\Rightarrow$}}
\arrow{d}[sloped, anchor=north]{=}
\arrow{ld}[sloped]{=}
&
\cC
\arrow{d}{L}
\arrow{ld}[sloped]{=}
\\
\cC
\arrow[leftarrow]{r}{=}[description, xshift=0.3cm, yshift=-0.8cm]{\rotatebox{-45}{$\Rightarrow$}}
\arrow{d}[sloped, anchor=north]{=}
&
\cC
\arrow[leftarrow]{r}{R}[description, xshift=0.3cm, yshift=-0.8cm]{\rotatebox{-45}{$\Rightarrow$}}
\arrow{d}[swap]{L}
\arrow{ld}[sloped]{=}
&
\cD
\arrow{d}[sloped, anchor=south]{=}
\arrow{ld}[sloped]{LR}
\\
\cC
\arrow[leftarrow]{r}[swap]{R}
&
\cD
\arrow[leftarrow]{r}[swap]{=}
&
\cD
\end{tikzcd}
\end{equation}
in $2\Cat$; in particular, this defines the 2-morphisms $\eta$ and $\varepsilon$ as (straightenings of) locally 2-cartesian fibrations over $[2]$ via appropriate functors $[2] \ra ([2]^\op \times [2])$. Moreover, the requisite identifications of the composite 2-morphisms \Cref{two.morphisms.for.Zorro} follow from the functoriality of un/straightening with respect to functors $([1]^\op \times [1]) \ra ([2]^\op \times [2])$, namely those selecting the right half and the bottom half of diagram \Cref{two.by.two.grid.in.which.Zorro.identities} respectively.
\end{proof}

\begin{cor}
\label{cor.existence.of.radjt.detectable.ptwise}
A functor $\cC \xra{L} \cD$ in $2\Cat$ is a left adjoint if and only if for every object $d \in \cD$ the functor $\hom_\cD(L(-),d) \in \Fun(\cC^{1\op},\Cat)$ is representable, i.e.\! there exist an object $c \in \cC$ and a 1-morphism $L(c) \ra d$ in $\cD$ such that the composite
\[
\hom_\cC(c',c)
\longra
\hom_\cD(L(c'),L(c))
\longra
\hom_\cD(L(c'),d)
\]
is an equivalence in $\Cat$ for all $c' \in \cC$.
\end{cor}

\begin{proof}
This is immediate from \Cref{basic.adjns.lemma}.
\end{proof}

\begin{lemma}
\label{lemma.twocategorical.passing.to.adjoints.and.swapping.laxness}
The datum of a morphism
\begin{equation}
\label{morphism.in.llax.over.arbitrary.twocat.for.first.adjn.lemma}
\cE_0
\longla
\cE_1
\end{equation}
in $2\Cat_{1\cocart/\cC}$ that on fibers over each object $c \in \cC$ is a right adjoint is equivalent to the datum of a morphism
\begin{equation}
\label{morphism.in.rlax.over.arbitrary.twocat.for.first.adjn.lemma}
(\cE_0)^\cocartdual
\longra
(\cE_1)^\cocartdual
\end{equation}
in $2\Cat_{1\cart/\cC^{1\op}}$ that on fibers over each object $c \in \cC^{1\op}$ is a left adjoint, with the equivalence given fiberwise by passing to adjoints.\footnote{Here, just as for $(\infty,1)$-categories we write $(-)^\cocartdual$ for the cocartesian duality equivalence $2\coCart_\cC \xra{\sim} 2\Cart_{\cC^{1\op}}$.}
\end{lemma}

\begin{proof}
Let us consider the morphism \Cref{morphism.in.llax.over.arbitrary.twocat.for.first.adjn.lemma} as a functor
\[
[1]^\op
\longra
2\Cat_{1\cocart/\cC}
~.
\]
By \Cref{thm.switching.yoga}, this is equivalent data to a functor
\[
\cC
\longra
2\Cat_{1\cart/[1]}
\simeq
1\Cat_{1\cart/[1]}
=
\Cat_{\cart/[1]}
\]
that factors through the subcategory $\Cat_{\bicart/[1]} := \Cat_{\cocart/[1]} \cap \Cat_{\cart/[1]} \subset \Cat_{\cart/[1]}$. Applying \Cref{thm.switching.yoga} to the resulting composite
\[
\cC
\longra
\Cat_{\bicart/[1]}
\xra{\fgt}
\Cat_{\cocart/[1]}
\simeq
2\Cat_{1\cocart/[1]}
~,
\]
we obtain a functor
\[
[1]
\longra
2\Cat_{1\cart/\cC^{1\op}}
~,
\]
which selects the desired morphism \Cref{morphism.in.rlax.over.arbitrary.twocat.for.first.adjn.lemma}. It is now clear that this construction indeed defines an equivalence of spaces.
\end{proof}

\begin{remark}
A more restrictive version of \Cref{lemma.twocategorical.passing.to.adjoints.and.swapping.laxness} is proved in \cite{HHLN-laxmonadjns-twovarfibns-calcmates}.
\end{remark}

\begin{lemma}
\label{lemma.twocategorical.extn.to.an.adjn.in.twoCatonecartoverC}
Given a solid diagram
\begin{equation}
\label{extn.for.second.adjn.lemma}
\begin{tikzcd}
{[1]^\op}
\arrow{r}{F}
\arrow[hook]{d}
&
2\Cat_{1\cart/\cC}
\\
\Adj^{1\op}
\arrow[dashed]{ru}
\end{tikzcd}
~,
\end{equation}
there exists an extension (i.e.\! $F$ selects a right adjoint in $2\Cat_{1\cart/\cC}$) if and only if the following conditions are satisfied.
\begin{enumerate}

\item\label{condition.fiberwise.radjt}

On fibers over each object $c \in \cC$, the functor $F$ selects a right adjoint.

\item\label{condition.morphism.in.twocatonecartoverC.is.strict}

The functor $F$ factors through the subcategory $1 \Cart_\cC \subseteq 2\Cat_{1\cart/\cC}$.

\end{enumerate}
\end{lemma}

\begin{proof}
Suppose first that there exists an extension \Cref{extn.for.second.adjn.lemma}. It is clear that condition \Cref{condition.fiberwise.radjt} is satisfied. To verify condition \Cref{condition.morphism.in.twocatonecartoverC.is.strict}, let us write 
\[ \begin{tikzcd}[column sep=1.5cm]
\cE
\arrow[transform canvas={yshift=0.9ex}]{r}{L}
\arrow[leftarrow, transform canvas={yshift=-0.9ex}]{r}[yshift=-0.2ex]{\bot}[swap]{R}
&
\cF
\end{tikzcd} \]
for the adjunction in $2\Cat_{1\cart/\cC}$ (leaving the functors to $\cC$ implicit). We must show that $R$ preserves cartesian 1-morphisms over $\cC$. For this, by taking pullback along an arbitrary functor $[1] \ra \cC$ it suffices to consider the case that $\cC = [1]$. Given a cartesian 1-morphism $f_0 \ra f_1$ in $\cF$ over the morphism $0 \ra 1$ in $[1]$, we must show that the 1-morphism $R(f_0) \ra R(f_1)$ in $\cE$ is also cartesian. For any object $e \in \cE_0$ we have the commutative diagram
\[ \begin{tikzcd}[row sep=0cm]
\hom_{\cE_0}(e,R(f_0))
\arrow{r}
&
\hom_\cE(e,R(f_1))
\\
\rotatebox{90}{$\simeq$}
&
\rotatebox{90}{$\simeq$}
\\
\hom_{\cF_0}(L(e),f_0)
\arrow{r}[swap]{\sim}
&
\hom_\cF(L(e),f_1)
\end{tikzcd}
~, \]
in which the vertical equivalences arise from the adjunctions $L \adj R$ and $L_0 \adj R_0$ and the lower morphism is an equivalence since $f_0 \ra f_1$ is cartesian.

Now, suppose that conditions \Cref{condition.fiberwise.radjt} and \Cref{condition.morphism.in.twocatonecartoverC.is.strict} are satisfied. We must show that there exists an extension
\begin{equation}
\label{wts.extn.over.Adjoneop.into.twocatonecartoverC}
\begin{tikzcd}
{[1]^\op}
\arrow{r}{F}
\arrow[hook]{d}
&
1\Cart_\cC
\arrow[hook]{d}
\\
\Adj^{1\op}
\arrow[dashed]{r}
&
2\Cat_{1\cart/\cC}
\end{tikzcd}
~.
\end{equation}

We claim that it suffices to assume that $\cC$ is a thin 2-category. Indeed, to show this we verify that it suffices to assume that $\cC = c_k$ is the free $(\infty,2)$-category on a $k$-morphism for $0 \leq k \leq 2$. For this, observe the monomorphism
\[
\hom_{\iota_1 2\Cat} ( \Adj^{1\op} , 2\Cat_{1\cart/\cC} )
\longhookra
\hom_{\iota_1 2\Cat} ( [1]^\op , 2\Cat_{1\cart/\cC} )
\]
in $\Spaces$, which by \Cref{thm.switching.yoga} is equivalent to a morphism
\[
\hom_{\iota_1 2\Cat} ( \cC^{1\op} , 2\Cat_{1\cocart/\Adj^{1\op}} )
\longra
\hom_{\iota_1 2\Cat} ( \cC^{1\op} , 2\Cat_{1\cocart/[1]^\op} )
\]
which is therefore also a monomorphism in $\Spaces$, functorially in $\cC \in \iota_1 2\Cat^\op$. It follows that the pullback functor
\[
2\Cat_{\cocart/\Adj^{1\op}}
\longra
2\Cat_{\cocart/[1]^\op}
\]
is a monomorphism in $\iota_1 2\Cat$. Hence, again applying \Cref{thm.switching.yoga}, there exists an extension \Cref{extn.for.second.adjn.lemma} if and only if there exists an extension after pullback along each functor $c_k \ra \cC$. (And clearly conditions \Cref{condition.fiberwise.radjt} and \Cref{condition.morphism.in.twocatonecartoverC.is.strict} hold if and only if they do after pullback along each functor $c_k \ra \cC$.) Thus, we can indeed assume that $\cC$ is thin.

Now, since $\cC$ is thin, the functor $2\Cat_{1\cart/\cC} \xra{\fgt} 2\Cat$ is 1-full. Hence, there exists an extension \Cref{extn.for.second.adjn.lemma} if and only if the composite $[1]^\op \ra 2\Cat_{1\cart/\cC} \xra{\fgt} 2\Cat$ selects a functor admitting a left adjoint that satisfies the condition of admitting a lift to $2\Cat_{1\cart/\cC}$.

Consider the cartesian unstraightening $\cE \xra{(p,q)} \cC \times [1]$ of the functor $F$ (the upper horizontal functor in diagram \Cref{wts.extn.over.Adjoneop.into.twocatonecartoverC}). By \Cref{basic.adjns.lemma}, it is equivalent to show that $\cE \xra{q} [1]$ is a 2-bicartesian fibration whose cocartesian 1-morphisms are carried to equivalences by the functor $\cE \xra{p} \cC$. By assumption, for each object $c \in \cC$, the functor $\cE_c \xra{q_c} [1]$ is a 2-bicartesian fibration. Given any $e_0 \in \cE_0 := q^{-1}(0)$, let us write $s := p(e_0) \in \cC$ and let $e_0 \ra e_1$ be a $q_s$-cocartesian 1-morphism. We claim that this 1-morphism is in fact $q$-cocartesian, which will evidently give the desired result. That is, we must show that for any object $e_1' \in \cE_1$, writing $t := p(e_1') \in \cC$, the upper functor in the commutative triangle
\begin{equation}
\label{comm.diagram.of.hom.cats.between.Eone.E.and.C.for.proof.of.radjt.in.twoCat.onecartoverC}
\begin{tikzcd}
\hom_{\cE_1} (e_1,e_1')
\arrow{rr}
\arrow{rd}
&
&
\hom_\cE(e_0,e_1')
\arrow{ld}
\\
&
\hom_\cC(s,t)
\end{tikzcd}
\end{equation}
in $\Cat$ is an equivalence. Because $\cE \xra{p} \cC$ is a 2-cartesian fibration, diagram \Cref{comm.diagram.of.hom.cats.between.Eone.E.and.C.for.proof.of.radjt.in.twoCat.onecartoverC} defines a morphism in $\coCart_{\hom_\cC(s,t)}$, so it suffices to prove that its upper horizontal functor restricts to an equivalence $\hom_{\cE_1}^\varphi(e_1,e_1') \ra \hom_\cE^\varphi(e_0,e_1')$ on fibers over an arbitrary object $\varphi \in \hom_\cC(s,t)$. Now, let $e_1'' \ra e_1'$ be a $(p,q)$-cartesian lift of the 1-morphism $(\id_1,\varphi)$. Then, we have a commutative diagram
\[ \begin{tikzcd}
\hom_{\cE_1}^\varphi(e_1,e_1')
\arrow{r}
&
\hom_\cE^\varphi(e_0,e_1')
\\
\hom_{\cE_{(1,s)}}(e_1,e_1'')
\arrow{u}[sloped, anchor=north]{\sim}
\arrow{r}[swap]{\sim}
&
\hom_{\cE_s}(e_0,e_1'')
\arrow{u}[sloped, anchor=north]{\sim}
\end{tikzcd} \]
in $\Cat$, where the vertical morphisms are equivalences since $e_1'' \ra e_1'$ is $(p,q)$-cartesian and the lower horizontal morphism is an equivalence since $e_0 \ra e_1$ is $q_s$-cocartesian.
\end{proof}

\subsection{Lax limits}
\label{subsection.lax.limits.over.twocats}

In this subsection, we define lax limits in $\Cat$ over $(\infty,2)$-categories, and we give an alternative description (\Cref{thm.defns.of.rlax.limit.over.onecats.agree}) in the case that the base is the left-laxification of an $(\infty,1)$-category in terms of its subdivision (as introduced and studied in \Cref{subsection.sd.of.posets}). For an alternate discussion of lax limits, see also \cite{GHL-fibns-lax-limits-inftytwocats}.

\begin{definition}
\label{defn.rlax.lim.of.fctr.from.a.twocat.to.Cat}
Given an $(\infty,2)$-category $\cC \in 2\Cat$ and a functor $\cC \ra \Cat$, its \bit{left-} and \bit{right-lax limits} are respectively the $(\infty,1)$-categories of sections of its cocartesian unstraightening over $\cC$ and its cartesian unstraightening over $\cC^{1\op}$.\footnote{These are also the $(\infty,2)$-categories of sections: all 2-morphisms therein are invertible.} Evidently, these define right adjoints
\[
\begin{tikzcd}[column sep=1.5cm]
\Cat
\arrow[transform canvas={yshift=0.9ex}]{r}{(-) \times \cC}
\arrow[dashed, leftarrow, transform canvas={yshift=-0.9ex}]{r}[yshift=-0.2ex]{\bot}[swap]{\lim^\llax_\cC}
&
2\Cat_{1\cocart/\cC}
\end{tikzcd}
\qquad
\text{and}
\qquad
\begin{tikzcd}[column sep=1.5cm]
\Cat
\arrow[transform canvas={yshift=0.9ex}]{r}{(-) \times \cC^{1\op}}
\arrow[dashed, leftarrow, transform canvas={yshift=-0.9ex}]{r}[yshift=-0.2ex]{\bot}[swap]{\lim^\rlax_\cC}
&
2\Cat_{1\cart/\cC^{1\op}}
\end{tikzcd}
~.
\]
\end{definition}

\begin{theorem}
\label{thm.defns.of.rlax.limit.over.onecats.agree}
Given an $\infty$-category $\cB \in \Cat$, the composite functor
\[
\loc.\coCart_\cB
\simeq
1\coCart_{\llax(\cB)}
\simeq
1\Cart_{\llax(\cB)^{1\op}}
\longhookra
2\Cat_{1\cart/\llax(\cB)^{1\op}}
\xra{\lim^\rlax_{\llax(\cB)}}
\Cat
\]
(in which the equivalences respectively follow from Theorems \ref{t11} \and \ref{thm.un.straightening.for.two.cats}) is corepresented by the object $(\sd(\cB) \xra{\max} \cB) \in \loc.\coCart_\cB$ (recall \Cref{t4}\Cref{t4.part.one}).
\end{theorem}

\begin{lemma}
\label{lemma.pull.back.sections.of.loctwocartfibns.along.rlax.nat.transfns}
Suppose we are given $(\infty,2)$-categories $\cC,\cD \in 2\Cat$, right-lax functors $F,G \in \hom_{\iota_1 2\Cat_{\rlax}}(\cD,\cC)$, and a right-lax natural transformation $F \overset{\alpha}{\laxra} G$. 
Suppose we are also given a 2-cartesian fibration $\cE \da \cC$. 
Then, we obtain a lax-commutative triangle
\[ \begin{tikzcd}
\Gamma_\cC(\cE)
\arrow{rr}{F^*}[swap, yshift=-0.4cm]{\Downarrow}
\arrow{rd}[sloped, swap]{G^*}
&
&
\Gamma_\cD(F^*\cE)
\\
&
\Gamma_\cD(G^*\cE)
\arrow{ru}[sloped, swap]{\Gamma_\cD(\alpha^*\cE)}
\end{tikzcd} \]
in $2\Cat$.
Moreover, these data are natural with respect to pullback along right-lax functors $\cD' \laxra \cD$.
\end{lemma}

\begin{proof}

Let $\cD \times [1] \overset{\beta}{\laxra} \cC$ be the right-lax functor defining $\alpha$. Note that by (the $(-)^{1\&2\op}$ version of) \Cref{lemma.for.unstraightening.yoga}, the composite functor $\beta^* \cE \ra \cD \times [1] \ra [1]$ is a 2-cartesian fibration.

Now, we have a solid commutative square
\begin{equation}
\label{diagram.with.radjts.in.lemma.for.proof.of.agreement.of.defns.of.laxlimit}
\begin{tikzcd}[column sep=1.5cm]
\Fun_{/[1]}([1] , \beta^* \cE)
\arrow[transform canvas={yshift=0.9ex}]{r}{\ev_1}
\arrow[dashed, leftarrow, transform canvas={yshift=-0.9ex}]{r}[yshift=-0.2ex]{\bot}
\arrow{d}
&
\Fun_{/[1]} ( \{1\} , \beta^* \cE)
\arrow{d}
&[-1.7cm]
\simeq
G^* \cE
\\
\Fun([1],\cD)
\arrow[transform canvas={yshift=0.9ex}]{r}{\ev_1}
\arrow[dashed, leftarrow, transform canvas={yshift=-0.9ex}]{r}[yshift=-0.2ex]{\bot}
&
\cD
\end{tikzcd}
\end{equation}
in $2\Cat$. 
Here, the upper dashed right adjoint exists by \Cref{t15}.
Moreover, 
the diagram \Cref{diagram.with.radjts.in.lemma.for.proof.of.agreement.of.defns.of.laxlimit} commutes after omitting the left adjoints (i.e.\! it satisfies the Beck--Chevalley condition)
since $\beta^\ast \cE \da \cD$ smushes $\beta^\ast \cE \da [1]$ by \Cref{lemma.for.unstraightening.yoga}.
Hence, we obtain a diagram
\[ \begin{tikzcd}[column sep=1.5cm]
\Fun_{/[1]}([1],\beta^* \cE) \underset{\Fun([1],\cD)}{\times} \cD
\arrow[transform canvas={yshift=0.9ex}]{r}{\ev_1}
\arrow[leftarrow, transform canvas={yshift=-0.9ex}]{r}[yshift=-0.2ex]{\bot}[swap]{\ev_1^R}
\arrow{d}[swap]{\ev_0}
&
G^*\cE
\\
F^* \cE
\end{tikzcd} \]
in $2\Cat_{/\cD}$, and thereafter a lax-commutative triangle
\[ \begin{tikzcd}
\Fun_{/[1]} ([1] , \beta^* \cE) \underset{\Fun([1],\cD)}{\times} \cD
\arrow{rr}[swap, yshift=-0.5cm]{\Downarrow}{\ev_0}
\arrow{rd}[sloped, swap]{\ev_1}
&
&
F^* \cE
\\
&
G^* \cE
\arrow{ru}[sloped, swap]{\ev_0 \ev_1^R}
\end{tikzcd}
\]
in $2\Cat_{/\cD}$.  By \Cref{t15}, we have an identification $\alpha^* \cE \simeq \ev_0 \ev_1^R$.
Applying $\Gamma_\cD(-)$ and precomposing with the evident functor
\[
\Gamma_\cC(\cE)
\longra
\Gamma_{\cD \times [1]}(\beta^* \cE)
\simeq
\Gamma_\cD
\left(
\Fun_{/[1]} ([1] , \beta^* \cE) \underset{\Fun([1],\cD)}{\times} \cD
\right)
\]
establishes the claim.
\end{proof}

\begin{proof}[Proof of \Cref{thm.defns.of.rlax.limit.over.onecats.agree}]
\Cref{obs.laxification.of.one.cats} gives an equivalence
\[
\llax(\cB) 
\simeq
\colim_{([n] \da \cB) \in \bDelta_{/\cB}} \llax([n])
~.
\]
Therefore, by \Cref{t4}, it suffices to consider the case that $\cB = [n]$ for some $[n] \in \bDelta$.
Throughout, we refer to the description of $\llax([n])$ given by \Cref{prop.llax.of.brax.n}.\footnote{In particular, we use without further reference that $\llax([n])^{1\op}$ is a thin 2-category.}

We now explicitly describe the image of the object $(\sd([n]) \xra{\max} [n]) \in \loc.\coCart_{[n]}$ under the composite equivalence
\[
\loc.\coCart_{[n]}
\xlongla{\sim}
1\coCart_{\llax([n])}
\simeq
1\Cart_{\llax([n])^{1\op}}
\]
(where the first (leftwards) equivalence is that of \Cref{t11}). For this, let $\coSpan(\sd([n])) \in 2\Cat$ denote the strict 2-category of cospans in $\sd([n])$ (which exists since $\sd([n])$ has pushouts, which are given by union of subsets of $[n]$). We define the 1-full subcategory
\[
\w{\sd}
:=
\w{\sd([n])}
\subseteq
\coSpan(\sd([n]))
\]
on those 1-morphisms
\begin{equation}
\label{new.generic.one.morphism.in.sd.tilde}
I
\longhookra
K
\longhookla
J
\end{equation}
from $I$ to $J$ (a cospan among subsets of $[n]$) in which the inclusion $I \hookra K$ is isomax and the inclusion $K \hookla J$ is both isomin and inert. We note that $\w{\sd}$ is a thin 2-category. Now, we claim that the desired image in $1\Cart_{\llax([n])^{1\op}}$ is given by the functor
\begin{equation}
\label{cart.unstr.of.sd}
\w{\sd}
\xra{\max}
\llax([n])^{1\op}
\end{equation}
characterized by the fact that it carries a 1-morphism \Cref{new.generic.one.morphism.in.sd.tilde} in $\w{\sd}$ to the 1-morphism in $\llax([n])^{1\op}$ corresponding to the 1-morphism
\[
\max(I)
=
\max(K)
\xla{K_{\geq \max(J)}}
\max(J)
\]
in $\llax([n])$.
Using \Cref{obs.two.cart.fibns}\Cref{obs.two.cart.fibns.one.cart.fibn.iff}, we now observe that the functor \Cref{cart.unstr.of.sd} is a 1-cartesian fibration, whose cartesian 1-morphisms are those 1-morphisms \Cref{new.generic.one.morphism.in.sd.tilde} for which $I=K$.
Next, by definition of $\w{\sd}$, for any $I,J\in \w{\sd}$, a morphism $K \hookrightarrow K'$ in $\ulhom_{\w{\sd}}(I,J)$ has the feature that $K' = K \cup K'_{\geq \max(J)}$.
Using this it is routine to verify that the functor (between posets) $\ulhom_{\w{\sd}}(I,J) \to \ulhom_{\llax([n])^{1\op}}(I,J)$ is a left fibration.

Now, in the 1-cartesian fibration \Cref{cart.unstr.of.sd} the fiber over an object $a \in \llax([n])^{1\op}$ is $\sd([n])_{\{a\}/\isomax}$ and its cartesian monodromy functors are given by concatenation. Noting that the straightening $\llax([n]) \ra \Cat$ factors through the full subcategory $\Poset \subset \Cat$ so that it is characterized by its values on the objects and generating 1-morphisms in $\llax([n])$ (i.e.\! those of the form $a < b$), the claim follows.

In order to proceed, let us observe that the functor \Cref{cart.unstr.of.sd} admits an evident section
\[
\w{\sd}
\xlongla{\sigma}
\llax([n])^{1\op}
\]
whose image is the full subcategory on the inclusions of singletons into $[n]$, which we consider as a morphism in $2\Cat_{1\cart/\llax([n])^{1\op}}$. It now suffices to show that for any 1-cartesian fibration $(\cE \da \llax([n])^{1\op}) \in 2\Cat_{1\cart/\llax([n])^{1\op}}$, the resulting restriction functor
\begin{equation}
\label{the.fctr.to.be.shown.an.equivce.for.sd.to.compute.rlaxlim}
\hom_{1\Cart_{\llax([n])^{1\op}}} ( \w{\sd} , \cE )
\xra{\sigma^*}
\Gamma_{\llax([n])^{1\op}} ( \cE )
\end{equation}
in $\Cat$ is an equivalence. We will construct an inverse. 

For this, consider the right-lax functor
\[ \begin{tikzcd}
\w{\sd}
\arrow[squiggly]{r}{\min}
&
\llax([n])^{1\op}
\end{tikzcd}
\]
characterized by the fact that it carries a 1-morphism \Cref{new.generic.one.morphism.in.sd.tilde} in $\w{\sd}$ to the 1-morphism in $\llax([n])^{1\op}$ corresponding to the 1-morphism
\[
\min(I)
\xla{K_{\leq \min(I)}}
\min(K)
=
\min(J)
\]
in $\llax([n])$. Observe that there exists a right-lax natural transformation
\begin{equation}
\label{rlax.nat.trans.from.max.to.min}
\begin{tikzcd}
\max
\arrow[squiggly]{r}
&
\min
\end{tikzcd}
\end{equation}
from $\max$ to $\min$ characterized by the fact that it carries an object $(I \subseteq [n]) \in \w{\sd}$ to the 1-morphism in $\llax([n])^{1\op}$ corresponding to the 1-morphism $\max(I) \xla{I} \min(I)$ in $\llax([n])$; 
in terms of the description of \Cref{rmk.rlax.nat.trans.has.lax.squares},
the value of the right-lax natural transformation \Cref{rlax.nat.trans.from.max.to.min} on a morphism \Cref{new.generic.one.morphism.in.sd.tilde} is depicted by the diagram
in $\llax([n])$
\begin{equation}
\label{e22}
\begin{tikzcd}[row sep=1.5cm, column sep=1.5cm]
\max(I)
\arrow[leftarrow]{d}[swap]{I}
\arrow[leftarrow, bend right=10]{rrd}[sloped, swap, pos=0.4]{I \cup K_{\leq \min(I)}}[xshift=1ex, yshift=0.5ex]{\rotatebox{15}{$\Longrightarrow$}}
&[-1.7cm]
= \max(K)
\arrow[leftarrow]{rr}{K_{\geq \max(J)}}
\arrow[leftarrow, bend left=10]{rd}[sloped]{K}
&
&[-1.7cm]
\max(J)
\arrow[leftarrow]{d}{J}
\\
\min(I)
\arrow[leftarrow]{rr}[swap]{K_{\leq \min(I)}}
&
&
\min(K) =
&
\min(J)
\end{tikzcd}
\end{equation}
in which the 2-morphism indicates the inclusion $(I \cup K_{\leq \min(I)}) \subseteq K$.

We now construct a functor
\begin{equation}
\label{inverse.of.the.fctr.to.be.shown.an.equivce.for.sd.to.compute.rlaxlim}
\hom_{1\Cart_{\llax([n])^{1\op}}} ( \w{\sd} , \cE )
\longla
\Gamma_{\llax([n])^{1\op}} ( \cE )
\end{equation}
in the opposite direction as \Cref{the.fctr.to.be.shown.an.equivce.for.sd.to.compute.rlaxlim}, which we will later show to be its inverse. By 
\Cref{t12},
we obtain a morphism $\min^* \cE \ra \max^* \cE$ in $2\Cat_{\loc.1\cart/\w{\sd}}$.
On sections, we therefore have a composite functor
\begin{equation}
\label{composite.functor.from.sections.over.llaxbraxnoneop.to.sections.of.max.pullback}
\Gamma_{\llax([n])^{1\op}} ( \cE )
\longra
\Gamma_{\w{\sd}} ( \min^* \cE )
\longra
\Gamma_{\w{\sd}} ( \max^* \cE )
~.
\end{equation}
We claim that the functor \Cref{composite.functor.from.sections.over.llaxbraxnoneop.to.sections.of.max.pullback} lands in the subcategory
\[
\hom_{1\Cart_{\llax([n])^{1\op}}} ( \w{\sd} , \cE )
\subseteq
\hom_{2\Cat_{/\llax([n])^{1\op}}} ( \w{\sd} , \cE )
\simeq
\Gamma_{\w{\sd}} ( \max^* \cE )
~;
\]
this will give our functor \Cref{inverse.of.the.fctr.to.be.shown.an.equivce.for.sd.to.compute.rlaxlim}. To see this, let $[1] \xra{\varphi} \w{\sd}$ select a $(\w{\sd} \xra{\max} \llax([n])^{1\op})$-cartesian 1-morphism. By our above description thereof, the composite (strict) functor
\[ \begin{tikzcd}
{[1]}
\arrow{r}{\varphi}
&
\w{\sd}
\arrow[squiggly]{r}{\min}
&
\llax([n])^{1\op}
\end{tikzcd} \]
is constant. 
Therefore, using \Cref{t12}\Cref{t12.part.2} the functor
\[
\Gamma_{\llax([n])^{1\op}} ( \cE )
\longra
\Gamma_{[1]}(\varphi^* \min^* \cE )
\]
factors through $\Gamma_{[1]}^\cart(\varphi^* \min^* \cE ) \subseteq \Gamma_{[1]}(\varphi^* \min^* \cE )$. Moreover, pullback along $\varphi$ of the right-lax natural transformation \Cref{rlax.nat.trans.from.max.to.min} yields a strict natural transformation (between strict functors),\footnote{Indeed, first of all, every (unital) lax functor from $[1]$ is strict.
Next, given a 1-morphism $(I \hookrightarrow K \hookleftarrow J)$ in $\w{\sd}$ it is cartesian over over $\llax([n])^{1\op}$ if $I = K$.
In this case, the inclusion $I \cup K_{\leq \min(I)} \hookrightarrow K$ appearing in \Cref{e22} is an isomorphism, hence the natural transformation is strict.
} 
so that the morphism
\[
\varphi^*\min^*\cE
\longra
\varphi^*\max^*\cE
\]
lies in $1\Cart_{[1]} \subset 2\Cat_{1\cart/[1]}$ (i.e.\! it preserves cartesian 1-morphisms). This proves the claimed factorization of the functor \Cref{composite.functor.from.sections.over.llaxbraxnoneop.to.sections.of.max.pullback}.

We now conclude by showing that the functors \Cref{the.fctr.to.be.shown.an.equivce.for.sd.to.compute.rlaxlim} and \Cref{inverse.of.the.fctr.to.be.shown.an.equivce.for.sd.to.compute.rlaxlim} are inverses.

We first show that the composite functor $\Cref{the.fctr.to.be.shown.an.equivce.for.sd.to.compute.rlaxlim} \circ \Cref{inverse.of.the.fctr.to.be.shown.an.equivce.for.sd.to.compute.rlaxlim}$ is the identity: 
using \Cref{t12}\Cref{t12.part.2}
this follows from the fact that $\Cref{rlax.nat.trans.from.max.to.min} \circ \sigma$ is the identity natural transformation from $\id_{\llax([n])^{1\op}}$ to itself.

We now show that the composite functor $\Cref{inverse.of.the.fctr.to.be.shown.an.equivce.for.sd.to.compute.rlaxlim} \circ \Cref{the.fctr.to.be.shown.an.equivce.for.sd.to.compute.rlaxlim}$ is the identity. Let us first observe that the right-lax natural transformation $\sigma \circ \max \xra{\sigma \circ \Cref{rlax.nat.trans.from.max.to.min}} \sigma \circ \min$ factors as a composite
\[ \begin{tikzcd}
\sigma \circ \max
\arrow{r}
&
\id_{\w{\sd}}
\arrow[squiggly]{r}
&
\sigma \circ \min
\end{tikzcd} \]
of right-lax natural transformations among right-lax endofunctors of $\w{\sd}$, which is determined (using that $\w{\sd}$ is thin) by the fact that its value on an object $(I \subseteq [n]) \in \w{\sd}$ is the composite
\[
\{ \max(I) \}
\xra{\{ \max(I) \} \longhookra I \xlonghookla{=} I }
I
\xra{ I \xlonghookra{=} I \longhookla \{ \min(I) \} }
\{ \min(I) \}
~.
\]
By \Cref{t12}\Cref{t12.part.1} applied to the functor $( \llax([n])^{1\op} \xra{\sigma} \w{\sd} )$, the composite
$\Cref{inverse.of.the.fctr.to.be.shown.an.equivce.for.sd.to.compute.rlaxlim} \circ \Cref{the.fctr.to.be.shown.an.equivce.for.sd.to.compute.rlaxlim}$ is the restriction to the full subcategory $\hom_{1\Cart_{\llax([n])^{1\op}}} ( \w{\sd} , \cE ) \subseteq \Gamma_{\w{\sd}} ( \max^* \cE )$ (in both the source and the target) of the composite
\begin{equation}
\label{e19}
\Gamma_{\w{\sd}} ( \max^\ast \cE )
\longrightarrow
\Gamma_{\w{\sd}} ( ( \sigma \circ \min)^\ast \max^\ast \cE )
\longra
\Gamma_{\w{\sd}} ( ( \sigma \circ \max)^\ast \max^\ast \cE )
\simeq
\Gamma_{\w{\sd}} ( \max^\ast \cE )
\end{equation}
in which the second morphism is obtained from \Cref{t12} applied to the right-lax natural transformation $ \sigma \circ \max \laxra \sigma \circ \min$ and the equivalence follows from the identification $\max \circ \sigma = \id$.
Next, observe the identification
\[
\max
\circ
\left( \sigma \circ \max 
\longra
\id_{\w{\sd}}
\right)
=
\left( \max \circ \sigma \xlongra{=} \id_{\llax([n])^{1\op}} \right) \circ \max
\]
between natural transformations.
This gives an identification of the composite \Cref{e19} with the composite
\begin{equation}
\label{composite.functor.on.sections.over.tilde.sd}
\Gamma_{\w{\sd}} ( \max^* \cE )
\longra
\Gamma_{\w{\sd}} ( (\sigma \circ \min)^* \max^* \cE )
\longra
\Gamma_{\w{\sd}} ( \max^* \cE )
~,
\end{equation}
in which the second morphism arises by applying \Cref{t12} to the right-lax natural transformation $\id_{\w{\sd}} \laxra \sigma \circ \min$.
Thus, the composite
$\Cref{inverse.of.the.fctr.to.be.shown.an.equivce.for.sd.to.compute.rlaxlim} \circ \Cref{the.fctr.to.be.shown.an.equivce.for.sd.to.compute.rlaxlim}$ can be identified as the restriction to the full subcategory $\hom_{1\Cart_{\llax([n])^{1\op}}} ( \w{\sd} , \cE ) \subseteq \Gamma_{\w{\sd}} ( \max^* \cE )$ (in both the source and the target) of the composite \Cref{composite.functor.on.sections.over.tilde.sd}.

Now, \Cref{lemma.pull.back.sections.of.loctwocartfibns.along.rlax.nat.transfns} applied to
$\id_{\w{\sd}} \laxra \sigma \circ \min$
gives a natural transformation
\begin{equation}
\label{nat.trans.from.id.to.composite.functor.on.sections.over.tilde.sd}
\id_{\Gamma_{\w{\sd}} ( \max^* \cE )}
\longra
\Cref{composite.functor.on.sections.over.tilde.sd}
~.
\end{equation}
It remains to show is an equivalence on the full subcategory $\hom_{1\Cart_{\llax([n])^{1\op}}} ( \w{\sd} , \cE ) \subseteq \Gamma_{\w{\sd}} ( \max^* \cE )$. 
For this, suppose we are given an object
\[
\varphi \in \hom_{1\Cart_{\llax([n])^{1\op}}}(\w{\sd} , \cE )
\subseteq
\Gamma_{\w{\sd}} ( \max^* \cE )
~.
\]
For an arbitrary object $(I \subseteq [n]) \in \w{\sd}$, consider the $(\w{\sd} \xra{\max} \llax([n])^{1\op})$-cartesian morphism $I \xra{I \xhookra{=} I \hookla \{\min(I)\}} \{ \min(I) \}$. Since both $\varphi$ and $\Cref{composite.functor.on.sections.over.tilde.sd}(\varphi)$ preserve cartesian 1-morphisms, it suffices to show that the natural transformation \Cref{nat.trans.from.id.to.composite.functor.on.sections.over.tilde.sd} is an equivalence on objects given by singleton subsets of $[n]$, i.e.\! those in the image of $\sigma$. Upon observing the identification
\[
\left(
\sigma 
= 
\id_{\w{\sd}} \circ \sigma  
\laxra \sigma \circ \min \circ \sigma 
\simeq 
\sigma \circ \id_{\llax([n])^{1\op}}
= 
\sigma
\right)
\simeq
\id_{\sigma}
~,
\]
this follows by the naturality of \Cref{lemma.pull.back.sections.of.loctwocartfibns.along.rlax.nat.transfns}.
\end{proof}

\bibliographystyle{amsalpha}
\bibliography{strat}{}

\end{document}